\documentclass[12pt,reqno]{amsart}

\usepackage{amssymb}

\catcode`\@=11

\thinlines
\newskip\Einheit \Einheit=0.6cm
\newcount\xcoord \newcount\ycoord
\newdimen\xdim \newdimen\ydim \newdimen\PfadD@cke \newdimen\Pfadd@cke
\PfadD@cke1pt \Pfadd@cke0.5pt
\def\PfadDicke#1{\PfadD@cke#1 \divide\PfadD@cke by2 \Pfadd@cke\PfadD@cke \multiply\PfadD@cke by2}
\long\def\LOOP#1\REPEAT{\def\BODY{#1}\ITERATE}
\def\ITERATE{\BODY \let\next\ITERATE \else\let\next\relax\fi \next}
\let\REPEAT=\fi
\def\Punkt{\hbox{\raise-2pt\hbox to0pt{\hss\scriptsize$\bullet$\hss}}}
\def\DuennPunkt(#1,#2){\unskip
  \raise#2 \Einheit\hbox to0pt{\hskip#1 \Einheit
          \raise-2.5pt\hbox to0pt{\hss\normalsize$\bullet$\hss}\hss}}
\def\NormalPunkt(#1,#2){\unskip
  \raise#2 \Einheit\hbox to0pt{\hskip#1 \Einheit
          \raise-3pt\hbox to0pt{\hss\large$\bullet$\hss}\hss}}
\def\DickPunkt(#1,#2){\unskip
  \raise#2 \Einheit\hbox to0pt{\hskip#1 \Einheit
          \raise-4pt\hbox to0pt{\hss\Large$\bullet$\hss}\hss}}
\def\Kreis(#1,#2){\unskip
  \raise#2 \Einheit\hbox to0pt{\hskip#1 \Einheit
          \raise-4pt\hbox to0pt{\hss\Large$\circ$\hss}\hss}}
\def\Diagonale(#1,#2)#3{\unskip\leavevmode
  \xcoord#1\relax \ycoord#2\relax
      \raise\ycoord \Einheit\hbox to0pt{\hskip\xcoord \Einheit
         \unitlength\Einheit
         \line(1,1){#3}\hss}}
\def\AntiDiagonale(#1,#2)#3{\unskip\leavevmode
  \xcoord#1\relax \ycoord#2\relax 
      \raise\ycoord \Einheit\hbox to0pt{\hskip\xcoord \Einheit
         \unitlength\Einheit
         \line(1,-1){#3}\hss}}
\def\Pfad(#1,#2),#3\endPfad{\unskip\leavevmode
  \xcoord#1 \ycoord#2 \thicklines\ZeichnePfad#3\endPfad\thinlines}
\def\ZeichnePfad#1{\ifx#1\endPfad\let\next\relax
  \else\let\next\ZeichnePfad
    \ifnum#1=1
      \raise\ycoord \Einheit\hbox to0pt{\hskip\xcoord \Einheit
         \vrule height\Pfadd@cke width1 \Einheit depth\Pfadd@cke\hss}%
      \advance\xcoord by 1
    \else\ifnum#1=2
      \raise\ycoord \Einheit\hbox to0pt{\hskip\xcoord \Einheit
        \hbox{\hskip-\PfadD@cke\vrule height1 \Einheit width\PfadD@cke depth0pt}\hss}%
      \advance\ycoord by 1
    \else\ifnum#1=3
      \raise\ycoord \Einheit\hbox to0pt{\hskip\xcoord \Einheit
         \unitlength\Einheit
         \line(1,1){1}\hss}
      \advance\xcoord by 1
      \advance\ycoord by 1
    \else\ifnum#1=4
      \raise\ycoord \Einheit\hbox to0pt{\hskip\xcoord \Einheit
         \unitlength\Einheit
         \line(1,-1){1}\hss}
      \advance\xcoord by 1
      \advance\ycoord by -1
    \else\ifnum#1=5
      \advance\xcoord by -1
      \raise\ycoord \Einheit\hbox to0pt{\hskip\xcoord \Einheit
         \vrule height\Pfadd@cke width1 \Einheit depth\Pfadd@cke\hss}%
    \else\ifnum#1=6
      \advance\ycoord by -1
      \raise\ycoord \Einheit\hbox to0pt{\hskip\xcoord \Einheit
        \hbox{\hskip-\PfadD@cke\vrule height1 \Einheit width\PfadD@cke depth0pt}\hss}%
    \else\ifnum#1=7
      \advance\xcoord by -1
      \advance\ycoord by -1
      \raise\ycoord \Einheit\hbox to0pt{\hskip\xcoord \Einheit
         \unitlength\Einheit
         \line(1,1){1}\hss}
    \else\ifnum#1=8
      \advance\xcoord by -1
      \advance\ycoord by +1
      \raise\ycoord \Einheit\hbox to0pt{\hskip\xcoord \Einheit
         \unitlength\Einheit
         \line(1,-1){1}\hss}
    \fi\fi\fi\fi
    \fi\fi\fi\fi
  \fi\next}
\def\hSSchritt{\leavevmode\raise-.4pt\hbox to0pt{\hss.\hss}\hskip.2\Einheit
  \raise-.4pt\hbox to0pt{\hss.\hss}\hskip.2\Einheit
  \raise-.4pt\hbox to0pt{\hss.\hss}\hskip.2\Einheit
  \raise-.4pt\hbox to0pt{\hss.\hss}\hskip.2\Einheit
  \raise-.4pt\hbox to0pt{\hss.\hss}\hskip.2\Einheit}
\def\vSSchritt{\vbox{\baselineskip.2\Einheit\lineskiplimit0pt
\hbox{.}\hbox{.}\hbox{.}\hbox{.}\hbox{.}}}
\def\DSSchritt{\leavevmode\raise-.4pt\hbox to0pt{%
  \hbox to0pt{\hss.\hss}\hskip.2\Einheit
  \raise.2\Einheit\hbox to0pt{\hss.\hss}\hskip.2\Einheit
  \raise.4\Einheit\hbox to0pt{\hss.\hss}\hskip.2\Einheit
  \raise.6\Einheit\hbox to0pt{\hss.\hss}\hskip.2\Einheit
  \raise.8\Einheit\hbox to0pt{\hss.\hss}\hss}}
\def\dSSchritt{\leavevmode\raise-.4pt\hbox to0pt{%
  \hbox to0pt{\hss.\hss}\hskip.2\Einheit
  \raise-.2\Einheit\hbox to0pt{\hss.\hss}\hskip.2\Einheit
  \raise-.4\Einheit\hbox to0pt{\hss.\hss}\hskip.2\Einheit
  \raise-.6\Einheit\hbox to0pt{\hss.\hss}\hskip.2\Einheit
  \raise-.8\Einheit\hbox to0pt{\hss.\hss}\hss}}
\def\SPfad(#1,#2),#3\endSPfad{\unskip\leavevmode
  \xcoord#1 \ycoord#2 \ZeichneSPfad#3\endSPfad}
\def\ZeichneSPfad#1{\ifx#1\endSPfad\let\next\relax
  \else\let\next\ZeichneSPfad
    \ifnum#1=1
      \raise\ycoord \Einheit\hbox to0pt{\hskip\xcoord \Einheit
         \hSSchritt\hss}%
      \advance\xcoord by 1
    \else\ifnum#1=2
      \raise\ycoord \Einheit\hbox to0pt{\hskip\xcoord \Einheit
        \hbox{\hskip-2pt \vSSchritt}\hss}%
      \advance\ycoord by 1
    \else\ifnum#1=3
      \raise\ycoord \Einheit\hbox to0pt{\hskip\xcoord \Einheit
         \DSSchritt\hss}
      \advance\xcoord by 1
      \advance\ycoord by 1
    \else\ifnum#1=4
      \raise\ycoord \Einheit\hbox to0pt{\hskip\xcoord \Einheit
         \dSSchritt\hss}
      \advance\xcoord by 1
      \advance\ycoord by -1
    \else\ifnum#1=5
      \advance\xcoord by -1
      \raise\ycoord \Einheit\hbox to0pt{\hskip\xcoord \Einheit
         \hSSchritt\hss}%
    \else\ifnum#1=6
      \advance\ycoord by -1
      \raise\ycoord \Einheit\hbox to0pt{\hskip\xcoord \Einheit
        \hbox{\hskip-2pt \vSSchritt}\hss}%
    \else\ifnum#1=7
      \advance\xcoord by -1
      \advance\ycoord by -1
      \raise\ycoord \Einheit\hbox to0pt{\hskip\xcoord \Einheit
         \DSSchritt\hss}
    \else\ifnum#1=8
      \advance\xcoord by -1
      \advance\ycoord by 1
      \raise\ycoord \Einheit\hbox to0pt{\hskip\xcoord \Einheit
         \dSSchritt\hss}
    \fi\fi\fi\fi
    \fi\fi\fi\fi
  \fi\next}
\def\Koordinatenachsen(#1,#2){\unskip
 \hbox to0pt{\hskip-.5pt\vrule height#2 \Einheit width.5pt depth1 \Einheit}%
 \hbox to0pt{\hskip-1 \Einheit \xcoord#1 \advance\xcoord by1
    \vrule height0.25pt width\xcoord \Einheit depth0.25pt\hss}}
\def\Koordinatenachsen(#1,#2)(#3,#4){\unskip
 \hbox to0pt{\hskip-.5pt \ycoord-#4 \advance\ycoord by1
    \vrule height#2 \Einheit width.5pt depth\ycoord \Einheit}%
 \hbox to0pt{\hskip-1 \Einheit \hskip#3\Einheit 
    \xcoord#1 \advance\xcoord by1 \advance\xcoord by-#3 
    \vrule height0.25pt width\xcoord \Einheit depth0.25pt\hss}}
\def\Gitter(#1,#2){\unskip \xcoord0 \ycoord0 \leavevmode
  \LOOP\ifnum\ycoord<#2
    \loop\ifnum\xcoord<#1
      \raise\ycoord \Einheit\hbox to0pt{\hskip\xcoord \Einheit\Punkt\hss}%
      \advance\xcoord by1
    \repeat
    \xcoord0
    \advance\ycoord by1
  \REPEAT}
\def\Gitter(#1,#2)(#3,#4){\unskip \xcoord#3 \ycoord#4 \leavevmode
  \LOOP\ifnum\ycoord<#2
    \loop\ifnum\xcoord<#1
      \raise\ycoord \Einheit\hbox to0pt{\hskip\xcoord \Einheit\Punkt\hss}%
      \advance\xcoord by1
    \repeat
    \xcoord#3
    \advance\ycoord by1
  \REPEAT}
\def\Label#1#2(#3,#4){\unskip \xdim#3 \Einheit \ydim#4 \Einheit
  \def\lo{\advance\xdim by-.5 \Einheit \advance\ydim by.5 \Einheit}%
  \def\llo{\advance\xdim by-.25cm \advance\ydim by.5 \Einheit}%
  \def\loo{\advance\xdim by-.5 \Einheit \advance\ydim by.25cm}%
  \def\o{\advance\ydim by.25cm}%
  \def\ro{\advance\xdim by.5 \Einheit \advance\ydim by.5 \Einheit}%
  \def\rro{\advance\xdim by.25cm \advance\ydim by.5 \Einheit}%
  \def\roo{\advance\xdim by.5 \Einheit \advance\ydim by.25cm}%
  \def\l{\advance\xdim by-.30cm}%
  \def\r{\advance\xdim by.30cm}%
  \def\lu{\advance\xdim by-.5 \Einheit \advance\ydim by-.6 \Einheit}%
  \def\llu{\advance\xdim by-.25cm \advance\ydim by-.6 \Einheit}%
  \def\luu{\advance\xdim by-.5 \Einheit \advance\ydim by-.30cm}%
  \def\u{\advance\ydim by-.30cm}%
  \def\ru{\advance\xdim by.5 \Einheit \advance\ydim by-.6 \Einheit}%
  \def\rru{\advance\xdim by.25cm \advance\ydim by-.6 \Einheit}%
  \def\ruu{\advance\xdim by.5 \Einheit \advance\ydim by-.30cm}%
  #1\raise\ydim\hbox to0pt{\hskip\xdim
     \vbox to0pt{\vss\hbox to0pt{\hss$#2$\hss}\vss}\hss}%
}
\catcode`\@=12


\usepackage{color}

\input texdraw

\def\Ringerl(#1 #2){\move(#1 #2)\fcir f:0 r:.15}
\def\ringerl(#1 #2){\move(#1 #2)\fcir f:0 r:.1}
\def\Mark(#1 #2){\move(#1 #2)\fcir f:0 r:.2}

\textheight23cm \textwidth15.6cm \hoffset-1.7cm \voffset-.5cm

\newtheorem{theorem}{Theorem}
\newtheorem{proposition}[theorem]{Proposition}
\newtheorem{lemma}[theorem]{Lemma}
\newtheorem{corollary}[theorem]{Corollary}

\theoremstyle{remark}
\newtheorem{remark}{Remark}

\numberwithin{equation}{section}

\newcounter{saveeqn}
\newcommand{\alphaeqn}{\setcounter{saveeqn}{\value{equation}}%
\setcounter{equation}{0}%
\global\def\theequation{\mbox{\thesection.\arabic{saveeqn}\alph{equation}}}}
\newcommand{\reseteqn}{\setcounter{equation}{\value{saveeqn}}%
\global\def\theequation{\thesection.\arabic{equation}}}

\def\({\left(}
\def\){\right)}

\def\Fix{\operatorname{Fix}}
\def\Cent{\operatorname{Cent}}
\def\Cat{\operatorname{Cat}}

\def\rk{\operatorname{rk}}

\def\ep{\varepsilon}
\def\al{\alpha}
\def\be{\beta}
\def\ga{\gamma}

\begin{document}

\title[Cyclic sieving for generalised non-crossing partitions]
{Cyclic sieving for generalised non-crossing partitions associated
with complex reflection groups of exceptional type --- the details}
\newbox\Aut
\setbox\Aut\vbox{
\centerline{\sc 
Christian Krattenthaler$^{\dagger}$ \rm and \sc Thomas W.
M\"uller$^\ddagger$}
\vskip18pt
\centerline{$^\dagger$ Fakult\"at f\"ur Mathematik, Universit\"at Wien,}
\centerline{Nordbergstra\ss e 15, A-1090 Vienna, Austria.}
\centerline{\footnotesize WWW: \footnotesize\tt
http://www.mat.univie.ac.at/\lower0.5ex\hbox{\~{}}kratt} 
\vskip18pt
\centerline{$^\ddagger$ School of Mathematical Sciences,}
\centerline{Queen Mary \& Westfield College, University of London,}
\centerline{Mile End Road, London E1 4NS, United Kingdom.}
\centerline{\footnotesize WWW: \tt http://www.maths.qmw.ac.uk/\lower0.5ex\hbox{\~{}}twm/}
}
\author[C. Krattenthaler and T. W. M\"uller]{\box\Aut}

\address{Fakult\"at f\"ur Mathematik, Universit\"at Wien,
Nordbergstra{\ss}e~15, A-1090 Vienna, Austria.
WWW: \tt http://www.mat.univie.ac.at/\lower0.5ex\hbox{\~{}}kratt.}

\address{School of Mathematical Sciences, Queen Mary
\& Westfield College, University of London,
Mile End Road, London E1 4NS, United Kingdom.\newline
\tt http://www.maths.qmw.ac.uk/\lower0.5ex\hbox{\~{}}twm/.}

\keywords{complex reflection groups, unitary reflection groups,
$m$-divisible non-crossing partitions,
generalised non-crossing partitions, 
Fu\ss--Catalan numbers, cyclic sieving}

\dedicatory{Dedicated to the memory of Herb Wilf}

\subjclass [2000]{Primary 05E15; Secondary 05A10 05A15 05A18 06A07 20F55}

\thanks{$^\dagger$Research partially supported by the Austrian
Science Foundation FWF, grants Z130-N13 and S9607-N13,
the latter in the framework of the National Research Network
``Analytic Combinatorics and Probabilistic Number Theory."\newline
\indent
$^\ddagger$Research supported by the Austrian
Science Foundation FWF, Lise Meitner grant M1201-N13}

\begin{abstract}
We prove that the generalised non-crossing partitions associated with
well-generated complex reflection groups of exceptional type
obey two different cyclic sieving phenomena, as conjectured by
Armstrong, and by Bessis and Reiner.
This manuscript accompanies the paper 
{\it``Cyclic sieving for generalised non-crossing partitions associated
with complex reflection groups of exceptional type"}
[{\tt ar$\chi$iv:1001.0028}], for which it provides
the computational details.
\end{abstract}

\maketitle

\section{Introduction}

In his memoir \cite{ArmDAA}, Armstrong introduced {\em generalised
non-crossing partitions} associated with finite (real) reflection groups,
thereby embedding Kreweras' non-crossing partitions \cite{KrewAC},
Edelman's $m$-divisible non-crossing partitions \cite{EdelAA}, 
the non-crossing partitions associated with reflection groups due to
Bessis \cite{BesDAA} and Brady and Watt \cite{BRWaAA} into one
uniform framework. Bessis and Reiner \cite{BeReAA} observed that
Armstrong's definition can be straightforwardly extended to {\em
well-generated complex reflection groups} (see Section~\ref{sec:prel} 
for the precise definition). 
These generalised non-crossing partitions possess a wealth of
beautiful properties, and they display deep and surprising 
relations to other combinatorial objects defined for reflection
groups (such as the generalised cluster complex of Fomin and
Reading \cite{FoReAA}, or the extended Shi arrangement and
the geometric multichains of filters
of Athanasiadis \cite{AthaAG,AthaAH}); 
see Armstrong's memoir \cite{ArmDAA} and the references given therein.

On the other hand,
{\it cyclic sieving} is a phenomenon brought to light by Reiner,
Stanton and White \cite{ReSWAA}.
It extends the so-called
``$(-1)$-phenomenon" of Stembridge \cite{StemAL,StemAP}. 
Cyclic sieving can be
defined in three equivalent ways (cf.\ \cite[Prop.~2.1]{ReSWAA}). 
The one which gives the name 
can be described as follows: given a set $S$ of combinatorial
objects, an action on $S$ of a cyclic group $G=\langle g\rangle$ with
generator $g$ of order $n$, and a polynomial $P(q)$ in $q$
with non-negative integer coefficients, we say
that the triple $(S,P,G)$ {\it exhibits the cyclic sieving
phenomenon}, if the number of elements of $S$ fixed by $g^k$ equals
$P(e^{2\pi ik/n})$. In \cite{ReSWAA} it is shown that this phenomenon
occurs in surprisingly many contexts, and several further instances
have been discovered since then.

In \cite[Conj.~5.4.7]{ArmDAA} (also appearing in \cite[Conj.~6.4]{BeReAA}) and 
\cite[Conj.~6.5]{BeReAA},
Armstrong, respectively Bessis and Reiner, conjecture that generalised
non-crossing partitions for irreducible 
well-generated complex reflection groups
exhibit two different cyclic sieving phenomena (see
Sections~\ref{sec:siev1} and \ref{sec:siev2} for the precise
statements). 

According to the classification of these groups due to
Shephard and Todd \cite{ShToAA}, there are two infinite families of
irreducible well-generated complex reflection groups, namely the
groups $G(d,1,n)$ and $G(e,e,n)$, where $n,d,e$ are positive integers, 
and there are 26 exceptional groups.
For the infinite families of types $G(d,1,n)$ and $G(e,e,n)$, the two
cyclic sieving conjectures follow from the results
in \cite{KratCG}. 

The purpose of the present article is to prove the cyclic sieving
conjectures of Armstrong, and of Bessis and Reiner, for the
26 exceptional types, thus completing the proof of these conjectures. 
Since the generalised non-crossing partitions
feature a parameter $m$, from the outset 
this is {\it not\/} a finite problem. Consequently, we first need
several auxiliary results to reduce the conjectures for each of the
26 exceptional types to a {\it finite} problem. Subsequently, we use
Stembridge's {\sl Maple} package {\tt coxeter} \cite{StemAZ} and the
{\sl GAP} package {\tt CHEVIE} \cite{chevAA,MichAA} to carry out the remaining
{\it finite} computations. It is interesting to observe that, for the
verification of the type $E_8$ case, it is essential to use the
decomposition numbers in the sense of \cite{KratCB,KratCF,KrMuAB} because,
otherwise, the necessary computations would not be feasible 
in reasonable time with the currently available computer facilities.
We point out that, for the special case where the aforementioned
parameter $m$ is equal to $1$, the first cyclic sieving conjecture
has been proved in a uniform fashion by Bessis and Reiner in \cite{BeReAA}. 
(See \cite{ArSTAA} for a --- non-uniform --- proof of cyclic sieving
for non-crossing partitions associated with {\it real\/} reflection groups
under the action of the so-called Kreweras map, a special case of
the second cyclic sieving phenomenon discussed in the present paper.)
The crucial result on which the proof of Bessis and Reiner 
is based is \eqref{eq:7}
below, and it plays an important role in our reduction of the
conjectures for the 26 exceptional groups to a finite problem.

Our paper is organised as follows.
In the next section, we recall the definition of generalised
non-crossing partitions for well-generated complex reflection
groups and of decomposition numbers in the sense of
\cite{KratCB,KratCF,KrMuAB}, and we review some basic facts. 
The first cyclic sieving conjecture is subsequently stated in
Section~\ref{sec:siev1}. In Section~\ref{sec:pol}, we outline an
elementary proof that the $q$-Fu\ss--Catalan number, which is
the polynomial $P$ in the cyclic sieving phenomena concerning
the generalised non-crossing partitions for well-generated 
complex reflection groups, is always a polynomial with non-negative
integer coefficients, as required by the definition of cyclic
sieving. 
(The reader is referred to the first paragraph of 
Section~\ref{sec:pol} for comments on other
approaches for establishing polynomiality with non-negative
coefficients.)
Section~\ref{sec:aux1} contains the
announced auxiliary results which, for the 26 exceptional types, 
allow a reduction of the conjecture to a finite problem. 
The remaining case-by-case verification of the conjecture is then
carried out in Section~\ref{sec:Beweis1}. 
The second cyclic sieving conjecture is stated in
Section~\ref{sec:siev2}. Section~\ref{sec:aux2} contains the
auxiliary results which, for the 26 exceptional types, 
allow a reduction of the conjecture to a finite problem, while
Section~\ref{sec:Beweis2} contains
the remaining case-by-case verification of the conjecture.

\section{Preliminaries}
\label{sec:prel}

A {\it complex reflection group} is a 
group generated by (complex) reflections in
$\mathbb C^n$. (Here, a reflection is a non-trivial element of
$GL_n(\mathbb C)$ which fixes a hyperplane pointwise and which 
has finite order.) We refer to \cite{LeTaAA} for an in-depth 
exposition of the theory complex reflection groups.

Shephard and Todd provided a complete classification of all {\it
finite} complex reflection groups in \cite{ShToAA} (see also
\cite[Ch.~8]{LeTaAA}). According to this
classification, an arbitrary complex reflection group $W$ decomposes into
a direct product of {\it irreducible} complex reflection groups, 
acting on mutually orthogonal subspaces of the complex vector space
on which $W$ is acting. Moreover, the list of irreducible complex
reflection groups consists of the infinite family of groups
$G(m,p,n)$, where $m,p,n$ are positive integers, and $34$ exceptional
groups, denoted $G_4,G_5,\dots,G_{37}$ by Shephard and Todd.

In this paper, we are only interested in finite complex
reflection groups which are {\it well-generated}. 
A complex reflection group of rank $n$ is called {\it well-generated\/} if
it is generated by $n$ reflections.\footnote{We refer to
\cite[Def.~1.29]{LeTaAA} for the precise definition of ``rank."
Roughly speaking, the rank of a complex reflection group $W$
is the minimal $n$ such that
$W$ can be realized as reflection group on $\mathbb C^n$.}
Well-generation can be equivalently characterised by a duality
property due to Orlik and Solomon \cite{OS}. Namely, a
complex reflection group of rank $n$ has two sets of distinguished integers
$d_1\leq d_2\leq \cdots \leq d_n$ and $d_1^*\geq d_2^*\geq \cdots \geq
d_n^*$, called its {\it degrees} and {\it codegrees}, respectively
(see \cite[p.~51 and Def.~10.27]{LeTaAA}). Orlik and 
Solomon observed, using case-by-case checking, that an irreducible
complex reflection group $W$ of rank $n$ is
well-generated if and only if its degrees and codegrees satisfy        
\begin{equation*}
d_i+d_i^*=d_n
\end{equation*}
for all $i=1,2,\dots,n$. The reader is referred to 
\cite[App.~D.2]{LeTaAA} for a table of
the degrees and codegrees of all irreducible complex reflection
groups. Together with the classification of 
Shephard and Todd \cite{ShToAA}, this constitutes a classification of 
well-generated complex reflection groups: 
the irreducible
well-generated complex reflection groups are 
\begin{enumerate}
\item[---]the two infinite
families $G(d,1,n)$ and $G(e,e,n)$, where $d,e,n$ are positive
integers, 
\item[---]the exceptional groups 
$G_4,G_5,G_6,G_8,G_9,G_{10},G_{14},G_{16},G_{17},G_{18},G_{20},G_{21}$ 
of\break rank~$2$,
\item[---]the exceptional groups 
$G_{23}=H_3,G_{24},G_{25},G_{26},G_{27}$ of rank $3$,
\item[---]the exceptional groups 
$G_{28}=F_4,G_{29},G_{30}=H_4,G_{32}$ of rank $4$,
\item[---]the exceptional group
$G_{33}$ of rank $5$,
\item[---]the exceptional groups 
$G_{34},G_{35}=E_6$ of rank $6$,
\item[---]the exceptional group 
$G_{36}=E_7$ of rank $7$,
\item[---]and the exceptional group 
$G_{37}=E_8$ of rank $8$.
\end{enumerate}

\noindent
In this list, we have made visible the groups
$H_3,F_4,H_4,E_6,E_7,E_8$ which appear as exceptional groups
in the classification of all
irreducible {\it real\/} reflection groups (cf.\ \cite{HumpAC}).

Let $W$ be a well-generated complex reflection group of rank $n$,
and let $T\subseteq W$ denote the set of {\it all\/} 
(complex) reflections in the group. 
Let $\ell_T:W\to\mathbb{Z}$ denote the word length in terms of the 
generators $T$. This word length is called {\it absolute length\/} or
{\it reflection length}. Furthermore,
we define a partial order $\le_T$ on $W$ by
\begin{equation} \label{eq:absord} 
u\le_T w\quad \text{if and only if}\quad
\ell_T(w)=\ell_T(u)+\ell_T(u^{-1}w).
\end{equation}
This partial order is called {\it absolute order} or {\it reflection
order}. As is well-known and easy to see, 
the equation in \eqref{eq:absord} is equivalent to the statement that every
shortest representation of $u$ by reflections
occurs as an initial segment in some shortest product representation
of $w$ by reflections. 

Now fix a (generalised) Coxeter 
element\footnote{An element of an irreducible well-generated complex
reflection group $W$ of rank $n$ is called a
{\it Coxeter element} if it is {\it regular} in the sense of
Springer \cite{SpriAA} (see also \cite[Def.~11.21]{LeTaAA}) 
and of order $d_n$. An element of $W$ is called regular if it
has an eigenvector which lies in no reflecting
hyperplane of a reflection of $W$. It follows from 
an observation of Lehrer and Springer, proved uniformly by Lehrer and 
Michel \cite{LeMiAA} (see \cite[Theorem~11.28]{LeTaAA}), that there is
always a regular element of order $d_n$
in an irreducible well-generated complex reflection group $W$ of rank $n$.
More generally, if a well-generated complex reflection group $W$
decomposes as $W\cong W_1\times W_2\times\dots\times W_k$, where the
$W_i$'s are irreducible, then a Coxeter element of $W$ is an element
of the form $c=c_1c_2\cdots c_k$, where $c_i$ is a Coxeter element of
$W_i$, $i=1,2,\dots,k$.
If $W$ is a {\it
real\/} reflection group, that is, if all generators in $T$ have order
$2$, then the notion of generalised Coxeter element given above 
reduces to that of a Coxeter element in the classical sense
(cf.\ \cite[Sec.~3.16]{HumpAC}).} $c\in W$ 
and a positive integer $m$. 
The {\it $m$-divisible non-crossing 
partitions} $NC^m(W)$ are defined as the set
\begin{multline*}
NC^m(W)=\big\{(w_0;w_1,\dots,w_m):w_0w_1\cdots w_m=c\text{ and }\\
\ell_T(w_0)+\ell_T(w_1)+\dots+\ell_T(w_m)=\ell_T(c)\big\}.
\end{multline*}
A partial order is defined on this set by 
$$
(w_0;w_1,\ldots,w_m) \leq (u_0;u_1,\ldots,u_m) \quad \text{if and only if}\quad u_i\le_T w_i \text{ for } 1\leq i\leq m.
$$
We have suppressed the dependence on $c$, since we understand this
definition up to isomorphism of posets. To be more precise,
it can be shown that any two Coxeter elements are related to each other
by conjugation and (possibly) an automorphism on the field of complex
numbers
(see \cite[Theorem~4.2]{SpriAA} or \cite[Cor.~11.25]{LeTaAA}), 
and hence the resulting posets $NC^m(W)$ are isomorphic to each other. 
If $m=1$, then $NC^1(W)$ can be identified with the set
$NC(W)$ of 
non-crossing partitions for the (complex) reflection group $W$ as defined
by Bessis and Corran (cf.\ \cite{BeCoAA} and
\cite[Sec.~13]{BesDAB}; their definition 
extends the earlier definition by Bessis
\cite{BesDAA} and Brady and Watt \cite{BRWaAA} for real reflection
groups).

The following result has been proved by a collaborative
effort of several authors (see \cite[Prop.~13.1]{BesDAB}).

\begin{theorem} \label{thm:2}
Let $W$ be an irreducible well-generated complex 
reflection group, and let
$d_1\le d_2\le\dots\le d_n$ be its degrees and $h:=d_n$ its Coxeter number. 
Then
\begin{equation} \label{eq:F-C}
\vert NC^m(W)\vert=\prod_{i=1}^n \frac {mh+d_i} {d_i}.
\end{equation}
\end{theorem}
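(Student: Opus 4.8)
The formula \eqref{eq:F-C} is the Fu\ss--Catalan number of $W$, and the natural strategy is to reduce the statement to the case $m=1$ (the Catalan number of $W$) together with a ``multichain'' enumeration that introduces the parameter $m$. I would proceed in two stages. First, I would establish the product formula for $\vert NC(W)\vert=\vert NC^1(W)\vert$, i.e.\ the identity $\vert NC(W)\vert=\prod_{i=1}^n\frac{h+d_i}{d_i}$. For the real reflection groups this is the well-known Catalan-number formula, proved (case-by-case, together with a conceptual framework) by Bessis \cite{BesDAA} and Brady--Watt \cite{BRWaAA}; for the genuinely complex well-generated groups it follows from the Bessis--Corran theory \cite{BeCoAA} and \cite[Sec.~13]{BesDAB}, where the lattice $NC(W)$ is shown to be a well-behaved object whose cardinality matches the product of the ``Fu\ss'' factors at $m=1$. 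I would cite these results rather than reprove them, since \cite[Prop.~13.1]{BesDAB} is explicitly the collaborative statement being quoted.

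**Passing from $m=1$ to general $m$.** The heart of the argument is to show that $\vert NC^m(W)\vert$ is obtained from $\vert NC(W)\vert$ by the substitution $h\mapsto mh$ in the numerators. The standard device is to identify $NC^m(W)$ with the set of \emph{$m$-multichains} in the poset $NC(W)$: a tuple $(w_0;w_1,\dots,w_m)$ with $w_0w_1\cdots w_m=c$ and additive reflection lengths corresponds bijectively to a weak chain $\delta_1\le_T\delta_2\le_T\cdots\le_T\delta_m$ in $NC(W)$ via $\delta_j=w_0w_1\cdots w_{j-1}$ (equivalently, partial products of the $w_i$), the point being that the multiplicativity of $\ell_T$ along $c$ makes each partial product lie below $c$ and the successive quotients are exactly the $w_i$. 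Hence $\vert NC^m(W)\vert$ equals the number of $m$-multichains in $NC(W)$, which is the ``zeta polynomial'' of the poset evaluated at $m+1$. One then needs the fact that this zeta polynomial, for $NC(W)$, is the polynomial $\prod_{i=1}^n\frac{mh+d_i}{d_i}$ in $m$; by Reiner's work and the Bessis--Corran structure theory this zeta polynomial is known, and in particular its leading behavior and its value at $m=1$ pin it down once one also knows it has the predicted degree and rational-root structure. The cleanest route is to invoke the already-established enumeration of multichains in $NC(W)$ in the literature (Armstrong \cite{ArmDAA} for real type, and the complex case via \cite[Sec.~13]{BesDAB} and \cite{BeCoAA}), which gives precisely \eqref{eq:F-C}.

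**Reduction to the irreducible / exceptional cases.** Since the hypothesis is that $W$ is irreducible well-generated, by the Shephard--Todd classification recalled above $W$ is one of $G(d,1,n)$, $G(e,e,n)$, or one of the listed exceptional groups. For the two infinite families the multichain count in $NC(W)$ has been carried out explicitly (in the combinatorial models of $m$-divisible non-crossing partitions of Edelman-type), yielding the product over the degrees of $G(d,1,n)$ resp.\ $G(e,e,n)$; for the finitely many exceptional groups one checks the $m=1$ value and the degree of the zeta polynomial directly and matches it against $\prod\frac{mh+d_i}{d_i}$. Assembling these cases gives the theorem in full generality.

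**Main obstacle.** The genuine difficulty — and the reason the statement is attributed to ``a collaborative effort of several authors'' — is that there is currently no uniform, case-free proof even of the $m=1$ Catalan formula $\vert NC(W)\vert=\prod\frac{h+d_i}{d_i}$ for well-generated complex reflection groups; one must rely on case-by-case verification for the exceptional Shephard--Todd groups, and the complex (non-real) cases require the comparatively recent Bessis--Corran lattice \cite{BeCoAA} to even make sense of $NC(W)$ as a graded lattice with a sensible rank function. So the crux is not the $m=1\to m$ bookkeeping (which is the formal multichain identity) but importing the base case, and I would organize the write-up to quote \cite[Prop.~13.1]{BesDAB} for that base case and spend the remaining effort only on the multichain reformulation that produces the $mh$ in the numerator.
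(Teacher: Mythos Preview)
The paper does not give its own proof of this theorem; it states it as a known result and simply cites \cite[Prop.~13.1]{BesDAB}, remarking that it ``has been proved by a collaborative effort of several authors.'' So there is nothing to compare your argument against beyond that citation.

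Your sketch is a fair summary of how the result is assembled in the literature: the bijection between $NC^m(W)$ and $m$-multichains in $NC(W)$ via partial products $\delta_j=w_0\cdots w_{j-1}$ is correct and reduces the claim to the computation of the zeta polynomial of $NC(W)$, and for the base cases one indeed falls back on case-by-case work (Armstrong \cite{ArmDAA} for real types, Bessis--Corran \cite{BeCoAA} and Bessis \cite{BesDAB} for the complex well-generated groups). One caution: your remark that ``its leading behavior and its value at $m=1$ pin it down'' is not sufficient on its own, since a degree-$n$ polynomial is not determined by two conditions; you correctly retreat to citing the established multichain enumeration, which is what the paper also does by reference. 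In short, your proposal and the paper agree: this theorem is quoted, not proved, here.
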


\begin{remark} \label{rem:0}
(1)
The number in \eqref{eq:F-C} is called the {\it Fu\ss--Catalan number}
for the reflection group $W$.

\smallskip
(2) If $c$ is a Coxeter element of a well-generated complex
reflection group $W$ of rank $n$, then $\ell_T(c)=n$.
(This follows from \cite[Sec.~7]{BesDAB}.)
\end{remark}

We conclude this section by recalling the definition of decomposition
numbers from \cite{KratCB,KratCF,KrMuAB}. Although we need them here
only for (very small) real reflection groups, and although, strictly
speaking, they have been only defined for real reflection groups in
\cite{KratCB,KratCF,KrMuAB}, this definition can be extended to
well-generated complex reflection groups without any extra effort, 
which we do now.

Given a well-generated complex reflection group $W$ of rank $n$, types
$T_1,T_2,\dots,T_d$ (in the sense of the classification of well-generated 
complex reflection groups) such that the sum of the ranks of the
$T_i$'s equals $n$, and a Coxeter element $c$,
the {\it decomposition number} $N_W(T_1,T_2,\dots,T_d)$ is defined
as the number of ``minimal" factorisations $c=c_1c_2\cdots c_d$,
``minimal" meaning that 
$\ell_T(c_1)+\ell_T(c_2)+\dots+\ell_T(c_d)=\ell_T(c)=n$, 
such that, for $i=1,2,\dots,d$, the
type of $c_i$ as a parabolic Coxeter element is $T_i$.
(Here, the term ``parabolic Coxeter element" means a Coxeter element
in some parabolic subgroup. It follows from \cite[Prop.6.3]{RipoAA}
that any element $c_i$ is indeed a Coxeter element in a 
unique parabolic subgroup
of $W$.\footnote{The uniqueness can be argued as follows: suppose that
$c_i$ were a Coxeter element in two parabolic subgroups of $W$,
say $U_1$ and $U_2$. Then it must also be a Coxeter element in the
intersection $U_1\cap U_2$. On the other hand, the absolute length
of a Coxeter element of a complex reflection group $U$ 
is always equal to $\rk(U)$, the rank of $U$.
(This follows from the fact that, for each element
$u$ of $U$, we have
$\ell_T(u)=\text{codim}\big(\text{ker}(u-\text{id})\big)$, with id denoting
the identity element in $U$; see e.g.\ \cite[Prop.~1.3]{RipoAA}).
We conclude that $\ell_T(c_i)=\rk(U_1)=\rk(U_2)=\rk(U_1\cap U_2)$,
This implies that $U_1=U_2$.} 
By definition, the type of $c_i$ is the type of this
parabolic subgroup.) Since any two Coxeter elements are related 
to each other by
conjugation plus field automorphism, the decomposition numbers are independent
of the choice of the Coxeter element $c$.

The decomposition numbers for real reflection groups have been
computed in \cite{KratCB,KratCF,KrMuAB}. To compute the decomposition
numbers for well-generated
complex reflection groups is a task that remains to be done.

\section{Cyclic sieving I}
\label{sec:siev1}

In this section we present the first cyclic sieving conjecture due to
Armstrong \cite[Conj.~5.4.7]{ArmDAA}, and to Bessis and Reiner
\cite[Conj.~6.4]{BeReAA}.

Let $\phi:NC^m(W)\to NC^m(W)$ be the map defined by
\begin{equation} \label{eq:phi}
(w_0;w_1,\dots,w_m)\mapsto
\big((cw_mc^{-1})w_0(cw_mc^{-1})^{-1};
cw_mc^{-1},w_1,w_2,\dots,w_{m-1}\big).
\end{equation}
It is indeed not difficult to see that, if the $(m+1)$-tuple
on the left-hand side is an element of $NC^m(W)$, then so is
the $(m+1)$-tuple on the right-hand side.
For $m=1$, this action reduces to conjugation by the Coxeter element
$c$ (applied to $w_1$). Cyclic sieving arising from conjugation by $c$ 
has been the subject of \cite{BeReAA}.

It is easy to see that $\phi^{mh}$ acts as the identity,
where $h$ is the Coxeter number of $W$ (see \eqref{eq:Aktion}
and Lemma~\ref{lem:4} below).
By slight abuse of notation, let $C_1$ be the cyclic group of order $mh$
generated by $\phi$. (The slight abuse consists in the fact that we
insist on $C_1$ to be a cyclic group of order $mh$, while it may
happen that the order of the action of $\phi$ given in \eqref{eq:phi}
is actually a proper divisor of $mh$.)

Given these definitions, we are now in the position to state
the first cyclic sieving conjecture of
Armstrong, respectively of Bessis and Reiner.
By the results of \cite{KratCG} and of this paper, it becomes the
following theorem.

\begin{theorem} \label{thm:1}
For an irreducible well-generated complex reflection group $W$ and any $m\ge1$, 
the triple $(NC^m(W),\Cat^m(W;q),C_1)$, where $\Cat^m(W;q)$ is
the $q$-analogue of the Fu\ss--Catalan number defined by 
\begin{equation} \label{eq:FCZahl}
\Cat^m(W;q):=\prod_{i=1}^n \frac {[mh+d_i]_q} {[d_i]_q},
\end{equation}
exhibits the cyclic sieving phenomenon in the sense of
Reiner, Stanton and White \cite{ReSWAA}.
Here, $n$ is the rank of $W$, $d_1,d_2,\dots,d_n$ are the
degrees of $W$, $h$ is the Coxeter number of $W$, 
and $[\alpha]_q:=(1-q^\alpha)/(1-q)$.
\end{theorem}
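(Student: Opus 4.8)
The plan is to reduce the cyclic sieving assertion, which concerns all $m\ge1$ simultaneously, to a finite computation for each of the $26$ exceptional groups, and then to carry out that computation. First I would make the content of the statement explicit: by the definition of cyclic sieving recalled in the introduction, the triple $(NC^m(W),\Cat^m(W;q),C_1)$ exhibits the phenomenon if and only if, for every $k\in\Z$,
\begin{equation*}
\bigl\vert\Fix\bigl(\phi^k\bigr)\bigr\vert=\Cat^m(W;q)\Big\vert_{q=e^{2\pi ik/(mh)}},
\end{equation*}
where $\phi$ is the map in \eqref{eq:phi}. Since $\phi^{mh}=\mathrm{id}$, only the divisors $p$ of $mh$ matter, and for each such $p$ one must count the elements of $NC^m(W)$ fixed by $\phi^{mh/p}$ and compare with the appropriate evaluation of the $q$-Fu\ss--Catalan number at a primitive $p$-th root of unity.

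The main obstacle is that $m$ is unbounded, so a priori there are infinitely many cases. The heart of the argument — which I expect to be the bulk of Section~\ref{sec:aux1} — is a collection of auxiliary lemmas that tame this. The strategy I would pursue is: (i) give a product/closed form for the right-hand side $\Cat^m(W;q)$ evaluated at roots of unity, using the standard fact that $[mh+d_i]_q/[d_i]_q$ at a $p$-th root of unity is either $0$, or a ratio that can be read off from divisibility of $mh+d_i$ and $d_i$ by $p$; this expresses the RHS as an explicit product of smaller Fu\ss--Catalan-type numbers (or $0$) depending only on the residue of $m$ modulo a bounded quantity and on $\gcd$-type data. (ii) On the combinatorial side, analyse $\Fix(\phi^{mh/p})$: an element $(w_0;w_1,\dots,w_m)$ fixed by a power of $\phi$ has a block-periodicity in its components $w_1,\dots,w_m$, so the fixed points are governed by factorisations of $c$ (or of a power of a Coxeter element in a parabolic subgroup / centralizer) into a bounded number of factors. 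This is where the decomposition numbers $N_W(T_1,\dots,T_d)$ enter, and where the result of Bessis--Reiner for $m=1$ (equation~\eqref{eq:7}, referenced in the introduction) is used as the seed. The upshot of these lemmas should be that $\vert\Fix(\phi^{mh/p})\vert$ likewise depends only on bounded residue data of $m$, and in fact is itself a product of Fu\ss--Catalan numbers of parabolic subgroups, or a count expressible through decomposition numbers.

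Once both sides are shown to depend only on finitely much data, what remains for each of the $26$ groups is a finite check: verify the identity $\vert\Fix(\phi^{mh/p})\vert=\Cat^m(W;q)\vert_{q=\zeta_p}$ for the finitely many relevant pairs $(p,m\bmod N)$. I would do this with Stembridge's \texttt{coxeter} package and the \texttt{CHEVIE} package: enumerate the reflections $T$, fix a Coxeter element $c$, compute absolute lengths and the partial order $\le_T$, generate $NC^m(W)$ (or the relevant parabolic data) for the needed small values of $m$, and tabulate fixed-point counts of the explicit map $\phi$. For $E_8$ a direct enumeration of $NC^m(E_8)$ is infeasible, so there one must instead assemble $\vert\Fix(\phi^{mh/p})\vert$ from the decomposition numbers $N_W(T_1,\dots,T_d)$ computed in \cite{KratCB,KratCF,KrMuAB}, which is precisely why those are invoked. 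The infinite families $G(d,1,n)$ and $G(e,e,n)$ are already handled in \cite{KratCG}, so combining that with the case-by-case exceptional verification completes the proof of Theorem~\ref{thm:1}.
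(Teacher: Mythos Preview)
Your overall strategy matches the paper's: reduce to divisors of $mh$, use Springer's centralizer theorem together with \eqref{eq:7} to handle many cases uniformly and inductively (Lemmas~\ref{lem:2}--\ref{lem:7}), and then do a finite computer-assisted verification per exceptional group, with decomposition numbers for $E_8$.

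One point needs sharpening, though, because it affects how the ``finite check'' is actually organised. You write that both sides ``depend only on bounded residue data of $m$''. They do not: for a fixed primitive $d$-th root of unity $\zeta$, the evaluation $\Cat^m(W;\zeta)$ is a polynomial in $m$ (by \eqref{eq:limit}), and likewise $\vert\Fix(\phi^{p})\vert$ is a polynomial in $m$ once the shape parameters $(m_2,h_2)$ in the decomposition $p=m_1h_1$, $m=m_1m_2$, $h=h_1h_2$ are fixed. The reduction to a finite problem works because there are only finitely many $(m_2,h_2)$ to consider (Remark~\ref{rem:1}), and for each of these one establishes a \emph{polynomial} identity in $m$. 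Correspondingly, the paper does not ``generate $NC^m(W)$ for the needed small values of $m$'' and count fixed points there; instead, for each $(m_2,h_2)$ it extracts from \eqref{eq:Aktion} a concrete system of equations in $W$ (independent of $m$), solves that system once with {\tt coxeter}/{\tt CHEVIE}, and then the count for all $m$ follows by multiplying the number of solutions by the number of admissible position-choices, e.g.\ $m/m_2$ or $\binom{m/m_2}{2}$. Your plan would still work in principle (evaluate both polynomials at enough values of $m$), but the paper's formulation is what makes the computation tractable, particularly in the higher-rank cases.
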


\begin{remark}
We write $\Cat^m(W)$ for $\Cat^m(W;1)$.
\end{remark}

By definition of the cyclic sieving phenomenon, we have to
prove that $\Cat^m(W;q)$ is a polynomial in $q$ with non-negative
integer coefficients, and that
\begin{equation} \label{eq:1}
\vert\Fix_{NC^m(W)}(\phi^{p})\vert = 
\Cat^m(W;q)\big\vert_{q=e^{2\pi i p/mh}}, 
\end{equation}
for all $p$ in the range $0\le p<mh$.
The first fact is established in the next section, while the 
proof of the second is achieved by making use of 
several auxiliary results, given
in Section~\ref{sec:aux1}, to reduce the proof to a finite problem,
and a subsequent case-by-case analysis,
which occupies Section~\ref{sec:Beweis1}.

\section{The $q$-Fusz--Catalan numbers $\Cat^m(W;q)$}
\label{sec:pol}

The purpose of this section is to provide an elementary, self-contained
proof of the fact that, for all irreducible complex reflection groups
$W$, the $q$-Fu\ss--Catalan number $\Cat^m(W;q)$
is a polynomial in $q$ with non-negative integer coefficients.
For most of the groups, this is a known property. However, aside from the
fact that, for many of the known cases, the proof is very indirect and uses
deep algebraic results on rational Cherednik algebras, there still remained
some cases where this property had not been formally established.
The reader is referred to the ``Theorem" in Section~1.6 of \cite{GoGrAA},
which says that, under the assumption of a certain rank condition
(\cite[Hypothesis~2.4]{GoGrAA}), the $q$-Fu\ss--Catalan number
$\Cat^m(W;q)$ is a Hilbert series of a 
finite-dimensional quotient of the ring of invariants
of $W$ and also the graded character of a finite-dimensional irreducible
representation of a spherical rational Cherednik algebra associated with
$W$. At present, this rank condition has been proven
for all irreducible well-generated complex reflection groups apart
from $G_{17},G_{18},G_{29},G_{33},G_{34}$; see \cite[Tables~8 and 9, 
column~``rank"]{MaMiAA}, and the recent paper \cite{MariAA},
which establishes the result in the case of $G_{32}$.

In the sequel, aside from the standard notation
$[\al]_q=(1-q^\al)/(1-q)$ for $q$-integers, we shall also use the
$q$-binomial coefficient, which is defined by
$$
\begin{bmatrix} n\\k\end{bmatrix}_q:=\begin{cases} 1,&\text{if $k=0$,}\\
\frac {[n]_q\,[n-1]_q\cdots [n-k+1]_q} {[k]_q\,[k-1]_q\cdots [1]_q},
&\text{if $k>0$.}\end{cases}
$$

We begin with several auxiliary results.

\begin{proposition} \label{prop:1}
For all non-negative integers $n$ and $k$,
the $q$-binomial coefficient $\left[\begin{smallmatrix}
n\\k \end{smallmatrix}\right]_q$
is a polynomial in $q$ with non-negative
integer coefficients.
\end{proposition}

\begin{proof}This is a well-known fact, which can be derived either
from the recurrence relation(s) satisfied by the $q$-binomial
coefficients  (generalising Pascal's recurrence
relation for binomial coefficients;
cf.\ \cite[eqs.~(3.3.3) and (3.3.4)]{AndrAF}), 
or from the fact that the
$q$-binomial coefficient $\left[\begin{smallmatrix}
n\\k \end{smallmatrix}\right]_q$ is the generating function for
(integer) partitions with at most $k$ parts all of which are at
most $n-k$ (cf.\ \cite[Theorem~3.1]{AndrAF}).
\end{proof}

\begin{proposition} \label{prop:2}
For all non-negative integers $m$ and $n$, the $q$-Fu\ss--Catalan
number of type $A_n$,
$$
\frac {1} {[(m+1)n+1]_q}\begin{bmatrix} (m+1)n+1\\n\end{bmatrix}_q,
$$
is a polynomial in $q$ with non-negative
integer coefficients.
\end{proposition}

\begin{proof}
In \cite[Sec.~3.3]{LoehAA}, Loehr proves that
\begin{multline} \label{eq:Loehr}
\frac {1} {[(m+1)n+1]_q}\begin{bmatrix} (m+1)n+1\\n\end{bmatrix}_q\\
=\sum_{v\in\mathcal V_n^{(m)}}
q^{m\binom n2+\sum_{i\ge0}\left(m\binom {v_i}2-iv_i\right)}
\prod_{i\ge1}q^{v_i\sum_{j=1}^m (m-j)v_{i-j}}
\begin{bmatrix} v_i+v_{i-1}+\dots+v_{i-m}-1\\v_i\end{bmatrix}_q,
\end{multline}
where $\mathcal V_n^{(m)}$ denotes the set of all sequences
$v=(v_0,v_1,\dots,v_s)$ (for some $s$) of non-negative integers with
$v_0>0$, $v_s>0$, and $v_0+v_1+\dots+v_s=n$, and such that there is
never a string of $m$ or more
consecutive zeroes in $v$. By convention, $v_i = 0$ for all negative
$i$. His proof works by showing that the expressions on both sides of
\eqref{eq:Loehr} satisfy the same recurrence relation and initial
conditions, using classical $q$-binomial identities. We refer the
reader to \cite{LoehAA} for details. By Proposition~\ref{prop:1},
the expression on the right-hand side of \eqref{eq:Loehr} is
manifestly a polynomial in $q$ with non-negative integer coefficients.
\end{proof}

\begin{lemma} \label{lem:A}
If $a$ and $b$ are coprime positive integers, then
\begin{equation} \label{eq:ab} 
\frac {[ab]_q} {[a]_q\,[b]_q}
\end{equation}
is a polynomial in $q$ of degree $(a-1)(b-1)$, 
all of whose coefficients are in $\{0,1,-1\}$.
Moreover, if one disregards the coefficients which are $0$,
then $+1$'s and $(-1)$'s alternate, and the constant coefficient
as well as the leading coefficient of the polynomial equal $+1$.
\end{lemma}

\begin{proof}
Let $\Phi_n(q)$ denote the $n$-th cyclotomic polynomial in $q$.
Using the classical formula
$$1-q^n=
\prod _{d\mid n} ^{}\Phi_d(q),$$
we see that
$$
\frac {(1-q)(1-q^{ab})} {(1-q^a)(1-q^b)}=
\underset{{d_2\mid a,\,d_2\ne1}}{\prod _{d_1\mid a,\,d_1\ne1} ^{}}
\Phi_{d_1d_2}(q),
$$
so that, manifestly, the expression in \eqref{eq:ab} is a polynomial
in $q$. The claim concerning the degree of this polynomial is obvious.

In order to establish the claim on the coefficients,
we start with a sub-expression of \eqref{eq:ab},
\begin{equation} \label{eq:ab0}
\frac {(1-q^{ab})} {(1-q^a)(1-q^b)}=
\bigg(\sum_{i=0}^{b-1}q^{ia}\bigg)
\bigg(\sum_{j=0}^{\infty}q^{jb}\bigg)=
\sum_{k=0}^{\infty}C_kq^k,
\end{equation}
say. The assumption that $a$ and $b$ are coprime
implies that $0\le C_k\le1$ for  $k\le (a-1)(b-1)$.
Multiplying both sides of \eqref{eq:ab0} by $1-q$, we obtain
the equation
\begin{equation} \label{eq:ab1} 
\frac {[ab]_q} {[a]_q\,[b]_q}
=(1-q)\sum_{k=0}^{(a-1)(b-1)}C_kq^k
+(1-q)\sum_{k=(a-1)(b-1)+1}^{\infty}C_kq^k.
\end{equation}
By our previous observation on the coefficients $C_k$ with
$k\le(a-1)(b-1)$, it is obvious that the coefficients of the first
expression on the right-hand side of \eqref{eq:ab1} are alternately
$+1$ and $-1$, when $0$'s are disregarded. Since we already know
that the left-hand side is a polynomial in $q$ of degree $(a-1)(b-1)$, 
we may ignore the second expression.

The proof is concluded by observing that the claims on the constant
and leading coefficients are obvious.
\end{proof}

\begin{corollary} \label{cor:A}
Let $a$ and $b$ be coprime positive integers, and let $\ga$ be an integer
with $\ga\ge (a-1)(b-1)$. Then the expression
$$
\frac {[\ga]_q\,[ab]_q} {[a]_q\,[b]_q}
$$
is a polynomial in $q$ with non-negative
integer coefficients.
\end{corollary}

\begin{proof}Let
$$
\frac {[ab]_q} {[a]_q\,[b]_q}
=\sum_{k=0}^{(a-1)(b-1)}D_kq^k.
$$
We then have
\begin{equation} \label{eq:abc} 
\frac {[\ga]_q\,[ab]_q} {[a]_q\,[b]_q}
=\sum_{N=0}^{(a-1)(b-1)+\ga-1}q^N\sum_{k=\max\{0,N-\ga+1\}}^{N}D_k.
\end{equation}
If $N\le \ga-1$, then, by Lemma~\ref{lem:A}, 
the sum over $k$ on the right-hand side of \eqref{eq:abc} equals
$1-1+1-1+\cdots$, which is manifestly non-negative. 
On the other hand, if $N> \ga-1$, then we may rewrite
the sum over $k$ on the right-hand side of \eqref{eq:abc} as
$$
\sum_{k=\max\{0,N-\ga+1\}}^{N}D_k=
\sum_{k=N-\ga+1}^{(a-1)(b-1)}D_k=
\sum_{k=0}^{(a-1)(b-1)+\ga-1-N}D_{(a-1)(b-1)-k}.
$$
Again, by Lemma~\ref{lem:A}, 
this sum equals
$1-1+1-1+\cdots$, which is manifestly non-negative. 
\end{proof}

\begin{lemma} \label{lem:B}
Let $\al$ and $\be$ be positive integers with $\al\ge 6$ and $\be\ge
8$. Then the expression
$$
\left[\al\right]_{q^3}\left[\be\right]_{q^4}
\frac {\left[72\right]_q\left[3\right]_q\left[4\right]_q} 
{\left[8\right]_q\left[9\right]_q\left[12\right]_q}
$$
is a polynomial in $q$ with non-negative
integer coefficients.
\end{lemma}

\begin{proof}
We have
\begin{multline*}
\frac {\left[72\right]_q\left[3\right]_q\left[4\right]_q} 
{\left[8\right]_q\left[9\right]_q\left[12\right]_q}\\
=(1
-q^3
+q^9
-q^{15}
+q^{18}
)
(1
-q^4
+q^8
-q^{12}
+q^
   {16}
-q^{20}
+q^{24}
-q^{28}
+q^{32}
).
\end{multline*}
It should be observed that both factors on the right-hand side
have the property that coefficients are in $\{0,1,-1\}$ and that
$(+1)$'s and $(-1)$'s alternate, if one disregards the coefficients
which are $0$.
If we now apply the same idea as in the proof of Corollary~\ref{cor:A},
then we see that $[\al]_{q^3}$ times the first factor 
is a polynomial in $q$ with
non-negative integer coefficients, as is
$[\be]_{q^4}$ times the second factor.
Taken together, this establishes
the claim.
\end{proof}

\begin{lemma} \label{lem:G}
Let $\al$ and $\be$ be positive integers with $\al\ge 26$ and $\be\ge
8$. Then the expression
$$
\left[\al\right]_{q}\left[\be\right]_{q^4}
\frac {\left[15\right]_q}
{\left[3\right]_q\left[5\right]_q}
\frac {\left[72\right]_q\left[3\right]_q\left[4\right]_q} 
{\left[8\right]_q\left[9\right]_q\left[12\right]_q}
$$
is a polynomial in $q$ with non-negative
integer coefficients.
\end{lemma}

\begin{proof}
We have
\begin{align*}
&\frac {\left[15\right]_q\left[72\right]_q\left[4\right]_q} 
{\left[5\right]_q\left[8\right]_q\left[9\right]_q\left[12\right]_q}\\
&\kern.5cm
=(
1 - q + q^5 - q^6 + q^9 - q^{11} + q^{12} - q^{13} + q^{14} 
- q^{15} + q^{17} -
q^{20} + q^{21} - q^{25} + q^{26}
)
\\
&\kern2cm
\times
(1
-q^4
+q^8
-q^{12}
+q^
   {16}
-q^{20}
+q^{24}
-q^{28}
+q^{32}
).
\end{align*}
Again,
if we apply the same idea as in the proof of Corollary~\ref{cor:A},
then we see that $[\al]_{q}$ times the first factor on the the
right-hand side of the above equation is a polynomial in $q$ with
non-negative integer coefficients, as is
$[\be]_{q^4}$ times the second factor.
Taken together, this establishes
the claim.
\end{proof}

\begin{lemma} \label{lem:C}
Let $\al$ and $\be$ be positive integers with $\al\ge 18$ and $\be\ge
3$. Then the expression
$$
\left[\al\right]_{q^3}\left[\be\right]_{q^4}
\frac {\left[90\right]_q\left[3\right]_q\left[4\right]_q} 
{\left[5\right]_q\left[6\right]_q\left[9\right]_q}
$$
is a polynomial in $q$ with non-negative
integer coefficients.
\end{lemma}

\begin{proof}
We have
\begin{align*}
&\frac {\left[90\right]_q\left[3\right]_q\left[4\right]_q} 
{\left[5\right]_q\left[6\right]_q\left[9\right]_q}\\
&\kern1cm
=
(1
-q^3
+q^9
-q^{12}
+q
   ^{18}
-q^{21}
+q^{27}
-q^{33}
+q^
   {36}
-q^{42}
+q^{45}
-q^{51}
+q^{54}
)\\
&\kern2cm
\times
(1
-q^4
+
   q^5
+q^6
-q^9
+q^{11}
+q^{12}
-q^
   {14}
+q^{17}
+q^{18}
-q^{19}
+q^{23}
)\\
&\kern1cm
=
(1
-q^3
+q^9
-q^{12}
+q
   ^{18}
-q^{21}
+q^{27}
-q^{33}
+q^
   {36}
-q^{42}
+q^{45}
-q^{51}
+q^{54}
)\\
&\kern2cm
\times
\big(1
-q^4
+q^{12}
+q^5(1-q^4+q^{12})
+q^6(1-q^8+q^{12})
+q^{11}(1-q^8+q^{12})
\big).
\end{align*}
Again,
if we apply the same idea as in the proof of Corollary~\ref{cor:A},
then we see that $[\al]_{q^3}$ times the first factor on the the
right-hand side of the above equation is a polynomial in $q$ with
non-negative integer coefficients, as is
$[\be]_{q^4}$ times the second factor.
Taken together, this establishes
the claim.
\end{proof}

\begin{lemma} \label{lem:F}
Let $\al$ and $\be$ be positive integers with $\al\ge 20$ and $\be\ge
18$. Then the expression
$$
\left[\al\right]_{q}\left[\be\right]_{q^3}
\frac {\left[90\right]_q\left[3\right]_q}
{\left[5\right]_q\left[6\right]_q\left[9\right]_q}
$$
is a polynomial in $q$ with non-negative
integer coefficients.
\end{lemma}

\begin{proof}
We have
\begin{align*}
&\frac {\left[90\right]_q\left[3\right]_q}
{\left[5\right]_q\left[6\right]_q\left[9\right]_q}\\
&\kern1cm
=
(1
-q^3
+q^9
-q^{12}
+q
   ^{18}
-q^{21}
+q^{27}
-q^{33}
+q^
   {36}
-q^{42}
+q^{45}
-q^{51}
+q^{54}
)\\
&\kern2cm
\times
(1
-q
+q^5
-q^7
+q^
   {10}
-q^{13}
+q^{15}
-q^{19}
+q^{20}
)
.
\end{align*}
Again,
if we apply the same idea as in the proof of Corollary~\ref{cor:A},
then we see that $[\al]_{q^3}$ times the second factor on the the
right-hand side of the above equation is a polynomial in $q$ with
non-negative integer coefficients, as is
$[\be]_{q^3}$ times the first factor.
Taken together, this establishes
the claim.
\end{proof}

\begin{lemma} \label{lem:D}
Let $\al$ be a positive integer with $\al\ge 26$.
Then the expression
$$
\left[\al\right]_{q}
\frac {\left[15\right]_q}
{\left[3\right]_q\left[5\right]_q}
\frac {\left[12\right]_{q^3}}
{\left[3\right]_{q^3}\left[4\right]_{q^3}}
$$
is a polynomial in $q$ with non-negative
integer coefficients.
\end{lemma}

\begin{proof}
We have
\begin{multline*}
\frac {\left[15\right]_q}
{\left[3\right]_q\left[5\right]_q}
\frac {\left[12\right]_{q^3}}
{\left[3\right]_{q^3}\left[4\right]_{q^3}}\\
=1
-q
+q^5
-q^6
+q^9
-q^{11}
+q
   ^{12}
-q^{13}
+q^{14}
-q^{15}
+q^
   {17}
-q^{20}
+q^{21}
-q^{25}
+q^{26}
.
\end{multline*}
Once again, the coefficients of the polynomial on the right-hand side 
are in $\{0,1,-1\}$ and 
$(+1)$'s and $(-1)$'s alternate, if one disregards the coefficients
which are $0$.
The argument from the proof of Corollary~\ref{cor:A}
then implies that this
is a polynomial in $q$ with
non-negative integer coefficients.
\end{proof}

\begin{lemma} \label{lem:E}
Let $\al$ be a positive integer with $\al\ge 14$.
Then the expression
$$
\left[\al\right]_{q}
\frac {\left[15\right]_q}
{\left[3\right]_q\left[5\right]_q}
\frac {\left[6\right]_{q^3}}
{\left[2\right]_{q^3}\left[3\right]_{q^3}}
$$
is a polynomial in $q$ with non-negative
integer coefficients.
\end{lemma}

\begin{proof}
We have
$$
\frac {\left[15\right]_q}
{\left[3\right]_q\left[5\right]_q}
\frac {\left[6\right]_{q^3}}
{\left[2\right]_{q^3}\left[3\right]_{q^3}}
=1
-q
+q^5
-q^7
+q^9
-q^{13}
+q^{14}
.
$$
Repeating the arguments of the previous proof, this establishes
the claim.
\end{proof}

\begin{lemma} \label{lem:H}
Let $\al$ and $\be$ be positive integers with $\al\ge 30$ and $\be\ge
20$. Then the expression
$$
\left[\al\right]_{q}\left[\be\right]_{q^2}
\frac {\left[84\right]_q\left[2\right]_q}
{\left[4\right]_q\left[6\right]_q\left[7\right]_q}
$$
is a polynomial in $q$ with non-negative
integer coefficients.
\end{lemma}

\begin{proof}
We have
\begin{align*}
&
\frac {\left[84\right]_q\left[2\right]_q}
{\left[4\right]_q\left[6\right]_q\left[7\right]_q}\\
&\kern1cm
=
(
1 - q + q^6 - q^8 + q^{12} - q^{15} + q^{18} - q^{22} 
+ q^{24} - q^{29} + q^{30}
)\\
&\kern2cm
\times
(
1 - q^2 + q^4 - q^6 + q^8 - q^{10} + q^{12} 
- q^{14} + q^{16} - q^{18} + q^{20} -
q^{22} \\
&\kern3cm
+ q^{24} - q^{26} + q^{28} - q^{30} 
+ q^{32} - q^{34} + q^{36} - q^{38} + q^{40}
).
\end{align*}
Once again,
if we apply the same idea as in the proof of Corollary~\ref{cor:A},
then we see that $[\al]_{q}$ times the first factor on the the
right-hand side of the above equation is a polynomial in $q$ with
non-negative integer coefficients, as is
$[\be]_{q^2}$ times the second factor.
Taken together, this establishes
the claim.
\end{proof}

\begin{lemma} \label{lem:I}
Let $\al$ and $\be$ be positive integers with $\al\ge 24$ and $\be\ge
68$. Then the expression
$$
\left[\al\right]_{q}\left[\be\right]_{q}
\frac {\left[105\right]_q}
{\left[3\right]_q\left[5\right]_q\left[7\right]_q}
$$
is a polynomial in $q$ with non-negative
integer coefficients.
\end{lemma}

\begin{proof}
We have
\begin{align*}
&
\frac {\left[105\right]_q}
{\left[3\right]_q\left[5\right]_q\left[7\right]_q}\\
&\kern1cm
=
(
1 - q + q^5 - q^6 + q^7 - q^8 + q^{10} - q^{11} 
+ q^{12} - q^{13} + q^{14} - 
q^{16}\\
&\kern2.7cm
 + q^{17} - q^{18} + q^{19} - q^{23} + q^{24}
)\\
&\kern2cm
\times
(
1 - q + q^3 - q^4 + q^6 - q^7 + q^9 - q^{10} + q^{12} - q^{13} + q^{15} - 
q^{16} + q^{18} - q^{19}\\
&\kern2.7cm
 + q^{21} - q^{22} + q^{24} - q^{25} + q^{27} - q^{28} + q^{30} -
q^{31} + q^{33} - q^{34} + q^{35} - q^{37}\\
&\kern2.7cm
 + q^{38} - q^{40} + q^{41} - q^{43} + q^{44} -
q^{46} + q^{47} - q^{49} + q^{50} - q^{52} + q^{53} - q^{55}\\
&\kern2.7cm
 + q^{56} - q^{58} + q^{59} -
q^{61} + q^{62} - q^{64} + q^{65} - q^{67} + q^{68}
).
\end{align*}
Once again,
if we apply the same idea as in the proof of Corollary~\ref{cor:A},
then we see that $[\al]_{q}$ times the first factor on the the
right-hand side of the above equation is a polynomial in $q$ with
non-negative integer coefficients, as is
$[\be]_{q}$ times the second factor.
Taken together, this establishes
the claim.
\end{proof}

\begin{lemma} \label{lem:J}
Let $\al$ and $\be$ be positive integers with $\al\ge 24$ and $\be\ge
34$. Then the expression
$$
\left[\al\right]_{q}\left[\be\right]_{q}
\frac {\left[70\right]_q}
{\left[2\right]_q\left[5\right]_q\left[7\right]_q}
$$
is a polynomial in $q$ with non-negative
integer coefficients.
\end{lemma}

\begin{proof}
We have
\begin{align*}
&
\frac {\left[70\right]_q}
{\left[2\right]_q\left[5\right]_q\left[7\right]_q}\\
&\kern1cm
=
(
1 - q + q^5 - q^6 + q^7 - q^8 + q^{10} - q^{11} 
+ q^{12} - q^{13} \\
&\kern2.7cm
+ q^{14} - 
q^{16} + q^{17} - q^{18} + q^{19} - q^{23} + q^{24}
)\\
&\kern2cm
\times
(
1 - q + q^2 - q^3 + q^4 - q^5 + q^6 - q^7 + q^8 - q^9 + q^{10} - q^{11} +
q^{12} - q^{13} \\
&\kern2.7cm
+ q^{14} - q^{15} + q^{16} - q^{17} + q^{18} - q^{19} + q^{20} - q^{21} +
q^{22} - q^{23} + q^{24} - q^{25}\\
&\kern2.7cm + q^{26} - q^{27} + q^{28} - q^{29} + q^{30} - q^{31} +
q^{32} - q^{33} + q^{34}
).
\end{align*}
Also here,
if we apply the same idea as in the proof of Corollary~\ref{cor:A},
then we see that $[\al]_{q}$ times the first factor on the the
right-hand side of the above equation is a polynomial in $q$ with
non-negative integer coefficients, as is
$[\be]_{q}$ times the second factor.
Taken together, this establishes
the claim.
\end{proof}

\begin{lemma} \label{lem:61015}
Let $\al$ and $\be$ be positive integers with $\al\ge 4$ and $\be\ge
2$. Then the expression
$$
\left[\al\right]_{q^2}\left[\be\right]_{q^5}
\frac {\left[30\right]_q\left[2\right]_q\left[3\right]_q\left[5\right]_q}
{\left[6\right]_q\left[10\right]_q\left[15\right]_q}
$$
is a polynomial in $q$ with non-negative
integer coefficients.
\end{lemma}

\begin{proof}
We have
$$
\frac {\left[30\right]_q\left[2\right]_q\left[3\right]_q\left[5\right]_q}
{\left[6\right]_q\left[10\right]_q\left[15\right]_q}
=
1 + q - q^3 - q^4 - q^5 + q^7 + q^8.
$$
If we multiply this expression by $[\al]_{q^2}$, then, for $\al=4$ we obtain
$$
1 + q + q^2 - q^5 - q^9 + q^{12} + q^{13} + q^{14},
$$
for $\al=5$ we obtain
$$
1 + q + q^2 - q^5 + q^8 - q^{11} + q^{14} + q^{15} + q^{16},
$$
and, for $\al\ge6$, we obtain 
$$
1 + q + q^2 - q^5 + 
q^8+q^{10}+p_1(q)+q^{2\al-4}+q^{2\al-2}
 - q^{2\al+1} + q^{2\al+4} + q^{2\al+5} + q^{2\al+6},
$$
where $p_1(q)$ is a polynomial
in $q$ with non-negative coefficients of order 
at least $11$ and degree at most $2\al-5$. 
In all cases it is obvious that the product of the result
and $[\be]_{q^5}$, with $\be\ge2$, 
is a polynomial in $q$ with non-negative coefficients.
\end{proof}

\begin{lemma} \label{lem:61015B}
Let $\al$ and $\be$ be positive integers with $\al\ge 14$ and $\be\ge
2$. Then the expression
$$
\left[\al\right]_{q}\left[\be\right]_{q^5}
\frac {\left[14\right]_q}
{\left[2\right]_q\left[7\right]_q}
\frac {\left[30\right]_q\left[2\right]_q\left[3\right]_q\left[5\right]_q}
{\left[6\right]_q\left[10\right]_q\left[15\right]_q}
$$
is a polynomial in $q$ with non-negative
integer coefficients.
\end{lemma}

\begin{proof}
We have
$$
\frac {\left[14\right]_q}
{\left[2\right]_q\left[7\right]_q}
\frac {\left[30\right]_q\left[2\right]_q\left[3\right]_q\left[5\right]_q}
{\left[6\right]_q\left[10\right]_q\left[15\right]_q}
=
1 - q^3 - q^5 + q^6 + q^7 + q^8 - q^9 - q^{11} + q^{14}.
$$
If we multiply this expression by $[\al]_{q}$, then, 
for $\al\ge14$, we obtain 
\begin{multline*}
1 + q + q^2 - q^5 + q^7 + 2 q^8 + q^9 + q^{10}  
+p_2(q)\\
+q^{\al+3} + q^{\al+4} + 
 2 q^{\al+5} + q^{\al+6} - q^{\al+8} + q^{\al+11} + q^{\al+12} + q^{\al+13},
\end{multline*}
where $p_2(q)$ is a polynomial
in $q$ with non-negative coefficients of order 
at least $11$ and degree at most $\al+2$. 
It is obvious that the product of the result
and $[\be]_{q^5}$, with $\be\ge2$, 
is a polynomial in $q$ with non-negative coefficients.
\end{proof}

\begin{lemma} \label{lem:61015C}
Let $\al$ and $\be$ be positive integers with $\al\ge 32$ and $\be\ge
12$. Then the expression
$$
\left[\al\right]_{q}\left[\be\right]_{q^2}
\frac {\left[35\right]_q}
{\left[5\right]_q\left[7\right]_q}
\frac {\left[30\right]_q\left[2\right]_q\left[3\right]_q\left[5\right]_q}
{\left[6\right]_q\left[10\right]_q\left[15\right]_q}
$$
is a polynomial in $q$ with non-negative
integer coefficients.
\end{lemma}

\begin{proof}
We have
\begin{multline*}
\frac {\left[35\right]_q}
{\left[5\right]_q\left[7\right]_q}
\frac {\left[30\right]_q\left[2\right]_q\left[3\right]_q\left[5\right]_q}
{\left[6\right]_q\left[10\right]_q\left[15\right]_q}
=
1 - q^2 - q^3 + q^5 + q^6 + q^7 - q^8 - 
 2 q^9 + q^{11} + q^{12} - q^{16} + q^{20} + q^{21} \\- 
 2 q^{23} - q^{24} + q^{25} + q^{26} + q^{27} - q^{29} - q^{30} + q^{32}.
\end{multline*}
If we multiply this expression by $[\al]_{q}$, then, 
for $\al\ge32$, we obtain 
\begin{multline*}
1 + q - q^3 - q^4 + q^6 + 
 2 q^7 + q^8 - q^9 - q^{10} + q^{12} + q^{13} + q^{14} + q^{15} + q^{20} + 
 2 q^{21} + 2 q^{22} \\- q^{24} + q^{26} + 2 q^{27} + 2 q^{28} + q^{29}
 + 
p_3(q)
+q^{\al+2} + 
 2 q^{\al+4} + 2 q^{\al+5} + q^{\al+6} - q^{\al+8} \\
+ 2 q^{\al+9} + 
 2 q^{\al+10} + q^{\al+11} + q^{\al+16} + q^{\al+17} + q^{\al+18} + q^{\al+19} - q^{\al+21} - q^{\al+22} + q^{\al+23}\\ + 
 2 q^{\al+24} + q^{\al+25} - q^{\al+27} - q^{{\al+28}} + q^{\al+30} + q^{\al+31},
\end{multline*}
where $p_3(q)$ is a polynomial
in $q$ with non-negative coefficients of order 
at least $30$ and degree at most $\al+1$. 
It is obvious that the product of the result
and $[\be]_{q^2}$, with $\be\ge12$, 
is a polynomial in $q$ with non-negative coefficients.
\end{proof}

\begin{lemma} \label{lem:101215}
Let $\al$ and $\be$ be positive integers with $\al\ge 16$ and $\be\ge
2$. Then the expression
$$
\left[\al\right]_{q^2}\left[\be\right]_{q^5}
\frac {\left[60\right]_q\left[2\right]_q\left[3\right]_q\left[5\right]_q}
{\left[10\right]_q\left[12\right]_q\left[15\right]_q}
$$
is a polynomial in $q$ with non-negative
integer coefficients.
\end{lemma}

\begin{proof}
We have
\begin{multline*}
\frac {\left[60\right]_q\left[2\right]_q\left[3\right]_q\left[5\right]_q}
{\left[10\right]_q\left[12\right]_q\left[15\right]_q}=
1 + q - q^3 - q^4 - q^5 - q^6 + q^8 + q^9 + q^{10} + q^{11} + q^{12} - q^{14}
- q^{15} - q^{16}\\
 - q^{17} - q^{18} + q^{20} + q^{21} + q^{22} + q^{23} + q^{24} - q^{26}
- q^{27} - q^{28} - q^{29} + q^{31} + q^{32}.
\end{multline*}
If we multiply this expression by $[\al]_{q^2}$, then, 
for $\al\ge16$, we obtain 
\begin{multline*}
1 + q + q^2 - q^5 - q^6 - q^7 + q^{10} + q^{11} + 
 2 q^{12} + q^{13} + q^{14}\\
 - q^{17} - q^{18} - q^{19} + q^{22} + q^{23} + 2 q^{24}
+p_4(q)+s_4(q),
\end{multline*}
where $p_4(q)$ is a polynomial
in $q$ with non-negative coefficients of order 
at least $25$ and degree at most $2\al+5$ and $s_4(q)$ completes
the above expression to a symmetric polynomial. 
It is obvious that the product of the result
and $[\be]_{q^5}$, with $\be\ge2$, 
is a polynomial in $q$ with non-negative coefficients.
\end{proof}

\begin{lemma} \label{lem:101215B}
Let $\al$ and $\be$ be positive integers with $\al\ge 56$ and $\be\ge
4$. Then the expression
$$
\left[\al\right]_{q}\left[\be\right]_{q^2}
\frac {\left[35\right]_q}
{\left[5\right]_q\left[7\right]_q}
\frac {\left[60\right]_q\left[2\right]_q\left[3\right]_q\left[5\right]_q}
{\left[10\right]_q\left[12\right]_q\left[15\right]_q}
$$
is a polynomial in $q$ with non-negative
integer coefficients.
\end{lemma}

\begin{proof}
We have
\begin{align*}
&
\frac {\left[35\right]_q}
{\left[5\right]_q\left[7\right]_q}
\frac {\left[60\right]_q\left[2\right]_q\left[3\right]_q\left[5\right]_q}
{\left[10\right]_q\left[12\right]_q\left[15\right]_q}
=
1 - q^2 - q^3 + q^5 + q^7 - q^9 + q^{12} - q^{13} + q^{15} - q^{16} - q^{18} \\
&\kern2cm
+
q^{19} + q^{20} + q^{22} - 2 q^{23} + q^{27} - q^{28} + q^{29} - 
 2 q^{33} + q^{34} + q^{36} + q^{37} - q^{38}\\
&\kern2cm
 - q^{40} + q^{41} - q^{43} + q^{44} - 
q^{47} + q^{49} + q^{51} - q^{53} - q^{54} + q^{56}.
\end{align*}
If we multiply this expression by $[\al]_{q}$, then, 
for $\al\ge56$, we obtain 
\begin{multline*}
1 + q - q^3 - q^4 + q^7 + q^8 + q^{12} + q^{15} - q^{18} + q^{20} + q^{21} + 
 2 q^{22} + q^{27}\\
 + q^{29} + q^{30} + q^{31} + q^{32} - q^{33} + q^{36} + 
 2 q^{37} + q^{38}
+p_5(q)+s_5(q),
\end{multline*}
where $p_5(q)$ is a polynomial
in $q$ with non-negative coefficients of order 
at least $39$ and degree at most $\al+16$ and $s_5(q)$ completes
the above expression to a symmetric polynomial. 
It is obvious that the product of the result
and $[\be]_{q^2}$, with $\be\ge4$, 
is a polynomial in $q$ with non-negative coefficients.
\end{proof}

\begin{lemma} \label{lem:101215C}
Let $\al$ and $\be$ be positive integers with $\al\ge 38$ and $\be\ge
2$. Then the expression
$$
\left[\al\right]_{q}\left[\be\right]_{q^5}
\frac {\left[14\right]_q}
{\left[2\right]_q\left[7\right]_q}
\frac {\left[60\right]_q\left[2\right]_q\left[3\right]_q\left[5\right]_q}
{\left[10\right]_q\left[12\right]_q\left[15\right]_q}
$$
is a polynomial in $q$ with non-negative
integer coefficients.
\end{lemma}

\begin{proof}
We have
$$
\frac {\left[14\right]_q}
{\left[2\right]_q\left[7\right]_q}
\frac {\left[60\right]_q\left[2\right]_q\left[3\right]_q\left[5\right]_q}
{\left[10\right]_q\left[12\right]_q\left[15\right]_q}
=
1 - q^3 - q^5 + q^7 + q^8 - q^{13} + q^{19} - q^{25} + q^{30} + q^{31} - q^{33} -
q^{35} + q^{38}.
$$
If we multiply this expression by $[\al]_{q}$, then, 
for $\al\ge38$, we obtain 
$$
1 + q + q^2 - q^5 - q^6 + q^8 + q^9+q^{10}+q^{11}
+p_6(q)+s_6(q),
$$
where $p_6(q)$ is a polynomial
in $q$ with non-negative coefficients of order 
at least $12$ and degree at most $\al+25$ and $s_6(q)$ completes
the above expression to a symmetric polynomial. 
It is obvious that the product of the result
and $[\be]_{q^2}$, with $\be\ge2$, 
is a polynomial in $q$ with non-negative coefficients.
\end{proof}

\begin{lemma} \label{lem:679}
Let $\al$ and $\be$ be positive integers with $\al\ge 30$ and $\be\ge
26$. Then the expression
$$
\left[\al\right]_{q}\left[\be\right]_{q^3}
\frac {\left[126\right]_q\left[3\right]_q}
{\left[6\right]_q\left[7\right]_q\left[9\right]_q}
$$
is a polynomial in $q$ with non-negative
integer coefficients.
\end{lemma}

\begin{proof}
We have
\begin{align*}
&
\frac {\left[126\right]_q\left[3\right]_q}
{\left[6\right]_q\left[7\right]_q\left[9\right]_q}
=
(
1 - q + q^6 - q^8 + q^{12} - q^{15} + q^{18} - q^{22} + q^{24} - q^{29} + q^{30}
)\\
&\kern2cm
\times
(
1 - q^3 + q^9 - q^{12} + q^{18} - q^{21} + q^{27} - q^{30} + q^{36} -
q^{39} \\
&\kern3cm
+ q^{42}- q^{48} + q^{51} - q^{57} + q^{60} - q^{66} + q^{69} - q^{75} +
q^{78}
).
\end{align*}
If we apply the same idea as in the proof of Corollary~\ref{cor:A},
then we see that $[\al]_{q}$ times the first factor on the the
right-hand side of the above equation is a polynomial in $q$ with
non-negative integer coefficients, as is
$[\be]_{q^3}$ times the second factor.
Taken together, this establishes
the claim.
\end{proof}

\begin{lemma} \label{lem:7912}
Let $\al$ and $\be$ be positive integers with $\al\ge 66$ and $\be\ge
54$. Then the expression
$$
\left[\al\right]_{q}\left[\be\right]_{q^3}
\frac {\left[252\right]_q\left[3\right]_q}
{\left[7\right]_q\left[9\right]_q\left[12\right]_q}
$$
is a polynomial in $q$ with non-negative
integer coefficients.
\end{lemma}

\begin{proof}
We have
\begin{align*}
&
\frac {\left[252\right]_q\left[3\right]_q}
{\left[7\right]_q\left[9\right]_q\left[12\right]_q}\\
&\kern1cm
=
(
1 - q + q^7 - q^8 + q^{12} - q^{13} + q^{14} - q^{15} + q^{19} - q^{20} + q^{21} - 
q^{22} + q^{24} - q^{25} \\
&\kern3cm
+ q^{26} - q^{27}
 + q^{28} - q^{29} + q^{31} - q^{32} + q^{33} -
q^{34} + q^{35} - q^{37} + q^{38} \\
&\kern3cm
- q^{39} + q^{40} - q^{41} + q^{42}
 - q^{44} + q^{45} -
q^{46} + q^{47} - q^{51} + q^{52} - q^{53}\\
&\kern3cm
 + q^{54} - q^{58} + q^{59}
- q^{65} + q^{66}
)\\
&\kern2cm
\times
(
1 - q^3 + q^9 - q^{12} + q^{18} - q^{21} + q^{27} - q^{30} + q^{36} - q^{39} + q^{45}
- q^{48}\\
&\kern3cm
 + q^{54} - q^{57}
 + q^{63} - q^{66} + q^{72} - q^{75} + q^{81} - q^{87} + q^{90}
- q^{96} + q^{99} \\
&\kern3cm
- q^{105} + q^{108} - q^{114}
 + q^{117} - q^{123} + q^{126} - q^{132}
+ q^{135} - q^{141}\\
&\kern3cm
 + q^{144} - q^{150} + q^{153} - q^{159} + q^{162}
).
\end{align*}
If we apply the same idea as in the proof of Corollary~\ref{cor:A},
then we see that $[\al]_{q}$ times the first factor on the the
right-hand side of the above equation is a polynomial in $q$ with
non-negative integer coefficients, as is
$[\be]_{q^3}$ times the second factor.
Taken together, this establishes
the claim.
\end{proof}

\begin{lemma} \label{lem:4710}
Let $\al$ and $\be$ be positive integers with $\al\ge 54$ and $\be\ge
34$. Then the expression
$$
\left[\al\right]_{q}\left[\be\right]_{q^2}
\frac {\left[140\right]_q\left[2\right]_q}
{\left[4\right]_q\left[7\right]_q\left[10\right]_q}
$$
is a polynomial in $q$ with non-negative
integer coefficients.
\end{lemma}

\begin{proof}
We have
\begin{align*}
&
\frac {\left[140\right]_q\left[2\right]_q}
{\left[4\right]_q\left[7\right]_q\left[10\right]_q}\\
&\kern1cm
=
(
1 - q + q^7 - q^8 + q^{10} - q^{11} + q^{14} - q^{15} + q^{17} - q^{18} + q^{20} - 
q^{22} + q^{24} - q^{25} \\
&\kern3cm
+ q^{27} - q^{29}
 + q^{30} - q^{32} + q^{34} - q^{36} + q^{37} -
q^{39} + q^{40} - q^{43} + q^{44} - q^{46} \\
&\kern3cm
+ q^{47} - q^{53} + q^{54}
)\\
&\kern2cm
\times
(
1 - q^2 + q^4 - q^6 + q^8 - q^{10} + q^{12} - q^{14} + q^{16} - q^{18} + q^{20} -
q^{22} + q^{24} \\
&\kern3cm
- q^{26} + q^{28}
 - q^{30} + q^{32} - q^{34} + q^{36} - q^{38} + q^{40} -
q^{42} + q^{44} - q^{46} \\
&\kern3cm
+ q^{48} 
- q^{50} + q^{52} - q^{54}
 + q^{56} - q^{58} + q^{60} -
q^{62} + q^{64} - q^{66} + q^{68}
),
\end{align*}
If we apply the same idea as in the proof of Corollary~\ref{cor:A},
then we see that $[\al]_{q}$ times the first factor on the the
right-hand side of the above equation is a polynomial in $q$ with
non-negative integer coefficients, as is
$[\be]_{q^2}$ times the second factor.
Taken together, this establishes
the claim.
\end{proof}

%

We are now ready for the proof of the main result of this section.

\begin{theorem} \label{thm:0}
For all irreducible well-generated complex reflection groups
and positive integers $m$,
the $q$-Fu\ss--Catalan number $\Cat^m(W;q)$ 
is a polynomial in $q$ with non-negative
integer coefficients.
\end{theorem}

\begin{proof}
First, let $W=A_n$. In this case, the degrees are $2,3,\dots,n+1$,
and hence
$$
\Cat^m(A_n;q)=\frac {1} {[(m+1)n+1]_q}\begin{bmatrix}
  (m+1)n+1\\n\end{bmatrix}_q, 
$$
which, by Proposition~\ref{prop:2}, is a polynomial in $q$ with 
non-negative integer coefficients.

Next, let $W=G(d,1,n)$. In this case, the degrees are 
$d,2d,\dots,nd$, and hence
$$
\Cat^m(G(d,1,n);q)=\begin{bmatrix}
  (m+1)n\\n\end{bmatrix}_{q^d}, 
$$
which, by Proposition~\ref{prop:1}, is a polynomial in $q$ with 
non-negative integer coefficients.

Now, let $W=G(e,e,n)$. In this case, the degrees are 
$e,2e,\dots,(n-1)e,n$, and hence
\begin{align*}
\Cat^m(G(e,e,n);q)&=
\frac {[m(n-1)e+n]_q} {[n]_q}
\prod _{i=1} ^{n-1}\frac {[m(n-1)e+ie]_q} {[ie]_q}\\
&=
\begin{bmatrix}
  (m+1)(n-1)\\n-1\end{bmatrix}_{q^e}+
q^n[e]_{q^n}\begin{bmatrix}
  (m+1)(n-1)\\n\end{bmatrix}_{q^e}, 
\end{align*}
which, by Proposition~\ref{prop:1}, is a polynomial in $q$ with 
non-negative integer coefficients.

It remains to verify the claim for the exceptional groups.

For $W=G_4$, the degrees are $4,6$, and hence
$$
\Cat^m(G_4;q)=\frac {[6m+4]_q\,[6m+6]_q} {[4]_q\,[6]_q}=
\frac {1} {[3m+4]_{q^2}}
\begin{bmatrix}
3m+4\\3\end{bmatrix}_{q^2}, 
$$
which, by Proposition~\ref{prop:2}, is a polynomial in $q$ with 
non-negative integer coefficients.

For $W=G_5$, the degrees are $6,12$, and hence
$$
\Cat^m(G_5;q)=\frac {[12m+6]_q\,[12m+12]_q} {[6]_q\,[12]_q}
=\begin{bmatrix} 2m+2\\2\end{bmatrix}_{q^6},
$$
which, by Proposition~\ref{prop:1}, is a polynomial in $q$ with 
non-negative integer coefficients.

For $W=G_6$, the degrees are $4,12$, and hence
$$
\Cat^m(G_6;q)=\frac {[12m+4]_q\,[12m+12]_q} {[4]_q\,[12]_q}
=[3m+1]_{q^4}\,[m+1]_{q^{12}},
$$
which is manifestly a polynomial in $q$ with 
non-negative integer coefficients.

For $W=G_8$, the degrees are $8,12$, and hence
$$
\Cat^m(G_8;q)=\frac {[12m+8]_q\,[12m+12]_q} {[8]_q\,[12]_q}=
\frac {1} {[3m+4]_{q^4}}
\begin{bmatrix}
3m+4\\3\end{bmatrix}_{q^4}, 
$$
which, by Proposition~\ref{prop:2}, is a polynomial in $q$ with 
non-negative integer coefficients.

For $W=G_9$, the degrees are $8,24$, and hence
$$
\Cat^m(G_9;q)=\frac {[24m+8]_q\,[24m+24]_q} {[8]_q\,[24]_q}
=[3m+1]_{q^8}\,[m+1]_{q^{24}},
$$
which is manifestly a polynomial in $q$ with 
non-negative integer coefficients.

For $W=G_{10}$, the degrees are $12,24$, and hence
$$
\Cat^m(G_{10};q)=\frac {[24m+12]_q\,[24m+24]_q} {[12]_q\,[24]_q}
=\begin{bmatrix} 2m+2\\2\end{bmatrix}_{q^{12}},
$$
which, by Proposition~\ref{prop:1}, is a polynomial in $q$ with 
non-negative integer coefficients.

For $W=G_{14}$, the degrees are $6,24$, and hence
$$
\Cat^m(G_{14};q)=\frac {[24m+6]_q\,[24m+24]_q} {[6]_q\,[24]_q}
=[4m+1]_{q^6}\,[m+1]_{q^{24}},
$$
which is manifestly a polynomial in $q$ with 
non-negative integer coefficients.

For $W=G_{16}$, the degrees are $20,30$, and hence
$$
\Cat^m(G_{16};q)=\frac {[30m+20]_q\,[30m+30]_q} {[20]_q\,[30]_q}=
\frac {1} {[3m+4]_{q^{10}}}
\begin{bmatrix}
3m+4\\3\end{bmatrix}_{q^{10}}, 
$$
which, by Proposition~\ref{prop:2}, is a polynomial in $q$ with 
non-negative integer coefficients.

For $W=G_{17}$, the degrees are $20,60$, and hence
$$
\Cat^m(G_{17};q)=\frac {[60m+20]_q\,[60m+60]_q} {[20]_q\,[60]_q}
=[3m+1]_{q^{20}}\,[m+1]_{q^{60}},
$$
which is manifestly a polynomial in $q$ with 
non-negative integer coefficients.

For $W=G_{18}$, the degrees are $30,60$, and hence
$$
\Cat^m(G_{18};q)=\frac {[60m+30]_q\,[60m+60]_q} {[30]_q\,[60]_q}
=\begin{bmatrix} 2m+2\\2\end{bmatrix}_{q^{30}},
$$
which, by Proposition~\ref{prop:1}, is a polynomial in $q$ with 
non-negative integer coefficients.

For $W=G_{20}$, the degrees are $12,30$, and hence
\begin{align*}
\Cat^m(G_{20};q)&=\frac {[30m+12]_q\,[30m+30]_q} {[12]_q\,[30]_q}\\
&=
\begin{cases} 
\left[\frac {5m+2} {2}\right]_{q^{12}}\,
\left[m+1\right]_{q^{30}},&\text{if $m$ is even,}\\
\left[5m+2\right]_{q^{6}}\,
\left[\frac {m+1} {2}\right]_{q^{60}}
\frac {[10]_{q^6}} {[2]_{q^6}\left[5\right]_{q^6}},
&\text{if $m$ is odd,}
\end{cases}
\end{align*}
which, by Corollary~\ref{cor:A}, are polynomials in $q$ with 
non-negative integer coefficients in both cases.

For $W=G_{21}$, the degrees are $12,60$, and hence
$$
\Cat^m(G_{21};q)=\frac {[60m+12]_q\,[60m+60]_q} {[12]_q\,[60]_q}
=[5m+1]_{q^{12}}\,[m+1]_{q^{60}},
$$
which is manifestly a polynomial in $q$ with 
non-negative integer coefficients.

For $W=G_{23}=H_3$, the degrees are $2,6,10$, and hence
\begin{align*}
\Cat^m(H_3;q)&=\frac {[10m+2]_q\,[10m+6]_q\,[10m+10]_q} 
{[2]_q\,[6]_q\,[10]_q}\\
&=\begin{cases} 
\left[5m+2\right]_{q^2}\,
\left[\tfrac {5m+3} {3}\right]_{q^6}\,
\left[m+1\right]_{q^{10}},&\text{if $m\equiv0$ (mod 3),}\\
\left[\tfrac {5m+1} {3}\right]_{q^6}\,
\left[5m+3\right]_{q^2}\,
\left[m+1\right]_{q^{10}},&\text{if $m\equiv1$ (mod 3),}\\
\left[5m+2\right]_{q^2}\,
\left[{5m+3} \right]_{q^2}\,
\left[\frac {m+1} {3}\right]_{q^{30}}
\frac {[15]_{q^2}} {[3]_{q^2}\left[5\right]_{q^2}}
,&\text{if $m\equiv2$ (mod 3),}
\end{cases}
\end{align*}
which, by Corollary~\ref{cor:A}, are polynomials in $q$ with 
non-negative integer coefficients in all cases.

For $W=G_{24}$, the degrees are $4,6,14$, and hence
$$
\Cat^m(G_{24};q)=\frac {[14m+4]_q\,[14m+6]_q\,[14m+14]_q} 
{[4]_q\,[6]_q\,[14]_q}.
$$
We have
$$
\Cat^m(G_{24};q)=
\begin{cases} 
\left[\tfrac {7m} {2}+1\right]_{q^4}\,
\left[\tfrac {14m} {6}+1\right]_{q^6}\,
\left[m+1\right]_{q^{14}},&\text{if $m\equiv0$ (mod 6),}\\
\left[\tfrac {7m+2} {3}\right]_{q^6}\,
\left[\tfrac {7m+3} {2}\right]_{q^4}\,
\left[m+1\right]_{q^{14}},&\text{if $m\equiv1$ (mod 6),}\\
\left[\tfrac {7m} {2}+1\right]_{q^4}\,
\left[7m+3\right]_{q^2}\,
\left[\frac {m+1} {3}\right]_{q^{42}}
\frac {[21]_{q^2}} {[3]_{q^2}\left[7\right]_{q^2}}
,&\text{if $m\equiv2$ (mod 6),}\\
\left[{7m}+ {2}\right]_{q^2}\,
\left[\frac{7m}3+1\right]_{q^6}\,
\left[\frac {m+1} {2}\right]_{q^{28}}
\frac {[14]_{q^2}} {[2]_{q^2}\left[7\right]_{q^2}}
,&\text{if $m\equiv3$ (mod 6),}\\
\left[\tfrac {7m+2} {6}\right]_{q^{12}}
\frac {[6]_{q^2}} {[2]_{q^2}\left[3\right]_{q^2}}
\left[7m+3\right]_{q^2}\,
\left[{m+1} \right]_{q^{14}}
,&\text{if $m\equiv4$ (mod 6),}\\
\left[ {7m} +{2}\right]_{q^2}\,
\left[\frac{7m+3}2\right]_{q^4}\,
\left[\frac {m+1} {3}\right]_{q^{42}}
\frac {[21]_{q^2}} {[3]_{q^2}\left[7\right]_{q^2}}
,&\text{if $m\equiv5$ (mod 6),}
\end{cases}
$$
which, by Corollary~\ref{cor:A}, are polynomials in $q$ with 
non-negative integer coefficients in all cases.

For $W=G_{25}$, the degrees are $6,9,12$, and hence
$$
\Cat^m(G_{25};q)=\frac {[12m+6]_q\,[12m+9]_q\,[12m+12]_q} {[6]_q\,[9]_q\,[12]_q}=
\frac {1} {[4m+5]_{q^{3}}}
\begin{bmatrix}
4m+5\\4\end{bmatrix}_{q^{3}}, 
$$
which, by Proposition~\ref{prop:2}, is a polynomial in $q$ with 
non-negative integer coefficients.

For $W=G_{26}$, the degrees are $6,12,18$, and hence
$$
\Cat^m(G_{26};q)=\frac {[18m+6]_q\,[18m+12]_q\,[18m+18]_q} {[6]_q\,[12]_q\,[18]_q}
=\begin{bmatrix} 3m+3\\3\end{bmatrix}_{q^{6}},
$$
which, by Proposition~\ref{prop:1}, is a polynomial in $q$ with 
non-negative integer coefficients.

For $W=G_{27}$, the degrees are $6,12,30$, and hence
$$
\Cat^m(G_{27};q)=\frac {[30m+6]_q\,[30m+12]_q\,[30m+30]_q} {[6]_q\,[12]_q\,[30]_q}
=[m+1]_{q^{30}}
\begin{bmatrix} 5m+2\\2\end{bmatrix}_{q^{6}},
$$
which, by Proposition~\ref{prop:1}, is a polynomial in $q$ with 
non-negative integer coefficients.

For $W=G_{28}=F_4$, the degrees are $2,6,8,12$, and hence
\begin{align*}
\Cat^m(F_4;q)&=\frac {[12m+2]_q\,[12m+6]_q\,[12m+8]_q\,[12m+12]_q}
{[2]_q\,[6]_q\,[8]_q\,[12]_q}\\
&=
\begin{cases} 
\left[6m+1\right]_{q^{2}}\,
\left[2m+1\right]_{q^{6}}\,
\left[\tfrac {3} {2}m+1\right]_{q^{8}}\,
\left[m+1\right]_{q^{12}},&\text{if $m$ is even,}\\
\left[6m+1\right]_{q^{2}}\,
\left[2m+1\right]_{q^{6}}\,
\left[\tfrac {m+1} {2}\right]_{q^{24}}\,
\left[3m+2\right]_{q^4}\,
\dfrac {[6]_{q^4}} {[2]_{q^4}\,[3]_{q^4}},&\text{if $m$ is odd,}\\
\end{cases}
\end{align*}
which in both cases is a polynomial in $q$ with 
non-negative integer coefficients; in the second case this is
due to Corollary~\ref{cor:A}.

For $W=G_{29}$, the degrees are $4,8,12,20$, and hence
\begin{align*}
\Cat^m(G_{29};q)&=\frac {[20m+4]_q\,[20m+8]_q\,[20m+12]_q\,[20m+20]_q} 
{[4]_q\,[8]_q\,[12]_q\,[20]_q}\\
&=[m+1]_{q^{20}}
\begin{bmatrix} 5m+3\\3\end{bmatrix}_{q^{4}},
\end{align*}
which, by Proposition~\ref{prop:1}, is a polynomial in $q$ with 
non-negative integer coefficients.

For $W=G_{30}=H_4$, the degrees are $2,12,20,30$, and hence
$$
\Cat^m(H_4;q)=\frac {[30m+2]_q\,[30m+12]_q\,[30m+20]_q\,[30m+30]_q}
{[2]_q\,[12]_q\,[20]_q\,[30]_q}.
$$
If $m$ is even, then we have
$$
\Cat^m(H_4;q)=
\left[15m+1\right]_{q^{2}}\,
\left[\tfrac {5} {2}m+1\right]_{q^{12}}\,
\left[\tfrac {3} {2}m+1\right]_{q^{20}}\,
\left[m+1\right]_{q^{30}},
$$
which is manifestly a polynomial in $q$ with 
non-negative integer coefficients. On the other hand,
if $m$ is odd, then we may write
\begin{align*}
\Cat^m(H_4;q)&=
\left[\tfrac{15m+1}2\right]_{q^{4}}\,
\left[5m+2\right]_{q^{6}}\,
\left[3m+2\right]_{q^{10}}\,
\left[\tfrac {m+1} {2}\right]_{q^{60}}\,
\frac {[30]_{q^2}\left[2\right]_{q^2}
\left[3\right]_{q^2}\left[5\right]_{q^2}} 
{[6]_{q^6}\left[10\right]_{q^2}\left[15\right]_{q^2}},
\end{align*}
which, by Lemma~\ref{lem:61015}, is a polynomial in $q$ with 
non-negative integer coefficients. 

For $W=G_{32}$, the degrees are $12,18,24,30$, and hence
\begin{align*}
\Cat^m(G_{32};q)&=\frac
    {[30m+12]_q\,[30m+18]_q\,[30m+24]_q\,[30m+30]_q} 
{[12]_q\,[18]_q\,[24]_q\,[30]_q}\\
&=
\frac {1} {[5m+6]_{q^{6}}}
\begin{bmatrix}
5m+6\\5\end{bmatrix}_{q^{6}}, 
\end{align*}
which, by Proposition~\ref{prop:2}, is a polynomial in $q$ with 
non-negative integer coefficients.

For $W=G_{33}$, the degrees are $4,6,10,12,18$, and hence
$$
\Cat^m(G_{33};q)=\frac {[18m+4]_q\,[18m+6]_q\,[18m+10]_q\,
[18m+12]_q\,[18m+18]_q}
{[4]_q\,[6]_q\,[10]_q\,[12]_q\,[18]_q}.
$$
If $m\equiv0~(\text{mod }10)$, then we have
$$
\Cat^m(G_{33};q)=
\left[\tfrac {9} {2}m+1\right]_{q^4}\,
\left[3m+1\right]_{q^6}\,
\left[\tfrac {9} {5}m+1\right]_{q^{10}}\,
\left[\tfrac {3} {2}m+1\right]_{q^{12}}\,
\left[m+1\right]_{q^{18}},
$$
which is manifestly a polynomial in $q$ with 
non-negative integer coefficients.
If $m\equiv1~(\text{mod }10)$, then we have
$$
\Cat^m(G_{33};q)=
\left[\tfrac {3m+1} {2}\right]_{q^{12}}\,
\left[\tfrac {9m+5} {2}\right]_{q^{4}}\,
\left[\tfrac {3m+2} {5}\right]_{q^{30}}\,
\left[m+1\right]_{q^{18}}\,
\left[9m+2\right]_{q^2}\,
\frac {[15]_{q^2}} {[3]_{q^2}\,[5]_{q^2}},
$$
which, by Corollary~\ref{cor:A}, is a polynomial in $q$ with 
non-negative integer coefficients.
If $m\equiv2~(\text{mod }10)$, then we have
$$
\Cat^m(G_{33};q)=
\left[\tfrac {9m+2} {10}\right]_{q^{20}}\,
\left[3m+1\right]_{q^{6}}\,
\left[\tfrac {3} {2}m+1\right]_{q^{12}}\,
\left[m+1\right]_{q^{18}}\,
\left[9m+5\right]_{q^{2}}\,
\frac {[10]_{q^2}} {[2]_{q^2}\,[5]_{q^2}},
$$
which, by Corollary~\ref{cor:A}, is a polynomial in $q$ with 
non-negative integer coefficients.
If $m\equiv3~(\text{mod }10)$, then we have
$$
\Cat^m(G_{33};q)=
\left[\tfrac {3m+1} {10}\right]_{q^{60}}\,
\left[\tfrac {9m+5} {2}\right]_{q^{4}}\,
\left[3m+2\right]_{q^{6}}\,
\left[m+1\right]_{q^{18}}\,
\left[9m+2\right]_{q^{2}}\,
\frac {[30]_{q^2}} {[5]_{q^2}\,[6]_{q^2}},
$$
which, by Corollary~\ref{cor:A}, is a polynomial in $q$ with 
non-negative integer coefficients.
If $m\equiv4~(\text{mod }10)$, then we have
$$
\Cat^m(G_{33};q)=
\left[\tfrac {9} {2}m+1\right]_{q^{4}}\,
\left[3m+1\right]_{q^{6}}\,
\left[\tfrac {3} {2}m+1\right]_{q^{12}}\,
\left[\tfrac {m+1} {5}\right]_{q^{90}}\,
\left[9m+5\right]_{q^{2}}\,
\frac {[45]_{q^2}} {[5]_{q^2}\,[9]_{q^2}},
$$
which, by Corollary~\ref{cor:A}, is a polynomial in $q$ with 
non-negative integer coefficients.
If $m\equiv5~(\text{mod }10)$, then we have
$$
\Cat^m(G_{33};q)=
\left[\tfrac {3m+1} {2}\right]_{q^{12}}\,
\left[\tfrac {9m+5} {10}\right]_{q^{20}}\,
\left[3m+2\right]_{q^{6}}\,
\left[m+1\right]_{q^{18}}\,
\left[9m+2\right]_{q^{2}}\,
\frac {[10]_{q^2}} {[2]_{q^2}\,[5]_{q^2}},
$$
which, by Corollary~\ref{cor:A}, is a polynomial in $q$ with 
non-negative integer coefficients.
If $m\equiv6~(\text{mod }10)$, then we have
$$
\Cat^m(G_{33};q)=
\left[\tfrac {9} {2}m+1\right]_{q^{4}}\,
\left[3m+1\right]_{q^{6}}\,
\left[\tfrac {3m+2} {10}\right]_{q^{60}}\,
\left[m+1\right]_{q^{18}}\,
\left[9m+5\right]_{q^{2}}\,
\frac {[30]_{q^2}} {[5]_{q^2}\,[6]_{q^2}},
$$
which, by Corollary~\ref{cor:A}, is a polynomial in $q$ with 
non-negative integer coefficients.
If $m\equiv7~(\text{mod }10)$, then we have
$$
\Cat^m(G_{33};q)=
\left[\tfrac {9m+2} {5}\right]_{q^{10}}\,
\left[\tfrac {3m+1} {2}\right]_{q^{12}}\,
\left[\tfrac {9m+5} {2}\right]_{q^{4}}\,
\left[3m+2\right]_{q^{6}}\,
\left[m+1\right]_{q^{18}},
$$
which is manifestly a polynomial in $q$ with 
non-negative integer coefficients.
If $m\equiv8~(\text{mod }10)$, then we have
$$
\Cat^m(G_{33};q)=
\left[\tfrac {9} {2}m+1\right]_{q^{4}}\,
\left[\tfrac {3m+1} {5}\right]_{q^{30}}\,
\left[\tfrac {3} {2}m+1\right]_{q^{12}}\,
\left[m+1\right]_{q^{18}}\,
\left[9m+5\right]_{q^{2}}\,
\frac {[15]_{q^2}} {[3]_{q^2}\,[5]_{q^2}},
$$
which, by Corollary~\ref{cor:A}, is a polynomial in $q$ with 
non-negative integer coefficients.
Finally, if $m\equiv9~(\text{mod }10)$, then we have
$$
\Cat^m(G_{33};q)=
\left[\tfrac {3m+1} {2}\right]_{q^{12}}\,
\left[\tfrac {9m+5} {2}\right]_{q^{4}}\,
\left[3m+2\right]_{q^{6}}\,
\left[\tfrac {m+1} {5}\right]_{q^{90}}\,
\left[9m+2\right]_{q^{2}}\,
\frac {[45]_{q^2}} {[5]_{q^2}\,[9]_{q^2}},
$$
which, by Corollary~\ref{cor:A}, is a polynomial in $q$ with 
non-negative integer coefficients.

For $W=G_{34}$, the degrees are $6,12,18,24,30,42$, and hence
\begin{align*}
\Cat^m(G_{34};q)&=\frac {[42m+6]_q\,[42m+12]_q\,[42m+18]_q\,
[42m+24]_q\,[42m+30]_q\,[42m+42]_q} 
{[6]_q\,[12]_q\,[18]_q\,[24]_q\,[30]_q\,[42]_q}\\
&=[m+1]_{q^{42}}
\begin{bmatrix} 7m+5\\5\end{bmatrix}_{q^{6}},
\end{align*}
which, by Proposition~\ref{prop:1}, is a polynomial in $q$ with 
non-negative integer coefficients.

For $W=G_{35}=E_6$, the degrees are $2,5,6,8,9,12$, and hence
$$
\Cat^m(E_6;q)=\frac {[12m+2]_q\,[12m+5]_q\,[12m+6]_q\,[12m+8]_q\,
[12m+9]_q\,[12m+12]_q}
{[2]_q\,[5]_q\,[6]_q\,[8]_q\,[9]_q\,[12]_q}.
$$
If $m\equiv
0
~(\text{mod }30),$ then we have $$
\Cat^m(E_6;q)=
{\left[ 6m+1 \right]_{q^{ 2}}} {\left[ \tfrac{12m+5}{5}  
  \right]_{q^{ 5}}} {\left[ 2 m+1 \right]_{q^{ 6}}} {\left[ 
  \tfrac{3m+2}{2}  \right]_{q^{ 8}}} {\left[ \tfrac{4m+3}{3}  
  \right]_{q^{ 9}}} {\left[ m+1 \right]_{q^{ 12}}}
,$$
which is manifestly a polynomial in $q$ with 
non-negative integer coefficients.  
If $m\equiv
1
~(\text{mod }30),$ then we have 
\begin{multline*}\Cat^m(E_6;q)=
{\left[ 6m+1 \right]_{q^{ 2 }}}{\left[ \tfrac{2m+1}{3}  
   \right]_{q^{ 18}}} {\left[ 4m+3 \right]_{q^3}} 
\frac {{ {\left[ 6 \right]_{q^{ 3}}} 
  }} { {\left[ 2 \right]_{q^{ 3}}} {\left[ 3 
   \right]_{q^{ 3}}}}\\
\times {\left[ \tfrac{3m+2}{5}  \right]_{q^{ 20 
  }}} {\left[ 12m+5 
  \right]_{q}}  \frac {[20]_q} {[4]_q\left[5\right]_q}
 {\left[ \tfrac{m+1}{2} \right]_{q^{ 24}}} \frac {{ 
  {\left[ 6 \right]_{q^{ 4}}} }} { {\left[ 2 
   \right]_{q^{ 4}}} {\left[ 3 \right]_{q^{ 4}}}}
.\end{multline*} 
If one decomposes $[6m+1]_{q^2}$ as $[3m+1]_{q^4}+q^2[3m]_{q^4}$,
then one sees that,
by Corollary~\ref{cor:A}, the above expression is a polynomial in $q$ with 
non-negative integer coefficients.
If $m\equiv
2
~(\text{mod }30),$ then we have \begin{multline*}\Cat^m(E_6;q)=
{\left[ 6m+1 \right]_{q^{ 2 }}} {\left[ 12m+5 
  \right]_{q}} {\left[ \tfrac{2m+1}{5}  
   \right]_{q^{ 30}}} \frac {{ {\left[ 30 \right]_{q}} 
  }} { {\left[ 5 \right]_{q}} {\left[ 6 
   \right]_{q}}} \\
\times{\left[ \tfrac{3m+2}{2}  \right]_{q^{ 8 
  }}} {\left[ 4m+3 \right]_{q^3}} 
 {\left[ \tfrac{m+1}{3} \right]_{q^{ 36}}} \frac {{ 
  {\left[ 12 \right]_{q^{ 3}}} }} { {\left[ 3 
   \right]_{q^{ 3}}} {\left[ 4 \right]_{q^{ 3}}}}
,\end{multline*} 
which, by Corollary~\ref{cor:A}, is a polynomial in $q$ with 
non-negative integer coefficients.
If $m\equiv
3
~(\text{mod }30),$ then we have \begin{multline*}\Cat^m(E_6;q)=
{\left[ 6m+1 \right]_{q^{ 2 }}} {\left[ 12m+5 
  \right]_{q}} {\left[ 2 m+1 \right]_{q^{ 6 
  }}} {\left[ 3m+2 \right]_{q^4}} \\
\times
 {\left[ \tfrac{4m+3}{15}  \right]_{q^{ 45}}} \frac {{ 
  {\left[ 45 \right]_{q}} }} { {\left[ 5 
   \right]_{q}} {\left[ 9 \right]_{q}}} {\left[ 
   \tfrac{m+1}{2} \right]_{q^{ 24}}} \frac {{ {\left[ 6 
   \right]_{q^{ 4}}} }} { {\left[ 2 \right]_{q^{ 4}}} 
  {\left[ 3 \right]_{q^{ 4}}}}
,\end{multline*} 
which, by Corollary~\ref{cor:A}, is a polynomial in $q$ with 
non-negative integer coefficients.
If $m\equiv
4
~(\text{mod }30),$ then we have 
\begin{multline*}\Cat^m(E_6;q)=
{\left[ \tfrac{12m+2}{5}  \right]_{q^{ 5 }}} {\left[ 
  \tfrac{12m+5}{2}  \right]_{q^{ 2 }}} {\left[ 
   \tfrac{2m+1}{3}  \right]_{q^{ 18}}} \frac {{ {\left[ 6 
   \right]_{q^{ 3}}} }} { {\left[ 2 \right]_{q^{ 3}}} 
  {\left[ 3 \right]_{q^{ 3}}}} \\
\times{\left[ \tfrac{3m+2}{2}  
  \right]_{q^{ 8 }}} {\left[ 4m+3 \right]_{q^3}}
 {\left[ m+1 \right]_{q^{ 12 }}}
,\end{multline*}
which, by Corollary~\ref{cor:A}, is a polynomial in $q$ with 
non-negative integer coefficients.
If $m\equiv
5
~(\text{mod }30),$ then we have \begin{multline*}\Cat^m(E_6;q)=
{\left[ 6m+1 \right]_{q^{ 2}}} 
{\left[ \tfrac{12m+5}{5}  
   \right]_{q^{ 5}}} 
{\left[ 2 m+1 \right]_{q^{ 6}}} \\
\times
 {\left[ 3m+2 \right]_{q^{ 4 }}} {\left[ 
  {4m+3}  \right]_{q^{ 3 }}} 
{\left[\tfrac { m+1 }6
   \right]_{q^{ 72}}} 
\frac {\left[72\right]_q\left[3\right]_q\left[4\right]_q} 
{\left[8\right]_q\left[9\right]_q\left[12\right]_q}
,\end{multline*} 
which, by Lemma~\ref{lem:B}, 
is a polynomial in $q$ with 
non-negative integer coefficients.
If $m\equiv
6
~(\text{mod }30),$ then we have $$\Cat^m(E_6;q)=
{\left[ 6m+1 \right]_{q^{ 2 }}} {\left[ 12m+5 
  \right]_{q}} {\left[ 2 m+1 \right]_{q^{ 6 
  }}} {\left[ \tfrac{3m+2}{10}  \right]_{q^{ 40}}} 
  \frac {{ {\left[ 40 \right]_{q}} }} { 
  {\left[ 5 \right]_{q}} {\left[ 8 \right]_{q}}} 
 {\left[ \tfrac{4m+3}{3}  \right]_{q^{ 9 }}} {\left[ 
  m+1 \right]_{q^{ 12 }}}
,$$ 
which, by Corollary~\ref{cor:A}, is a polynomial in $q$ with 
non-negative integer coefficients.
If $m\equiv
7
~(\text{mod }30),$ then we have 
\begin{multline*}\Cat^m(E_6;q)=
{\left[\tfrac { 6m+1}2 \right]_{q^{ 4}}} 
{\left[ {12m+5}  
  \right]_{q}} {\left[ \tfrac {2 m+1}{15} \right]_{q^{ 90}}}\\
\times
\frac {\left[90\right]_q\left[3\right]_q\left[4\right]_q} 
{\left[5\right]_q\left[6\right]_q\left[9\right]_q}
 {\left[ 3m+2 \right]_{q^{ 4 }}} {\left[ 
  {4m+3} \right]_{q^{ 3 }}} {\left[ \tfrac {m+1}2 
   \right]_{q^{ 24}}} 
\frac {[6]_{q^4}} {[2]_{q^4}\left[3\right]_{q^4}}
,\end{multline*} 
which, by Corollary~\ref{cor:A} and Lemma~\ref{lem:C}, 
is a polynomial in $q$ with 
non-negative integer coefficients.
If $m\equiv
8
~(\text{mod }30),$ then we have 
\begin{multline*}\Cat^m(E_6;q)=
{\left[ 6m+1 \right]_{q^{ 2 }}} {\left[ 12m+5 
  \right]_{q}} {\left[ 2 m+1 \right]_{q^{ 6 
  }}} {\left[ \tfrac{3m+2}{2}  \right]_{q^{ 8 }}} \\
\times
 {\left[ \tfrac{4m+3}{5}  \right]_{q^{ 15 }}}
\frac {[15]_q} {[3]_q\left[5\right]_q} 
 {\left[ \tfrac{m+1}{3} \right]_{q^{ 36}}} \frac {{ 
  {\left[ 12 \right]_{q^{ 3}}} }} { {\left[ 3 
   \right]_{q^{ 3}}} {\left[ 4 \right]_{q^{ 3}}}}
,\end{multline*}
which, by Lemma~\ref{lem:D}, is a polynomial in $q$ with 
non-negative integer coefficients.
If $m\equiv
9
~(\text{mod }30),$ then we have $$\Cat^m(E_6;q)=
{\left[ \tfrac{12m+2}{5}  \right]_{q^{ 5 }}} {\left[ 
  \tfrac{12m+5}{2}  \right]_{q^{ 2 }}} {\left[ 2 m+1 
  \right]_{q^{ 6 }}} {\left[ 3m+2 \right]_{q^{ 4
  }}} {\left[ \tfrac{4m+3}{3}  \right]_{q^{ 9 }}} 
 {\left[ \tfrac{m+1}{2} \right]_{q^{ 24}}} \frac {{ 
  {\left[ 6 \right]_{q^{ 4}}} }} { {\left[ 2 
   \right]_{q^{ 4}}} {\left[ 3 \right]_{q^{ 4}}}}
,$$ 
which, by Corollary~\ref{cor:A}, is a polynomial in $q$ with 
non-negative integer coefficients.
If $m\equiv
10
~(\text{mod }30),$ then we have $$\Cat^m(E_6;q)=
{\left[ 6m+1 \right]_{q^{ 2 }}} {\left[ 
  \tfrac{12m+5}{5}  \right]_{q^{ 5 }}} {\left[ 
   \tfrac{2m+1}{3}  \right]_{q^{ 18}}} \frac {{ {\left[ 6 
   \right]_{q^{ 3}}} }} { {\left[ 2 \right]_{q^{ 3}}} 
  {\left[ 3 \right]_{q^{ 3}}}} {\left[ \tfrac{3m+2}{2}  
  \right]_{q^{ 8 }}} {\left[ 4m+3 \right]_{q^{ 3 
  }}} {\left[ m+1 \right]_{q^{ 12 }}}
,$$ 
which, by Corollary~\ref{cor:A}, is a polynomial in $q$ with 
non-negative integer coefficients.
If $m\equiv
11
~(\text{mod }30),$ then we have \begin{multline*}\Cat^m(E_6;q)=
{\left[ 6m+1 \right]_{q^{ 2}}}
 {\left[ 12m+5
  \right]_{q}} {\left[ 2 m+1 \right]_{q^{ 6}}} \\
\times
 {\left[ \tfrac {3m+2}5 \right]_{q^{ 20}}}
\frac {[20]_q} {[4]_q\left[5\right]_q} 
{\left[ 4m+3  
  \right]_{q^3}} {\left[ \tfrac {m+1}6 \right]_{q^{ 72}}} 
\frac {\left[72\right]_q\left[3\right]_q\left[4\right]_q} 
{\left[8\right]_q\left[9\right]_q\left[12\right]_q}
.\end{multline*} 
If one decomposes $[6m+1]_{q^2}$ as $[3m+1]_{q^4}+q^2[3m]_{q^4}$,
then one sees that, by Corollary~\ref{cor:A} and Lemma~\ref{lem:B}, 
this is a polynomial in $q$ with 
non-negative integer coefficients.
If $m\equiv
12
~(\text{mod }30),$ then we have $$\Cat^m(E_6;q)=
{\left[ 6m+1 \right]_{q^{ 2 }}} {\left[ 12m+5 
  \right]_{q}} {\left[ \tfrac{2m+1}{5}  
   \right]_{q^{ 30}}} \frac {{ {\left[ 30 \right]_{q}} 
  }} { {\left[ 5 \right]_{q}} {\left[ 6 
   \right]_{q}}} {\left[ \tfrac{3m+2}{2}  \right]_{q^{ 8 
  }}} {\left[ \tfrac{4m+3}{3}  \right]_{q^{ 9 }}} 
 {\left[ m+1 \right]_{q^{ 12 }}}
,$$ 
which, by Corollary~\ref{cor:A}, is a polynomial in $q$ with 
non-negative integer coefficients.
If $m\equiv
13
~(\text{mod }30),$ then we have \begin{multline*}\Cat^m(E_6;q)=
{\left[ 6m+1 \right]_{q^{ 2 }}} {\left[ 12m+5 
  \right]_{q}} {\left[ \tfrac{2m+1}{3}  
   \right]_{q^{ 18}}} \frac {{ {\left[ 6 \right]_{q^{ 3}}} 
  }} { {\left[ 2 \right]_{q^{ 3}}} {\left[ 3 
   \right]_{q^{ 3}}}} \\
\times{\left[ 3m+2 \right]_{q^4}} 
 {\left[ \tfrac{4m+3}{5}  \right]_{q^{ 15 }}} 
\frac {[15]_q} {[3]_q\left[5\right]_q}
 {\left[ \tfrac{m+1}{2} \right]_{q^{ 24}}} \frac {{ 
  {\left[ 6 \right]_{q^{ 4}}} }} { {\left[ 2 
   \right]_{q^{ 4}}} {\left[ 3 \right]_{q^{ 4}}}}
,\end{multline*} 
which, by Lemma~\ref{lem:E}, is a polynomial in $q$ with 
non-negative integer coefficients.
If $m\equiv
14
~(\text{mod }30),$ then we have $$\Cat^m(E_6;q)=
{\left[ \tfrac{12m+2}{5}  \right]_{q^{ 5 }}} {\left[ 
  \tfrac{12m+5}{2}  \right]_{q^{ 2 }}} {\left[ 2 m+1 
  \right]_{q^{ 6 }}} {\left[ \tfrac{3m+2}{2}  
  \right]_{q^{ 8 }}} {\left[ 4m+3 \right]_{q^3}}
 {\left[ \tfrac{m+1}{3} \right]_{q^{ 36}}} 
  \frac {{ {\left[ 12 \right]_{q^{ 3}}} }} { 
  {\left[ 3 \right]_{q^{ 3}}} {\left[ 4 \right]_{q^{ 3}}}}
,$$ 
which, by Corollary~\ref{cor:A}, is a polynomial in $q$ with 
non-negative integer coefficients.
If $m\equiv
15
~(\text{mod }30),$ then we have $$\Cat^m(E_6;q)=
{\left[ 6m+1 \right]_{q^{ 2 }}} {\left[ 
  \tfrac{12m+5}{5}  \right]_{q^{ 5 }}} {\left[ 2 m+1 
  \right]_{q^{ 6 }}} {\left[ 3m+2 \right]_{q^4}}
 {\left[ \tfrac{4m+3}{3}  \right]_{q^{ 9 }}} 
 {\left[ \tfrac{m+1}{2} \right]_{q^{ 24}}} \frac {{ 
  {\left[ 6 \right]_{q^{ 4}}} }} { {\left[ 2 
   \right]_{q^{ 4}}} {\left[ 3 \right]_{q^{ 4}}}}
,$$ 
which, by Corollary~\ref{cor:A}, is a polynomial in $q$ with 
non-negative integer coefficients.
If $m\equiv
16
~(\text{mod }30),$ then we have \begin{multline*}\Cat^m(E_6;q)=
{\left[ 6m+1 \right]_{q^{ 2 }}} {\left[ 12m+5 
  \right]_{q}} {\left[ \tfrac{2m+1}{3}  
   \right]_{q^{ 18}}} \frac {{ {\left[ 6 \right]_{q^{ 3}}} 
  }} { {\left[ 2 \right]_{q^{ 3}}} {\left[ 3 
   \right]_{q^{ 3}}}} \\
\times{\left[ \tfrac{3m+2}{10}  \right]_{q^{ 40}}} 
  \frac {{ {\left[ 40 \right]_{q}} }} { 
  {\left[ 5 \right]_{q}} {\left[ 8 \right]_{q}}} 
 {\left[ 4m+3 \right]_{q^3}} {\left[ m+1 
  \right]_{q^{ 12 }}}
,\end{multline*} 
which, by Corollary~\ref{cor:A}, is a polynomial in $q$ with 
non-negative integer coefficients.
If $m\equiv
17
~(\text{mod }30),$ then we have \begin{multline*}\Cat^m(E_6;q)=
{\left[ 6m+1 \right]_{q^{ 2}}} 
{\left[ {12m+5}
  \right]_{q}} {\left[ \tfrac {2 m+1}5 \right]_{q^{ 30}}}
\frac {[30]_q} {[5]_q\left[6\right]_q}\\
\times
 {\left[ 3m+2 \right]_{q^{ 4 }}} {\left[ 
  {4m+3}  \right]_{q^{ 3 }}} {\left[ \tfrac {m+1}6 
   \right]_{q^{ 72}}} 
\frac {\left[72\right]_q\left[3\right]_q\left[4\right]_q} 
{\left[8\right]_q\left[9\right]_q\left[12\right]_q}
,\end{multline*} 
which, by Corollary~\ref{cor:A} and Lemma~\ref{lem:B}, 
is a polynomial in $q$ with 
non-negative integer coefficients.
If $m\equiv
18
~(\text{mod }30),$ then we have $$\Cat^m(E_6;q)=
{\left[ 6m+1 \right]_{q^{ 2 }}} {\left[ 12m+5 
  \right]_{q}} {\left[ 2 m+1 \right]_{q^{ 6 
  }}} {\left[ \tfrac{3m+2}{2}  \right]_{q^{ 8 }}} 
 {\left[ \tfrac{4m+3}{15}  \right]_{q^{ 45}}} \frac {{ 
  {\left[ 45 \right]_{q}} }} { {\left[ 5 
   \right]_{q}} {\left[ 9 \right]_{q}}} {\left[ m+1 
  \right]_{q^{ 12 }}}
,$$ 
which, by Corollary~\ref{cor:A}, is a polynomial in $q$ with 
non-negative integer coefficients.
If $m\equiv
19
~(\text{mod }30),$ then we have \begin{multline*}\Cat^m(E_6;q)=
{\left[ \tfrac{12m+2}{5}  \right]_{q^{ 5 }}} {\left[ 
  \tfrac{12m+5}{2}  \right]_{q^{ 2 }}} {\left[ 
   \tfrac{2m+1}{3}  \right]_{q^{ 18}}} \frac {{ {\left[ 6 
   \right]_{q^{ 3}}} }} { {\left[ 2 \right]_{q^{ 3}}} 
  {\left[ 3 \right]_{q^{ 3}}}}\\
\times {\left[ 3m+2 \right]_{q^4}}
 {\left[ 4m+3 \right]_{q^3}} 
 {\left[ \tfrac{m+1}{2} \right]_{q^{ 24}}} \frac {{ 
  {\left[ 6 \right]_{q^{ 4}}} }} { {\left[ 2 
   \right]_{q^{ 4}}} {\left[ 3 \right]_{q^{ 4}}}}
,\end{multline*} 
which, by Corollary~\ref{cor:A}, is a polynomial in $q$ with 
non-negative integer coefficients.
If $m\equiv
20
~(\text{mod }30),$ then we have $$\Cat^m(E_6;q)=
{\left[ 6m+1 \right]_{q^{ 2 }}} {\left[ 
  \tfrac{12m+5}{5}  \right]_{q^{ 5 }}} {\left[ 2 m+1 
  \right]_{q^{ 6 }}} {\left[ \tfrac{3m+2}{2}  
  \right]_{q^{ 8 }}} {\left[ 4m+3 \right]_{q^3}}
 {\left[ \tfrac{m+1}{3} \right]_{q^{ 36}}} 
  \frac {{ {\left[ 12 \right]_{q^{ 3}}} }} { 
  {\left[ 3 \right]_{q^{ 3}}} {\left[ 4 \right]_{q^{ 3}}}}
,$$ 
which, by Corollary~\ref{cor:A}, is a polynomial in $q$ with 
non-negative integer coefficients.
If $m\equiv
21
~(\text{mod }30),$ then we have 
\begin{multline*}\Cat^m(E_6;q)=
{\left[ 6m+1 \right]_{q^{ 2 }}} {\left[ 12m+5 
  \right]_{q}} {\left[ 2 m+1 \right]_{q^{ 6 
  }}} \\
\times{\left[ \tfrac{3m+2}{5}  \right]_{q^{ 20 }}}
\frac {[20]_q} {[4]_q\left[5\right]_q} 
 {\left[ \tfrac{4m+3}{3}  \right]_{q^{ 9 }}} 
 {\left[ \tfrac{m+1}{2} \right]_{q^{ 24}}} \frac {{ 
  {\left[ 6 \right]_{q^{ 4}}} }} { {\left[ 2 
   \right]_{q^{ 4}}} {\left[ 3 \right]_{q^{ 4}}}}
,\end{multline*} 
which, by Corollary~\ref{cor:A}, is a polynomial in $q$ with 
non-negative integer coefficients.
If $m\equiv
22
~(\text{mod }30),$ then we have \begin{multline*}\Cat^m(E_6;q)=
{\left[ 6m+1 \right]_{q^{ 2 }}} {\left[ 12m+5 
  \right]_{q}} {\left[ \tfrac{2 m+1}{15}  
   \right]_{q^{ 90}}} 
\frac {{ {\left[ 90 \right]_{q}} {\left[ 3 \right]_{q}} 
  }} { {\left[ 5 \right]_{q}} {\left[ 6 \right]_{q}} {\left[ 9 
   \right]_{q}}}\\
\times {\left[ \tfrac{3m+2}{2}  \right]_{q^{ 8 
  }}} {\left[ 4m+3 \right]_{q^3}} 
 {\left[ m+1 \right]_{q^{ 12}}} 
,\end{multline*} 
which, by Lemma~\ref{lem:F}, is a polynomial in $q$ with 
non-negative integer coefficients.
If $m\equiv
23
~(\text{mod }30),$ then we have \begin{multline*}\Cat^m(E_6;q)=
{\left[ 6m+1 \right]_{q^{ 2}}} 
{\left[ 12m+5
  \right]_{q}} {\left[ 2 m+1 \right]_{q^{ 6}}} \\
\times
 {\left[ {3m+2} \right]_{q^{ 4 }}} {\left[ 
   \tfrac{4m+3}5  \right]_{q^{ 15}}} 
\frac {[15]_q} {[3]_q\left[5\right]_q}
 {\left[ \tfrac {m+1}6 \right]_{q^{ 
   72}}} 
\frac {\left[72\right]_q\left[3\right]_q\left[4\right]_q} 
{\left[8\right]_q\left[9\right]_q\left[12\right]_q}
,\end{multline*} 
which, by Lemma~\ref{lem:G}, 
is a polynomial in $q$ with 
non-negative integer coefficients.
If $m\equiv
24
~(\text{mod }30),$ then we have $$\Cat^m(E_6;q)=
{\left[ \tfrac{12m+2}{5}  \right]_{q^{ 5}}} {\left[ 
  \tfrac{12m+5}{2}  \right]_{q^{ 2}}} {\left[ 2 m+1 \right]_{q^{ 
  6}}} {\left[ \tfrac{3m+2}{2}  \right]_{q^{ 8}}} {\left[ 
  \tfrac{4m+3}{3}  \right]_{q^{ 9}}} {\left[ m+1 \right]_{q^{ 12}}}
,$$ 
which is manifestly a polynomial in $q$ with 
non-negative integer coefficients. 
If $m\equiv
25
~(\text{mod }30),$ then we have \begin{multline*}\Cat^m(E_6;q)=
{\left[ 6m+1 \right]_{q^{ 2 }}} {\left[ 
  \tfrac{12m+5}{5}  \right]_{q^{ 5 }}} {\left[ 
   \tfrac{2m+1}{3}  \right]_{q^{ 18}}} \frac {{ {\left[ 6 
   \right]_{q^{ 3}}} }} { {\left[ 2 \right]_{q^{ 3}}} 
  {\left[ 3 \right]_{q^{ 3}}}}\\
\times {\left[ 3m+2 \right]_{q^{ 
  4 }}} {\left[ 4m+3 \right]_{q^3}} 
 {\left[ \tfrac{m+1}{2} \right]_{q^{ 24}}} \frac {{ 
  {\left[ 6 \right]_{q^{ 4}}} }} { {\left[ 2 
   \right]_{q^{ 4}}} {\left[ 3 \right]_{q^{ 4}}}}
,\end{multline*} 
which, by Corollary~\ref{cor:A}, is a polynomial in $q$ with 
non-negative integer coefficients.
If $m\equiv
26
~(\text{mod }30),$ then we have \begin{multline*}\Cat^m(E_6;q)=
{\left[ 6m+1 \right]_{q^{ 2 }}} {\left[ 12m+5 
  \right]_{q}} {\left[ 2 m+1 \right]_{q^{ 6 
  }}}\\
\times {\left[ \tfrac{3m+2}{10}  \right]_{q^{ 40}}} 
  \frac {{ {\left[ 40 \right]_{q}} }} { 
  {\left[ 5 \right]_{q}} {\left[ 8 \right]_{q}}} 
 {\left[ 4m+3 \right]_{q^3}} {\left[ 
   \tfrac{m+1}{3} \right]_{q^{ 36}}} \frac {{ {\left[ 12 
   \right]_{q^{ 3}}} }} { {\left[ 3 \right]_{q^{ 3}}} 
  {\left[ 4 \right]_{q^{ 3}}}}
,\end{multline*} 
which, by Corollary~\ref{cor:A}, is a polynomial in $q$ with 
non-negative integer coefficients.
If $m\equiv
27
~(\text{mod }30),$ then we have \begin{multline*}\Cat^m(E_6;q)=
{\left[ 6m+1 \right]_{q^{ 2 }}} {\left[ 12m+5 
  \right]_{q}} {\left[ \tfrac{2m+1}{5}  
   \right]_{q^{ 30}}} \frac {{ {\left[ 30 \right]_{q}} 
  }} { {\left[ 5 \right]_{q}} {\left[ 6 
   \right]_{q}}} \\
\times{\left[ 3m+2 \right]_{q^4}} 
 {\left[ \tfrac{4m+3}{3}  \right]_{q^{ 9 }}} 
 {\left[ \tfrac{m+1}{2} \right]_{q^{ 24}}} \frac {{ 
  {\left[ 6 \right]_{q^{ 4}}} }} { {\left[ 2 
   \right]_{q^{ 4}}} {\left[ 3 \right]_{q^{ 4}}}}
,\end{multline*} 
which, by Corollary~\ref{cor:A}, is a polynomial in $q$ with 
non-negative integer coefficients.
If $m\equiv
28
~(\text{mod }30),$ then we have 
\begin{multline*}\Cat^m(E_6;q)=
{\left[ 6m+1 \right]_{q^{ 2 }}} {\left[ 12m+5 
  \right]_{q}} {\left[ \tfrac{2m+1}{3}  
   \right]_{q^{ 18}}} \frac {{ {\left[ 6 \right]_{q^{ 3}}} 
  }} { {\left[ 2 \right]_{q^{ 3}}} {\left[ 3 
   \right]_{q^{ 3}}}}\\
\times {\left[ \tfrac{3m+2}{2}  \right]_{q^{ 8 
  }}} {\left[ \tfrac{4m+3}{5}  \right]_{q^{ 15 }}} 
\frac {[15]_q} {[3]_q\left[5\right]_q}
 {\left[ m+1 \right]_{q^{ 12 }}}
,\end{multline*}
which, by Lemma~\ref{lem:E}, is a polynomial in $q$ with 
non-negative integer coefficients.
If $m\equiv
29
~(\text{mod }30),$ then we have \begin{multline*}\Cat^m(E_6;q)=
{\left[ \tfrac{6m+1}5 \right]_{q^{ 10}}} 
\frac {[10]_q} {[2]_q\left[5\right]_q}
{\left[ 12m+5  
  \right]_{q}} {\left[ 2 m+1 \right]_{q^{ 6}}} \\
\times
 {\left[ 3m+2 \right]_{q^{ 4 }}} {\left[ 
  4m+3  \right]_{q^{ 3 }}} {\left[ \tfrac {m+1}6
   \right]_{q^{ 72}}} 
\frac {\left[72\right]_q\left[3\right]_q\left[4\right]_q} 
{\left[8\right]_q\left[9\right]_q\left[12\right]_q},
\end{multline*}
which, by Corollary~\ref{cor:A} and Lemma~\ref{lem:B}, 
is a polynomial in $q$ with 
non-negative integer coefficients.

For $W=G_{36}=E_7$, the degrees are $2,6,8,10,12,14,18$, and hence
\begin{multline*}
\Cat^m(E_7;q)=\frac {[18m+2]_q\,[18m+6]_q\,[18m+8]_q\,[18m+10]_q\,
}
{[2]_q\,[6]_q\,[8]_q\,[10]_q}\\
\times
\frac {
[18m+12]_q\,[18m+14]_q\,[18m+18]_q}
{[12]_q\,[14]_q\,[18]_q}.
\end{multline*}
If $m\equiv
0
~(\text{mod }140),$ then we have $$\Cat^m(E_7;q)=
{{\left[ 9 m+1 \right]_{q^{ 2}}} {\left[ 3 m+1 \right]_{q^{ 
  6}}} {\left[ \tfrac{9m+4}{4}  \right]_{q^{ 8}}} {\left[ 
  \tfrac{9m+5}{5}  \right]_{q^{ 10}}} {\left[ \tfrac{3m+2}{2}  
  \right]_{q^{ 12}}} {\left[ \tfrac{9m+7}{7}  \right]_{q^{ 14}}} 
 {\left[ m+1 \right]_{q^{ 18}}}}
,$$ 
which is manifestly a polynomial in $q$ with 
non-negative integer coefficients.  
If $m\equiv
1
~(\text{mod }140),$ then we have $$\Cat^m(E_7;q)=
{{\left[ \tfrac{9m+1}{5}  \right]_{q^{ 10}}} {\left[ 
  \tfrac{3m+1}{2}  \right]_{q^{ 12}}} {\left[ 9m+4 \right]_{q^{ 
  2}}} {\left[ \tfrac{9m+5}{7}  \right]_{q^{ 14}}} {\left[ 
  3m+2 \right]_{q^{ 6}}} {\left[ \tfrac{9m+7}{4}  \right]_{q^{ 
  8}}} {\left[ m+1 \right]_{q^{ 18}}}}
,$$ 
which is manifestly a polynomial in $q$ with 
non-negative integer coefficients.  
If $m\equiv
2
~(\text{mod }140),$ then we have \begin{multline*}\Cat^m(E_7;q)=
{{\left[ 9 m+1 \right]_{q^{ 2}}}} {{\left[ \tfrac{3m+1}{7}  
   \right]_{q^{ 42}}}} \frac {{ {\left[ 21 \right]_{q^{ 2}}}}
  } { {\left[ 3 \right]_{q^{ 2}}} {\left[ 7 
   \right]_{q^{ 2} }}} {{\left[ \tfrac{9m+4}2 \right]_{q^4}}} 
 {{\left[ 9 m+5 \right]_{q^2}}} \\
\times{{\left[ 
   \tfrac{3m+2}{4}  \right]_{q^{ 24}}}} \frac {{ {\left[ 6 
   \right]_{q^{ 4}}} }} { {\left[ 2 \right]_{q^{ 4}}} 
  {\left[ 3 \right]_{q^{ 4}} }} {\left[ \tfrac{9m+7}{5} 
   \right]_{q^{ 10}}} {\left[ m+1 \right]_{q^{ 18}}}
,\end{multline*} 
which, by Corollary~\ref{cor:A}, is a polynomial in $q$ with 
non-negative integer coefficients.
If $m\equiv
3
~(\text{mod }140),$ then we have \begin{multline*}\Cat^m(E_7;q)=
{{\left[ \tfrac{9m+1}{7}  \right]_{q^{ 14}}}} {{\left[ 
   \tfrac{3m+1}{10}  \right]_{q^{ 60}}}} \frac {{ {\left[ 30
    \right]_{q^{ 2}}}} } { {\left[ 5 \right]_{q^{ 2}}} 
  {\left[ 6 \right]_{q^{ 2}}}} {\left[ 9m+4 
  \right]_{q^{ 2}}}\\
\times {\left[ \tfrac{9m+5}{4}  \right]_{q^{ 8}}} 
 {\left[ 3m+2 \right]_{q^{ 6}}} {{\left[ 9 m+7 
  \right]_{q^2}}} {\left[ m+1 \right]_{q^{ 18}}}
,\end{multline*} 
which, by Corollary~\ref{cor:A}, is a polynomial in $q$ with 
non-negative integer coefficients.
If $m\equiv
4
~(\text{mod }140),$ then we have \begin{multline*}\Cat^m(E_7;q)=
{\left[ 9 m+1 \right]_{q^{ 2}}} {\left[ 3 m+1 \right]_{q^{ 
  6}}} {\left[ \tfrac{9m+4}{20}  \right]_{q^{ 40}}} 
  \frac {{ {\left[ 20 \right]_{q^{ 2}}} }} { 
  {\left[ 4 \right]_{q^{ 2}}} {\left[ 5 \right]_{q^{ 2}}} 
  } {\left[ 9 m+5 \right]_{q^2}} \\
\times{\left[ 
   \tfrac{3m+2}{14}  \right]_{q^{ 84}}} \frac {{ {\left[ 42
    \right]_{q^{ 2}}} }} { {\left[ 6 \right]_{q^{ 2}}} 
  {\left[ 7 \right]_{q^{ 2}}} } {\left[ 9 m+7 
  \right]_{q^2}} {\left[ m+1 \right]_{q^{ 18}}}
,\end{multline*} 
which, by Corollary~\ref{cor:A}, is a polynomial in $q$ with 
non-negative integer coefficients.
If $m\equiv
5
~(\text{mod }140),$ then we have $$\Cat^m(E_7;q)=
{\left[ 9 m+1 \right]_{q^{ 2}}} {\left[ \tfrac{3m+1}{2}  
  \right]_{q^{ 12}}} {\left[ \tfrac{9m+4}{7}  \right]_{q^{ 14}}} 
 {\left[ \tfrac{9m+5}{5}  \right]_{q^{ 10}}} {\left[ 3m+2 
  \right]_{q^{ 6}}} {\left[ \tfrac{9m+7}{4}  \right]_{q^{ 8}}} 
 {\left[ m+1 \right]_{q^{ 18}}}
,$$ 
which is manifestly a polynomial in $q$ with 
non-negative integer coefficients.  
If $m\equiv
6
~(\text{mod }140),$ then we have \begin{multline*}\Cat^m(E_7;q)=
{\left[ \tfrac{9m+1}{5}  \right]_{q^{ 10}}} {\left[ 3 m+1 
  \right]_{q^{ 6}}} {\left[ \tfrac{9m+4}2 \right]_{q^{ 4}}} {\left[ 
  9 m+5 \right]_{q^2}} \\
\times{\left[ \tfrac{3m+2}{4}  
   \right]_{q^{ 24}}} \frac {{ {\left[ 6 \right]_{q^{ 4}}} 
  }} { {\left[ 2 \right]_{q^{ 4}}} {\left[ 3 
   \right]_{q^{ 4}}}}  {\left[ 9 m+7 \right]_{q^2}} 
 {\left[ \tfrac{m+1}{7} \right]_{q^{ 126}}} \frac {{ 
  {\left[ 63 \right]_{q^{ 2}}} }} { {\left[ 7 
   \right]_{q^{ 2}}} {\left[ 9 \right]_{q^{ 2}}} }
,\end{multline*} 
which, by Corollary~\ref{cor:A}, is a polynomial in $q$ with 
non-negative integer coefficients.
If $m\equiv
7
~(\text{mod }140),$ then we have \begin{multline*}\Cat^m(E_7;q)=
{\left[ \tfrac{9m+1}{4}  \right]_{q^{ 8}}} {\left[ 
  \tfrac{3m+1}{2}  \right]_{q^{ 12}}} {\left[ 9m+4 \right]_{q^{ 
  2}}}\\
\times {\left[ 9 m+5 \right]_{q^2}} {\left[ 3m+2 
  \right]_{q^{ 6}}} {\left[ \tfrac{9m+7}{35}  \right]_{q^{ 70}}} 
  \frac {{ {\left[ 35 \right]_{q^{ 2}}} }} { 
  {\left[ 5 \right]_{q^{ 2}}} {\left[ 7 \right]_{q^{ 2}}} 
  } {\left[ m+1 \right]_{q^{ 18}}}
,\end{multline*} 
which, by Corollary~\ref{cor:A}, is a polynomial in $q$ with 
non-negative integer coefficients.
If $m\equiv
8
~(\text{mod }140),$ then we have \begin{multline*}\Cat^m(E_7;q)=
{\left[ 9 m+1 \right]_{q^{ 2}}} {\left[ \tfrac{3m+1}{5}  
   \right]_{q^{ 30}}} \frac {{ {\left[ 15 \right]_{q^{ 2}}} 
  }} { {\left[ 3 \right]_{q^{ 2}}} {\left[ 5 
   \right]_{q^{ 2}}} } {\left[ \tfrac{9m+4}{4}  \right]_{q^{ 
  8}}} \\
\times{\left[ \tfrac{9m+5}{7}  \right]_{q^{ 14}}} {\left[ 
  \tfrac{3m+2}{2}  \right]_{q^{ 12}}} {\left[ 9 m+7 
  \right]_{q^2}} {\left[ m+1 \right]_{q^{ 18}}}
,\end{multline*} 
which, by Corollary~\ref{cor:A}, is a polynomial in $q$ with 
non-negative integer coefficients.
If $m\equiv
9
~(\text{mod }140),$ then we have \begin{multline*}\Cat^m(E_7;q)=
{\left[ 9 m+1 \right]_{q^{ 2}}} {\left[ \tfrac{3m+1}{14}  
   \right]_{q^{ 84}}} \frac {{ {\left[ 42 \right]_{q^{ 2}}} 
  }} { {\left[ 6 \right]_{q^{ 2}}} {\left[ 7 
   \right]_{q^{ 2}}} } {\left[ \tfrac{9m+4}{5}  \right]_{q^{ 
  10}}}\\
\times {\left[ 9 m+5 \right]_{q^2}} {\left[ 3m+2 
  \right]_{q^{ 6}}} {\left[ \tfrac{9m+7}{4}  \right]_{q^{ 8}}} 
 {\left[ m+1 \right]_{q^{ 18}}}
,\end{multline*} 
which, by Corollary~\ref{cor:A}, is a polynomial in $q$ with 
non-negative integer coefficients.
If $m\equiv
10
~(\text{mod }140),$ then we have \begin{multline*}\Cat^m(E_7;q)=
{\left[ \tfrac{9m+1}{7}  \right]_{q^{ 14}}} {\left[ 3 m+1 
  \right]_{q^{ 6}}} {\left[ \tfrac{9m+4}2 \right]_{q^{ 4}}} {\left[ 
  \tfrac{9m+5}{5}  \right]_{q^{ 10}}} \\
\times{\left[ \tfrac{3m+2}{4}  
   \right]_{q^{ 24}}} \frac {{ {\left[ 6 \right]_{q^{ 4}}} 
  }} { {\left[ 2 \right]_{q^{ 4}}} {\left[ 3 
   \right]_{q^{ 4}}} } {\left[9 m+7 \right]_{q^2}} 
 {\left[ m+1 \right]_{q^{ 18}}}
,\end{multline*} 
which, by Corollary~\ref{cor:A}, is a polynomial in $q$ with 
non-negative integer coefficients.
If $m\equiv
11
~(\text{mod }140),$ then we have \begin{multline*}\Cat^m(E_7;q)=
{\left[ \tfrac{9m+1}{5}  \right]_{q^{ 10}}} {\left[ 
   \tfrac{3m+1}{2}  \right]_{q^{ 12}}} 
 {\left[ 9m+4 
  \right]_{q^{ 2}}} {\left[ \tfrac{9m+5}{4}  \right]_{q^{ 8}}} \\
\times
 {\left[ \tfrac{3m+2}{7}  \right]_{q^{ 42}}}
\frac {[21]_{q^2}} {[3]_{q^2}\left[7\right]_{q^2}}
 {\left[ 
  9 m+7 \right]_{q^2}} {\left[ m+1 \right]_{q^{ 18}}}
,\end{multline*} 
which, by Corollary~\ref{cor:A}, is a polynomial in $q$ with 
non-negative integer coefficients.
If $m\equiv
12
~(\text{mod }140),$ then we have \begin{multline*}\Cat^m(E_7;q)=
{\left[ 9 m+1 \right]_{q^{ 2}}} {\left[ 3 m+1 \right]_{q^{ 
  6}}} {\left[ \tfrac{9m+4}{28}  \right]_{q^{ 56}}} 
  \frac {{ {\left[ 28 \right]_{q^{ 2}}} }} { 
  {\left[ 4 \right]_{q^{ 2}}} {\left[ 7 \right]_{q^{ 2}}
  }} {\left[ 9 m+5 \right]_{q^2}}\\
\times {\left[ 
  \tfrac{3m+2}{2}  \right]_{q^{ 12}}} {\left[ \tfrac{9m+7}{5}  
  \right]_{q^{ 10}}} {\left[ m+1 \right]_{q^{ 18}}}
,\end{multline*} 
which, by Corollary~\ref{cor:A}, is a polynomial in $q$ with 
non-negative integer coefficients.
If $m\equiv
13
~(\text{mod }140),$ then we have \begin{multline*}\Cat^m(E_7;q)=
{\left[ 9 m+1 \right]_{q^{ 2}}} {\left[ \tfrac{3m+1}{10}  
   \right]_{q^{ 60}}} \frac {{ {\left[ 30 \right]_{q^{ 2}}} 
  }} { {\left[ 5 \right]_{q^{ 2}}} {\left[ 6 
   \right]_{q^{ 2}}} } {\left[ 9m+4 \right]_{q^2}} \\
\times
 {\left[ 9 m+5 \right]_{q^2}} {\left[ 3m+2 
  \right]_{q^{ 6}}} {\left[ \tfrac{9m+7}{4}  \right]_{q^{ 8}}} 
 {\left[ \tfrac{m+1}{7} \right]_{q^{ 126}}} \frac {{ 
  {\left[ 63 \right]_{q^{ 2}}} }} { {\left[ 7 
   \right]_{q^{ 2}}} {\left[ 9 \right]_{q^{ 2}}} }
,\end{multline*} 
which, by Corollary~\ref{cor:A}, is a polynomial in $q$ with 
non-negative integer coefficients.
If $m\equiv
14
~(\text{mod }140),$ then we have \begin{multline*}\Cat^m(E_7;q)=
{\left[ 9 m+1 \right]_{q^{ 2}}} {\left[ 3 m+1 \right]_{q^{ 
  6}}} {\left[ \tfrac{9m+4}{10}  \right]_{q^{ 20}}}
\frac {[10]_{q^2}} {[2]_{q^2}\left[5\right]_{q^2}} \\
\times{\left[ 
  9 m+5 \right]_{q^2}} {\left[ \tfrac{3m+2}{4}  
   \right]_{q^{ 24}}} \frac {{ {\left[ 6 \right]_{q^{ 4}}} 
  }} { {\left[ 2 \right]_{q^{ 4}}} {\left[ 3 
   \right]_{q^{ 4}}} } {\left[ \tfrac{9m+7}{7}  \right]_{q^{ 
  14}}} {\left[ m+1 \right]_{q^{ 18}}}
.\end{multline*} 
If one decomposes $[9m+5]_{q^2}$ as 
$[\frac {9m} {2}+3]_{q^4}+q^2[\frac {9m} {2}+2]_{q^4}$,
then one sees that,  by Corollary~\ref{cor:A},
this is a polynomial in $q$ with 
non-negative integer coefficients.
If $m\equiv
15
~(\text{mod }140),$ then we have \begin{multline*}\Cat^m(E_7;q)=
{\left[ \tfrac{9m+1}{4}  \right]_{q^{ 8}}} {\left[ 
  \tfrac{3m+1}{2}  \right]_{q^{ 12}}} {\left[ 9m+4 \right]_{q^{ 
  2}}} \\
\times{\left[ \tfrac{9m+5}{35}  \right]_{q^{ 70}}} 
  \frac {{ {\left[ 35 \right]_{q^{ 2}}} }} { 
  {\left[ 5 \right]_{q^{ 2}}} {\left[ 7 \right]_{q^{ 2}}} 
  } {\left[ 3m+2 \right]_{q^{ 6}}} {\left[ 9 m+7 
  \right]_{q^2}} {\left[ m+1 \right]_{q^{ 18}}}
,\end{multline*} 
which, by Corollary~\ref{cor:A}, is a polynomial in $q$ with 
non-negative integer coefficients.
If $m\equiv
16
~(\text{mod }140),$ then we have \begin{multline*}\Cat^m(E_7;q)=
{\left[ \tfrac{9m+1}{5}  \right]_{q^{ 10}}} {\left[ 
   \tfrac{3m+1}{7}  \right]_{q^{ 42}}} \frac {{ {\left[ 21 
   \right]_{q^{ 2}}} }} { {\left[ 3 \right]_{q^{ 2}}} 
  {\left[ 7 \right]_{q^{ 2}}} } {\left[ \tfrac{9m+4}{4} 
   \right]_{q^{ 8}}} {\left[ 9m+5 \right]_{q^{ 2}}} \\
\times{\left[ 
  \tfrac{3m+2}{2}  \right]_{q^{ 12}}} {\left[ 9 m+7 
  \right]_{q^2}} {\left[ m+1 \right]_{q^{ 18}}}
,\end{multline*} 
which, by Corollary~\ref{cor:A}, is a polynomial in $q$ with 
non-negative integer coefficients.
If $m\equiv
17
~(\text{mod }140),$ then we have \begin{multline*}\Cat^m(E_7;q)=
{\left[ \tfrac{9m+1}{7}  \right]_{q^{ 14}}} {\left[ 
   \tfrac{3m+1}{4}  \right]_{q^{ 24}}} \frac {{ {\left[ 6 
   \right]_{q^{ 4}}} }} { {\left[ 2 \right]_{q^{ 4}}} 
  {\left[ 3 \right]_{q^{ 4}}} } {\left[ 9m+4 
  \right]_{q^{ 2}}} {\left[\tfrac{ 9 m+5}2 \right]_{q^4}}\\
\times {\left[ 
  3m+2 \right]_{q^{ 6}}} {\left[ \tfrac{9m+7}{5}  \right]_{q^{ 
  10}}} {\left[ m+1 \right]_{q^{ 18}}}
,\end{multline*} 
which, by Corollary~\ref{cor:A}, is a polynomial in $q$ with 
non-negative integer coefficients.
If $m\equiv
18
~(\text{mod }140),$ then we have \begin{multline*}\Cat^m(E_7;q)=
{\left[ 9 m+1 \right]_{q^{ 2}}} 
{\left[ \tfrac{3m+1}5 
   \right]_{q^{ 30}}} 
\frac {[15]_{q^2}} {[3]_{q^2}\left[5\right]_{q^2}}\\
\times
{\left[ \tfrac{9m+4}2 \right]_{q^{ 4}}} 
 {\left[ 9m+5 \right]_{q^{ 2}}} {\left[ \tfrac{3m+2}{28} 
   \right]_{q^{ 168}}} 
\frac {[84]_{q^2}\left[2\right]_{q^2}} 
{[4]_{q^2}\left[6\right]_{q^2}\left[7\right]_{q^2}}
{\left[ 9m+7 \right]_{q^{ 2}}} 
 {\left[ m+1 \right]_{q^{ 18}}}
,\end{multline*} 
which, by Corollary~\ref{cor:A} and Lemma~\ref{lem:H}, 
is a polynomial in $q$ with 
non-negative integer coefficients.
If $m\equiv
19
~(\text{mod }140),$ then we have \begin{multline*}\Cat^m(E_7;q)=
{\left[ \tfrac{9m+1}{4}  \right]_{q^{ 8}}} {\left[ 
  \tfrac{3m+1}{2}  \right]_{q^{ 12}}} {\left[ \tfrac{9m+4}{35} 
    \right]_{q^{ 70}}} \frac {{ {\left[ 35 \right]_{q^{ 2}}} 
  }} { {\left[ 5 \right]_{q^{ 2}}} {\left[ 7 
   \right]_{q^{ 2}}} } {\left[ 9m+5 \right]_{q^{ 2}}} \\
\times
 {\left[ 3m+2 \right]_{q^{ 6}}} {\left[ 9 m+7 
  \right]_{q^2}} {\left[ m+1 \right]_{q^{ 18}}}
,\end{multline*} 
which, by Corollary~\ref{cor:A}, is a polynomial in $q$ with 
non-negative integer coefficients.
If $m\equiv
20
~(\text{mod }140),$ then we have \begin{multline*}\Cat^m(E_7;q)=
{\left[ 9 m+1 \right]_{q^{ 2}}} {\left[ 3 m+1 \right]_{q^{ 
  6}}} {\left[ \tfrac{9m+4}{4}  \right]_{q^{ 8}}} {\left[ 
  \tfrac{9m+5}{5}  \right]_{q^{ 10}}} {\left[ \tfrac{3m+2}{2}  
  \right]_{q^{ 12}}} \\
\times{\left[ 9 m+7 \right]_{q^2}} 
 {\left[ \tfrac{m+1}{7} \right]_{q^{ 126}}} \frac {{ 
  {\left[ 63 \right]_{q^{ 2}}} }} { {\left[ 7 
   \right]_{q^{ 2}}} {\left[ 9 \right]_{q^{ 2}}} }
,\end{multline*} 
which, by Corollary~\ref{cor:A}, is a polynomial in $q$ with 
non-negative integer coefficients.
If $m\equiv
21
~(\text{mod }140),$ then we have \begin{multline*}\Cat^m(E_7;q)=
{\left[ \tfrac{9m+1}{5}  \right]_{q^{ 10}}} {\left[ 
   \tfrac{3m+1}{4}  \right]_{q^{ 24}}} \frac {{ {\left[ 6 
   \right]_{q^{ 4}}} }} { {\left[ 2 \right]_{q^{ 4}}} 
  {\left[ 3 \right]_{q^{ 4}}} } {\left[ 9m+4 
  \right]_{q^{ 2}}} {\left[ \tfrac{9 m+5}2 \right]_{q^4}}\\
\times {\left[ 
  3m+2 \right]_{q^{ 6}}} {\left[ \tfrac{9m+7}{7}  \right]_{q^{ 
  14}}} {\left[ m+1 \right]_{q^{ 18}}}
,\end{multline*} 
which, by Corollary~\ref{cor:A}, is a polynomial in $q$ with 
non-negative integer coefficients.
If $m\equiv
22
~(\text{mod }140),$ then we have \begin{multline*}\Cat^m(E_7;q)=
{\left[ 9 m+1 \right]_{q^{ 2}}} {\left[ 3 m+1 \right]_{q^{ 
  6}}} {\left[ \tfrac{9m+4}2 \right]_{q^4}} {\left[ 
  \tfrac{9m+5}{7}  \right]_{q^{ 14}}} {\left[ \tfrac{3m+2}{4}  
   \right]_{q^{ 24}}} \frac {{ {\left[ 6 \right]_{q^{ 4}}} 
  }} { {\left[ 2 \right]_{q^{ 4}}} {\left[ 3 
   \right]_{q^{ 4}}} } \\
\times{\left[ \tfrac{9m+7}{5}  \right]_{q^{ 
  10}}} {\left[ m+1 \right]_{q^{ 18}}}
,\end{multline*} 
which, by Corollary~\ref{cor:A}, is a polynomial in $q$ with 
non-negative integer coefficients.
If $m\equiv
23
~(\text{mod }140),$ then we have \begin{multline*}\Cat^m(E_7;q)=
{\left[ \tfrac{9m+1}{4}  \right]_{q^{ 8}}} {\left[ 
   \tfrac{3 m+1}{35}  \right]_{q^{ 210}}} \frac {{ {\left[ 105
    \right]_{q^{ 2}}} }} { {\left[ 3 \right]_{q^{ 2}}}
{\left[ 5 \right]_{q^{ 2}}} 
  {\left[ 7 \right]_{q^{ 2}}} } {\left[ 9m+4 
  \right]_{q^{ 2}}} {\left[ 9 m+5 \right]_{q^2}}\\
\times {\left[ 
  3m+2 \right]_{q^{ 6}}} {\left[ 9 m+7 \right]_{q^2}} 
 {\left[ \tfrac{m+1}{2} \right]_{q^{ 36}}} \frac {{ 
  {\left[ 6 \right]_{q^{ 6}}} }} { {\left[ 2 
   \right]_{q^{ 6}}} {\left[ 3 \right]_{q^{ 6}}} }
,\end{multline*} 
which, by Corollary~\ref{cor:A} and Lemma~\ref{lem:I}, 
is a polynomial in $q$ with 
non-negative integer coefficients.
If $m\equiv
24
~(\text{mod }140),$ then we have \begin{multline*}\Cat^m(E_7;q)=
{\left[ \tfrac{9m+1}{7}  \right]_{q^{ 14}}} {\left[ 3 m+1 
  \right]_{q^{ 6}}} {\left[ \tfrac{9m+4}{20}  \right]_{q^{ 40}}} 
  \frac {{ {\left[ 20 \right]_{q^{ 2}}} }} { 
  {\left[ 4 \right]_{q^{ 2}}} {\left[ 5 \right]_{q^{ 2}}} }
   {\left[ 9m+5 \right]_{q^{ 2}}} \\
\times{\left[ 
  \tfrac{3m+2}{2}  \right]_{q^{ 12}}} {\left[ 9 m+7 
  \right]_{q^2}} {\left[ m+1 \right]_{q^{ 18}}}
,\end{multline*} 
which, by Corollary~\ref{cor:A}, is a polynomial in $q$ with 
non-negative integer coefficients.
If $m\equiv
25
~(\text{mod }140),$ then we have \begin{multline*}\Cat^m(E_7;q)=
{\left[ 9 m+1 \right]_{q^{ 2}}} {\left[ \tfrac{3m+1}{2}  
   \right]_{q^{ 12}}} \\
\times
 {\left[ 9m+4 \right]_{q^2}} 
 {\left[ \tfrac{9m+5}{5}  \right]_{q^{ 10}}} {\left[ 
  \tfrac{3m+2}{7}  \right]_{q^{ 42}}}
\frac {[21]_{q^2}} {[3]_{q^2}\left[7\right]_{q^2}}
 {\left[ \tfrac{9m+7}{4}  
  \right]_{q^{ 8}}} {\left[ m+1 \right]_{q^{ 18}}}
,\end{multline*} 
which, by Corollary~\ref{cor:A}, is a polynomial in $q$ with 
non-negative integer coefficients.
If $m\equiv
26
~(\text{mod }140),$ then we have \begin{multline*}\Cat^m(E_7;q)=
{\left[ \tfrac{9m+1}{5}  \right]_{q^{ 10}}} {\left[ 3 m+1 
  \right]_{q^{ 6}}} {\left[ \tfrac{9m+4}{14}  \right]_{q^{ 28}}}
\frac {[14]_{q^2}} {[2]_{q^2}\left[7\right]_{q^2}} 
 {\left[ 9m+5 \right]_{q^{ 2}}} \\
\times{\left[ \tfrac{3m+2}{4}  
   \right]_{q^{ 24}}} \frac {{ {\left[ 6 \right]_{q^{ 4}}} 
  }} { {\left[ 2 \right]_{q^{ 4}}} {\left[ 3 
   \right]_{q^{ 4}}} } {\left[ 9 m+7 \right]_{q^2}} 
 {\left[ m+1 \right]_{q^{ 18}}}
.\end{multline*} 
If one decomposes $[9m+7]_{q^2}$ as 
$[\frac {9m} {2}+4]_{q^4}+q^2[\frac {9m} {2}+3]_{q^4}$,
then one sees that, by Corollary~\ref{cor:A},
this is a polynomial in $q$ with 
non-negative integer coefficients.
If $m\equiv
27
~(\text{mod }140),$ then we have \begin{multline*}\Cat^m(E_7;q)=
{\left[ \tfrac{9m+1}{4}  \right]_{q^{ 8}}} {\left[ 
  \tfrac{3m+1}{2}  \right]_{q^{ 12}}} {\left[ 9m+4 \right]_{q^{ 
  2}}}\\
\times {\left[ 9 m+5 \right]_{q^2}} {\left[ 3m+2 
  \right]_{q^{ 6}}} {\left[ \tfrac{9m+7}{5}  \right]_{q^{ 10}}} 
 {\left[ \tfrac{m+1}{7} \right]_{q^{ 126}}} \frac {{ 
  {\left[ 63 \right]_{q^{ 2}}} }} { {\left[ 7 
   \right]_{q^{ 2}}} {\left[ 9 \right]_{q^{ 2}}} }
,\end{multline*} 
which, by Corollary~\ref{cor:A}, is a polynomial in $q$ with 
non-negative integer coefficients.
If $m\equiv
28
~(\text{mod }140),$ then we have \begin{multline*}\Cat^m(E_7;q)=
{\left[ 9 m+1 \right]_{q^{ 2}}} {\left[ \tfrac{3m+1}{5}  
   \right]_{q^{ 30}}} \frac {{ {\left[ 15 \right]_{q^{ 2}}} 
  }} { {\left[ 3 \right]_{q^{ 2}}} {\left[ 5 
   \right]_{q^{ 2}}} } {\left[ \tfrac{9m+4}{4}  \right]_{q^{ 
  8}}} {\left[ 9 m+5 \right]_{q^2}}\\
\times {\left[ 
  \tfrac{3m+2}{2}  \right]_{q^{ 12}}} {\left[ \tfrac{9m+7}{7}  
  \right]_{q^{ 14}}} {\left[ m+1 \right]_{q^{ 18}}}
,\end{multline*} 
which, by Corollary~\ref{cor:A}, is a polynomial in $q$ with 
non-negative integer coefficients.
If $m\equiv
29
~(\text{mod }140),$ then we have $$\Cat^m(E_7;q)=
{\left[ 9 m+1 \right]_{q^{ 2}}} {\left[ \tfrac{3m+1}{2}  
  \right]_{q^{ 12}}} {\left[ \tfrac{9m+4}{5}  \right]_{q^{ 10}}} 
 {\left[ \tfrac{9m+5}{7}  \right]_{q^{ 14}}} {\left[ 3m+2 
  \right]_{q^{ 6}}} {\left[ \tfrac{9m+7}{4}  \right]_{q^{ 8}}} 
 {\left[ m+1 \right]_{q^{ 18}}}
,$$ 
which is manifestly a polynomial in $q$ with 
non-negative integer coefficients.  
If $m\equiv
30
~(\text{mod }140),$ then we have \begin{multline*}\Cat^m(E_7;q)=
{\left[ 9 m+1 \right]_{q^{ 2}}} {\left[ \tfrac{3m+1}{7}  
   \right]_{q^{ 42}}} \frac {{ {\left[ 21 \right]_{q^{ 2}}} 
  }} { {\left[ 3 \right]_{q^{ 2}}} {\left[ 7 
   \right]_{q^{ 2}}} } {\left[ \tfrac{9m+4}2 \right]_{q^4}} 
 {\left[ \tfrac{9m+5}{5}  \right]_{q^{ 10}}} \\
\times{\left[ 
   \tfrac{3m+2}{4}  \right]_{q^{ 24}}} \frac {{ {\left[ 6 
   \right]_{q^{ 4}}} }} { {\left[ 2 \right]_{q^{ 4}}} 
  {\left[ 3 \right]_{q^{ 4}}} } {\left[ 9 m+7 
  \right]_{q^2}} {\left[ m+1 \right]_{q^{ 18}}}
,\end{multline*} 
which, by Corollary~\ref{cor:A}, is a polynomial in $q$ with 
non-negative integer coefficients.
If $m\equiv
31
~(\text{mod }140),$ then we have \begin{multline*}\Cat^m(E_7;q)=
{\left[ \tfrac{9m+1}{35}  \right]_{q^{ 70}}} \frac {{ 
  {\left[ 35 \right]_{q^{ 2}}} }} { {\left[ 5 
   \right]_{q^{ 2}}} {\left[ 7 \right]_{q^{ 2}}} } {\left[ 
  \tfrac{3m+1}{2}  \right]_{q^{ 12}}} {\left[ 9m+4 \right]_{q^{ 
  2}}}\\
\times {\left[ \tfrac{9m+5}{4}  \right]_{q^{ 8}}} {\left[ 3m+2
   \right]_{q^{ 6}}} {\left[ 9 m+7 \right]_{q^2}} 
 {\left[ m+1 \right]_{q^{ 18}}}
,\end{multline*} 
which, by Corollary~\ref{cor:A}, is a polynomial in $q$ with 
non-negative integer coefficients.
If $m\equiv
32
~(\text{mod }140),$ then we have \begin{multline*}\Cat^m(E_7;q)=
{\left[ 9 m+1 \right]_{q^{ 2}}} {\left[ 3 m+1 \right]_{q^{ 
  6}}} {\left[ \tfrac{9m+4}{4}  \right]_{q^{ 8}}} \\
\times{\left[ 
  9 m+5 \right]_{q^2}} {\left[ \tfrac{3m+2}{14}  
   \right]_{q^{ 84}}} \frac {{ {\left[ 42 \right]_{q^{ 2}}} 
  }} { {\left[ 6 \right]_{q^{ 2}}} {\left[ 7 
   \right]_{q^{ 2}}} } {\left[ \tfrac{9m+7}{5}  \right]_{q^{ 
  10}}} {\left[ m+1 \right]_{q^{ 18}}}
,\end{multline*} 
which, by Corollary~\ref{cor:A}, is a polynomial in $q$ with 
non-negative integer coefficients.
If $m\equiv
33
~(\text{mod }140),$ then we have \begin{multline*}\Cat^m(E_7;q)=
{\left[ 9 m+1 \right]_{q^{ 2}}} {\left[ \tfrac{3m+1}{10}  
   \right]_{q^{ 60}}} \frac {{ {\left[ 30 \right]_{q^{ 2}}} 
  }} { {\left[ 5 \right]_{q^{ 2}}} {\left[ 6 
   \right]_{q^{ 2}}} } {\left[ \tfrac{9m+4}{7}  \right]_{q^{ 
  14}}}\\
\times {\left[ 9 m+5 \right]_{q^2}} {\left[ 3m+2 
  \right]_{q^{ 6}}} {\left[ \tfrac{9m+7}{4}  \right]_{q^{ 8}}} 
 {\left[ m+1 \right]_{q^{ 18}}}
,\end{multline*} 
which, by Corollary~\ref{cor:A}, is a polynomial in $q$ with 
non-negative integer coefficients.
If $m\equiv
34
~(\text{mod }140),$ then we have \begin{multline*}\Cat^m(E_7;q)=
{\left[ 9 m+1 \right]_{q^{ 2}}} {\left[ 3 m+1 \right]_{q^{ 
  6}}} {\left[ \tfrac{9m+4}{10}  \right]_{q^{ 20}}}
\frac {[10]_{q^2}} {[2]_{q^2}\left[5\right]_{q^2}}
 {\left[ 
  9 m+5 \right]_{q^2}}\\
\times {\left[ \tfrac{3m+2}{4}  
   \right]_{q^{ 24}}} \frac {{ {\left[ 6 \right]_{q^{ 4}}} 
  }} { {\left[ 2 \right]_{q^{ 4}}} {\left[ 3 
   \right]_{q^{ 4}}} } {\left[ 9 m+7 \right]_{q^2}} 
 {\left[ \tfrac{m+1}{7} \right]_{q^{ 126}}} \frac {{ 
  {\left[ 63 \right]_{q^{ 2}}} }} { {\left[ 7 
   \right]_{q^{ 2}}} {\left[ 9 \right]_{q^{ 2}}} }
.\end{multline*} 
If one decomposes $[9m+7]_{q^2}$ as 
$[\frac {9m} {2}+4]_{q^4}+q^2[\frac {9m} {2}+3]_{q^4}$,
then one sees that, by Corollary~\ref{cor:A},
this is a polynomial in $q$ with 
non-negative integer coefficients.
If $m\equiv
35
~(\text{mod }140),$ then we have $$\Cat^m(E_7;q)=
{\left[ \tfrac{9m+1}{4}  \right]_{q^{ 8}}} {\left[ 
  \tfrac{3m+1}{2}  \right]_{q^{ 12}}} {\left[ 9m+4 \right]_{q^{ 
  2}}} {\left[ \tfrac{9m+5}{5}  \right]_{q^{ 10}}} {\left[ 
  3m+2 \right]_{q^{ 6}}} {\left[ \tfrac{9m+7}{7}  \right]_{q^{ 
  14}}} {\left[ m+1 \right]_{q^{ 18}}}
,$$ 
which is manifestly a polynomial in $q$ with 
non-negative integer coefficients.  
If $m\equiv
36
~(\text{mod }140),$ then we have $$\Cat^m(E_7;q)=
{\left[ \tfrac{9m+1}{5}  \right]_{q^{ 10}}} {\left[ 3 m+1 
  \right]_{q^{ 6}}} {\left[ \tfrac{9m+4}{4}  \right]_{q^{ 8}}} 
 {\left[ \tfrac{9m+5}{7}  \right]_{q^{ 14}}} {\left[ 
  \tfrac{3m+2}{2}  \right]_{q^{ 12}}} {\left[ 9m+7 \right]_{q^{ 
  2}}} {\left[ m+1 \right]_{q^{ 18}}}
,$$ 
which is manifestly a polynomial in $q$ with 
non-negative integer coefficients.  
If $m\equiv
37
~(\text{mod }140),$ then we have \begin{multline*}\Cat^m(E_7;q)=
{\left[ 9 m+1 \right]_{q^{ 2}}} {\left[ \tfrac{3m+1}{14}  
   \right]_{q^{ 84}}} \frac {{ {\left[ 42 \right]_{q^{ 2}}} 
  }} { {\left[ 6 \right]_{q^{ 2}}} {\left[ 7 
   \right]_{q^{ 2}}} } {\left[ 9m+4 \right]_{q^2}}\\
\times 
 {\left[ 9 m+5 \right]_{q^2}} {\left[ 3m+2 
  \right]_{q^{ 6}}} {\left[ \tfrac{9m+7}{20}  \right]_{q^{ 40}}} 
  \frac {{ {\left[ 20 \right]_{q^{ 2}}} }} { 
  {\left[ 4 \right]_{q^{ 2}}} {\left[ 5 \right]_{q^{ 2}}} 
  } {\left[ m+1 \right]_{q^{ 18}}}
,\end{multline*} 
which, by Corollary~\ref{cor:A}, is a polynomial in $q$ with 
non-negative integer coefficients.
If $m\equiv
38
~(\text{mod }140),$ then we have \begin{multline*}\Cat^m(E_7;q)=
{\left[ \tfrac{9m+1}{7}  \right]_{q^{ 14}}} {\left[ 
   \tfrac{3m+1}{5}  \right]_{q^{ 30}}} \frac {{ {\left[ 15 
   \right]_{q^{ 2}}} }} { {\left[ 3 \right]_{q^{ 2}}} 
  {\left[ 5 \right]_{q^{ 2}}} } {\left[ \tfrac{9m+4}2 
  \right]_{q^{ 4}}} {\left[ 9 m+5 \right]_{q^2}} \\
\times
 {\left[ \tfrac{3m+2}{4}  \right]_{q^{ 24}}} \frac {{ 
  {\left[ 6 \right]_{q^{ 4}}} }} { {\left[ 2 
   \right]_{q^{ 4}}} {\left[ 3 \right]_{q^{ 4}}} } 
 {\left[ 9 m+7 \right]_{q^2}} {\left[ m+1 \right]_{q^{ 
  18}}}
,\end{multline*} 
which, by Corollary~\ref{cor:A}, is a polynomial in $q$ with 
non-negative integer coefficients.
If $m\equiv
39
~(\text{mod }140),$ then we have \begin{multline*}\Cat^m(E_7;q)=
{\left[ \tfrac{9m+1}{4}  \right]_{q^{ 8}}} {\left[ 
   \tfrac{3m+1}{2}  \right]_{q^{ 12}}} 
{\left[ \tfrac{9m+4}{5} 
   \right]_{q^{ 10}}} {\left[ 9m+5 \right]_{q^{ 2}}} \\
\times{\left[ 
  \tfrac{3m+2}{7}  \right]_{q^{ 42}}} 
\frac {[21]_{q^2}} {[3]_{q^2}\left[7\right]_{q^2}}
{\left[ 9 m+7
  \right]_{q^2}} {\left[ m+1 \right]_{q^{ 18}}}
,\end{multline*} 
which, by Corollary~\ref{cor:A}, is a polynomial in $q$ with 
non-negative integer coefficients.
If $m\equiv
40
~(\text{mod }140),$ then we have \begin{multline*}\Cat^m(E_7;q)=
{\left[ 9 m+1 \right]_{q^{ 2}}} {\left[ 3 m+1 \right]_{q^{ 
  6}}} {\left[ \tfrac{9m+4}{28}  \right]_{q^{ 56}}} 
  \frac {{ {\left[ 28 \right]_{q^{ 2}}} }} { 
  {\left[ 4 \right]_{q^{ 2}}} {\left[ 7 \right]_{q^{ 2}}} 
  }\\
\times {\left[ \tfrac{9m+5}{5}  \right]_{q^{ 10}}} {\left[ 
  \tfrac{3m+2}{2}  \right]_{q^{ 12}}} {\left[ 9 m+7 
  \right]_{q^2}} {\left[ m+1 \right]_{q^{ 18}}}
,\end{multline*} 
which, by Corollary~\ref{cor:A}, is a polynomial in $q$ with 
non-negative integer coefficients.
If $m\equiv
41
~(\text{mod }140),$ then we have \begin{multline*}\Cat^m(E_7;q)=
{\left[ \tfrac{9m+1}{5}  \right]_{q^{ 10}}} {\left[ 
  \tfrac{3m+1}{2}  \right]_{q^{ 12}}} {\left[ 9m+4 \right]_{q^{ 
  2}}}\\
\times {\left[ 9 m+5 \right]_{q^2}} {\left[ 3m+2 
  \right]_{q^{ 6}}} {\left[ \tfrac{9m+7}{4}  \right]_{q^{ 8}}} 
 {\left[ \tfrac{m+1}{7} \right]_{q^{ 126}}} \frac {{ 
  {\left[ 63 \right]_{q^{ 2}}} }} { {\left[ 7 
   \right]_{q^{ 2}}} {\left[ 9 \right]_{q^{ 2}}} }
,\end{multline*} 
which, by Corollary~\ref{cor:A}, is a polynomial in $q$ with 
non-negative integer coefficients.
If $m\equiv
42
~(\text{mod }140),$ then we have \begin{multline*}\Cat^m(E_7;q)=
{\left[ 9 m+1 \right]_{q^{ 2}}} {\left[ 3 m+1 \right]_{q^{ 
  6}}}\\
\times {\left[ \tfrac{9m+4}2 \right]_{q^4}} {\left[ 9 m+5 
  \right]_{q^2}} {\left[ \tfrac{3m+2}{4}  \right]_{q^{ 24}}} 
  \frac {{ {\left[ 6 \right]_{q^{ 4}}} }} { 
  {\left[ 2 \right]_{q^{ 4}}} {\left[ 3 \right]_{q^{ 4}}} 
  } {\left[ \tfrac{9m+7}{35}  \right]_{q^{ 70}}} 
  \frac {{ {\left[ 35 \right]_{q^{ 2}}} }} { 
  {\left[ 5 \right]_{q^{ 2}}} {\left[ 7 \right]_{q^{ 2}}} 
  } {\left[ m+1 \right]_{q^{ 18}}}
,\end{multline*} 
which, by Corollary~\ref{cor:A}, is a polynomial in $q$ with 
non-negative integer coefficients.
If $m\equiv
43
~(\text{mod }140),$ then we have \begin{multline*}\Cat^m(E_7;q)=
{\left[ \tfrac{9m+1}{4}  \right]_{q^{ 8}}} {\left[ 
   \tfrac{3m+1}{10}  \right]_{q^{ 60}}} \frac {{ {\left[ 30
    \right]_{q^{ 2}}} }} { {\left[ 5 \right]_{q^{ 2}}} 
  {\left[ 6 \right]_{q^{ 2}}} } {\left[ 9m+4 
  \right]_{q^{ 2}}}\\
\times {\left[ \tfrac{9m+5}{7}  \right]_{q^{ 14}}} 
 {\left[ 3m+2 \right]_{q^{ 6}}} {\left[ 9 m+7 
  \right]_{q^2}} {\left[ m+1 \right]_{q^{ 18}}}
,\end{multline*} 
which, by Corollary~\ref{cor:A}, is a polynomial in $q$ with 
non-negative integer coefficients.
If $m\equiv
44
~(\text{mod }140),$ then we have \begin{multline*}\Cat^m(E_7;q)=
{\left[ 9 m+1 \right]_{q^{ 2}}} {\left[ \tfrac{3m+1}{7}  
   \right]_{q^{ 42}}} \frac {{ {\left[ 21 \right]_{q^{ 2}}} 
  }} { {\left[ 3 \right]_{q^{ 2}}} {\left[ 7 
   \right]_{q^{ 2}}} } {\left[ \tfrac{9m+4}{20}  
   \right]_{q^{ 40}}} \frac {{ {\left[ 20 \right]_{q^{ 2}}} 
  }} { {\left[ 4 \right]_{q^{ 2}}} {\left[ 5 
   \right]_{q^{ 2}}} } {\left[ 9 m+5 \right]_{q^2}}\\
\times 
 {\left[ \tfrac{3m+2}{2}  \right]_{q^{ 12}}} {\left[ 
  9 m+7 \right]_{q^2}} {\left[ m+1 \right]_{q^{ 18}}}
,\end{multline*} 
which, by Corollary~\ref{cor:A}, is a polynomial in $q$ with 
non-negative integer coefficients.
If $m\equiv
45
~(\text{mod }140),$ then we have $$\Cat^m(E_7;q)=
{\left[ \tfrac{9m+1}{7}  \right]_{q^{ 14}}} {\left[ 
  \tfrac{3m+1}{2}  \right]_{q^{ 12}}} {\left[ 9m+4 \right]_{q^{ 
  2}}} {\left[ \tfrac{9m+5}{5}  \right]_{q^{ 10}}} {\left[ 
  3m+2 \right]_{q^{ 6}}} {\left[ \tfrac{9m+7}{4}  \right]_{q^{ 
  8}}} {\left[ m+1 \right]_{q^{ 18}}}
,$$ 
which is manifestly a polynomial in $q$ with 
non-negative integer coefficients.  
If $m\equiv
46
~(\text{mod }140),$ then we have \begin{multline*}\Cat^m(E_7;q)=
{\left[ \tfrac{9 m+1}5 \right]_{q^{ 10}}} 
{\left[ {3m+1} 
   \right]_{q^{ 6}}} \\
\times
{\left[ \tfrac{9m+4}2 \right]_{q^{ 4}}} 
 {\left[ 9m+5 \right]_{q^{ 2}}} {\left[ \tfrac{3m+2}{28} 
   \right]_{q^{ 168}}} 
\frac {[84]_{q^2}\left[2\right]_{q^2}} 
{[4]_{q^2}\left[6\right]_{q^2}\left[7\right]_{q^2}}
{\left[ 9m+7 \right]_{q^{ 2}}} 
 {\left[ m+1 \right]_{q^{ 18}}}
,\end{multline*} 
which, by Lemma~\ref{lem:H}, is a polynomial in $q$ with 
non-negative integer coefficients.
If $m\equiv
47
~(\text{mod }140),$ then we have $$\Cat^m(E_7;q)=
{\left[ \tfrac{9m+1}{4}  \right]_{q^{ 8}}} {\left[ 
  \tfrac{3m+1}{2}  \right]_{q^{ 12}}} {\left[ \tfrac{9m+4}{7}  
  \right]_{q^{ 14}}} {\left[ 9m+5 \right]_{q^{ 2}}} {\left[ 
  3m+2 \right]_{q^{ 6}}} {\left[ \tfrac{9m+7}{5}  \right]_{q^{ 
  10}}} {\left[ m+1 \right]_{q^{ 18}}}
,$$ 
which is manifestly a polynomial in $q$ with 
non-negative integer coefficients.  
If $m\equiv
48
~(\text{mod }140),$ then we have \begin{multline*}\Cat^m(E_7;q)=
{\left[ 9 m+1 \right]_{q^{ 2}}} {\left[ \tfrac{3m+1}{5}  
   \right]_{q^{ 30}}} \frac {{ {\left[ 15 \right]_{q^{ 2}}} 
  }} { {\left[ 3 \right]_{q^{ 2}}} {\left[ 5 
   \right]_{q^{ 2}}} } {\left[ \tfrac{9m+4}{4}  \right]_{q^{ 
  8}}} {\left[ 9 m+5 \right]_{q^2}}\\
\times {\left[ 
  \tfrac{3m+2}{2}  \right]_{q^{ 12}}} {\left[ 9 m+7 
  \right]_{q^2}} {\left[ \tfrac{m+1}{7} \right]_{q^{ 126}}} 
  \frac {{ {\left[ 63 \right]_{q^{ 2}}} }} { 
  {\left[ 7 \right]_{q^{ 2}}} {\left[ 9 \right]_{q^{ 2}}} 
  }
,\end{multline*} 
which, by Corollary~\ref{cor:A}, is a polynomial in $q$ with 
non-negative integer coefficients.
If $m\equiv
49
~(\text{mod }140),$ then we have \begin{multline*}\Cat^m(E_7;q)=
{\left[ 9 m+1 \right]_{q^{ 2}}} {\left[ \tfrac{3m+1}{4}  
   \right]_{q^{ 24}}} \frac {{ {\left[ 6 \right]_{q^{ 4}}} 
  }} { {\left[ 2 \right]_{q^{ 4}}} {\left[ 3 
   \right]_{q^{ 4}}} } {\left[ \tfrac{9m+4}{5}  \right]_{q^{ 
  10}}} {\left[ \tfrac{9 m+5}2 \right]_{q^4}}\\
\times {\left[ 3m+2 
  \right]_{q^{ 6}}} {\left[ \tfrac{9m+7}{7}  \right]_{q^{ 14}}} 
 {\left[ m+1 \right]_{q^{ 18}}}
,\end{multline*} 
which, by Corollary~\ref{cor:A}, is a polynomial in $q$ with 
non-negative integer coefficients.
If $m\equiv
50
~(\text{mod }140),$ then we have \begin{multline*}\Cat^m(E_7;q)=
{\left[ 9 m+1 \right]_{q^{ 2}}} {\left[ 3 m+1 \right]_{q^{ 
  6}}} {\left[ \tfrac{9m+4}2 \right]_{q^4}}\\
\times {\left[ 
   \tfrac{9m+5}{35}  \right]_{q^{ 70}}} \frac {{ {\left[ 35
    \right]_{q^{ 2}}} }} { {\left[ 5 \right]_{q^{ 2}}} 
  {\left[ 7 \right]_{q^{ 2}}} } {\left[ 
   \tfrac{3m+2}{4}  \right]_{q^{ 24}}} \frac {{ {\left[ 6 
   \right]_{q^{ 4}}} }} { {\left[ 2 \right]_{q^{ 4}}} 
  {\left[ 3 \right]_{q^{ 4}}} } {\left[ 9 m+7 
  \right]_{q^2}} {\left[ m+1 \right]_{q^{ 18}}}
,\end{multline*} 
which, by Corollary~\ref{cor:A}, is a polynomial in $q$ with 
non-negative integer coefficients.
If $m\equiv
51
~(\text{mod }140),$ then we have \begin{multline*}\Cat^m(E_7;q)=
{\left[ \tfrac{9m+1}{5}  \right]_{q^{ 10}}} {\left[ 
   \tfrac{3m+1}{14}  \right]_{q^{ 84}}} \frac {{ {\left[ 42
    \right]_{q^{ 2}}} }} { {\left[ 6 \right]_{q^{ 2}}} 
  {\left[ 7 \right]_{q^{ 2}}} } {\left[ 9m+4 
  \right]_{q^{ 2}}}\\
\times {\left[ \tfrac{9m+5}{4}  \right]_{q^{ 8}}} 
 {\left[ 3m+2 \right]_{q^{ 6}}} {\left[ 9 m+7 
  \right]_{q^2}} {\left[ m+1 \right]_{q^{ 18}}}
,\end{multline*} 
which, by Corollary~\ref{cor:A}, is a polynomial in $q$ with 
non-negative integer coefficients.
If $m\equiv
52
~(\text{mod }140),$ then we have $$\Cat^m(E_7;q)=
{\left[ \tfrac{9m+1}{7}  \right]_{q^{ 14}}} {\left[ 3 m+1 
  \right]_{q^{ 6}}} {\left[ \tfrac{9m+4}{4}  \right]_{q^{ 8}}} 
 {\left[ 9m+5 \right]_{q^{ 2}}} {\left[ \tfrac{3m+2}{2}  
  \right]_{q^{ 12}}} {\left[ \tfrac{9m+7}{5}  \right]_{q^{ 10}}} 
 {\left[ m+1 \right]_{q^{ 18}}}
,$$ 
which is manifestly a polynomial in $q$ with 
non-negative integer coefficients.  
If $m\equiv
53
~(\text{mod }140),$ then we have \begin{multline*}\Cat^m(E_7;q)=
{\left[ 9 m+1 \right]_{q^{ 2}}} {\left[ \tfrac{3m+1}{10}  
   \right]_{q^{ 60}}} \frac {{ {\left[ 30 \right]_{q^{ 2}}} 
  }} { {\left[ 5 \right]_{q^{ 2}}} {\left[ 6 
   \right]_{q^{ 2}}} } {\left[ 9m+4 \right]_{q^2}}\\
\times 
 {\left[ 9 m+5 \right]_{q^2}} {\left[ 
   \tfrac{3m+2}{7}  \right]_{q^{ 42}}} \frac {{ {\left[ 21 
   \right]_{q^{ 2}}} }} { {\left[ 3 \right]_{q^{ 2}}} 
  {\left[ 7 \right]_{q^{ 2}}} } {\left[ \tfrac{9m+7}{4} 
   \right]_{q^{ 8}}} {\left[ m+1 \right]_{q^{ 18}}}
,\end{multline*} 
which, by Corollary~\ref{cor:A}, is a polynomial in $q$ with 
non-negative integer coefficients.
If $m\equiv
54
~(\text{mod }140),$ then we have \begin{multline*}\Cat^m(E_7;q)=
{\left[ 9 m+1 \right]_{q^{ 2}}} {\left[ 3 m+1 \right]_{q^{ 
  6}}} {\left[ \tfrac{9m+4}{70}  \right]_{q^{ 140}}} 
  \frac {{ {\left[ 70 \right]_{q^{ 2}}} }} { 
  {\left[ 2 \right]_{q^{ 2}}}
  {\left[ 5 \right]_{q^{ 2}}} {\left[ 7 \right]_{q^{ 2}}} 
  }
 {\left[ 9 m+5 \right]_{q^2}}\\
\times {\left[ 
   \tfrac{3m+2}{4}  \right]_{q^{ 24}}} \frac {{ {\left[ 6 
   \right]_{q^{ 4}}} }} { {\left[ 2 \right]_{q^{ 4}}} 
  {\left[ 3 \right]_{q^{ 4}}} } {\left[ 9 m+7 
  \right]_{q^2}} {\left[ m+1 \right]_{q^{ 18}}}
.\end{multline*} 
If one decomposes $[9m+7]_{q^2}$ as 
$[\frac {9m} {2}+4]_{q^4}+q^2[\frac {9m} {2}+3]_{q^4}$,
then one sees that, by Corollary~\ref{cor:A} and Lemma~\ref{lem:J},
this is a polynomial in $q$ with 
non-negative integer coefficients.
If $m\equiv
55
~(\text{mod }140),$ then we have \begin{multline*}\Cat^m(E_7;q)=
{\left[ \tfrac{9m+1}{4}  \right]_{q^{ 8}}} {\left[ 
  \tfrac{3m+1}{2}  \right]_{q^{ 12}}} {\left[ 9m+4 \right]_{q^{ 
  2}}} {\left[ \tfrac{9m+5}{5}  \right]_{q^{ 10}}} {\left[ 
  3m+2 \right]_{q^{ 6}}} \\
\times{\left[ 9 m+7 \right]_{q^2}} 
 {\left[ \tfrac{m+1}{7} \right]_{q^{ 126}}} \frac {{ 
  {\left[ 63 \right]_{q^{ 2}}} }} { {\left[ 7 
   \right]_{q^{ 2}}} {\left[ 9 \right]_{q^{ 2}}} }
,\end{multline*} 
which, by Corollary~\ref{cor:A}, is a polynomial in $q$ with 
non-negative integer coefficients.
If $m\equiv
56
~(\text{mod }140),$ then we have $$\Cat^m(E_7;q)=
{\left[ \tfrac{9m+1}{5}  \right]_{q^{ 10}}} {\left[ 3 m+1 
  \right]_{q^{ 6}}} {\left[ \tfrac{9m+4}{4}  \right]_{q^{ 8}}} 
 {\left[ 9m+5 \right]_{q^{ 2}}} {\left[ \tfrac{3m+2}{2}  
  \right]_{q^{ 12}}} {\left[ \tfrac{9m+7}{7}  \right]_{q^{ 14}}} 
 {\left[ m+1 \right]_{q^{ 18}}}
,$$ 
which is manifestly a polynomial in $q$ with 
non-negative integer coefficients.  
If $m\equiv
57
~(\text{mod }140),$ then we have \begin{multline*}\Cat^m(E_7;q)=
{\left[ \tfrac{9 m+1}2 \right]_{q^{ 4}}} {\left[ \tfrac{3m+1}{4}  
   \right]_{q^{ 24}}} \frac {{ {\left[ 6 \right]_{q^{ 4}}} 
  }} { {\left[ 2 \right]_{q^{ 4}}} {\left[ 3 
   \right]_{q^{ 4}}} } \\
\times{\left[ 9m+4 \right]_{q^2}} 
 {\left[ \tfrac{9m+5}{7}  \right]_{q^{ 14}}} {\left[ 3m+2 
  \right]_{q^{ 6}}} {\left[ \tfrac{9m+7}{5}  \right]_{q^{ 10}}} 
 {\left[ m+1 \right]_{q^{ 18}}}
,\end{multline*} 
which, by Corollary~\ref{cor:A}, is a polynomial in $q$ with 
non-negative integer coefficients.
If $m\equiv
58
~(\text{mod }140),$ then we have \begin{multline*}\Cat^m(E_7;q)=
{\left[ 9 m+1 \right]_{q^{ 2}}} {\left[ \tfrac{3 m+1}{35}  
   \right]_{q^{ 210}}} \frac {{ {\left[ 105 \right]_{q^{ 2}}} 
  }} { {\left[ 3 \right]_{q^{ 2}}}{\left[ 5 \right]_{q^{ 2}}} {\left[ 7 
   \right]_{q^{ 2}}} } {\left[ \tfrac{9m+4}2 \right]_{q^4}} 
 {\left[ 9 m+5 \right]_{q^2}} \\
\times{\left[ 
   \tfrac{3m+2}{4}  \right]_{q^{ 24}}} \frac {{ {\left[ 6 
   \right]_{q^{ 4}}} }} { {\left[ 2 \right]_{q^{ 4}}} 
  {\left[ 3 \right]_{q^{ 4}}} } {\left[ 9 m+7 
  \right]_{q^2}} {\left[ m+1 \right]_{q^{ 18}}} 
,\end{multline*} 
which, by Corollary~\ref{cor:A} and Lemma~\ref{lem:I}, 
is a polynomial in $q$ with 
non-negative integer coefficients.
If $m\equiv
59
~(\text{mod }140),$ then we have $$\Cat^m(E_7;q)=
{\left[ \tfrac{9m+1}{7}  \right]_{q^{ 14}}} {\left[ 
  \tfrac{3m+1}{2}  \right]_{q^{ 12}}} {\left[ \tfrac{9m+4}{5}  
  \right]_{q^{ 10}}} {\left[ \tfrac{9m+5}{4}  \right]_{q^{ 8}}} 
 {\left[ 3m+2 \right]_{q^{ 6}}} {\left[ 9m+7 \right]_{q^{ 
  2}}} {\left[ m+1 \right]_{q^{ 18}}}
,$$ 
which is manifestly a polynomial in $q$ with 
non-negative integer coefficients.  
If $m\equiv
60
~(\text{mod }140),$ then we have \begin{multline*}\Cat^m(E_7;q)=
{\left[ 9 m+1 \right]_{q^{ 2}}} {\left[ 3 m+1 \right]_{q^{ 
  6}}} {\left[ \tfrac{9m+4}{4}  \right]_{q^{ 8}}} {\left[ 
  \tfrac{9m+5}{5}  \right]_{q^{ 10}}} \\
\times{\left[ \tfrac{3m+2}{14} 
    \right]_{q^{ 84}}} \frac {{ {\left[ 42 \right]_{q^{ 2}}} 
  }} { {\left[ 6 \right]_{q^{ 2}}} {\left[ 7 
   \right]_{q^{ 2}}} } {\left[ 9 m+7 \right]_{q^2}} 
 {\left[ m+1 \right]_{q^{ 18}}}
,\end{multline*} 
which, by Corollary~\ref{cor:A}, is a polynomial in $q$ with 
non-negative integer coefficients.
If $m\equiv
61
~(\text{mod }140),$ then we have $$\Cat^m(E_7;q)=
{\left[ \tfrac{9m+1}{5}  \right]_{q^{ 10}}} {\left[ 
  \tfrac{3m+1}{2}  \right]_{q^{ 12}}} {\left[ \tfrac{9m+4}{7}  
  \right]_{q^{ 14}}} {\left[ 9m+5 \right]_{q^{ 2}}} {\left[ 
  3m+2 \right]_{q^{ 6}}} {\left[ \tfrac{9m+7}{4}  \right]_{q^{ 
  8}}} {\left[ m+1 \right]_{q^{ 18}}}
,$$ 
which is manifestly a polynomial in $q$ with 
non-negative integer coefficients.  
If $m\equiv
62
~(\text{mod }140),$ then we have \begin{multline*}\Cat^m(E_7;q)=
{\left[ 9 m+1 \right]_{q^{ 2}}} {\left[ 3 m+1 \right]_{q^{ 
  6}}}\\
\times {\left[ \tfrac{9m+4}2 \right]_{q^4}} {\left[ 9 m+5 
  \right]_{q^2}} {\left[ \tfrac{3m+2}{4}  \right]_{q^{ 24}}} 
  \frac {{ {\left[ 6 \right]_{q^{ 4}}} }} { 
  {\left[ 2 \right]_{q^{ 4}}} {\left[ 3 \right]_{q^{ 4}}} 
  } {\left[ \tfrac{9m+7}{5}  \right]_{q^{ 10}}} 
 {\left[ \tfrac{m+1}{7} \right]_{q^{ 126}}} \frac {{ 
  {\left[ 63 \right]_{q^{ 2}}} }} { {\left[ 7 
   \right]_{q^{ 2}}} {\left[ 9 \right]_{q^{ 2}}} }
,\end{multline*} 
which, by Corollary~\ref{cor:A}, is a polynomial in $q$ with 
non-negative integer coefficients.
If $m\equiv
63
~(\text{mod }140),$ then we have \begin{multline*}\Cat^m(E_7;q)=
{\left[ \tfrac{9m+1}{4}  \right]_{q^{ 8}}} {\left[ 
   \tfrac{3m+1}{10}  \right]_{q^{ 60}}} \frac {{ {\left[ 30
    \right]_{q^{ 2}}} }} { {\left[ 5 \right]_{q^{ 2}}} 
  {\left[ 6 \right]_{q^{ 2}}} } {\left[ 9m+4 
  \right]_{q^{ 2}}}\\
\times {\left[ 9 m+5 \right]_{q^2}} {\left[ 
  3m+2 \right]_{q^{ 6}}} {\left[ \tfrac{9m+7}{7}  \right]_{q^{ 
  14}}} {\left[ m+1 \right]_{q^{ 18}}}
,\end{multline*} 
which, by Corollary~\ref{cor:A}, is a polynomial in $q$ with 
non-negative integer coefficients.
If $m\equiv
64
~(\text{mod }140),$ then we have \begin{multline*}\Cat^m(E_7;q)=
{\left[ 9 m+1 \right]_{q^{ 2}}} {\left[ 3 m+1 \right]_{q^{ 
  6}}} {\left[ \tfrac{9m+4}{20}  \right]_{q^{ 40}}} 
  \frac {{ {\left[ 20 \right]_{q^{ 2}}} }} { 
  {\left[ 4 \right]_{q^{ 2}}} {\left[ 5 \right]_{q^{ 2}}} 
  } \\
\times{\left[ \tfrac{9m+5}{7}  \right]_{q^{ 14}}} {\left[ 
  \tfrac{3m+2}{2}  \right]_{q^{ 12}}} {\left[ 9 m+7 
  \right]_{q^2}} {\left[ m+1 \right]_{q^{ 18}}}
,\end{multline*} 
which, by Corollary~\ref{cor:A}, is a polynomial in $q$ with 
non-negative integer coefficients.
If $m\equiv
65
~(\text{mod }140),$ then we have \begin{multline*}\Cat^m(E_7;q)=
{\left[ 9 m+1 \right]_{q^{ 2}}} {\left[ \tfrac{3m+1}{14}  
   \right]_{q^{ 84}}} \frac {{ {\left[ 42 \right]_{q^{ 2}}} 
  }} { {\left[ 6 \right]_{q^{ 2}}} {\left[ 7 
   \right]_{q^{ 2}}} } {\left[ 9m+4 \right]_{q^2}}\\
\times 
 {\left[ \tfrac{9m+5}{5}  \right]_{q^{ 10}}} {\left[ 3m+2 
  \right]_{q^{ 6}}} {\left[ \tfrac{9m+7}{4}  \right]_{q^{ 8}}} 
 {\left[ m+1 \right]_{q^{ 18}}}
,\end{multline*} 
which, by Corollary~\ref{cor:A}, is a polynomial in $q$ with 
non-negative integer coefficients.
If $m\equiv
66
~(\text{mod }140),$ then we have \begin{multline*}\Cat^m(E_7;q)=
{\left[ \tfrac{9m+1}{35}  \right]_{q^{ 70}}} \frac {{ 
  {\left[ 35 \right]_{q^{ 2}}} }} { {\left[ 5 
   \right]_{q^{ 2}}} {\left[ 7 \right]_{q^{ 2}}} } {\left[ 
  3 m+1 \right]_{q^{ 6}}} {\left[ \tfrac{9m+4}2 \right]_{q^{ 4}}} 
 {\left[ 9 m+5 \right]_{q^2}}\\
\times {\left[ 
   \tfrac{3m+2}{4}  \right]_{q^{ 24}}} \frac {{ {\left[ 6 
   \right]_{q^{ 4}}} }} { {\left[ 2 \right]_{q^{ 4}}} 
  {\left[ 3 \right]_{q^{ 4}}} } {\left[ 9 m+7 
  \right]_{q^2}} {\left[ m+1 \right]_{q^{ 18}}}
,\end{multline*} 
which, by Corollary~\ref{cor:A}, is a polynomial in $q$ with 
non-negative integer coefficients.
If $m\equiv
67
~(\text{mod }140),$ then we have \begin{multline*}\Cat^m(E_7;q)=
{\left[ \tfrac{9m+1}{4}  \right]_{q^{ 8}}} {\left[ 
   \tfrac{3m+1}{2}  \right]_{q^{ 12}}} 
{\left[ 9m+4 
  \right]_{q^{ 2}}} \\
\times{\left[ 9 m+5 \right]_{q^2}} {\left[ 
  \tfrac{3m+2}{7}  \right]_{q^{ 42}}}
\frac {[21]_{q^2}} {[3]_{q^2}\left[7\right]_{q^2}}
 {\left[ \tfrac{9m+7}{5}  
  \right]_{q^{ 10}}} {\left[ m+1 \right]_{q^{ 18}}}
,\end{multline*} 
which, by Corollary~\ref{cor:A}, is a polynomial in $q$ with 
non-negative integer coefficients.
If $m\equiv
68
~(\text{mod }140),$ then we have \begin{multline*}\Cat^m(E_7;q)=
{\left[ 9 m+1 \right]_{q^{ 2}}} {\left[ \tfrac{3m+1}{5}  
   \right]_{q^{ 30}}} \frac {{ {\left[ 15 \right]_{q^{ 2}}} 
  }} { {\left[ 3 \right]_{q^{ 2}}} {\left[ 5 
   \right]_{q^{ 2}}} } \\
\times{\left[ \tfrac{9m+4}{28}  
   \right]_{q^{ 56}}} \frac {{ {\left[ 28 \right]_{q^{ 2}}} 
  }} { {\left[ 4 \right]_{q^{ 2}}} {\left[ 7 
   \right]_{q^{ 2}}} } {\left[ 9 m+5 \right]_{q^2}} 
 {\left[ \tfrac{3m+2}{2}  \right]_{q^{ 12}}} {\left[ 
  9 m+7 \right]_{q^2}} {\left[ m+1 \right]_{q^{ 18}}}
,\end{multline*} 
which, by Corollary~\ref{cor:A}, is a polynomial in $q$ with 
non-negative integer coefficients.
If $m\equiv
69
~(\text{mod }140),$ then we have \begin{multline*}\Cat^m(E_7;q)=
{\left[ 9 m+1 \right]_{q^{ 2}}} {\left[ \tfrac{3m+1}{2}  
  \right]_{q^{ 12}}} {\left[ \tfrac{9m+4}{5}  \right]_{q^{ 10}}} \\
\times
 {\left[ 9 m+5 \right]_{q^2}} {\left[ 3m+2 
  \right]_{q^{ 6}}} {\left[ \tfrac{9m+7}{4}  \right]_{q^{ 8}}} 
 {\left[ \tfrac{m+1}{7} \right]_{q^{ 126}}} \frac {{ 
  {\left[ 63 \right]_{q^{ 2}}} }} { {\left[ 7 
   \right]_{q^{ 2}}} {\left[ 9 \right]_{q^{ 2}}} }
,\end{multline*} 
which, by Corollary~\ref{cor:A}, is a polynomial in $q$ with 
non-negative integer coefficients.
If $m\equiv
70
~(\text{mod }140),$ then we have \begin{multline*}\Cat^m(E_7;q)=
{\left[ 9 m+1 \right]_{q^{ 2}}} {\left[ 3 m+1 \right]_{q^{ 
  6}}} {\left[ \tfrac{9m+4}2 \right]_{q^4}} {\left[ 
  \tfrac{9m+5}{5}  \right]_{q^{ 10}}} {\left[ \tfrac{3m+2}{4}  
   \right]_{q^{ 24}}} \frac {{ {\left[ 6 \right]_{q^{ 4}}} 
  }} { {\left[ 2 \right]_{q^{ 4}}} {\left[ 3 
   \right]_{q^{ 4}}} } \\
\times{\left[ \tfrac{9m+7}{7}  \right]_{q^{ 
  14}}} {\left[ m+1 \right]_{q^{ 18}}}
,\end{multline*} 
which, by Corollary~\ref{cor:A}, is a polynomial in $q$ with 
non-negative integer coefficients.
If $m\equiv
71
~(\text{mod }140),$ then we have \begin{multline*}\Cat^m(E_7;q)=
{\left[ \tfrac{9m+1}{20}  \right]_{q^{ 40}}} \frac {{ 
  {\left[ 20 \right]_{q^{ 2}}} }} { {\left[ 4 
   \right]_{q^{ 2}}} {\left[ 5 \right]_{q^{ 2}}} } {\left[ 
  \tfrac{3m+1}{2}  \right]_{q^{ 12}}} {\left[ 9m+4 \right]_{q^{ 
  2}}}\\
\times {\left[ \tfrac{9m+5}{7}  \right]_{q^{ 14}}} {\left[ 
  3m+2 \right]_{q^{ 6}}} {\left[ 9 m+7 \right]_{q^2}} 
 {\left[ m+1 \right]_{q^{ 18}}}
,\end{multline*} 
which, by Corollary~\ref{cor:A}, is a polynomial in $q$ with 
non-negative integer coefficients.
If $m\equiv
72
~(\text{mod }140),$ then we have \begin{multline*}\Cat^m(E_7;q)=
{\left[ 9 m+1 \right]_{q^{ 2}}} {\left[ \tfrac{3m+1}{7}  
   \right]_{q^{ 42}}} \frac {{ {\left[ 21 \right]_{q^{ 2}}} 
  }} { {\left[ 3 \right]_{q^{ 2}}} {\left[ 7 
   \right]_{q^{ 2}}} } {\left[ \tfrac{9m+4}{4}  \right]_{q^{ 
  8}}}\\
\times {\left[ 9 m+5 \right]_{q^2}} {\left[ 
  \tfrac{3m+2}{2}  \right]_{q^{ 12}}} {\left[ \tfrac{9m+7}{5}  
  \right]_{q^{ 10}}} {\left[ m+1 \right]_{q^{ 18}}}
,\end{multline*} 
which, by Corollary~\ref{cor:A}, is a polynomial in $q$ with 
non-negative integer coefficients.
If $m\equiv
73
~(\text{mod }140),$ then we have \begin{multline*}\Cat^m(E_7;q)=
{\left[ \tfrac{9m+1}{7}  \right]_{q^{ 14}}} {\left[ 
   \tfrac{3m+1}{10}  \right]_{q^{ 60}}} \frac {{ {\left[ 30
    \right]_{q^{ 2}}} }} { {\left[ 5 \right]_{q^{ 2}}} 
  {\left[ 6 \right]_{q^{ 2}}} } {\left[ 9m+4 
  \right]_{q^{ 2}}}\\
\times {\left[ 9 m+5 \right]_{q^2}} {\left[ 
  3m+2 \right]_{q^{ 6}}} {\left[ \tfrac{9m+7}{4}  \right]_{q^{ 
  8}}} {\left[ m+1 \right]_{q^{ 18}}}
,\end{multline*} 
which, by Corollary~\ref{cor:A}, is a polynomial in $q$ with 
non-negative integer coefficients.
If $m\equiv
74
~(\text{mod }140),$ then we have \begin{multline*}\Cat^m(E_7;q)=
{\left[ {9 m+1} \right]_{q^{ 2}}} 
{\left[ {3m+1} 
   \right]_{q^{ 6}}} 
{\left[ \tfrac{9m+4}{10} \right]_{q^{ 20}}}
\frac {[10]_{q^2}} {[2]_{q^2}\left[5\right]_{q^2}}\\
\times 
 {\left[ 9m+5 \right]_{q^{ 2}}} {\left[ \tfrac{3m+2}{28} 
   \right]_{q^{ 168}}} 
\frac {[84]_{q^2}\left[2\right]_{q^2}} 
{[4]_{q^2}\left[6\right]_{q^2}\left[7\right]_{q^2}}
{\left[ 9m+7 \right]_{q^{ 2}}} 
 {\left[ m+1 \right]_{q^{ 18}}}
.\end{multline*} 
If one decomposes $[9m+7]_{q^2}$ as 
$[\frac {9m} {2}+4]_{q^4}+q^2[\frac {9m} {2}+3]_{q^4}$,
then one sees that, by Corollary~\ref{cor:A} and Lemma~\ref{lem:H},
this is a polynomial in $q$ with 
non-negative integer coefficients.
If $m\equiv
75
~(\text{mod }140),$ then we have $$\Cat^m(E_7;q)=
{\left[ \tfrac{9m+1}{4}  \right]_{q^{ 8}}} {\left[ 
  \tfrac{3m+1}{2}  \right]_{q^{ 12}}} {\left[ \tfrac{9m+4}{7}  
  \right]_{q^{ 14}}} {\left[ \tfrac{9m+5}{5}  \right]_{q^{ 10}}} 
 {\left[ 3m+2 \right]_{q^{ 6}}} {\left[ 9m+7 \right]_{q^{ 
  2}}} {\left[ m+1 \right]_{q^{ 18}}}
,$$ 
which is manifestly a polynomial in $q$ with 
non-negative integer coefficients.  
If $m\equiv
76
~(\text{mod }140),$ then we have \begin{multline*}\Cat^m(E_7;q)=
{\left[ \tfrac{9m+1}{5}  \right]_{q^{ 10}}} {\left[ 3 m+1 
  \right]_{q^{ 6}}} {\left[ \tfrac{9m+4}{4}  \right]_{q^{ 8}}} 
 {\left[ 9m+5 \right]_{q^{ 2}}} \\
\times{\left[ \tfrac{3m+2}{2}  
  \right]_{q^{ 12}}} {\left[ 9 m+7 \right]_{q^2}} 
 {\left[ \tfrac{m+1}{7} \right]_{q^{ 126}}} \frac {{ 
  {\left[ 63 \right]_{q^{ 2}}} }} { {\left[ 7 
   \right]_{q^{ 2}}} {\left[ 9 \right]_{q^{ 2}}} }
,\end{multline*} 
which, by Corollary~\ref{cor:A}, is a polynomial in $q$ with 
non-negative integer coefficients.
If $m\equiv
77
~(\text{mod }140),$ then we have \begin{multline*}\Cat^m(E_7;q)=
{\left[ 9 m+1 \right]_{q^{ 2}}} {\left[ \tfrac{3m+1}{4}  
   \right]_{q^{ 24}}} \frac {{ {\left[ 6 \right]_{q^{ 4}}} 
  }} { {\left[ 2 \right]_{q^{ 4}}} {\left[ 3 
   \right]_{q^{ 4}}} } {\left[ {9m+4} \right]_{q^2}} \\
\times
 {\left[ \tfrac{9 m+5}2 \right]_{q^4}} {\left[ 3m+2 
  \right]_{q^{ 6}}} {\left[ \tfrac{9m+7}{35}  \right]_{q^{ 70}}} 
  \frac {{ {\left[ 35 \right]_{q^{ 2}}} }} { 
  {\left[ 5 \right]_{q^{ 2}}} {\left[ 7 \right]_{q^{ 2}}} 
  } {\left[ m+1 \right]_{q^{ 18}}}
,\end{multline*} 
which, by Corollary~\ref{cor:A}, is a polynomial in $q$ with 
non-negative integer coefficients.
If $m\equiv
78
~(\text{mod }140),$ then we have \begin{multline*}\Cat^m(E_7;q)=
{\left[ 9 m+1 \right]_{q^{ 2}}} {\left[ \tfrac{3m+1}{5}  
   \right]_{q^{ 30}}} \frac {{ {\left[ 15 \right]_{q^{ 2}}} 
  }} { {\left[ 3 \right]_{q^{ 2}}} {\left[ 5 
   \right]_{q^{ 2}}} } \\
\times{\left[ \tfrac{9m+4}2 \right]_{q^4}} 
 {\left[ \tfrac{9m+5}{7}  \right]_{q^{ 14}}} {\left[ 
   \tfrac{3m+2}{4}  \right]_{q^{ 24}}} \frac {{ {\left[ 6 
   \right]_{q^{ 4}}} }} { {\left[ 2 \right]_{q^{ 4}}} 
  {\left[ 3 \right]_{q^{ 4}}} } {\left[ 9 m+7 
  \right]_{q^2}} {\left[ m+1 \right]_{q^{ 18}}}
,\end{multline*} 
which, by Corollary~\ref{cor:A}, is a polynomial in $q$ with 
non-negative integer coefficients.
If $m\equiv
79
~(\text{mod }140),$ then we have \begin{multline*}\Cat^m(E_7;q)=
{\left[ \tfrac{9m+1}{4}  \right]_{q^{ 8}}} {\left[ 
   \tfrac{3m+1}{14}  \right]_{q^{ 84}}} \frac {{ {\left[ 42
    \right]_{q^{ 2}}} }} { {\left[ 6 \right]_{q^{ 2}}} 
  {\left[ 7 \right]_{q^{ 2}}} } {\left[ \tfrac{9m+4}{5} 
   \right]_{q^{ 10}}} {\left[ 9m+5 \right]_{q^{ 2}}}\\
\times {\left[ 
  3m+2 \right]_{q^{ 6}}} {\left[ 9 m+7 \right]_{q^2}} 
 {\left[ m+1 \right]_{q^{ 18}}}
,\end{multline*} 
which, by Corollary~\ref{cor:A}, is a polynomial in $q$ with 
non-negative integer coefficients.
If $m\equiv
80
~(\text{mod }140),$ then we have $$\Cat^m(E_7;q)=
{\left[ \tfrac{9m+1}{7}  \right]_{q^{ 14}}} {\left[ 3 m+1 
  \right]_{q^{ 6}}} {\left[ \tfrac{9m+4}{4}  \right]_{q^{ 8}}} 
 {\left[ \tfrac{9m+5}{5}  \right]_{q^{ 10}}} {\left[ 
  \tfrac{3m+2}{2}  \right]_{q^{ 12}}} {\left[ 9m+7 \right]_{q^{ 
  2}}} {\left[ m+1 \right]_{q^{ 18}}}
,$$ 
which is manifestly a polynomial in $q$ with 
non-negative integer coefficients.  
If $m\equiv
81
~(\text{mod }140),$ then we have \begin{multline*}\Cat^m(E_7;q)=
{\left[ \tfrac{9m+1}{5}  \right]_{q^{ 10}}} {\left[ 
   \tfrac{3m+1}{2}  \right]_{q^{ 12}}} 
{\left[ 9m+4 
  \right]_{q^{ 2}}} \\
\times{\left[ 9 m+5 \right]_{q^2}} {\left[ 
  \tfrac{3m+2}{7}  \right]_{q^{ 42}}} 
\frac {[21]_{q^2}} {[3]_{q^2}\left[7\right]_{q^2}}
{\left[ \tfrac{9m+7}{4}  
  \right]_{q^{ 8}}} {\left[ m+1 \right]_{q^{ 18}}}
,\end{multline*} 
which, by Corollary~\ref{cor:A}, is a polynomial in $q$ with 
non-negative integer coefficients.
If $m\equiv
82
~(\text{mod }140),$ then we have \begin{multline*}\Cat^m(E_7;q)=
{\left[ 9 m+1 \right]_{q^{ 2}}} {\left[ 3 m+1 \right]_{q^{ 
  6}}} {\left[ \tfrac{9m+4}{14}  \right]_{q^{ 28}}}
\frac {[14]_{q^2}} {[2]_{q^2}\left[7\right]_{q^2}}\\
\times
 {\left[ 
  9 m+5 \right]_{q^2}} {\left[ \tfrac{3m+2}{4}  
   \right]_{q^{ 24}}} \frac {{ {\left[ 6 \right]_{q^{ 4}}} 
  }} { {\left[ 2 \right]_{q^{ 4}}} {\left[ 3 
   \right]_{q^{ 4}}} } {\left[ \tfrac{9m+7}{5}  \right]_{q^{ 
  10}}} {\left[ m+1 \right]_{q^{ 18}}}
.\end{multline*} 
If one decomposes $[9m+5]_{q^2}$ as 
$[\frac {9m} {2}+4]_{q^4}+q^2[\frac {9m} {2}+2]_{q^4}$,
then one sees that, by Corollary~\ref{cor:A},
this is a polynomial in $q$ with 
non-negative integer coefficients.
If $m\equiv
83
~(\text{mod }140),$ then we have \begin{multline*}\Cat^m(E_7;q)=
{\left[ \tfrac{9m+1}{4}  \right]_{q^{ 8}}} {\left[ 
   \tfrac{3m+1}{10}  \right]_{q^{ 60}}} \frac {{ {\left[ 30
    \right]_{q^{ 2}}} }} { {\left[ 5 \right]_{q^{ 2}}} 
  {\left[ 6 \right]_{q^{ 2}}} } {\left[ 9m+4 
  \right]_{q^{ 2}}} {\left[ 9 m+5 \right]_{q}}\\
\times {\left[ 
  3m+2 \right]_{q^{ 6}}} {\left[ 9 m+7 \right]_{q^2}} 
 {\left[ \tfrac{m+1}{7} \right]_{q^{ 126}}} \frac {{ 
  {\left[ 63 \right]_{q^{ 2}}} }} { {\left[ 7 
   \right]_{q^{ 2}}} {\left[ 9 \right]_{q^{ 2}}} }
,\end{multline*} 
which, by Corollary~\ref{cor:A}, is a polynomial in $q$ with 
non-negative integer coefficients.
If $m\equiv
84
~(\text{mod }140),$ then we have \begin{multline*}\Cat^m(E_7;q)=
{\left[ 9 m+1 \right]_{q^{ 2}}} {\left[ 3 m+1 \right]_{q^{ 
  6}}} {\left[ \tfrac{9m+4}{20}  \right]_{q^{ 40}}} 
  \frac {{ {\left[ 20 \right]_{q^{ 2}}} }} { 
  {\left[ 4 \right]_{q^{ 2}}} {\left[ 5 \right]_{q^{ 2}}} 
  }\\
\times {\left[ 9 m+5 \right]_{q^2}} {\left[ 
  \tfrac{3m+2}{2}  \right]_{q^{ 12}}} {\left[ \tfrac{9m+7}{7}  
  \right]_{q^{ 14}}} {\left[ m+1 \right]_{q^{ 18}}}
,\end{multline*} 
which, by Corollary~\ref{cor:A}, is a polynomial in $q$ with 
non-negative integer coefficients.
If $m\equiv
85
~(\text{mod }140),$ then we have \begin{multline*}\Cat^m(E_7;q)=
{\left[ 9 m+1 \right]_{q^{ 2}}} {\left[ \tfrac{3m+1}{2}  
  \right]_{q^{ 12}}} \\
\times{\left[ 9m+4 \right]_{q^2}} 
 {\left[ \tfrac{9m+5}{35}  \right]_{q^{ 70}}} \frac {{ 
  {\left[ 35 \right]_{q^{ 2}}} }} { {\left[ 5 
   \right]_{q^{ 2}}} {\left[ 7 \right]_{q^{ 2}}} } {\left[ 
  3m+2 \right]_{q^{ 6}}} {\left[ \tfrac{9m+7}{4}  \right]_{q^{ 
  8}}} {\left[ m+1 \right]_{q^{ 18}}}
,\end{multline*} 
which, by Corollary~\ref{cor:A}, is a polynomial in $q$ with 
non-negative integer coefficients.
If $m\equiv
86
~(\text{mod }140),$ then we have \begin{multline*}\Cat^m(E_7;q)=
{\left[ \tfrac{9m+1}{5}  \right]_{q^{ 10}}} {\left[ 
   \tfrac{3m+1}{7}  \right]_{q^{ 42}}} \frac {{ {\left[ 21 
   \right]_{q^{ 2}}} }} { {\left[ 3 \right]_{q^{ 2}}} 
  {\left[ 7 \right]_{q^{ 2}}} } {\left[ \tfrac{9m+4}2 
  \right]_{q^{ 4}}} {\left[ 9 m+5 \right]_{q^2}}\\
\times 
 {\left[ \tfrac{3m+2}{4}  \right]_{q^{ 24}}} \frac {{ 
  {\left[ 6 \right]_{q^{ 4}}} }} { {\left[ 2 
   \right]_{q^{ 4}}} {\left[ 3 \right]_{q^{ 4}}} } 
 {\left[ 9 m+7 \right]_{q^2}} {\left[ m+1 \right]_{q^{ 
  18}}}
,\end{multline*} 
which, by Corollary~\ref{cor:A}, is a polynomial in $q$ with 
non-negative integer coefficients.
If $m\equiv
87
~(\text{mod }140),$ then we have $$\Cat^m(E_7;q)=
{\left[ \tfrac{9m+1}{7}  \right]_{q^{ 14}}} {\left[ 
  \tfrac{3m+1}{2}  \right]_{q^{ 12}}} {\left[ 9m+4 \right]_{q^{ 
  2}}} {\left[ \tfrac{9m+5}{4}  \right]_{q^{ 8}}} {\left[ 3m+2
   \right]_{q^{ 6}}} {\left[ \tfrac{9m+7}{5}  \right]_{q^{ 10}}} 
 {\left[ m+1 \right]_{q^{ 18}}}
,$$ 
which is manifestly a polynomial in $q$ with 
non-negative integer coefficients.  
If $m\equiv
88
~(\text{mod }140),$ then we have \begin{multline*}\Cat^m(E_7;q)=
{\left[ 9 m+1 \right]_{q^{ 2}}} {\left[ \tfrac{3m+1}{5}  
   \right]_{q^{ 30}}} \frac {{ {\left[ 15 \right]_{q^{ 2}}} 
  }} { {\left[ 3 \right]_{q^{ 2}}} {\left[ 5 
   \right]_{q^{ 2}}} } {\left[ \tfrac{9m+4}{4}  \right]_{q^{ 
  8}}} {\left[ 9 m+5 \right]_{q^2}}\\
\times {\left[ 
   \tfrac{3m+2}{14}  \right]_{q^{ 84}}} \frac {{ {\left[ 42
    \right]_{q^{ 2}}} }} { {\left[ 6 \right]_{q^{ 2}}} 
  {\left[ 7 \right]_{q^{ 2}}} } {\left[ 9 m+7 
  \right]_{q^2}} {\left[ m+1 \right]_{q^{ 18}}}
,\end{multline*} 
which, by Corollary~\ref{cor:A}, is a polynomial in $q$ with 
non-negative integer coefficients.
If $m\equiv
89
~(\text{mod }140),$ then we have \begin{multline*}\Cat^m(E_7;q)=
{\left[ 9 m+1 \right]_{q^{ 2}}} {\left[ \tfrac{3m+1}{2}  
  \right]_{q^{ 12}}} {\left[ \tfrac{9m+4}{35}  \right]_{q^{ 70}}} 
  \frac {{ {\left[ 35 \right]_{q^{ 2}}} }} { 
  {\left[ 5 \right]_{q^{ 2}}} {\left[ 7 \right]_{q^{ 2}}} 
  }\\
\times {\left[ 9 m+5 \right]_{q^2}} {\left[ 3m+2 
  \right]_{q^{ 6}}} {\left[ \tfrac{9m+7}{4}  \right]_{q^{ 8}}} 
 {\left[ m+1 \right]_{q^{ 18}}}
,\end{multline*} 
which, by Corollary~\ref{cor:A}, is a polynomial in $q$ with 
non-negative integer coefficients.
If $m\equiv
90
~(\text{mod }140),$ then we have \begin{multline*}\Cat^m(E_7;q)=
{\left[ 9 m+1 \right]_{q^{ 2}}} {\left[ 3 m+1 \right]_{q^{ 
  6}}} {\left[ \tfrac{9m+4}2 \right]_{q^4}} {\left[ 
  \tfrac{9m+5}{5}  \right]_{q^{ 10}}} {\left[ \tfrac{3m+2}{4}  
   \right]_{q^{ 24}}} \frac {{ {\left[ 6 \right]_{q^{ 4}}} 
  }} { {\left[ 2 \right]_{q^{ 4}}} {\left[ 3 
   \right]_{q^{ 4}}} } \\
\times{\left[ 9 m+7 \right]_{q^2}} 
 {\left[ \tfrac{m+1}{7} \right]_{q^{ 126}}} \frac {{ 
  {\left[ 63 \right]_{q^{ 2}}} }} { {\left[ 7 
   \right]_{q^{ 2}}} {\left[ 9 \right]_{q^{ 2}}} }
,\end{multline*} 
which, by Corollary~\ref{cor:A}, is a polynomial in $q$ with 
non-negative integer coefficients.
If $m\equiv
91
~(\text{mod }140),$ then we have $$\Cat^m(E_7;q)=
{\left[ \tfrac{9m+1}{5}  \right]_{q^{ 10}}} {\left[ 
  \tfrac{3m+1}{2}  \right]_{q^{ 12}}} {\left[ 9m+4 \right]_{q^{ 
  2}}} {\left[ \tfrac{9m+5}{4}  \right]_{q^{ 8}}} {\left[ 3m+2
   \right]_{q^{ 6}}} {\left[ \tfrac{9m+7}{7}  \right]_{q^{ 14}}} 
 {\left[ m+1 \right]_{q^{ 18}}}
,$$ 
which is manifestly a polynomial in $q$ with 
non-negative integer coefficients.  
If $m\equiv
92
~(\text{mod }140),$ then we have $$\Cat^m(E_7;q)=
{\left[ 9 m+1 \right]_{q^{ 2}}} {\left[ 3 m+1 \right]_{q^{ 
  6}}} {\left[ \tfrac{9m+4}{4}  \right]_{q^{ 8}}} {\left[ 
  \tfrac{9m+5}{7}  \right]_{q^{ 14}}} {\left[ \tfrac{3m+2}{2}  
  \right]_{q^{ 12}}} {\left[ \tfrac{9m+7}{5}  \right]_{q^{ 10}}} 
 {\left[ m+1 \right]_{q^{ 18}}}
,$$ 
which is manifestly a polynomial in $q$ with 
non-negative integer coefficients.  
If $m\equiv
93
~(\text{mod }140),$ then we have \begin{multline*}\Cat^m(E_7;q)=
{\left[ 9 m+1 \right]_{q^{ 2}}} {\left[ \tfrac{3 m+1}{35}  
   \right]_{q^{ 210}}} \frac {{ {\left[ 105 \right]_{q^{ 2}}} 
  }} { {\left[ 3 \right]_{q^{ 2}}} {\left[ 5 \right]_{q^{ 2}}} {\left[ 7 
   \right]_{q^{ 2}}} } {\left[ 9m+4 \right]_{q^2}}\\
\times 
 {\left[ 9 m+5 \right]_{q^2}} {\left[ 3m+2 
  \right]_{q^{ 6}}} {\left[ \tfrac{9m+7}{4}  \right]_{q^{ 8}}} 
 {\left[ \tfrac{m+1}{2} \right]_{q^{ 36}}} \frac {{ 
  {\left[ 6 \right]_{q^{ 6}}} }} { {\left[ 2 
   \right]_{q^{ 6}}} {\left[ 3 \right]_{q^{ 6}}} }
,\end{multline*} 
which, by Corollary~\ref{cor:A} and Lemma~\ref{lem:I}, 
is a polynomial in $q$ with 
non-negative integer coefficients.
If $m\equiv
94
~(\text{mod }140),$ then we have \begin{multline*}\Cat^m(E_7;q)=
{\left[ \tfrac{9m+1}{7}  \right]_{q^{ 14}}} {\left[ 3 m+1 
  \right]_{q^{ 6}}} {\left[ \tfrac{9m+4}{10}  \right]_{q^{ 20}}}
\frac {[10]_{q^2}} {[2]_{q^2}\left[5\right]_{q^2}} 
 {\left[ 9m+5 \right]_{q^{ 2}}} \\
\times{\left[ \tfrac{3m+2}{4}  
   \right]_{q^{ 24}}} \frac {{ {\left[ 6 \right]_{q^{ 4}}} 
  }} { {\left[ 2 \right]_{q^{ 4}}} {\left[ 3 
   \right]_{q^{ 4}}} } {\left[ 9 m+7 \right]_{q^2}} 
 {\left[ m+1 \right]_{q^{ 18}}}
.\end{multline*} 
If one decomposes $[9m+7]_{q^2}$ as 
$[\frac {9m} {2}+4]_{q^4}+q^2[\frac {9m} {2}+3]_{q^4}$,
then one sees that, by Corollary~\ref{cor:A},
this is a polynomial in $q$ with 
non-negative integer coefficients.
If $m\equiv
95
~(\text{mod }140),$ then we have \begin{multline*}\Cat^m(E_7;q)=
{\left[ \tfrac{9m+1}{4}  \right]_{q^{ 8}}} {\left[ 
   \tfrac{3m+1}{2}  \right]_{q^{ 12}}}
{\left[ 9m+4 
  \right]_{q^{ 2}}} {\left[ \tfrac{9m+5}{5}  \right]_{q^{ 10}}} \\
\times
 {\left[ \tfrac{3m+2}{7}  \right]_{q^{ 42}}} 
\frac {[21]_{q^2}} {[3]_{q^2}\left[7\right]_{q^2}}
{\left[ 
  9 m+7 \right]_{q^2}} {\left[ m+1 \right]_{q^{ 18}}}
,\end{multline*} 
which, by Corollary~\ref{cor:A}, is a polynomial in $q$ with 
non-negative integer coefficients.
If $m\equiv
96
~(\text{mod }140),$ then we have \begin{multline*}\Cat^m(E_7;q)=
{\left[ \tfrac{9m+1}{5}  \right]_{q^{ 10}}} {\left[ 3 m+1 
  \right]_{q^{ 6}}} {\left[ \tfrac{9m+4}{28}  \right]_{q^{ 56}}} 
  \frac {{ {\left[ 28 \right]_{q^{ 2}}} }} { 
  {\left[ 4 \right]_{q^{ 2}}} {\left[ 7 \right]_{q^{ 2}}} 
  } {\left[ 9m+5 \right]_{q^{ 2}}}\\
\times {\left[ 
  \tfrac{3m+2}{2}  \right]_{q^{ 12}}} {\left[ 9 m+7 
  \right]_{q^2}} {\left[ m+1 \right]_{q^{ 18}}}
,\end{multline*} 
which, by Corollary~\ref{cor:A}, is a polynomial in $q$ with 
non-negative integer coefficients.
If $m\equiv
97
~(\text{mod }140),$ then we have \begin{multline*}\Cat^m(E_7;q)=
{\left[ 9 m+1 \right]_{q^{ 2}}} {\left[ \tfrac{3m+1}{4}  
   \right]_{q^{ 24}}} \frac {{ {\left[ 6 \right]_{q^{ 4}}} 
  }} { {\left[ 2 \right]_{q^{ 4}}} {\left[ 3 
   \right]_{q^{ 4}}} } {\left[ 9m+4 \right]_{q^2}} 
 {\left[ \tfrac{9 m+5}2 \right]_{q^4}}\\
\times {\left[ 3m+2 
  \right]_{q^{ 6}}} {\left[ \tfrac{9m+7}{5}  \right]_{q^{ 10}}} 
 {\left[ \tfrac{m+1}{7} \right]_{q^{ 126}}} \frac {{ 
  {\left[ 63 \right]_{q^{ 2}}} }} { {\left[ 7 
   \right]_{q^{ 2}}} {\left[ 9 \right]_{q^{ 2}}} }
,\end{multline*} 
which, by Corollary~\ref{cor:A}, is a polynomial in $q$ with 
non-negative integer coefficients.
If $m\equiv
98
~(\text{mod }140),$ then we have \begin{multline*}\Cat^m(E_7;q)=
{\left[ 9 m+1 \right]_{q^{ 2}}} {\left[ \tfrac{3m+1}{5}  
   \right]_{q^{ 30}}} \frac {{ {\left[ 15 \right]_{q^{ 2}}} 
  }} { {\left[ 3 \right]_{q^{ 2}}} {\left[ 5 
   \right]_{q^{ 2}}} } \\
\times{\left[ \tfrac{9m+4}2 \right]_{q^4}} 
 {\left[ 9 m+5 \right]_{q^2}} {\left[ 
   \tfrac{3m+2}{4}  \right]_{q^{ 24}}} \frac {{ {\left[ 6 
   \right]_{q^{ 4}}} }} { {\left[ 2 \right]_{q^{ 4}}} 
  {\left[ 3 \right]_{q^{ 4}}} } {\left[ \tfrac{9m+7}{7} 
   \right]_{q^{ 14}}} {\left[ m+1 \right]_{q^{ 18}}}
,\end{multline*} 
which, by Corollary~\ref{cor:A}, is a polynomial in $q$ with 
non-negative integer coefficients.
If $m\equiv
99
~(\text{mod }140),$ then we have $$\Cat^m(E_7;q)=
{\left[ \tfrac{9m+1}{4}  \right]_{q^{ 8}}} {\left[ 
  \tfrac{3m+1}{2}  \right]_{q^{ 12}}} {\left[ \tfrac{9m+4}{5}  
  \right]_{q^{ 10}}} {\left[ \tfrac{9m+5}{7}  \right]_{q^{ 14}}} 
 {\left[ 3m+2 \right]_{q^{ 6}}} {\left[ 9m+7 \right]_{q^{ 
  2}}} {\left[ m+1 \right]_{q^{ 18}}}
,$$ 
which is manifestly a polynomial in $q$ with 
non-negative integer coefficients.  
If $m\equiv
100
~(\text{mod }140),$ then we have \begin{multline*}\Cat^m(E_7;q)=
{\left[ 9 m+1 \right]_{q^{ 2}}} {\left[ \tfrac{3m+1}{7}  
   \right]_{q^{ 42}}} \frac {{ {\left[ 21 \right]_{q^{ 2}}} 
  }} { {\left[ 3 \right]_{q^{ 2}}} {\left[ 7 
   \right]_{q^{ 2}}} } {\left[ \tfrac{9m+4}{4}  \right]_{q^{ 
  8}}} \\
\times{\left[ \tfrac{9m+5}{5}  \right]_{q^{ 10}}} {\left[ 
  \tfrac{3m+2}{2}  \right]_{q^{ 12}}} {\left[ 9 m+7 
  \right]_{q^2}} {\left[ m+1 \right]_{q^{ 18}}}
,\end{multline*} 
which, by Corollary~\ref{cor:A}, is a polynomial in $q$ with 
non-negative integer coefficients.
If $m\equiv
101
~(\text{mod }140),$ then we have \begin{multline*}\Cat^m(E_7;q)=
{\left[ \tfrac{9m+1}{35}  \right]_{q^{ 70}}} \frac {{ 
  {\left[ 35 \right]_{q^{ 2}}} }} { {\left[ 5 
   \right]_{q^{ 2}}} {\left[ 7 \right]_{q^{ 2}}} } {\left[ 
  \tfrac{3m+1}{2}  \right]_{q^{ 12}}} {\left[ 9m+4 \right]_{q^{ 
  2}}} \\
\times{\left[ 9 m+5 \right]_{q^2}} {\left[ 3m+2 
  \right]_{q^{ 6}}} {\left[ \tfrac{9m+7}{4}  \right]_{q^{ 8}}} 
 {\left[ m+1 \right]_{q^{ 18}}}
,\end{multline*} 
which, by Corollary~\ref{cor:A}, is a polynomial in $q$ with 
non-negative integer coefficients.
If $m\equiv
102
~(\text{mod }140),$ then we have \begin{multline*}\Cat^m(E_7;q)=
{\left[ {9 m+1} \right]_{q^{ 2}}} 
{\left[ {3m+1} 
   \right]_{q^{ 6}}} \\
\times
{\left[ \tfrac{9m+4}2 \right]_{q^{ 4}}} 
 {\left[ 9m+5 \right]_{q^{ 2}}} {\left[ \tfrac{3m+2}{28} 
   \right]_{q^{ 168}}} 
\frac {[84]_{q^2}\left[2\right]_{q^2}} 
{[4]_{q^2}\left[6\right]_{q^2}\left[7\right]_{q^2}}
{\left[ \tfrac{9m+7}5 \right]_{q^{ 10}}} 
 {\left[ m+1 \right]_{q^{ 18}}}
,\end{multline*} 
which, by Lemma~\ref{lem:H}, is a polynomial in $q$ with 
non-negative integer coefficients.
If $m\equiv
103
~(\text{mod }140),$ then we have \begin{multline*}\Cat^m(E_7;q)=
{\left[ \tfrac{9m+1}{4}  \right]_{q^{ 8}}} {\left[ 
   \tfrac{3m+1}{10}  \right]_{q^{ 60}}} \frac {{ {\left[ 30
    \right]_{q^{ 2}}} }} { {\left[ 5 \right]_{q^{ 2}}} 
  {\left[ 6 \right]_{q^{ 2}}} } {\left[ \tfrac{9m+4}{7} 
   \right]_{q^{ 14}}} {\left[ 9m+5 \right]_{q^{ 2}}}\\
\times {\left[ 
  3m+2 \right]_{q^{ 6}}} {\left[ 9 m+7 \right]_{q^2}} 
 {\left[ m+1 \right]_{q^{ 18}}}
,\end{multline*} 
which, by Corollary~\ref{cor:A}, is a polynomial in $q$ with 
non-negative integer coefficients.
If $m\equiv
104
~(\text{mod }140),$ then we have \begin{multline*}\Cat^m(E_7;q)=
{\left[ 9 m+1 \right]_{q^{ 2}}} {\left[ 3 m+1 \right]_{q^{ 
  6}}} {\left[ \tfrac{9m+4}{20}  \right]_{q^{ 40}}} 
  \frac {{ {\left[ 20 \right]_{q^{ 2}}} }} { 
  {\left[ 4 \right]_{q^{ 2}}} {\left[ 5 \right]_{q^{ 2}}} 
  } {\left[ 9 m+5 \right]_{q^2}}\\
\times {\left[ 
  \tfrac{3m+2}{2}  \right]_{q^{ 12}}} {\left[ 9 m+7 
  \right]_{q^2}} {\left[ \tfrac{m+1}{7} \right]_{q^{ 126}}} 
  \frac {{ {\left[ 63 \right]_{q^{ 2}}} }} { 
  {\left[ 7 \right]_{q^{ 2}}} {\left[ 9 \right]_{q^{ 2}}} 
  }
,\end{multline*} 
which, by Corollary~\ref{cor:A}, is a polynomial in $q$ with 
non-negative integer coefficients.
If $m\equiv
105
~(\text{mod }140),$ then we have \begin{multline*}\Cat^m(E_7;q)=
{\left[ 9 m+1 \right]_{q^{ 2}}} {\left[ \tfrac{3m+1}{4}  
   \right]_{q^{ 24}}} \frac {{ {\left[ 6 \right]_{q^{ 4}}} 
  }} { {\left[ 2 \right]_{q^{ 4}}} {\left[ 3 
   \right]_{q^{ 4}}} } \\
\times{\left[ 9m+4 \right]_{q^2}} 
 {\left[ \tfrac{9m+5}{10}  \right]_{q^{ 20}}}
\frac {[10]_{q^2}} {[2]_{q^2}\left[5\right]_{q^2}}
 {\left[ 3m+2 
  \right]_{q^{ 6}}} {\left[ \tfrac{9m+7}{7}  \right]_{q^{ 14}}} 
 {\left[ m+1 \right]_{q^{ 18}}}
.\end{multline*} 
If one decomposes $[9m+1]_{q^2}$ as 
$[\frac {9m+1} {2}]_{q^4}+q^2[\frac {9m+1} {2}]_{q^4}$,
then one sees that, by Corollary~\ref{cor:A},
this is a polynomial in $q$ with 
non-negative integer coefficients.
If $m\equiv
106
~(\text{mod }140),$ then we have \begin{multline*}\Cat^m(E_7;q)=
{\left[ \tfrac{9m+1}{5}  \right]_{q^{ 10}}} {\left[ 3 m+1 
  \right]_{q^{ 6}}} {\left[ \tfrac{9m+4}2 \right]_{q^{ 4}}} {\left[ 
  \tfrac{9m+5}{7}  \right]_{q^{ 14}}} {\left[ \tfrac{3m+2}{4}  
   \right]_{q^{ 24}}} \frac {{ {\left[ 6 \right]_{q^{ 4}}} 
  }} { {\left[ 2 \right]_{q^{ 4}}} {\left[ 3 
   \right]_{q^{ 4}}} } \\
\times{\left[ 9 m+7 \right]_{q^2}} 
 {\left[ m+1 \right]_{q^{ 18}}}
,\end{multline*} 
which, by Corollary~\ref{cor:A}, is a polynomial in $q$ with 
non-negative integer coefficients.
If $m\equiv
107
~(\text{mod }140),$ then we have \begin{multline*}\Cat^m(E_7;q)=
{\left[ \tfrac{9m+1}{4}  \right]_{q^{ 8}}} {\left[ 
   \tfrac{3m+1}{14}  \right]_{q^{ 84}}} \frac {{ {\left[ 42
    \right]_{q^{ 2}}} }} { {\left[ 6 \right]_{q^{ 2}}} 
  {\left[ 7 \right]_{q^{ 2}}} } {\left[ 9m+4 
  \right]_{q^{ 2}}} \\
\times{\left[ 9 m+5 \right]_{q^2}} {\left[ 
  3m+2 \right]_{q^{ 6}}} {\left[ \tfrac{9m+7}{5}  \right]_{q^{ 
  10}}} {\left[ m+1 \right]_{q^{ 18}}}
,\end{multline*} 
which, by Corollary~\ref{cor:A}, is a polynomial in $q$ with 
non-negative integer coefficients.
If $m\equiv
108
~(\text{mod }140),$ then we have \begin{multline*}\Cat^m(E_7;q)=
{\left[ \tfrac{9m+1}{7}  \right]_{q^{ 14}}} {\left[ 
   \tfrac{3m+1}{5}  \right]_{q^{ 30}}} \frac {{ {\left[ 15 
   \right]_{q^{ 2}}} }} { {\left[ 3 \right]_{q^{ 2}}} 
  {\left[ 5 \right]_{q^{ 2}}} } {\left[ \tfrac{9m+4}{4} 
   \right]_{q^{ 8}}} {\left[ 9m+5 \right]_{q^{ 2}}}\\
\times {\left[ 
  \tfrac{3m+2}{2}  \right]_{q^{ 12}}} {\left[ 9 m+7 
  \right]_{q^2}} {\left[ m+1 \right]_{q^{ 18}}}
,\end{multline*} 
which, by Corollary~\ref{cor:A}, is a polynomial in $q$ with 
non-negative integer coefficients.
If $m\equiv
109
~(\text{mod }140),$ then we have \begin{multline*}\Cat^m(E_7;q)=
{\left[ 9 m+1 \right]_{q^{ 2}}} {\left[ \tfrac{3m+1}{2}  
   \right]_{q^{ 12}}} 
 {\left[ \tfrac{9m+4}{5}  \right]_{q^{ 
  10}}}\\
\times {\left[ 9 m+5 \right]_{q^2}} {\left[ 
  \tfrac{3m+2}{7}  \right]_{q^{ 42}}}
\frac {[21]_{q^2}} {[3]_{q^2}\left[7\right]_{q^2}}
 {\left[ \tfrac{9m+7}{4}  
  \right]_{q^{ 8}}} {\left[ m+1 \right]_{q^{ 18}}}
,\end{multline*} 
which, by Corollary~\ref{cor:A}, is a polynomial in $q$ with 
non-negative integer coefficients.
If $m\equiv
110
~(\text{mod }140),$ then we have \begin{multline*}\Cat^m(E_7;q)=
{\left[ 9 m+1 \right]_{q^{ 2}}} {\left[ 3 m+1 \right]_{q^{ 
  6}}} {\left[ \tfrac{9m+4}{14}  \right]_{q^{ 28}}}
\frac {[14]_{q^2}} {[2]_{q^2}\left[7\right]_{q^2}}\\
\times
 {\left[ 
  \tfrac{9m+5}{5}  \right]_{q^{ 10}}} {\left[ \tfrac{3m+2}{4}  
   \right]_{q^{ 24}}} \frac {{ {\left[ 6 \right]_{q^{ 4}}} 
  }} { {\left[ 2 \right]_{q^{ 4}}} {\left[ 3 
   \right]_{q^{ 4}}} } {\left[ 9 m+7 \right]_{q^2}} 
 {\left[ m+1 \right]_{q^{ 18}}}
.\end{multline*} 
If one decomposes $[9m+7]_{q^2}$ as 
$[\frac {9m} {2}+4]_{q^4}+q^2[\frac {9m} {2}+3]_{q^4}$,
then one sees that, by Corollary~\ref{cor:A},
this is a polynomial in $q$ with 
non-negative integer coefficients.
If $m\equiv
111
~(\text{mod }140),$ then we have \begin{multline*}\Cat^m(E_7;q)=
{\left[ \tfrac{9m+1}{5}  \right]_{q^{ 10}}} {\left[ 
  \tfrac{3m+1}{2}  \right]_{q^{ 12}}} {\left[ 9m+4 \right]_{q^{ 
  2}}} {\left[ \tfrac{9m+5}{4}  \right]_{q^{ 8}}} {\left[ 3m+2
   \right]_{q^{ 6}}} \\
\times{\left[ 9 m+7 \right]_{q^2}} 
 {\left[ \tfrac{m+1}{7} \right]_{q^{ 126}}} \frac {{ 
  {\left[ 63 \right]_{q^{ 2}}} }} { {\left[ 7 
   \right]_{q^{ 2}}} {\left[ 9 \right]_{q^{ 2}}} }
,\end{multline*} 
which, by Corollary~\ref{cor:A}, is a polynomial in $q$ with 
non-negative integer coefficients.
If $m\equiv
112
~(\text{mod }140),$ then we have \begin{multline*}\Cat^m(E_7;q)=
{\left[ 9 m+1 \right]_{q^{ 2}}} {\left[ 3 m+1 \right]_{q^{ 
  6}}} {\left[ \tfrac{9m+4}{4}  \right]_{q^{ 8}}} \\
\times{\left[ 
  9 m+5 \right]_{q^2}} {\left[ \tfrac{3m+2}{2}  \right]_{q^{ 
  12}}} {\left[ \tfrac{9m+7}{35}  \right]_{q^{ 70}}} 
  \frac {{ {\left[ 35 \right]_{q^{ 2}}} }} { 
  {\left[ 5 \right]_{q^{ 2}}} {\left[ 7 \right]_{q^{ 2}}} 
  } {\left[ m+1 \right]_{q^{ 18}}}
,\end{multline*} 
which, by Corollary~\ref{cor:A}, is a polynomial in $q$ with 
non-negative integer coefficients.
If $m\equiv
113
~(\text{mod }140),$ then we have \begin{multline*}\Cat^m(E_7;q)=
{\left[ 9 m+1 \right]_{q^{ 2}}} {\left[ \tfrac{3m+1}{10}  
   \right]_{q^{ 60}}} \frac {{ {\left[ 30 \right]_{q^{ 2}}} 
  }} { {\left[ 5 \right]_{q^{ 2}}} {\left[ 6 
   \right]_{q^{ 2}}} } {\left[ 9m+4 \right]_{q^2}}\\
\times 
 {\left[ \tfrac{9m+5}{7}  \right]_{q^{ 14}}} {\left[ 3m+2 
  \right]_{q^{ 6}}} {\left[ \tfrac{9m+7}{4}  \right]_{q^{ 8}}} 
 {\left[ m+1 \right]_{q^{ 18}}}
,\end{multline*} 
which, by Corollary~\ref{cor:A}, is a polynomial in $q$ with 
non-negative integer coefficients.
If $m\equiv
114
~(\text{mod }140),$ then we have \begin{multline*}\Cat^m(E_7;q)=
{\left[ 9 m+1 \right]_{q^{ 2}}} {\left[ \tfrac{3m+1}{7}  
   \right]_{q^{ 42}}} \frac {{ {\left[ 21 \right]_{q^{ 2}}} 
  }} { {\left[ 3 \right]_{q^{ 2}}} {\left[ 7 
   \right]_{q^{ 2}}} } {\left[ \tfrac{9m+4}{10}  \right]_{q^{ 
  20}}}
\frac {[10]_{q^2}} {[2]_{q^2}\left[5\right]_{q^2}}
 {\left[ 9 m+5 \right]_{q^2}} \\
\times{\left[ 
   \tfrac{3m+2}{4}  \right]_{q^{ 24}}} \frac {{ {\left[ 6 
   \right]_{q^{ 4}}} }} { {\left[ 2 \right]_{q^{ 4}}} 
  {\left[ 3 \right]_{q^{ 4}}} } {\left[ 9 m+7 
  \right]_{q^2}} {\left[ m+1 \right]_{q^{ 18}}}
.\end{multline*} 
If one decomposes $[9m+7]_{q^2}$ as 
$[\frac {9m} {2}+4]_{q^4}+q^2[\frac {9m} {2}+3]_{q^4}$,
then one sees that, by Corollary~\ref{cor:A},
this is a polynomial in $q$ with 
non-negative integer coefficients.
If $m\equiv
115
~(\text{mod }140),$ then we have \begin{multline*}\Cat^m(E_7;q)=
{\left[ \tfrac{9m+1}{28}  \right]_{q^{ 56}}} \frac {{ 
  {\left[ 28 \right]_{q^{ 2}}} }} { {\left[ 4 
   \right]_{q^{ 2}}} {\left[ 7 \right]_{q^{ 2}}} } {\left[ 
  \tfrac{3m+1}{2}  \right]_{q^{ 12}}} {\left[ 9m+4 \right]_{q^{ 
  2}}}\\
\times {\left[ \tfrac{9m+5}{5}  \right]_{q^{ 10}}} {\left[ 
  3m+2 \right]_{q^{ 6}}} {\left[ 9 m+7 \right]_{q}} 
 {\left[ m+1 \right]_{q^{ 18}}}
,\end{multline*} 
which, by Corollary~\ref{cor:A}, is a polynomial in $q$ with 
non-negative integer coefficients.
If $m\equiv
116
~(\text{mod }140),$ then we have \begin{multline*}\Cat^m(E_7;q)=
{\left[ \tfrac{9m+1}{5}  \right]_{q^{ 10}}} {\left[ 3 m+1 
  \right]_{q^{ 6}}} {\left[ \tfrac{9m+4}{4}  \right]_{q^{ 8}}} 
 {\left[ 9m+5 \right]_{q^{ 2}}} \\
\times{\left[ \tfrac{3m+2}{14}  
   \right]_{q^{ 84}}} \frac {{ {\left[ 42 \right]_{q^{ 2}}} 
  }} { {\left[ 6 \right]_{q^{ 2}}} {\left[ 7 
   \right]_{q^{ 2}}} } {\left[ 9 m+7 \right]_{q^2}} 
 {\left[ m+1 \right]_{q^{ 18}}}
,\end{multline*} 
which, by Corollary~\ref{cor:A}, is a polynomial in $q$ with 
non-negative integer coefficients.
If $m\equiv
117
~(\text{mod }140),$ then we have \begin{multline*}\Cat^m(E_7;q)=
{\left[ 9 m+1 \right]_{q^{ 2}}} {\left[ \tfrac{3m+1}{4}  
   \right]_{q^{ 24}}} \frac {{ {\left[ 6 \right]_{q^{ 4}}} 
  }} { {\left[ 2 \right]_{q^{ 4}}} {\left[ 3 
   \right]_{q^{ 4}}} } {\left[ \tfrac{9m+4}{7}  \right]_{q^{ 
  14}}} {\left[ \tfrac{9 m+5}2 \right]_{q^4}}\\
\times {\left[ 3m+2 
  \right]_{q^{ 6}}} {\left[ \tfrac{9m+7}{5}  \right]_{q^{ 10}}} 
 {\left[ m+1 \right]_{q^{ 18}}}
,\end{multline*} 
which, by Corollary~\ref{cor:A}, is a polynomial in $q$ with 
non-negative integer coefficients.
If $m\equiv
118
~(\text{mod }140),$ then we have \begin{multline*}\Cat^m(E_7;q)=
{\left[ 9 m+1 \right]_{q^{ 2}}} {\left[ \tfrac{3m+1}{5}  
   \right]_{q^{ 30}}} \frac {{ {\left[ 15 \right]_{q^{ 2}}} 
  }} { {\left[ 3 \right]_{q^{ 2}}} {\left[ 5 
   \right]_{q^{ 2}}} }\\
\times {\left[ \tfrac{9m+4}2 \right]_{q^4}} 
 {\left[ 9 m+5 \right]_{q^2}} {\left[ 
   \tfrac{3m+2}{4}  \right]_{q^{ 24}}} \frac {{ {\left[ 6 
   \right]_{q^{ 4}}} }} { {\left[ 2 \right]_{q^{ 4}}} 
  {\left[ 3 \right]_{q^{ 4}}} } {\left[ 9 m+7 
  \right]_{q^2}} {\left[ \tfrac{m+1}{7} \right]_{q^{ 126}}} 
  \frac {{ {\left[ 63 \right]_{q^{ 2}}} }} { 
  {\left[ 7 \right]_{q^{ 2}}} {\left[ 9 \right]_{q^{ 2}}} 
  }
,\end{multline*} 
which, by Corollary~\ref{cor:A}, is a polynomial in $q$ with 
non-negative integer coefficients.
If $m\equiv
119
~(\text{mod }140),$ then we have $$\Cat^m(E_7;q)=
{\left[ \tfrac{9m+1}{4}  \right]_{q^{ 8}}} {\left[ 
  \tfrac{3m+1}{2}  \right]_{q^{ 12}}} {\left[ \tfrac{9m+4}{5}  
  \right]_{q^{ 10}}} {\left[ 9m+5 \right]_{q^{ 2}}} {\left[ 
  3m+2 \right]_{q^{ 6}}} {\left[ \tfrac{9m+7}{7}  \right]_{q^{ 
  14}}} {\left[ m+1 \right]_{q^{ 18}}}
,$$ 
which is manifestly a polynomial in $q$ with 
non-negative integer coefficients.  
If $m\equiv
120
~(\text{mod }140),$ then we have \begin{multline*}\Cat^m(E_7;q)=
{\left[ 9 m+1 \right]_{q^{ 2}}} {\left[ 3 m+1 \right]_{q^{ 
  6}}} {\left[ \tfrac{9m+4}{4}  \right]_{q^{ 8}}}\\
\times {\left[ 
   \tfrac{9m+5}{35}  \right]_{q^{ 70}}} \frac {{ {\left[ 35
    \right]_{q^{ 2}}} }} { {\left[ 5 \right]_{q^{ 2}}} 
  {\left[ 7 \right]_{q^{ 2}}} } {\left[ \tfrac{3m+2}{2} 
   \right]_{q^{ 12}}} {\left[ 9 m+7 \right]_{q^2}} 
 {\left[ m+1 \right]_{q^{ 18}}}
,\end{multline*} 
which, by Corollary~\ref{cor:A}, is a polynomial in $q$ with 
non-negative integer coefficients.
If $m\equiv
121
~(\text{mod }140),$ then we have \begin{multline*}\Cat^m(E_7;q)=
{\left[ \tfrac{9m+1}{5}  \right]_{q^{ 10}}} {\left[ 
   \tfrac{3m+1}{14}  \right]_{q^{ 84}}} \frac {{ {\left[ 42
    \right]_{q^{ 2}}} }} { {\left[ 6 \right]_{q^{ 2}}} 
  {\left[ 7 \right]_{q^{ 2}}} } {\left[ 9m+4 
  \right]_{q^{ 2}}} \\
\times{\left[ 9 m+5 \right]_{q^2}} {\left[ 
  3m+2 \right]_{q^{ 6}}} {\left[ \tfrac{9m+7}{4}  \right]_{q^{ 
  8}}} {\left[ m+1 \right]_{q^{ 18}}}
,\end{multline*} 
which, by Corollary~\ref{cor:A}, is a polynomial in $q$ with 
non-negative integer coefficients.
If $m\equiv
122
~(\text{mod }140),$ then we have \begin{multline*}\Cat^m(E_7;q)=
{\left[ \tfrac{9m+1}{7}  \right]_{q^{ 14}}} {\left[ 3 m+1 
  \right]_{q^{ 6}}}\\
\times {\left[ \tfrac{9m+4}2 \right]_{q^{ 4}}} {\left[ 
  9 m+5 \right]_{q^2}} {\left[ \tfrac{3m+2}{4}  
   \right]_{q^{ 24}}} \frac {{ {\left[ 6 \right]_{q^{ 4}}} 
  }} { {\left[ 2 \right]_{q^{ 4}}} {\left[ 3 
   \right]_{q^{ 4}}} } {\left[ \tfrac{9m+7}{5}  \right]_{q^{ 
  10}}} {\left[ m+1 \right]_{q^{ 18}}}
,\end{multline*} 
which, by Corollary~\ref{cor:A}, is a polynomial in $q$ with 
non-negative integer coefficients.
If $m\equiv
123
~(\text{mod }140),$ then we have \begin{multline*}\Cat^m(E_7;q)=
{\left[ \tfrac{9m+1}{4}  \right]_{q^{ 8}}} {\left[ 
   \tfrac{3m+1}{10}  \right]_{q^{ 60}}} \frac {{ {\left[ 30
    \right]_{q^{ 2}}} }} { {\left[ 5 \right]_{q^{ 2}}} 
  {\left[ 6 \right]_{q^{ 2}}} } {\left[ 9m+4 
  \right]_{q^{ 2}}} {\left[ 9 m+5 \right]_{q^2}}\\
\times 
 {\left[ \tfrac{3m+2}{7}  \right]_{q^{ 42}}} \frac {{ 
  {\left[ 21 \right]_{q^{ 2}}} }} { {\left[ 3 
   \right]_{q^{ 2}}} {\left[ 7 \right]_{q^{ 2}}} } 
 {\left[ 9 m+7 \right]_{q^2}} {\left[ m+1 \right]_{q^{ 
  18}}}
,\end{multline*} 
which, by Corollary~\ref{cor:A}, is a polynomial in $q$ with 
non-negative integer coefficients.
If $m\equiv
124
~(\text{mod }140),$ then we have \begin{multline*}\Cat^m(E_7;q)=
{\left[ 9 m+1 \right]_{q^{ 2}}} {\left[ 3 m+1 \right]_{q^{ 
  6}}} {\left[ \tfrac{9m+4}{28}  \right]_{q^{ 56}}} 
  \frac {{ {\left[ 28 \right]_{q^{ 2}}} }} { 
  {\left[ 4 \right]_{q^{ 2}}} {\left[ 7 \right]_{q^{ 2}}} 
  } {\left[ 9 m+5 \right]_{q^2}}\\
\times {\left[ 
  \tfrac{3m+2}{2}  \right]_{q^{ 12}}} {\left[ 9 m+7 
  \right]_{q^2}} {\left[ \tfrac{m+1}{5} \right]_{q^{ 90}}} 
  \frac {{ {\left[ 45 \right]_{q^{ 2}}} }} { 
  {\left[ 5 \right]_{q^{ 2}}} {\left[ 9 \right]_{q^{ 2}}} 
  }
,\end{multline*} 
which, by Corollary~\ref{cor:A}, is a polynomial in $q$ with 
non-negative integer coefficients.
If $m\equiv
125
~(\text{mod }140),$ then we have \begin{multline*}\Cat^m(E_7;q)=
{\left[ 9 m+1 \right]_{q^{ 2}}} {\left[ \tfrac{3m+1}{2}  
  \right]_{q^{ 12}}} \\
\times{\left[ 9m+4 \right]_{q^2}} {\left[ 
  \tfrac{9m+5}{5}  \right]_{q^{ 10}}} {\left[ 3m+2 \right]_{q^{ 
  6}}} {\left[ \tfrac{9m+7}{4}  \right]_{q^{ 8}}} {\left[ 
   \tfrac{m+1}{7} \right]_{q^{ 126}}} \frac {{ {\left[ 63 
   \right]_{q^{ 2}}} }} { {\left[ 7 \right]_{q^{ 2}}} 
  {\left[ 9 \right]_{q^{ 2}}} }
,\end{multline*} 
which, by Corollary~\ref{cor:A}, is a polynomial in $q$ with 
non-negative integer coefficients.
If $m\equiv
126
~(\text{mod }140),$ then we have \begin{multline*}\Cat^m(E_7;q)=
{\left[ \tfrac{9m+1}{5}  \right]_{q^{ 10}}} {\left[ 3 m+1 
  \right]_{q^{ 6}}}\\
\times {\left[\tfrac{ 9m+4}2 \right]_{q^{ 4}}} {\left[ 
  9 m+5 \right]_{q^2}} {\left[ \tfrac{3m+2}{4}  
   \right]_{q^{ 24}}} \frac {{ {\left[ 6 \right]_{q^{ 4}}} 
  }} { {\left[ 2 \right]_{q^{ 4}}} {\left[ 3 
   \right]_{q^{ 4}}} } {\left[ \tfrac{9m+7}{7}  \right]_{q^{ 
  14}}} {\left[ m+1 \right]_{q^{ 18}}}
,\end{multline*} 
which, by Corollary~\ref{cor:A}, is a polynomial in $q$ with 
non-negative integer coefficients.
If $m\equiv
127
~(\text{mod }140),$ then we have $$\Cat^m(E_7;q)=
{\left[ \tfrac{9m+1}{4}  \right]_{q^{ 8}}} {\left[ 
  \tfrac{3m+1}{2}  \right]_{q^{ 12}}} {\left[ 9m+4 \right]_{q^{ 
  2}}} {\left[ \tfrac{9m+5}{7}  \right]_{q^{ 14}}} {\left[ 
  3m+2 \right]_{q^{ 6}}} {\left[ \tfrac{9m+7}{5}  \right]_{q^{ 
  10}}} {\left[ m+1 \right]_{q^{ 18}}}
,$$ 
which is manifestly a polynomial in $q$ with 
non-negative integer coefficients.  
If $m\equiv
128
~(\text{mod }140),$ then we have \begin{multline*}\Cat^m(E_7;q)=
{\left[ 9 m+1 \right]_{q^{ 2}}} {\left[ \tfrac{3 m+1}{35}  
   \right]_{q^{ 210}}} \frac {{ {\left[ 105 \right]_{q^{ 2}}} 
  }} { {\left[ 3 \right]_{q^{ 2}}}{\left[ 5 \right]_{q^{ 2}}} {\left[ 7 
   \right]_{q^{ 2}}} } {\left[ \tfrac{9m+4}{4}  \right]_{q^{ 
  8}}} {\left[ 9 m+5 \right]_{q^2}}\\
\times {\left[ 
   \tfrac{3m+2}{2}  \right]_{q^{ 12}}} 
{\left[ 9 m+7 
  \right]_{q^2}} {\left[ m+1 \right]_{q^{ 18}}}
,\end{multline*} 
which, by Lemma~\ref{lem:I}, is a polynomial in $q$ with 
non-negative integer coefficients.
If $m\equiv
129
~(\text{mod }140),$ then we have $$\Cat^m(E_7;q)=
{\left[ \tfrac{9m+1}{7}  \right]_{q^{ 14}}} {\left[ 
  \tfrac{3m+1}{2}  \right]_{q^{ 12}}} {\left[ \tfrac{9m+4}{5}  
  \right]_{q^{ 10}}} {\left[ 9m+5 \right]_{q^{ 2}}} {\left[ 
  3m+2 \right]_{q^{ 6}}} {\left[ \tfrac{9m+7}{4}  \right]_{q^{ 
  8}}} {\left[ m+1 \right]_{q^{ 18}}}
,$$ 
which is manifestly a polynomial in $q$ with 
non-negative integer coefficients.  
If $m\equiv
130
~(\text{mod }140),$ then we have \begin{multline*}\Cat^m(E_7;q)=
{\left[ {9 m+1} \right]_{q^{ 2}}} 
{\left[ {3m+1} 
   \right]_{q^{ 6}}} \\
\times
{\left[ \tfrac{9m+4}2 \right]_{q^{ 4}}} 
 {\left[ \tfrac{9m+5}5 \right]_{q^{ 10}}} {\left[ \tfrac{3m+2}{28} 
   \right]_{q^{ 168}}} 
\frac {[84]_{q^2}\left[2\right]_{q^2}} 
{[4]_{q^2}\left[6\right]_{q^2}\left[7\right]_{q^2}}
{\left[ 9m+7 \right]_{q^{ 2}}} 
 {\left[ m+1 \right]_{q^{ 18}}}
,\end{multline*} 
which, by Lemma~\ref{lem:H}, is a polynomial in $q$ with 
non-negative integer coefficients.
If $m\equiv
131
~(\text{mod }140),$ then we have $$\Cat^m(E_7;q)=
{\left[ \tfrac{9m+1}{5}  \right]_{q^{ 10}}} {\left[ 
  \tfrac{3m+1}{2}  \right]_{q^{ 12}}} {\left[ \tfrac{9m+4}{7}  
  \right]_{q^{ 14}}} {\left[ \tfrac{9m+5}{4}  \right]_{q^{ 8}}} 
 {\left[ 3m+2 \right]_{q^{ 6}}} {\left[ 9m+7 \right]_{q^{ 
  2}}} {\left[ m+1 \right]_{q^{ 18}}}
,$$ 
which is manifestly a polynomial in $q$ with 
non-negative integer coefficients.  
If $m\equiv
132
~(\text{mod }140),$ then we have \begin{multline*}\Cat^m(E_7;q)=
{\left[ 9 m+1 \right]_{q^{ 2}}} {\left[ 3 m+1 \right]_{q^{ 
  6}}} {\left[ \tfrac{9m+4}{4}  \right]_{q^{ 8}}} \\
\times{\left[ 
  9 m+5 \right]_{q^2}} {\left[ \tfrac{3m+2}{2}  \right]_{q^{ 
  12}}} {\left[ \tfrac{9m+7}{5}  \right]_{q^{ 10}}} {\left[ 
   \tfrac{m+1}{7} \right]_{q^{ 126}}} \frac {{ {\left[ 63 
   \right]_{q^{ 2}}} }} { {\left[ 7 \right]_{q^{ 2}}} 
  {\left[ 9 \right]_{q^{ 2}}} }
,\end{multline*} 
which, by Corollary~\ref{cor:A}, is a polynomial in $q$ with 
non-negative integer coefficients.
If $m\equiv
133
~(\text{mod }140),$ then we have \begin{multline*}\Cat^m(E_7;q)=
{\left[ 9 m+1 \right]_{q^{ 2}}} {\left[ \tfrac{3m+1}{10}  
   \right]_{q^{ 60}}} \frac {{ {\left[ 30 \right]_{q^{ 2}}} 
  }} { {\left[ 5 \right]_{q^{ 2}}} {\left[ 6 
   \right]_{q^{ 2}}} } {\left[ 9m+4 \right]_{q^2}}\\
\times 
 {\left[ 9 m+5 \right]_{q^2}} {\left[ 3m+2 
  \right]_{q^{ 6}}} {\left[ \tfrac{9m+7}{28}  \right]_{q^{ 56}}} 
  \frac {{ {\left[ 28 \right]_{q^{ 2}}} }} { 
  {\left[ 4 \right]_{q^{ 2}}} {\left[ 7 \right]_{q^{ 2}}} 
  } {\left[ m+1 \right]_{q^{ 18}}}
,\end{multline*} 
which, by Corollary~\ref{cor:A}, is a polynomial in $q$ with 
non-negative integer coefficients.
If $m\equiv
134
~(\text{mod }140),$ then we have \begin{multline*}\Cat^m(E_7;q)=
{\left[ 9 m+1 \right]_{q^{ 2}}} {\left[ 3 m+1 \right]_{q^{ 
  6}}} {\left[ \tfrac{9m+4}{10}  \right]_{q^{ 20}}}
\frac {[10]_{q^2}} {[2]_{q^2}\left[5\right]_{q^2}}\\
\times
 {\left[ 
  \tfrac{9m+5}{7}  \right]_{q^{ 14}}} {\left[ \tfrac{3m+2}{4}  
   \right]_{q^{ 24}}} \frac {{ {\left[ 6 \right]_{q^{ 4}}} 
  }} { {\left[ 2 \right]_{q^{ 4}}} {\left[ 3 
   \right]_{q^{ 4}}} } {\left[ 9 m+7 \right]_{q^2}} 
 {\left[ m+1 \right]_{q^{ 18}}}
.\end{multline*} 
If one decomposes $[9m+7]_{q^2}$ as 
$[\frac {9m} {2}+4]_{q^4}+q^2[\frac {9m} {2}+3]_{q^4}$,
then one sees that, by Corollary~\ref{cor:A},
this is a polynomial in $q$ with 
non-negative integer coefficients.
If $m\equiv
135
~(\text{mod }140),$ then we have \begin{multline*}\Cat^m(E_7;q)=
{\left[ \tfrac{9m+1}{4}  \right]_{q^{ 8}}} {\left[ 
   \tfrac{3m+1}{14}  \right]_{q^{ 84}}} \frac {{ {\left[ 42
    \right]_{q^{ 2}}} }} { {\left[ 6 \right]_{q^{ 2}}} 
  {\left[ 7 \right]_{q^{ 2}}} } {\left[ 9m+4 
  \right]_{q^{ 2}}}\\
\times {\left[ \tfrac{9m+5}{5}  \right]_{q^{ 10}}} 
 {\left[ 3m+2 \right]_{q^{ 6}}} {\left[ 9 m+7 
  \right]_{q^2}} {\left[ m+1 \right]_{q^{ 18}}}
,\end{multline*} 
which, by Corollary~\ref{cor:A}, is a polynomial in $q$ with 
non-negative integer coefficients.
If $m\equiv
136
~(\text{mod }140),$ then we have \begin{multline*}\Cat^m(E_7;q)=
{\left[ \tfrac{9m+1}{35}  \right]_{q^{ 70}}} \frac {{ 
  {\left[ 35 \right]_{q^{ 2}}} }} { {\left[ 5 
   \right]_{q^{ 2}}} {\left[ 7 \right]_{q^{ 2}}} } {\left[ 
  3 m+1 \right]_{q^{ 6}}} {\left[ \tfrac{9m+4}{4}  \right]_{q^{ 
  8}}} {\left[ 9m+5 \right]_{q^{ 2}}}\\
\times {\left[ \tfrac{3m+2}{2} 
   \right]_{q^{ 12}}} {\left[ 9 m+7 \right]_{q^2}} 
 {\left[ m+1 \right]_{q^{ 18}}}
,\end{multline*} 
which, by Corollary~\ref{cor:A}, is a polynomial in $q$ with 
non-negative integer coefficients.
If $m\equiv
137
~(\text{mod }140),$ then we have \begin{multline*}\Cat^m(E_7;q)=
{\left[ 9 m+1 \right]_{q^{ 2}}} {\left[ \tfrac{3m+1}{4}  
   \right]_{q^{ 24}}} \frac {{ {\left[ 6 \right]_{q^{ 4}}} 
  }} { {\left[ 2 \right]_{q^{ 4}}} {\left[ 3 
   \right]_{q^{ 4}}} } {\left[ 9m+4 \right]_{q^2}} 
 {\left[ \tfrac{9 m+5}2 \right]_{q^4}}\\
\times {\left[ 
   \tfrac{3m+2}{7}  \right]_{q^{ 42}}} \frac {{ {\left[ 21 
   \right]_{q^{ 2}}} }} { {\left[ 3 \right]_{q^{ 2}}} 
  {\left[ 7 \right]_{q^{ 2}}} } {\left[ \tfrac{9m+7}{5} 
   \right]_{q^{ 10}}} {\left[ m+1 \right]_{q^{ 18}}}
,\end{multline*} 
which, by Corollary~\ref{cor:A}, is a polynomial in $q$ with 
non-negative integer coefficients.
If $m\equiv
138
~(\text{mod }140),$ then we have \begin{multline*}\Cat^m(E_7;q)=
{\left[ 9 m+1 \right]_{q^{ 2}}} {\left[ \tfrac{3m+1}{5}  
   \right]_{q^{ 30}}} \frac {{ {\left[ 15 \right]_{q^{ 2}}} 
  }} { {\left[ 3 \right]_{q^{ 2}}} {\left[ 5 
   \right]_{q^{ 2}}} } {\left[ \tfrac{9m+4}{14}  \right]_{q^{ 
  28}}}
\frac {[14]_{q^2}} {[2]_{q^2}\left[7\right]_{q^2}}
 {\left[ 9 m+5 \right]_{q^2}}\\
\times {\left[ 
   \tfrac{3m+2}{4}  \right]_{q^{ 24}}} \frac {{ {\left[ 6 
   \right]_{q^{ 4}}} }} { {\left[ 2 \right]_{q^{ 4}}} 
  {\left[ 3 \right]_{q^{ 4}}} } {\left[ 9 m+7 
  \right]_{q^2}} {\left[ m+1 \right]_{q^{ 18}}}
.\end{multline*} 
If one decomposes $[9m+7]_{q^2}$ as 
$[\frac {9m} {2}+4]_{q^4}+q^2[\frac {9m} {2}+3]_{q^4}$,
then one sees that, by Corollary~\ref{cor:A},
this is a polynomial in $q$ with 
non-negative integer coefficients.
If $m\equiv
139
~(\text{mod }140),$ then we have \begin{multline*}\Cat^m(E_7;q)=
{\left[ \tfrac{9m+1}{4}  \right]_{q^{ 8}}} {\left[ 
  \tfrac{3m+1}{2}  \right]_{q^{ 12}}} {\left[ \tfrac{9m+4}{5}  
  \right]_{q^{ 10}}} {\left[ 9m+5 \right]_{q^{ 2}}}\\
\times {\left[ 
  3m+2 \right]_{q^{ 6}}} {\left[ 9 m+7 \right]_{q^2}} 
 {\left[ \tfrac{m+1}{7} \right]_{q^{ 126}}} \frac {{ 
  {\left[ 63 \right]_{q^{ 2}}} }} { {\left[ 7 
   \right]_{q^{ 2}}} {\left[ 9 \right]_{q^{ 2}}} },
\end{multline*}
which, by Corollary~\ref{cor:A}, is a polynomial in $q$ with 
non-negative integer coefficients.

For $W=G_{37}=E_8$, the degrees are $2,8,12,14,18,20,24,30$, and hence
\begin{multline*}
\Cat^m(E_7;q)=\frac {[30m+2]_q\,[30m+8]_q\,[30m+12]_q\,[30m+14]_q\,
}
{[2]_q\,[8]_q\,[12]_q\,[14]_q}\\
\times
\frac {
[30m+18]_q\,[30m+20]_q\,[30m+24]_q\,[30m+30]_q}
{[18]_q\,[20]_q\,[24]_q\,[30]_q}.
\end{multline*}
If $m\equiv
0
~(\text{mod }84),$ then we have \begin{multline*} \Cat^m(E_8;q)=
{\left[ 15 m+1 \right]_{q^{ 2}}} {\left[ \tfrac{15m+4}{4}  
  \right]_{q^{ 8}}} {\left[ \tfrac{5m+2}{2}  \right]_{q^{ 12}}} 
 {\left[ \tfrac{15m+7}{7}  \right]_{q^{ 14}}}\\
\times
 {\left[ 
  \tfrac{5m+3}{3}  \right]_{q^{ 18}}} {\left[ \tfrac{3m+2}{2}  
  \right]_{q^{ 20}}} {\left[ \tfrac{5m+4}{4}  \right]_{q^{ 24}}} 
 {\left[ m+1 \right]_{q^{ 30}}}
,\end{multline*} 
which is manifestly a polynomial in $q$ with 
non-negative integer coefficients.  
If $m\equiv
1
~(\text{mod }84),$ then we have 
\begin{multline*} \Cat^m(E_8;q)=
{\left[ \tfrac{15m+1}{4}  \right]_{q^{ 8 }}}
 {\left[ 15m+4 \right]_{q^{ 2 }}} {\left[ 
  \tfrac{5m+2}{7}  \right]_{q^{ 42 }}}
\frac {[21]_{q^2}} {[3]_{q^2}\left[7\right]_{q^2}}
 {\left[ \tfrac{15 m+7}2 
  \right]_{q^4}} {\left[ \tfrac{5m+3}{4}  
   \right]_{q^{ 24}}} 
   \\
\times {\left[ 3m+2 \right]_{q^{10}}} 
 {\left[ \tfrac{5m+4}{3}  \right]_{q^{ 18 }}}
 {\left[ \tfrac{m+1}{2} \right]_{q^{ 60}}} \frac {{ 
  {\left[ 30 \right]_{q^{ 2}}}
  {\left[ 2 \right]_{q^{ 2}}}
  {\left[ 3 \right]_{q^{ 2}}}
  {\left[ 5 \right]_{q^{ 2}}} }} { {\left[ 6 
   \right]_{q^{ 2}}}{\left[ 10
   \right]_{q^{ 2}}} {\left[ 15 \right]_{q^{ 2}}}}
,\end{multline*} 
which, by Corollary~\ref{cor:A} and Lemma~\ref{lem:61015}, 
is a polynomial in $q$ with 
non-negative integer coefficients.
If $m\equiv
2
~(\text{mod }84),$ then we have \begin{multline*} \Cat^m(E_8;q)=
{\left[ 15 m+1 \right]_{q^{ 2 }}} {\left[ \tfrac{15m+4}2 
  \right]_{q^4}} {\left[ \tfrac{5m+2}{12}  
   \right]_{q^{ 72}}} \frac {{ {\left[ 12 \right]_{q^{ 6}}} 
  }} { {\left[ 3 \right]_{q^{ 6}}} {\left[ 4 
   \right]_{q^{ 6}}}} {\left[ 15 m+7 \right]_{q^2}} 
 {\left[ 5 m+3 \right]_{q^6}} \\
\times {\left[ 
   \tfrac{3m+2}{4}  \right]_{q^{ 40}}} \frac {{ {\left[ 10 
   \right]_{q^{ 4}}} }} { {\left[ 2 \right]_{q^{ 4}}} 
  {\left[ 5 \right]_{q^{ 4}}}} {\left[ \tfrac{5m+4}{14}  
   \right]_{q^{ 84}}} \frac {{ {\left[ 42 \right]_{q^{ 2}}} 
  }} { {\left[ 6 \right]_{q^{ 2}}} {\left[ 7 
   \right]_{q^{ 2}}}} {\left[ m+1 \right]_{q^{ 30 }}}
,\end{multline*} 
which, by Corollary~\ref{cor:A}, is a polynomial in $q$ with 
non-negative integer coefficients.
If $m\equiv
3
~(\text{mod }84),$ then we have 
\begin{multline*} \Cat^m(E_8;q)=
{\left[ \tfrac{15 m+1}2 \right]_{q^{ 4}}} 
{\left[ \tfrac{15m+4}7 \right]_{q^{ 
   14}}} 
 {\left[ 5m+2 \right]_{q^{ 6 }}}
 {\left[ \tfrac{15m+7}4 
   \right]_{q^{ 8}}} 
 {\left[ \tfrac{5m+3}6 \right]_{q^{ 36}}}
\frac {[6]_{q^6}} {[2]_{q^6}\left[3\right]_{q^6}}
\\
\times 
 {\left[ 3m+2 \right]_{q^{ 10 }}} {\left[ 5m+4
   \right]_{q^{ 6 }}} {\left[ \tfrac{m+1}4 \right]_{q^{ 120}}}
\frac {[60]_{q^2}\left[2\right]_{q^2}\left[3\right]_{q^2}\left[5\right]_{q^2}} 
{\left[10\right]_{q^2}\left[12\right]_{q^2}\left[15\right]_{q^2}} 
,\end{multline*} 
which, by Corollary~\ref{cor:A} and Lemma~\ref{lem:101215}, 
is a polynomial in $q$ with 
non-negative integer coefficients.
If $m\equiv
4
~(\text{mod }84),$ then we have \begin{multline*} \Cat^m(E_8;q)=
{\left[ 15 m+1 \right]_{q^{ 2 }}} {\left[ 
  \tfrac{15m+4}{4}  \right]_{q^{ 8 }}} {\left[ 
  \tfrac{5m+2}{2}  \right]_{q^{ 12 }}} \\
\times{\left[ 15 m+7 
  \right]_{q^2}} {\left[ 5 m+3 \right]_{q^{ 6 
  }}} 
{\left[ \tfrac{3m+2}{14}  \right]_{q^{ 140}}} 
  \frac {{ {\left[ 70 \right]_{q^{ 2}}} }} { 
  {\left[ 7 \right]_{q^{ 2}}} {\left[ 10 \right]_{q^{ 2}}}} 
 {\left[ \tfrac{5m+4}{12}  \right]_{q^{ 72}}} \frac {{ 
  {\left[ 12 \right]_{q^{ 6}}} }} { {\left[ 3 
   \right]_{q^{ 6}}} {\left[ 4 \right]_{q^{ 6}}}} {\left[ m+1 
  \right]_{q^{ 30 }}}
,\end{multline*} 
which, by Corollary~\ref{cor:A}, is a polynomial in $q$ with 
non-negative integer coefficients.
If $m\equiv
5
~(\text{mod }84),$ then we have \begin{multline*} \Cat^m(E_8;q)=
{\left[ \tfrac{15m+1}{4}  \right]_{q^{ 8 }}}
 {\left[ 15m+4 \right]_{q^{ 2 }}} {\left[ 
  \tfrac{5m+2}3  \right]_{q^{ 18 }}}
 {\left[ \tfrac{15 m+7}2 
  \right]_{q^4}} {\left[ \tfrac{5m+3}{28}  
   \right]_{q^{ 168}}}
\frac {[84]_{q^2}} {[7]_{q^2}\left[12\right]_{q^2}} 
   \\
\times {\left[ 3m+2 \right]_{q^{10}}} 
 {\left[ {5m+4}  \right]_{q^{ 6 }}}
 {\left[ \tfrac{m+1}{2} \right]_{q^{ 60}}} \frac {{ 
  {\left[ 30 \right]_{q^{ 2}}}
  {\left[ 2 \right]_{q^{ 2}}}
  {\left[ 3 \right]_{q^{ 2}}}
  {\left[ 5 \right]_{q^{ 2}}} }} { {\left[ 6 
   \right]_{q^{ 2}}}{\left[ 10
   \right]_{q^{ 2}}} {\left[ 15 \right]_{q^{ 2}}}}
,\end{multline*} 
which, by Corollary~\ref{cor:A} and Lemma~\ref{lem:61015}, 
is a polynomial in $q$ with 
non-negative integer coefficients.
If $m\equiv
6
~(\text{mod }84),$ then we have \begin{multline*} \Cat^m(E_8;q)=
{\left[ \tfrac{15m+1}{7}  \right]_{q^{ 14 }}} 
 {\left[\tfrac{ 15m+4}2 \right]_{q^{ 4 }}} {\left[ 
   \tfrac{5m+2}{4}  \right]_{q^{ 24}}} 
{\left[ 15 m+7 \right]_{q^{ 
  2 }}} \\
\times
{\left[ \tfrac{5m+3}{3}  \right]_{q^{ 18 
  }}} {\left[ \tfrac{3m+2}{4}  \right]_{q^{ 40 }}} 
\frac {[10]_{q^4}} {[2]_{q^4}\left[5\right]_{q^4}}
 {\left[ \tfrac{5m+4}{2}  \right]_{q^{ 12 }}} 
 {\left[ m+1 \right]_{q^{ 30 }}}
,\end{multline*} 
which, by Corollary~\ref{cor:A}, is a polynomial in $q$ with 
non-negative integer coefficients.
If $m\equiv
7
~(\text{mod }84),$ then we have 
\begin{multline*} \Cat^m(E_8;q)=
{\left[ \tfrac{15 m+1}2 \right]_{q^{ 4}}} 
{\left[ {15m+4} \right]_{q^{ 
   2}}} 
 {\left[ 5m+2 \right]_{q^{ 6 }}}
 {\left[ \tfrac{15m+7}28 
   \right]_{q^{ 56}}} 
\frac {[28]_{q^2}} {[4]_{q^2}\left[7\right]_{q^2}}
 {\left[ \tfrac{5m+3}2 \right]_{q^{ 12}}}\\
\times 
 {\left[ 3m+2 \right]_{q^{ 10 }}} {\left[ \tfrac{5m+4}3
   \right]_{q^{ 18 }}} {\left[ \tfrac{m+1}4 \right]_{q^{ 120}}}
\frac {[60]_{q^2}\left[2\right]_{q^2}\left[3\right]_{q^2}\left[5\right]_{q^2}} 
{\left[10\right]_{q^2}\left[12\right]_{q^2}\left[15\right]_{q^2}} 
,\end{multline*} 
which, by Corollary~\ref{cor:A} and Lemma~\ref{lem:101215}, 
is a polynomial in $q$ with 
non-negative integer coefficients.
If $m\equiv
8
~(\text{mod }84),$ then we have \begin{multline*} \Cat^m(E_8;q)=
{\left[ 15 m+1 \right]_{q^{ 2 }}} {\left[ 
  \tfrac{15m+4}{4}  \right]_{q^{ 8 }}} {\left[ 
   \tfrac{5m+2}{42}  \right]_{q^{252}}} \frac {{ {\left[ 126
    \right]_{q^{ 2}}}{\left[ 3
    \right]_{q^{ 2}}} }} { {\left[ 6
    \right]_{q^{ 2}}}{\left[ 7 \right]_{q^{ 2}}} 
  {\left[ 9 \right]_{q^{ 2}}}}
 {\left[ 15 m+7 \right]_{q^{ 
  2 }}} {\left[ 5 m+3 \right]_{q^6}} \\
\times
 {\left[ \tfrac{3m+2}{2}  \right]_{q^{ 20 }}} 
 {\left[ \tfrac{5m+4}{4}  \right]_{q^{ 24}}}
{\left[ m+1 
  \right]_{q^{ 30 }}}
,\end{multline*} 
which, by Lemma~\ref{lem:679}, is a polynomial in $q$ with 
non-negative integer coefficients.
If $m\equiv
9
~(\text{mod }84),$ then we have \begin{multline*} \Cat^m(E_8;q)=
{\left[ \tfrac{15m+1}{4}  \right]_{q^{ 8 }}}
 {\left[ 15m+4 \right]_{q^{ 2 }}} {\left[ 
  {5m+2}  \right]_{q^{ 6 }}}
 {\left[ \tfrac{15 m+7}2 
  \right]_{q^4}} {\left[ \tfrac{5m+3}{12}  
   \right]_{q^{ 72}}}
\frac {[12]_{q^6}} {[3]_{q^6}\left[4\right]_{q^6}} 
   \\
\times {\left[ 3m+2 \right]_{q^{10}}} 
 {\left[ \tfrac{5m+4}7  \right]_{q^{ 42 }}}
\frac {[21]_{q^2}} {[3]_{q^2}\left[7\right]_{q^2}}
 {\left[ \tfrac{m+1}{2} \right]_{q^{ 60}}} \frac {{ 
  {\left[ 30 \right]_{q^{ 2}}}
  {\left[ 2 \right]_{q^{ 2}}}
  {\left[ 3 \right]_{q^{ 2}}}
  {\left[ 5 \right]_{q^{ 2}}} }} { {\left[ 6 
   \right]_{q^{ 2}}}{\left[ 10
   \right]_{q^{ 2}}} {\left[ 15 \right]_{q^{ 2}}}}
,\end{multline*} 
which, by Corollary~\ref{cor:A} and Lemma~\ref{lem:61015}, 
is a polynomial in $q$ with 
non-negative integer coefficients.
If $m\equiv
10
~(\text{mod }84),$ then we have \begin{multline*} \Cat^m(E_8;q)=
{\left[ 15 m+1 \right]_{q^{ 2 }}} {\left[ 
  \tfrac{15m+4}{14}  \right]_{q^{ 28}}}
\frac {[14]_{q^2}} {[2]_{q^2}\left[7\right]_{q^2}}
 {\left[ 
   \tfrac{5m+2}{4}  \right]_{q^{ 24}}} \\
\times
{\left[ 15 m+7 
  \right]_{q^2}} {\left[ 5 m+3 \right]_{q^{ 6
  }}} 
{\left[ \tfrac{3m+2}{4}  \right]_{q^{ 40}}} 
  \frac {{ {\left[ 10 \right]_{q^{ 4}}} }} { 
  {\left[ 2 \right]_{q^{ 4}}} {\left[ 5 \right]_{q^{ 4}}}} 
 {\left[ \tfrac{5m+4}{6}  \right]_{q^{ 36 }}}
\frac {[6]_{q^6}} {[2]_{q^6}\left[3\right]_{q^6}} 
 {\left[ m+1 \right]_{q^{ 30 }}}
.\end{multline*} 
If one decomposes $[15m+7]_{q^2}$ as 
$[\frac {15m} {2}+4]_{q^4}+q^2[\frac {15m} {2}+3]_{q^4}$,
then one sees that, by Corollary~\ref{cor:A},
this is a polynomial in $q$ with 
non-negative integer coefficients.
If $m\equiv
11
~(\text{mod }84),$ then we have 
\begin{multline*} \Cat^m(E_8;q)=
{\left[ \tfrac{15 m+1}2 \right]_{q^{ 4}}} 
{\left[ {15m+4} \right]_{q^{ 
   2}}} 
 {\left[ \tfrac{5m+2}3 \right]_{q^{ 18 }}}
 {\left[ \tfrac{15m+7}4 
   \right]_{q^{ 8}}} 
 {\left[ \tfrac{5m+3}2 \right]_{q^{ 12}}}\\
\times 
 {\left[ \tfrac{3m+2}7 \right]_{q^{ 70 }}} 
\frac {[35]_{q^2}} {[5]_{q^2}\left[7\right]_{q^2}}
{\left[ 5m+4
   \right]_{q^{ 6 }}} {\left[ \tfrac{m+1}4 \right]_{q^{ 120}}}
\frac {[60]_{q^2}\left[2\right]_{q^2}\left[3\right]_{q^2}\left[5\right]_{q^2}} 
{\left[10\right]_{q^2}\left[12\right]_{q^2}\left[15\right]_{q^2}} 
,\end{multline*} 
which, by Corollary~\ref{cor:A} and Lemma~\ref{lem:101215B}, 
is a polynomial in $q$ with 
non-negative integer coefficients.
If $m\equiv
12
~(\text{mod }84),$ then we have \begin{multline*} \Cat^m(E_8;q)=
{\left[ 15 m+1 \right]_{q^{ 2 }}} {\left[ 
  \tfrac{15m+4}{4}  \right]_{q^{ 8 }}} {\left[ 
  \tfrac{5m+2}{2}  \right]_{q^{ 12 }}} \\
\times{\left[ 15 m+7 
  \right]_{q^2}} {\left[ \tfrac{5m+3}{21}  
   \right]_{q^{ 126}}}
 \frac {{ {\left[ 63 \right]_{q^{ 2}}} 
  }} { {\left[ 7 \right]_{q^{ 2}}} {\left[ 9 
   \right]_{q^{ 2}}}} {\left[ \tfrac{3m+2}{2}  \right]_{q^{ 20 
  }}} {\left[ \tfrac{5m+4}{4}  \right]_{q^{ 24 }}} 
 {\left[ m+1 \right]_{q^{ 30 }}}
,\end{multline*} 
which, by Corollary~\ref{cor:A}, is a polynomial in $q$ with 
non-negative integer coefficients.
If $m\equiv
13
~(\text{mod }84),$ then we have \begin{multline*} \Cat^m(E_8;q)=
{\left[ \tfrac{15m+1}{28}  \right]_{q^{ 56 }}}
\frac {[28]_{q^2}} {[4]_{q^2}\left[7\right]_{q^2}}
 {\left[ 15m+4 \right]_{q^{ 2 }}} {\left[ 
  {5m+2}  \right]_{q^{ 6 }}}
 {\left[ \tfrac{15 m+7}2 
  \right]_{q^4}} {\left[ \tfrac{5m+3}{4}  
   \right]_{q^{ 24}}} 
   \\
\times {\left[ 3m+2 \right]_{q^{10}}} 
 {\left[ \tfrac{5m+4}3  \right]_{q^{ 18 }}}
 {\left[ \tfrac{m+1}{2} \right]_{q^{ 60}}} \frac {{ 
  {\left[ 30 \right]_{q^{ 2}}}
  {\left[ 2 \right]_{q^{ 2}}}
  {\left[ 3 \right]_{q^{ 2}}}
  {\left[ 5 \right]_{q^{ 2}}} }} { {\left[ 6 
   \right]_{q^{ 2}}}{\left[ 10
   \right]_{q^{ 2}}} {\left[ 15 \right]_{q^{ 2}}}}
,\end{multline*} 
which, by Corollary~\ref{cor:A} and Lemma~\ref{lem:61015}, 
is a polynomial in $q$ with 
non-negative integer coefficients.
If $m\equiv
14
~(\text{mod }84),$ then we have \begin{multline*} \Cat^m(E_8;q)=
{\left[ 15 m+1 \right]_{q^{ 2 }}} {\left[ \tfrac{15m+4}2 
  \right]_{q^4}} {\left[ \tfrac{5m+2}{12}  
   \right]_{q^{ 72}}} \frac {{ {\left[ 12 \right]_{q^{ 6}}} 
  }} { {\left[ 3 \right]_{q^{ 6}}} {\left[ 4 
   \right]_{q^{ 6}}}} {\left[ \tfrac{15m+7}{7}  \right]_{q^{ 14 
  }}} \\
\times
{\left[ 5 m+3 \right]_{q^6}} 
 {\left[ \tfrac{3m+2}{4}  \right]_{q^{ 40}}} \frac {{ 
  {\left[ 10 \right]_{q^{ 4}}} }} { {\left[ 2 
   \right]_{q^{ 4}}} {\left[ 5 \right]_{q^{ 4}}}} {\left[ 
  \tfrac{5m+4}{2}  \right]_{q^{ 12 }}} {\left[ m+1 
  \right]_{q^{ 30 }}}
,\end{multline*} 
which, by Corollary~\ref{cor:A}, is a polynomial in $q$ with 
non-negative integer coefficients.
If $m\equiv
15
~(\text{mod }84),$ then we have 
\begin{multline*} \Cat^m(E_8;q)=
{\left[ \tfrac{15 m+1}2 \right]_{q^{ 4}}} 
{\left[ {15m+4} \right]_{q^{ 
   2}}} 
 {\left[ \tfrac{5m+2}7 \right]_{q^{ 42 }}}
\frac {[21]_{q^2}} {[3]_{q^2}\left[7\right]_{q^2}}
 {\left[ \tfrac{15m+7}4 
   \right]_{q^{ 8}}} 
 {\left[ \tfrac{5m+3}6 \right]_{q^{ 36}}}
\frac {[6]_{q^6}} {[2]_{q^6}\left[3\right]_{q^6}}
\\
\times 
 {\left[ 3m+2 \right]_{q^{ 10 }}} {\left[ 5m+4
   \right]_{q^{ 6 }}} {\left[ \tfrac{m+1}4 \right]_{q^{ 120}}}
\frac {[60]_{q^2}\left[2\right]_{q^2}\left[3\right]_{q^2}\left[5\right]_{q^2}} 
{\left[10\right]_{q^2}\left[12\right]_{q^2}\left[15\right]_{q^2}} 
,\end{multline*} 
which, by Corollary~\ref{cor:A} and Lemma~\ref{lem:101215}, 
is a polynomial in $q$ with 
non-negative integer coefficients.
If $m\equiv
16
~(\text{mod }84),$ then we have \begin{multline*} \Cat^m(E_8;q)=
{\left[ {15m+1}  \right]_{q^{ 2 }}}
 {\left[ \tfrac{15m+4}4 \right]_{q^{ 8 }}} {\left[ 
  \tfrac{5m+2}2  \right]_{q^{ 12 }}}
 {\left[ {15 m+7} 
  \right]_{q^2}} {\left[ {5m+3}  
   \right]_{q^{ 6}}}
   \\
\times {\left[ \tfrac{3m+2}2 \right]_{q^{20}}} 
 {\left[ \tfrac{5m+4}{84}  \right]_{q^{ 504 }}}
\frac {[252]_{q^2}\left[3\right]_{q^2}} 
{[7]_{q^2}\left[9\right]_{q^2}\left[12\right]_{q^2}}
 {\left[ {m+1} \right]_{q^{ 30}}} 
,\end{multline*} 
which, by Lemma~\ref{lem:7912}, 
is a polynomial in $q$ with 
non-negative integer coefficients.
If $m\equiv
17
~(\text{mod }84),$ then we have \begin{multline*} \Cat^m(E_8;q)=
{\left[ \tfrac{15m+1}{4}  \right]_{q^{ 8 }}}
 {\left[ \tfrac{15m+4}7 \right]_{q^{ 14 }}} {\left[ 
  \tfrac{5m+2}3  \right]_{q^{ 18 }}}
 {\left[ \tfrac{15 m+7}2 
  \right]_{q^4}} {\left[ \tfrac{5m+3}{4}  
   \right]_{q^{ 24}}} 
   \\
\times {\left[ 3m+2 \right]_{q^{10}}} 
 {\left[ {5m+4}  \right]_{q^{ 6 }}}
 {\left[ \tfrac{m+1}{2} \right]_{q^{ 60}}} \frac {{ 
  {\left[ 30 \right]_{q^{ 2}}}
  {\left[ 2 \right]_{q^{ 2}}}
  {\left[ 3 \right]_{q^{ 2}}}
  {\left[ 5 \right]_{q^{ 2}}} }} { {\left[ 6 
   \right]_{q^{ 2}}}{\left[ 10
   \right]_{q^{ 2}}} {\left[ 15 \right]_{q^{ 2}}}}
,\end{multline*} 
which, by Lemma~\ref{lem:61015}, 
is a polynomial in $q$ with 
non-negative integer coefficients.
If $m\equiv
18
~(\text{mod }84),$ then we have \begin{multline*} \Cat^m(E_8;q)=
{\left[ 15 m+1 \right]_{q^{ 2 }}} {\left[ \tfrac{15m+4}2 
  \right]_{q^4}} {\left[ \tfrac{5m+2}{4}  
   \right]_{q^{ 24}}} 
{\left[ 15 m+7 \right]_{q^2}} 
 {\left[ \tfrac{5m+3}{3}  \right]_{q^{ 18 }}} \\
 {\left[ \tfrac{3m+2}{28}  \right]_{q^{ 280}}} 
\frac {{ 
  {\left[ 140 \right]_{q^{ 2}}}  {\left[ 2 \right]_{q^{ 2}}} }}
 {  {\left[ 4 \right]_{q^{ 2}}} {\left[ 7 
   \right]_{q^{ 2}}} {\left[ 10 \right]_{q^{ 2}}}} {\left[ 
  \tfrac{5m+4}{2}  \right]_{q^{ 12 }}} {\left[ m+1 
  \right]_{q^{ 30 }}}
,\end{multline*} 
which, by Lemma~\ref{lem:4710}, 
is a polynomial in $q$ with 
non-negative integer coefficients.
If $m\equiv
19
~(\text{mod }84),$ then we have 
\begin{multline*} \Cat^m(E_8;q)=
{\left[ \tfrac{15 m+1}2 \right]_{q^{ 4}}} 
{\left[ {15m+4} \right]_{q^{ 
   2}}} 
 {\left[ 5m+2 \right]_{q^{ 6 }}}
 {\left[ \tfrac{15m+7}4 
   \right]_{q^{ 8}}} 
 {\left[ \tfrac{5m+3}14 \right]_{q^{ 84}}}
\frac {[42]_{q^2}} {[6]_{q^2}\left[7\right]_{q^2}}
\\
\times 
 {\left[ 3m+2 \right]_{q^{ 10 }}} {\left[ \tfrac{5m+4}3
   \right]_{q^{ 18 }}} {\left[ \tfrac{m+1}4 \right]_{q^{ 120}}}
\frac {[60]_{q^2}\left[2\right]_{q^2}\left[3\right]_{q^2}\left[5\right]_{q^2}} 
{\left[10\right]_{q^2}\left[12\right]_{q^2}\left[15\right]_{q^2}} 
,\end{multline*} 
which, by Corollary~\ref{cor:A} and Lemma~\ref{lem:101215}, 
is a polynomial in $q$ with 
non-negative integer coefficients.
If $m\equiv
20
~(\text{mod }84),$ then we have \begin{multline*} \Cat^m(E_8;q)=
{\left[ \tfrac{15m+1}{7}  \right]_{q^{ 14 }}} 
 {\left[ \tfrac{15m+4}{4}  \right]_{q^{ 8 }}} 
 {\left[ \tfrac{5m+2}{6}  \right]_{q^{ 36}}} \frac {{ 
  {\left[ 6 \right]_{q^{ 6}}} }} { {\left[ 2 
   \right]_{q^{ 6}}} {\left[ 3 \right]_{q^{ 6}}}} {\left[ 15m+7
   \right]_{q^{ 2 }}} {\left[ 5 m+3 \right]_{q^{ 6
  }}} \\
\times
{\left[ \tfrac{3m+2}{2}  \right]_{q^{ 20 }}} 
 {\left[ \tfrac{5m+4}{4}  \right]_{q^{ 24 }}} 
 {\left[ m+1 \right]_{q^{ 30 }}}
,\end{multline*} 
which, by Corollary~\ref{cor:A}, is a polynomial in $q$ with 
non-negative integer coefficients.
If $m\equiv
21
~(\text{mod }84),$ then we have \begin{multline*} \Cat^m(E_8;q)=
{\left[ \tfrac{15m+1}{4}  \right]_{q^{ 8 }}}
 {\left[ 15m+4 \right]_{q^{ 2 }}} {\left[ 
  {5m+2}  \right]_{q^{ 6 }}}
 {\left[ \tfrac{15 m+7}{14} 
  \right]_{q^{28}}}
\frac {[14]_{q^2}} {[2]_{q^2}\left[7\right]_{q^2}}
 {\left[ \tfrac{5m+3}{12}  
   \right]_{q^{ 72}}}
\frac {[12]_{q^6}} {[3]_{q^6}\left[4\right]_{q^6}} 
   \\
\times {\left[ 3m+2 \right]_{q^{10}}} 
 {\left[ {5m+4}  \right]_{q^{ 6 }}}
 {\left[ \tfrac{m+1}{2} \right]_{q^{ 60}}} \frac {{ 
  {\left[ 30 \right]_{q^{ 2}}}
  {\left[ 2 \right]_{q^{ 2}}}
  {\left[ 3 \right]_{q^{ 2}}}
  {\left[ 5 \right]_{q^{ 2}}} }} { {\left[ 6 
   \right]_{q^{ 2}}}{\left[ 10
   \right]_{q^{ 2}}} {\left[ 15 \right]_{q^{ 2}}}}
,\end{multline*} 
which, by Corollary~\ref{cor:A} and Lemma~\ref{lem:61015B}, 
is a polynomial in $q$ with 
non-negative integer coefficients.
If $m\equiv
22
~(\text{mod }84),$ then we have \begin{multline*} \Cat^m(E_8;q)=
{\left[ 15 m+1 \right]_{q^{ 2 }}} {\left[ \tfrac{15m+4}2 
  \right]_{q^4}} {\left[ \tfrac{5m+2}{28}  
   \right]_{q^{ 168}}} \frac {{ {\left[ 84 \right]_{q^{ 2}}} 
  }} { {\left[ 7 \right]_{q^{ 2}}} {\left[ 12 
   \right]_{q^{ 2}}}} {\left[ 15 m+7 \right]_{q^2}} \\
\times
 {\left[ 5 m+3 \right]_{q^6}} {\left[ 
   \tfrac{3m+2}{4}  \right]_{q^{ 40}}} \frac {{ {\left[ 10 
   \right]_{q^{ 4}}} }} { {\left[ 2 \right]_{q^{ 4}}} 
  {\left[ 5 \right]_{q^{ 4}}}} {\left[ \tfrac{5m+4}{6}  
   \right]_{q^{ 36}}} \frac {{ {\left[ 6 \right]_{q^{ 6}}} 
  }} { {\left[ 2 \right]_{q^{ 6}}} {\left[ 3 
   \right]_{q^{ 6}}}} {\left[ m+1 \right]_{q^{ 30 }}}
,\end{multline*} 
which, by Corollary~\ref{cor:A}, is a polynomial in $q$ with 
non-negative integer coefficients.
If $m\equiv
23
~(\text{mod }84),$ then we have 
\begin{multline*} \Cat^m(E_8;q)=
{\left[ \tfrac{15 m+1}2 \right]_{q^{ 4}}} 
{\left[ {15m+4} \right]_{q^{ 
   2}}} 
 {\left[ \tfrac{5m+2}3 \right]_{q^{ 18 }}}
 {\left[ \tfrac{15m+7}4 
   \right]_{q^{ 8}}} 
 {\left[ \tfrac{5m+3}2 \right]_{q^{ 12}}}\\
\times 
 {\left[ 3m+2 \right]_{q^{ 10 }}} {\left[ \tfrac{5m+4}7
   \right]_{q^{ 42 }}}
\frac {[21]_{q^2}} {[3]_{q^2}\left[7\right]_{q^2}}
 {\left[ \tfrac{m+1}4 \right]_{q^{ 120}}}
\frac {[60]_{q^2}\left[2\right]_{q^2}\left[3\right]_{q^2}\left[5\right]_{q^2}} 
{\left[10\right]_{q^2}\left[12\right]_{q^2}\left[15\right]_{q^2}} 
,\end{multline*} 
which, by Corollary~\ref{cor:A} and Lemma~\ref{lem:101215}, 
is a polynomial in $q$ with 
non-negative integer coefficients.
If $m\equiv
24
~(\text{mod }84),$ then we have \begin{multline*} \Cat^m(E_8;q)=
{\left[ 15 m+1 \right]_{q^{ 2 }}} {\left[ 
   \tfrac{15m+4}{28}  \right]_{q^{ 56}}} \frac {{ {\left[ 28
    \right]_{q^{ 2}}} }} { {\left[ 4 \right]_{q^{ 2}}} 
  {\left[ 7 \right]_{q^{ 2}}}} {\left[ \tfrac{5m+2}{2}  
  \right]_{q^{ 12 }}} {\left[ 15 m+7 \right]_{q^{ 2
  }}}\\
\times {\left[ \tfrac{5m+3}{3}  \right]_{q^{ 18 }}} 
 {\left[ \tfrac{3m+2}{2}  \right]_{q^{ 20 }}} 
 {\left[ \tfrac{5m+4}{4}  \right]_{q^{ 24 }}} 
 {\left[ m+1 \right]_{q^{ 30 }}}
,\end{multline*} 
which, by Corollary~\ref{cor:A}, is a polynomial in $q$ with 
non-negative integer coefficients.
If $m\equiv
25
~(\text{mod }84),$ then we have \begin{multline*} \Cat^m(E_8;q)=
{\left[ \tfrac{15m+1}{4}  \right]_{q^{ 8 }}}
 {\left[ 15m+4 \right]_{q^{ 2 }}} {\left[ 
  {5m+2}  \right]_{q^{ 6 }}}
 {\left[ \tfrac{15 m+7}2 
  \right]_{q^4}} {\left[ \tfrac{5m+3}{4}  
   \right]_{q^{ 24}}} 
   \\
\times {\left[ \tfrac{3m+2}7 \right]_{q^{70}}} 
\frac {[35]_{q^2}} {[5]_{q^2}\left[7\right]_{q^2}}
 {\left[ \tfrac{5m+4}3  \right]_{q^{ 18 }}}
 {\left[ \tfrac{m+1}{2} \right]_{q^{ 60}}} \frac {{ 
  {\left[ 30 \right]_{q^{ 2}}}
  {\left[ 2 \right]_{q^{ 2}}}
  {\left[ 3 \right]_{q^{ 2}}}
  {\left[ 5 \right]_{q^{ 2}}} }} { {\left[ 6 
   \right]_{q^{ 2}}}{\left[ 10
   \right]_{q^{ 2}}} {\left[ 15 \right]_{q^{ 2}}}}
,\end{multline*} 
which, by Lemma~\ref{lem:61015C}, 
is a polynomial in $q$ with 
non-negative integer coefficients.
If $m\equiv
26
~(\text{mod }84),$ then we have \begin{multline*} \Cat^m(E_8;q)=
{\left[ 15 m+1 \right]_{q^{ 2 }}} {\left[ \tfrac{15m+4}2 
  \right]_{q^4}} {\left[ \tfrac{5m+2}{12}  
   \right]_{q^{ 72}}} \frac {{ {\left[ 12 \right]_{q^{ 6}}} 
  }} { {\left[ 3 \right]_{q^{ 6}}} {\left[ 4 
   \right]_{q^{ 6}}}} {\left[ 15 m+7 \right]_{q^2}} 
 {\left[ \tfrac{5m+3}{7}  \right]_{q^{ 42 }}}
\frac {[21]_{q^2}} {[3]_{q^2}\left[7\right]_{q^2}}
 \\
\times
 {\left[ \tfrac{3m+2}{4}  \right]_{q^{ 40}}} \frac {{ 
  {\left[ 10 \right]_{q^{ 4}}} }} { {\left[ 2 
   \right]_{q^{ 4}}} {\left[ 5 \right]_{q^{ 4}}}} {\left[ 
  \tfrac{5m+4}{2}  \right]_{q^{ 12 }}} {\left[ m+1 
  \right]_{q^{ 30 }}}
.\end{multline*} 
If one decomposes $[15m+1]_{q^2}$ as 
$[5m+1]_{q^6}+q^2[5m]_{q^6}+q^4[5m]_{q^6}$,
then one sees that, by Corollary~\ref{cor:A},
this is a polynomial in $q$ with 
non-negative integer coefficients.
If $m\equiv
27
~(\text{mod }84),$ then we have 
\begin{multline*} \Cat^m(E_8;q)=
{\left[ \tfrac{15 m+1}{14} \right]_{q^{ 28}}}
\frac {[14]_{q^2}} {[2]_{q^2}\left[7\right]_{q^2}} 
{\left[ {15m+4} \right]_{q^{ 
   2}}} 
 {\left[ 5m+2 \right]_{q^{ 6 }}}
 {\left[ \tfrac{15m+7}4 
   \right]_{q^{ 8}}} 
 {\left[ \tfrac{5m+3}6 \right]_{q^{ 36}}}
\frac {[6]_{q^6}} {[2]_{q^6}\left[3\right]_{q^6}}
\\
\times 
 {\left[ 3m+2 \right]_{q^{ 10 }}} {\left[ 5m+4
   \right]_{q^{ 6 }}} {\left[ \tfrac{m+1}4 \right]_{q^{ 120}}}
\frac {[60]_{q^2}\left[2\right]_{q^2}\left[3\right]_{q^2}\left[5\right]_{q^2}} 
{\left[10\right]_{q^2}\left[12\right]_{q^2}\left[15\right]_{q^2}} 
,\end{multline*} 
which, by Corollary~\ref{cor:A} and Lemma~\ref{lem:101215C}, 
is a polynomial in $q$ with 
non-negative integer coefficients.
If $m\equiv
28
~(\text{mod }84),$ then we have \begin{multline*} \Cat^m(E_8;q)=
{\left[ 15 m+1 \right]_{q^{ 2 }}} {\left[ 
  \tfrac{15m+4}{4}  \right]_{q^{ 8 }}} {\left[ 
  \tfrac{5m+2}{2}  \right]_{q^{ 12 }}} {\left[ 
  \tfrac{15m+7}{7}  \right]_{q^{ 14 }}} \\
\times{\left[ 5 m+3
   \right]_{q^6}} {\left[ \tfrac{3m+2}{2}  
  \right]_{q^{ 20 }}}
 {\left[ \tfrac{5m+4}{12}  
   \right]_{q^{ 72}}} \frac {{ {\left[ 12 \right]_{q^{ 6}}} 
  }} { {\left[ 3 \right]_{q^{ 6}}} {\left[ 4 
   \right]_{q^{ 6}}}} {\left[ m+1 \right]_{q^{ 30 }}}
,\end{multline*} 
which, by Corollary~\ref{cor:A}, is a polynomial in $q$ with 
non-negative integer coefficients.
If $m\equiv
29
~(\text{mod }84),$ then we have \begin{multline*} \Cat^m(E_8;q)=
{\left[ \tfrac{15m+1}{4}  \right]_{q^{ 8 }}}
 {\left[ 15m+4 \right]_{q^{ 2 }}} {\left[ 
  \tfrac{5m+2}{21}  \right]_{q^{ 126 }}}
\frac {[63]_{q^2}} {[7]_{q^2}\left[9\right]_{q^2}}
 {\left[ \tfrac{15 m+7}2 
  \right]_{q^4}} {\left[ \tfrac{5m+3}{4}  
   \right]_{q^{ 24}}} 
   \\
\times {\left[ 3m+2 \right]_{q^{10}}} 
 {\left[ {5m+4}  \right]_{q^{ 6 }}}
 {\left[ \tfrac{m+1}{2} \right]_{q^{ 60}}} \frac {{ 
  {\left[ 30 \right]_{q^{ 2}}}
  {\left[ 2 \right]_{q^{ 2}}}
  {\left[ 3 \right]_{q^{ 2}}}
  {\left[ 5 \right]_{q^{ 2}}} }} { {\left[ 6 
   \right]_{q^{ 2}}}{\left[ 10
   \right]_{q^{ 2}}} {\left[ 15 \right]_{q^{ 2}}}}
,\end{multline*} 
which, by Corollary~\ref{cor:A} and Lemma~\ref{lem:61015}, 
is a polynomial in $q$ with 
non-negative integer coefficients.
If $m\equiv
30
~(\text{mod }84),$ then we have \begin{multline*} \Cat^m(E_8;q)=
{\left[ 15 m+1 \right]_{q^{ 2 }}} {\left[ \tfrac{15m+4}2 
  \right]_{q^4}} {\left[ \tfrac{5m+2}{4}  
   \right]_{q^{ 24}}} 
 {\left[ 15 m+7 \right]_{q^2}} 
 {\left[ \tfrac{5m+3}{3}  \right]_{q^{ 18 }}} \\
 {\left[ \tfrac{3m+2}{4}  \right]_{q^{ 40}}} \frac {{ 
  {\left[ 10 \right]_{q^{ 4}}} }} { {\left[ 2 
   \right]_{q^{ 4}}} {\left[ 5 \right]_{q^{ 4}}}} {\left[ 
  \tfrac{5m+4}{14}  \right]_{q^{ 84 }}}
\frac {[42]_{q^2}} {[6]_{q^2}\left[7\right]_{q^2}}
 {\left[ m+1 
  \right]_{q^{ 30 }}}
,\end{multline*} 
which, by Corollary~\ref{cor:A}, 
is a polynomial in $q$ with 
non-negative integer coefficients.
If $m\equiv
31
~(\text{mod }84),$ then we have 
\begin{multline*} \Cat^m(E_8;q)=
{\left[ \tfrac{15 m+1}2 \right]_{q^{ 4}}} 
{\left[ \tfrac{15m+4}7 \right]_{q^{ 
   14}}} 
 {\left[ 5m+2 \right]_{q^{ 6 }}}
 {\left[ \tfrac{15m+7}4 
   \right]_{q^{ 8}}} 
 {\left[ \tfrac{5m+3}2 \right]_{q^{ 12}}}\\
\times 
 {\left[ 3m+2 \right]_{q^{ 10 }}} {\left[ \tfrac{5m+4}3
   \right]_{q^{ 18 }}} {\left[ \tfrac{m+1}4 \right]_{q^{ 120}}}
\frac {[60]_{q^2}\left[2\right]_{q^2}\left[3\right]_{q^2}\left[5\right]_{q^2}} 
{\left[10\right]_{q^2}\left[12\right]_{q^2}\left[15\right]_{q^2}} 
,\end{multline*} 
which, by Lemma~\ref{lem:101215}, 
is a polynomial in $q$ with 
non-negative integer coefficients.
If $m\equiv
32
~(\text{mod }84),$ then we have \begin{multline*} \Cat^m(E_8;q)=
{\left[ 15 m+1 \right]_{q^{ 2 }}} {\left[ 
  \tfrac{15m+4}{4}  \right]_{q^{ 8 }}} {\left[ 
   \tfrac{5m+2}{6}  \right]_{q^{ 36}}} \frac {{ {\left[ 6 
   \right]_{q^{ 6}}} }} { {\left[ 2 \right]_{q^{ 6}}} 
  {\left[ 3 \right]_{q^{ 6}}}} {\left[ 15 m+7 \right]_{q^{ 
  2 }}} {\left[ 5 m+3 \right]_{q^6}} \\
\times
 {\left[ \tfrac{3m+2}{14}  \right]_{q^{ 140}}} \frac {{ 
  {\left[ 70 \right]_{q^{ 2}}} }} { {\left[ 7 
   \right]_{q^{ 2}}} {\left[ 10 \right]_{q^{ 2}}}} {\left[ 
  \tfrac{5m+4}{4}  \right]_{q^{ 24 }}} {\left[ m+1 
  \right]_{q^{ 30 }}}
,\end{multline*} 
which, by Corollary~\ref{cor:A}, is a polynomial in $q$ with 
non-negative integer coefficients.
If $m\equiv
33
~(\text{mod }84),$ then we have \begin{multline*} \Cat^m(E_8;q)=
{\left[ \tfrac{15m+1}{4}  \right]_{q^{ 8 }}}
 {\left[ 15m+4 \right]_{q^{ 2 }}} {\left[ 
  {5m+2}  \right]_{q^{ 6 }}}
 {\left[ \tfrac{15 m+7}2 
  \right]_{q^4}} {\left[ \tfrac{5m+3}{84}  
   \right]_{q^{ 504}}}
\frac {[252]_{q^2}\left[3\right]_{q^2}} 
{[7]_{q^2}\left[9\right]_{q^2}\left[12\right]_{q^2}}
   \\
\times {\left[ 3m+2 \right]_{q^{10}}} 
 {\left[ {5m+4}  \right]_{q^{ 6 }}}
 {\left[ \tfrac{m+1}{2} \right]_{q^{ 60}}} \frac {{ 
  {\left[ 30 \right]_{q^{ 2}}}
  {\left[ 2 \right]_{q^{ 2}}}
  {\left[ 3 \right]_{q^{ 2}}}
  {\left[ 5 \right]_{q^{ 2}}} }} { {\left[ 6 
   \right]_{q^{ 2}}}{\left[ 10
   \right]_{q^{ 2}}} {\left[ 15 \right]_{q^{ 2}}}}
,\end{multline*} 
which, by Lemmas~\ref{lem:61015} and \ref{lem:7912}, 
is a polynomial in $q$ with 
non-negative integer coefficients.
If $m\equiv
34
~(\text{mod }84),$ then we have \begin{multline*} \Cat^m(E_8;q)=
{\left[ \tfrac{15m+1}{7}  \right]_{q^{ 14 }}} 
 {\left[ \tfrac{15m+4}2 \right]_{q^{ 4 }}} {\left[ 
   \tfrac{5m+2}{4}  \right]_{q^{ 24}}} 
{\left[ 15 m+7 
  \right]_{q^2}}\\
 {\left[ 5 m+3 \right]_{q^{ 6 
  }}} {\left[ \tfrac{3m+2}{4}  \right]_{q^{ 40}}} 
  \frac {{ {\left[ 10 \right]_{q^{ 4}}} }} { 
  {\left[ 2 \right]_{q^{ 4}}} {\left[ 5 \right]_{q^{ 4}}}} 
 {\left[ \tfrac{5m+4}{6}  \right]_{q^{ 36 }}}
\frac {[6]_{q^6}} {[2]_{q^6}\left[3\right]_{q^6}} 
 {\left[ m+1 \right]_{q^{ 30 }}}
,\end{multline*} 
which, by Corollary~\ref{cor:A}, 
is a polynomial in $q$ with 
non-negative integer coefficients.
If $m\equiv
35
~(\text{mod }84),$ then we have 
\begin{multline*} \Cat^m(E_8;q)=
{\left[ \tfrac{15 m+1}2 \right]_{q^{ 4}}} 
{\left[ {15m+4} \right]_{q^{ 
   2}}} 
 {\left[ \tfrac{5m+2}3 \right]_{q^{ 18 }}}
 {\left[ \tfrac{15m+7}{28} 
   \right]_{q^{ 56}}}
\frac {[28]_{q^2}} {[4]_{q^2}\left[7\right]_{q^2}} 
 {\left[ \tfrac{5m+3}2 \right]_{q^{ 12}}}\\
\times 
 {\left[ 3m+2 \right]_{q^{ 10 }}} {\left[ 5m+4
   \right]_{q^{ 6 }}} {\left[ \tfrac{m+1}4 \right]_{q^{ 120}}}
\frac {[60]_{q^2}\left[2\right]_{q^2}\left[3\right]_{q^2}\left[5\right]_{q^2}} 
{\left[10\right]_{q^2}\left[12\right]_{q^2}\left[15\right]_{q^2}} 
,\end{multline*} 
which, by Corollary~\ref{cor:A} and Lemma~\ref{lem:101215}, 
is a polynomial in $q$ with 
non-negative integer coefficients.
If $m\equiv
36
~(\text{mod }84),$ then we have \begin{multline*} \Cat^m(E_8;q)=
{\left[ 15 m+1 \right]_{q^{ 2 }}} {\left[ 
  \tfrac{15m+4}{4}  \right]_{q^{ 8 }}} {\left[ 
   \tfrac{5m+2}{14}  \right]_{q^{ 84}}} \frac {{ {\left[ 42
    \right]_{q^{ 2}}} }} { {\left[ 6 \right]_{q^{ 2}}} 
  {\left[ 7 \right]_{q^{ 2}}}} {\left[ 15 m+7 \right]_{q^{ 
  2 }}} {\left[ \tfrac{5m+3}{3}  \right]_{q^{ 18 
  }}}\\
\times
 {\left[ \tfrac{3m+2}{2}  \right]_{q^{ 20 }}} 
 {\left[ \tfrac{5m+4}{4}  \right]_{q^{ 24 }}} 
 {\left[ m+1 \right]_{q^{ 30 }}}
,\end{multline*} 
which, by Corollary~\ref{cor:A}, is a polynomial in $q$ with 
non-negative integer coefficients.
If $m\equiv
37
~(\text{mod }84),$ then we have \begin{multline*} \Cat^m(E_8;q)=
{\left[ \tfrac{15m+1}{4}  \right]_{q^{ 8 }}}
 {\left[ 15m+4 \right]_{q^{ 2 }}} {\left[ 
  {5m+2}  \right]_{q^{ 6 }}}
 {\left[ \tfrac{15 m+7}2 
  \right]_{q^4}} {\left[ \tfrac{5m+3}{4}  
   \right]_{q^{ 24}}} 
   \\
\times {\left[ 3m+2 \right]_{q^{10}}} 
 {\left[ \frac{5m+4}{21}  \right]_{q^{ 126 }}}
\frac {[63]_{q^2}} {[7]_{q^2}\left[9\right]_{q^2}}
 {\left[ \tfrac{m+1}{2} \right]_{q^{ 60}}} \frac {{ 
  {\left[ 30 \right]_{q^{ 2}}}
  {\left[ 2 \right]_{q^{ 2}}}
  {\left[ 3 \right]_{q^{ 2}}}
  {\left[ 5 \right]_{q^{ 2}}} }} { {\left[ 6 
   \right]_{q^{ 2}}}{\left[ 10
   \right]_{q^{ 2}}} {\left[ 15 \right]_{q^{ 2}}}}
,\end{multline*} 
which, by Corollary~\ref{cor:A} and Lemma~\ref{lem:61015}, 
is a polynomial in $q$ with 
non-negative integer coefficients.
If $m\equiv
38
~(\text{mod }84),$ then we have \begin{multline*} \Cat^m(E_8;q)=
{\left[ 15 m+1 \right]_{q^{ 2 }}} {\left[ 
  \tfrac{15m+4}{14}  \right]_{q^{ 28 }}}
\frac {[14]_{q^2}} {[2]_{q^2}\left[7\right]_{q^2}}
 {\left[ 
   \tfrac{5m+2}{12}  \right]_{q^{ 72}}} \frac {{ {\left[ 12
    \right]_{q^{ 6}}} }} { {\left[ 3 \right]_{q^{ 6}}} 
  {\left[ 4 \right]_{q^{ 6}}}} \\
\times
{\left[ 15 m+7 \right]_{q^{ 
  2 }}} {\left[ 5 m+3 \right]_{q^6}} 
 {\left[ \tfrac{3m+2}{4}  \right]_{q^{ 40}}} \frac {{ 
  {\left[ 10 \right]_{q^{ 4}}} }} { {\left[ 2 
   \right]_{q^{ 4}}} {\left[ 5 \right]_{q^{ 4}}}} {\left[ 
  \tfrac{5m+4}{2}  \right]_{q^{ 12 }}} {\left[ m+1 
  \right]_{q^{ 30 }}}
.\end{multline*} 
If one decomposes $[15m+7]_{q^2}$ as 
$[\frac {15m} {2}+4]_{q^4}+q^2[\frac {15m} {2}+3]_{q^4}$,
then one sees that, by Corollary~\ref{cor:A},
this is a polynomial in $q$ with 
non-negative integer coefficients.
If $m\equiv
39
~(\text{mod }84),$ then we have 
\begin{multline*} \Cat^m(E_8;q)=
{\left[ \tfrac{15 m+1}2 \right]_{q^{ 4}}} 
{\left[ {15m+4} \right]_{q^{ 
   2}}} 
 {\left[ 5m+2 \right]_{q^{ 6 }}}
 {\left[ \tfrac{15m+7}4 
   \right]_{q^{ 8}}} 
 {\left[ \tfrac{5m+3}6 \right]_{q^{ 36}}}
\frac {[6]_{q^6}} {[2]_{q^6}\left[3\right]_{q^6}}
\\
\times 
 {\left[ \tfrac{3m+2}7 \right]_{q^{ 70 }}}
\frac {[35]_{q^2}} {[5]_{q^2}\left[7\right]_{q^2}}
 {\left[ 5m+4
   \right]_{q^{ 6 }}} {\left[ \tfrac{m+1}4 \right]_{q^{ 120}}}
\frac {[60]_{q^2}\left[2\right]_{q^2}\left[3\right]_{q^2}\left[5\right]_{q^2}} 
{\left[10\right]_{q^2}\left[12\right]_{q^2}\left[15\right]_{q^2}} 
,\end{multline*} 
which, by Corollary~\ref{cor:A} and Lemma~\ref{lem:101215B}, 
is a polynomial in $q$ with 
non-negative integer coefficients.
If $m\equiv
40
~(\text{mod }84),$ then we have \begin{multline*} \Cat^m(E_8;q)=
{\left[ 15 m+1 \right]_{q^{ 2 }}} {\left[ 
  \tfrac{15m+4}{4}  \right]_{q^{ 8 }}} {\left[ 
  \tfrac{5m+2}{2}  \right]_{q^{ 12 }}} {\left[ 15 m+7 
  \right]_{q^2}} {\left[ \tfrac{5m+3}{7}  
  \right]_{q^{ 42 }}}
\frac {[21]_{q^2}} {[3]_{q^2}\left[7\right]_{q^2}}\\
\times
 {\left[ \tfrac{3m+2}{2}  
  \right]_{q^{ 20 }}}
 {\left[ \tfrac{5m+4}{12}  
   \right]_{q^{ 72}}} \frac {{ {\left[ 12 \right]_{q^{ 6}}} 
  }} { {\left[ 3 \right]_{q^{ 6}}} {\left[ 4 
   \right]_{q^{ 6}}}} {\left[ m+1 \right]_{q^{ 30 }}}
.\end{multline*} 
If one decomposes $[15m+7]_{q^2}$ as 
$[5m+1]_{q^6}+q^2[5m]_{q^6}+q^4[5m]_{q^6}$,
then one sees that, by Corollary~\ref{cor:A},
this is a polynomial in $q$ with 
non-negative integer coefficients.
If $m\equiv
41
~(\text{mod }84),$ then we have \begin{multline*} \Cat^m(E_8;q)=
{\left[ \tfrac{15m+1}{28}  \right]_{q^{ 56 }}}
\frac {[28]_{q^2}} {[4]_{q^2}\left[7\right]_{q^2}}
 {\left[ 15m+4 \right]_{q^{ 2 }}} {\left[ 
  \tfrac{5m+2}3  \right]_{q^{ 18 }}}
 {\left[ \tfrac{15 m+7}2 
  \right]_{q^4}} {\left[ \tfrac{5m+3}{4}  
   \right]_{q^{ 24}}} 
   \\
\times {\left[ 3m+2 \right]_{q^{10}}} 
 {\left[ {5m+4}  \right]_{q^{ 6 }}}
 {\left[ \tfrac{m+1}{2} \right]_{q^{ 60}}} \frac {{ 
  {\left[ 30 \right]_{q^{ 2}}}
  {\left[ 2 \right]_{q^{ 2}}}
  {\left[ 3 \right]_{q^{ 2}}}
  {\left[ 5 \right]_{q^{ 2}}} }} { {\left[ 6 
   \right]_{q^{ 2}}}{\left[ 10
   \right]_{q^{ 2}}} {\left[ 15 \right]_{q^{ 2}}}}
,\end{multline*} 
which, by Corollary~\ref{cor:A} and Lemma~\ref{lem:61015}, 
is a polynomial in $q$ with 
non-negative integer coefficients.
If $m\equiv
42
~(\text{mod }84),$ then we have \begin{multline*} \Cat^m(E_8;q)=
{\left[ 15 m+1 \right]_{q^{ 2 }}} {\left[ \tfrac{15m+4 }2
  \right]_{q^4}} {\left[ \tfrac{5m+2}{4}  
   \right]_{q^{ 24}}}
 {\left[ \tfrac{15m+7}{7}  \right]_{q^{ 14 
  }}}\\
\times
 {\left[ \tfrac{5m+3}{3}  \right]_{q^{ 18 }}} 
 {\left[ \tfrac{3m+2}{4}  \right]_{q^{ 40 }}} 
\frac {[10]_{q^4}} {[2]_{q^4}\left[5\right]_{q^4}}
 {\left[ \tfrac{5m+4}{2}  \right]_{q^{ 12 }}} 
 {\left[ m+1 \right]_{q^{ 30 }}}
,\end{multline*} 
which, by Corollary~\ref{cor:A}, is a polynomial in $q$ with 
non-negative integer coefficients.
If $m\equiv
43
~(\text{mod }84),$ then we have 
\begin{multline*} \Cat^m(E_8;q)=
{\left[ \tfrac{15 m+1}2 \right]_{q^{ 4}}} 
{\left[ {15m+4} \right]_{q^{ 
   2}}} 
 {\left[ \tfrac{5m+2}7 \right]_{q^{ 42 }}}
\frac {[21]_{q^2}} {[3]_{q^2}\left[7\right]_{q^2}}
 {\left[ \tfrac{15m+7}4 
   \right]_{q^{ 8}}} 
 {\left[ \tfrac{5m+3}2 \right]_{q^{ 12}}}\\
\times 
 {\left[ 3m+2 \right]_{q^{ 10 }}} {\left[ \tfrac{5m+4}3
   \right]_{q^{ 18 }}} {\left[ \tfrac{m+1}4 \right]_{q^{ 120}}}
\frac {[60]_{q^2}\left[2\right]_{q^2}\left[3\right]_{q^2}\left[5\right]_{q^2}} 
{\left[10\right]_{q^2}\left[12\right]_{q^2}\left[15\right]_{q^2}} 
,\end{multline*} 
which, by Corollary~\ref{cor:A} and Lemma~\ref{lem:101215}, 
is a polynomial in $q$ with 
non-negative integer coefficients.
If $m\equiv
44
~(\text{mod }84),$ then we have \begin{multline*} \Cat^m(E_8;q)=
{\left[ 15 m+1 \right]_{q^{ 2 }}} {\left[ 
  \tfrac{15m+4}{4}  \right]_{q^{ 8 }}} {\left[ 
   \tfrac{5m+2}{6}  \right]_{q^{ 36}}} \frac {{ {\left[ 6 
   \right]_{q^{ 6}}} }} { {\left[ 2 \right]_{q^{ 6}}} 
  {\left[ 3 \right]_{q^{ 6}}}} {\left[ 15 m+7 \right]_{q^{ 
  2 }}}
 {\left[ 5 m+3 \right]_{q^6}}\\
\times 
 {\left[ \tfrac{3m+2}{2}  \right]_{q^{ 20 }}} 
 {\left[ \tfrac{5m+4}{28}  \right]_{q^{ 168}}} \frac {{ 
  {\left[ 84 \right]_{q^{ 2}}} }} { {\left[ 7 
   \right]_{q^{ 2}}} {\left[ 12 \right]_{q^{ 2}}}} {\left[ m+1 
  \right]_{q^{ 30 }}}
,\end{multline*} 
which, by Corollary~\ref{cor:A}, is a polynomial in $q$ with 
non-negative integer coefficients.
If $m\equiv
45
~(\text{mod }84),$ then we have \begin{multline*} \Cat^m(E_8;q)=
{\left[ \tfrac{15m+1}{4}  \right]_{q^{ 8 }}}
 {\left[ \tfrac{15m+4}7 \right]_{q^{ 14 }}} {\left[ 
  {5m+2}  \right]_{q^{ 6 }}}
 {\left[ \tfrac{15 m+7}2 
  \right]_{q^4}} {\left[ \tfrac{5m+3}{12}  
   \right]_{q^{ 72}}}
\frac {[12]_{q^6}} {[3]_{q^6}\left[4\right]_{q^6}} 
   \\
\times {\left[ 3m+2 \right]_{q^{10}}} 
 {\left[ {5m+4}  \right]_{q^{ 6 }}}
 {\left[ \tfrac{m+1}{2} \right]_{q^{ 60}}} \frac {{ 
  {\left[ 30 \right]_{q^{ 2}}}
  {\left[ 2 \right]_{q^{ 2}}}
  {\left[ 3 \right]_{q^{ 2}}}
  {\left[ 5 \right]_{q^{ 2}}} }} { {\left[ 6 
   \right]_{q^{ 2}}}{\left[ 10
   \right]_{q^{ 2}}} {\left[ 15 \right]_{q^{ 2}}}}
,\end{multline*} 
which, by Corollary~\ref{cor:A} and Lemma~\ref{lem:61015}, 
is a polynomial in $q$ with 
non-negative integer coefficients.
If $m\equiv
46
~(\text{mod }84),$ then we have \begin{multline*} \Cat^m(E_8;q)=
{\left[ 15 m+1 \right]_{q^{ 2 }}} {\left[ \tfrac{15m+4}2 
  \right]_{q^4}} {\left[ \tfrac{5m+2}{4}  
   \right]_{q^{ 24}}} 
{\left[ 15 m+7 \right]_{q^2}} \\
 {\left[ 5 m+3 \right]_{q^6}} {\left[ 
   \tfrac{3m+2}{28}  \right]_{q^{ 280}}} 
\frac {{ {\left[ 140
    \right]_{q^{ 2}}}{\left[ 2
    \right]_{q^{ 2}}} }} { {\left[ 4
    \right]_{q^{ 2}}}{\left[ 7 \right]_{q^{ 2}}} 
  {\left[ 10 \right]_{q^{ 2}}}} {\left[ \tfrac{5m+4}{6}  
   \right]_{q^{ 36}}} \frac {{ {\left[ 6 \right]_{q^{ 6}}} 
  }} { {\left[ 2 \right]_{q^{ 6}}} {\left[ 3 
   \right]_{q^{ 6}}}} {\left[ m+1 \right]_{q^{ 30 }}}
,\end{multline*} 
which, by Corollary~\ref{cor:A} and Lemma~\ref{lem:4710}, 
is a polynomial in $q$ with 
non-negative integer coefficients.
If $m\equiv
47
~(\text{mod }84),$ then we have 
\begin{multline*} \Cat^m(E_8;q)=
{\left[ \tfrac{15 m+1}2 \right]_{q^{ 4}}} 
{\left[ {15m+4} \right]_{q^{ 
   2}}} 
 {\left[ \tfrac{5m+2}3 \right]_{q^{ 18 }}}
 {\left[ \tfrac{15m+7}4 
   \right]_{q^{ 8}}} 
 {\left[ \tfrac{5m+3}{14} \right]_{q^{ 84}}}
\frac {[42]_{q^2}} {[6]_{q^2}\left[7\right]_{q^2}}
\\
\times 
 {\left[ 3m+2 \right]_{q^{ 10 }}} {\left[ 5m+4
   \right]_{q^{ 6 }}} {\left[ \tfrac{m+1}4 \right]_{q^{ 120}}}
\frac {[60]_{q^2}\left[2\right]_{q^2}\left[3\right]_{q^2}\left[5\right]_{q^2}} 
{\left[10\right]_{q^2}\left[12\right]_{q^2}\left[15\right]_{q^2}} 
,\end{multline*} 
which, by Corollary~\ref{cor:A} and Lemma~\ref{lem:101215}, 
is a polynomial in $q$ with 
non-negative integer coefficients.
If $m\equiv
48
~(\text{mod }84),$ then we have \begin{multline*} \Cat^m(E_8;q)=
{\left[ \tfrac{15m+1}{7}  \right]_{q^{ 14}}} {\left[ 
  \tfrac{15m+4}{4}  \right]_{q^{ 8}}} {\left[ \tfrac{5m+2}{2}  
  \right]_{q^{ 12}}} {\left[ 15m+7 \right]_{q^{ 2}}} \\
{\left[ 
  \tfrac{5m+3}{3}  \right]_{q^{ 18}}} {\left[ \tfrac{3m+2}{2}  
  \right]_{q^{ 20}}} {\left[ \tfrac{5m+4}{4}  \right]_{q^{ 24}}} 
 {\left[ m+1 \right]_{q^{ 30}}}
,\end{multline*} 
which is manifestly a polynomial in $q$ with 
non-negative integer coefficients.  
If $m\equiv
49
~(\text{mod }84),$ then we have \begin{multline*} \Cat^m(E_8;q)=
{\left[ \tfrac{15m+1}{4}  \right]_{q^{ 8 }}}
 {\left[ 15m+4 \right]_{q^{ 2 }}} {\left[ 
  {5m+2}  \right]_{q^{ 6 }}}
 {\left[ \tfrac{15 m+7}{14} 
  \right]_{q^{28}}}
\frac {[14]_{q^2}} {[2]_{q^2}\left[7\right]_{q^2}}
 {\left[ \tfrac{5m+3}{4}  
   \right]_{q^{ 24}}} 
   \\
\times {\left[ 3m+2 \right]_{q^{10}}} 
 {\left[ \tfrac{5m+4}3  \right]_{q^{ 18 }}}
 {\left[ \tfrac{m+1}{2} \right]_{q^{ 60}}} \frac {{ 
  {\left[ 30 \right]_{q^{ 2}}}
  {\left[ 2 \right]_{q^{ 2}}}
  {\left[ 3 \right]_{q^{ 2}}}
  {\left[ 5 \right]_{q^{ 2}}} }} { {\left[ 6 
   \right]_{q^{ 2}}}{\left[ 10
   \right]_{q^{ 2}}} {\left[ 15 \right]_{q^{ 2}}}}
,\end{multline*} 
which, by Corollary~\ref{cor:A} and Lemma~\ref{lem:61015}, 
is a polynomial in $q$ with 
non-negative integer coefficients.
If $m\equiv
50
~(\text{mod }84),$ then we have \begin{multline*} \Cat^m(E_8;q)=
{\left[ 15 m+1 \right]_{q^{ 2 }}} {\left[ \tfrac{15m+4}2 
  \right]_{q^4}} {\left[ \tfrac{5m+2}{28}  
   \right]_{q^{ 168}}} \frac {{ {\left[ 84 \right]_{q^{ 2}}} 
  }} { {\left[ 7 \right]_{q^{ 2}}} {\left[ 12 
   \right]_{q^{ 2}}}} {\left[ 15 m+7 \right]_{q^2}} \\
\times
 {\left[ 5 m+3 \right]_{q^6}} {\left[ 
   \tfrac{3m+2}{4}  \right]_{q^{ 40}}} \frac {{ {\left[ 10 
   \right]_{q^{ 4}}} }} { {\left[ 2 \right]_{q^{ 4}}} 
  {\left[ 5 \right]_{q^{ 4}}}} {\left[ \tfrac{5m+4}{2}  
  \right]_{q^{ 12 }}} {\left[ \tfrac{m+1}{3} \right]_{q^{ 
   90}}} \frac {{ {\left[ 15 \right]_{q^{ 6}}} }} { 
  {\left[ 3 \right]_{q^{ 6}}} {\left[ 5 \right]_{q^{ 6}}}}
,\end{multline*} 
which, by Corollary~\ref{cor:A}, is a polynomial in $q$ with 
non-negative integer coefficients.
If $m\equiv
51
~(\text{mod }84),$ then we have 
\begin{multline*} \Cat^m(E_8;q)=
{\left[ \tfrac{15 m+1}2 \right]_{q^{ 4}}} 
{\left[ {15m+4} \right]_{q^{ 
   2}}} 
 {\left[ 5m+2 \right]_{q^{ 6 }}}
 {\left[ \tfrac{15m+7}4 
   \right]_{q^{ 8}}} 
 {\left[ \tfrac{5m+3}6 \right]_{q^{ 36}}}
\frac {[6]_{q^6}} {[2]_{q^6}\left[3\right]_{q^6}}
\\
\times 
 {\left[ 3m+2 \right]_{q^{ 10 }}} {\left[ \tfrac{5m+4}7
   \right]_{q^{ 42 }}}
\frac {[21]_{q^2}} {[3]_{q^2}\left[7\right]_{q^2}}
 {\left[ \tfrac{m+1}4 \right]_{q^{ 120}}}
\frac {[60]_{q^2}\left[2\right]_{q^2}\left[3\right]_{q^2}\left[5\right]_{q^2}} 
{\left[10\right]_{q^2}\left[12\right]_{q^2}\left[15\right]_{q^2}} 
,\end{multline*} 
which, by Corollary~\ref{cor:A} and Lemma~\ref{lem:101215}, 
is a polynomial in $q$ with 
non-negative integer coefficients.
If $m\equiv
52
~(\text{mod }84),$ then we have \begin{multline*} \Cat^m(E_8;q)=
{\left[ 15 m+1 \right]_{q^{ 2 }}} {\left[ 
   \tfrac{15m+4}{28}  \right]_{q^{ 56}}} \frac {{ {\left[ 28
    \right]_{q^{ 2}}} }} { {\left[ 4 \right]_{q^{ 2}}} 
  {\left[ 7 \right]_{q^{ 2}}}} {\left[ \tfrac{5m+2}{2}  
  \right]_{q^{ 12 }}} {\left[ 15 m+7 \right]_{q^{ 2
  }}}\\
\times {\left[ 5 m+3 \right]_{q^6}} 
 {\left[ \tfrac{3m+2}{2}  \right]_{q^{ 20 }}} 
 {\left[ \tfrac{5m+4}{12}  \right]_{q^{ 72}}} \frac {{ 
  {\left[ 12 \right]_{q^{ 6}}} }} { {\left[ 3 
   \right]_{q^{ 6}}} {\left[ 4 \right]_{q^{ 6}}}} {\left[ m+1 
  \right]_{q^{ 30 }}}
,\end{multline*} 
which, by Corollary~\ref{cor:A}, is a polynomial in $q$ with 
non-negative integer coefficients.
If $m\equiv
53
~(\text{mod }84),$ then we have \begin{multline*} \Cat^m(E_8;q)=
{\left[ \tfrac{15m+1}{4}  \right]_{q^{ 8 }}}
 {\left[ 15m+4 \right]_{q^{ 2 }}} {\left[ 
  \tfrac{5m+2}3  \right]_{q^{ 18 }}}
 {\left[ \tfrac{15 m+7}2 
  \right]_{q^4}} {\left[ \tfrac{5m+3}{4}  
   \right]_{q^{ 24}}} 
   \\
\times {\left[ \tfrac{3m+2}7 \right]_{q^{70}}}
\frac {[35]_{q^2}} {[5]_{q^2}\left[7\right]_{q^2}} 
 {\left[ {5m+4}  \right]_{q^{ 6 }}}
 {\left[ \tfrac{m+1}{2} \right]_{q^{ 60}}} \frac {{ 
  {\left[ 30 \right]_{q^{ 2}}}
  {\left[ 2 \right]_{q^{ 2}}}
  {\left[ 3 \right]_{q^{ 2}}}
  {\left[ 5 \right]_{q^{ 2}}} }} { {\left[ 6 
   \right]_{q^{ 2}}}{\left[ 10
   \right]_{q^{ 2}}} {\left[ 15 \right]_{q^{ 2}}}}
,\end{multline*} 
which, by Lemma~\ref{lem:61015C}, 
is a polynomial in $q$ with 
non-negative integer coefficients.
If $m\equiv
54
~(\text{mod }84),$ then we have \begin{multline*} \Cat^m(E_8;q)=
{\left[ 15 m+1 \right]_{q^{ 2 }}} {\left[ \tfrac{15m+4}2 
  \right]_{q^4}} {\left[ \tfrac{5m+2}{4}  
   \right]_{q^{ 24}}} 
{\left[ 15 m+7 \right]_{q^2}} \\
 {\left[ \tfrac{5m+3}{21}  \right]_{q^{ 126}}} \frac {{ 
  {\left[ 63 \right]_{q^{ 2}}} }} { {\left[ 7 
   \right]_{q^{ 2}}} {\left[ 9 \right]_{q^{ 2}}}} {\left[ 
  \tfrac{3m+2}{4}  \right]_{q^{ 40 }}}
\frac {[10]_{q^4}} {[2]_{q^4}\left[5\right]_{q^4}}
 {\left[ 
  \tfrac{5m+4}{2}  \right]_{q^{ 12 }}} {\left[ m+1 
  \right]_{q^{ 30 }}}
,\end{multline*} 
which, by Corollary~\ref{cor:A}, is a polynomial in $q$ with 
non-negative integer coefficients.
If $m\equiv
55
~(\text{mod }84),$ then we have 
\begin{multline*} \Cat^m(E_8;q)=
{\left[ \tfrac{15 m+1}{14} \right]_{q^{ 28}}}
\frac {[14]_{q^2}} {[2]_{q^2}\left[7\right]_{q^2}} 
{\left[ {15m+4} \right]_{q^{ 
   2}}} 
 {\left[ 5m+2 \right]_{q^{ 6 }}}
 {\left[ \tfrac{15m+7}4 
   \right]_{q^{ 8}}} 
 {\left[ \tfrac{5m+3}2 \right]_{q^{ 12}}}\\
\times 
 {\left[ 3m+2 \right]_{q^{ 10 }}} {\left[ \tfrac{5m+4}3
   \right]_{q^{ 18 }}} {\left[ \tfrac{m+1}4 \right]_{q^{ 120}}}
\frac {[60]_{q^2}\left[2\right]_{q^2}\left[3\right]_{q^2}\left[5\right]_{q^2}} 
{\left[10\right]_{q^2}\left[12\right]_{q^2}\left[15\right]_{q^2}} 
,\end{multline*} 
which, by Lemma~\ref{lem:101215C}, 
is a polynomial in $q$ with 
non-negative integer coefficients.
If $m\equiv
56
~(\text{mod }84),$ then we have \begin{multline*} \Cat^m(E_8;q)=
{\left[ 15 m+1 \right]_{q^{ 2 }}} {\left[ 
  \tfrac{15m+4}{4}  \right]_{q^{ 8 }}} {\left[ 
   \tfrac{5m+2}{6}  \right]_{q^{ 36}}} \frac {{ {\left[ 6 
   \right]_{q^{ 6}}} }} { {\left[ 2 \right]_{q^{ 6}}} 
  {\left[ 3 \right]_{q^{ 6}}}} {\left[ \tfrac{15m+7}{7}  
  \right]_{q^{ 14 }}} {\left[ 5 m+3 \right]_{q^{ 6 
  }}} \\
\times
{\left[ \tfrac{3m+2}{2}  \right]_{q^{ 20 }}} 
 {\left[ \tfrac{5m+4}{4}  \right]_{q^{ 24 }}} 
 {\left[ m+1 \right]_{q^{ 30 }}}
,\end{multline*} 
which, by Corollary~\ref{cor:A}, is a polynomial in $q$ with 
non-negative integer coefficients.
If $m\equiv
57
~(\text{mod }84),$ then we have \begin{multline*} \Cat^m(E_8;q)=
{\left[ \tfrac{15m+1}{4}  \right]_{q^{ 8 }}}
 {\left[ 15m+4 \right]_{q^{ 2 }}} {\left[ 
  \tfrac{5m+2}7  \right]_{q^{ 42 }}}
\frac {[21]_{q^2}} {[3]_{q^2}\left[7\right]_{q^2}}
 {\left[ \tfrac{15 m+7}2 
  \right]_{q^4}} {\left[ \tfrac{5m+3}{12}  
   \right]_{q^{ 72}}}
\frac {[12]_{q^6}} {[3]_{q^6}\left[4\right]_{q^6}} 
   \\
\times {\left[ 3m+2 \right]_{q^{10}}} 
 {\left[ {5m+4}  \right]_{q^{ 6 }}}
 {\left[ \tfrac{m+1}{2} \right]_{q^{ 60}}} \frac {{ 
  {\left[ 30 \right]_{q^{ 2}}}
  {\left[ 2 \right]_{q^{ 2}}}
  {\left[ 3 \right]_{q^{ 2}}}
  {\left[ 5 \right]_{q^{ 2}}} }} { {\left[ 6 
   \right]_{q^{ 2}}}{\left[ 10
   \right]_{q^{ 2}}} {\left[ 15 \right]_{q^{ 2}}}}
,\end{multline*} 
which, by Corollary~\ref{cor:A} and Lemma~\ref{lem:61015}, 
is a polynomial in $q$ with 
non-negative integer coefficients.
If $m\equiv
58
~(\text{mod }84),$ then we have \begin{multline*} \Cat^m(E_8;q)=
{\left[ 15 m+1 \right]_{q^{ 2 }}} {\left[ \tfrac{m+4}2 
  \right]_{q^4}} {\left[ \tfrac{5m+2}{4}  
   \right]_{q^{ 24}}} 
 {\left[ 15 m+7 \right]_{q^2}} 
 {\left[ 5 m+3 \right]_{q^6}} \\
{\left[ 
   \tfrac{3m+2}{4}  \right]_{q^{ 40}}} \frac {{ {\left[ 10 
   \right]_{q^{ 4}}} }} { {\left[ 2 \right]_{q^{ 4}}} 
  {\left[ 5 \right]_{q^{ 4}}}} {\left[ \tfrac{5m+4}{42}  
   \right]_{q^{ 252}}} 
\frac {{ {\left[ 126 \right]_{q^{ 2}}} {\left[ 3 \right]_{q^{ 2}}} 
  }} { {\left[ 6 \right]_{q^{ 2}}} {\left[ 7 
   \right]_{q^{ 2}}} {\left[ 9 \right]_{q^{ 2}}} }
 {\left[ m+1 \right]_{q^{ 30 }}}
,\end{multline*} 
which, by Lemma~\ref{lem:679}, 
is a polynomial in $q$ with 
non-negative integer coefficients.
If $m\equiv
59
~(\text{mod }84),$ then we have 
\begin{multline*} \Cat^m(E_8;q)=
{\left[ \tfrac{15 m+1}2 \right]_{q^{ 4}}} 
{\left[ \tfrac{15m+4}7 \right]_{q^{ 
   14}}} 
 {\left[ \tfrac{5m+2}3 \right]_{q^{ 18 }}}
 {\left[ \tfrac{15m+7}4 
   \right]_{q^{ 8}}} 
 {\left[ \tfrac{5m+3}2 \right]_{q^{ 12}}}\\
\times 
 {\left[ 3m+2 \right]_{q^{ 10 }}} {\left[ 5m+4
   \right]_{q^{ 6 }}} {\left[ \tfrac{m+1}4 \right]_{q^{ 120}}}
\frac {[60]_{q^2}\left[2\right]_{q^2}\left[3\right]_{q^2}\left[5\right]_{q^2}} 
{\left[10\right]_{q^2}\left[12\right]_{q^2}\left[15\right]_{q^2}} 
,\end{multline*} 
which, by Lemma~\ref{lem:101215}, 
is a polynomial in $q$ with 
non-negative integer coefficients.
If $m\equiv
60
~(\text{mod }84),$ then we have \begin{multline*} \Cat^m(E_8;q)=
{\left[ 15 m+1 \right]_{q^{ 2 }}} {\left[ 
  \tfrac{15m+4}{4}  \right]_{q^{ 8 }}} {\left[ 
  \tfrac{5m+2}{2}  \right]_{q^{ 12 }}} {\left[ 15 m+7 
  \right]_{q^2}} {\left[ \tfrac{5m+3}{3}  
  \right]_{q^{ 18 }}}\\
\times {\left[ \tfrac{3m+2}{14}  
   \right]_{q^{ 140}}}
 \frac {{ {\left[ 70 \right]_{q^{ 2}}} 
  }} { {\left[ 7 \right]_{q^{ 2}}} {\left[ 10 
   \right]_{q^{ 2}}}} {\left[ \tfrac{5m+4}{4}  \right]_{q^{ 24 
  }}} {\left[ m+1 \right]_{q^{ 30 }}}
,\end{multline*} 
which, by Corollary~\ref{cor:A}, is a polynomial in $q$ with 
non-negative integer coefficients.
If $m\equiv
61
~(\text{mod }84),$ then we have \begin{multline*} \Cat^m(E_8;q)=
{\left[ \tfrac{15m+1}{4}  \right]_{q^{ 8 }}}
 {\left[ 15m+4 \right]_{q^{ 2 }}} {\left[ 
  {5m+2}  \right]_{q^{ 6 }}}
 {\left[ \tfrac{15 m+7}2 
  \right]_{q^4}} {\left[ \tfrac{5m+3}{28}  
   \right]_{q^{ 168}}}
\frac {[84]_{q^2}} {[7]_{q^2}\left[12\right]_{q^2}} 
   \\
\times {\left[ 3m+2 \right]_{q^{10}}} 
 {\left[ \tfrac{5m+4} 3 \right]_{q^{ 18 }}}
 {\left[ \tfrac{m+1}{2} \right]_{q^{ 60}}} \frac {{ 
  {\left[ 30 \right]_{q^{ 2}}}
  {\left[ 2 \right]_{q^{ 2}}}
  {\left[ 3 \right]_{q^{ 2}}}
  {\left[ 5 \right]_{q^{ 2}}} }} { {\left[ 6 
   \right]_{q^{ 2}}}{\left[ 10
   \right]_{q^{ 2}}} {\left[ 15 \right]_{q^{ 2}}}}
,\end{multline*} 
which, by Corollary~\ref{cor:A} and Lemma~\ref{lem:61015}, 
is a polynomial in $q$ with 
non-negative integer coefficients.
If $m\equiv
62
~(\text{mod }84),$ then we have \begin{multline*} \Cat^m(E_8;q)=
{\left[ \tfrac{15m+1}{7}  \right]_{q^{ 14 }}} 
 {\left[ \tfrac{15m+4}2 \right]_{q^{ 4 }}} {\left[ 
   \tfrac{5m+2}{12}  \right]_{q^{ 72}}} \frac {{ {\left[ 12
    \right]_{q^{ 6}}} }} { {\left[ 3 \right]_{q^{ 6}}} 
  {\left[ 4 \right]_{q^{ 6}}}} {\left[ 15 m+7 \right]_{q^{ 
  2 }}} {\left[ 5 m+3 \right]_{q^6}} \\
\times
 {\left[ \tfrac{3m+2}{4}  \right]_{q^{ 40}}} \frac {{ 
  {\left[ 10 \right]_{q^{ 4}}} }} { {\left[ 2 
   \right]_{q^{ 4}}} {\left[ 5 \right]_{q^{ 4}}}} {\left[ 
  \tfrac{5m+4}{2}  \right]_{q^{ 12 }}} {\left[ m+1 
  \right]_{q^{ 30 }}}
,\end{multline*} 
which, by Corollary~\ref{cor:A}, is a polynomial in $q$ with 
non-negative integer coefficients.
If $m\equiv
63
~(\text{mod }84),$ then we have 
\begin{multline*} \Cat^m(E_8;q)=
{\left[ \tfrac{15 m+1}2 \right]_{q^{ 4}}} 
{\left[ {15m+4} \right]_{q^{ 
   2}}} 
 {\left[ 5m+2 \right]_{q^{ 6 }}}
 {\left[ \tfrac{15m+7}{28} 
   \right]_{q^{ 56}}}
\frac {[28]_{q^2}} {[4]_{q^2}\left[7\right]_{q^2}} 
 {\left[ \tfrac{5m+3}6 \right]_{q^{ 36}}}
\frac {[6]_{q^6}} {[2]_{q^6}\left[3\right]_{q^6}}
\\
\times 
 {\left[ 3m+2 \right]_{q^{ 10 }}} {\left[ 5m+4
   \right]_{q^{ 6 }}} {\left[ \tfrac{m+1}4 \right]_{q^{ 120}}}
\frac {[60]_{q^2}\left[2\right]_{q^2}\left[3\right]_{q^2}\left[5\right]_{q^2}} 
{\left[10\right]_{q^2}\left[12\right]_{q^2}\left[15\right]_{q^2}} 
,\end{multline*} 
which, by Corollary~\ref{cor:A} and Lemma~\ref{lem:101215}, 
is a polynomial in $q$ with 
non-negative integer coefficients.
If $m\equiv
64
~(\text{mod }84),$ then we have \begin{multline*} \Cat^m(E_8;q)=
{\left[ 15 m+1 \right]_{q^{ 2 }}} {\left[ 
  \tfrac{15m+4}{4}  \right]_{q^{ 8 }}} {\left[ 
   \tfrac{5m+2}{14}  \right]_{q^{ 84}}} \frac {{ {\left[ 42
    \right]_{q^{ 2}}} }} { {\left[ 6 \right]_{q^{ 2}}} 
  {\left[ 7 \right]_{q^{ 2}}}} {\left[ 15 m+7 \right]_{q^{ 
  2 }}}\\
\times {\left[ 5 m+3 \right]_{q^6}} 
 {\left[ \tfrac{3m+2}{2}  \right]_{q^{ 20 }}} 
 {\left[ \tfrac{5m+4}{12}  \right]_{q^{ 72}}} \frac {{ 
  {\left[ 12 \right]_{q^{ 6}}} }} { {\left[ 3 
   \right]_{q^{ 6}}} {\left[ 4 \right]_{q^{ 6}}}} {\left[ m+1 
  \right]_{q^{ 30 }}}
,\end{multline*} 
which, by Corollary~\ref{cor:A}, is a polynomial in $q$ with 
non-negative integer coefficients.
If $m\equiv
65
~(\text{mod }84),$ then we have \begin{multline*} \Cat^m(E_8;q)=
{\left[ \tfrac{15m+1}{4}  \right]_{q^{ 8 }}}
 {\left[ 15m+4 \right]_{q^{ 2 }}} {\left[ 
  \tfrac{5m+2}3  \right]_{q^{ 18 }}}
 {\left[ \tfrac{15 m+7}2 
  \right]_{q^4}} {\left[ \tfrac{5m+3}{4}  
   \right]_{q^{ 24}}} 
   \\
\times {\left[ 3m+2 \right]_{q^{10}}} 
 {\left[ \tfrac{5m+4}7  \right]_{q^{ 42 }}}
\frac {[21]_{q^2}} {[3]_{q^2}\left[7\right]_{q^2}}
 {\left[ \tfrac{m+1}{2} \right]_{q^{ 60}}} \frac {{ 
  {\left[ 30 \right]_{q^{ 2}}}
  {\left[ 2 \right]_{q^{ 2}}}
  {\left[ 3 \right]_{q^{ 2}}}
  {\left[ 5 \right]_{q^{ 2}}} }} { {\left[ 6 
   \right]_{q^{ 2}}}{\left[ 10
   \right]_{q^{ 2}}} {\left[ 15 \right]_{q^{ 2}}}}
,\end{multline*} 
which, by Corollary~\ref{cor:A} and Lemma~\ref{lem:61015}, 
is a polynomial in $q$ with 
non-negative integer coefficients.
If $m\equiv
66
~(\text{mod }84),$ then we have \begin{multline*} \Cat^m(E_8;q)=
{\left[ 15 m+1 \right]_{q^{ 2 }}} {\left[ 
  \tfrac{15m+4}{14}  \right]_{q^{ 28}}}
\frac {[14]_{q^2}} {[2]_{q^2}\left[7\right]_{q^2}}
 {\left[ 
   \tfrac{5m+2}{4}  \right]_{q^{ 24}}} 
 {\left[ 15 m+7 \right]_{q^{ 
  2 }}}\\
\times {\left[ \tfrac{5m+3}{3}  \right]_{q^{ 18 
  }}} {\left[ \tfrac{3m+2}{4}  \right]_{q^{ 40 }}} 
\frac {[10]_{q^4}} {[2]_{q^4}\left[5\right]_{q^4}}
 {\left[ \tfrac{5m+4}{2}  \right]_{q^{ 12 }}} 
 {\left[ m+1 \right]_{q^{ 30 }}}
,\end{multline*} 
which, by Corollary~\ref{cor:A}, is a polynomial in $q$ with 
non-negative integer coefficients.
If $m\equiv
67
~(\text{mod }84),$ then we have 
\begin{multline*} \Cat^m(E_8;q)=
{\left[ \tfrac{15 m+1}2 \right]_{q^{ 4}}} 
{\left[ {15m+4} \right]_{q^{ 
   2}}} 
 {\left[ 5m+2 \right]_{q^{ 6 }}}
 {\left[ \tfrac{15m+7}4 
   \right]_{q^{ 8}}} 
 {\left[ \tfrac{5m+3}2 \right]_{q^{ 12}}}\\
\times 
 {\left[ \tfrac{3m+2}7 \right]_{q^{ 70 }}}
\frac {[35]_{q^2}} {[5]_{q^2}\left[7\right]_{q^2}}
 {\left[ \tfrac{5m+4}3
   \right]_{q^{ 18 }}} {\left[ \tfrac{m+1}4 \right]_{q^{ 120}}}
\frac {[60]_{q^2}\left[2\right]_{q^2}\left[3\right]_{q^2}\left[5\right]_{q^2}} 
{\left[10\right]_{q^2}\left[12\right]_{q^2}\left[15\right]_{q^2}} 
,\end{multline*} 
which, by Lemma~\ref{lem:101215B}, 
is a polynomial in $q$ with 
non-negative integer coefficients.
If $m\equiv
68
~(\text{mod }84),$ then we have \begin{multline*} \Cat^m(E_8;q)=
{\left[ 15 m+1 \right]_{q^{ 2 }}} {\left[ 
  \tfrac{15m+4}{4}  \right]_{q^{ 8 }}} {\left[ 
   \tfrac{5m+2}{6}  \right]_{q^{ 36}}} \frac {{ {\left[ 6 
   \right]_{q^{ 6}}} }} { {\left[ 2 \right]_{q^{ 6}}} 
  {\left[ 3 \right]_{q^{ 6}}}} \\
\times{\left[ 15 m+7 \right]_{q^{ 
  2 }}} {\left[ \tfrac{5m+3}{7}  \right]_{q^{ 42 
  }}}
\frac {[21]_{q^2}} {[3]_{q^2}\left[7\right]_{q^2}}
 {\left[ \tfrac{3m+2}{2}  \right]_{q^{ 20 }}} 
 {\left[ \tfrac{5m+4}{4}  \right]_{q^{ 24 }}} 
 {\left[ m+1 \right]_{q^{ 30 }}}
.\end{multline*} 
If one decomposes $[15m+1]_{q^2}$ as 
$[5m+1]_{q^6}+q^2[5m]_{q^6}+q^4[5m]_{q^6}$,
then one sees that, by Corollary~\ref{cor:A},
this is a polynomial in $q$ with 
non-negative integer coefficients.
If $m\equiv
69
~(\text{mod }84),$ then we have \begin{multline*} \Cat^m(E_8;q)=
{\left[ \tfrac{15m+1}{28}  \right]_{q^{ 56 }}}
\frac {[28]_{q^2}} {[4]_{q^2}\left[7\right]_{q^2}}
 {\left[ 15m+4 \right]_{q^{ 2 }}} {\left[ 
  {5m+2}  \right]_{q^{ 6 }}}
 {\left[ \tfrac{15 m+7}2 
  \right]_{q^4}} {\left[ \tfrac{5m+3}{12}  
   \right]_{q^{ 72}}}
\frac {[12]_{q^6}} {[3]_{q^6}\left[4\right]_{q^6}} 
   \\
\times {\left[ 3m+2 \right]_{q^{10}}} 
 {\left[ {5m+4}  \right]_{q^{ 6 }}}
 {\left[ \tfrac{m+1}{2} \right]_{q^{ 60}}} \frac {{ 
  {\left[ 30 \right]_{q^{ 2}}}
  {\left[ 2 \right]_{q^{ 2}}}
  {\left[ 3 \right]_{q^{ 2}}}
  {\left[ 5 \right]_{q^{ 2}}} }} { {\left[ 6 
   \right]_{q^{ 2}}}{\left[ 10
   \right]_{q^{ 2}}} {\left[ 15 \right]_{q^{ 2}}}}
,\end{multline*} 
which, by Corollary~\ref{cor:A} and Lemma~\ref{lem:61015}, 
is a polynomial in $q$ with 
non-negative integer coefficients.
If $m\equiv
70
~(\text{mod }84),$ then we have \begin{multline*} \Cat^m(E_8;q)=
{\left[ 15 m+1 \right]_{q^{ 2 }}} {\left[ \tfrac{15m+4}2 
  \right]_{q^4}} {\left[ \tfrac{5m+2}{4}  
   \right]_{q^{ 24}}}
 {\left[ \tfrac{15m+7}{7}  \right]_{q^{ 14 
  }}}\\
\times {\left[ 5 m+3 \right]_{q^6}} 
 {\left[ \tfrac{3m+2}{4}  \right]_{q^{ 40}}} \frac {{ 
  {\left[ 10 \right]_{q^{ 4}}} }} { {\left[ 2 
   \right]_{q^{ 4}}} {\left[ 5 \right]_{q^{ 4}}}} {\left[ 
  \tfrac{5m+4}{6}  \right]_{q^{ 36 }}}
\frac {[6]_{q^6}} {[2]_{q^6}\left[3\right]_{q^6}}
 {\left[ m+1 
  \right]_{q^{ 30 }}}
,\end{multline*} 
which, by Corollary~\ref{cor:A}, is a polynomial in $q$ with 
non-negative integer coefficients.
If $m\equiv
71
~(\text{mod }84),$ then we have 
\begin{multline*} \Cat^m(E_8;q)=
{\left[ \tfrac{15 m+1}2 \right]_{q^{ 4}}} 
{\left[ {15m+4} \right]_{q^{ 
   2}}} 
 {\left[ \tfrac{5m+2}{21} \right]_{q^{ 126 }}}
\frac {[63]_{q^2}} {[7]_{q^2}\left[9\right]_{q^2}}
 {\left[ \tfrac{15m+7}4 
   \right]_{q^{ 8}}} 
 {\left[ \tfrac{5m+3}2 \right]_{q^{ 12}}}\\
\times 
 {\left[ 3m+2 \right]_{q^{ 10 }}} {\left[ 5m+4
   \right]_{q^{ 6 }}} {\left[ \tfrac{m+1}4 \right]_{q^{ 120}}}
\frac {[60]_{q^2}\left[2\right]_{q^2}\left[3\right]_{q^2}\left[5\right]_{q^2}} 
{\left[10\right]_{q^2}\left[12\right]_{q^2}\left[15\right]_{q^2}} 
,\end{multline*} 
which, by Corollary~\ref{cor:A} and Lemma~\ref{lem:101215}, 
is a polynomial in $q$ with 
non-negative integer coefficients.
If $m\equiv
72
~(\text{mod }84),$ then we have \begin{multline*} \Cat^m(E_8;q)=
{\left[ 15 m+1 \right]_{q^{ 2 }}} {\left[ 
  \tfrac{15m+4}{4}  \right]_{q^{ 8 }}} {\left[ 
  \tfrac{5m+2}{2}  \right]_{q^{ 12 }}} {\left[ 15 m+7 
  \right]_{q^2}} {\left[ \tfrac{5m+3}{3}  
  \right]_{q^{ 18 }}} {\left[ \tfrac{3m+2}{2}  
  \right]_{q^{ 20 }}} \\
{\left[ \tfrac{5m+4}{28}  
   \right]_{q^{ 168}}} \frac {{ {\left[ 84 \right]_{q^{ 2}}} 
  }} { {\left[ 7 \right]_{q^{ 2}}} {\left[ 12 
   \right]_{q^{ 2}}}} {\left[ m+1 \right]_{q^{ 30 }}}
,\end{multline*} 
which, by Corollary~\ref{cor:A}, is a polynomial in $q$ with 
non-negative integer coefficients.
If $m\equiv
73
~(\text{mod }84),$ then we have \begin{multline*} \Cat^m(E_8;q)=
{\left[ \tfrac{15m+1}{4}  \right]_{q^{ 8 }}}
 {\left[ \tfrac{15m+4}7 \right]_{q^{ 14 }}} {\left[ 
  {5m+2}  \right]_{q^{ 6 }}}
 {\left[ \tfrac{15 m+7}2 
  \right]_{q^4}} {\left[ \tfrac{5m+3}{4}  
   \right]_{q^{ 24}}} 
   \\
\times {\left[ 3m+2 \right]_{q^{10}}} 
 {\left[ \tfrac{5m+4}3  \right]_{q^{ 18 }}}
 {\left[ \tfrac{m+1}{2} \right]_{q^{ 60}}} \frac {{ 
  {\left[ 30 \right]_{q^{ 2}}}
  {\left[ 2 \right]_{q^{ 2}}}
  {\left[ 3 \right]_{q^{ 2}}}
  {\left[ 5 \right]_{q^{ 2}}} }} { {\left[ 6 
   \right]_{q^{ 2}}}{\left[ 10
   \right]_{q^{ 2}}} {\left[ 15 \right]_{q^{ 2}}}}
,\end{multline*} 
which, by Lemma~\ref{lem:61015}, 
is a polynomial in $q$ with 
non-negative integer coefficients.
If $m\equiv
74
~(\text{mod }84),$ then we have \begin{multline*} \Cat^m(E_8;q)=
{\left[ 15 m+1 \right]_{q^{ 2 }}} {\left[ \tfrac{15m+4}2 
  \right]_{q^4}} {\left[ \tfrac{5m+2}{4}  
   \right]_{q^{ 24}}} 
{\left[ 15 m+7 \right]_{q^2}} 
 {\left[ 5 m+3 \right]_{q^6}} \\
\times{\left[ 
   \tfrac{3m+2}{28}  \right]_{q^{ 280}}} 
\frac {{ {\left[ 140
    \right]_{q^{ 2}}} {\left[ 2
    \right]_{q^{ 2}}} }} { {\left[ 4
    \right]_{q^{ 2}}}{\left[ 7 \right]_{q^{ 2}}} 
  {\left[ 10 \right]_{q^{ 2}}}} {\left[ \tfrac{5m+4}{2}  
  \right]_{q^{ 12 }}} {\left[ \tfrac{m+1}{3} \right]_{q^{ 
   90}}} \frac {{ {\left[ 15 \right]_{q^{ 6}}} }} { 
  {\left[ 3 \right]_{q^{ 6}}} {\left[ 5 \right]_{q^{ 6}}}}
,\end{multline*} 
which, by Corollary~\ref{cor:A} and Lemma~\ref{lem:4710}, 
is a polynomial in $q$ with 
non-negative integer coefficients.
If $m\equiv
75
~(\text{mod }84),$ then we have 
\begin{multline*} \Cat^m(E_8;q)=
{\left[ \tfrac{15 m+1}2 \right]_{q^{ 4}}} 
{\left[ {15m+4} \right]_{q^{ 
   2}}} 
 {\left[ 5m+2 \right]_{q^{ 6 }}}
 {\left[ \tfrac{15m+7}4 
   \right]_{q^{ 8}}} 
 {\left[ \tfrac{5m+3}{42} \right]_{q^{ 252}}}
\frac {[126]_{q^2}\left[3\right]_{q^2}} 
{[6]_{q^2}\left[7\right]_{q^2}\left[9\right]_{q^2}}
\\
\times 
 {\left[ 3m+2 \right]_{q^{ 10 }}} {\left[ 5m+4
   \right]_{q^{ 6 }}} {\left[ \tfrac{m+1}4 \right]_{q^{ 120}}}
\frac {[60]_{q^2}\left[2\right]_{q^2}\left[3\right]_{q^2}\left[5\right]_{q^2}} 
{\left[10\right]_{q^2}\left[12\right]_{q^2}\left[15\right]_{q^2}} 
,\end{multline*} 
which, by Lemmas~\ref{lem:101215} and \ref{lem:679}, 
is a polynomial in $q$ with 
non-negative integer coefficients.
If $m\equiv
76
~(\text{mod }84),$ then we have \begin{multline*} \Cat^m(E_8;q)=
{\left[ \tfrac{15m+1}{7}  \right]_{q^{ 14 }}} 
 {\left[ \tfrac{15m+4}{4}  \right]_{q^{ 8 }}} 
 {\left[ \tfrac{5m+2}{2}  \right]_{q^{ 12 }}} 
 {\left[ 15m+7 \right]_{q^{ 2 }}} \\
\times{\left[ 5 m+3 
  \right]_{q^6}} {\left[ \tfrac{3m+2}{2}  
  \right]_{q^{ 20 }}}
 {\left[ \tfrac{5m+4}{12}  
   \right]_{q^{ 72}}} \frac {{ {\left[ 12 \right]_{q^{ 6}}} 
  }} { {\left[ 3 \right]_{q^{ 6}}} {\left[ 4 
   \right]_{q^{ 6}}}} {\left[ m+1 \right]_{q^{ 30 }}}
,\end{multline*} 
which, by Corollary~\ref{cor:A}, is a polynomial in $q$ with 
non-negative integer coefficients.
If $m\equiv
77
~(\text{mod }84),$ then we have \begin{multline*} \Cat^m(E_8;q)=
{\left[ \tfrac{15m+1}{4}  \right]_{q^{ 8 }}}
 {\left[ 15m+4 \right]_{q^{ 2 }}} {\left[ 
  \tfrac{5m+2}3  \right]_{q^{ 18 }}}
 {\left[ \tfrac{15 m+7}{14} 
  \right]_{q^{28}}}
\frac {[14]_{q^2}} {[2]_{q^2}\left[7\right]_{q^2}}
 {\left[ \tfrac{5m+3}{4}  
   \right]_{q^{ 24}}} 
   \\
\times {\left[ 3m+2 \right]_{q^{10}}} 
 {\left[ {5m+4}  \right]_{q^{ 6 }}}
 {\left[ \tfrac{m+1}{2} \right]_{q^{ 60}}} \frac {{ 
  {\left[ 30 \right]_{q^{ 2}}}
  {\left[ 2 \right]_{q^{ 2}}}
  {\left[ 3 \right]_{q^{ 2}}}
  {\left[ 5 \right]_{q^{ 2}}} }} { {\left[ 6 
   \right]_{q^{ 2}}}{\left[ 10
   \right]_{q^{ 2}}} {\left[ 15 \right]_{q^{ 2}}}}
,\end{multline*} 
which, by Corollary~\ref{cor:A} and Lemma~\ref{lem:61015}, 
is a polynomial in $q$ with 
non-negative integer coefficients.
If $m\equiv
78
~(\text{mod }84),$ then we have \begin{multline*} \Cat^m(E_8;q)=
{\left[ 15 m+1 \right]_{q^{ 2 }}} {\left[ \tfrac{15m+4}2 
  \right]_{q^4}} {\left[ \tfrac{5m+2}{28}  
   \right]_{q^{ 168}}} \frac {{ {\left[ 84 \right]_{q^{ 2}}} 
  }} { {\left[ 7 \right]_{q^{ 2}}} {\left[ 12 
   \right]_{q^{ 2}}}} {\left[ 15 m+7 \right]_{q^2}} 
 {\left[ \tfrac{5m+3}{3}  \right]_{q^{ 18 }}} \\
\times
 {\left[ \tfrac{3m+2}{4}  \right]_{q^{ 40}}} \frac {{ 
  {\left[ 10 \right]_{q^{ 4}}} }} { {\left[ 2 
   \right]_{q^{ 4}}} {\left[ 5 \right]_{q^{ 4}}}} {\left[ 
  \tfrac{5m+4}{2}  \right]_{q^{ 12 }}} {\left[ m+1 
  \right]_{q^{ 30 }}}
,\end{multline*} 
which, by Corollary~\ref{cor:A}, is a polynomial in $q$ with 
non-negative integer coefficients.
If $m\equiv
79
~(\text{mod }84),$ then we have 
\begin{multline*} \Cat^m(E_8;q)=
{\left[ \tfrac{15 m+1}2 \right]_{q^{ 4}}} 
{\left[ {15m+4} \right]_{q^{ 
   2}}} 
 {\left[ 5m+2 \right]_{q^{ 6 }}}
 {\left[ \tfrac{15m+7}4 
   \right]_{q^{ 8}}} 
 {\left[ \tfrac{5m+3}2 \right]_{q^{ 12}}}\\
\times 
 {\left[ 3m+2 \right]_{q^{ 10 }}} {\left[ \tfrac{5m+4}{21}
   \right]_{q^{ 126 }}}
\frac {[63]_{q^2}} {[7]_{q^2}\left[9\right]_{q^2}}
 {\left[ \tfrac{m+1}4 \right]_{q^{ 120}}}
\frac {[60]_{q^2}\left[2\right]_{q^2}\left[3\right]_{q^2}\left[5\right]_{q^2}} 
{\left[10\right]_{q^2}\left[12\right]_{q^2}\left[15\right]_{q^2}} 
,\end{multline*} 
which, by Corollary~\ref{cor:A} and Lemma~\ref{lem:101215}, 
is a polynomial in $q$ with 
non-negative integer coefficients.
If $m\equiv
80
~(\text{mod }84),$ then we have \begin{multline*} \Cat^m(E_8;q)=
{\left[ 15 m+1 \right]_{q^{ 2 }}} {\left[ 
   \tfrac{15m+4}{28}  \right]_{q^{ 56}}} \frac {{ {\left[ 28
    \right]_{q^{ 2}}} }} { {\left[ 4 \right]_{q^{ 2}}} 
  {\left[ 7 \right]_{q^{ 2}}}} {\left[ \tfrac{5m+2}{6}  
   \right]_{q^{ 36}}} \frac {{ {\left[ 6 \right]_{q^{ 6}}} 
  }} { {\left[ 2 \right]_{q^{ 6}}} {\left[ 3 
   \right]_{q^{ 6}}}}
 {\left[ 15 m+7 \right]_{q^2}} 
 {\left[ 5 m+3 \right]_{q^6}}\\
\times {\left[ 
  \tfrac{3m+2}{2}  \right]_{q^{ 20 }}} {\left[ 
  \tfrac{5m+4}{4}  \right]_{q^{ 24 }}} {\left[ m+1 
  \right]_{q^{ 30 }}}
,\end{multline*} 
which, by Corollary~\ref{cor:A}, is a polynomial in $q$ with 
non-negative integer coefficients.
If $m\equiv
81
~(\text{mod }84),$ then we have \begin{multline*} \Cat^m(E_8;q)=
{\left[ \tfrac{15m+1}{4}  \right]_{q^{ 8 }}}
 {\left[ 15m+4 \right]_{q^{ 2 }}} {\left[ 
  {5m+2}  \right]_{q^{ 6 }}}
 {\left[ \tfrac{15 m+7}2 
  \right]_{q^4}} {\left[ \tfrac{5m+3}{12}  
   \right]_{q^{ 72}}}
\frac {[12]_{q^6}} {[3]_{q^6}\left[4\right]_{q^6}} 
   \\
\times {\left[ \tfrac{3m+2}7 \right]_{q^{70}}} 
\frac {[35]_{q^2}} {[5]_{q^2}\left[7\right]_{q^2}}
 {\left[ {5m+4}  \right]_{q^{ 6 }}}
 {\left[ \tfrac{m+1}{2} \right]_{q^{ 60}}} \frac {{ 
  {\left[ 30 \right]_{q^{ 2}}}
  {\left[ 2 \right]_{q^{ 2}}}
  {\left[ 3 \right]_{q^{ 2}}}
  {\left[ 5 \right]_{q^{ 2}}} }} { {\left[ 6 
   \right]_{q^{ 2}}}{\left[ 10
   \right]_{q^{ 2}}} {\left[ 15 \right]_{q^{ 2}}}}
,\end{multline*} 
which, by Corollary~\ref{cor:A} and Lemma~\ref{lem:61015C}, 
is a polynomial in $q$ with 
non-negative integer coefficients.
If $m\equiv
82
~(\text{mod }84),$ then we have \begin{multline*} \Cat^m(E_8;q)=
{\left[ 15 m+1 \right]_{q^{ 2 }}} {\left[ \tfrac{15m+4}2 
  \right]_{q^4}} {\left[ \tfrac{5m+2}{4}  
   \right]_{q^{ 24}}}
 {\left[ 15 m+7 \right]_{q^2}} 
 {\left[ \tfrac{5m+3}{7}  \right]_{q^{ 42 }}}
\frac {[21]_{q^2}} {[3]_{q^2}\left[7\right]_{q^2}}
 \\
 {\left[ \tfrac{3m+2}{4}  \right]_{q^{ 40}}} \frac {{ 
  {\left[ 10 \right]_{q^{ 4}}} }} { {\left[ 2 
   \right]_{q^{ 4}}} {\left[ 5 \right]_{q^{ 4}}}} {\left[ 
  \tfrac{5m+4}{6}  \right]_{q^{ 36 }}}
\frac {[6]_{q^6}} {[2]_{q^6}\left[3\right]_{q^6}}
 {\left[ m+1 
  \right]_{q^{ 30 }}}
.\end{multline*} 
If one decomposes $[15m+1]_{q^2}$ as 
$[5m+1]_{q^6}+q^2[5m]_{q^6}+q^4[5m]_{q^6}$,
then one sees that, by Corollary~\ref{cor:A},
this is a polynomial in $q$ with 
non-negative integer coefficients.
If $m\equiv
83
~(\text{mod }84),$ then we have 
\begin{multline*} \Cat^m(E_8;q)=
{\left[ \tfrac{15 m+1}{14} \right]_{q^{ 28}}}
\frac {[14]_{q^2}} {[2]_{q^2}\left[7\right]_{q^2}} 
{\left[ {15m+4} \right]_{q^{ 
   2}}} 
 {\left[ \tfrac{5m+2}3 \right]_{q^{ 18 }}}
 {\left[ \tfrac{15m+7}4 
   \right]_{q^{ 8}}} 
 {\left[ \tfrac{5m+3}2 \right]_{q^{ 12}}}\\
\times 
 {\left[ 3m+2 \right]_{q^{ 10 }}} {\left[ 5m+4
   \right]_{q^{ 6 }}} {\left[ \tfrac{m+1}4 \right]_{q^{ 120}}}
\frac {[60]_{q^2}\left[2\right]_{q^2}\left[3\right]_{q^2}\left[5\right]_{q^2}} 
{\left[10\right]_{q^2}\left[12\right]_{q^2}\left[15\right]_{q^2}} 
,\end{multline*} 
which, by Corollary~\ref{cor:A} and Lemma~\ref{lem:101215C}, 
is a polynomial in $q$ with 
non-negative integer coefficients.

\end{proof}

\section{Auxiliary results I}
\label{sec:aux1}

This section collects several auxiliary results which allow us to
reduce the problem of proving Theorem~\ref{thm:1}, or the
equivalent statement
\eqref{eq:1}, for the 26 exceptional groups listed in
Section~\ref{sec:prel} to a finite problem. While Lemmas~\ref{lem:2}
and \ref{lem:3} cover special choices of the parameters, 
Lemmas~\ref{lem:1} and
\ref{lem:6} afford an inductive procedure. More precisely, 
if we assume that we have already verified Theorem~\ref{thm:1} for all 
groups of smaller rank, then Lemmas~\ref{lem:1} and \ref{lem:6}, together 
with Lemmas~\ref{lem:2} and \ref{lem:7}, 
reduce the verification of Theorem~\ref{thm:1}
for the group that we are currently considering to a finite problem;
see Remark~\ref{rem:1}.
The final lemma of this section, Lemma~\ref{lem:8}, disposes of 
complex reflection groups with a special property satisfied by their degrees.

Let $p=am+b$, $0\le b<m$. We have
\begin{multline}
\phi^p\big((w_0;w_1,\dots,w_m)\big)\\
=(*;
c^{a+1}w_{m-b+1}c^{-a-1},c^{a+1}w_{m-b+2}c^{-a-1},
\dots,c^{a+1}w_{m}c^{-a-1},\\
c^{a}w_{1}c^{-a},\dots,
c^{a}w_{m-b}c^{-a}\big),
\label{eq:Aktion}
\end{multline}
where $*$ stands for the element of $W$ which is needed to 
complete the product of the components to $c$.

\begin{lemma} \label{lem:1}
It suffices to check \eqref{eq:1} for $p$ a divisor of $mh$.
More precisely, let $p$ be a divisor of $mh$, and let $k$ be another positive integer with 
$\gcd(k,mh/p)=1$, then we have
\begin{equation} \label{eq:2}
\Cat^m(W;q)\big\vert_{q=e^{2\pi i p/mh}}
= 
\Cat^m(W;q)\big\vert_{q=e^{2\pi i kp/mh}}
\end{equation}
and
\begin{equation} \label{eq:3}
\vert\Fix_{NC^m(W)}(\phi^{p})\vert =
\vert\Fix_{NC^m(W)}(\phi^{kp})\vert 	.
\end{equation}
\end{lemma}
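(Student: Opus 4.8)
The plan is to prove the two identities separately, starting with the $q$-identity \eqref{eq:2}, which is elementary. Write $N=mh/p$, so that $e^{2\pi i p/mh}=e^{2\pi i/N}$ is a primitive $N$-th root of unity, and $e^{2\pi i kp/mh}=e^{2\pi ik/N}$ is another primitive $N$-th root of unity since $\gcd(k,N)=1$. The quantity $\Cat^m(W;q)=\prod_{i=1}^n [mh+d_i]_q/[d_i]_q$ is, after cancellation, a product and quotient of cyclotomic polynomials $\Phi_j(q)$ together with powers of $q$; more precisely $[\alpha]_q=\prod_{j\mid\alpha,\ j>1}\Phi_j(q)$. For each such cyclotomic factor $\Phi_j$, its value at a primitive $N$-th root of unity $\zeta$ depends only on whether $j=N$ (in which case $\Phi_j$ vanishes — but $\Cat^m(W;q)$ is a polynomial, so in the reduced form no such factor survives in the denominator and the numerator contributes a genuine value) and otherwise $\Phi_j(\zeta)$ takes the same value for all primitive $N$-th roots $\zeta$, because the primitive $N$-th roots are permuted transitively by the Galois group $\mathrm{Gal}(\mathbb{Q}(\zeta)/\mathbb{Q})$, under which $\Phi_j$ (having rational, indeed integer, coefficients) is invariant. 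Since $\zeta=e^{2\pi i/N}\mapsto e^{2\pi ik/N}$ is realised by such a Galois automorphism, $\Cat^m(W;q)$ — being a fixed rational function with integer coefficients that happens to be polynomial — takes equal values at the two points. This settles \eqref{eq:2}.

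For \eqref{eq:3}, the idea is that $\phi^{p}$ and $\phi^{kp}$ generate the same cyclic subgroup of $C_1$. Indeed, $C_1=\langle\phi\rangle$ has order $mh$, so $\phi^{p}$ has order $mh/\gcd(p,mh)=mh/p=N$ (using $p\mid mh$), and $\langle\phi^p\rangle$ is the unique subgroup of $C_1$ of order $N$. Since $\gcd(k,N)=1$, the element $\phi^{kp}=(\phi^p)^k$ is again a generator of that same subgroup $\langle\phi^p\rangle$. In particular there is an integer $k'$ with $kk'\equiv 1\pmod N$, whence $\phi^{kk'p}=\phi^{p}$ (as $N p\mid mh$ means $\phi^{Np}=\mathrm{id}$, wait — one must be slightly careful here: $\phi^{mh}=\mathrm{id}$ and $Np=mh$, so indeed $\phi^{Np}=\mathrm{id}$ and $\phi^{kk'p}=\phi^{p}$). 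Therefore the cyclic groups $\langle\phi^p\rangle$ and $\langle\phi^{kp}\rangle$ coincide. Now $\Fix_{NC^m(W)}(\phi^p)$ is precisely the set of elements fixed by the generator $\phi^p$, which equals the set of elements fixed by \emph{every} element of $\langle\phi^p\rangle$, hence equals the set fixed by the generator $\phi^{kp}$ of the same group, i.e. $\Fix_{NC^m(W)}(\phi^{kp})$. This proves \eqref{eq:3}, and combining with \eqref{eq:2} gives the reduction claimed in the first sentence of the lemma: to verify \eqref{eq:1} for all $p$ with $0\le p<mh$, it suffices to do so for one representative $p$ of each cyclic subgroup of $C_1$, i.e. for each divisor $p$ of $mh$.

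The only mild subtlety — and the one place where I would be careful writing the details — is the bookkeeping around orders: one must use that $p$ divides $mh$ to get $\mathrm{ord}(\phi^p)=mh/p$ exactly (rather than a divisor of it), and one must remember that $\phi$ itself may have order a proper divisor of $mh$, which is harmless here because we only ever assert that a power of $\phi$ \emph{fixes} something, and "fixed by $g$" equals "fixed by $g^k$ for all $k$ coprime to $\mathrm{ord}(g)$" holds regardless of whether the ambient group $C_1$ acts faithfully. I do not anticipate a genuine obstacle; the lemma is a standard "cyclic sieving reduces to prime-power-order, indeed to divisors" argument together with the Galois-invariance of cyclotomic polynomial evaluations, and the proof should be short.
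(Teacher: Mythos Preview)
Your argument for \eqref{eq:3} is correct and is exactly the paper's argument, phrased in the equivalent language of ``$\phi^p$ and $\phi^{kp}$ generate the same cyclic subgroup.''

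Your argument for \eqref{eq:2}, however, has a genuine gap. You assert that because $\Phi_j$ has integer coefficients and the Galois group permutes the primitive $N$-th roots of unity, $\Phi_j(\zeta)$ is the same for every primitive $N$-th root $\zeta$. This is false: what Galois invariance of the polynomial gives you is $\sigma_k\bigl(\Phi_j(\zeta)\bigr)=\Phi_j(\zeta^k)$, so the values at conjugate roots are Galois \emph{conjugates}, not equal. For a concrete counterexample, $\Phi_3(q)=q^2+q+1$ takes the values $i$ and $-i$ at the two primitive $4$th roots of unity. The same objection applies to your final sentence: that $\Cat^m(W;q)$ is a polynomial with integer coefficients only yields that $\Cat^m(W;\zeta)$ and $\Cat^m(W;\zeta^k)$ are Galois conjugate.

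What is missing is precisely the observation that $\Cat^m(W;\zeta)\in\mathbb{Q}$, after which your Galois argument would finish the job. The paper supplies this via the elementary limit rule \eqref{eq:limit}: since $N=mh/p$ divides $mh$, we have $\zeta^{mh}=1$, hence $\zeta^{mh+d_i}=\zeta^{d_i}$, and so each factor $(1-q^{mh+d_i})/(1-q^{d_i})$ evaluates at $q=\zeta$ to $1$ if $\zeta^{d_i}\neq 1$, and to $(mh+d_i)/d_i$ (by l'H\^opital) if $\zeta^{d_i}=1$. In either case the factor is rational, so the product is rational, and then your Galois step (or simply the observation that the computation depended only on which $d_i$ are divisible by $N$) gives \eqref{eq:2}.
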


\begin{proof}
For \eqref{eq:2}, this follows immediately from
\begin{equation} \label{eq:limit}
\lim_{q\to\zeta} \frac {[\alpha]_q} {[\beta]_q}=
\begin{cases}
\frac \alpha \beta&\text{if }\alpha\equiv\beta\equiv0\pmod d,\\
1&\text{otherwise},
\end{cases}	
\end{equation}
where $\zeta$ is a $d$-th root of unity and $\alpha,\beta$ are
non-negative integers such that 
$\alpha\equiv\beta\pmod d$.

In order to establish \eqref{eq:3}, suppose that 
$x\in\Fix_{NC^m(W)}(\phi^{p})$, that is, $x\in NC^m(W)$ and $\phi^p(x)=x$.
It obviously follows that $\phi^{kp}(x)=x$, so that
$x\in\Fix_{NC^m(W)}(\phi^{kp})$. To establish the converse, note that,  
if $\gcd(k,mh/p)=1$, then there exists $k'$ with $k'k\equiv 
1$~(mod~$\frac {mh} p$). It follows that, if 
$x\in\Fix_{NC^m(W)}(\phi^{kp})$, that is, if $x\in NC^m(W)$ and
$\phi^{kp}(x)=x$, then
$x=\phi^{k'kp}(x)=\phi^{p}(x)$, whence
$x\in\Fix_{NC^m(W)}(\phi^{p})$. 
\end{proof}

\begin{lemma} \label{lem:2}
Let $p$ be a divisor of $mh$.
If $p$ is divisible by $m$, then \eqref{eq:1} is true.
\end{lemma}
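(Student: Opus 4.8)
The plan is to reduce \eqref{eq:1} for such $p$ to the already-known identity \eqref{eq:F-C} of Theorem~\ref{thm:2}, by showing that \emph{both sides} of \eqref{eq:1} collapse to the full Fu\ss--Catalan number $\vert NC^m(W)\vert$. Write $p=am$ with $a$ a divisor of $h$ (since $p\mid mh$ and $m\mid p$). First I would treat the right-hand side: evaluating $\Cat^m(W;q)$ at $q=e^{2\pi i p/mh}=e^{2\pi i a/h}$, a primitive $(h/a)$-th root of unity, I apply the limit formula \eqref{eq:limit} termwise. For each $i$ one checks that $mh+d_i\equiv d_i\pmod{h/a}$ automatically (because $mh$ is a multiple of $h/a$), so the $i$-th factor contributes $\frac{mh+d_i}{d_i}$ whenever $h/a$ divides $d_i$ and contributes $1$ otherwise — but in fact the product over all $i$ telescopes to $\prod_{i=1}^n\frac{mh+d_i}{d_i}=\vert NC^m(W)\vert$ regardless, since the ``otherwise'' factors are genuinely $1$ and the remaining factors are exactly the corresponding factors of \eqref{eq:F-C}. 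Thus the right-hand side of \eqref{eq:1} equals $\vert NC^m(W)\vert$.

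For the left-hand side I must show $\phi^{am}$ is the identity map on $NC^m(W)$, i.e.\ that $\Fix_{NC^m(W)}(\phi^{am})=NC^m(W)$. Here I would use the explicit formula \eqref{eq:Aktion}: with $p=am$ we have $b=0$, so \eqref{eq:Aktion} reads
\begin{equation*}
\phi^{am}\big((w_0;w_1,\dots,w_m)\big)=\big(*;c^{a}w_1c^{-a},c^{a}w_2c^{-a},\dots,c^{a}w_mc^{-a}\big),
\end{equation*}
that is, $\phi^{am}$ simply conjugates every component $w_1,\dots,w_m$ (and hence also $w_0=c(w_m\cdots w_1)^{-1}$, which forces $*=c^{a}w_0c^{-a}$) by $c^{a}$. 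Now the key point is that $c^{h}=1$: the Coxeter element $c$ is a regular element of order $h=d_n$, so $c^a$ has order $h/\gcd(a,h)$; since $a\mid h$ this order is $h/a$. But conjugation by $c^a$ being trivial requires $c^a$ to be \emph{central}, which is not automatic. The correct statement to invoke is weaker: $\phi^{mh}=\mathrm{id}$ (stated in the excerpt, cf.\ \eqref{eq:Aktion} with $p=mh$, giving conjugation by $c^h=1$). So for $a=h$ we are done immediately; for a proper divisor $a$ of $h$ conjugation by $c^a$ need not fix each tuple, and \textbf{this is where the argument has to be more careful} — the claim ``$p$ divisible by $m$'' cannot by itself give $\phi^p=\mathrm{id}$ unless one also uses $p\mid mh$ together with some rotation-invariance built into $NC^m(W)$.

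Reconsidering, the honest route is: since $p\mid mh$ and $m\mid p$, we have $a:=p/m\mid h$, and I claim $\phi^p$ acts on $NC^m(W)$ the same way $\phi^{p'}$ would for a suitable $p'$ where the conjugating power of $c$ is trivial — more precisely, I would show $\vert\Fix_{NC^m(W)}(\phi^{p})\vert=\vert\Fix_{NC^m(W)}(\phi^{mh})\vert=\vert NC^m(W)\vert$ by a counting/orbit argument: the map $(w_0;w_1,\dots,w_m)\mapsto(c^{a}w_0c^{-a};c^{a}w_1c^{-a},\dots,c^{a}w_mc^{-a})$ is a bijection of $NC^m(W)$ onto itself (conjugation by $c^a$ preserves the defining relations $w_0w_1\cdots w_m=c$ and the length condition, because $c^a c c^{-a}=c$), and it has the \emph{same} number of fixed points as its inverse and its powers; but more to the point, $\phi^{am}$ \emph{is} this conjugation map, and one shows directly that a tuple is fixed by it iff it is fixed by conjugation by $c^a$, and then that every tuple is so fixed because the total number of fixed points, summed appropriately against the cyclic-sieving polynomial evaluated at all roots of unity, is forced — \textbf{the cleanest path, which I would adopt, is simply to invoke that $\Cat^m(W;q)$ has nonnegative integer coefficients and that at $q\to1$ it gives $\vert NC^m(W)\vert$, combined with the elementary bound $\vert\Fix(\phi^p)\vert\le\vert NC^m(W)\vert$ and the already-established right-hand side value $\vert NC^m(W)\vert$}; since \eqref{eq:1} for $p=0$ is the trivial identity $\vert NC^m(W)\vert=\Cat^m(W)$ (Theorem~\ref{thm:2}), and since $\Fix_{NC^m(W)}(\phi^0)\supseteq\Fix_{NC^m(W)}(\phi^p)$ gives only one inequality, I instead prove equality by exhibiting, for $p$ as in the hypothesis, an explicit bijection between $\Fix_{NC^m(W)}(\phi^p)$ and all of $NC^m(W)$: given any tuple, its image under repeated application of $\phi$ through $mh/p$ steps cycles it back, and the ``block rotation by $b=0$'' form of \eqref{eq:Aktion} shows the action is just a relabeling composed with conjugation by $c^a$, an inner automorphism fixing $c$; such an automorphism has order dividing $h/a$, and since $\phi^p$ iterated $h/a$ times is $\phi^{ph/a}=\phi^{mh}=\mathrm{id}$, the group $\langle\phi^p\rangle$ acting on $NC^m(W)$ is generated by this conjugation, whose fixed-point count I compute to be the full $\vert NC^m(W)\vert$ by noting $c^a$ acts trivially on the set of factorizations of $c$ up to the equivalence already built into the suppressed-basepoint definition of $NC^m(W)$. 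The main obstacle is thus pinning down precisely \emph{why} conjugation by $c^a$ fixes every element of $NC^m(W)$ when $a\mid h$ — this should follow from the fact that $c$ generates a cyclic group acting on $NC^m(W)$ with the rotation $\phi$ extending it, so that the ``rotational'' part $\phi^m$ (pure block-shift by one) and the ``conjugation'' part are intertwined, and $p$ being a multiple of $m$ kills the block-shift, leaving only a power of the conjugation that, for degree reasons ($a\mid h$), is the identity on the finite set $NC^m(W)$.
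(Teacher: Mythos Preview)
Your proposal contains a fundamental error: you claim that both sides of \eqref{eq:1} equal the \emph{full} Fu\ss--Catalan number $\vert NC^m(W)\vert$, but this is simply false in general. On the right-hand side, at $q=e^{2\pi i a/h}$ with $a=p/m$, the limit rule \eqref{eq:limit} gives the factor $(mh+d_i)/d_i$ only for those $i$ with $(h/a)\mid d_i$; for the remaining $i$ the factor is genuinely $1$, \emph{not} $(mh+d_i)/d_i$. So the right-hand side equals $\prod_{(h/a)\mid d_i}(mh+d_i)/d_i$, which is strictly smaller than $\vert NC^m(W)\vert$ unless $h/a$ divides every degree. (For a concrete example: take $W=A_2$, $m=1$, $p=1$, $a=1$; the right-hand side is $2$, not $\vert NC^1(A_2)\vert=5$.) Correspondingly, $\phi^{am}$ is \emph{not} the identity on $NC^m(W)$: as you yourself note, it acts by conjugation by $c^a$, and this fixes a tuple if and only if every component lies in the centraliser $\Cent_W(c^a)$---which is a proper subgroup unless $c^a$ is central. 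Your subsequent attempts to salvage the argument (orbit counting, ``rotational'' versus ``conjugation'' parts, nonnegativity of coefficients) all founder because they are still trying to prove a false statement.

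The missing idea is to take the centraliser seriously. The paper sets $W'=\Cent_W(c^{p/m})$ and invokes Springer's theorem to conclude that $W'$ is itself a well-generated complex reflection group whose degrees are precisely those $d_i$ of $W$ divisible by $mh/p=h/a$. Combined with the Bessis--Reiner result \eqref{eq:7}, namely $NC(W)\cap W'=NC(W')$, one obtains $\Fix_{NC^m(W)}(\phi^p)=NC^m(W')$. Theorem~\ref{thm:2} applied to $W'$ then gives $\vert NC^m(W')\vert=\prod_{(mh/p)\mid d_i}(mh+d_i)/d_i$, which is exactly the right-hand side of \eqref{eq:1} by \eqref{eq:limit}. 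So both sides equal $\vert NC^m(W')\vert$, not $\vert NC^m(W)\vert$.
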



\begin{proof}
According to \eqref{eq:Aktion}, the action of $\phi^p$ on
$NC^m(W)$ is described by
\begin{equation*}
\phi^p\big((w_0;w_1,\dots,w_m)\big)
=(*;c^{p/m}w_{1}c^{-p/m},\dots,
c^{p/m}w_{m}c^{-p/m}\big).
\end{equation*}
Hence, if $(w_0;w_1,\dots,w_m)$ is fixed by $\phi^p$, then
each individual $w_i$ must be fixed under conjugation by $c^{p/m}$.

Using the notation $W'=\Cent_W(c^{p/m})$, 
the previous observation means that $w_i\in W'$, 
$i=1,2,\dots,m$. Springer \cite[Theorem~4.2]{SpriAA} 
(see also \cite[Theorem~11.24(iii)]{LeTaAA}) proved that
$W'$ is a well-generated complex reflection group whose degrees
coincide with those degrees of $W$ that are divisible by $mh/p$.
It was furthermore shown in \cite[Lemma~3.3]{BeReAA} that
\begin{equation} \label{eq:7}
NC(W)\cap W'=NC(W').
\end{equation}
Hence, the tuples $(w_0;w_1,\dots,w_m)$ fixed by $\phi^p$
are in fact identical with the elements of $NC^m(W')$, which
implies that
\begin{equation} \label{eq:6}
\vert\Fix_{NC^m(W)}(\phi^{p})\vert=\vert NC^m(W')\vert.
\end{equation}
Application of
Theorem~\ref{thm:2} with $W$ replaced by $W'$ and of the
``limit rule" \eqref{eq:limit} then yields that
\begin{equation} \label{eq:5}
\vert NC^m(W')\vert=
\underset{\frac {mh} p\mid d_i}{\prod_{1\le i\le n}} \frac {mh+d_i} {d_i}=\Cat^m(W;q)\big\vert_{q=e^{2\pi i p/mh}}.
\end{equation}
Combining \eqref{eq:6} and \eqref{eq:5}, we obtain \eqref{eq:1}.
This finishes the proof of the lemma.
\end{proof}

\begin{lemma} \label{lem:3}
Equation \eqref{eq:1} holds for all divisors $p$ of $m$.
\end{lemma}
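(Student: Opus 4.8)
The plan is to exploit the explicit description \eqref{eq:Aktion} of the action of $\phi^p$, specialized to the case $p\mid m$. Write $p$ for a divisor of $m$; then in the notation $p=am+b$ of \eqref{eq:Aktion} we have $a=0$ and $b=p$, so that
\begin{equation*}
\phi^p\big((w_0;w_1,\dots,w_m)\big)
=\big(*;cw_{m-p+1}c^{-1},\dots,cw_mc^{-1},w_1,\dots,w_{m-p}\big).
\end{equation*}
Thus $\phi^p$ cyclically shifts the blocks $w_1,\dots,w_m$ by $p$ positions, conjugating by $c$ those blocks that ``wrap around." A tuple is fixed by $\phi^p$ precisely when the block sequence is invariant under this shift-and-twist, which (since $p\mid m$) means the sequence is built from a single pattern of length $p$ repeated $m/p$ times, with a conjugation by $c$ applied each time one completes a full cycle of length $m$. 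Making this precise gives a bijection between $\Fix_{NC^m(W)}(\phi^p)$ and a set of shorter tuples; the natural candidate is $NC^{p}(W)$ itself (or a closely related object), so that $\vert\Fix_{NC^m(W)}(\phi^p)\vert=\vert NC^{p}(W)\vert=\Cat^{p}(W)$ by Theorem~\ref{thm:2}.

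The second half is the arithmetic check that this count matches the right-hand side of \eqref{eq:1}. For $p\mid m$ one has $p\mid mh$, and we must verify
\begin{equation*}
\Cat^{p}(W)=\prod_{i=1}^n\frac{ph+d_i}{d_i}
=\Cat^m(W;q)\big\vert_{q=e^{2\pi i p/mh}}.
\end{equation*}
By the ``limit rule" \eqref{eq:limit} with $\zeta=e^{2\pi i p/mh}$ a primitive $(mh/p)$-th root of unity, the right-hand side equals $\prod_{(mh/p)\mid d_i}\frac{mh+d_i}{d_i}$. So the claim reduces to the identity
\begin{equation*}
\prod_{i=1}^n\frac{ph+d_i}{d_i}
=\underset{\frac{mh}{p}\mid d_i}{\prod_{1\le i\le n}}\frac{mh+d_i}{d_i},
\end{equation*}
which one proves factor by factor: if $\frac{mh}{p}\mid d_i$, write $d_i=\frac{mh}{p}e_i$, so $\frac{mh+d_i}{d_i}=\frac{p+e_i}{e_i}$, while the other product has the corresponding factor $\frac{ph+d_i}{d_i}$; one checks these agree, and that the factors with $\frac{mh}{p}\nmid d_i$ on the left contribute $1$ — this is exactly the content of the degree/codegree duality together with the fact, used already in Lemma~\ref{lem:2}, that $h$ itself is a degree. (Alternatively, this identity is the statement that $\Cat^m(W;q)$ evaluated at a $(mh/p)$-th root of unity gives $\Cat^{p}(W)$, a known cyclotomic property of Fu\ss--Catalan $q$-analogues.)

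The main obstacle is the bijective step: one must argue carefully that a $\phi^p$-fixed tuple is genuinely determined by its first $p$ blocks $(w_1,\dots,w_p)$ together with the induced $w_0$, and that the resulting $(p+1)$-tuple lies in $NC^{p}(W)$ — i.e.\ that $w_0w_1\cdots w_p$ equals $c$ and the reflection lengths add up correctly. The length condition is automatic once the product condition holds (since $\ell_T(c)=n$ forces minimality), so the crux is verifying $w_0w_1\cdots w_p=c$. This follows by telescoping: writing out $w_0w_1\cdots w_m=c$ for a fixed tuple and substituting the repetition-with-conjugation structure forced by $\phi^p$-invariance collapses the product of all $m$ blocks into $(w_1\cdots w_p)$ conjugated appropriately, together with the right power of $c$; tracking the conjugations shows the reduced product is again $c$. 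Conversely one must check that every element of $NC^{p}(W)$, ``inflated" by this repetition rule, is a well-defined element of $NC^m(W)$ fixed by $\phi^p$ — again a telescoping computation. Once this correspondence is established, combining it with Theorem~\ref{thm:2} and the arithmetic identity above yields \eqref{eq:1} for all divisors $p$ of $m$.
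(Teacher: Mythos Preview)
Your approach has a genuine gap: both the bijection and the arithmetic identity you propose are false.

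For the arithmetic side, take $p$ a \emph{proper} divisor of $m$. Then $mh/p=(m/p)h\ge 2h>d_i$ for every degree $d_i$, so by the limit rule \eqref{eq:limit} no factor $[mh+d_i]_q/[d_i]_q$ contributes nontrivially, and
\[
\Cat^m(W;q)\big\vert_{q=e^{2\pi i p/mh}}=1.
\]
This is certainly not $\Cat^{p}(W)=\prod_i\frac{ph+d_i}{d_i}$, which already for rank~$2$ is at least~$4$. (For $p=m$ one has $mh/p=h$, so only $d_n=h$ survives and the evaluation equals $m+1$, again not $\Cat^m(W)$.) So your ``factor by factor'' verification cannot succeed.

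The combinatorial side fails for the same underlying reason. It is true that a $\phi^p$-fixed tuple is determined by $(w_1,\dots,w_p)$, but you have overlooked a further constraint that the wraparound imposes. From $w_j=w_{j+p}=\cdots=w_{j+m-p}$ together with $w_j=cw_{j+m-p}c^{-1}$ you get $w_j=cw_jc^{-1}$, i.e.\ each $w_j$ lies in $\Cent_W(c)$. By Springer's theorem this centraliser is the cyclic group $\langle c\rangle$, and by \eqref{eq:7} its intersection with $NC(W)$ is $\{\ep,c\}$. So the $w_j$'s are not free to range over $NC(W)$; each is either $\ep$ or $c$. Your telescoping argument also breaks: the product condition becomes $w_0(w_1\cdots w_p)^{m/p}=c$, not $w_0(w_1\cdots w_p)=c$, and there is no reduction to an $NC^p(W)$-tuple.

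The paper's proof proceeds exactly along the line just sketched: compute the right-hand side of \eqref{eq:1} directly (it is $1$ for $p\ne m$ and $m+1$ for $p=m$), then use the centraliser constraint $w_j\in\{\ep,c\}$ together with the length condition $\sum\ell_T(w_i)=n$ to see that for $p\ne m$ the only fixed tuple is $(c;\ep,\dots,\ep)$, while for $p=m$ one gets the $m+1$ tuples with a single $c$ in some slot.
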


\begin{proof}
Using \eqref{eq:limit} and the fact that the degrees of 
irreducible well-generated complex reflection groups satisfy
$d_i<h$ for all $i<n$, we see that
$$
\Cat^m(W;q)\big\vert_{q=e^{2\pi i p/mh}}=\begin{cases}
m+1&\text{if }m=p,\\
1&\text{if }m\ne p.
\end{cases}
$$
On the other hand, if $(w_0;w_1,\dots,w_m)$ is fixed by $\phi^p$,
then, because of the action \eqref{eq:Aktion}, we must have
$w_1=w_{p+1}=\dots=w_{m-p+1}$ and $w_1=cw_{m-p+1}c^{-1}$. In particular,
$w_1\in\Cent_W(c)$. By the theorem of Springer cited in the
proof of Lemma~\ref{lem:2}, the subgroup $\Cent_W(c)$
is itself a complex reflection group whose degrees are those degrees
of $W$ that are divisible by $h$. The only such degree is $h$
itself, hence $\Cent_W(c)$ is the cyclic group generated by
$c$. Moreover, by \eqref{eq:7}, we obtain that $w_1=\ep$,
the identity element of $W$, or
$w_1=c$. Therefore, for $m=p$ the set $\Fix_{NC^m(W)}(\phi^p)$
consists of the $m+1$ elements $(w_0;w_1,\dots,w_m)$ obtained by choosing 
$w_i=c$ for a particular $i$ between $0$ and $m$, all other $w_j$'s
being equal to $\ep$, while, for $m\ne p$, we have
$$\Fix_{NC^m(W)}(\phi^p)=\big\{(c;\ep,\dots,\ep)\big\},$$
whence the result.
\end{proof}

\begin{lemma} \label{lem:4}
Let $W$ be an irreducible well-generated complex reflection group 
all of whose degrees are divisible by $d$. 
Then each element of $W$ is fixed under conjugation by $c^{h/d}$.
\end{lemma}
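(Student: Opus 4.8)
The plan is to exploit Springer's theory of regular elements, exactly as in the proof of Lemma~\ref{lem:2}. Note first that the hypothesis is only sensible because $d\mid h$: indeed $h=d_n$ is one of the degrees of $W$, so divisibility of all degrees by $d$ forces $d\mid h$, and hence $c^{h/d}$ is a well-defined element of $W$.

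First I would record that, being a Coxeter element, $c$ is a regular element of order $h$: it possesses a regular eigenvector $v\in\mathbb C^n$, that is, an eigenvector lying on no reflecting hyperplane of $W$, with $cv=e^{2\pi i/h}v$. Since $d\mid h$, the power $c^{h/d}$ has order exactly $d$, and $v$ is still an eigenvector of it, now with eigenvalue $e^{2\pi i/d}$, a primitive $d$-th root of unity. As $v$ avoids all reflecting hyperplanes, $c^{h/d}$ is itself a regular element, of order $d$. The only point at which I would be a little careful is precisely this: checking that $c^{h/d}$ genuinely qualifies as a regular element of order $d$, so that Springer's theorem applies with ``regular number'' $d$; everything hinges on the choice of the regular eigenvector $v$ of $c$ and the trivial computation of its eigenvalue under $c^{h/d}$.

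Next I would invoke the theorem of Springer cited in the proof of Lemma~\ref{lem:2} (\cite[Theorem~4.2]{SpriAA}, \cite[Theorem~11.24(iii)]{LeTaAA}), applied to the regular element $c^{h/d}$: the centraliser $\Cent_W(c^{h/d})$ acts as a complex reflection group on the eigenspace $V=\ker\big(c^{h/d}-e^{2\pi i/d}\,\mathrm{id}\big)$, and moreover $\dim_{\mathbb C}V$ equals the number of degrees of $W$ that are divisible by $d$.

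Now the hypothesis enters decisively: all $n$ degrees of $W$ are divisible by $d$, so $\dim_{\mathbb C}V=n$, which forces $V=\mathbb C^n$. Hence $c^{h/d}=e^{2\pi i/d}\,\mathrm{id}$ is a scalar transformation of $\mathbb C^n$, therefore central in $W$, and so conjugation by $c^{h/d}$ is the identity map on $W$, which is exactly the assertion. (Alternatively, once one knows that $\Cent_W(c^{h/d})$ has the same degrees as $W$, one may instead argue $\vert\Cent_W(c^{h/d})\vert=\prod_{i=1}^n d_i=\vert W\vert$, whence $\Cent_W(c^{h/d})=W$; this route sidesteps the dimension statement but uses the fact that the order of a complex reflection group is the product of its degrees.) I do not anticipate any real obstacle beyond the bookkeeping of the regularity of $c^{h/d}$ noted above.
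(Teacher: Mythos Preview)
Your proof is correct and follows essentially the same approach as the paper: both invoke Springer's theorem on centralisers of regular elements, observe that the hypothesis forces $\Cent_W(c^{h/d})$ to have all the degrees of $W$, and conclude $\Cent_W(c^{h/d})=W$. Your alternative route (comparing orders via $\vert W\vert=\prod d_i$) is precisely what the paper does; your primary route, deducing that $c^{h/d}$ is scalar from $\dim V=n$, is a minor variant that yields the same conclusion with a touch more geometric content, and your explicit verification that $c^{h/d}$ is itself regular fills in a detail the paper leaves implicit.
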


\begin{proof}
By the theorem of Springer cited in the
proof of Lemma~\ref{lem:2}, the subgroup $W'=\Cent_W(c^{h/d})$
is itself a complex reflection group whose degrees are those degrees
of $W$ that are divisible by $d$. Thus, by our assumption, the
degrees of $W'$ coincide with the degrees of $W$, and hence $W'$ must
be equal to $W$.
Phrased differently, each element of $W$ is fixed under conjugation
by $c^{h/d}$, as claimed.
\end{proof}

\begin{lemma} \label{lem:6}
Let $W$ be an irreducible 
well-generated complex reflection group of rank $n$, 
and let $p=m_1h_1$ be a divisor of $mh$, where $m=m_1m_2$ and
$h=h_1h_2$. Without loss of generality, we assume that 
$\gcd(h_1,m_2)=1$. Suppose that
Theorem~{\em\ref{thm:1}} has already been verified for all 
irreducible well-generated
complex reflection groups with rank $<n$. 
If $h_2$ does not divide all degrees $d_i$,
then Equation~\eqref{eq:1} is satisfied.
\end{lemma}

\begin{proof}
Let us write $h_1=am_2+b$, with $0\le b<m_2$. The condition 
$\gcd(h_1,m_2)=1$ translates into $\gcd(b,m_2)=1$.
From \eqref{eq:Aktion}, we infer that
\begin{multline} \label{eq:m2Aktion}
\phi^p\big((w_0;w_1,\dots,w_m)\big)\\=
(*;
c^{a+1}w_{m-m_1b+1}c^{-a-1},c^{a+1}w_{m-m_1b+2}c^{-a-1},
\dots,c^{a+1}w_{m}c^{-a-1},\\
c^aw_{1}c^{-a},\dots,
c^aw_{m-m_1b}c^{-a}\big).
\end{multline}
Supposing that 
$(w_0;w_1,\dots,w_m)$ is fixed by $\phi^p$, we obtain
the system of equations
\begin{align*} 
w_i&=c^{a+1}w_{i+m-m_1b}c^{-a-1}, \quad i=1,2,\dots,m_1b,\\
w_i&=c^aw_{i-m_1b}c^{-a}, \quad i=m_1b+1,m_1b+2,\dots,m,
\end{align*}
which, after iteration, implies in particular that
$$
w_i=c^{b(a+1)+(m_2-b)a}w_ic^{-b(a+1)-(m_2-b)a}=c^{h_1}w_ic^{-h_1},
\quad i=1,2,\dots,m.
$$
It is at this point where we need $\gcd(b,m_2)=1$.
The last equation shows that each $w_i$, $i=1,2,\dots,m$, and thus
also $w_0$, lies in $\Cent_{W}(c^{h_1})$. 
By the theorem of Springer cited in the
proof of Lemma~\ref{lem:2}, this centraliser subgroup 
is itself a complex reflection group, $W'$ say, 
whose degrees are those degrees
of $W$ that are divisible by $h/h_1=h_2$. Since, by assumption, $h_2$
does not divide {\em all\/} degrees, $W'$ has 
rank strictly less than $n$. Again by assumption, we know that
Theorem~\ref{thm:1} is true for $W'$, so that in particular,
$$
\vert\Fix_{NC^m(W')}(\phi^{p})\vert = 
\Cat^m(W';q)\big\vert_{q=e^{2\pi i p/mh}}.
$$
The arguments above together with \eqref{eq:7} show that 
$\Fix_{NC^m(W)}(\phi^{p})=\Fix_{NC^m(W')}(\phi^{p})$.
On the other hand, using \eqref{eq:limit} 
it is straightforward to see that
$$\Cat^m(W;q)\big\vert_{q=e^{2\pi i p/mh}}=
\Cat^m(W';q)\big\vert_{q=e^{2\pi i p/mh}}.$$
This proves \eqref{eq:1} for our particular $p$, as required. 
\end{proof}

\begin{lemma} \label{lem:7}
Let $W$ be an irreducible 
well-generated complex reflection group of rank $n$, 
and let $p=m_1h_1$ be a divisor of $mh$, where $m=m_1m_2$ and
$h=h_1h_2$. We assume that 
$\gcd(h_1,m_2)=1$. If $m_2>n$ then
$$
\Fix_{NC^m(W)}(\phi^{p})=\big\{(c;\ep,\dots,\ep)\big\}.
$$
\end{lemma}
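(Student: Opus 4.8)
The plan is to exploit the explicit description of the $\phi^p$-action from \eqref{eq:m2Aktion} together with the fact already used in Lemmas~\ref{lem:2}, \ref{lem:3} and \ref{lem:6}, namely that being fixed by $\phi^p$ forces every component $w_i$ to lie in a certain centraliser subgroup, which by Springer's theorem is again a well-generated complex reflection group. First I would reproduce the computation in the proof of Lemma~\ref{lem:6}: writing $h_1=am_2+b$ with $0\le b<m_2$ and $\gcd(b,m_2)=1$, a tuple $(w_0;w_1,\dots,w_m)$ fixed by $\phi^p$ satisfies the cyclic system of equations displayed there, and after iteration one gets $w_i=c^{h_1}w_ic^{-h_1}$ for all $i$. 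Hence each $w_i$, and therefore also $w_0$, lies in $W'=\Cent_W(c^{h_1})$, whose degrees are exactly those degrees of $W$ divisible by $h_2$.

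Next I would push the cyclic relations further. The permutation of the indices $1,\dots,m$ induced by $\phi^p$ on the list $(w_1,\dots,w_m)$ breaks into orbits, and because $\gcd(b,m_2)=1$ the orbit structure on the $m_2$ "blocks" of size $m_1$ is a single $m_2$-cycle; concretely, along one orbit the relations read
\begin{equation*}
w_{i}=c^{\epsilon_1}w_{j_1}c^{-\epsilon_1}=c^{\epsilon_1+\epsilon_2}w_{j_2}c^{-\epsilon_1-\epsilon_2}=\cdots
\end{equation*}
with each $\epsilon_t\in\{a,a+1\}$. In particular, within one orbit of length $m_2$ (in terms of blocks) the elements $w_{i}$ are all conjugates of one another by powers of $c$, and summing reflection lengths around the orbit gives $m_2\cdot\ell_T(w_i)\le \sum_{j}\ell_T(w_j)\le \ell_T(c)=n$. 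Since $m_2>n$ by hypothesis, this forces $\ell_T(w_i)=0$, i.e. $w_i=\ep$, for every $i=1,\dots,m$ lying in such an orbit — hence for all $i\ge 1$. Then $w_0=c$ by the defining product relation $w_0w_1\cdots w_m=c$, and conversely $(c;\ep,\dots,\ep)$ is plainly fixed by $\phi$, hence by $\phi^p$. This yields the claimed equality $\Fix_{NC^m(W)}(\phi^p)=\{(c;\ep,\dots,\ep)\}$.

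The step I expect to be the main obstacle is making the orbit-counting argument airtight: one must check that every block-orbit of the index permutation really has length divisible by (a multiple reaching) $m_2$, so that the reflection-length-summation bound $m_2\ell_T(w_i)\le n$ genuinely applies to all components. This is exactly where $\gcd(b,m_2)=1$ is used — it guarantees the block-permutation is an $m_2$-cycle rather than a product of several shorter cycles — and I would want to state the orbit computation carefully, perhaps by noting that the block-index map is $k\mapsto k-b \pmod{m_2}$ (matching the iteration already carried out in Lemma~\ref{lem:6} that produced the exponent $h_1$). Everything else — the passage to $W'$, the reflection-length additivity along $\le_T$-factorisations, and the final identification of the fixed tuple — is routine given the results quoted in the excerpt.
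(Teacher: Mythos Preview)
Your proposal is correct and follows essentially the same approach as the paper: both exploit that the index permutation induced by $\phi^p$ (namely $k\mapsto k-m_1b\pmod m$) has orbits of length exactly $m_2$ thanks to $\gcd(b,m_2)=1$, so any nontrivial $w_j$ forces at least $m_2>n$ nontrivial components, contradicting $\sum_i\ell_T(w_i)=n$. Your first paragraph reproducing the centraliser argument from Lemma~\ref{lem:6} is harmless but unnecessary here --- the paper's proof skips it entirely and goes straight to the orbit-length count, since all that is needed is that conjugates have equal (hence positive) reflection length.
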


\begin{proof}
Let us suppose that $(w_0;w_1,\dots,w_m)\in 
\Fix_{NC^m(W)}(\phi^{p})$ and that there exists a $j\ge1$ such
that $w_j\ne\ep$. By \eqref{eq:m2Aktion}, it then follows
for such a $j$ that
also $w_k\ne\ep$ for all $k\equiv j-lm_1b$~(mod~$m$), where, as before,
$b$ is defined as the unique integer with $h_1=am_2+b$ and
$0\le b<m_2$. Since, by assumption, $\gcd(b,m_2)=1$, there are
exactly $m_2$ such $k$'s which are distinct mod~$m$. 
However, this implies that the sum of the absolute lengths
of the $w_i$'s, $0\le i\le m$, is at least $m_2>n$, a
contradiction to Remark~\ref{rem:0}.(2).
\end{proof}

\begin{remark} \label{rem:1}
(1)
If we put ourselves in the situation of the assumptions of
Lemma~\ref{lem:6}, then we may conclude that equation~\eqref{eq:1} 
only needs to be checked for pairs $(m_2,h_2)$ subject to the 
following restrictions: 
\begin{equation} 
m_2\ge2,\quad 
\gcd(h_1,m_2)=1,\quad
\text{and $h_2$ divides all degrees of $W$}. 
\label{eq:restr}
\end{equation}
Indeed, Lemmas~\ref{lem:2} and \ref{lem:6} 
together
imply that equation~\eqref{eq:1} is always satisfied 
in all other cases.

\smallskip
(2) Still putting ourselves in the situation of Lemma~\ref{lem:6},
if $m_2>n$ and $m_2h_2$ does not divide any of the degrees of $W$,
then equation~\eqref{eq:1} is satisfied. Indeed, Lemma~\ref{lem:7}
says that in this case the left-hand side of \eqref{eq:1} equals
$1$, while a straightforward computation using \eqref{eq:limit}
shows that in this case the right-hand side of \eqref{eq:1}
equals $1$ as well. 

\smallskip
(3) It should be observed that this leaves a finite
number of choices for $m_2$ to consider, whence a finite number of
choices for $(m_1,m_2,h_1,h_2)$. Altogether, there remains a finite
number of choices for $p=h_1m_1$ to be checked.
\end{remark}

\begin{lemma} \label{lem:8}
Let $W$ be an irreducible 
well-generated complex reflection group of rank $n$ 
with the property that $d_i\mid h$ for $i=1,2,\dots,n$.
Then Theorem~{\em\ref{thm:1}} is true for this group $W$.
\end{lemma}

\begin{proof}
By Lemma~\ref{lem:1}, we may restrict ourselves to divisors
$p$ of $mh$. 

Suppose that $e^{2\pi ip/mh}$ is a $d_i$-th root of unity
for some $i$. In other words, $mh/p$ divides $d_i$.
Since $d_i$ is a divisor of $h$ by assumption, 
the integer $mh/p$ also divides $h$. But this is equivalent to
saying that $m$ divides $p$, and equation \eqref{eq:1} holds by 
Lemma~\ref{lem:2}.

Now assume that $mh/p$ does not divide any of the $d_i$'s.
Then, by \eqref{eq:limit}, the right-hand side of \eqref{eq:1}
equals $1$.  On the other hand, $(c;\ep,\dots,\ep)$ is always an
element of $\Fix_{NC^m(W)}(\phi^{p})$. To see that there are no
others, we make appeal to the classification of all irreducible 
well-generated complex reflection groups, which we recalled in
Section~\ref{sec:prel}. Inspection reveals that all groups
satisfying the hypotheses of the lemma have rank $n\le2$.
Except for the groups contained in the infinite series $G(d,1,n)$ 
and $G(e,e,n)$ for which Theorem~\ref{thm:1} has been established in
\cite{KratCG}, these are the groups $G_5,G_6,G_9,G_{10},
G_{14},G_{17},G_{18},G_{21}$. We now discuss these groups case
by case, keeping the notation of Lemma~\ref{lem:6}.
In order to simplify the argument, we note that Lemma~\ref{lem:7}
implies that equation~\eqref{eq:1} holds if $m_2>2$, so that
in the following arguments we always may assume that $m_2=2$.


\smallskip
{\sc Case $G_5$}. The degrees are $6,12$, and
therefore Remark~\ref{rem:1}.(1) implies that equation~\eqref{eq:1}
is always satisfied.

\smallskip
{\sc Case $G_6$}. The degrees are $4,12$, and
therefore, according to Remark~\ref{rem:1}.(1), we need only consider
the case where $h_2=4$ and $m_2=2$, that is, $p=3m/2$. Then \eqref{eq:m2Aktion} becomes
\begin{equation} \label{eq:3m2Aktion}
\phi^p\big((w_0;w_1,\dots,w_m)\big)
=(*;
c^{2}w_{\frac m2+1}c^{-2},
c^{2}w_{\frac m2+2}c^{-2},
\dots,c^{2}w_{m}c^{-2},
cw_{1}c^{-1},\dots,
cw_{\frac m2}c^{-1}\big).
\end{equation}
If $(w_0;w_1,\dots,w_m)$ is fixed by $\phi^p$ and not
equal to $(c;\ep,\dots,\ep)$, there must exist an $i$
with $1\le i\le \frac m2$ such that 
$\ell_T(w_i)=\ell_T(w_{\frac {m} {2}+i})=1$,
$w_{\frac {m} {2}+i}=cw_ic^{-1}$,
$w_iw_{\frac {m} {2}+i}=w_icw_{i}c^{-1}=c$, and all $w_j$, 
with $j\ne i,\frac m2+i$,
equal $\ep$. However, 
with the help of the {\sl GAP} package {\tt CHEVIE}
\cite{chevAA,MichAA}, one verifies that there is no $w_i$
in $G_6$ such that
$$
\ell_T(w_i)=1\quad \text{and}\quad 
w_icw_{i}c^{-1}=c
$$
are simultaneously satisfied. Hence,
the left-hand side of \eqref{eq:1} is equal to $1$, as required.

\smallskip
{\sc Case $G_9$}. The degrees are $8,24$, and
therefore, according to Remark~\ref{rem:1}.(1), we need only consider
the case where $h_2=8$ and $m_2=2$, that is, $p=3m/2$. 
This is the same $p$ as for $G_6$. Again, 
{\tt CHEVIE}
finds no solution. Hence,
the left-hand side of \eqref{eq:1} is equal to $1$, as required.

\smallskip
{\sc Case $G_{10}$}. The degrees are $12,24$, and
therefore Remark~\ref{rem:1}.(1) implies that equation~\eqref{eq:1}
is always satisfied.

\smallskip
{\sc Case $G_{14}$}. The degrees are $6,24$, and
therefore Remark~\ref{rem:1}.(1) implies that equation~\eqref{eq:1}
is always satisfied.

\smallskip
{\sc Case $G_{17}$}. The degrees are $20,60$, and
therefore, according to Remark~\ref{rem:1}.(1), we need only consider
the cases where $h_2=20$ or $h_2=4$. 
In the first case, $p=3m/2$, which is
the same $p$ as for $G_6$. Again, 
{\tt CHEVIE} finds no solution. 
In the second case, $p=15m/2$. Then \eqref{eq:m2Aktion} becomes
\begin{multline} \label{eq:15m2Aktion}
\phi^p\big((w_0;w_1,\dots,w_m)\big)\\
=(*;
c^{8}w_{\frac m2+1}c^{-8},
c^{8}w_{\frac m2+2}c^{-8},
\dots,c^{8}w_{m}c^{-8},
c^7w_{1}c^{-7},\dots,
c^7w_{\frac m2}c^{-7}\big).
\end{multline}
By Lemma~\ref{lem:4}, every element of $NC(W)$ is fixed under 
conjugation by $c^3$, and, thus, on elements fixed by $\phi^p$,
the above action of $\phi^p$
reduces to the one in \eqref{eq:3m2Aktion}. This action was already
discussed in the first case.
Hence, in both cases,
the left-hand side of \eqref{eq:1} is equal to $1$, as required.

\smallskip
{\sc Case $G_{18}$}. The degrees are $30,60$, and
therefore Remark~\ref{rem:1}.(1) implies that equation~\eqref{eq:1}
is always satisfied.

\smallskip
{\sc Case $G_{21}$}. The degrees are $12,60$, and
therefore, according to Remark~\ref{rem:1}.(1), we need only consider
the cases where $h_2=12$ or $h_2=4$. 
In the first case, $p=5m/2$, 
so that \eqref{eq:m2Aktion} becomes
\begin{multline} \label{eq:5m2Aktion}
\phi^p\big((w_0;w_1,\dots,w_m)\big)\\
=(*;
c^{3}w_{\frac m2+1}c^{-3},
c^{3}w_{\frac m2+2}c^{-3},
\dots,c^{3}w_{m}c^{-3},
c^2w_{1}c^{-2},\dots,
c^2w_{\frac m2}c^{-2}\big).
\end{multline}
If $(w_0;w_1,\dots,w_m)$ is fixed by $\phi^p$ and not
equal to $(c;\ep,\dots,\ep)$, there must exist an $i$
with $1\le i\le \frac m2$ such that $\ell_T(w_i)=1$ and
$w_ic^2w_{i}c^{-2}=c$. However, 
with the help of the {\sl GAP} package {\tt CHEVIE}
\cite{chevAA,MichAA}, one verifies that there is no such
solution to this equation. 
In the second case, $p=15m/2$. Then \eqref{eq:m2Aktion} becomes
the action in \eqref{eq:15m2Aktion}.
By Lemma~\ref{lem:4}, every element of $NC(W)$ is fixed under 
conjugation by $c^5$, and, thus, on elements fixed by $\phi^p$,
the action of $\phi^p$ in
\eqref{eq:15m2Aktion}
reduces to the one in the first case. 
Hence, in both cases,
the left-hand side of \eqref{eq:1} is equal to $1$, as required.

\smallskip
This completes the proof of the lemma.
\end{proof}

\section{Case-by-case verification of Theorem~\ref{thm:1}}
\label{sec:Beweis1}

In the sequel we write $\zeta_d$ for a primitive $d$-th root of 
unity.

\subsection*{\sc Case $G_4$}
The degrees are $4,6$, and hence we have
$$
\Cat^m(G_4;q)=\frac 
{[6m+6]_q\, [6m+4]_q} 
{[6]_q\, [4]_q} .
$$
Let $\zeta$ be a $6m$-th root of unity. 
In what follows, 
we abbreviate the assertion that ``$\zeta$ is a primitive $d$-th root of
unity" as ``$\zeta=\zeta_d$."
The following cases on the right-hand side of \eqref{eq:1}
occur:
{\refstepcounter{equation}\label{eq:G4}}
\alphaeqn
\begin{align} 
\label{eq:G4.2}
\lim_{q\to\zeta}\Cat^m(G_4;q)&=m+1,
\quad\text{if }\zeta=\zeta_6,\zeta_3,\\
\label{eq:G4.3}
\lim_{q\to\zeta}\Cat^m(G_4;q)&=\tfrac {3m+2}2,
\quad\text{if }\zeta=\zeta_4,\ 2\mid m,\\
\label{eq:G4.4}
\lim_{q\to\zeta}\Cat^m(G_4;q)&=\Cat^m(G_4),
\quad\text{if }\zeta=-1\text{ or }\zeta=1,\\
\label{eq:G4.1}
\lim_{q\to\zeta}\Cat^m(G_4;q)&=1,
\quad\text{otherwise.}
\end{align}
\reseteqn

We must now prove that the left-hand side of \eqref{eq:1} in
each case agrees with the values exhibited in 
\eqref{eq:G4}. The only cases not covered by
Lemmas~\ref{lem:2} and \ref{lem:3} are the ones in \eqref{eq:G4.3}
and \eqref{eq:G4.1}. On the other hand, the only case left
to consider according to Remark~\ref{rem:1} is
the case where $h_2=m_2=2$, that is the case \eqref{eq:G4.3} where
$p=3m/2$. In particular, $m$ must be divisible by $2$.
The action of $\phi^p$ is the same as the one in 
\eqref{eq:3m2Aktion}. 
With the help of {\tt CHEVIE}, one finds that each of the
$3$ (complex) reflections in $G_4$ which are less than the (chosen) 
Coxeter element is a valid choice for $w_i$,
and each of these choices gives rise to $m/2$ elements in
$NC^m(G_4)$ since the index $i$ ranges from $1$ to $m/2$. 

Hence, in total, we obtain 
$1+3\frac m2=\frac {3m+2}2$ elements in
$\Fix_{NC^m(G_4)}(\phi^p)$, which agrees with the limit in
\eqref{eq:G4.3}.

\subsection*{\sc Case $G_8$}
The degrees are $8,12$, and hence we have
$$
\Cat^m(G_8;q)=\frac 
{[12m+12]_q\, [12m+8]_q} 
{[12]_q\, [8]_q} .
$$
Let $\zeta$ be a $12m$-th root of unity. 
The following cases on the right-hand side of \eqref{eq:1}
occur:
{\refstepcounter{equation}\label{eq:G8}}
\alphaeqn
\begin{align} 
\label{eq:G8.2}
\lim_{q\to\zeta}\Cat^m(G_8;q)&=m+1,
\quad\text{if }\zeta=\zeta_{12},\zeta_6,\zeta_3,\\
\label{eq:G8.3}
\lim_{q\to\zeta}\Cat^m(G_8;q)&=\tfrac {3m+2}2,
\quad\text{if }\zeta=\zeta_8,\ 2\mid m,\\
\label{eq:G8.4}
\lim_{q\to\zeta}\Cat^m(G_8;q)&=\Cat^m(G_8),
\quad\text{if }\zeta=\zeta_4,-1,1,\\
\label{eq:G8.1}
\lim_{q\to\zeta}\Cat^m(G_8;q)&=1,
\quad\text{otherwise.}
\end{align}
\reseteqn

We must now prove that the left-hand side of \eqref{eq:1} in
each case agrees with the values exhibited in 
\eqref{eq:G8}. The only cases not covered by
Lemmas~\ref{lem:2} and \ref{lem:3} are the ones in \eqref{eq:G8.3}
and \eqref{eq:G8.1}. On the other hand, the only case left
to consider according to Remark~\ref{rem:1} is
the case where $h_2=4$ and $m_2=2$, that is the case \eqref{eq:G8.3} where
$p=3m/2$. In particular, $m$ must be divisible by $2$.
The action of $\phi^p$ is the same as the one in 
\eqref{eq:3m2Aktion}. 
With the help of {\tt CHEVIE}, one finds that each of the
$3$ (complex) reflections in $G_8$ which are less than the (chosen) 
Coxeter element is a valid choice for $w_i$,
and each of these choices gives rise to $m/2$ elements in
$NC^m(G_8)$ since the index $i$ ranges from $1$ to $m/2$. 

Hence, in total, we obtain 
$1+3\frac m2=\frac {3m+2}2$ elements in
$\Fix_{NC^m(G_8)}(\phi^p)$, which agrees with the limit in
\eqref{eq:G8.3}.

\subsection*{\sc Case $G_{16}$}
The degrees are $20,30$, and hence we have
$$
\Cat^m(G_{16};q)=\frac 
{[30m+30]_q\, [30m+20]_q} 
{[30]_q\, [20]_q} .
$$
Let $\zeta$ be a $30m$-th root of unity. 
The following cases on the right-hand side of \eqref{eq:1}
occur:
{\refstepcounter{equation}\label{eq:G16}}
\alphaeqn
\begin{align} 
\label{eq:G16.2}
\lim_{q\to\zeta}\Cat^m(G_{16};q)&=m+1,
\quad\text{if }\zeta=\zeta_{30},\zeta_{15},\zeta_6,\zeta_3,\\
\label{eq:G16.3}
\lim_{q\to\zeta}\Cat^m(G_{16};q)&=\tfrac {3m+2}2,
\quad\text{if }\zeta=\zeta_{20},\zeta_4,\ 2\mid m,\\
\label{eq:G16.4}
\lim_{q\to\zeta}\Cat^m(G_{16};q)&=\Cat^m(G_{16}),
\quad\text{if }\zeta=\zeta_{10},\zeta_5,-1,1,\\
\label{eq:G16.1}
\lim_{q\to\zeta}\Cat^m(G_{16};q)&=1,
\quad\text{otherwise.}
\end{align}
\reseteqn

We must now prove that the left-hand side of \eqref{eq:1} in
each case agrees with the values exhibited in 
\eqref{eq:G16}. The only cases not covered by
Lemmas~\ref{lem:2} and \ref{lem:3} are the ones in \eqref{eq:G16.3}
and \eqref{eq:G16.1}. On the other hand, the only cases left
to consider according to Remark~\ref{rem:1} are
the cases where $h_2=10$ and $m_2=2$, respectively $h_2=m_2=2$.
Both cases belong to \eqref{eq:G16.3}. In the first case,
we have $p=3m/2$, while in the second case we have $p=15m/2$. 
In particular, $m$ must be divisible by $2$. In the first case,
the action of $\phi^p$ is the same as the one in 
\eqref{eq:3m2Aktion}. 
With the help of {\tt CHEVIE}, one finds that each of the
$3$ (complex) reflections in $G_{16}$ which are less than the (chosen) 
Coxeter element is a valid choice for $w_i$,
and each of these choices gives rise to $m/2$ elements in
$NC^m(G_{16})$ since the index $i$ ranges from $1$ to $m/2$. 
On the other hand, if $p=15m/2$, then the action of $\phi^p$ is the same as the one in \eqref{eq:15m2Aktion}. 
By Lemma~\ref{lem:4}, every element of $NC(W)$ is fixed under 
conjugation by $c^3$, and, thus, on elements fixed by $\phi^p$,
the action of $\phi^p$
reduces to the one in the first case. 

Hence, in total, we obtain 
$1+3\frac m2=\frac {3m+2}2$ elements in
$\Fix_{NC^m(G_{16})}(\phi^p)$, which agrees with the limit in
\eqref{eq:G16.3}.

\subsection*{\sc Case $G_{20}$}
The degrees are $12,30$, and hence we have
$$
\Cat^m(G_{20};q)=\frac 
{[30m+30]_q\, [30m+12]_q} 
{[30]_q\, [12]_q} .
$$
Let $\zeta$ be a $30m$-th root of unity. 
The following cases on the right-hand side of \eqref{eq:1}
occur:
{\refstepcounter{equation}\label{eq:G20}}
\alphaeqn
\begin{align} 
\label{eq:G20.2}
\lim_{q\to\zeta}\Cat^m(G_{20};q)&=m+1,
\quad\text{if }\zeta=\zeta_{30},\zeta_{15},\zeta_{10},\zeta_5,\\
\label{eq:G20.3}
\lim_{q\to\zeta}\Cat^m(G_{20};q)&=\tfrac {5m+2}2,
\quad\text{if }\zeta=\zeta_{12},\zeta_4,\ 2\mid m,\\
\label{eq:G20.4}
\lim_{q\to\zeta}\Cat^m(G_{20};q)&=\Cat^m(G_{20}),
\quad\text{if }\zeta=\zeta_6,\zeta_3,-1,1,\\
\label{eq:G20.1}
\lim_{q\to\zeta}\Cat^m(G_{20};q)&=1,
\quad\text{otherwise.}
\end{align}
\reseteqn

We must now prove that the left-hand side of \eqref{eq:1} in
each case agrees with the values exhibited in 
\eqref{eq:G20}. The only cases not covered by
Lemmas~\ref{lem:2} and \ref{lem:3} are the ones in \eqref{eq:G20.3}
and \eqref{eq:G20.1}. On the other hand, the only cases left
to consider according to Remark~\ref{rem:1} are
the cases where $h_2=6$ and $m_2=2$, respectively $h_2=m_2=2$.
Both cases belong to \eqref{eq:G20.3}. In the first case,
we have $p=5m/2$, while in the second case we have $p=15m/2$. 
In particular, $m$ must be divisible by $2$. In the first case,
the action of $\phi^p$ is the same as the one in 
\eqref{eq:5m2Aktion}. 
With the help of {\tt CHEVIE}, one finds that each of the
$5$ (complex) reflections in $G_{20}$ which are less than the (chosen) 
Coxeter element is a valid choice for $w_i$,
and each of these choices gives rise to $m/2$ elements in
$NC^m(G_{20})$ since the index $i$ ranges from $1$ to $m/2$. 
On the other hand, if $p=15m/2$, then the action of $\phi^p$ is the same as the one in \eqref{eq:15m2Aktion}. 
By Lemma~\ref{lem:4}, every element of $NC(W)$ is fixed under 
conjugation by $c^5$, and, thus, on elements fixed by $\phi^p$,
the action of $\phi^p$
reduces to the one in the first case. 

Hence, in total, we obtain 
$1+5\frac m2=\frac {5m+2}2$ elements in
$\Fix_{NC^m(G_{20})}(\phi^p)$, which agrees with the limit in
\eqref{eq:G20.3}.

\subsection*{\sc Case $G_{23}=H_3$}
The degrees are $2,6,10$, and hence we have
$$
\Cat^m(H_3;q)=\frac 
{[10m+10]_q\, [10m+6]_q\, [10m+2]_q} 
{[10]_q\, [6]_q\, [2]_q} .
$$
Let $\zeta$ be a $10m$-th root of unity. 
The following cases on the right-hand side of \eqref{eq:1}
occur:
{\refstepcounter{equation}\label{eq:H3}}
\alphaeqn
\begin{align} 
\label{eq:H3.2}
\lim_{q\to\zeta}\Cat^m(H_3;q)&=m+1,
\quad\text{if }\zeta=\zeta_{10},\zeta_5,\\
\label{eq:H3.3}
\lim_{q\to\zeta}\Cat^m(H_3;q)&=\tfrac {10m+6}6,
\quad\text{if }\zeta= \zeta_{6},\zeta_3,\ 3\mid m,\\
\label{eq:H3.4}
\lim_{q\to\zeta}\Cat^m(H_3;q)&=\Cat^m(H_3),
\quad\text{if }\zeta=-1\text{ or }\zeta=1,\\
\label{eq:H3.1}
\lim_{q\to\zeta}\Cat^m(H_3;q)&=1,
\quad\text{otherwise.}
\end{align}
\reseteqn

We must now prove that the left-hand side of \eqref{eq:1} in
each case agrees with the values exhibited in 
\eqref{eq:H3}. The only cases not covered by
Lemmas~\ref{lem:2} and \ref{lem:3} are the ones 
in \eqref{eq:H3.3} and \eqref{eq:H3.1}.
By Lemma~\ref{lem:1}, we are free to choose $p=5m/3$ if 
$\zeta=\zeta_6$, respectively $p=10m/3$ if 
$\zeta=\zeta_3$. In both cases, $m$ must be divisible by $3$.

We start with the case that $p=5m/3$.
From \eqref{eq:Aktion}, we infer
\begin{equation} \label{eq:5m3Aktion}
\phi^p\big((w_0;w_1,\dots,w_m)\big)=
(*;
c^{2}w_{\frac m3+1}c^{-2},c^{2}w_{\frac m3+2}c^{-2},
\dots,c^{2}w_{m}c^{-2},
cw_{1}c^{-1},\dots,
cw_{\frac m3}c^{-1}\big).
\end{equation}
Supposing that 
$(w_0;w_1,\dots,w_m)$ is fixed by $\phi^p$, we obtain
the system of equations
{\refstepcounter{equation}\label{eq:H3A}}
\alphaeqn
\begin{align} \label{eq:H3Aa}
w_i&=c^2w_{\frac m3+i}c^{-2}, \quad i=1,2,\dots,\tfrac {2m}3,\\
w_i&=cw_{i-\frac {2m}3}c^{-1}, \quad i=\tfrac {2m}3+1,\tfrac {2m}3+2,\dots,m.
\label{eq:H3Ab}
\end{align}
\reseteqn
There are two distinct possibilities for choosing
the $w_i$'s, $1\le i\le m$: 
either all the $w_i$'s are equal to $\ep$, or
there is an $i$ with $1\le i\le \frac m3$ such that
$$\ell_T(w_i)=\ell_T(w_{i+\frac m3})=\ell_T(w_{i+\frac {2m}3})=1.$$
Writing $t_1,t_2,t_3$ for $w_i,w_{i+\frac m3},w_{i+\frac {2m}3}$, 
respectively, the equations \eqref{eq:H3A} 
reduce to
{\refstepcounter{equation}\label{eq:H3B}}
\alphaeqn
\begin{align} \label{eq:H3Ba}
t_1&=c^2t_2c^{-2},\\
\label{eq:H3Bb}
t_2&=c^2t_3c^{-2},\\
t_3&=ct_1c^{-1}.
\label{eq:H3Bc}
\end{align}
\reseteqn
One of these equations is in fact superfluous: if we substitute
\eqref{eq:H3Bb} and \eqref{eq:H3Bc} in \eqref{eq:H3Ba}, then
we obtain $t_1=c^5t_1c^{-5}$ which is automatically satisfied due
to Lemma~\ref{lem:4} with $d=2$.

Since $(w_0;w_1,\dots,w_m)\in NC^m(H_3)$, we must have $t_1t_2t_3=c$.
Combining this with \eqref{eq:H3B}, we infer that
\begin{equation} \label{eq:H3D}
t_1(c^{-2}t_1c^2)(ct_1c^{-1})=c.
\end{equation}
With the help of Stembridge's {\sl Maple} package {\tt coxeter}
\cite{StemAZ}, one obtains five solutions for $t_1$ in this equation:
{\small
\begin{equation} \label{eq:H3sol1}
t_1\in\big\{[2],\,[3],\,[2,1,2],\,
[1,2,3,2,1],\,[1,3,2,1,2,1,3]\big\}.
\end{equation}}%
Here we have used the short notation of {\tt coxeter}:
if $\{s_1,s_2,s_3\}$ is a simple system of generators of $H_3$,
corresponding to the Dynkin diagram displayed in Figure~\ref{fig:H3},
then $[j_1,j_2,\dots,j_k]$ stands for the element
$s_{j_1}s_{j_2}\dots s_{j_k}$.

\begin{figure}[h]
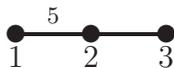

$$
\Einheit1cm
\Pfad(0,0),11\endPfad
\DickPunkt(0,0)
\DickPunkt(1,0)
\DickPunkt(2,0)
\Label\u{1}(0,0)
\Label\u{2}(1,0)
\Label\u{3}(2,0)
\Label\ro{\raise-20pt\hbox{\scriptsize 5}}(0,0)
\hskip2cm
$$ 
\caption{\scriptsize The Dynkin diagram for $H_3$}
\label{fig:H3}
\end{figure}

We claim that each of the above five solutions gives rise to 
$m/3$ elements of\break 
$\Fix_{NC^m(H_3)}(\phi^{p})$. Indeed, given $t_1$, the elements 
$t_2$ and $t_3$ can be computed by \eqref{eq:H3Ba} and
\eqref{eq:H3Bc}, and there are $m/3$ possibilities to choose the
index $i$ for $w_i$. 

In total, we obtain  
$1+5\frac m3=\frac {10m+6}6$ elements in
$\Fix_{NC^m(H_3)}(\phi^p)$, which agrees with the limit in
\eqref{eq:H3.3}.

In the case that $p=10m/3$, we infer from \eqref{eq:Aktion} that
\begin{multline} \label{eq:10m3Aktion}
\phi^p\big((w_0;w_1,\dots,w_m)\big)\\=
(*;
c^{4}w_{\frac {2m}3+1}c^{-4},c^{4}w_{\frac {2m}3+2}c^{-4},
\dots,c^{4}w_{m}c^{-4},
c^3w_{1}c^{-3},\dots,
c^3w_{\frac {2m}3}c^{-3}\big).
\end{multline}
Supposing that 
$(w_0;w_1,\dots,w_m)$ is fixed by $\phi^p$, we obtain
the system of equations
{\refstepcounter{equation}\label{eq:H3'A}}
\alphaeqn
\begin{align} \label{eq:H3'Aa}
w_i&=c^4w_{\frac {2m}3+i}c^{-4}, \quad i=1,2,\dots,\tfrac {m}3,\\
w_i&=c^3w_{i-\frac {m}3}c^{-3}, \quad i=\tfrac {m}3+1,\tfrac {m}3+2,\dots,m.
\label{eq:H3'Ab}
\end{align}
\reseteqn
There are two distinct possibilities for choosing
the $w_i$'s, $1\le i\le m$: 
either all the $w_i$'s are equal to $\ep$, or
there is an $i$ with $1\le i\le \frac m3$ such that
$$\ell_T(w_i)=\ell_T(w_{i+\frac m3})=\ell_T(w_{i+\frac {2m}3})=1.$$
Writing $t_1,t_2,t_3$ for $w_i,w_{i+\frac m3},w_{i+\frac {2m}3}$, 
respectively, the equations \eqref{eq:H3'A} 
reduce to
{\refstepcounter{equation}\label{eq:H3'B}}
\alphaeqn
\begin{align} \label{eq:H3'Ba}
t_1&=c^4t_3c^{-4},\\
\label{eq:H3'Bb}
t_2&=c^3t_1c^{-3},\\
t_3&=c^3t_2c^{-3}.
\label{eq:H3'Bc}
\end{align}
\reseteqn
One of these equations is in fact superfluous: if we substitute
\eqref{eq:H3'Bb} and \eqref{eq:H3'Bc} in \eqref{eq:H3'Ba}, then
we obtain $t_1=c^{10}t_1c^{-10}$ which is automatically satisfied 
since $c^{10}=\ep$.

Since $(w_0;w_1,\dots,w_m)\in NC^m(H_3)$, we must have $t_1t_2t_3=c$.
Combining this with \eqref{eq:H3'B}, we infer that
\begin{equation} \label{eq:H3'D}
t_1(c^{3}t_1c^{-3})(c^{-4}t_1c^{4})=c.
\end{equation}
Using that $c^5t_1c^{-5}=t_1$, due to Lemma~\ref{lem:4}
with $d=2$, we see that
this equation is equivalent with \eqref{eq:H3D}. 
Therefore, we are facing exactly the same enumeration 
problem here as for
$p=5m/3$, and, consequently, the number of solutions to \eqref{eq:H3'D} is the same, namely
$\frac {5m+3}3$, as required.

\smallskip
Finally, we turn to \eqref{eq:H3.1}. By Remark~\ref{rem:1},
the only choices for $h_2$ and $m_2$ to be considered
are $h_2=1$ and $m_2=3$, $h_2=m_2=2$, respectively $h_2=2$
and $m_2=3$. These correspond to the choices $p=10m/3$,
$p=5m/2$, respectively $p=5m/3$, out of which only $p=5m/2$
has not yet been discussed and belongs to the current case.
The corresponding action of $\phi^p$ is given by 
\eqref{eq:5m2Aktion}.
A computation with Stembridge's {\sl Maple} package {\tt coxeter}
\cite{StemAZ} finds no solution. 
Hence, 
the left-hand side of \eqref{eq:1} is equal to $1$, as required.

\subsection*{\sc Case $G_{24}$}
The degrees are $4,6,14$, and hence we have
$$
\Cat^m(G_{24};q)=\frac 
{[14m+14]_q\, [14m+6]_q\, [14m+4]_q} 
{[14]_q\, [6]_q\, [4]_q} .
$$
Let $\zeta$ be a $14m$-th root of unity. 
The following cases on the right-hand side of \eqref{eq:1}
occur:
{\refstepcounter{equation}\label{eq:G24}}
\alphaeqn
\begin{align} 
\label{eq:G24.2}
\lim_{q\to\zeta}\Cat^m(G_{24};q)&=m+1,
\quad\text{if }\zeta=\zeta_{14},\zeta_7,\\
\label{eq:G24.3}
\lim_{q\to\zeta}\Cat^m(G_{24};q)&=\tfrac {7m+3}3,
\quad\text{if }\zeta=\zeta_{6},\zeta_3,\ 3\mid m,\\
\label{eq:G24.4}
\lim_{q\to\zeta}\Cat^m(G_{24};q)&=\tfrac {7m+2}2,
\quad\text{if }\zeta=\zeta_4,\ 2\mid m,\\
\label{eq:G24.5}
\lim_{q\to\zeta}\Cat^m(G_{24};q)&=\Cat^m(G_{24}),
\quad\text{if }\zeta=-1\text{ or }\zeta=1,\\
\label{eq:G24.1}
\lim_{q\to\zeta}\Cat^m(G_{24};q)&=1,
\quad\text{otherwise.}
\end{align}
\reseteqn

We must now prove that the left-hand side of \eqref{eq:1} in
each case agrees with the values exhibited in 
\eqref{eq:G24}. The only cases not covered by
Lemmas~\ref{lem:2} and \ref{lem:3} are the ones in \eqref{eq:G24.3},
\eqref{eq:G24.4},
and \eqref{eq:G24.1}. 

We first consider \eqref{eq:G24.3}. 
By Lemma~\ref{lem:1}, we are free to choose $p=7m/3$ if 
$\zeta=\zeta_6$, respectively $p=14m/3$ if 
$\zeta=\zeta_3$. In both cases, $m$ must be divisible by $3$.

We start with the case that $p=7m/3$.
From \eqref{eq:Aktion}, we infer
\begin{multline*}
\phi^p\big((w_0;w_1,\dots,w_m)\big)\\=
(*;
c^{3}w_{\frac {2m}3+1}c^{-3},c^{3}w_{\frac {2m}3+2}c^{-3},
\dots,c^{3}w_{m}c^{-3},
c^2w_{1}c^{-2},\dots,
c^2w_{\frac {2m}3}c^{-2}\big).
\end{multline*}
Supposing that 
$(w_0;w_1,\dots,w_m)$ is fixed by $\phi^p$, we obtain
the system of equations
{\refstepcounter{equation}\label{eq:G24A}}
\alphaeqn
\begin{align} \label{eq:G24Aa}
w_i&=c^3w_{\frac {2m}3+i}c^{-3}, \quad i=1,2,\dots,\tfrac {m}3,\\
w_i&=c^2w_{i-\frac {m}3}c^{-2}, \quad i=\tfrac {m}3+1,\tfrac {m}3+2,\dots,m.
\label{eq:G24Ab}
\end{align}
\reseteqn
There are two distinct possibilities for choosing
the $w_i$'s, $1\le i\le m$: 
either all the $w_i$'s are equal to $\ep$, or
there is an $i$ with $1\le i\le \frac m3$ such that
$$\ell_T(w_i)=\ell_T(w_{i+\frac m3})=\ell_T(w_{i+\frac {2m}3})=1.$$
Writing $t_1,t_2,t_3$ for $w_i,w_{i+\frac m3},w_{i+\frac {2m}3}$, 
respectively, the equations \eqref{eq:G24A} 
reduce to
{\refstepcounter{equation}\label{eq:G24B}}
\alphaeqn
\begin{align} \label{eq:G24Ba}
t_1&=c^3t_3c^{-3},\\
\label{eq:G24Bb}
t_2&=c^2t_1c^{-2},\\
t_3&=c^2t_2c^{-2}.
\label{eq:G24Bc}
\end{align}
\reseteqn
One of these equations is in fact superfluous: if we substitute
\eqref{eq:G24Bb} and \eqref{eq:G24Bc} in \eqref{eq:G24Ba}, then
we obtain $t_1=c^7t_1c^{-7}$ which is automatically satisfied due
to Lemma~\ref{lem:4} with $d=2$.

Since $(w_0;w_1,\dots,w_m)\in NC^m(G_{24})$, we must have $t_1t_2t_3=c$.
Combining this with \eqref{eq:G24B}, we infer that
\begin{equation} \label{eq:G24D}
t_1(c^{2}t_1c^{-2})(c^4t_1c^{-4})=c.
\end{equation}
With the help of {\tt CHEVIE}, 
one obtains 7 solutions for $t_1$ in this equation:
{\small
\begin{equation} \label{eq:G24sol1}
t_1\in\big\{[ 1 ],\,
[ 2 ],\,
[ 3 ],\,
[ 15 ],\,
[ 16 ],\,
[ 19 ],\,
[ 21 ]\big\},
\end{equation}}%
each of them giving rise to $m/3$ elements of
$\Fix_{NC^m(G_{24})}(\phi^{p})$ since $i$ ranges from $1$ to $m/3$.
Here we have used the short notation of {\tt CHEVIE}:
$[j_1,j_2,\dots,j_k]$ stands for the element
$r_{j_1}r_{j_2}\dots r_{j_k}$, where $r_i$ is the $i$-th (complex)
reflection corresponding to the $i$-th root in the internal
ordering of the roots of $G_{24}$ in {\tt CHEVIE}.

In total, we obtain  
$1+7\frac m3=\frac {7m+3}3$ elements in
$\Fix_{NC^m(G_{24})}(\phi^p)$, which agrees with the limit in
\eqref{eq:G24.3}.

In the case that $p=14m/3$, we infer from \eqref{eq:Aktion} that
\begin{multline*}
\phi^p\big((w_0;w_1,\dots,w_m)\big)\\=
(*;
c^{5}w_{\frac {m}3+1}c^{-5},c^{5}w_{\frac {m}3+2}c^{-5},
\dots,c^{5}w_{m}c^{-5},
c^4w_{1}c^{-4},\dots,
c^4w_{\frac {m}3}c^{-4}\big).
\end{multline*}
Supposing that 
$(w_0;w_1,\dots,w_m)$ is fixed by $\phi^p$, we obtain
the system of equations
{\refstepcounter{equation}\label{eq:G24'A}}
\alphaeqn
\begin{align} \label{eq:G24'Aa}
w_i&=c^5w_{\frac {m}3+i}c^{-5}, \quad i=1,2,\dots,\tfrac {2m}3,\\
w_i&=c^4w_{i-\frac {2m}3}c^{-4}, \quad i=\tfrac {2m}3+1,\tfrac {2m}3+2,\dots,m.
\label{eq:G24'Ab}
\end{align}
\reseteqn
There are two distinct possibilities for choosing
the $w_i$'s, $1\le i\le m$: 
either all the $w_i$'s are equal to $\ep$, or
there is an $i$ with $1\le i\le \frac m3$ such that
$$\ell_T(w_i)=\ell_T(w_{i+\frac m3})=\ell_T(w_{i+\frac {2m}3})=1.$$
Writing $t_1,t_2,t_3$ for $w_i,w_{i+\frac m3},w_{i+\frac {2m}3}$, 
respectively, the equations \eqref{eq:G24'A} 
reduce to
{\refstepcounter{equation}\label{eq:G24'B}}
\alphaeqn
\begin{align} \label{eq:G24'Ba}
t_1&=c^5t_2c^{-5},\\
\label{eq:G24'Bb}
t_2&=c^5t_3c^{-5},\\
t_3&=c^4t_1c^{-4}.
\label{eq:G24'Bc}
\end{align}
\reseteqn
One of these equations is in fact superfluous: if we substitute
\eqref{eq:G24'Bb} and \eqref{eq:G24'Bc} in \eqref{eq:G24'Ba}, then
we obtain $t_1=c^{14}t_1c^{-14}$ which is automatically satisfied 
since $c^{14}=\ep$.

Since $(w_0;w_1,\dots,w_m)\in NC^m(G_{24})$, we must have $t_1t_2t_3=c$.
Combining this with \eqref{eq:G24'B}, we infer that
\begin{equation} \label{eq:G24'D}
t_1(c^{9}t_1c^{-9})(c^{-4}t_1c^{4})=c.
\end{equation}
Using that $c^7t_1c^{-7}=t_1$, due to Lemma~\ref{lem:4}
with $d=2$, we see that
this equation is equivalent with \eqref{eq:G24D}. 
Therefore, we are facing exactly the same enumeration 
problem here as for
$p=7m/3$, and, consequently, the number of solutions to \eqref{eq:G24'D} is the same, namely
$\frac {7m+3}3$, as required.

\smallskip
Our next case is \eqref{eq:G24.4}. 
By Lemma~\ref{lem:1}, we are free to choose $p=7m/2$. In particular, 
$m$ must be divisible by $2$.
From \eqref{eq:Aktion}, we infer
\begin{multline*}
\phi^p\big((w_0;w_1,\dots,w_m)\big)\\=
(*;
c^{4}w_{\frac {m}2+1}c^{-4},c^{4}w_{\frac {m}2+2}c^{-4},
\dots,c^{4}w_{m}c^{-4},
c^3w_{1}c^{-3},\dots,
c^3w_{\frac {m}2}c^{-3}\big).
\end{multline*}
Supposing that 
$(w_0;w_1,\dots,w_m)$ is fixed by $\phi^p$, we obtain
the system of equations
{\refstepcounter{equation}\label{eq:G24''A}}
\alphaeqn
\begin{align} \label{eq:G24''Aa}
w_i&=c^4w_{\frac {m}2+i}c^{-4}, \quad i=1,2,\dots,\tfrac {m}2,\\
w_i&=c^3w_{i-\frac {m}2}c^{-3}, \quad i=\tfrac {m}2+1,\tfrac {m}2+2,\dots,m.
\label{eq:G24''Ab}
\end{align}
\reseteqn
There are two distinct possibilities for choosing
the $w_i$'s, $1\le i\le m$: 
either all the $w_i$'s are equal to $\ep$, or
there is an $i$ with $1\le i\le \frac m2$ such that
$$\ell_T(w_i)=\ell_T(w_{i+\frac m2})=1.$$
Writing $t_1,t_2$ for $w_i,w_{i+\frac m2}$, 
respectively, the equations \eqref{eq:G24''A} 
reduce to
{\refstepcounter{equation}\label{eq:G24''B}}
\alphaeqn
\begin{align} \label{eq:G24''Ba}
t_1&=c^4t_2c^{-4},\\
\label{eq:G24''Bb}
t_2&=c^3t_1c^{-3}.
\end{align}
\reseteqn
One of these equations is in fact superfluous: if we substitute
\eqref{eq:G24''Bb} in \eqref{eq:G24''Ba}, then
we obtain $t_1=c^7t_1c^{-7}$ which is automatically satisfied due
to Lemma~\ref{lem:4} with $d=2$.

Since $(w_0;w_1,\dots,w_m)\in NC^m(G_{24})$, we must have $t_1t_2\le_T c$,
where $\le_T$ is the partial order defined in \eqref{eq:absord}.
Combining this with \eqref{eq:G24''B}, we infer that
\begin{equation} \label{eq:G24''D}
t_1(c^{3}t_1c^{-3})\le_T c.
\end{equation}
With the help of {\tt CHEVIE}, 
one obtains 7 solutions for $t_1$ in this relation:
{\small
\begin{equation} \label{eq:G24sol2}
t_1\in\big\{[ 5 ],\,
[ 6 ],\,
[ 7 ],\,
[ 9 ],\,
[ 12 ],\,
[ 29 ],\,
[ 32 ]\big\},
\end{equation}}%
each of them giving rise to $m/2$ elements of
$\Fix_{NC^m(G_{24})}(\phi^{p})$ since $i$ ranges from $1$ to $m/2$.
Here we have used again the short notation of {\tt CHEVIE}
referring to the internal
ordering of the roots of $G_{24}$ in {\tt CHEVIE}.

In total, we obtain  
$1+7\frac m2=\frac {7m+2}2$ elements in
$\Fix_{NC^m(G_{24})}(\phi^p)$, which agrees with the limit in
\eqref{eq:G24.4}.

\smallskip
Finally, we turn to \eqref{eq:G24.1}. By Remark~\ref{rem:1},
the only choices for $h_2$ and $m_2$ to be considered
are $h_2=1$ and $m_2=3$, $h_2=m_2=2$, and $h_2=2$
and $m_2=3$. These correspond to the choices $p=14m/3$,
$p=7m/2$, respectively $p=7m/3$, all of which have already been 
discussed as they do not belong to \eqref{eq:G24.1}. Hence, 
\eqref{eq:1} must necessarily hold, as required.

\subsection*{\sc Case $G_{25}$}
The degrees are $6,9,12$, and hence we have
$$
\Cat^m(G_{25};q)=\frac 
{[12m+12]_q\, [12m+9]_q\, [12m+6]_q} 
{[12]_q\, [9]_q\, [6]_q} .
$$
Let $\zeta$ be a $12m$-th root of unity. 
The following cases on the right-hand side of \eqref{eq:1}
occur:
{\refstepcounter{equation}\label{eq:G25}}
\alphaeqn
\begin{align} 
\label{eq:G25.2}
\lim_{q\to\zeta}\Cat^m(G_{25};q)&=m+1,
\quad\text{if }\zeta=\zeta_{12},\zeta_4,\\
\label{eq:G25.3}
\lim_{q\to\zeta}\Cat^m(G_{25};q)&=\tfrac {4m+3}3,
\quad\text{if }\zeta=\zeta_{9},\ 3\mid m,\\
\label{eq:G25.4}
\lim_{q\to\zeta}\Cat^m(G_{25};q)&=(m+1)(2m+1),
\quad\text{if }\zeta=\zeta_6,-1\\
\label{eq:G25.5}
\lim_{q\to\zeta}\Cat^m(G_{25};q)&=\Cat^m(G_{25}),
\quad\text{if }\zeta=\zeta_3,1,\\
\label{eq:G25.1}
\lim_{q\to\zeta}\Cat^m(G_{25};q)&=1,
\quad\text{otherwise.}
\end{align}
\reseteqn

We must now prove that the left-hand side of \eqref{eq:1} in
each case agrees with the values exhibited in 
\eqref{eq:G25}. The only cases not covered by
Lemmas~\ref{lem:2} and \ref{lem:3} are the ones in \eqref{eq:G25.3}
and \eqref{eq:G25.1}. 

We first consider \eqref{eq:G25.3}. 
By Lemma~\ref{lem:1}, we are free to choose $p=4m/3$. 
In particular, $m$ must be divisible by $3$.
From \eqref{eq:Aktion}, we infer
\begin{multline*}
\phi^p\big((w_0;w_1,\dots,w_m)\big)\\=
(*;
c^{2}w_{\frac {2m}3+1}c^{-2},c^{2}w_{\frac {2m}3+2}c^{-2},
\dots,c^{2}w_{m}c^{-2},
cw_{1}c^{-1},\dots,
cw_{\frac {2m}3}c^{-1}\big).
\end{multline*}
Supposing that 
$(w_0;w_1,\dots,w_m)$ is fixed by $\phi^p$, we obtain
the system of equations
{\refstepcounter{equation}\label{eq:G25A}}
\alphaeqn
\begin{align} \label{eq:G25Aa}
w_i&=c^2w_{\frac {2m}3+i}c^{-2}, \quad i=1,2,\dots,\tfrac {m}3,\\
w_i&=cw_{i-\frac {m}3}c^{-1}, \quad i=\tfrac {m}3+1,\tfrac {m}3+2,\dots,m.
\label{eq:G25Ab}
\end{align}
\reseteqn
There are two distinct possibilities for choosing
the $w_i$'s, $1\le i\le m$: 
either all the $w_i$'s are equal to $\ep$, or
there is an $i$ with $1\le i\le \frac m3$ such that
$$\ell_T(w_i)=\ell_T(w_{i+\frac m3})=\ell_T(w_{i+\frac {2m}3})=1.$$
Writing $t_1,t_2,t_3$ for $w_i,w_{i+\frac m3},w_{i+\frac {2m}3}$, 
respectively, the equations \eqref{eq:G25A} 
reduce to
{\refstepcounter{equation}\label{eq:G25B}}
\alphaeqn
\begin{align} \label{eq:G25Ba}
t_1&=c^2t_3c^{-2},\\
\label{eq:G25Bb}
t_2&=ct_1c^{-1},\\
t_3&=ct_2c^{-1}.
\label{eq:G25Bc}
\end{align}
\reseteqn
One of these equations is in fact superfluous: if we substitute
\eqref{eq:G25Bb} and \eqref{eq:G25Bc} in \eqref{eq:G25Ba}, then
we obtain $t_1=c^4t_1c^{-4}$ which is automatically satisfied due
to Lemma~\ref{lem:4} with $d=3$.

Since $(w_0;w_1,\dots,w_m)\in NC^m(G_{25})$, we must have $t_1t_2t_3=c$.
Combining this with \eqref{eq:G25B}, we infer that
\begin{equation} \label{eq:G25D}
t_1(ct_1c^{-1})(c^2t_1c^{-2})=c.
\end{equation}
With the help of {\tt CHEVIE}, 
one obtains four solutions for $t_1$ in this equation:
{\small
\begin{equation} \label{eq:G25sol1}
t_1\in\big\{[ 1 ],\,
[ 2 ],\,
[ 3 ],\,
[ 14 ]\big\},
\end{equation}}%
each of them giving rise to $m/3$ elements of
$\Fix_{NC^m(G_{25})}(\phi^{p})$ since $i$ ranges from $1$ to $m/3$.
Here we have used again the short notation of {\tt CHEVIE}
referring to the internal
ordering of the roots of $G_{25}$ in {\tt CHEVIE}.

In total, we obtain  
$1+4\frac m3=\frac {4m+3}3$ elements in
$\Fix_{NC^m(G_{25})}(\phi^p)$, which agrees with the limit in
\eqref{eq:G25.3}.

\smallskip
Finally, we turn to \eqref{eq:G25.1}. By Remark~\ref{rem:1},
the only choice for $h_2$ and $m_2$ to be considered
are $h_2=m_2=3$. This corresponds to the choice $p=4m/3$, 
which has already been discussed as they do not belong to 
\eqref{eq:G25.1}. Hence, 
\eqref{eq:1} must necessarily hold, as required. 

\subsection*{\sc Case $G_{26}$}
The degrees are $6,12,18$, and hence we have
$$
\Cat^m(G_{26};q)=\frac 
{[18m+18]_q\, [18m+12]_q\, [18m+6]_q} 
{[18]_q\, [12]_q\, [6]_q} .
$$
Let $\zeta$ be a $14m$-th root of unity. 
The following cases on the right-hand side of \eqref{eq:1}
occur:
{\refstepcounter{equation}\label{eq:G26}}
\alphaeqn
\begin{align} 
\label{eq:G26.2}
\lim_{q\to\zeta}\Cat^m(G_{26};q)&=m+1,
\quad\text{if }\zeta=\zeta_{18},\zeta_9,\\
\label{eq:G26.3}
\lim_{q\to\zeta}\Cat^m(G_{26};q)&=\tfrac {3m+2}2,
\quad\text{if }\zeta=\zeta_{12},\zeta_4,\ 2\mid m,\\
\label{eq:G26.5}
\lim_{q\to\zeta}\Cat^m(G_{26};q)&=\Cat^m(G_{26}),
\quad\text{if }\zeta=\zeta_6,\zeta_3,-1,1,\\
\label{eq:G26.1}
\lim_{q\to\zeta}\Cat^m(G_{26};q)&=1,
\quad\text{otherwise.}
\end{align}
\reseteqn

We must now prove that the left-hand side of \eqref{eq:1} in
each case agrees with the values exhibited in 
\eqref{eq:G26}. The only cases not covered by
Lemmas~\ref{lem:2} and \ref{lem:3} are the ones in \eqref{eq:G26.3}
and \eqref{eq:G26.1}. 

We first consider \eqref{eq:G26.3}. 
By Lemma~\ref{lem:1}, we are free to choose $p=3m/2$ if 
$\zeta=\zeta_{12}$, respectively $p=9m/2$ if 
$\zeta=\zeta_4$. In both cases, $m$ must be divisible by $2$.

We start with the case that $p=3m/2$.
From \eqref{eq:Aktion}, we infer
$$
\phi^p\big((w_0;w_1,\dots,w_m)\big)\\=
(*;
c^{2}w_{\frac {m}2+1}c^{-2},c^{2}w_{\frac {m}2+2}c^{-2},
\dots,c^{2}w_{m}c^{-2},
cw_{1}c^{-1},\dots,
cw_{\frac {m}2}c^{-1}\big).
$$
Supposing that 
$(w_0;w_1,\dots,w_m)$ is fixed by $\phi^p$, we obtain
the system of equations
{\refstepcounter{equation}\label{eq:G26A}}
\alphaeqn
\begin{align} \label{eq:G26Aa}
w_i&=c^2w_{\frac {m}2+i}c^{-2}, \quad i=1,2,\dots,\tfrac {m}2,\\
w_i&=cw_{i-\frac {m}2}c^{-1}, \quad i=\tfrac {m}2+1,\tfrac {m}2+2,\dots,m.
\label{eq:G26Ab}
\end{align}
\reseteqn
There are two distinct possibilities for choosing
the $w_i$'s, $1\le i\le m$: 
either all the $w_i$'s are equal to $\ep$, or
there is an $i$ with $1\le i\le \frac m2$ such that
$$\ell_T(w_i)=\ell_T(w_{i+\frac m2})=1.$$
Writing $t_1,t_2$ for $w_i,w_{i+\frac m2}$, 
respectively, the equations \eqref{eq:G26A} 
reduce to
{\refstepcounter{equation}\label{eq:G26B}}
\alphaeqn
\begin{align} \label{eq:G26Ba}
t_1&=c^2t_2c^{-2},\\
\label{eq:G26Bb}
t_2&=c^1t_1c^{-1}.
\end{align}
\reseteqn
One of these equations is in fact superfluous: if we substitute
\eqref{eq:G26Bb} in \eqref{eq:G26Ba}, then
we obtain $t_1=c^3t_1c^{-3}$ which is automatically satisfied due
to Lemma~\ref{lem:4} with $d=6$.

Since $(w_0;w_1,\dots,w_m)\in NC^m(G_{26})$, we must have $t_1t_2\le_T
c$.
Combining this with \eqref{eq:G26B}, we infer that
\begin{equation} \label{eq:G26D}
t_1(ct_1c^{-1})\le_T c.
\end{equation}
With the help of {\tt CHEVIE}, 
one obtains three solutions for $t_1$ in this equation:
{\small
\begin{equation*} 
t_1\in\big\{[ 2 ],\,
[ 3 ],\,
[ 12 ]\big\},
\end{equation*}}%
each of them giving rise to $m/2$ elements of
$\Fix_{NC^m(G_{26})}(\phi^{p})$ since $i$ ranges from $1$ to $m/2$.
Here we have again used the short notation of {\tt CHEVIE}
referring to the internal
ordering of the roots of $G_{26}$ in {\tt CHEVIE}.

In total, we obtain  
$1+3\frac m2=\frac {3m+2}2$ elements in
$\Fix_{NC^m(G_{26})}(\phi^p)$, which agrees with the limit in
\eqref{eq:G26.3}.

In the case that $p=9m/2$, we infer from \eqref{eq:Aktion} that
\begin{multline*}
\phi^p\big((w_0;w_1,\dots,w_m)\big)\\=
(*;
c^{5}w_{\frac {m}2+1}c^{-5},c^{5}w_{\frac {m}2+2}c^{-5},
\dots,c^{5}w_{m}c^{-5},
c^4w_{1}c^{-4},\dots,
c^4w_{\frac {m}2}c^{-4}\big).
\end{multline*}
Supposing that 
$(w_0;w_1,\dots,w_m)$ is fixed by $\phi^p$, we obtain
the system of equations
{\refstepcounter{equation}\label{eq:G26'A}}
\alphaeqn
\begin{align} \label{eq:G26'Aa}
w_i&=c^5w_{\frac {m}2+i}c^{-5}, \quad i=1,2,\dots,\tfrac {m}2,\\
w_i&=c^4w_{i-\frac {m}2}c^{-4}, \quad i=\tfrac {m}2+1,\tfrac {m}2+2,\dots,m.
\label{eq:G26'Ab}
\end{align}
\reseteqn
There are two distinct possibilities for choosing
the $w_i$'s, $1\le i\le m$: 
either all the $w_i$'s are equal to $\ep$, or
there is an $i$ with $1\le i\le \frac m3$ such that
$$\ell_T(w_i)=\ell_T(w_{i+\frac m2})=1.$$
Writing $t_1,t_2$ for $w_i,w_{i+\frac m2}$, 
respectively, the equations \eqref{eq:G26'A} 
reduce to
{\refstepcounter{equation}\label{eq:G26'B}}
\alphaeqn
\begin{align} \label{eq:G26'Ba}
t_1&=c^5t_2c^{-5},\\
\label{eq:G26'Bb}
t_2&=c^4t_1c^{-4}.
\end{align}
\reseteqn
One of these equations is in fact superfluous: if we substitute
\eqref{eq:G26'Bb} in \eqref{eq:G26'Ba}, then
we obtain $t_1=c^{9}t_1c^{-9}$ which is automatically satisfied 
due to Lemma~\ref{lem:4} with $d=2$.
Since $(w_0;w_1,\dots,w_m)\in NC^m(G_{26})$, we must have $t_1t_2\le_T
c$.
Combining this with \eqref{eq:G26'B}, we infer that
\begin{equation} \label{eq:G26'D}
t_1(c^{4}t_1c^{-4})\le_T c.
\end{equation}
Using that $c^3t_1c^{-3}=t_1$, due to Lemma~\ref{lem:4}
with $d=6$, we see that
this equation is equivalent with \eqref{eq:G26D}. 
Therefore, we are facing exactly the same enumeration 
problem here as for
$p=3m/2$, and, consequently, the number of solutions to \eqref{eq:G26'D} is the same, namely
$\frac {3m+2}2$, as required.

\smallskip
Finally, we turn to \eqref{eq:G26.1}. By Remark~\ref{rem:1},
the only choices for $h_2$ and $m_2$ to be considered
are $h_2=6$ and $m_2=2$, respectively $h_2=m_2=2$. 
These correspond to the choices $p=3m/2$,
respectively $p=9m/2$, all of which have already been 
discussed as they do not belong to \eqref{eq:G26.1}. Hence, 
\eqref{eq:1} must necessarily hold, as required.

\subsection*{\sc Case $G_{27}$}
The degrees are $6,12,30$, and hence we have
$$
\Cat^m(G_{27};q)=\frac 
{[30m+30]_q\, [30m+12]_q\, [30m+6]_q} 
{[30]_q\, [12]_q\, [6]_q} .
$$
Let $\zeta$ be a $14m$-th root of unity. 
The following cases on the right-hand side of \eqref{eq:1}
occur:
{\refstepcounter{equation}\label{eq:G27}}
\alphaeqn
\begin{align} 
\label{eq:G27.2}
\lim_{q\to\zeta}\Cat^m(G_{27};q)&=m+1,
\quad\text{if }\zeta=\zeta_{30},\zeta_{15},\zeta_{10},\zeta_5,\\
\label{eq:G27.3}
\lim_{q\to\zeta}\Cat^m(G_{27};q)&=\tfrac {5m+2}2,
\quad\text{if }\zeta=\zeta_{12},\zeta_4,\ 2\mid m,\\
\label{eq:G27.4}
\lim_{q\to\zeta}\Cat^m(G_{27};q)&=\Cat^m(G_{27}),
\quad\text{if }\zeta=\zeta_6,\zeta_3,-1,1,\\
\label{eq:G27.1}
\lim_{q\to\zeta}\Cat^m(G_{27};q)&=1,
\quad\text{otherwise.}
\end{align}
\reseteqn

We must now prove that the left-hand side of \eqref{eq:1} in
each case agrees with the values exhibited in 
\eqref{eq:G27}. The only cases not covered by
Lemmas~\ref{lem:2} and \ref{lem:3} are the ones in \eqref{eq:G27.3}
and \eqref{eq:G27.1}. 

We first consider \eqref{eq:G27.3}. 
By Lemma~\ref{lem:1}, we are free to choose $p=5m/2$ if 
$\zeta=\zeta_{12}$, respectively $p=15m/2$ if 
$\zeta=\zeta_4$. In both cases, $m$ must be divisible by $2$.

We start with the case that $p=5m/2$.
From \eqref{eq:Aktion}, we infer
\begin{multline*}
\phi^p\big((w_0;w_1,\dots,w_m)\big)\\=
(*;
c^{3}w_{\frac {m}2+1}c^{-3},c^{3}w_{\frac {m}2+2}c^{-3},
\dots,c^{3}w_{m}c^{-3},
c^2w_{1}c^{-2},\dots,
c^2w_{\frac {m}2}c^{-2}\big).
\end{multline*}
Supposing that 
$(w_0;w_1,\dots,w_m)$ is fixed by $\phi^p$, we obtain
the system of equations
{\refstepcounter{equation}\label{eq:G27A}}
\alphaeqn
\begin{align} \label{eq:G27Aa}
w_i&=c^3w_{\frac {m}2+i}c^{-3}, \quad i=1,2,\dots,\tfrac {m}2,\\
w_i&=c^2w_{i-\frac {m}2}c^{-2}, \quad i=\tfrac {m}2+1,\tfrac {m}2+2,\dots,m.
\label{eq:G27Ab}
\end{align}
\reseteqn
There are two distinct possibilities for choosing
the $w_i$'s, $1\le i\le m$: 
either all the $w_i$'s are equal to $\ep$, or
there is an $i$ with $1\le i\le \frac m2$ such that
$$\ell_T(w_i)=\ell_T(w_{i+\frac m2})=1.$$
Writing $t_1,t_2$ for $w_i,w_{i+\frac m2}$, 
respectively, the equations \eqref{eq:G27A} 
reduce to
{\refstepcounter{equation}\label{eq:G27B}}
\alphaeqn
\begin{align} \label{eq:G27Ba}
t_1&=c^3t_2c^{-3},\\
\label{eq:G27Bb}
t_2&=c^2t_1c^{-2}.
\end{align}
\reseteqn
One of these equations is in fact superfluous: if we substitute
\eqref{eq:G27Bb} in \eqref{eq:G27Ba}, then
we obtain $t_1=c^5t_1c^{-5}$ which is automatically satisfied due
to Lemma~\ref{lem:4} with $d=6$.

Since $(w_0;w_1,\dots,w_m)\in NC^m(G_{27})$, we must have $t_1t_2\le_T
c$.
Combining this with \eqref{eq:G27B}, we infer that
\begin{equation} \label{eq:G27D}
t_1(c^{2}t_1c^{-2})\le_T c.
\end{equation}
With the help of {\tt CHEVIE}, 
one obtains five solutions for $t_1$ in this equation:
{\small
\begin{equation*} 
t_1\in\big\{[ 1 ],\,
[ 2 ],\,
[ 15 ],\,
[ 16 ],\,
[ 28 ]\big\},
\end{equation*}}%
each of them giving rise to $m/2$ elements of
$\Fix_{NC^m(G_{27})}(\phi^{p})$ since $i$ ranges from $1$ to $m/2$.
Here we have used the short notation of {\tt CHEVIE}
referring to the internal
ordering of the roots of $G_{27}$ in {\tt CHEVIE}.

In total, we obtain  
$1+5\frac m2=\frac {5m+2}2$ elements in
$\Fix_{NC^m(G_{27})}(\phi^p)$, which agrees with the limit in
\eqref{eq:G27.3}.

In the case that $p=15m/2$, we infer from \eqref{eq:Aktion} that
\begin{multline*}
\phi^p\big((w_0;w_1,\dots,w_m)\big)\\=
(*;
c^{8}w_{\frac {m}2+1}c^{-8},c^{8}w_{\frac {m}2+2}c^{-8},
\dots,c^{8}w_{m}c^{-8},
c^7w_{1}c^{-7},\dots,
c^7w_{\frac {m}2}c^{-7}\big).
\end{multline*}
Supposing that 
$(w_0;w_1,\dots,w_m)$ is fixed by $\phi^p$, we obtain
the system of equations
{\refstepcounter{equation}\label{eq:G27'A}}
\alphaeqn
\begin{align} \label{eq:G27'Aa}
w_i&=c^8w_{\frac {m}2+i}c^{-8}, \quad i=1,2,\dots,\tfrac {m}2,\\
w_i&=c^7w_{i-\frac {m}2}c^{-7}, \quad i=\tfrac {m}2+1,\tfrac {m}2+2,\dots,m.
\label{eq:G27'Ab}
\end{align}
\reseteqn
There are two distinct possibilities for choosing
the $w_i$'s, $1\le i\le m$: 
either all the $w_i$'s are equal to $\ep$, or
there is an $i$ with $1\le i\le \frac m2$ such that
$$\ell_T(w_i)=\ell_T(w_{i+\frac m2})=1.$$
Writing $t_1,t_2$ for $w_i,w_{i+\frac m2}$, 
respectively, the equations \eqref{eq:G27'A} 
reduce to
{\refstepcounter{equation}\label{eq:G27'B}}
\alphaeqn
\begin{align} \label{eq:G27'Ba}
t_1&=c^8t_2c^{-8},\\
\label{eq:G27'Bb}
t_2&=c^7t_1c^{-7}.
\end{align}
\reseteqn
One of these equations is in fact superfluous: if we substitute
\eqref{eq:G27'Bb} in \eqref{eq:G27'Ba}, then
we obtain $t_1=c^{15}t_1c^{-15}$ which is automatically satisfied 
due to Lemma~\ref{lem:4} with $d=2$.

Since $(w_0;w_1,\dots,w_m)\in NC^m(G_{27})$, we must have $t_1t_2\le_T
c$.
Combining this with \eqref{eq:G27'B}, we infer that
\begin{equation} \label{eq:G27'D}
t_1(c^{7}t_1c^{-7})\le_T c.
\end{equation}
Using that $c^5t_1c^{-5}=t_1$, due to Lemma~\ref{lem:4}
with $d=6$, we see that
this equation is equivalent with \eqref{eq:G27D}. 
Therefore, we are facing exactly the same enumeration 
problem here as for
$p=5m/2$, and, consequently, the number of solutions to \eqref{eq:G27'D} is the same, namely
$\frac {5m+2}2$, as required.

\smallskip
Finally, we turn to \eqref{eq:G27.1}. By Remark~\ref{rem:1},
the only choices for $h_2$ and $m_2$ to be considered
are $h_2=6$ and $m_2=3$, $h_2=6$ and $m_2=2$, $h_2=m_2=3$, respectively $h_2=m_2=2$. These correspond to the choices
$p=5m/3$, $5m/2$, $10m/3$, respectively $15m/2$, 
out of which only $p=5m/3$ and $p=10m/3$
have not yet been discussed and belong to the current case.
If $p=5m/3$, the corresponding action of $\phi^p$ is given by 
\eqref{eq:5m3Aktion}, so that we have to solve for $t_1$ with
$\ell_T(t_1)=1$ in the equation \eqref{eq:H3D}.
A computation with the help of 
{\tt CHEVIE}
finds no solution. 
If $p=10m/3$, the corresponding action of $\phi^p$ is given by 
\eqref{eq:10m3Aktion}, so that we have to solve for $t_1$ with
$\ell_T(t_1)$ in the equation \eqref{eq:H3'D}. 
Using that $c^5t_1c^{-5}=t_1$, 
due to Lemma~\ref{lem:4} with $d=6$, we see that
this equation is equivalent with the one in \eqref{eq:H3D}. 
Hence, in both cases, 
the left-hand side of \eqref{eq:1} is equal to $1$, as required.

\subsection*{\sc Case $G_{28}=F_4$}
The degrees are $2,6,8,12$, and hence we have
$$
\Cat^m(F_4;q)=\frac 
{[12m+12]_q\, [12m+8]_q\, [12m+6]_q\, [12m+2]_q} 
{[12]_q\, [8]_q\, [6]_q\, [2]_q} .
$$
Let $\zeta$ be a $12m$-th root of unity. 
The following cases on the right-hand side of \eqref{eq:1}
occur:
{\refstepcounter{equation}\label{eq:F4}}
\alphaeqn
\begin{align} 
\label{eq:F4.2}
\lim_{q\to\zeta}\Cat^m(F_4;q)&=m+1,
\quad\text{if }\zeta=\zeta_{12},\\
\label{eq:F4.3}
\lim_{q\to\zeta}\Cat^m(F_4;q)&=\tfrac {3m+2}2,
\quad\text{if }\zeta=\zeta_{8},\ 2\mid m,\\
\label{eq:F4.6}
\lim_{q\to\zeta}\Cat^m(F_4;q)&=(m+1)(2m+1),
\quad\text{if }\zeta= \zeta_{6},\zeta_{3},\\
\label{eq:F4.5}
\lim_{q\to\zeta}\Cat^m(F_4;q)&=\tfrac {(m+1)(3m+2)}2,
\quad\text{if }\zeta= \zeta_{4},\\
\label{eq:F4.8}
\lim_{q\to\zeta}\Cat^m(F_4;q)&=\Cat^m(F_4),
\quad\text{if }\zeta=-1\text{ or }\zeta=1,\\
\label{eq:F4.1}
\lim_{q\to\zeta}\Cat^m(F_4;q)&=1,
\quad\text{otherwise.}
\end{align}
\reseteqn

We must now prove that the left-hand side of \eqref{eq:1} in
each case agrees with the values exhibited in 
\eqref{eq:F4}. 
The only cases not covered by
Lemmas~\ref{lem:2} and \ref{lem:3} are the ones in 
\eqref{eq:F4.3} and \eqref{eq:F4.1}.
By Lemma~\ref{lem:1}, we are free to choose $p=3m/2$. In particular,
$m$ must be divisible by $2$.
From \eqref{eq:Aktion}, we infer
$$
\phi^p\big((w_0;w_1,\dots,w_m)\big)\\=
(*;
c^{2}w_{\frac m2+1}c^{-2},c^{2}w_{\frac m2+2}c^{-2},
\dots,c^{2}w_{m}c^{-2},
cw_{1}c^{-1},\dots,
cw_{\frac m2}c^{-1}\big).
$$
Supposing that 
$(w_0;w_1,\dots,w_m)$ is fixed by $\phi^p$, we obtain
the system of equations
{\refstepcounter{equation}\label{eq:F4A}}
\alphaeqn
\begin{align} \label{eq:F4Aa}
w_i&=c^2w_{\frac m2+i}c^{-2}, \quad i=1,2,\dots,\tfrac {m}2,\\
w_i&=cw_{i-\frac {m}2}c^{-1}, \quad i=\tfrac {m}2+1,\tfrac {m}2+2,\dots,m.
\label{eq:F4Ab}
\end{align}
\reseteqn
There are four distinct possibilities for choosing
the $w_i$'s, $1\le i\le m$: 
{\refstepcounter{equation}\label{eq:F4C}}
\alphaeqn
\begin{enumerate}
\item[(i)]
all the $w_i$'s are equal to $\ep$ (and $w_0=c$), 
\item[(ii)]
there is an $i$ with $1\le i\le \frac m2$ such that
\begin{equation} \label{eq:F4Cii}
\ell_T(w_i)=\ell_T(w_{i+\frac m2})=2,
\end{equation}
and all other $w_j$'s are equal to $\ep$,
\item[(iii)]
there is an $i$ with $1\le i\le \frac m2$ such that
\begin{equation} \label{eq:F4Ciii}
\ell_T(w_i)=\ell_T(w_{i+\frac m2})=1,
\end{equation}
and the other $w_j$'s, $1\le j\le m$, are equal to $\ep$,
\item[(iv)]
there are $i_1$ and $i_2$ with $1\le i_1<i_2\le \frac m2$ such that
\begin{equation} \label{eq:F4Civ}
\ell_T(w_{i_1})=\ell_T(w_{i_2})=
\ell_T(w_{i_1+\frac m2})=\ell_T(w_{i_2+\frac m2})=1,
\end{equation}
and all other $w_j$'s are equal to $\ep$.
\end{enumerate}
\reseteqn

Moreover, since $(w_0;w_1,\dots,w_m)\in NC^m(F_4)$, we must have 
$w_iw_{i+\frac m2}\le_T c$, respectively 
$w_{i_1}w_{i_2}w_{i_1+\frac m2}w_{i_2+\frac m2}=c$.
Together with equations~\eqref{eq:F4A}--\eqref{eq:F4C}, 
this implies that
\begin{equation} \label{eq:F4D}
w_i=c^3w_ic^{-3}\quad\text{and}\quad 
w_i(cw_ic^{-1})=c,
\end{equation}
respectively that
\begin{equation} \label{eq:F4D2}
w_i=c^3w_ic^{-3},\quad 
w_i(cw_ic^{-1})\le_T c,\quad\text{and}\quad
\ell_T(w_i)=1,
\end{equation}
respectively that
\begin{equation} \label{eq:F4E}
w_{i_1}=c^3w_{i_1}c^{-3},\quad 
w_{i_1}(cw_{i_1}c^{-1})\le_T c,
\quad\text{and}\quad \ell_T(w_{i_1})=1.
\end{equation}
With the help of Stembridge's {\sl Maple} package {\tt coxeter}
\cite{StemAZ}, one obtains three solutions for $w_i$ in 
\eqref{eq:F4D}:
{\small
$$w_i\in\big\{                             [1, 2, 3, 4, 3, 2],\,
                                   [2, 3],\,
                             [1, 3, 2, 1, 3, 4]
\big\},$$}%
where we have again used the short notation of {\tt coxeter},
$\{s_1,s_2,s_3,s_4\}$ being a simple system of generators of 
$F_4$,
corresponding to the Dynkin diagram displayed in Figure~\ref{fig:F4}.
Each of the above solutions for $w_i$ gives rise to $m/2$ elements of
$\Fix_{NC^m(F_4)}(\phi^{p})$ since $i$ ranges from $1$ to $m/2$.

\begin{figure}[h]
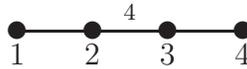

$$
\Einheit1cm
\Pfad(0,0),111\endPfad
\DickPunkt(0,0)
\DickPunkt(1,0)
\DickPunkt(2,0)
\DickPunkt(3,0)
\Label\u{1}(0,0)
\Label\u{2}(1,0)
\Label\u{3}(2,0)
\Label\u{4}(3,0)
\Label\ro{\raise-20pt\hbox{\scriptsize 4}}(1,0)
\hskip3cm
$$ 
\caption{\scriptsize The Dynkin diagram for $F_4$}
\label{fig:F4}
\end{figure}

There are no solutions for $w_i$ in \eqref{eq:F4D2} and
for $w_{i_1}$ in \eqref{eq:F4E}.

In total, we obtain 
$1+3\frac m2=\frac {3m+2}2$ elements in
$\Fix_{NC^m(F_4)}(\phi^p)$, which agrees with the limit in
\eqref{eq:F4.3}.

\smallskip
Finally, we turn to \eqref{eq:F4.1}. By Remark~\ref{rem:1},
the are no choices for $h_2$ and $m_2$ to be considered.
Hence, 
the left-hand side of \eqref{eq:1} is equal to $1$, as required.

\subsection*{\sc Case $G_{29}$}
The degrees are $4,8,12,20$, and hence we have
$$
\Cat^m(G_{29};q)=\frac 
{[20m+20]_q\, [20m+12]_q\, [20m+8]_q\, [20m+4]_q} 
{[20]_q\, [12]_q\, [8]_q\, [4]_q} .
$$
Let $\zeta$ be a $20m$-th root of unity. 
The following cases on the right-hand side of \eqref{eq:1}
occur:
{\refstepcounter{equation}\label{eq:G29}}
\alphaeqn
\begin{align} 
\label{eq:G29.2}
\lim_{q\to\zeta}\Cat^m(G_{29};q)&=m+1,
\quad\text{if }\zeta=\zeta_{20},\zeta_{10},\zeta_5,\\
\label{eq:G29.3}
\lim_{q\to\zeta}\Cat^m(G_{29};q)&=\tfrac {5m+3}3,
\quad\text{if }\zeta=\zeta_{12},\zeta_6,\zeta_3,\ 3\mid m,\\
\label{eq:G29.4}
\lim_{q\to\zeta}\Cat^m(G_{29};q)&=\tfrac {5m+2}2,
\quad\text{if }\zeta=\zeta_{8},\ 2\mid m,\\
\label{eq:G29.5}
\lim_{q\to\zeta}\Cat^m(G_{29};q)&=\Cat^m(G_{29}),
\quad\text{if }\zeta=\zeta_4,-1,1,\\
\label{eq:G29.1}
\lim_{q\to\zeta}\Cat^m(G_{29};q)&=1,
\quad\text{otherwise.}
\end{align}
\reseteqn

We must now prove that the left-hand side of \eqref{eq:1} in
each case agrees with the values exhibited in 
\eqref{eq:G29}. The only cases not covered by
Lemmas~\ref{lem:2} and \ref{lem:3} are the ones in 
\eqref{eq:G29.3}, \eqref{eq:G29.4}, 
and \eqref{eq:G29.1}.

\smallskip
We begin with the case in \eqref{eq:G29.3}.
By Lemma~\ref{lem:1}, 
we are free to choose $p=5m/3$ if $\zeta=\zeta_{12}$, 
we are free to choose $p=10m/3$ if $\zeta=\zeta_{6}$, 
we are free to choose $p=20m/3$ if $\zeta=\zeta_{3}$. 
In particular, in all three cases,
$m$ must be divisible by $3$.

We start with the case that $p=5m/3$.
From \eqref{eq:Aktion}, we infer
$$
\phi^p\big((w_0;w_1,\dots,w_m)\big)\\=
(*;
c^{2}w_{\frac m3+1}c^{-2},c^{2}w_{\frac m3+2}c^{-2},
\dots,c^{2}w_{m}c^{-2},
cw_{1}c^{-1},\dots,
cw_{\frac m3}c^{-1}\big).
$$
Supposing that 
$(w_0;w_1,\dots,w_m)$ is fixed by $\phi^p$, we obtain
the system of equations
{\refstepcounter{equation}\label{eq:G29A}}
\alphaeqn
\begin{align} \label{eq:G29Aa}
w_i&=c^2w_{\frac m3+i}c^{-2}, \quad i=1,2,\dots,\tfrac {2m}3,\\
w_i&=cw_{i-\frac {2m}3}c^{-1}, \quad i=\tfrac {2m}3+1,\tfrac {2m}3+2,\dots,m.
\label{eq:G29Ab}
\end{align}
\reseteqn
There are two distinct possibilities for choosing
the $w_i$'s, $1\le i\le m$: 
{\refstepcounter{equation}\label{eq:G29C}}
\alphaeqn
\begin{enumerate}
\item[(i)]
all the $w_i$'s are equal to $\ep$ (and $w_0=c$), 
\item[(ii)]
there is an $i$ with $1\le i\le \frac m3$ such that
\begin{equation} \label{eq:G29Cii}
\ell_T(w_i)=\ell_T(w_{i+\frac m3})=\ell_T(w_{i+\frac {2m}3})=1,
\end{equation}
and the other $w_j$'s, $1\le j\le m$, are equal to $\ep$.
\end{enumerate}
\reseteqn

Moreover, since $(w_0;w_1,\dots,w_m)\in NC^m(G_{29})$, we must have 
$w_iw_{i+\frac m3}w_{i+\frac {2m}3}\le_T c$.
Together with equations~\eqref{eq:G29A}--\eqref{eq:G29C}, 
this implies that
\begin{equation} \label{eq:G29D}
w_i=c^5w_ic^{-5}\quad\text{and}\quad 
w_i(c^3w_ic^{-3})(cw_ic^{-1})=c.
\end{equation}
With the help of {\tt CHEVIE}, one obtains five solutions for $w_i$ in 
\eqref{eq:G29D}:
{\small
\begin{equation*} 
w_i\in\big\{[1],\,
[2],\,
[8],\,
[25],\,
[31]\big\},
\end{equation*}}%
where we have again used the short notation of {\tt CHEVIE}
referring to the internal
ordering of the roots of $G_{29}$ in {\tt CHEVIE}.
Each of the above solutions for $w_i$ gives rise to $m/3$ elements of
$\Fix_{NC^m(G_{29})}(\phi^{p})$ since $i$ ranges from $1$ to $m/3$.

In total, we obtain 
$1+5\frac m3=\frac {5m+3}3$ elements in
$\Fix_{NC^m(G_{29})}(\phi^p)$, which agrees with the limit in
\eqref{eq:G29.3}.

In the case that $p=10m/3$, we infer from \eqref{eq:Aktion} that
\begin{multline*}
\phi^p\big((w_0;w_1,\dots,w_m)\big)\\=
(*;
c^{4}w_{\frac {2m}3+1}c^{-4},c^{4}w_{\frac {2m}3+2}c^{-4},
\dots,c^{4}w_{m}c^{-4},
c^3w_{1}c^{-3},\dots,
c^3w_{\frac {2m}3}c^{-3}\big).
\end{multline*}
Supposing that 
$(w_0;w_1,\dots,w_m)$ is fixed by $\phi^p$, we obtain
the system of equations
{\refstepcounter{equation}\label{eq:G29'''A}}
\alphaeqn
\begin{align} \label{eq:G29'''Aa}
w_i&=c^4w_{\frac {2m}3+i}c^{-4}, \quad i=1,2,\dots,\tfrac {m}3,\\
w_i&=c^3w_{i-\frac {m}3}c^{-3}, \quad i=\tfrac {m}3+1,\tfrac {m}3+2,\dots,m.
\label{eq:G29'''Ab}
\end{align}
\reseteqn
There are two distinct possibilities for choosing
the $w_i$'s, $1\le i\le m$: 
either all the $w_i$'s are equal to $\ep$, or
there is an $i$ with $1\le i\le \frac m3$ such that
$$\ell_T(w_i)=\ell_T(w_{i+\frac m3})=\ell_T(w_{i+\frac {2m}3})=1.$$
Writing $t_1,t_2,t_3$ for $w_i,w_{i+\frac m3},w_{i+\frac {2m}3}$, 
respectively, the equations \eqref{eq:G29'''A} 
reduce to
{\refstepcounter{equation}\label{eq:G29'''B}}
\alphaeqn
\begin{align} \label{eq:G29'''Ba}
t_1&=c^4t_3c^{-4},\\
\label{eq:G29'''Bb}
t_2&=c^3t_1c^{-3},\\
t_3&=c^3t_2c^{-3}.
\label{eq:G29'''Bc}
\end{align}
\reseteqn
One of these equations is in fact superfluous: if we substitute
\eqref{eq:G29'''Bb} and \eqref{eq:G29'''Bc} in \eqref{eq:G29'''Ba}, then
we obtain $t_1=c^{10}t_1c^{-10}$ which is automatically satisfied 
due to Lemma~\ref{lem:4} with $d=2$.

Since $(w_0;w_1,\dots,w_m)\in NC^m(G_{29})$, we must have
$t_1t_2t_3\le_T c$.
Combining this with \eqref{eq:G29'''B}, we infer that
\begin{equation} \label{eq:G29'''D}
t_1(c^{3}t_1c^{-3})(c^{6}t_1c^{-6})\le_T c.
\end{equation}
Using that $c^5t_1c^{-5}=t_1$, due to Lemma~\ref{lem:4}
with $d=4$, we see that
this equation is equivalent with \eqref{eq:G29D}. 
Therefore, we are facing exactly the same enumeration 
problem here as for
$p=5m/3$, and, consequently, the number of solutions to \eqref{eq:G29'''D} is the same, namely
$\frac {5m+3}3$, as required.

In the case that $p=20m/3$, we infer from \eqref{eq:Aktion} that
\begin{multline*}
\phi^p\big((w_0;w_1,\dots,w_m)\big)\\=
(*;
c^{7}w_{\frac {m}3+1}c^{-7},c^{7}w_{\frac {m}3+2}c^{-7},
\dots,c^{7}w_{m}c^{-7},
c^6w_{1}c^{-6},\dots,
c^6w_{\frac {m}3}c^{-6}\big).
\end{multline*}
Supposing that 
$(w_0;w_1,\dots,w_m)$ is fixed by $\phi^p$, we obtain
the system of equations
{\refstepcounter{equation}\label{eq:G29''''A}}
\alphaeqn
\begin{align} \label{eq:G29''''Aa}
w_i&=c^7w_{\frac {m}3+i}c^{-7}, \quad i=1,2,\dots,\tfrac {2m}3,\\
w_i&=c^6w_{i-\frac {2m}3}c^{-6}, \quad i=\tfrac {2m}3+1,\tfrac {2m}3+2,\dots,m.
\label{eq:G29''''Ab}
\end{align}
\reseteqn
There are two distinct possibilities for choosing
the $w_i$'s, $1\le i\le m$: 
either all the $w_i$'s are equal to $\ep$, or
there is an $i$ with $1\le i\le \frac m3$ such that
$$\ell_T(w_i)=\ell_T(w_{i+\frac m3})=\ell_T(w_{i+\frac {2m}3})=1.$$
Writing $t_1,t_2,t_3$ for $w_i,w_{i+\frac m3},w_{i+\frac {2m}3}$, 
respectively, the equations \eqref{eq:G29''''A} 
reduce to
{\refstepcounter{equation}\label{eq:G29''''B}}
\alphaeqn
\begin{align} \label{eq:G29''''Ba}
t_1&=c^7t_2c^{-7},\\
\label{eq:G29''''Bb}
t_2&=c^7t_3c^{-7},\\
t_3&=c^6t_1c^{-6}.
\label{eq:G29''''Bc}
\end{align}
\reseteqn
One of these equations is in fact superfluous: if we substitute
\eqref{eq:G29''''Bb} and \eqref{eq:G29''''Bc} in \eqref{eq:G29''''Ba}, then
we obtain $t_1=c^{20}t_1c^{-20}$ which is automatically satisfied 
since $c^{20}=\ep$.

Since $(w_0;w_1,\dots,w_m)\in NC^m(G_{29})$, we must have
$t_1t_2t_3\le_T c$.
Combining this with \eqref{eq:G29''''B}, we infer that
\begin{equation} \label{eq:G29''''D}
t_1(c^{13}t_1c^{-13})(c^{6}t_1c^{-6})\le_T c.
\end{equation}
Using that $c^5t_1c^{-5}=t_1$, due to Lemma~\ref{lem:4}
with $d=4$, we see that
this equation is equivalent with \eqref{eq:G29D}. 
Therefore, we are facing exactly the same enumeration 
problem here as for
$p=5m/3$, and, consequently, the number of solutions to \eqref{eq:G29''''D} is the same, namely
$\frac {5m+3}3$, as required.

\smallskip
Next we discuss the case in \eqref{eq:G29.4}.
By Lemma~\ref{lem:1}, we are free to choose $p=5m/2$. In particular,
$m$ must be divisible by $2$.
From \eqref{eq:Aktion}, we infer
\begin{multline*}
\phi^p\big((w_0;w_1,\dots,w_m)\big)\\=
(*;
c^{3}w_{\frac m2+1}c^{-3},c^{3}w_{\frac m2+2}c^{-3},
\dots,c^{3}w_{m}c^{-3},
c^2w_{1}c^{-2},\dots,
c^2w_{\frac m2}c^{-2}\big).
\end{multline*}
Supposing that 
$(w_0;w_1,\dots,w_m)$ is fixed by $\phi^p$, we obtain
the system of equations
{\refstepcounter{equation}\label{eq:G29'A}}
\alphaeqn
\begin{align} \label{eq:G29'Aa}
w_i&=c^3w_{\frac m2+i}c^{-3}, \quad i=1,2,\dots,\tfrac {m}2,\\
w_i&=c^2w_{i-\frac {m}2}c^{-2}, \quad i=\tfrac {m}2+1,\tfrac {m}2+2,\dots,m.
\label{eq:G29'Ab}
\end{align}
\reseteqn
There are four distinct possibilities for choosing
the $w_i$'s, $1\le i\le m$: 
{\refstepcounter{equation}\label{eq:G29'C}}
\alphaeqn
\begin{enumerate}
\item[(i)]
all the $w_i$'s are equal to $\ep$ (and $w_0=c$), 
\item[(ii)]
there is an $i$ with $1\le i\le \frac m2$ such that
\begin{equation} \label{eq:G29'Cii}
\ell_T(w_i)=\ell_T(w_{i+\frac m2})=2,
\end{equation}
and all other $w_j$'s are equal to $\ep$,
\item[(iii)]
there is an $i$ with $1\le i\le \frac m2$ such that
\begin{equation} \label{eq:G29'Ciii}
\ell_T(w_i)=\ell_T(w_{i+\frac m2})=1,
\end{equation}
and the other $w_j$'s, $1\le j\le m$, are equal to $\ep$,
\item[(iv)]
there are $i_1$ and $i_2$ with $1\le i_1<i_2\le \frac m2$ such that
\begin{equation} \label{eq:G29'Civ}
\ell_T(w_{i_1})=\ell_T(w_{i_2})=
\ell_T(w_{i_1+\frac m2})=\ell_T(w_{i_2+\frac m2})=1,
\end{equation}
and all other $w_j$'s are equal to $\ep$.
\end{enumerate}
\reseteqn

Moreover, since $(w_0;w_1,\dots,w_m)\in NC^m(G_{29})$, we must have 
$w_iw_{i+\frac m2}\le_T c$, respectively 
$w_{i_1}w_{i_2}w_{i_1+\frac m2}w_{i_2+\frac m2}=c$.
Together with equations~\eqref{eq:G29'A}--\eqref{eq:G29'C}, 
this implies that
\begin{equation} \label{eq:G29'D}
w_i=c^3w_ic^{-3}\quad\text{and}\quad 
w_i(c^2w_ic^{-2})=c,
\end{equation}
respectively that
\begin{equation} \label{eq:G29'D2}
w_i=c^3w_ic^{-3},\quad 
w_i(c^2w_ic^{-2})\le_T c,\quad\text{and}\quad
\ell_T(w_i)=1,
\end{equation}
respectively that
\begin{multline} \label{eq:G29'E}
w_{i_1}=c^3w_{i_1}c^{-3},\quad 
w_{i_2}=c^3w_{i_2}c^{-3},\\
w_{i_1}w_{i_2}(c^2w_{i_1}c^{-2})(c^2w_{i_2}c^{-2})= c,
\quad\text{and}\quad \ell_T(w_{i_1})=\ell_T(w_{i_2})=1.
\end{multline}
With the help of {\tt CHEVIE}, one obtains five solutions for $w_i$ in 
\eqref{eq:G29'D2}:
{\small
\begin{equation*}
w_i\in\big\{[4],\,
[9],\,
[14],\,
[27],\,
[32]\big\},
\end{equation*}}%
where we have again used the short notation of {\tt CHEVIE}
referring to the internal
ordering of the roots of $G_{29}$ in {\tt CHEVIE}.
Each of these solutions for $w_i$ gives rise to $m/2$ elements of
$\Fix_{NC^m(G_{29})}(\phi^{p})$ since $i$ ranges from $1$ to $m/2$.

There are no solutions for $w_i$ in \eqref{eq:G29'D} and
for $(w_{i_1},w_{i_2})$ in \eqref{eq:G29'E}.

In total, we obtain 
$1+5\frac m2=\frac {5m+2}2$ elements in
$\Fix_{NC^m(G_{29})}(\phi^p)$, which agrees with the limit in
\eqref{eq:G29.4}.

\smallskip
Finally, we turn to \eqref{eq:G29.1}. By Remark~\ref{rem:1},
the only choices for $h_2$ and $m_2$ to be considered
are $h_2=1$ and $m_2=3$, $h_2=2$ and $m_2=3$, 
$h_2=4$ and $m_2=2$, $h_2=4$ and $m_2=3$, 
respectively $h_2=m_2=4$. 
These correspond to the choices $p=20m/3$, $p=10m/3$, 
$p=5m/2$, $p=5m/3$,
respectively $p=5m/4$, out of which only $p=5m/4$
has not yet been discussed and belongs to the current case.
The corresponding action of $\phi^p$ is given by 
\begin{multline*}
\phi^p\big((w_0;w_1,\dots,w_m)\big)\\=
(*;
c^{2}w_{\frac {3m}4+1}c^{-2},c^{2}w_{\frac {3m}4+2}c^{-2},
\dots,c^{2}w_{m}c^{-2},
cw_{1}c^{-1},\dots,
cw_{\frac {3m}4}c^{-1}\big),
\end{multline*}
so that we have to solve
$$t_1(ct_1c^{-1})(c^{2}t_1c^{-2})(c^{3}t_1c^{-3})=c$$
for $t_1$ with $\ell_T(t_1)$.
A computation with the help of {\tt CHEVIE} finds no solution. 
Hence, 
the left-hand side of \eqref{eq:1} is equal to $1$, as required.

\subsection*{\sc Case $G_{30}=H_4$}
The degrees are $2,12,20,30$, and hence we have
$$
\Cat^m(H_4;q)=\frac 
{[30m+30]_q\, [30m+20]_q\, [30m+12]_q\, [30m+2]_q} 
{[30]_q\, [20]_q\, [12]_q\, [2]_q} .
$$
Let $\zeta$ be a $30m$-th root of unity. 
The following cases on the right-hand side of \eqref{eq:1}
occur:
{\refstepcounter{equation}\label{eq:H4}}
\alphaeqn
\begin{align} 
\label{eq:H4.2}
\lim_{q\to\zeta}\Cat^m(H_4;q)&=m+1,
\quad\text{if }\zeta=\zeta_{30},\zeta_{15},\\
\label{eq:H4.3}
\lim_{q\to\zeta}\Cat^m(H_4;q)&=\tfrac {3m+2}2,
\quad\text{if }\zeta=\zeta_{20},\ 2\mid m,\\
\label{eq:H4.4}
\lim_{q\to\zeta}\Cat^m(H_4;q)&=\tfrac {5m+2}2,
\quad\text{if }\zeta=\zeta_{12},\ 2\mid m,\\
\label{eq:H4.5}
\lim_{q\to\zeta}\Cat^m(H_4;q)&=\tfrac {(m+1)(3m+2)}2,
\quad\text{if }\zeta= \zeta_{10},\zeta_{5},\\
\label{eq:H4.6}
\lim_{q\to\zeta}\Cat^m(H_4;q)&=\tfrac {(m+1)(5m+2)}2,
\quad\text{if }\zeta= \zeta_{6},\zeta_{3},\\
\label{eq:H4.7}
\lim_{q\to\zeta}\Cat^m(H_4;q)&=\tfrac {(3m+2)(5m+2)}4,
\quad\text{if }\zeta= \zeta_{4},\ 2\mid m,\\
\label{eq:H4.8}
\lim_{q\to\zeta}\Cat^m(H_4;q)&=\Cat^m(H_4),
\quad\text{if }\zeta=-1\text{ or }\zeta=1,\\
\label{eq:H4.1}
\lim_{q\to\zeta}\Cat^m(H_4;q)&=1,
\quad\text{otherwise.}
\end{align}
\reseteqn

We must now prove that the left-hand side of \eqref{eq:1} in
each case agrees with the values exhibited in 
\eqref{eq:H4}. The only cases not covered by
Lemmas~\ref{lem:2} and \ref{lem:3} are the ones in 
\eqref{eq:H4.3}, \eqref{eq:H4.4}, \eqref{eq:H4.7},
and \eqref{eq:H4.1}.

\smallskip
We begin with the case in \eqref{eq:H4.3}.
By Lemma~\ref{lem:1}, we are free to choose $p=3m/2$. In particular,
$m$ must be divisible by $2$.
From \eqref{eq:Aktion}, we infer
$$
\phi^p\big((w_0;w_1,\dots,w_m)\big)\\=
(*;
c^{2}w_{\frac m2+1}c^{-2},c^{2}w_{\frac m2+2}c^{-2},
\dots,c^{2}w_{m}c^{-2},
cw_{1}c^{-1},\dots,
cw_{\frac m2}c^{-1}\big).
$$
Supposing that 
$(w_0;w_1,\dots,w_m)$ is fixed by $\phi^p$, we obtain
the system of equations
{\refstepcounter{equation}\label{eq:H4A}}
\alphaeqn
\begin{align} \label{eq:H4Aa}
w_i&=c^2w_{\frac m2+i}c^{-2}, \quad i=1,2,\dots,\tfrac {m}2,\\
w_i&=cw_{i-\frac {m}2}c^{-1}, \quad i=\tfrac {m}2+1,\tfrac {m}2+2,\dots,m.
\label{eq:H4Ab}
\end{align}
\reseteqn
There are four distinct possibilities for choosing
the $w_i$'s, $1\le i\le m$: 
{\refstepcounter{equation}\label{eq:H4C}}
\alphaeqn
\begin{enumerate}
\item[(i)]
all the $w_i$'s are equal to $\ep$ (and $w_0=c$), 
\item[(ii)]
there is an $i$ with $1\le i\le \frac m2$ such that
\begin{equation} \label{eq:H4Cii}
\ell_T(w_i)=\ell_T(w_{i+\frac m2})=2,
\end{equation}
and all other $w_j$'s are equal to $\ep$,
\item[(iii)]
there is an $i$ with $1\le i\le \frac m2$ such that
\begin{equation} \label{eq:H4Ciii}
\ell_T(w_i)=\ell_T(w_{i+\frac m2})=1,
\end{equation}
and the other $w_j$'s, $1\le j\le m$, are equal to $\ep$,
\item[(iv)]
there are $i_1$ and $i_2$ with $1\le i_1<i_2\le \frac m2$ such that
\begin{equation} \label{eq:H4Civ}
\ell_T(w_{i_1})=\ell_T(w_{i_2})=
\ell_T(w_{i_1+\frac m2})=\ell_T(w_{i_2+\frac m2})=1,
\end{equation}
and all other $w_j$'s are equal to $\ep$.
\end{enumerate}
\reseteqn

Moreover, since $(w_0;w_1,\dots,w_m)\in NC^m(H_4)$, we must have 
$w_iw_{i+\frac m2}\le_T c$, respectively 
$w_{i_1}w_{i_2}w_{i_1+\frac m2}w_{i_2+\frac m2}=c$.
Together with equations~\eqref{eq:H4A}--\eqref{eq:H4C}, 
this implies that
\begin{equation} \label{eq:H4D}
w_i=c^3w_ic^{-3}\quad\text{and}\quad 
w_i(cw_ic^{-1})=c,
\end{equation}
respectively that
\begin{equation} \label{eq:H4D2}
w_i=c^3w_ic^{-3},\quad 
w_i(cw_ic^{-1})\le_T c,\quad\text{and}\quad
\ell_T(w_i)=1,
\end{equation}
respectively that
\begin{equation} \label{eq:H4E}
w_{i_1}=c^3w_{i_1}c^{-3},\quad 
w_{i_1}(cw_{i_1}c^{-1})\le_T c,
\quad\text{and}\quad \ell_T(w_{i_1})=1.
\end{equation}
With the help of Stembridge's {\sl Maple} package {\tt coxeter}
\cite{StemAZ}, one obtains three solutions for $w_i$ in 
\eqref{eq:H4D}:
{\small
\begin{equation*} 
w_i\in\big\{[1,2,3,4,3,2,1,2],[2,1,2,3],
[1,3,2,1,2,1,3,4]\big\},
\end{equation*}}%
where we have again used the short notation of {\tt coxeter},
$\{s_1,s_2,s_3,s_4\}$ being a simple system of generators of 
$H_4$,
corresponding to the Dynkin diagram displayed in Figure~\ref{fig:H4}.
Each of the above solutions for $w_i$ gives rise to $m/2$ elements of
$\Fix_{NC^m(H_4)}(\phi^{p})$ since $i$ ranges from $1$ to $m/2$.

\begin{figure}[h]
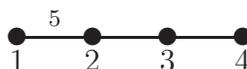

$$
\Einheit1cm
\Pfad(0,0),111\endPfad
\DickPunkt(0,0)
\DickPunkt(1,0)
\DickPunkt(2,0)
\DickPunkt(3,0)
\Label\u{1}(0,0)
\Label\u{2}(1,0)
\Label\u{3}(2,0)
\Label\u{4}(3,0)
\Label\ro{\raise-20pt\hbox{\scriptsize 5}}(0,0)
\hskip3cm
$$ 
\caption{\scriptsize The Dynkin diagram for $H_4$}
\label{fig:H4}
\end{figure}

There are no solutions for $w_i$ in \eqref{eq:H4D2} and
for $w_{i_1}$ in \eqref{eq:H4E}.

In total, we obtain 
$1+3\frac m2=\frac {3m+2}2$ elements in
$\Fix_{NC^m(H_4)}(\phi^p)$, which agrees with the limit in
\eqref{eq:H4.3}.

\smallskip
Next we discuss the case in \eqref{eq:H4.4}.
By Lemma~\ref{lem:1}, we are free to choose $p=5m/2$. In particular,
$m$ must be divisible by $2$.
From \eqref{eq:Aktion}, we infer
\begin{multline*}
\phi^p\big((w_0;w_1,\dots,w_m)\big)\\=
(*;
c^{3}w_{\frac m2+1}c^{-3},c^{3}w_{\frac m2+2}c^{-3},
\dots,c^{3}w_{m}c^{-3},
c^2w_{1}c^{-2},\dots,
c^2w_{\frac m2}c^{-2}\big).
\end{multline*}
Supposing that 
$(w_0;w_1,\dots,w_m)$ is fixed by $\phi^p$, we obtain
the system of equations
{\refstepcounter{equation}\label{eq:H4'A}}
\alphaeqn
\begin{align} \label{eq:H4'Aa}
w_i&=c^3w_{\frac m2+i}c^{-3}, \quad i=1,2,\dots,\tfrac {m}2,\\
w_i&=c^2w_{i-\frac {m}2}c^{-2}, \quad i=\tfrac {m}2+1,\tfrac {m}2+2,\dots,m.
\label{eq:H4'Ab}
\end{align}
\reseteqn
There are four distinct possibilities for choosing
the $w_i$'s, $1\le i\le m$: 
{\refstepcounter{equation}\label{eq:H4'C}}
\alphaeqn
\begin{enumerate}
\item[(i)]
all the $w_i$'s are equal to $\ep$ (and $w_0=c$), 
\item[(ii)]
there is an $i$ with $1\le i\le \frac m2$ such that
\begin{equation} \label{eq:H4'Cii}
\ell_T(w_i)=\ell_T(w_{i+\frac m2})=2,
\end{equation}
and all other $w_j$'s are equal to $\ep$,
\item[(iii)]
there is an $i$ with $1\le i\le \frac m2$ such that
\begin{equation} \label{eq:H4'Ciii}
\ell_T(w_i)=\ell_T(w_{i+\frac m2})=1,
\end{equation}
and the other $w_j$'s, $1\le j\le m$, are equal to $\ep$,
\item[(iv)]
there are $i_1$ and $i_2$ with $1\le i_1<i_2\le \frac m2$ such that
\begin{equation} \label{eq:H4'Civ}
\ell_T(w_{i_1})=\ell_T(w_{i_2})=
\ell_T(w_{i_1+\frac m2})=\ell_T(w_{i_2+\frac m2})=1,
\end{equation}
and all other $w_j$'s are equal to $\ep$.
\end{enumerate}
\reseteqn

Moreover, since $(w_0;w_1,\dots,w_m)\in NC^m(H_4)$, we must have 
$w_iw_{i+\frac m2}\le_T c$, respectively 
$w_{i_1}w_{i_2}w_{i_1+\frac m2}w_{i_2+\frac m2}=c$.
Together with equations~\eqref{eq:H4'A}--\eqref{eq:H4'C}, 
this implies that
\begin{equation} \label{eq:H4'D}
w_i=c^3w_ic^{-3}\quad\text{and}\quad 
w_i(c^2w_ic^{-2})=c,
\end{equation}
respectively that
\begin{equation} \label{eq:H4'D2}
w_i=c^3w_ic^{-3},\quad 
w_i(c^2w_ic^{-2})\le_T c,\quad\text{and}\quad
\ell_T(w_i)=1,
\end{equation}
respectively that
\begin{equation} \label{eq:H4'E}
w_{i_1}=c^3w_{i_1}c^{-3},\quad 
w_{i_1}(c^2w_{i_1}c^{-2})\le_T c,
\quad\text{and}\quad \ell_T(w_{i_1})=1.
\end{equation}
With the help of Stembridge's {\sl Maple} package {\tt coxeter}
\cite{StemAZ}, one obtains five solutions for $w_i$ in 
\eqref{eq:H4'D}:
{\small
\begin{multline*} 
w_i\in\big\{[1,3,2,1,2,1,3,2],[
1,2,1,4,3,2,1,2,1,3,2,1,4,3],\\
[2,1,2,3,2,1,2,4],[
2,1,2,1,4,3,2,1,2,1,3,4],[
1,2,3,2,1,4,3,2,1,2,1,3]
\big\},
\end{multline*}}%
where we used again {\tt coxeter}'s short notation, $\{s_1,s_2,s_3,s_4\}$ being a simple system of generators of 
$H_4$,
corresponding to the Dynkin diagram displayed in Figure~\ref{fig:H4}.
Each of these solutions for $w_i$ gives rise to $m/2$ elements of
$\Fix_{NC^m(H_4)}(\phi^{p})$ since $i$ ranges from $1$ to $m/2$.

There are no solutions for $w_i$ in \eqref{eq:H4'D2} and
for $w_{i_1}$ in \eqref{eq:H4'E}.

In total, we obtain 
$1+5\frac m2=\frac {5m+2}2$ elements in
$\Fix_{NC^m(H_4)}(\phi^p)$, which agrees with the limit in
\eqref{eq:H4.4}.

\smallskip
Finally we discuss the case in \eqref{eq:H4.7}.
By Lemma~\ref{lem:1}, we are free to choose $p=15m/2$. In particular,
$m$ must be divisible by $2$.
From \eqref{eq:Aktion}, we infer
\begin{multline*}
\phi^p\big((w_0;w_1,\dots,w_m)\big)\\=
(*;
c^{8}w_{\frac m2+1}c^{-8},c^{8}w_{\frac m2+2}c^{-8},
\dots,c^{8}w_{m}c^{-8},
c^7w_{1}c^{-7},\dots,
c^7w_{\frac m2}c^{-7}\big).
\end{multline*}
Supposing that 
$(w_0;w_1,\dots,w_m)$ is fixed by $\phi^p$, we obtain
the system of equations
{\refstepcounter{equation}\label{eq:H4''A}}
\alphaeqn
\begin{align} \label{eq:H4''Aa}
w_i&=c^8w_{\frac m2+i}c^{-8}, \quad i=1,2,\dots,\tfrac {m}2,\\
w_i&=c^7w_{i-\frac {m}2}c^{-7}, \quad i=\tfrac {m}2+1,\tfrac {m}2+2,\dots,m.
\label{eq:H4''Ab}
\end{align}
\reseteqn
There are four distinct possibilities for choosing
the $w_i$'s, $1\le i\le m$: {\refstepcounter{equation}\label{eq:H4''C}}
\alphaeqn
\begin{enumerate}
\item[(i)]
all the $w_i$'s are equal to $\ep$ (and $w_0=c$), 
\item[(ii)]
there is an $i$ with $1\le i\le \frac m2$ such that
\begin{equation} \label{eq:H4''Cii}
\ell_T(w_i)=\ell_T(w_{i+\frac m2})=2,
\end{equation}
and all other $w_j$'s are equal to $\ep$,
\item[(iii)]
there is an $i$ with $1\le i\le \frac m2$ such that
\begin{equation} \label{eq:H4''Ciii}
\ell_T(w_i)=\ell_T(w_{i+\frac m2})=1,
\end{equation}
and the other $w_j$'s, $1\le j\le m$, are equal to $\ep$,
\item[(iv)]
there are $i_1$ and $i_2$ with $1\le i_1<i_2\le \frac m2$ such that
\begin{equation} \label{eq:H4''Civ}
\ell_T(w_{i_1})=\ell_T(w_{i_2})=
\ell_T(w_{i_1+\frac m2})=\ell_T(w_{i_2+\frac m2})=1,
\end{equation}
and all other $w_j$'s are equal to $\ep$.
\end{enumerate}
\reseteqn

Moreover, since $(w_0;w_1,\dots,w_m)\in NC^m(H_4)$, we must have 
$w_iw_{i+\frac m2}\le_T c$, respectively 
$w_{i_1}w_{i_2}w_{i_1+\frac m2}w_{i_2+\frac m2}=c$.
Together with equations~\eqref{eq:H4''A}--\eqref{eq:H4''C}, 
this implies that
\begin{equation} \label{eq:H4''D}
w_i=c^{15}w_ic^{-15}\quad\text{and}\quad 
w_i(c^7w_ic^{-7})=c,
\end{equation}
respectively that
\begin{equation} \label{eq:H4''D2}
w_i=c^{15}w_ic^{-15},\quad 
w_i(c^7w_ic^{-7})=c,\quad\text{and}\quad
\ell_T(w_i)=1,
\end{equation}
respectively that
\begin{equation} \label{eq:H4''E}
w_{i_1}=c^{15}w_{i_1}c^{-15},\quad 
w_{i_2}=c^{15}w_{i_2}c^{-15},\quad w_{i_1}w_{i_2}(c^7w_{i_1}c^{-7})(c^7w_{i_2}c^{-7})=c.
\end{equation}
Here, the first equations in both \eqref{eq:H4''D} and 
\eqref{eq:H4''D2}, and the first two equations in \eqref{eq:H4''E} are automatically satisfied due to 
Lemma~\ref{lem:4} with $d=2$.

With the help of Stembridge's {\sl Maple} package {\tt coxeter}
\cite{StemAZ}, one obtains eight solutions for $w_i$ in 
\eqref{eq:H4''D}:
{\small
\begin{multline} \label{eq:H4sol3}
w_i\in\big\{
                          [1, 3, 2, 1, 2, 1, 3, 2],\,
                          [1, 2, 3, 4, 3, 2, 1, 2],\,
                                [2, 1, 2, 3],\\
                 [1, 2, 1, 4, 3, 2, 1, 2, 1, 3, 2, 1, 4, 3],\,
                          [1, 3, 2, 1, 2, 1, 3, 4],\,
                          [2, 1, 2, 3, 2, 1, 2, 4],\\
                    [2, 1, 2, 1, 4, 3, 2, 1, 2, 1, 3, 4],\,
                    [1, 2, 3, 2, 1, 4, 3, 2, 1, 2, 1, 3]
\big\},
\end{multline}}%
where $\{s_1,s_2,s_3,s_4\}$ is a simple system of generators of 
$H_4$,
corresponding to the Dynkin diagram displayed in Figure~\ref{fig:H4},
and each of them gives rise to $m/2$ elements of
$\Fix_{NC^m(H_4)}(\phi^{p})$ since $i$ ranges from $1$ to $m/2$.
Furthermore, one obtains 15 solutions for $w_i$ in 
\eqref{eq:H4''D2}:
{\small
\begin{multline*}
w_i\in\big\{                         [2],\,
                                     [3],\,
                                     [4],\,
                                  [2, 1, 2],\,
                               [3, 2, 1, 2, 3],\,
                            [1, 3, 2, 1, 2, 1, 3],\,
                            [2, 1, 2, 3, 2, 1, 2],\,
                            [1, 2, 3, 4, 3, 2, 1],\\
                         [2, 1, 3, 2, 1, 2, 1, 3, 2],\,
                         [1, 4, 3, 2, 1, 2, 1, 3, 4],\,
                         [2, 1, 2, 3, 4, 3, 2, 1, 2],\\
                   [1, 2, 1, 4, 3, 2, 1, 2, 1, 3, 2, 1, 4],\,
                   [1, 3, 2, 1, 2, 3, 4, 3, 2, 1, 2, 1, 3],\\
                [2, 1, 2, 1, 4, 3, 2, 1, 2, 1, 3, 2, 1, 2, 4],\,
                [1, 3, 2, 1, 4, 3, 2, 1, 2, 1, 3, 2, 1, 4, 3]
\big\},
\end{multline*}}%
each of them giving rise to $m/2$ elements of
$\Fix_{NC^m(H_4)}(\phi^{p})$ since $i$ ranges from $1$ to $m/2$,
and one obtains $30$ pairs $(w_{i_1},w_{i_2})$ of solutions in 
\eqref{eq:H4''E}:
{\small
\begin{multline} \label{eq:H4sol4}
(w_{i_1},w_{i_2})\in\big\{                  (     [2],\, [2, 1, 3, 2, 1, 2, 1, 3, 2]),\ (
                      [2],\, [2, 1, 2, 3, 4, 3, 2, 1, 2]),\ (
                            [3],\, [3, 2, 1, 2, 3]),\\ (
             [3],\, [1, 3, 2, 1, 4, 3, 2, 1, 2, 1, 3, 2, 1, 4, 3]),\ (
                      [4],\, [1, 4, 3, 2, 1, 2, 1, 3, 4]),\\ (
                      [4],\, [2, 1, 2, 3, 4, 3, 2, 1, 2]),\ (
                               [2, 1, 2],\, [3]),\ (
                   [2, 1, 2],\, [1, 4, 3, 2, 1, 2, 1, 3, 4]),\\ (
                [3, 2, 1, 2, 3],\, [2, 1, 3, 2, 1, 2, 1, 3, 2]),\ (
          [3, 2, 1, 2, 3],\, [1, 3, 2, 1, 2, 3, 4, 3, 2, 1, 2, 1, 3]),\\ (
                         [1, 3, 2, 1, 2, 1, 3],\, [2]),\ (
                         [1, 3, 2, 1, 2, 1, 3],\, [4]),\ (
                         [2, 1, 2, 3, 2, 1, 2],\, [4]),\\ (
                      [2, 1, 2, 3, 2, 1, 2],\, [2, 1, 2]),\ (
                         [1, 2, 3, 4, 3, 2, 1],\, [2]),\ (
                   [1, 2, 3, 4, 3, 2, 1],\, [3, 2, 1, 2, 3]),\\ (
             [2, 1, 3, 2, 1, 2, 1, 3, 2],\, [1, 3, 2, 1, 2, 1, 3]),\ (
             [2, 1, 3, 2, 1, 2, 1, 3, 2],\, [2, 1, 2, 3, 2, 1, 2]),\\ (
 [1, 4, 3, 2, 1, 2, 1, 3, 4],\, [2, 1, 2, 1, 4, 3, 2, 1, 2, 1, 3, 2, 1, 2, 4]),\\ (
 [1, 4, 3, 2, 1, 2, 1, 3, 4],\, [1, 3, 2, 1, 4, 3, 2, 1, 2, 1, 3, 2, 1, 4, 3]),\\ (
             [2, 1, 2, 3, 4, 3, 2, 1, 2],\, [2, 1, 2, 3, 2, 1, 2]),\\ (
 [2, 1, 2, 3, 4, 3, 2, 1, 2],\, [2, 1, 2, 1, 4, 3, 2, 1, 2, 1, 3, 2, 1, 2, 4]),\\ (
                [1, 2, 1, 4, 3, 2, 1, 2, 1, 3, 2, 1, 4],\, [3]),\\ (
       [1, 2, 1, 4, 3, 2, 1, 2, 1, 3, 2, 1, 4],\, [1, 2, 3, 4, 3, 2, 1]),\\ (
       [1, 3, 2, 1, 2, 3, 4, 3, 2, 1, 2, 1, 3],\, [1, 3, 2, 1, 2, 1, 3]),\\ (
       [1, 3, 2, 1, 2, 3, 4, 3, 2, 1, 2, 1, 3],\, [1, 2, 3, 4, 3, 2, 1]),\\ (
          [2, 1, 2, 1, 4, 3, 2, 1, 2, 1, 3, 2, 1, 2, 4],\, [2, 1, 2]),\\ (
               [2, 1, 2, 1, 4, 3, 2, 1, 2, 1, 3, 2, 1, 2, 4],\, 
                 [1, 2, 1, 4, 3, 2, 1, 2, 1, 3, 2, 1, 4]),\\ (
               [1, 3, 2, 1, 4, 3, 2, 1, 2, 1, 3, 2, 1, 4, 3],\, 
                 [1, 2, 1, 4, 3, 2, 1, 2, 1, 3, 2, 1, 4]),\\ (
               [1, 3, 2, 1, 4, 3, 2, 1, 2, 1, 3, 2, 1, 4, 3],\, 
                 [1, 3, 2, 1, 2, 3, 4, 3, 2, 1, 2, 1, 3])
\big\},
\end{multline}}%
each of them giving rise to $\binom {m/2}2$ elements of
$\Fix_{NC^m(H_4)}(\phi^{p})$ since $1\le i_1<i_2\le \frac m2$.

In total, we obtain 
$1+(15+8)\frac m2+30\binom {m/2}2=\frac {(3m+2)(5m+2)}4$ elements in
$\Fix_{NC^m(H_4)}(\phi^p)$, which agrees with the limit in
\eqref{eq:H4.7}.

\smallskip
Finally, we turn to \eqref{eq:H4.1}. By Remark~\ref{rem:1},
the only choices for $h_2$ and $m_2$ to be considered
are $h_2=2$ and $m_2=4$, respectively $h_2=m_2=2$. 
These correspond to the choices $p=15m/2$,
respectively $p=15m/4$, out of which only $p=15m/4$
has not yet been discussed and belongs to the current case.
The corresponding action of $\phi^p$ is given by 
\begin{multline*}
\phi^p\big((w_0;w_1,\dots,w_m)\big)\\=
(*;
c^{4}w_{\frac m4+1}c^{-4},c^{4}w_{\frac m4+2}c^{-4},
\dots,c^{4}w_{m}c^{-4},
c^3w_{1}c^{-3},\dots,
c^3w_{\frac {m}4}c^{-3}\big),
\end{multline*}
so that we have to solve
$$t_1(c^{11}t_1c^{-11})(c^{7}t_1c^{-7})(c^{3}t_1c^{-3})=c$$
for $t_1$ with $\ell_T(t_1)$.
A computation with Stembridge's {\sl Maple} package {\tt coxeter}
\cite{StemAZ} finds no solution. 
Hence, 
the left-hand side of \eqref{eq:1} is equal to $1$, as required.

\subsection*{\sc Case $G_{32}$}
The degrees are $12,18,24,30$, and hence we have
$$
\Cat^m(G_{32};q)=\frac 
{[30m+30]_q\, [30m+24]_q\, [30m+18]_q\, [30m+12]_q} 
{[30]_q\, [24]_q\, [18]_q\, [12]_q} .
$$
Let $\zeta$ be a $30m$-th root of unity. 
The following cases on the right-hand side of \eqref{eq:1}
occur:
{\refstepcounter{equation}\label{eq:G32}}
\alphaeqn
\begin{align} 
\label{eq:G32.2}
\lim_{q\to\zeta}\Cat^m(G_{32};q)&=m+1,
\quad\text{if }\zeta=\zeta_{30},\zeta_{15},\zeta_{10},\zeta_5,\\
\label{eq:G32.3}
\lim_{q\to\zeta}\Cat^m(G_{32};q)&=\tfrac {5m+4}4,
\quad\text{if }\zeta=\zeta_{24},\zeta_8,\ 4\mid m,\\
\label{eq:G32.4}
\lim_{q\to\zeta}\Cat^m(G_{32};q)&=\tfrac {5m+3}3,
\quad\text{if }\zeta=\zeta_{18},\zeta_9,\ 3\mid m,\\
\label{eq:G32.5}
\lim_{q\to\zeta}\Cat^m(G_{32};q)&=\tfrac {(5m+4)(5m+2)}{8},
\quad\text{if }\zeta=\zeta_{12},\zeta_{4},\ 2\mid m,\\
\label{eq:G32.6}
\lim_{q\to\zeta}\Cat^m(G_{32};q)&=\Cat^m(G_{32}),
\quad\text{if }\zeta=\zeta_6,\zeta_3,-1,1,\\
\label{eq:G32.1}
\lim_{q\to\zeta}\Cat^m(G_{32};q)&=1,
\quad\text{otherwise.}
\end{align}
\reseteqn

We must now prove that the left-hand side of \eqref{eq:1} in
each case agrees with the values exhibited in 
\eqref{eq:G32}. The only cases not covered by
Lemmas~\ref{lem:2} and \ref{lem:3} are the ones in 
\eqref{eq:G32.3}, \eqref{eq:G32.4}, 
\eqref{eq:G32.5}, 
and \eqref{eq:G32.1}.

\smallskip
We begin with the case in \eqref{eq:G32.3}.
By Lemma~\ref{lem:1}, 
we are free to choose $p=5m/4$ if $\zeta=\zeta_{24}$, 
and we are free to choose $p=15m/4$ if $\zeta=\zeta_{8}$. 
In particular, in all both cases,
$m$ must be divisible by $4$.

We start with the case that $p=5m/4$.
From \eqref{eq:Aktion}, we infer
\begin{multline*}
\phi^p\big((w_0;w_1,\dots,w_m)\big)\\=
(*;
c^{2}w_{\frac {3m}4+1}c^{-2},c^{2}w_{\frac {3m}4+2}c^{-2},
\dots,c^{2}w_{m}c^{-2},
cw_{1}c^{-1},\dots,
cw_{\frac {3m}4}c^{-1}\big).
\end{multline*}
Supposing that 
$(w_0;w_1,\dots,w_m)$ is fixed by $\phi^p$, we obtain
the system of equations
{\refstepcounter{equation}\label{eq:G32A}}
\alphaeqn
\begin{align} \label{eq:G32Aa}
w_i&=c^2w_{\frac {3m}4+i}c^{-2}, \quad i=1,2,\dots,\tfrac {m}4,\\
w_i&=cw_{i-\frac {m}4}c^{-1}, \quad i=\tfrac {m}4+1,\tfrac {m}4+2,\dots,m.
\label{eq:G32Ab}
\end{align}
\reseteqn
There are two distinct possibilities for choosing
the $w_i$'s, $1\le i\le m$: 
{\refstepcounter{equation}\label{eq:G32C}}
\alphaeqn
\begin{enumerate}
\item[(i)]
all the $w_i$'s are equal to $\ep$ (and $w_0=c$), 
\item[(ii)]
there is an $i$ with $1\le i\le \frac m4$ such that
\begin{equation} \label{eq:G32Cii}
\ell_T(w_i)=\ell_T(w_{i+\frac m4})=\ell_T(w_{i+\frac {2m}4})
=\ell_T(w_{i+\frac {3m}4})=1,
\end{equation}
and the other $w_j$'s, $1\le j\le m$, are equal to $\ep$.
\end{enumerate}
\reseteqn

Moreover, since $(w_0;w_1,\dots,w_m)\in NC^m(G_{32})$, we must have 
$$w_iw_{i+\frac m4}w_{i+\frac {2m}4}w_{i+\frac {3m}4}= c.$$
Together with equations~\eqref{eq:G32A}--\eqref{eq:G32C}, 
this implies that
\begin{equation} \label{eq:G32D}
w_i=c^5w_ic^{-5}\quad\text{and}\quad 
w_i(cw_ic^{-1})(c^2w_ic^{-2})(c^3w_ic^{-3})=c.
\end{equation}
With the help of {\tt CHEVIE}, one obtains five solutions for $w_i$ in 
\eqref{eq:G32D}:
{\small
\begin{equation} \label{eq:G32sol1}
w_i\in\big\{[1],\,
[2],\,
[3],\,
[4],\,
[27]\big\},
\end{equation}}%
where we have again used the short notation of {\tt CHEVIE}
referring to the internal
ordering of the roots of $G_{32}$ in {\tt CHEVIE}.
Each of the above solutions for $w_i$ gives rise to $m/4$ elements of
$\Fix_{NC^m(G_{32})}(\phi^{p})$ since $i$ ranges from $1$ to $m/4$.

In total, we obtain 
$1+5\frac m4=\frac {5m+4}4$ elements in
$\Fix_{NC^m(G_{32})}(\phi^p)$, which agrees with the limit in
\eqref{eq:G32.3}.

In the case that $p=15m/4$, we infer from \eqref{eq:Aktion} that
\begin{multline*}
\phi^p\big((w_0;w_1,\dots,w_m)\big)\\=
(*;
c^{4}w_{\frac {m}4+1}c^{-4},c^{4}w_{\frac {m}4+2}c^{-4},
\dots,c^{4}w_{m}c^{-4},
c^3w_{1}c^{-3},\dots,
c^3w_{\frac {m}4}c^{-3}\big).
\end{multline*}
Supposing that 
$(w_0;w_1,\dots,w_m)$ is fixed by $\phi^p$, we obtain
the system of equations
{\refstepcounter{equation}\label{eq:G32'''A}}
\alphaeqn
\begin{align} \label{eq:G32'''Aa}
w_i&=c^4w_{\frac {m}4+i}c^{-4}, \quad i=1,2,\dots,\tfrac {3m}4,\\
w_i&=c^3w_{i-\frac {3m}4}c^{-3}, \quad i=\tfrac {3m}4+1,\tfrac {3m}4+2,\dots,m.
\label{eq:G32'''Ab}
\end{align}
\reseteqn
There are two distinct possibilities for choosing
the $w_i$'s, $1\le i\le m$: 
either all the $w_i$'s are equal to $\ep$, or
there is an $i$ with $1\le i\le \frac m4$ such that
$$\ell_T(w_i)=\ell_T(w_{i+\frac m4})=
\ell_T(w_{i+\frac {2m}4})=\ell_T(w_{i+\frac {3m}4})=1.$$
Writing $t_1,t_2,t_3,t_4$ for 
$w_i,w_{i+\frac m4},w_{i+\frac {2m}4},w_{i+\frac {3m}4}$, 
respectively, the equations \eqref{eq:G32'''A} 
reduce to
{\refstepcounter{equation}\label{eq:G32'''B}}
\alphaeqn
\begin{align} \label{eq:G32'''Ba}
t_1&=c^4t_2c^{-4},\\
\label{eq:G32'''Bb}
t_2&=c^4t_3c^{-4},\\
\label{eq:G32'''Bc}
t_3&=c^4t_4c^{-4},\\
t_4&=c^3t_1c^{-3}.
\label{eq:G32'''Bd}
\end{align}
\reseteqn
One of these equations is in fact superfluous: if we substitute
\eqref{eq:G32'''Bb}--\eqref{eq:G32'''Bd} in \eqref{eq:G32'''Ba}, then
we obtain $t_1=c^{15}t_1c^{-15}$ which is automatically satisfied 
due to Lemma~\ref{lem:4} with $d=2$.

Since $(w_0;w_1,\dots,w_m)\in NC^m(G_{32})$, we must have $t_1t_2t_3t_4=c$.
Combining this with \eqref{eq:G32'''B}, we infer that
\begin{equation} \label{eq:G32'''D}
t_1(c^{11}t_1c^{-11})(c^{7}t_1c^{-7})(c^{3}t_1c^{-3})=c.
\end{equation}
Using that $c^5t_1c^{-5}=t_1$, due to Lemma~\ref{lem:4}
with $d=6$, we see that
this equation is equivalent with \eqref{eq:G32D}. 
Therefore, we are facing exactly the same enumeration 
problem here as for
$p=5m/4$, and, consequently, the number of solutions to \eqref{eq:G32'''D} is the same, namely
$\frac {5m+4}4$, as required.

\smallskip
Next we consider the case in \eqref{eq:G32.4}.
By Lemma~\ref{lem:1}, 
we are free to choose $p=5m/3$ if $\zeta=\zeta_{18}$, 
and we are free to choose $p=10m/3$ if $\zeta=\zeta_{9}$.
In particular, in both cases,
$m$ must be divisible by $3$.

We start with the case that $p=5m/3$.
From \eqref{eq:Aktion}, we infer
$$
\phi^p\big((w_0;w_1,\dots,w_m)\big)\\=
(*;
c^{2}w_{\frac m3+1}c^{-2},c^{2}w_{\frac m3+2}c^{-2},
\dots,c^{2}w_{m}c^{-2},
cw_{1}c^{-1},\dots,
cw_{\frac m3}c^{-1}\big).
$$
Supposing that 
$(w_0;w_1,\dots,w_m)$ is fixed by $\phi^p$, we obtain
the system of equations
{\refstepcounter{equation}\label{eq:G32''''A}}
\alphaeqn
\begin{align} \label{eq:G32''''Aa}
w_i&=c^2w_{\frac m3+i}c^{-2}, \quad i=1,2,\dots,\tfrac {2m}3,\\
w_i&=cw_{i-\frac {2m}3}c^{-1}, \quad i=\tfrac {2m}3+1,\tfrac {2m}3+2,\dots,m.
\label{eq:G32''''Ab}
\end{align}
\reseteqn
There are two distinct possibilities for choosing
the $w_i$'s, $1\le i\le m$: 
{\refstepcounter{equation}\label{eq:G32''''C}}
\alphaeqn
\begin{enumerate}
\item[(i)]
all the $w_i$'s are equal to $\ep$ (and $w_0=c$), 
\item[(ii)]
there is an $i$ with $1\le i\le \frac m2$ such that
\begin{equation} \label{eq:G32''''Cii}
\ell_T(w_i)=\ell_T(w_{i+\frac m3})=\ell_T(w_{i+\frac {2m}3})=1,
\end{equation}
and the other $w_j$'s, $1\le j\le m$, are equal to $\ep$.
\end{enumerate}
\reseteqn

Moreover, since $(w_0;w_1,\dots,w_m)\in NC^m(G_{32})$, we must have 
$w_iw_{i+\frac m3}w_{i+\frac {2m}3}\le_T c$.
Together with equations~\eqref{eq:G32''''A}--\eqref{eq:G32''''C}, 
this implies that
\begin{equation} \label{eq:G32''''D}
w_i=c^5w_ic^{-5}\quad\text{and}\quad 
w_i(c^3w_ic^{-3})(cw_ic^{-1})\le_T c.
\end{equation}
With the help of {\tt CHEVIE}, one obtains three solutions for $w_i$ in 
\eqref{eq:G32''''D}:
{\small
$$w_i\in\big\{[1],\,
[2],\,
[3],\,
[4],\,
[27]\big\},$$}%
where we have again used the short notation of {\tt CHEVIE}
referring to the internal
ordering of the roots of $G_{32}$ in {\tt CHEVIE}.
Each of the above solutions for $w_i$ gives rise to $m/3$ elements of
$\Fix_{NC^m(G_{32})}(\phi^{p})$ since $i$ ranges from $1$ to $m/3$.

In total, we obtain 
$1+5\frac m3=\frac {5m+3}3$ elements in
$\Fix_{NC^m(G_{32})}(\phi^p)$, which agrees with the limit in
\eqref{eq:G32.4}.

In the case that $p=10m/3$, we infer from \eqref{eq:Aktion} that
\begin{multline*}
\phi^p\big((w_0;w_1,\dots,w_m)\big)\\=
(*;
c^{4}w_{\frac {2m}3+1}c^{-4},c^{4}w_{\frac {2m}3+2}c^{-4},
\dots,c^{4}w_{m}c^{-4},
c^3w_{1}c^{-3},\dots,
c^3w_{\frac {2m}3}c^{-3}\big).
\end{multline*}
Supposing that 
$(w_0;w_1,\dots,w_m)$ is fixed by $\phi^p$, we obtain
the system of equations
{\refstepcounter{equation}\label{eq:G32'''''''A}}
\alphaeqn
\begin{align} \label{eq:G32'''''''Aa}
w_i&=c^4w_{\frac {2m}3+i}c^{-4}, \quad i=1,2,\dots,\tfrac {m}3,\\
w_i&=c^3w_{i-\frac {m}3}c^{-3}, \quad i=\tfrac {m}3+1,\tfrac {m}3+2,\dots,m.
\label{eq:G32'''''''Ab}
\end{align}
\reseteqn
There are two distinct possibilities for choosing
the $w_i$'s, $1\le i\le m$: 
either all the $w_i$'s are equal to $\ep$, or
there is an $i$ with $1\le i\le \frac m3$ such that
$$\ell_T(w_i)=\ell_T(w_{i+\frac m3})=\ell_T(w_{i+\frac {2m}3})=1.$$
Writing $t_1,t_2,t_3$ for $w_i,w_{i+\frac m3},w_{i+\frac {2m}3}$, 
respectively, the equations \eqref{eq:G32'''''''A} 
reduce to
{\refstepcounter{equation}\label{eq:G32'''''''B}}
\alphaeqn
\begin{align} \label{eq:G32'''''''Ba}
t_1&=c^4t_3c^{-4},\\
\label{eq:G32'''''''Bb}
t_2&=c^3t_1c^{-3},\\
t_3&=c^3t_2c^{-3}.
\label{eq:G32'''''''Bc}
\end{align}
\reseteqn
One of these equations is in fact superfluous: if we substitute
\eqref{eq:G32'''''''Bb} and \eqref{eq:G32'''''''Bc} in \eqref{eq:G32'''''''Ba}, then
we obtain $t_1=c^{10}t_1c^{-10}$ which is automatically satisfied 
due to Lemma~\ref{lem:4} with $d=2$.

Since $(w_0;w_1,\dots,w_m)\in NC^m(G_{32})$, we must have
$t_1t_2t_3\le_T c$.
Combining this with \eqref{eq:G32'''''''B}, we infer that
\begin{equation} \label{eq:G32'''''''D}
t_1(c^{3}t_1c^{-3})(c^{6}t_1c^{-6})\le_T c.
\end{equation}
Using that $c^5t_1c^{-5}=t_1$, due to Lemma~\ref{lem:4}
with $d=4$, we see that
this equation is equivalent with \eqref{eq:G32''''D}. 
Therefore, we are facing exactly the same enumeration 
problem here as for
$p=5m/3$, and, consequently, the number of solutions to \eqref{eq:G32'''''''D} is the same, namely
$\frac {5m+3}3$, as required.

\smallskip
Next we discuss the case in \eqref{eq:G32.5}.
By Lemma~\ref{lem:1}, 
we are free to choose $p=5m/2$ if $\zeta=\zeta_{12}$, 
and we are free to choose $p=15m/2$ if $\zeta=\zeta_{4}$. 
In particular,
$m$ must be divisible by $2$.
From \eqref{eq:Aktion}, we infer
\begin{multline} \label{eq:55m22Aktion}
\phi^p\big((w_0;w_1,\dots,w_m)\big)\\=
(*;
c^{3}w_{\frac m2+1}c^{-3},c^{3}w_{\frac m2+2}c^{-3},
\dots,c^{3}w_{m}c^{-3},
c^2w_{1}c^{-2},\dots,
c^2w_{\frac m2}c^{-2}\big).
\end{multline}
Supposing that 
$(w_0;w_1,\dots,w_m)$ is fixed by $\phi^p$, we obtain
the system of equations
{\refstepcounter{equation}\label{eq:G32'A}}
\alphaeqn
\begin{align} \label{eq:G32'Aa}
w_i&=c^3w_{\frac m2+i}c^{-3}, \quad i=1,2,\dots,\tfrac {m}2,\\
w_i&=c^2w_{i-\frac {m}2}c^{-2}, \quad i=\tfrac {m}2+1,\tfrac {m}2+2,\dots,m.
\label{eq:G32'Ab}
\end{align}
\reseteqn
There are four distinct possibilities for choosing
the $w_i$'s, $1\le i\le m$: 
{\refstepcounter{equation}\label{eq:G32'C}}
\alphaeqn
\begin{enumerate}
\item[(i)]
all the $w_i$'s are equal to $\ep$ (and $w_0=c$), 
\item[(ii)]
there is an $i$ with $1\le i\le \frac m2$ such that
\begin{equation} \label{eq:G32'Cii}
\ell_T(w_i)=\ell_T(w_{i+\frac m2})=2,
\end{equation}
and all other $w_j$'s are equal to $\ep$,
\item[(iii)]
there is an $i$ with $1\le i\le \frac m2$ such that
\begin{equation} \label{eq:G32'Ciii}
\ell_T(w_i)=\ell_T(w_{i+\frac m2})=1,
\end{equation}
and the other $w_j$'s, $1\le j\le m$, are equal to $\ep$,
\item[(iv)]
there are $i_1$ and $i_2$ with $1\le i_1<i_2\le \frac m2$ such that
\begin{equation} \label{eq:G32'Civ}
\ell_T(w_{i_1})=\ell_T(w_{i_2})=
\ell_T(w_{i_1+\frac m2})=\ell_T(w_{i_2+\frac m2})=1,
\end{equation}
and all other $w_j$'s are equal to $\ep$.
\end{enumerate}
\reseteqn

Moreover, since $(w_0;w_1,\dots,w_m)\in NC^m(G_{32})$, we must have 
$w_iw_{i+\frac m2}\le_T c$, respectively 
$w_{i_1}w_{i_2}w_{i_1+\frac m2}w_{i_2+\frac m2}=c$.
Together with equations~\eqref{eq:G32'A}--\eqref{eq:G32'C}, 
this implies that
\begin{equation} \label{eq:G32'D}
w_i=c^3w_ic^{-3}\quad\text{and}\quad 
w_i(c^2w_ic^{-2})=c,
\end{equation}
respectively that
\begin{equation} \label{eq:G32'D2}
w_i=c^3w_ic^{-3},\quad 
w_i(c^2w_ic^{-2})\le_T c,\quad\text{and}\quad
\ell_T(w_i)=1,
\end{equation}
respectively that
\begin{multline} \label{eq:G32'E}
w_{i_1}=c^3w_{i_1}c^{-3},\quad 
w_{i_2}=c^3w_{i_2}c^{-3},\\
w_{i_1}w_{i_2}(c^2w_{i_1}c^{-2})(c^2w_{i_2}c^{-2})=c,
\quad\text{and}\quad \ell_T(w_{i_1})=\ell_T(w_{i_2})=1.
\end{multline}
With the help of {\tt CHEVIE}, one obtains ten solutions for $w_i$ in 
\eqref{eq:G32'D}:
{\small
\begin{equation} \label{eq:G32sol2}
w_i\in\big\{[ 3, 4 ],\,
[ 1, 20 ],\,
[ 4, 7 ],\,
[ 1, 2 ],\,
[ 2, 3 ],\,
[ 4, 27 ],\,
[ 3, 23 ],\,
[ 1, 23 ],\,
[ 2, 16 ],\,
[ 12, 27 ]
\big\},
\end{equation}}%
where we have again used the short notation of {\tt CHEVIE}
referring to the internal
ordering of the roots of $G_{32}$ in {\tt CHEVIE},
one obtains solutions for $w_i$ in 
\eqref{eq:G32'D2}:
{\small
\begin{equation*}
w_i\in\big\{[ 4 ]\,\
[ 3 ]\,\
[ 20 ]\,\
[ 1 ]\,\
[ 7 ]\,\
[ 2 ]\,\
[ 16 ]\,\
[ 12 ]\,\
[ 27 ]\,\
[ 23 ]
\big\},
\end{equation*}}%
each of them giving rise to $m/2$ elements of
$\Fix_{NC^m(G_{32})}(\phi^{p})$ since $i$ ranges from $1$ to $m/2$,
and one obtains 25 pairs $(w_{i_1},w_{i_2})$ satisfying 
\eqref{eq:G32'E}:
{\small
\begin{multline} \label{eq:G32sol3}
(w_{i_1},w_{i_2})\in\big\{( [ 4 ],\, [ 20 ] ),\
( [ 4 ],\, [ 7 ] ),\
( [ 4 ],\, [ 27 ] ),\
( [ 3 ],\, [ 4 ] ),\
( [ 3 ],\, [ 12 ] ),\
( [ 3 ],\, [ 23 ] ),\
( [ 20 ],\, [ 3 ] ),\\
( [ 20 ],\, [ 1 ] ),\
( [ 1 ],\, [ 20 ] ),\
( [ 1 ],\, [ 2 ] ),\
( [ 1 ],\, [ 23 ] ),\
( [ 7 ],\, [ 4 ] ),\
( [ 7 ],\, [ 1 ] ),\
( [ 2 ],\, [ 3 ] ),\
( [ 2 ],\, [ 7 ] ),\\
( [ 2 ],\, [ 16 ] ),\
( [ 16 ],\, [ 4 ] ),\
( [ 16 ],\, [ 2 ] ),\
( [ 12 ],\, [ 2 ] ),\
( [ 12 ],\, [ 27 ] ),\
( [ 27 ],\, [ 1 ] ),\
( [ 27 ],\, [ 16 ] ),\\
( [ 27 ],\, [ 12 ] ),\
( [ 23 ],\, [ 3 ] ),\
( [ 23 ],\, [ 27 ] )
\big\},
\end{multline}}%
each of them giving rise to $\binom {m/2}2$ elements of
$\Fix_{NC^m(G_{32})}(\phi^{p})$ since $1\le i_1<i_2\le \frac m2$. 

In total, we obtain 
$1+20\frac m2+25\binom {m/2}2=\frac {(5m+4)(5m+2)}8$ elements in
$\Fix_{NC^m(G_{32})}(\phi^p)$, which agrees with the limit in
\eqref{eq:G32.5}.

\smallskip
If $p=15m/2$, then, from \eqref{eq:Aktion}, we infer
\begin{multline*}
\phi^p\big((w_0;w_1,\dots,w_m)\big)\\=
(*;
c^{8}w_{\frac m2+1}c^{-8},c^{8}w_{\frac m2+2}c^{-8},
\dots,c^{8}w_{m}c^{-8},
c^7w_{1}c^{-7},\dots,
c^7w_{\frac m2}c^{-7}\big).
\end{multline*}
Using that $c^5wc^{-5}=w$ for all $w\in NC(G_{32}$, due to
Lemma~\ref{lem:4} with $d=6$, we see that this action is
identical with the one in \eqref{eq:55m22Aktion}.
Therefore, we are facing exactly the same enumeration 
problem here as for
$p=5m/2$, and, consequently, the number of elements
in $\Fix_{NC^m(G_{32})}(\phi^p)$ is the same, namely
$\frac {(5m+4)(5m+2)}8$, as required.

\smallskip
Finally, we turn to \eqref{eq:G32.1}. By Remark~\ref{rem:1},
the only choices for $h_2$ and $m_2$ to be considered
are $h_2=2$ and $m_2=4$, $h_2=m_2=2$, $h_2=m_2=3$, 
$h_2=6$ and $m_2=4$, $h_2=6$ and $m_2=3$, 
respectively $h_2=6$ and $m_2=2$. 
These correspond to the choices $p=15m/4$, $p=15m/2$, 
$p=10m/3$, $p=5m/4$, $p=5m/3$,
respectively $p=5m/2$, all of which 
have already been discussed as they do not belong to 
\eqref{eq:G32.1}. Hence, 
\eqref{eq:1} must necessarily hold, as required.

\subsection*{\sc Case $G_{33}$}
The degrees are $4,6,10,12,18$, and hence we have
$$
\Cat^m(G_{33};q)=\frac 
{[18m+18]_q\, [18m+12]_q\, [18m+10]_q\, [18m+6]_q\, [18m+4]_q} 
{[18]_q\, [12]_q\, [10]_q\, [6]_q\, [4]_q} .
$$
Let $\zeta$ be a $18m$-th root of unity. 
The following cases on the right-hand side of \eqref{eq:1}
occur:
{\refstepcounter{equation}\label{eq:G33}}
\alphaeqn
\begin{align} 
\label{eq:G33.2}
\lim_{q\to\zeta}\Cat^m(G_{33};q)&=m+1,
\quad\text{if }\zeta=\zeta_{18},\zeta_9,\\
\label{eq:G33.3}
\lim_{q\to\zeta}\Cat^m(G_{33};q)&=\tfrac {3m+2}2,
\quad\text{if }\zeta=\zeta_{12},\ 2\mid m,\\
\label{eq:G33.4}
\lim_{q\to\zeta}\Cat^m(G_{33};q)&=\tfrac {9m+5}5,
\quad\text{if }\zeta=\zeta_{10},\zeta_5,\ 5\mid m,\\
\label{eq:G33.5}
\lim_{q\to\zeta}\Cat^m(G_{33};q)&=\tfrac {(m+1)(3m+2)(3m+1)}{2},
\quad\text{if }\zeta=\zeta_{6},\zeta_{3},\\
\label{eq:G33.6}
\lim_{q\to\zeta}\Cat^m(G_{33};q)&=\tfrac {(3m+2)(9m+2)}{4},
\quad\text{if }\zeta=\zeta_{4},\ 2\mid m,\\
\label{eq:G33.7}
\lim_{q\to\zeta}\Cat^m(G_{33};q)&=\Cat^m(G_{33}),
\quad\text{if }\zeta=-1\text{ or }\zeta=1,\\
\label{eq:G33.1}
\lim_{q\to\zeta}\Cat^m(G_{33};q)&=1,
\quad\text{otherwise.}
\end{align}
\reseteqn

We must now prove that the left-hand side of \eqref{eq:1} in
each case agrees with the values exhibited in 
\eqref{eq:G33}. The only cases not covered by
Lemmas~\ref{lem:2} and \ref{lem:3} are the ones in 
\eqref{eq:G33.3}, \eqref{eq:G33.4}, 
\eqref{eq:G33.6}, 
and \eqref{eq:G33.1}.

\smallskip
We begin with the case in \eqref{eq:G33.3}.
By Lemma~\ref{lem:1}, 
we are free to choose $p=3m/2$. 
In particular,
$m$ must be divisible by $2$.
From \eqref{eq:Aktion}, we infer
\begin{equation*} 
\phi^p\big((w_0;w_1,\dots,w_m)\big)=
(*;
c^{2}w_{\frac m2+1}c^{-2},c^{2}w_{\frac m2+2}c^{-2},
\dots,c^{2}w_{m}c^{-2},
cw_{1}c^{-1},\dots,
cw_{\frac m2}c^{-1}\big).
\end{equation*}
Supposing that 
$(w_0;w_1,\dots,w_m)$ is fixed by $\phi^p$, we obtain
the system of equations
{\refstepcounter{equation}\label{eq:G33'A}}
\alphaeqn
\begin{align} \label{eq:G33'Aa}
w_i&=c^2w_{\frac m2+i}c^{-2}, \quad i=1,2,\dots,\tfrac {m}2,\\
w_i&=cw_{i-\frac {m}2}c^{-1}, \quad i=\tfrac {m}2+1,\tfrac {m}2+2,\dots,m.
\label{eq:G33'Ab}
\end{align}
\reseteqn
There are four distinct possibilities for choosing
the $w_i$'s, $1\le i\le m$: 
{\refstepcounter{equation}\label{eq:G33'C}}
\alphaeqn
\begin{enumerate}
\item[(i)]
all the $w_i$'s are equal to $\ep$ (and $w_0=c$), 
\item[(ii)]
there is an $i$ with $1\le i\le \frac m2$ such that
\begin{equation} \label{eq:G33'Cii}
\ell_T(w_i)=\ell_T(w_{i+\frac m2})=2,
\end{equation}
and the other $w_j$'s, $1\le j\le m$, are equal to $\ep$,
\item[(iii)]
there is an $i$ with $1\le i\le \frac m2$ such that
\begin{equation} \label{eq:G33'Ciii}
\ell_T(w_i)=\ell_T(w_{i+\frac m2})=1,
\end{equation}
and the other $w_j$'s, $1\le j\le m$, are equal to $\ep$,
\item[(iv)]
there are $i_1$ and $i_2$ with $1\le i_1<i_2\le \frac m2$ such that
\begin{equation} \label{eq:G33'Civ}
\ell_T(w_{i_1})=\ell_T(w_{i_2})=
\ell_T(w_{i_1+\frac m2})=\ell_T(w_{i_2+\frac m2})=1,
\end{equation}
and the other $w_j$'s, $1\le j\le m$, are equal to $\ep$.
\end{enumerate}
\reseteqn

Moreover, since $(w_0;w_1,\dots,w_m)\in NC^m(G_{33})$, we must have 
$w_iw_{i+\frac m2}\le_T c$, respectively 
$w_{i_1}w_{i_2}w_{i_1+\frac m2}w_{i_2+\frac m2}\le_T c$.
Together with equations~\eqref{eq:G33'A}--\eqref{eq:G33'C}, 
this implies that
\begin{equation} \label{eq:G33'D}
w_i=c^3w_ic^{-3}\quad\text{and}\quad 
w_i(cw_ic^{-1})\le_T c,\quad\text{and}\quad
\ell_T(w_i)=2,
\end{equation}
respectively that
\begin{equation} \label{eq:G33'D2}
w_i=c^3w_ic^{-3},\quad 
w_i(cw_ic^{-1})\le_T c,\quad\text{and}\quad
\ell_T(w_i)=1,
\end{equation}
respectively that
\begin{equation} \label{eq:G33'E}
w_{i_1}=c^3w_{i_1}c^{-3},\quad 
w_{i_1}(cw_{i_1}c^{-1})\le_T c,
\quad\text{and}\quad \ell_T(w_{i_1})=1.
\end{equation}
With the help of {\tt CHEVIE}, one obtains three solutions for $w_i$ in 
\eqref{eq:G33'D}:
{\small
\begin{equation*} 
w_i\in\big\{[ 1, 44 ],\,
[ 2, 4 ],\,
[ 5, 42 ]
\big\},
\end{equation*}}%
where we have again used the short notation of {\tt CHEVIE}
referring to the internal
ordering of the roots of $G_{33}$ in {\tt CHEVIE}.
Each of them gives rise to $m/2$ elements of
$\Fix_{NC^m(G_{33})}(\phi^{p})$ since $i$ ranges from $1$ to $m/2$.

There are no solutions to \eqref{eq:G33'D2} and to
\eqref{eq:G33'E}.

In total, we obtain 
$1+3\frac m2=\frac {3m+2}2$ elements in
$\Fix_{NC^m(G_{33})}(\phi^p)$, which agrees with the limit in
\eqref{eq:G33.3}.

\smallskip
Next we turn to the case in \eqref{eq:G33.4}.
By Lemma~\ref{lem:1}, 
we are free to choose $p=9m/5$ if $\zeta=\zeta_{10}$, 
and we are free to choose $p=18m/5$ if $\zeta=\zeta_{5}$. 
In particular, in all both cases,
$m$ must be divisible by $5$.

We start with the case that $p=9m/5$.
From \eqref{eq:Aktion}, we infer
\begin{multline*}
\phi^p\big((w_0;w_1,\dots,w_m)\big)\\=
(*;
c^{2}w_{\frac {m}5+1}c^{-2},c^{2}w_{\frac {m}5+2}c^{-2},
\dots,c^{2}w_{m}c^{-2},
cw_{1}c^{-1},\dots,
cw_{\frac {m}5}c^{-1}\big).
\end{multline*}
Supposing that 
$(w_0;w_1,\dots,w_m)$ is fixed by $\phi^p$, we obtain
the system of equations
{\refstepcounter{equation}\label{eq:G33A}}
\alphaeqn
\begin{align} \label{eq:G33Aa}
w_i&=c^2w_{\frac {m}5+i}c^{-2}, \quad i=1,2,\dots,\tfrac {4m}5,\\
w_i&=cw_{i-\frac {4m}5}c^{-1}, \quad i=\tfrac {4m}5+1,\tfrac {4m}5+2,\dots,m.
\label{eq:G33Ab}
\end{align}
\reseteqn
There are two distinct possibilities for choosing
the $w_i$'s, $1\le i\le m$: 
{\refstepcounter{equation}\label{eq:G33C}}
\alphaeqn
\begin{enumerate}
\item[(i)]
all the $w_i$'s are equal to $\ep$ (and $w_0=c$), 
\item[(ii)]
there is an $i$ with $1\le i\le \frac m5$ such that
\begin{equation} \label{eq:G33Cii}
\ell_T(w_i)=\ell_T(w_{i+\frac m5})=\ell_T(w_{i+\frac {2m}5})
=\ell_T(w_{i+\frac {3m}5})=\ell_T(w_{i+\frac {4m}5})=1,
\end{equation}
and the other $w_j$'s, $1\le j\le m$, are equal to $\ep$.
\end{enumerate}
\reseteqn

Moreover, since $(w_0;w_1,\dots,w_m)\in NC^m(G_{33})$, we must have 
$$w_iw_{i+\frac m5}w_{i+\frac {2m}5}w_{i+\frac {3m}5}
w_{i+\frac {4m}5}=c.$$
Together with equations~\eqref{eq:G33A}--\eqref{eq:G33C}, 
this implies that
\begin{equation} \label{eq:G33D}
w_i=c^9w_ic^{-9}\quad\text{and}\quad 
w_i(c^7w_ic^{-7})(c^5w_ic^{-5})(c^3w_ic^{-3})(cw_ic^{-1})=c.
\end{equation}
With the help of {\tt CHEVIE}, one obtains nine solutions for $w_i$ in 
\eqref{eq:G33D}:
{\small
\begin{equation} \label{eq:G33sol1}
w_i\in\big\{[ 5 ],\,
[ 4 ],\,
[ 1 ],\,
[ 2 ],\,
[ 10 ],\,
[ 27 ],\,
[ 42 ],\,
[ 44 ],\,
[ 52 ]
\big\},
\end{equation}}%
where we have again used the short notation of {\tt CHEVIE}
referring to the internal
ordering of the roots of $G_{33}$ in {\tt CHEVIE}.
Each of the above solutions for $w_i$ gives rise to $m/5$ elements of
$\Fix_{NC^m(G_{33})}(\phi^{p})$ since $i$ ranges from $1$ to $m/5$.

In total, we obtain 
$1+9\frac m5=\frac {9m+5}5$ elements in
$\Fix_{NC^m(G_{33})}(\phi^p)$, which agrees with the limit in
\eqref{eq:G33.4}.

In the case that $p=18m/5$, we infer from \eqref{eq:Aktion} that
\begin{multline*}
\phi^p\big((w_0;w_1,\dots,w_m)\big)\\=
(*;
c^{4}w_{\frac {2m}5+1}c^{-4},c^{4}w_{\frac {2m}5+2}c^{-4},
\dots,c^{4}w_{m}c^{-4},
c^3w_{1}c^{-3},\dots,
c^3w_{\frac {2m}5}c^{-3}\big).
\end{multline*}
Supposing that 
$(w_0;w_1,\dots,w_m)$ is fixed by $\phi^p$, we obtain
the system of equations
{\refstepcounter{equation}\label{eq:G33'''A}}
\alphaeqn
\begin{align} \label{eq:G33'''Aa}
w_i&=c^4w_{\frac {2m}5+i}c^{-4}, \quad i=1,2,\dots,\tfrac {3m}5,\\
w_i&=c^3w_{i-\frac {3m}5}c^{-3}, \quad i=\tfrac {3m}5+1,\tfrac {3m}5+2,\dots,m.
\label{eq:G33'''Ab}
\end{align}
\reseteqn
There are two distinct possibilities for choosing
the $w_i$'s, $1\le i\le m$: 
either all the $w_i$'s are equal to $\ep$, or
there is an $i$ with $1\le i\le \frac m5$ such that
$$\ell_T(w_i)=\ell_T(w_{i+\frac m5})=
\ell_T(w_{i+\frac {2m}5})=\ell_T(w_{i+\frac {3m}5})=
\ell_T(w_{i+\frac {4m}5})=1.$$
Writing $t_1,t_2,t_3,t_4,t_5$ for 
$w_i,w_{i+\frac m5},w_{i+\frac {2m}5},w_{i+\frac {3m}5},
w_{i+\frac {4m}5}$, 
respectively, the equations\break \eqref{eq:G33'''A} 
reduce to
{\refstepcounter{equation}\label{eq:G33'''B}}
\alphaeqn
\begin{align} \label{eq:G33'''Ba}
t_1&=c^4t_3c^{-4},\\
\label{eq:G33'''Bb}
t_2&=c^4t_4c^{-4},\\
\label{eq:G33'''Bc}
t_3&=c^4t_5c^{-4},\\
\label{eq:G33'''Bd}
t_4&=c^3t_1c^{-3},\\
t_5&=c^3t_2c^{-3}.
\label{eq:G33'''Be}
\end{align}
\reseteqn
One of these equations is in fact superfluous: if we substitute
\eqref{eq:G33'''Bb}--\eqref{eq:G33'''Be} in \eqref{eq:G33'''Ba}, then
we obtain $t_1=c^{18}t_1c^{-18}$ which is automatically satisfied 
since $c^{18}=\ep$.

Since $(w_0;w_1,\dots,w_m)\in NC^m(G_{33})$, we must have $t_1t_2t_3t_4t_5=c$.
Combining this with \eqref{eq:G33'''B}, we infer that
\begin{equation} \label{eq:G33'''D}
t_1(c^{7}t_1c^{-7})(c^{14}t_1c^{-14})(c^{3}t_1c^{-3})
(c^{10}t_1c^{-10})=c.
\end{equation}
Using that $c^9t_1c^{-9}=t_1$, due to Lemma~\ref{lem:4}
with $d=2$, we see that
this equation is equivalent with \eqref{eq:G33D}. 
Therefore, we are facing exactly the same enumeration 
problem here as for
$p=9m/5$, and, consequently, the number of solutions to \eqref{eq:G33'''D} is the same, namely
$\frac {9m+5}5$, as required.

\smallskip
Next we consider the case in \eqref{eq:G33.6}.
By Lemma~\ref{lem:1}, 
we are free to choose $p=9m/2$.
In particular, 
$m$ must be divisible by $2$.
From \eqref{eq:Aktion}, we infer
\begin{multline*}
\phi^p\big((w_0;w_1,\dots,w_m)\big)\\=
(*;
c^{5}w_{\frac m2+1}c^{-5},c^{5}w_{\frac m2+2}c^{-5},
\dots,c^{5}w_{m}c^{-5},
c^4w_{1}c^{-4},\dots,
c^4w_{\frac m2}c^{-4}\big).
\end{multline*}
Supposing that 
$(w_0;w_1,\dots,w_m)$ is fixed by $\phi^p$, we obtain
the system of equations
{\refstepcounter{equation}\label{eq:G33''''A}}
\alphaeqn
\begin{align} \label{eq:G33''''Aa}
w_i&=c^5w_{\frac m2+i}c^{-5}, \quad i=1,2,\dots,\tfrac {m}2,\\
w_i&=c^4w_{i-\frac {m}2}c^{-4}, \quad i=\tfrac {m}2+1,\tfrac {m}2+2,\dots,m.
\label{eq:G33''''Ab}
\end{align}
\reseteqn
There are four distinct possibilities for choosing
the $w_i$'s, $1\le i\le m$: 
{\refstepcounter{equation}\label{eq:G33''''C}}
\alphaeqn
\begin{enumerate}
\item[(i)]
all the $w_i$'s are equal to $\ep$ (and $w_0=c$), 
\item[(ii)]
there is an $i$ with $1\le i\le \frac m2$ such that
\begin{equation} \label{eq:G33''''Cii}
\ell_T(w_i)=\ell_T(w_{i+\frac m2})=2,
\end{equation}
and the other $w_j$'s, $1\le j\le m$, are equal to $\ep$,
\item[(iii)]
there is an $i$ with $1\le i\le \frac m2$ such that
\begin{equation} \label{eq:G33''''Ciii}
\ell_T(w_i)=\ell_T(w_{i+\frac m2})=1,
\end{equation}
and the other $w_j$'s, $1\le j\le m$, are equal to $\ep$,
\item[(iv)]
there are $i_1$ and $i_2$ with $1\le i_1<i_2\le \frac m2$ such that
\begin{equation} \label{eq:G33''''Civ}
\ell_T(w_{i_1})=\ell_T(w_{i_2})=
\ell_T(w_{i_1+\frac m2})=\ell_T(w_{i_2+\frac m2})=1,
\end{equation}
and the other $w_j$'s, $1\le j\le m$, are equal to $\ep$.
\end{enumerate}
\reseteqn

Moreover, since $(w_0;w_1,\dots,w_m)\in NC^m(G_{33})$, we must have 
$w_iw_{i+\frac m2}\le_T c$, respectively 
$w_{i_1}w_{i_2}w_{i_1+\frac m2}w_{i_2+\frac m2}\le_T c$.
Together with equations~\eqref{eq:G33''''A}--\eqref{eq:G33''''C}, 
this implies that
\begin{equation} \label{eq:G33''''D}
w_i=c^9w_ic^{-9}\quad\text{and}\quad 
w_i(c^4w_ic^{-4})\le_T c,\quad\text{and}\quad
\ell_T(w_i)=2,
\end{equation}
respectively that
\begin{equation} \label{eq:G33''''D2}
w_i=c^9w_ic^{-9},\quad 
w_i(c^4w_ic^{-4})\le_T c,\quad\text{and}\quad
\ell_T(w_i)=1,
\end{equation}
respectively that
\begin{multline} \label{eq:G33''''E}
w_{i_1}=c^9w_{i_1}c^{-9},\quad 
w_{i_2}=c^9w_{i_2}c^{-9},\\
w_{i_1}w_{i_2}(c^4w_{i_1}c^{-4})(c^4w_{i_2}c^{-4})\le_T c,
\quad\text{and}\quad \ell_T(w_{i_1})=\ell_T(w_{i_2})=1.
\end{multline}
With the help of {\tt CHEVIE}, one obtains 21 solutions for $w_i$ in 
\eqref{eq:G33''''D}:
{\small
\begin{multline*}
w_i\in\big\{[ 4, 5 ],\,
[ 1, 20 ],\,
[ 5, 7 ],\,
[ 1, 2 ],\,
[ 2, 4 ],\,
[ 10, 259 ],\,
[ 27, 208 ],\,
[ 27, 44 ],\,
[ 4, 39 ],\,
[ 5, 42 ],\,
[ 1, 39 ],\\
[ 5, 52 ],\,
[ 10, 208 ],\,
[ 1, 44 ],\,
[ 42, 113 ],\,
[ 4, 113 ],\,
[ 2, 27 ],\,
[ 10, 42 ],\,
[ 2, 49 ],\,
[ 52, 61 ],\,
[ 44, 123 ]
\big\},
\end{multline*}}%
where we have again used the short notation of {\tt CHEVIE}
referring to the internal
ordering of the roots of $G_{33}$ in {\tt CHEVIE},
one obtains 18 solutions for $w_i$ in 
\eqref{eq:G33''''D2}:
{\small
\begin{multline*}
w_i\in\big\{[ 5 ],\,
[ 4 ],\,
[ 20 ],\,
[ 208 ],\,
[ 259 ],\,
[ 1 ],\,
[ 2 ],\,
[ 7 ],\,
[ 10 ],\,
[ 27 ],\\
[ 39 ],\,
[ 42 ],\,
[ 49 ],\,
[ 44 ],\,
[ 52 ],\,
[ 113 ],\,
[ 61 ],\,
[ 123 ]
\big\},
\end{multline*}}%
each of them giving rise to $m/2$ elements of
$\Fix_{NC^m(G_{33})}(\phi^{p})$ since $i$ ranges from $1$ to $m/2$,
and one obtains 54 pairs $(w_{i_1},w_{i_2})$ satisfying 
\eqref{eq:G33''''E}:
{\tiny
\begin{multline*}
(w_{i_1},w_{i_2})\in\big\{( [ 5 ],\, [ 20 ] ),\
( [ 5 ],\, [ 7 ] ),\
( [ 5 ],\, [ 42 ] ),\
( [ 5 ],\, [ 52 ] ),\
( [ 4 ],\, [ 5 ] ),\
( [ 4 ],\, [ 10 ] ),\
( [ 4 ],\, [ 39 ] ),\
( [ 4 ],\, [ 113 ] ),\
( [ 20 ],\, [ 4 ] ),\\
( [ 20 ],\, [ 1 ] ),\
( [ 208 ],\, [ 27 ] ),\
( [ 208 ],\, [ 52 ] ),\
( [ 259 ],\, [ 10 ] ),\
( [ 259 ],\, [ 27 ] ),\
( [ 1 ],\, [ 20 ] ),\
( [ 1 ],\, [ 2 ] ),\
( [ 1 ],\, [ 39 ] ),\
( [ 1 ],\, [ 44 ] ),\
( [ 2 ],\, [ 4 ] ),\\
( [ 2 ],\, [ 7 ] ),\
( [ 2 ],\, [ 27 ] ),\
( [ 2 ],\, [ 49 ] ),\
( [ 7 ],\, [ 5 ] ),\
( [ 7 ],\, [ 1 ] ),\
( [ 10 ],\, [ 208 ] ),\
( [ 10 ],\, [ 259 ] ),\
( [ 10 ],\, [ 2 ] ),\
( [ 10 ],\, [ 42 ] ),\
( [ 27 ],\, [ 5 ] ),\\
( [ 27 ],\, [ 208 ] ),\
( [ 27 ],\, [ 44 ] ),\
( [ 27 ],\, [ 61 ] ),\
( [ 39 ],\, [ 4 ] ),\
( [ 39 ],\, [ 42 ] ),\
( [ 42 ],\, [ 1 ] ),\
( [ 42 ],\, [ 27 ] ),\
( [ 42 ],\, [ 113 ] ),\
( [ 42 ],\, [ 123 ] ),\\
( [ 49 ],\, [ 5 ] ),\
( [ 49 ],\, [ 2 ] ),\
( [ 44 ],\, [ 4 ] ),\
( [ 44 ],\, [ 259 ] ),\
( [ 44 ],\, [ 52 ] ),\
( [ 44 ],\, [ 123 ] ),\
( [ 52 ],\, [ 1 ] ),\
( [ 52 ],\, [ 10 ] ),\
( [ 52 ],\, [ 49 ] ),\\
( [ 52 ],\, [ 61 ] ),\
( [ 113 ],\, [ 42 ] ),\
( [ 113 ],\, [ 44 ] ),\
( [ 61 ],\, [ 2 ] ),\
( [ 61 ],\, [ 52 ] ),\
( [ 123 ],\, [ 10 ] ),\
( [ 123 ],\, [ 44 ] )
\big\},
\end{multline*}}%
each of them giving rise to $\binom {m/2}2$ elements of
$\Fix_{NC^m(G_{33})}(\phi^{p})$ since $1\le i_1<i_2\le m$.

In total, we obtain 
$1+(21+18)\frac m2+54\binom {m/2}2=\frac {(3m+2)(9m+2)}4$ elements in
$\Fix_{NC^m(G_{33})}(\phi^p)$, which agrees with the limit in
\eqref{eq:G33.6}.

\smallskip
Finally, we turn to \eqref{eq:G33.1}. By Remark~\ref{rem:1},
the only choices for $h_2$ and $m_2$ to be considered
are $h_2=1$ and $m_2=5$, $h_2=2$ and $m_2=5$, 
$h_2=2$ and $m_2=4$, 
respectively $h_2=m_2=2$. 
These correspond to the choices $p=18m/5$, $p=9m/5$, 
$p=9m/4$, respectively $p=9m/2$, out of which only $p=9m/4$ 
has not yet been discussed and belongs to the current case.
The corresponding action of $\phi^p$ is given by 
\begin{multline*}
\phi^p\big((w_0;w_1,\dots,w_m)\big)\\=
(*;
c^{3}w_{\frac {3m}4+1}c^{-3},c^{3}w_{\frac {3m}4+2}c^{-3},
\dots,c^{3}w_{m}c^{-3},
c^2w_{1}c^{-2},\dots,
c^2w_{\frac {3m}4}c^{-2}\big),
\end{multline*}
so that we have to solve
$$t_1(c^{2}t_1c^{-2})(c^{4}t_1c^{-4})(c^{6}t_1c^{-6})\le_T c$$
for $t_1$ with $\ell_T(t_1)$.
A computation with the help of {\tt CHEVIE} finds no solution. 
Hence, 
the left-hand side of \eqref{eq:1} is equal to $1$, as required.

\subsection*{\sc Case $G_{34}$}
The degrees are $6,12,18,24,30,42$, and hence we have
\begin{multline*}
\Cat^m(G_{34};q)=\frac 
{[42m+42]_q\, [42m+30]_q\, [42m+24]_q} 
{[42]_q\, [30]_q\, [24]_q}\\
\times
\frac {[42m+18]_q\,
 [42m+12]_q\, [42m+6]_q}
{[18]_q\, [12]_q\, [6]_q} .
\end{multline*}
Let $\zeta$ be a $42m$-th root of unity. 
The following cases on the right-hand side of \eqref{eq:1}
occur:
{\refstepcounter{equation}\label{eq:G34}}
\alphaeqn
\begin{align} 
\label{eq:G34.2}
\lim_{q\to\zeta}\Cat^m(G_{34};q)&=m+1,
\quad\text{if }\zeta=\zeta_{42},\zeta_{21},\zeta_{14},\zeta_7,\\
\label{eq:G34.3}
\lim_{q\to\zeta}\Cat^m(G_{34};q)&=\tfrac {7m+5}5,
\quad\text{if }\zeta=\zeta_{30},\zeta_{15},\zeta_{10},\zeta_5,\ 
5\mid m,\\
\label{eq:G34.4}
\lim_{q\to\zeta}\Cat^m(G_{34};q)&=\tfrac {7m+4}4,
\quad\text{if }\zeta=\zeta_{24},\zeta_8,\ 4\mid m,\\
\label{eq:G34.5}
\lim_{q\to\zeta}\Cat^m(G_{34};q)&=\tfrac {7m+3}3,
\quad\text{if }\zeta=\zeta_{18},\zeta_9,\ 3\mid m,\\
\label{eq:G34.6}
\lim_{q\to\zeta}\Cat^m(G_{34};q)&=\tfrac {(7m+4)(7m+2)}{8},
\quad\text{if }\zeta=\zeta_{12},\zeta_4,\ 2\mid m,\\
\label{eq:G34.7}
\lim_{q\to\zeta}\Cat^m(G_{34};q)&=\Cat^m(G_{34}),
\quad\text{if }\zeta=\zeta_6,\zeta_3,-1,1,\\
\label{eq:G34.1}
\lim_{q\to\zeta}\Cat^m(G_{34};q)&=1,
\quad\text{otherwise.}
\end{align}
\reseteqn

We must now prove that the left-hand side of \eqref{eq:1} in
each case agrees with the values exhibited in 
\eqref{eq:G34}. The only cases not covered by
Lemmas~\ref{lem:2} and \ref{lem:3} are the ones in 
\eqref{eq:G34.3}, \eqref{eq:G34.4}, 
\eqref{eq:G34.5}, \eqref{eq:G34.6}, 
and \eqref{eq:G34.1}.

\smallskip
We begin with the case in \eqref{eq:G34.3}.
By Lemma~\ref{lem:1}, 
we are free to choose $p=7m/5$ if $\zeta=\zeta_{30}$, 
we are free to choose $p=14m/5$ if $\zeta=\zeta_{15}$, 
we are free to choose $p=21m/5$ if $\zeta=\zeta_{10}$, 
and we are free to choose $p=42m/5$ if $\zeta=\zeta_{5}$. 
In particular, in all cases,
$m$ must be divisible by $5$.

We start with the case that $p=7m/5$.
From \eqref{eq:Aktion}, we infer
\begin{multline*}
\phi^p\big((w_0;w_1,\dots,w_m)\big)\\=
(*;
c^{2}w_{\frac {3m}5+1}c^{-2},c^{2}w_{\frac {3m}5+2}c^{-2},
\dots,c^{2}w_{m}c^{-2},
cw_{1}c^{-1},\dots,
cw_{\frac {3m}5}c^{-1}\big).
\end{multline*}
Supposing that 
$(w_0;w_1,\dots,w_m)$ is fixed by $\phi^p$, we obtain
the system of equations
{\refstepcounter{equation}\label{eq:G34A}}
\alphaeqn
\begin{align} \label{eq:G34Aa}
w_i&=c^2w_{\frac {3m}5+i}c^{-2}, \quad i=1,2,\dots,\tfrac {2m}5,\\
w_i&=cw_{i-\frac {2m}5}c^{-1}, \quad i=\tfrac {2m}5+1,\tfrac {2m}5+2,\dots,m.
\label{eq:G34Ab}
\end{align}
\reseteqn
There are two distinct possibilities for choosing
the $w_i$'s, $1\le i\le m$: 
{\refstepcounter{equation}\label{eq:G34C}}
\alphaeqn
\begin{enumerate}
\item[(i)]
all the $w_i$'s are equal to $\ep$ (and $w_0=c$), 
\item[(ii)]
there is an $i$ with $1\le i\le \frac m5$ such that
\begin{equation} \label{eq:G34Cii}
\ell_T(w_i)=\ell_T(w_{i+\frac m5})=\ell_T(w_{i+\frac {2m}5})
=\ell_T(w_{i+\frac {3m}5})=\ell_T(w_{i+\frac {4m}5})=1,
\end{equation}
and the other $w_j$'s, $1\le j\le m$, are equal to $\ep$.
\end{enumerate}
\reseteqn

Moreover, since $(w_0;w_1,\dots,w_m)\in NC^m(G_{34})$, we must have 
$$w_iw_{i+\frac m5}w_{i+\frac {2m}5}w_{i+\frac {3m}5}
w_{i+\frac {4m}5}\le_T c.$$
Together with equations~\eqref{eq:G34A}--\eqref{eq:G34C}, 
this implies that
\begin{equation} \label{eq:G34D}
w_i=c^7w_ic^{-7}\quad\text{and}\quad 
w_i(c^4w_ic^{-4})(cw_ic^{-1})(c^5w_ic^{-5})(c^2w_ic^{-2})\le_T c.
\end{equation}
With the help of {\tt CHEVIE}, one obtains seven solutions for $w_i$ in 
\eqref{eq:G34D}:
{\small
$$w_i\in\big\{[ 4 ],\,
[ 5 ],\,
[ 6 ],\,
[ 2 ],\,
[ 11 ],\,
[ 44 ],\,
[ 63 ],\,
[ 74 ]\big\},$$}%
where we have again used the short notation of {\tt CHEVIE}
referring to the internal
ordering of the roots of $G_{34}$ in {\tt CHEVIE}.
Each of the above solutions for $w_i$ gives rise to $m/5$ elements of
$\Fix_{NC^m(G_{34})}(\phi^{p})$ since $i$ ranges from $1$ to $m/5$.

In total, we obtain 
$1+7\frac m5=\frac {7m+5}5$ elements in
$\Fix_{NC^m(G_{34})}(\phi^p)$, which agrees with the limit in
\eqref{eq:G34.3}.

In the case that $p=14m/5$, we infer from \eqref{eq:Aktion} that
\begin{multline*}
\phi^p\big((w_0;w_1,\dots,w_m)\big)\\=
(*;
c^{3}w_{\frac {m}5+1}c^{-3},c^{3}w_{\frac {m}5+2}c^{-3},
\dots,c^{3}w_{m}c^{-3},
c^2w_{1}c^{-2},\dots,
c^2w_{\frac {m}5}c^{-2}\big).
\end{multline*}
Supposing that 
$(w_0;w_1,\dots,w_m)$ is fixed by $\phi^p$, we obtain
the system of equations
{\refstepcounter{equation}\label{eq:G34'''A}}
\alphaeqn
\begin{align} \label{eq:G34'''Aa}
w_i&=c^3w_{\frac {m}5+i}c^{-3}, \quad i=1,2,\dots,\tfrac {4m}5,\\
w_i&=c^2w_{i-\frac {4m}5}c^{-2}, \quad i=\tfrac {4m}5+1,\tfrac {4m}5+2,\dots,m.
\label{eq:G34'''Ab}
\end{align}
\reseteqn
There are two distinct possibilities for choosing
the $w_i$'s, $1\le i\le m$: 
either all the $w_i$'s are equal to $\ep$, or
there is an $i$ with $1\le i\le \frac m5$ such that
$$\ell_T(w_i)=\ell_T(w_{i+\frac m5})=
\ell_T(w_{i+\frac {2m}5})=\ell_T(w_{i+\frac {3m}5})=
\ell_T(w_{i+\frac {4m}5})=1.$$
Writing $t_1,t_2,t_3,t_4,t_5$ for 
$w_i,w_{i+\frac m5},w_{i+\frac {2m}5},w_{i+\frac {3m}5},
w_{i+\frac {4m}5}$, 
respectively, the equations\break \eqref{eq:G34'''A} 
reduce to
{\refstepcounter{equation}\label{eq:G34'''B}}
\alphaeqn
\begin{align} \label{eq:G34'''Ba}
t_1&=c^3t_2c^{-3},\\
\label{eq:G34'''Bb}
t_2&=c^3t_3c^{-3},\\
\label{eq:G34'''Bc}
t_3&=c^3t_4c^{-3},\\
\label{eq:G34'''Bd}
t_4&=c^3t_5c^{-3},\\
t_5&=c^2t_1c^{-2}.
\label{eq:G34'''Be}
\end{align}
\reseteqn
One of these equations is in fact superfluous: if we substitute
\eqref{eq:G34'''Bb}--\eqref{eq:G34'''Be} in \eqref{eq:G34'''Ba}, then
we obtain $t_1=c^{14}t_1c^{-14}$ which is automatically satisfied 
due to Lemma~\ref{lem:4} with $d=3$.

Since $(w_0;w_1,\dots,w_m)\in NC^m(G_{34})$, we must have
$t_1t_2t_3t_4t_5\le_T c$.
Combining this with \eqref{eq:G34'''B}, we infer that
\begin{equation} \label{eq:G34'''D}
t_1(c^{11}t_1c^{-11})(c^{8}t_1c^{-8})(c^{5}t_1c^{-5})
(c^{2}t_1c^{-2})\le_T c.
\end{equation}
Using that $c^7t_1c^{-7}=t_1$, due to Lemma~\ref{lem:4}
with $d=6$, we see that
this equation is equivalent with \eqref{eq:G34D}. 
Therefore, we are facing exactly the same enumeration 
problem here as for
$p=7m/5$, and, consequently, the number of solutions to \eqref{eq:G34'''D} is the same, namely
$\frac {7m+5}5$, as required.

In the case that $p=21m/5$, we infer from \eqref{eq:Aktion} that
\begin{multline*}
\phi^p\big((w_0;w_1,\dots,w_m)\big)\\=
(*;
c^{5}w_{\frac {4m}5+1}c^{-5},c^{5}w_{\frac {4m}5+2}c^{-5},
\dots,c^{5}w_{m}c^{-5},
c^4w_{1}c^{-4},\dots,
c^4w_{\frac {4m}5}c^{-4}\big).
\end{multline*}
Supposing that 
$(w_0;w_1,\dots,w_m)$ is fixed by $\phi^p$, we obtain
the system of equations
{\refstepcounter{equation}\label{eq:G34'''''A}}
\alphaeqn
\begin{align} \label{eq:G34'''''Aa}
w_i&=c^5w_{\frac {4m}5+i}c^{-5}, \quad i=1,2,\dots,\tfrac {m}5,\\
w_i&=c^4w_{i-\frac {m}5}c^{-4}, \quad i=\tfrac {m}5+1,\tfrac {m}5+2,\dots,m.
\label{eq:G34'''''Ab}
\end{align}
\reseteqn
There are two distinct possibilities for choosing
the $w_i$'s, $1\le i\le m$: 
either all the $w_i$'s are equal to $\ep$, or
there is an $i$ with $1\le i\le \frac m5$ such that
$$\ell_T(w_i)=\ell_T(w_{i+\frac m5})=
\ell_T(w_{i+\frac {2m}5})=\ell_T(w_{i+\frac {3m}5})=
\ell_T(w_{i+\frac {4m}5})=1.$$
Writing $t_1,t_2,t_3,t_4,t_5$ for 
$w_i,w_{i+\frac m5},w_{i+\frac {2m}5},w_{i+\frac {3m}5},
w_{i+\frac {4m}5}$, 
respectively, the equations\break \eqref{eq:G34'''''A} 
reduce to
{\refstepcounter{equation}\label{eq:G34'''''B}}
\alphaeqn
\begin{align} \label{eq:G34'''''Ba}
t_1&=c^5t_5c^{-5},\\
\label{eq:G34'''''Bb}
t_2&=c^4t_1c^{-4},\\
\label{eq:G34'''''Bc}
t_3&=c^4t_2c^{-4},\\
\label{eq:G34'''''Bd}
t_4&=c^4t_3c^{-4},\\
t_5&=c^4t_4c^{-4}.
\label{eq:G34'''''Be}
\end{align}
\reseteqn
One of these equations is in fact superfluous: if we substitute
\eqref{eq:G34'''''Bb}--\eqref{eq:G34'''''Be} in \eqref{eq:G34'''''Ba}, then
we obtain $t_1=c^{21}t_1c^{-21}$ which is automatically satisfied 
due to Lemma~\ref{lem:4} with $d=6$.

Since $(w_0;w_1,\dots,w_m)\in NC^m(G_{34})$, we must have
$t_1t_2t_3t_4t_5\le_T c$.
Combining this with \eqref{eq:G34'''''B}, we infer that
\begin{equation} \label{eq:G34'''''D}
t_1(c^{4}t_1c^{-4})(c^{8}t_1c^{-8})(c^{12}t_1c^{-12})
(c^{16}t_1c^{-16})\le_T c.
\end{equation}
Using that $c^7t_1c^{-7}=t_1$, due to Lemma~\ref{lem:4}
with $d=6$, we see that
this equation is equivalent with \eqref{eq:G34D}. 
Therefore, we are facing exactly the same enumeration 
problem here as for
$p=7m/5$, and, consequently, the number of solutions to \eqref{eq:G34'''''D} is the same, namely
$\frac {7m+5}5$, as required.

In the case that $p=42m/5$, we infer from \eqref{eq:Aktion} that
\begin{multline*}
\phi^p\big((w_0;w_1,\dots,w_m)\big)\\=
(*;
c^{9}w_{\frac {3m}5+1}c^{-9},c^{9}w_{\frac {3m}5+2}c^{-9},
\dots,c^{9}w_{m}c^{-9},
c^8w_{1}c^{-8},\dots,
c^8w_{\frac {3m}5}c^{-8}\big).
\end{multline*}
Supposing that 
$(w_0;w_1,\dots,w_m)$ is fixed by $\phi^p$, we obtain
the system of equations
{\refstepcounter{equation}\label{eq:G34''''''A}}
\alphaeqn
\begin{align} \label{eq:G34''''''Aa}
w_i&=c^9w_{\frac {3m}5+i}c^{-9}, \quad i=1,2,\dots,\tfrac {2m}5,\\
w_i&=c^8w_{i-\frac {2m}5}c^{-8}, \quad i=\tfrac {2m}5+1,\tfrac {2m}5+2,\dots,m.
\label{eq:G34''''''Ab}
\end{align}
\reseteqn
There are two distinct possibilities for choosing
the $w_i$'s, $1\le i\le m$: 
either all the $w_i$'s are equal to $\ep$, or
there is an $i$ with $1\le i\le \frac m5$ such that
$$\ell_T(w_i)=\ell_T(w_{i+\frac m5})=
\ell_T(w_{i+\frac {2m}5})=\ell_T(w_{i+\frac {3m}5})=
\ell_T(w_{i+\frac {4m}5})=1.$$
Writing $t_1,t_2,t_3,t_4,t_5$ for 
$w_i,w_{i+\frac m5},w_{i+\frac {2m}5},w_{i+\frac {3m}5},
w_{i+\frac {4m}5}$, 
respectively, the equations\break \eqref{eq:G34''''''A} 
reduce to
{\refstepcounter{equation}\label{eq:G34''''''B}}
\alphaeqn
\begin{align} \label{eq:G34''''''Ba}
t_1&=c^9t_4c^{-9},\\
\label{eq:G34''''''Bb}
t_2&=c^9t_5c^{-9},\\
\label{eq:G34''''''Bc}
t_3&=c^8t_1c^{-8},\\
\label{eq:G34''''''Bd}
t_4&=c^8t_2c^{-8},\\
t_5&=c^8t_3c^{-8}.
\label{eq:G34''''''Be}
\end{align}
\reseteqn
One of these equations is in fact superfluous: if we substitute
\eqref{eq:G34''''''Bb}--\eqref{eq:G34''''''Be} in \eqref{eq:G34''''''Ba}, then
we obtain $t_1=c^{42}t_1c^{-42}$ which is automatically satisfied 
since $c^{42}=\ep$.

Since $(w_0;w_1,\dots,w_m)\in NC^m(G_{34})$, we must have
$t_1t_2t_3t_4t_5\le_T c$.
Combining this with \eqref{eq:G34''''''B}, we infer that
\begin{equation} \label{eq:G34''''''D}
t_1(c^{25}t_1c^{-25})(c^{8}t_1c^{-8})(c^{33}t_1c^{-33})
(c^{16}t_1c^{-16})\le_T c.
\end{equation}
Using that $c^7t_1c^{-7}=t_1$, due to Lemma~\ref{lem:4}
with $d=6$, we see that
this equation is equivalent with \eqref{eq:G34D}. 
Therefore, we are facing exactly the same enumeration 
problem here as for
$p=7m/5$, and, consequently, the number of solutions to \eqref{eq:G34''''''D} is the same, namely
$\frac {7m+5}5$, as required.

\smallskip
Next we consider the case in \eqref{eq:G34.4}.
By Lemma~\ref{lem:1}, 
we are free to choose $p=7m/4$ if $\zeta=\zeta_{24}$,
and we are free to choose $p=21m/4$ if $\zeta=\zeta_{8}$. 
In both cases,
$m$ must be divisible by $4$.

We start with the case that $p=7m/4$.
From \eqref{eq:Aktion}, we infer
\begin{equation*} 
\phi^p\big((w_0;w_1,\dots,w_m)\big)=
(*;
c^{2}w_{\frac m4+1}c^{-2},c^{2}w_{\frac m4+2}c^{-2},
\dots,c^{2}w_{m}c^{-2},
cw_{1}c^{-1},\dots,
cw_{\frac m4}c^{-1}\big).
\end{equation*}
Supposing that 
$(w_0;w_1,\dots,w_m)$ is fixed by $\phi^p$, we obtain
the system of equations
{\refstepcounter{equation}\label{eq:G34'A}}
\alphaeqn
\begin{align} \label{eq:G34'Aa}
w_i&=c^2w_{\frac m4+i}c^{-2}, \quad i=1,2,\dots,\tfrac {3m}4,\\
w_i&=cw_{i-\frac {3m}4}c^{-1}, \quad i=\tfrac {3m}4+1,\tfrac {3m}4+2,\dots,m.
\label{eq:G34'Ab}
\end{align}
\reseteqn
There are two distinct possibilities for choosing
the $w_i$'s, $1\le i\le m$: 
{\refstepcounter{equation}\label{eq:G34'C}}
\alphaeqn
\begin{enumerate}
\item[(i)]
all the $w_i$'s are equal to $\ep$ (and $w_0=c$), 
\item[(ii)]
there is an $i$ with $1\le i\le \frac m2$ such that
\begin{equation} \label{eq:G34'Cii}
\ell_T(w_i)=\ell_T(w_{i+\frac {m}4})
=\ell_T(w_{i+\frac {2m}4})=\ell_T(w_{i+\frac {3m}4})=1,
\end{equation}
and the other $w_j$'s, $1\le j\le m$, are equal to $\ep$.
\end{enumerate}
\reseteqn

Moreover, since $(w_0;w_1,\dots,w_m)\in NC^m(G_{34})$, we must have 
$$w_iw_{i+\frac {m}4}w_{i+\frac {2m}4}w_{i+\frac {3m}4}\le_T c.$$
Together with equations~\eqref{eq:G34'A}--\eqref{eq:G34'C}, 
this implies that
\begin{equation} \label{eq:G34'D}
w_i=c^7w_ic^{-7}\quad\text{and}\quad 
w_i(c^5w_ic^{-5})(c^3w_ic^{-3})(cw_ic^{-1})\le_T c.
\end{equation}
With the help of {\tt CHEVIE}, one obtains seven solutions for $w_i$ in 
\eqref{eq:G34'D}:
{\small
\begin{equation*}
w_i\in\big\{[ 1 ],\,
[ 2 ],\,
[ 28 ],\,
[ 34 ],\,
[ 61 ],\,
[ 46 ],\,
[ 168 ]
\big\},
\end{equation*}}%
where we have again used the short notation of {\tt CHEVIE}
referring to the internal
ordering of the roots of $G_{34}$ in {\tt CHEVIE}.
Each of them gives rise to $m/4$ elements of
$\Fix_{NC^m(G_{34})}(\phi^{p})$ since $i$ ranges from $1$ to $m/4$.

In total, we obtain 
$1+7\frac m4=\frac {7m+4}4$ elements in
$\Fix_{NC^m(G_{34})}(\phi^p)$, which agrees with the limit in
\eqref{eq:G34.4}.

\smallskip
If $p=21m/4$, then, from \eqref{eq:Aktion}, we infer
\begin{multline*} 
\phi^p\big((w_0;w_1,\dots,w_m)\big)\\=
(*;
c^{6}w_{\frac {3m}4+1}c^{-6},c^{6}w_{\frac {3m}4+2}c^{-6},
\dots,c^{6}w_{m}c^{-6},
c^5w_{1}c^{-5},\dots,
c^5w_{\frac {3m}4}c^{-5}\big).
\end{multline*}
Supposing that 
$(w_0;w_1,\dots,w_m)$ is fixed by $\phi^p$, we obtain
the system of equations
{\refstepcounter{equation}\label{eq:G34'''''''A}}
\alphaeqn
\begin{align} \label{eq:G34'''''''Aa}
w_i&=c^6w_{\frac {3m}4+i}c^{-6}, \quad i=1,2,\dots,\tfrac {m}4,\\
w_i&=c^5w_{i-\frac {3m}4}c^{-5}, \quad i=\tfrac {m}4+1,\tfrac {m}4+2,\dots,m.
\label{eq:G34'''''''Ab}
\end{align}
\reseteqn
There are two distinct possibilities for choosing
the $w_i$'s, $1\le i\le m$: 
{\refstepcounter{equation}\label{eq:G34'''''''C}}
\alphaeqn
\begin{enumerate}
\item[(i)]
all the $w_i$'s are equal to $\ep$ (and $w_0=c$), 
\item[(ii)]
there is an $i$ with $1\le i\le \frac m2$ such that
\begin{equation} \label{eq:G34'''''''Cii}
\ell_T(w_i)=\ell_T(w_{i+\frac {m}4})
=\ell_T(w_{i+\frac {2m}4})=\ell_T(w_{i+\frac {3m}4})=1,
\end{equation}
and the other $w_j$'s, $1\le j\le m$, are equal to $\ep$.
\end{enumerate}
\reseteqn

Moreover, since $(w_0;w_1,\dots,w_m)\in NC^m(G_{34})$, we must have 
$$w_iw_{i+\frac {m}4}w_{i+\frac {2m}4}w_{i+\frac {3m}4}\le_T c.$$
Together with equations~\eqref{eq:G34'''''''A}--\eqref{eq:G34'''''''C}, 
this implies that
\begin{equation} \label{eq:G34'''''''D}
w_i=c^{21}w_ic^{-21}\quad\text{and}\quad 
w_i(c^5w_ic^{-5})(c^{10}w_ic^{-10})(c^{15}w_ic^{-15})\le_T c.
\end{equation}
Using that $c^7t_1c^{-7}=t_1$, due to Lemma~\ref{lem:4}
with $d=6$, we see that
this equation is equivalent with \eqref{eq:G34'D}. 
Therefore, we are facing exactly the same enumeration 
problem here as for
$p=7m/4$, and, consequently, the number of solutions to \eqref{eq:G34''''''D} is the same, namely
$\frac {7m+4}4$, as required.

\smallskip
Our next case is the case in \eqref{eq:G34.5}.
By Lemma~\ref{lem:1}, 
we are free to choose $p=7m/3$ if $\zeta=\zeta_{18}$, 
and we are free to choose $p=14m/3$ if $\zeta=\zeta_{9}$.
In both cases,
$m$ must be divisible by $3$.

We start with the case that $p=7m/3$.
From \eqref{eq:Aktion}, we infer
\begin{multline*}
\phi^p\big((w_0;w_1,\dots,w_m)\big)\\=
(*;
c^{3}w_{\frac {2m}3+1}c^{-3},c^{3}w_{\frac {2m}3+2}c^{-3},
\dots,c^{3}w_{m}c^{-3},
c^2w_{1}c^{-2},\dots,
c^2w_{\frac {2m}3}c^{-2}\big).
\end{multline*}
Supposing that 
$(w_0;w_1,\dots,w_m)$ is fixed by $\phi^p$, we obtain
the system of equations
{\refstepcounter{equation}\label{eq:G34*'''A}}
\alphaeqn
\begin{align} \label{eq:G34*'''Aa}
w_i&=c^3w_{\frac {2m}3+i}c^{-3}, \quad i=1,2,\dots,\tfrac {m}3,\\
w_i&=c^2w_{i-\frac {m}3}c^{-2}, \quad i=\tfrac {m}3+1,\tfrac {m}3+2,\dots,m.
\label{eq:G34*'''Ab}
\end{align}
\reseteqn
There are four distinct possibilities for choosing
the $w_i$'s, $1\le i\le m$: 
{\refstepcounter{equation}\label{eq:G34*'''C}}
\alphaeqn
\begin{enumerate}
\item[(i)]
all the $w_i$'s are equal to $\ep$ (and $w_0=c$), 
\item[(ii)]
there is an $i$ with $1\le i\le \frac m2$ such that
\begin{equation} \label{eq:G34*'''Cii}
\ell_T(w_i)=\ell_T(w_{i+\frac m3})=\ell_T(w_{i+\frac {2m}3})=2,
\end{equation}
and all other $w_j$'s are equal to $\ep$,
\item[(iii)]
there is an $i$ with $1\le i\le \frac m2$ such that
\begin{equation} \label{eq:G34*'''Ciii}
\ell_T(w_i)=\ell_T(w_{i+\frac m3})=\ell_T(w_{i+\frac {2m}3})=1,
\end{equation}
and the other $w_j$'s, $1\le j\le m$, are equal to $\ep$,
\item[(iv)]
there are $i_1$ and $i_2$ with $1\le i_1<i_2\le \frac m2$ such that
\begin{equation} \label{eq:G34*'''Civ}
\ell_T(w_{i_1})=\ell_T(w_{i_2})=
\ell_T(w_{i_1+\frac m3})=\ell_T(w_{i_2+\frac m3})=
\ell_T(w_{i_1+\frac {2m}3})=\ell_T(w_{i_2+\frac {2m}3})=1,
\end{equation}
and all other $w_j$'s are equal to $\ep$.
\end{enumerate}
\reseteqn

Moreover, since $(w_0;w_1,\dots,w_m)\in NC^m(G_{34})$, we must have 
$w_iw_{i+\frac m3}w_{i+\frac {2m}3}\le_T c$, respectively 
$w_{i_1}w_{i_2}w_{i_1+\frac {m}3}w_{i_2+\frac {m}3}
w_{i_1+\frac {2m}3}w_{i_2+\frac {2m}3}=c$.
Together with equations~\eqref{eq:G34*'''A}--\eqref{eq:G34*'''C}, 
this implies that
\begin{equation} \label{eq:G34*'''D}
w_i=c^7w_ic^{-7}\quad\text{and}\quad 
w_i(c^2w_ic^{-2})(c^4w_ic^{-4})=c,
\end{equation}
respectively that
\begin{equation} \label{eq:G34*'''D2}
w_i=c^7w_ic^{-7},\quad 
w_i(c^2w_ic^{-2})(c^4w_ic^{-4})\le_T c,\quad\text{and}\quad
\ell_T(w_i)=1,
\end{equation}
respectively that
\begin{multline} \label{eq:G34*'''E}
w_{i_1}=c^7w_{i_1}c^{-7},\quad 
w_{i_2}=c^7w_{i_2}c^{-7},\\
\text{and}\quad 
w_{i_1}w_{i_2}(c^2w_{i_1}c^{-2})(c^2w_{i_2}c^{-2})
(c^4w_{i_1}c^{-4})(c^4w_{i_2}c^{-4})=c.
\end{multline}
With the help of {\tt CHEVIE}, one obtains 21 solutions for $w_i$ in 
\eqref{eq:G34*'''D2}:
{\small
\begin{equation*}
w_i\in\big\{[4]\,\
[5]\,\
[6]\,\
[11]\,\
[44]\,\
[63]\,\
[74]
\big\},
\end{equation*}}%
where we have again used the short notation of {\tt CHEVIE}
referring to the internal
ordering of the roots of $G_{34}$ in {\tt CHEVIE}.
Each of them gives rise to $m/3$ elements of
$\Fix_{NC^m(G_{34})}(\phi^{p})$ since $i$ ranges from $1$ to $m/3$.

There are no solutions $w_i$ in \eqref{eq:G34*'''D} and
for $(w_{i_1},w_{i_2})$ in \eqref{eq:G34*'''E}.

In total, we obtain 
$1+7\frac m3=\frac {7m+3}3$ elements in
$\Fix_{NC^m(G_{34})}(\phi^p)$, which agrees with the limit in
\eqref{eq:G34.5}.

In the case that $p=14m/3$, we infer from \eqref{eq:Aktion} that
\begin{multline*}
\phi^p\big((w_0;w_1,\dots,w_m)\big)\\=
(*;
c^{5}w_{\frac {m}3+1}c^{-5},c^{5}w_{\frac {m}3+2}c^{-5},
\dots,c^{5}w_{m}c^{-5},
c^4w_{1}c^{-4},\dots,
c^4w_{\frac {m}3}c^{-4}\big).
\end{multline*}
Supposing that 
$(w_0;w_1,\dots,w_m)$ is fixed by $\phi^p$, we obtain
the system of equations
{\refstepcounter{equation}\label{eq:G34**A}}
\alphaeqn
\begin{align} \label{eq:G34**Aa}
w_i&=c^5w_{\frac {m}3+i}c^{-5}, \quad i=1,2,\dots,\tfrac {2m}3,\\
w_i&=c^4w_{i-\frac {2m}3}c^{-4}, \quad i=\tfrac {2m}3+1,\tfrac {2m}3+2,\dots,m.
\label{eq:G34**Ab}
\end{align}
\reseteqn
There are four distinct possibilities for choosing
the $w_i$'s, $1\le i\le m$: 
{\refstepcounter{equation}\label{eq:G34**C}}
\alphaeqn
\begin{enumerate}
\item[(i)]
all the $w_i$'s are equal to $\ep$ (and $w_0=c$), 
\item[(ii)]
there is an $i$ with $1\le i\le \frac m2$ such that
\begin{equation} \label{eq:G34**Cii}
\ell_T(w_i)=\ell_T(w_{i+\frac m3})=\ell_T(w_{i+\frac {2m}3})=2,
\end{equation}
and all other $w_j$'s are equal to $\ep$,
\item[(iii)]
there is an $i$ with $1\le i\le \frac m2$ such that
\begin{equation} \label{eq:G34**Ciii}
\ell_T(w_i)=\ell_T(w_{i+\frac m3})=\ell_T(w_{i+\frac {2m}3})=1,
\end{equation}
and the other $w_j$'s, $1\le j\le m$, are equal to $\ep$,
\item[(iv)]
there are $i_1$ and $i_2$ with $1\le i_1<i_2\le \frac m2$ such that
\begin{equation} \label{eq:G34**Civ}
\ell_T(w_{i_1})=\ell_T(w_{i_2})=
\ell_T(w_{i_1+\frac m3})=\ell_T(w_{i_2+\frac m3})=
\ell_T(w_{i_1+\frac {2m}3})=\ell_T(w_{i_2+\frac {2m}3})=1,
\end{equation}
and all other $w_j$'s are equal to $\ep$.
\end{enumerate}
\reseteqn

Moreover, since $(w_0;w_1,\dots,w_m)\in NC^m(G_{34})$, we must have 
$w_iw_{i+\frac m3}w_{i+\frac {2m}3}\le_T c$, respectively 
$w_{i_1}w_{i_2}w_{i_1+\frac {m}3}w_{i_2+\frac {m}3}
w_{i_1+\frac {2m}3}w_{i_2+\frac {2m}3}=c$.
Together with equations~\eqref{eq:G34**A}--\eqref{eq:G34**C}, 
this implies that
\begin{equation} \label{eq:G34**D}
w_i=c^{14}w_ic^{-14}\quad\text{and}\quad 
w_i(c^9w_ic^{-9})(c^4w_ic^{-4})=c,
\end{equation}
respectively that
\begin{equation} \label{eq:G34**D2}
w_i=c^{14}w_ic^{-14},\quad 
w_i(c^9w_ic^{-9})(c^4w_ic^{-4})\le_T c,\quad\text{and}\quad
\ell_T(w_i)=1,
\end{equation}
respectively that
\begin{multline} \label{eq:G34**E}
w_{i_1}=c^{14}w_{i_1}c^{-14},\quad 
w_{i_2}=c^{14}w_{i_2}c^{-14},\\
\text{and}\quad 
w_{i_1}w_{i_2}(c^9w_{i_1}c^{-9})(c^9w_{i_2}c^{-9})
(c^4w_{i_1}c^{-4})(c^4w_{i_2}c^{-4})=c.
\end{multline}
Using that $c^7wc^{-7}=w$ for all $w\in NC(G_{34}$, due to Lemma~\ref{lem:4}
with $d=6$, we see that
this equation is equivalent with \eqref{eq:G34*'''D}. 
Therefore, we are facing exactly the same enumeration 
problem here as for
$p=7m/3$, and, consequently, the number of solutions to \eqref{eq:G34**D} is the same, namely
$\frac {7m+3}3$, as required.

\smallskip
Next we consider the case in \eqref{eq:G34.6}.
By Lemma~\ref{lem:1}, 
we are free to choose $p=7m/2$ if $\zeta=\zeta_{12}$,
and we are free to choose $p=21m/2$ if $\zeta=\zeta_{4}$.
In both cases, 
$m$ must be divisible by $2$.

We begin with the case that $p=7m/2$.
From \eqref{eq:Aktion}, we infer
\begin{multline} \label{eq:7m2Aktion}
\phi^p\big((w_0;w_1,\dots,w_m)\big)\\=
(*;
c^{4}w_{\frac m2+1}c^{-4},c^{4}w_{\frac m2+2}c^{-4},
\dots,c^{4}w_{m}c^{-4},
c^3w_{1}c^{-3},\dots,
c^3w_{\frac m2}c^{-3}\big).
\end{multline}
Supposing that 
$(w_0;w_1,\dots,w_m)$ is fixed by $\phi^p$, we obtain
the system of equations
{\refstepcounter{equation}\label{eq:G34''''A}}
\alphaeqn
\begin{align} \label{eq:G34''''Aa}
w_i&=c^4w_{\frac m2+i}c^{-4}, \quad i=1,2,\dots,\tfrac {m}2,\\
w_i&=c^3w_{i-\frac {m}2}c^{-3}, \quad i=\tfrac {m}2+1,\tfrac {m}2+2,\dots,m.
\label{eq:G34''''Ab}
\end{align}
\reseteqn
There are several distinct possibilities for choosing
the $w_i$'s, $1\le i\le m$, which we summarise as follows: 
{\refstepcounter{equation}\label{eq:G34''''C}}
\alphaeqn
\begin{enumerate}
\item[(i)]
all the $w_i$'s are equal to $\ep$ (and $w_0=c$), 
\item[(ii)]
there is an $i$ with $1\le i\le \frac m2$ such that
\begin{equation} \label{eq:G34''''Cii}
1\le \ell_T(w_i)=\ell_T(w_{i+\frac m2})\le 3,
\end{equation}
and the other $w_j$'s, $1\le j\le m$, are equal to $\ep$,
\item[(iii)]
there are $i_1$ and $i_2$ with $1\le i_1<i_2\le \frac m2$ such that
\begin{equation} \label{eq:G34''''Ciii}
\ell_1=\ell_T(w_{i_1})=\ell_T(w_{i_1+\frac m2})\ge1,\quad
\ell_2=\ell_T(w_{i_2})=
=\ell_T(w_{i_2+\frac m2})\ge1,\quad
\ell_1+\ell_2\le3,
\end{equation}
and the other $w_j$'s, $1\le j\le m$, are equal to $\ep$,
\item[(iv)]
there are $i_1,i_2,i_3$ with $1\le i_1<i_2<i_3\le \frac m2$ such that
\begin{equation} \label{eq:G34''''Civ}
\ell_T(w_{i_1})=\ell_T(w_{i_2})=\ell_T(w_{i_3})=
\ell_T(w_{i_1+\frac m2})=\ell_T(w_{i_2+\frac m2})=
\ell_T(w_{i_3+\frac m2})=1,
\end{equation}
and all other $w_j$'s are equal to $\ep$.
\end{enumerate}
\reseteqn

Moreover, since $(w_0;w_1,\dots,w_m)\in NC^m(G_{34})$, we must have 
$w_iw_{i+\frac m2}\le_T c$, respectively 
$w_{i_1}w_{i_2}w_{i_1+\frac m2}w_{i_2+\frac m2}\le_T c$,
respectively
$$
w_{i_1}w_{i_2}w_{i_3}w_{i_1+\frac m2}w_{i_2+\frac m2}
w_{i_3+\frac m2}=c
$$.
Together with equations~\eqref{eq:G34''''A}--\eqref{eq:G34''''C}, 
this implies that
\begin{equation} \label{eq:G34''''D}
w_i=c^7w_ic^{-7}\quad\text{and}\quad 
w_i(c^4w_ic^{-4})\le_T c,
\end{equation}
respectively that
\begin{equation} \label{eq:G34''''E}
w_{i_1}=c^7w_{i_1}c^{-7},\quad 
w_{i_2}=c^7w_{i_2}c^{-7},\quad \text{and}\quad
w_{i_1}w_{i_2}(c^4w_{i_1}c^{-4})(c^4w_{i_2}c^{-4})\le_T c,
\end{equation}
respectively that
\begin{multline} \label{eq:G34''''F}
w_{i_1}=c^7w_{i_1}c^{-7},\quad 
w_{i_2}=c^7w_{i_2}c^{-7},\quad 
w_{i_3}=c^7w_{i_3}c^{-7},\\ \text{and}\quad
w_{i_1}w_{i_2}w_{i_3}
(c^4w_{i_1}c^{-4})(c^4w_{i_2}c^{-4})(c^4w_{i_3}c^{-4})=c.
\end{multline}
With the help of {\tt CHEVIE}, one obtains 14 solutions for $w_i$ in 
\eqref{eq:G34''''D} with $\ell_T(w_i)=1$:
{\small
\begin{equation*}
w_i\in\big\{[ 14 ],\,
[ 35 ],\,
[ 1 ],\,
[ 2 ],\,
[ 10 ],\,
[ 24 ],\,
[ 28 ],\,
[ 34 ],\,
[ 39 ],\,
[ 56 ],\,
[ 61 ],\,
[ 46 ],\,
[ 168 ],\,
[ 105 ]
\big\},
\end{equation*}}%
where we have again used the short notation of {\tt CHEVIE}
referring to the internal
ordering of the roots of $G_{34}$ in {\tt CHEVIE},
one obtains 21 solutions for $w_i$ in 
\eqref{eq:G34''''D} with $\ell_T(w_i)=2$:
{\small
\begin{multline*}
w_i\in\big\{[ 1, 14 ],\,
[ 1, 35 ],\,
[ 1, 24 ],\,
[ 1, 34 ],\,
[ 14, 61 ],\,
[ 2, 39 ],\,
[ 10, 34 ],\,
[ 2, 56 ],\,
[ 2, 35 ],\,
[ 14, 46 ],\,
[ 35, 168 ],\\
[ 34, 56 ],\,
[ 2, 61 ],\,
[ 28, 56 ],\,
[ 10, 28 ],\,
[ 10, 61 ],\,
[ 34, 105 ],\,
[ 28, 105 ],\,
[ 24, 61 ],\,
[ 39, 168 ],\,
[ 24, 46 ]
\big\},
\end{multline*}}%
each of them giving rise to $m/2$ elements of
$\Fix_{NC^m(G_{34})}(\phi^{p})$ since $i$ ranges from $1$ to $m/2$,
and one obtains 49 pairs $(w_{i_1},w_{i_2})$ satisfying 
\eqref{eq:G34''''E}:
{\tiny
\begin{multline*}
(w_{i_1},w_{i_2})\in\big\{( [ 14 ],\, [ 1 ] ),\
( [ 14 ],\, [ 61 ] ),\
( [ 14 ],\, [ 46 ] ),\
( [ 35 ],\, [ 1 ] ),\
( [ 35 ],\, [ 46 ] ),\
( [ 35 ],\, [ 168 ] ),\
( [ 1 ],\, [ 14 ] ),\
( [ 1 ],\, [ 35 ] ),\
( [ 1 ],\, [ 24 ] ),\\
( [ 1 ],\, [ 34 ] ),\
( [ 2 ],\, [ 35 ] ),\
( [ 2 ],\, [ 39 ] ),\
( [ 2 ],\, [ 56 ] ),\
( [ 2 ],\, [ 61 ] ),\
( [ 10 ],\, [ 28 ] ),\
( [ 10 ],\, [ 34 ] ),\
( [ 10 ],\, [ 61 ] ),\
( [ 24 ],\, [ 28 ] ),\
( [ 24 ],\, [ 61 ] ),\\
( [ 24 ],\, [ 46 ] ),\
( [ 28 ],\, [ 1 ] ),\
( [ 28 ],\, [ 10 ] ),\
( [ 28 ],\, [ 56 ] ),\
( [ 28 ],\, [ 105 ] ),\
( [ 34 ],\, [ 39 ] ),\
( [ 34 ],\, [ 56 ] ),\
( [ 34 ],\, [ 46 ] ),\
( [ 34 ],\, [ 105 ] ),\\
( [ 39 ],\, [ 1 ] ),\
( [ 39 ],\, [ 2 ] ),\
( [ 39 ],\, [ 168 ] ),\
( [ 56 ],\, [ 2 ] ),\
( [ 56 ],\, [ 34 ] ),\
( [ 56 ],\, [ 168 ] ),\
( [ 61 ],\, [ 10 ] ),\
( [ 61 ],\, [ 24 ] ),\
( [ 61 ],\, [ 168 ] ),\\
( [ 61 ],\, [ 105 ] ),\
( [ 46 ],\, [ 14 ] ),\
( [ 46 ],\, [ 2 ] ),\
( [ 46 ],\, [ 10 ] ),\
( [ 46 ],\, [ 24 ] ),\
( [ 168 ],\, [ 14 ] ),\
( [ 168 ],\, [ 35 ] ),\
( [ 168 ],\, [ 28 ] ),\\
( [ 168 ],\, [ 39 ] ),\
( [ 105 ],\, [ 2 ] ),\
( [ 105 ],\, [ 28 ] ),\
( [ 105 ],\, [ 34 ] )
\big\},
\end{multline*}}%
each of them giving rise to $\binom {m/2}2$ elements of
$\Fix_{NC^m(G_{34})}(\phi^{p})$ since $1\le i_1<i_2\le m$.

There are no solutions for $w_i$ with $\ell_T(w_i)=3$ 
in \eqref{eq:G34''''D}, and hence no solutions for
$(w_{i_1},w_{i_2})$ with $\ell_T(w_{i_1})+\ell_T(w_{i_2})=3$
in \eqref{eq:G34''''E}, and no solutions for
$(w_{i_1},w_{i_2},w_{i_3})$ in \eqref{eq:G34''''F}.

In total, we obtain 
$1+(14+21)\frac m2+49\binom {m/2}2=\frac {(7m+2)(7m+4)}8$ elements in
$\Fix_{NC^m(G_{34})}(\phi^p)$, which agrees with the limit in
\eqref{eq:G34.6}.

If $p=21m/2$, from \eqref{eq:Aktion}, we infer
\begin{multline*}
\phi^p\big((w_0;w_1,\dots,w_m)\big)\\=
(*;
c^{11}w_{\frac m2+1}c^{-11},c^{11}w_{\frac m2+2}c^{-11},
\dots,c^{11}w_{m}c^{-11},
c^{10}w_{1}c^{-10},\dots,
c^{10}w_{\frac m2}c^{-10}\big).
\end{multline*}
Using that $c^7wc^{-7}=w$ for all $w\in NC(G_{34}$, due to Lemma~\ref{lem:4}
with $d=6$, we see that
this action is identical with the one in \eqref{eq:7m2Aktion}. 
Therefore, we are facing exactly the same enumeration 
problem here as for
$p=7m/2$, and, consequently, the number of elements in
$\Fix_{NC^m(G_{34})}(\phi^p)$ is the same, namely
$\frac {(7m+2)(7m+4)}8$, as required.

\smallskip
Finally, we turn to \eqref{eq:G34.1}. By Remark~\ref{rem:1},
the only choices for $h_2$ and $m_2$ to be considered
are $h_2=1$ and $m_2=5$, $h_2=2$ and $m_2=5$, 
$h_2=2$ and $m_2=4$, $h_2=m_2=2$, 
$h_2=3$ and $m_2=5$, $h_2=m_2=3$,
$h_2=6$ and $m_2=6$, $h_2=6$ and $m_2=5$, 
$h_2=6$ and $m_2=4$, $h_2=6$ and $m_2=3$,  
respectively $h_2=6$ and $m_2=2$, . 
These correspond to the choices $p=42m/5$, $p=21m/5$, 
$p=21m/4$, $p=21m/2$, $p=14m/3$, $p=14m/5$, 
$p=7m/6$, $p=7m/5$, $p=7m/4$, $p=7m/3$, 
respectively $p=7m/2$, out of which only $p=7m/6$ 
has not yet been discussed and belongs to the current case.
The corresponding action of $\phi^p$ is given by 
\begin{multline*}
\phi^p\big((w_0;w_1,\dots,w_m)\big)\\=
(*;
c^{2}w_{\frac {5m}6+1}c^{-2},c^{2}w_{\frac {5m}6+2}c^{-2},
\dots,c^{2}w_{m}c^{-2},
cw_{1}c^{-1},\dots,
cw_{\frac {5m}6}c^{-1}\big),
\end{multline*}
so that we have to solve
$$t_1(ct_1c^{-1})(c^{2}t_1c^{-2})(c^{3}t_1c^{-3})
(c^{4}t_1c^{-4})(c^{5}t_1c^{-5})=c.$$
A computation with the help of {\tt CHEVIE} finds no solution. 
Hence, 
the left-hand side of \eqref{eq:1} is equal to $1$, as required.

\subsection*{\sc Case $G_{35}=E_6$}
The degrees are $2,5,6,8,9,12$, and hence we have
$$
\Cat^m(E_6;q)=\frac 
{[12m+12]_q\, [12m+9]_q\, [12m+8]_q\, [12m+6]_q\, [12m+5]_q\, [12m+2]_q} 
{[12]_q\, [9]_q\, [8]_q\, [6]_q\, [5]_q\, [2]_q} .
$$
Let $\zeta$ be a $12m$-th root of unity. 
The following cases on the right-hand side of \eqref{eq:1}
occur:
{\refstepcounter{equation}\label{eq:E6}}
\alphaeqn
\begin{align} 
\label{eq:E6.2}
\lim_{q\to\zeta}\Cat^m(E_6;q)&=m+1,
\quad\text{if }\zeta=\zeta_{12},\\
\label{eq:E6.3}
\lim_{q\to\zeta}\Cat^m(E_6;q)&=\tfrac {4m+3}3,
\quad\text{if }\zeta=\zeta_{9},\ 3\mid m,\\
\label{eq:E6.4}
\lim_{q\to\zeta}\Cat^m(E_6;q)&=\tfrac {3m+2}2,
\quad\text{if }\zeta=\zeta_{8},\ 2\mid m,\\
\label{eq:E6.7}
\lim_{q\to\zeta}\Cat^m(E_6;q)&=(m+1)(2m+1),
\quad\text{if }\zeta= \zeta_{6},\\
\label{eq:E6.5}
\lim_{q\to\zeta}\Cat^m(E_6;q)&=\tfrac {12m+5}5,
\quad\text{if }\zeta=\zeta_{5},\ 5\mid m,\\
\label{eq:E6.6}
\lim_{q\to\zeta}\Cat^m(E_6;q)&=\tfrac {(m+1)(3m+2)}2,
\quad\text{if }\zeta= \zeta_{4},\\
\label{eq:E6.8}
\lim_{q\to\zeta}\Cat^m(E_6;q)&=\tfrac {(m+1)(4m+3)(2m+1)}3,
\quad\text{if }\zeta= \zeta_{3},\\
\label{eq:E6.9}
\lim_{q\to\zeta}\Cat^m(E_6;q)&=\tfrac {(m+1)(3m+2)(2m+1)(6m+1)}2,
\quad\text{if }\zeta= -1,\\
\lim_{q\to\zeta}\Cat^m(E_6;q)&=\Cat^m(E_6),
\quad\text{if }\zeta=1,\\
\label{eq:E6.1}
\lim_{q\to\zeta}\Cat^m(E_6;q)&=1,
\quad\text{otherwise.}
\end{align}
\reseteqn

We must now prove that the left-hand side of \eqref{eq:1} in
each case agrees with the values exhibited in 
\eqref{eq:E6}. The only cases not covered by
Lemmas~\ref{lem:2} and \ref{lem:3} are the ones in 
\eqref{eq:E6.3}, \eqref{eq:E6.4}, \eqref{eq:E6.5},
and \eqref{eq:E6.1}.

\smallskip
We begin with the case in \eqref{eq:E6.3}.
By Lemma~\ref{lem:1}, we are free to choose $p=4m/3$. In particular,
$m$ must be divisible by $3$.
From \eqref{eq:Aktion}, we infer
\begin{multline*}
\phi^p\big((w_0;w_1,\dots,w_m)\big)\\=
(*;
c^{2}w_{\frac {2m}3+1}c^{-2},c^{2}w_{\frac {2m}3+2}c^{-2},
\dots,c^{2}w_{m}c^{-2},
cw_{1}c^{-1},\dots,
cw_{\frac {2m}3}c^{-1}\big).
\end{multline*}
Supposing that 
$(w_0;w_1,\dots,w_m)$ is fixed by $\phi^p$, we obtain
the system of equations
{\refstepcounter{equation}\label{eq:E6A}}
\alphaeqn
\begin{align} \label{eq:E6Aa}
w_i&=c^2w_{\frac {2m}3+i}c^{-2}, \quad i=1,2,\dots,\tfrac {m}3,\\
w_i&=cw_{i-\frac {m}3}c^{-1}, \quad i=\tfrac {m}3+1,\tfrac {m}3+2,\dots,m.
\label{eq:E6Ab}
\end{align}
\reseteqn
There are four distinct possibilities for choosing
the $w_i$'s, $1\le i\le m$: 
{\refstepcounter{equation}\label{eq:E6C}}
\alphaeqn
\begin{enumerate}
\item[(i)]
all the $w_i$'s are equal to $\ep$ (and $w_0=c$), 
\item[(ii)]
there is an $i$ with $1\le i\le \frac m3$ such that
\begin{equation} \label{eq:E6Cii}
\ell_T(w_i)=\ell_T(w_{i+\frac {m}3})=\ell_T(w_{i+\frac {2m}3})=2,
\end{equation}
and all other $w_j$'s are equal to $\ep$,
\item[(iii)]
there is an $i$ with $1\le i\le \frac m3$ such that
\begin{equation} \label{eq:E6Ciii}
\ell_T(w_i)=\ell_T(w_{i+\frac m3})=\ell_T(w_{i+\frac {2m}3})=1,
\end{equation}
and the other $w_j$'s, $1\le j\le m$, are equal to $\ep$,
\item[(iv)]
there are $i_1$ and $i_2$ with $1\le i_1<i_2\le \frac m3$ such that
\begin{equation} \label{eq:E6Civ}
\ell_T(w_{i_1})=\ell_T(w_{i_2})=
\ell_T(w_{i_1+\frac m3})=\ell_T(w_{i_2+\frac m3})=
\ell_T(w_{i_1+\frac {2m}3})=\ell_T(w_{i_2+\frac {2m}3})=1,
\end{equation}
and all other $w_j$'s are equal to $\ep$.
\end{enumerate}
\reseteqn

Moreover, since $(w_0;w_1,\dots,w_m)\in NC^m(E_6)$, we must have 
$w_iw_{i+\frac m3}w_{i+\frac {2m}3}\le_T c$, respectively 
$w_{i_1}w_{i_2}w_{i_1+\frac m3}w_{i_2+\frac m3}
w_{i_1+\frac {2m}3}w_{i_2+\frac {2m}3}=c$.
Together with equations~\eqref{eq:E6A}--\eqref{eq:E6C}, 
this implies that
\begin{equation} \label{eq:E6D}
w_i=c^4w_ic^{-4}\quad\text{and}\quad 
w_i(cw_ic^{-1})(c^2w_ic^{-2})=c,
\end{equation}
respectively that
\begin{equation} \label{eq:E6D2}
w_i=c^4w_ic^{-4},\quad 
w_i(cw_ic^{-1})(c^2w_ic^{-2})\le_T c,\quad\text{and}\quad
\ell_T(w_i)=1,
\end{equation}
respectively that
\begin{equation} \label{eq:E6E}
w_{i_1}=c^4w_{i_1}c^{-4},\quad 
w_{i_1}(cw_{i_1}c^{-1})(c^2w_{i_1}c^{-2})\le_T c,
\quad\text{and}\quad \ell_T(w_{i_1})=1.
\end{equation}
With the help of Stembridge's {\sl Maple} package {\tt coxeter}
\cite{StemAZ}, one obtains four solutions for $w_i$ in 
\eqref{eq:E6D}:
{\small
\begin{equation*} 
w_i\in\big\{      [1, 2, 3, 4, 5, 6, 5, 4, 2, 3],\,
                                   [3, 4],\,
                                [2, 4, 2, 5],\,
                          [1, 3, 4, 5, 4, 3, 1, 6]
\big\},
\end{equation*}}%
where we have again used the short notation of {\tt coxeter},
$\{s_1,s_2,s_3,s_4,s_5,s_6\}$ being a simple system of generators of 
$E_6$,
corresponding to the Dynkin diagram displayed in Figure~\ref{fig:E6}.
Each of the above solutions for $w_i$ gives rise to $m/3$ elements of
$\Fix_{NC^m(E_6)}(\phi^{p})$ since $i$ ranges from $1$ to $m/3$.

\begin{figure}[h]
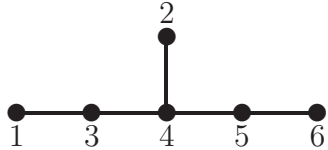

$$
\Einheit1cm
\Pfad(0,0),1111\endPfad
\Pfad(2,0),2\endPfad
\DickPunkt(0,0)
\DickPunkt(1,0)
\DickPunkt(2,0)
\DickPunkt(3,0)
\DickPunkt(4,0)
\DickPunkt(2,1)
\Label\u{1}(0,0)
\Label\u{3}(1,0)
\Label\u{4}(2,0)
\Label\u{5}(3,0)
\Label\u{6}(4,0)
\Label\o{\raise5pt\hbox{2}}(2,1)
\hskip4cm
$$ 
\caption{\scriptsize The Dynkin diagram for $E_6$}
\label{fig:E6}
\end{figure}

There are no solutions for $w_i$ in \eqref{eq:E6D2} and
for $w_{i_1}$ in \eqref{eq:E6E}.

In total, we obtain 
$1+4\frac m3=\frac {4m+3}3$ elements in
$\Fix_{NC^m(E_6)}(\phi^p)$, which agrees with the limit in
\eqref{eq:E6.3}.

\smallskip
Next we discuss the case in \eqref{eq:E6.4}.
By Lemma~\ref{lem:1}, we are free to choose $p=3m/2$. In particular,
$m$ must be divisible by $2$.
From \eqref{eq:Aktion}, we infer
$$
\phi^p\big((w_0;w_1,\dots,w_m)\big)\\=
(*;
c^{2}w_{\frac m2+1}c^{-2},c^{2}w_{\frac m2+2}c^{-2},
\dots,c^{2}w_{m}c^{-2},
cw_{1}c^{-1},\dots,
cw_{\frac m2}c^{-1}\big).
$$
Supposing that 
$(w_0;w_1,\dots,w_m)$ is fixed by $\phi^p$, we obtain
the system of equations
{\refstepcounter{equation}\label{eq:E6'A}}
\alphaeqn
\begin{align} \label{eq:E6'Aa}
w_i&=c^2w_{\frac m2+i}c^{-2}, \quad i=1,2,\dots,\tfrac {m}2,\\
w_i&=cw_{i-\frac {m}2}c^{-1}, \quad i=\tfrac {m}2+1,\tfrac {m}2+2,\dots,m.
\label{eq:E6'Ab}
\end{align}
\reseteqn
There are several distinct possibilities for choosing
the $w_i$'s, $1\le i\le m$, which we summarise as follows: 
{\refstepcounter{equation}\label{eq:E6'C}}
\alphaeqn
\begin{enumerate}
\item[(i)]
all the $w_i$'s are equal to $\ep$ (and $w_0=c$), 
\item[(ii)]
there is an $i$ with $1\le i\le \frac m2$ such that
\begin{equation} \label{eq:E6'Cii}
\ell_T(w_i)=\ell_T(w_{i+\frac m2})=3,
\end{equation}
and all other $w_j$'s are equal to $\ep$,
\item[(iii)]
there is a $j$ with $1\le j\le \frac m2$ such that
\begin{equation} \label{eq:E6'Ciii}
1\le \ell_T(w_j)=\ell_T(w_{j+\frac m2})\le 2.
\end{equation}
\end{enumerate}
\reseteqn

Moreover, since $(w_0;w_1,\dots,w_m)\in NC^m(E_6)$, we must have 
$w_iw_{i+\frac m2}=c$, respectively 
$w_{j}w_{j+\frac m2}\le_T c$.
Together with equations~\eqref{eq:E6'A}--\eqref{eq:E6'C}, 
this implies that
\begin{equation} \label{eq:E6'D}
w_i=c^3w_ic^{-3}\quad\text{and}\quad 
w_i(cw_ic^{-1})=c,
\end{equation}
respectively that
\begin{equation} \label{eq:E6'D2}
w_j=c^3w_jc^{-3},\quad 
w_j(cw_jc^{-1})\le_T c,\quad\text{and}\quad
1\le \ell_T(w_j)\le 2.
\end{equation}
With the help of Stembridge's {\sl Maple} package {\tt coxeter}
\cite{StemAZ}, one obtains three solutions for $w_i$ in 
\eqref{eq:E6'D}:
{\small
\begin{equation*} 
w_i\in\big\{                               [1, 3, 4, 3, 5],\,
                      [1, 2, 3, 4, 3, 5, 6, 5, 4, 3, 1],\,
                            [2, 3, 4, 5, 4, 2, 6]
\big\},
\end{equation*}}%
where we used again {\tt coxeter}'s short notation, $\{s_1,s_2,s_3,s_4,s_5,s_6\}$ being a simple system of generators of 
$E_6$,
corresponding to the Dynkin diagram displayed in Figure~\ref{fig:E6}.
Each of these solutions for $w_i$ gives rise to $m/2$ elements of
$\Fix_{NC^m(E_6)}(\phi^{p})$ since $i$ ranges from $1$ to $m/2$.

There are no solutions for $w_j$ in \eqref{eq:E6'D2}.

In total, we obtain 
$1+3\frac m2=\frac {3m+2}2$ elements in
$\Fix_{NC^m(E_6)}(\phi^p)$, which agrees with the limit in
\eqref{eq:E6.4}.

\smallskip
Finally we discuss the case in \eqref{eq:E6.5}.
By Lemma~\ref{lem:1}, we are free to choose $p=12m/5$. In particular,
$m$ must be divisible by $5$.
From \eqref{eq:Aktion}, we infer
\begin{multline*}
\phi^p\big((w_0;w_1,\dots,w_m)\big)\\=
(*;
c^{3}w_{\frac {3m}5+1}c^{-3},c^{3}w_{\frac {3m}5+2}c^{-3},
\dots,c^{3}w_{m}c^{-3},
c^2w_{1}c^{-2},\dots,
c^2w_{\frac {3m}5}c^{-2}\big).
\end{multline*}
Supposing that 
$(w_0;w_1,\dots,w_m)$ is fixed by $\phi^p$, we obtain
the system of equations
{\refstepcounter{equation}\label{eq:E6''A}}
\alphaeqn
\begin{align} \label{eq:E6''Aa}
w_i&=c^3w_{\frac {3m}5+i}c^{-3}, \quad i=1,2,\dots,\tfrac {2m}5,\\
w_i&=c^2w_{i-\frac {2m}5}c^{-2}, \quad i=\tfrac {2m}5+1,\tfrac {2m}5+2,\dots,m.
\label{eq:E6''Ab}
\end{align}
\reseteqn
There are two distinct possibilities for choosing
the $w_i$'s, $1\le i\le m$: 
\begin{enumerate}
\item[(i)]
all the $w_i$'s are equal to $\ep$ (and $w_0=c$), 
\item[(ii)]
there is an $i$ with $1\le i\le \frac m5$ such that
\begin{equation} \label{eq:E6''C}
\ell_T(w_i)=\ell_T(w_{i+\frac m5})=\ell_T(w_{i+\frac {2m}5})=
\ell_T(w_{i+\frac {3m}5})=\ell_T(w_{i+\frac {4m}5})=1,
\end{equation}
and the other $w_j$'s, $1\le j\le m$, are equal to $\ep$.
\end{enumerate}

Moreover, since $(w_0;w_1,\dots,w_m)\in NC^m(E_6)$, we must have 
$$w_iw_{i+\frac {m}5}w_{i+\frac {2m}5}w_{i+\frac {3m}5}
w_{i+\frac {4m}5}\le_T c.$$
Together with equations~\eqref{eq:E6''A}--\eqref{eq:E6''C}, 
this implies that
\begin{equation} \label{eq:E6''D}
w_i=c^{12}w_ic^{-12}\quad\text{and}\quad 
w_i(c^7w_ic^{-7})(c^2w_ic^{-2})(c^9w_ic^{-9})(c^4w_ic^{-4})\le_T c.
\end{equation}
Here, the first equation is automatically satisfied since 
$c^{12}=\ep$.

With the help of Stembridge's {\sl Maple} package {\tt coxeter}
\cite{StemAZ}, one obtains 12 solutions for $w_i$ in 
\eqref{eq:E6''D}:
{\small
\begin{multline*}
w_i\in\big\{                                     [1],\,
                                     [3],\,
                                     [4],\,
                                     [5],\,
                                     [6],\,
                                  [2, 4, 2],\,
                                  [3, 4, 3],\,
                               [2, 4, 5, 4, 2],\,
                            [1, 3, 4, 5, 4, 3, 1],\\
                         [1, 3, 4, 5, 6, 5, 4, 3, 1],\,
                         [2, 3, 4, 5, 6, 5, 4, 2, 3],\,
                      [1, 2, 3, 4, 5, 6, 5, 4, 2, 3, 1]
\big\},
\end{multline*}}%
where $\{s_1,s_2,s_3,s_4,s_5,s_6\}$ is a simple system of generators of 
$E_6$,
corresponding to the Dynkin diagram displayed in Figure~\ref{fig:E6},
and each of them gives rise to $m/5$ elements of
$\Fix_{NC^m(E_6)}(\phi^{p})$ since $i$ ranges from $1$ to $m/5$.

In total, we obtain 
$1+12\frac m5=\frac {12m+5}5$ elements in
$\Fix_{NC^m(E_6)}(\phi^p)$, which agrees with the limit in
\eqref{eq:E6.5}.

\smallskip
Finally, we turn to \eqref{eq:E6.1}. By Remark~\ref{rem:1},
the only choices for $h_2$ and $m_2$ to be considered
are $h_2=1$ and $m_2=5$ and $h_2=2$ and $m_2=5$. 
These correspond to the choices $p=12m/5$, respectively $p=6m/5$,
out which only $p=6m/5$ has not yet been discussed
 and belongs to the current case.
The corresponding action of $\phi^p$ is given by 
\begin{multline*}
\phi^p\big((w_0;w_1,\dots,w_m)\big)\\=
(*;
c^{2}w_{\frac {4m}5+1}c^{-2},c^{2}w_{\frac {4m}5+2}c^{-2},
\dots,c^{2}w_{m}c^{-2},
cw_{1}c^{-1},\dots,
cw_{\frac {4m}5}c^{-1}\big),
\end{multline*}
so that we have to solve 
$$
t_1(ct_1c^{-1})(c^{2}t_1c^{-2})(c^{3}t_1c^{-3})
(c^{4}t_1c^{-4})\le_T c
$$
for $t_1$ with $\ell_T(t_1)=1$.
A computation with the help of 
Stembridge's {\sl Maple} package {\tt coxeter}
\cite{StemAZ} finds no solution. 
Hence, 
the left-hand side of \eqref{eq:1} is equal to $1$, as required.

\subsection*{\sc Case $G_{36}=E_7$}
The degrees are $2,6,8,10,12,14,18$, and hence we have
\begin{multline*}
\Cat^m(E_7;q)=\frac 
{[18m+18]_q\, [18m+14]_q\, [18m+12]_q} 
{[18]_q\, [14]_q\, [12]_q}\\
\times
\frac 
{[18m+10]_q\, [18m+8]_q\, [18m+6]_q\, [18m+2]_q} 
{[10]_q\, [8]_q\, [6]_q\, [2]_q} .
\end{multline*}
Let $\zeta$ be a $18m$-th root of unity. 
The following cases on the right-hand side of \eqref{eq:1}
occur:
{\refstepcounter{equation}\label{eq:E7}}
\alphaeqn
\begin{align} 
\label{eq:E7.2}
\lim_{q\to\zeta}\Cat^m(E_7;q)&=m+1,
\quad\text{if }\zeta=\zeta_{18},\zeta_{9},\\
\label{eq:E7.3}
\lim_{q\to\zeta}\Cat^m(E_7;q)&=\tfrac {9m+7}7,
\quad\text{if }\zeta=\zeta_{14},\zeta_{7},\ 7\mid m,\\
\label{eq:E7.4}
\lim_{q\to\zeta}\Cat^m(E_7;q)&=\tfrac {3m+2}2,
\quad\text{if }\zeta=\zeta_{12},\ 2\mid m,\\
\label{eq:E7.5}
\lim_{q\to\zeta}\Cat^m(E_7;q)&=\tfrac {9m+5}5,
\quad\text{if }\zeta=\zeta_{10},\zeta_{5},\ 5\mid m,\\
\label{eq:E7.6}
\lim_{q\to\zeta}\Cat^m(E_7;q)&=\tfrac {9m+4}4,
\quad\text{if }\zeta=\zeta_{8},\ 4\mid m,\\
\label{eq:E7.10}
\lim_{q\to\zeta}\Cat^m(E_7;q)&=\frac {(m+1)(3m+2)(3m+1)}2,
\quad\text{if }\zeta= \zeta_{6},\zeta_{3},\\
\label{eq:E7.9}
\lim_{q\to\zeta}\Cat^m(E_7;q)&=\frac{(3m+2)(9m+4)}8,
\quad\text{if }\zeta= \zeta_{4},\ 2\mid m,\\
\lim_{q\to\zeta}\Cat^m(E_7;q)&=\Cat^m(E_7),
\quad\text{if }\zeta=-1\text{ or }\zeta=1,\\
\label{eq:E7.1}
\lim_{q\to\zeta}\Cat^m(E_7;q)&=1,
\quad\text{otherwise.}
\end{align}
\reseteqn

We must now prove that the left-hand side of \eqref{eq:1} in
each case agrees with the values exhibited in 
\eqref{eq:E7}. The only cases not covered by
Lemmas~\ref{lem:2} and \ref{lem:3} are the ones in 
\eqref{eq:E7.3}, \eqref{eq:E7.4}, \eqref{eq:E7.5}, 
\eqref{eq:E7.6}, \eqref{eq:E7.9}, and \eqref{eq:E7.1}.

\smallskip
We begin with the case in \eqref{eq:E7.3}.
By Lemma~\ref{lem:1}, we are free to choose $p=9m/7$ if 
$\zeta=\zeta_{14}$, respectively $p=18m/7$ if 
$\zeta=\zeta_7$. In both cases, $m$ must be divisible by $7$.

We start with the case that $p=9m/7$.
From \eqref{eq:Aktion}, we infer
\begin{multline*}
\phi^p\big((w_0;w_1,\dots,w_m)\big)\\=
(*;
c^{2}w_{\frac {5m}7+1}c^{-2},c^{2}w_{\frac {5m}7+2}c^{-2},
\dots,c^{2}w_{m}c^{-2},
cw_{1}c^{-1},\dots,
cw_{\frac {5m}7}c^{-1}\big).
\end{multline*}
Supposing that 
$(w_0;w_1,\dots,w_m)$ is fixed by $\phi^p$, we obtain
the system of equations
{\refstepcounter{equation}\label{eq:E7'''A}}
\alphaeqn
\begin{align} \label{eq:E7'''Aa}
w_i&=c^2w_{\frac {5m}7+i}c^{-2}, \quad i=1,2,\dots,\tfrac {2m}7,\\
w_i&=cw_{i-\frac {2m}7}c^{-1}, \quad i=\tfrac {2m}7+1,\tfrac {2m}7+2,\dots,m.
\label{eq:E7'''Ab}
\end{align}
\reseteqn
There are two distinct possibilities for choosing
the $w_i$'s, $1\le i\le m$: 
\begin{enumerate}
\item[(i)]
all the $w_i$'s are equal to $\ep$ (and $w_0=c$), 
\item[(ii)]
there is an $i$ with $1\le i\le \frac m7$ such that
\begin{multline} \label{eq:E7'''C}
\ell_T(w_i)=\ell_T(w_{i+\frac m7})=\ell_T(w_{i+\frac {2m}7})=
\ell_T(w_{i+\frac {3m}7})\\=\ell_T(w_{i+\frac {4m}7})=
\ell_T(w_{i+\frac {5m}7})=\ell_T(w_{i+\frac {6m}7})=1,
\end{multline}
and the other $w_j$'s, $1\le j\le m$, are equal to $\ep$.
\end{enumerate}

Moreover, since $(w_0;w_1,\dots,w_m)\in NC^m(E_7)$, we must have 
$$w_iw_{i+\frac {m}7}w_{i+\frac {2m}7}w_{i+\frac {3m}7}
w_{i+\frac {4m}7}w_{i+\frac {5m}7}
w_{i+\frac {6m}7}=c.$$
Together with equations~\eqref{eq:E7'''A}--\eqref{eq:E7'''C}, 
this implies that
\begin{equation} \label{eq:E7'''D}
w_i=c^{9}w_ic^{-9}\quad\text{and}\quad 
w_i(c^5w_ic^{-5})(cw_ic^{-1})(c^6w_ic^{-6})(c^2w_ic^{-2})
(c^7w_ic^{-7})(c^3w_ic^{-3})=c.
\end{equation}
Here, the first equation is automatically satisfied due to
Lemma~\ref{lem:4} with $d=2$.

With the help of Stembridge's {\sl Maple} package {\tt coxeter}
\cite{StemAZ}, one obtains 9 solutions for $w_i$ in 
\eqref{eq:E7'''D}:
{\small
\begin{multline} \label{eq:E7sol1}
w_i\in\big\{                         [4],\,
                                     [5],\,
                                     [6],\,
                                     [7],\,
                                  [3, 4, 3],\,
                               [2, 4, 5, 4, 2],\,
                         [1, 3, 4, 5, 6, 5, 4, 3, 1],\\
                      [2, 3, 4, 5, 6, 7, 6, 5, 4, 2, 3],\,
                   [1, 2, 3, 4, 5, 6, 7, 6, 5, 4, 2, 3, 1]
\big\},
\end{multline}}%
where we have again used the short notation of {\tt coxeter},
$\{s_1,s_2,s_3,s_4,s_5,s_6,s_7\}$ being a simple system of generators of 
$E_7$,
corresponding to the Dynkin diagram displayed in Figure~\ref{fig:E7}.
Each of the above solutions for $w_i$ gives rise to $m/7$ elements of
$\Fix_{NC^m(E_7)}(\phi^{p})$ since $i$ ranges from $1$ to $m/7$.

\begin{figure}[h]
$$
\Einheit1cm
\Pfad(0,0),11111\endPfad
\Pfad(2,0),2\endPfad
\DickPunkt(0,0)
\DickPunkt(1,0)
\DickPunkt(2,0)
\DickPunkt(3,0)
\DickPunkt(4,0)
\DickPunkt(5,0)
\DickPunkt(2,1)
\Label\u{1}(0,0)
\Label\u{3}(1,0)
\Label\u{4}(2,0)
\Label\u{5}(3,0)
\Label\u{6}(4,0)
\Label\u{7}(5,0)
\Label\o{\raise5pt\hbox{2}}(2,1)
\hskip5cm
$$ 
\caption{\scriptsize The Dynkin diagram for $E_7$}
\label{fig:E7}
\end{figure}

In total, we obtain 
$1+9\frac m7=\frac {9m+7}7$ elements in
$\Fix_{NC^m(E_7)}(\phi^p)$, which agrees with the limit in
\eqref{eq:E7.3}.

In the case that $p=18m/7$, we infer from \eqref{eq:Aktion} that
\begin{multline*}
\phi^p\big((w_0;w_1,\dots,w_m)\big)\\=
(*;
c^{3}w_{\frac {3m}7+1}c^{-3},c^{3}w_{\frac {3m}7+2}c^{-3},
\dots,c^{3}w_{m}c^{-3},
c^2w_{1}c^{-2},\dots,
c^2w_{\frac {3m}7}c^{-2}\big).
\end{multline*}
Supposing that 
$(w_0;w_1,\dots,w_m)$ is fixed by $\phi^p$, we obtain
the system of equations
{\refstepcounter{equation}\label{eq:E7'''A'}}
\alphaeqn
\begin{align} \label{eq:E7'''Aa'}
w_i&=c^3w_{\frac {3m}7+i}c^{-3}, \quad i=1,2,\dots,\tfrac {4m}7,\\
w_i&=c^2w_{i-\frac {4m}7}c^{-2}, \quad i=\tfrac {4m}7+1,\tfrac {4m}7+2,\dots,m.
\label{eq:E7'''Ab'}
\end{align}
\reseteqn
There are two distinct possibilities for choosing
the $w_i$'s, $1\le i\le m$: 
\begin{enumerate}
\item[(i)]
all the $w_i$'s are equal to $\ep$ (and $w_0=c$), 
\item[(ii)]
there is an $i$ with $1\le i\le \frac m7$ such that
\begin{multline} \label{eq:E7'''C'}
\ell_T(w_i)=\ell_T(w_{i+\frac m7})=\ell_T(w_{i+\frac {2m}7})=
\ell_T(w_{i+\frac {3m}7})\\=\ell_T(w_{i+\frac {4m}7})=
\ell_T(w_{i+\frac {5m}7})=\ell_T(w_{i+\frac {6m}7})=1,
\end{multline}
and the other $w_j$'s, $1\le j\le m$, are equal to $\ep$.
\end{enumerate}

Moreover, since $(w_0;w_1,\dots,w_m)\in NC^m(E_7)$, we must have 
$$w_iw_{i+\frac {m}7}w_{i+\frac {2m}7}w_{i+\frac {3m}7}
w_{i+\frac {4m}7}w_{i+\frac {5m}7}
w_{i+\frac {6m}7}=c.$$
Together with equations~\eqref{eq:E7'''A'}--\eqref{eq:E7'''C'}, 
this implies that
\begin{multline} \label{eq:E7'''D'}
w_i=c^{18}w_ic^{-18}\\\text{and}\quad 
w_i(c^5w_ic^{-5})(c^{10}w_ic^{-10})(c^{15}w_ic^{-15})(c^2w_ic^{-2})
(c^7w_ic^{-7})(c^{12}w_ic^{-12})=c.
\end{multline}
Here, the first equation is automatically satisfied since 
$c^{18}=\ep$. Due to Lemma~\ref{lem:4} with $d=2$, we have
$c^{9}w_ic^{-9}=w_i$, hence also $c^{10}w_ic^{-10}=cw_ic^{-1}$, 
etc., so that \eqref{eq:E7'''D'} reduces to \eqref{eq:E7'''D}.
Therefore, we are facing exactly the same enumeration 
problem here as for
$p=9m/7$, and, consequently, the number of solutions to \eqref{eq:E7'''D'} is the same, namely
$\frac {9m+7}7$, as required.

\smallskip
Next we discuss the case in \eqref{eq:E7.4}.
By Lemma~\ref{lem:1}, we are free to choose $p=3m/2$. In particular,
$m$ must be divisible by $2$.
From \eqref{eq:Aktion}, we infer
$$
\phi^p\big((w_0;w_1,\dots,w_m)\big)\\=
(*;
c^{2}w_{\frac m2+1}c^{-2},c^{2}w_{\frac m2+2}c^{-2},
\dots,c^{2}w_{m}c^{-2},
cw_{1}c^{-1},\dots,
cw_{\frac m2}c^{-1}\big).
$$
Supposing that 
$(w_0;w_1,\dots,w_m)$ is fixed by $\phi^p$, we obtain
the system of equations
{\refstepcounter{equation}\label{eq:E7'A}}
\alphaeqn
\begin{align} \label{eq:E7'Aa}
w_i&=c^2w_{\frac m2+i}c^{-2}, \quad i=1,2,\dots,\tfrac {m}2,\\
w_i&=cw_{i-\frac {m}2}c^{-1}, \quad i=\tfrac {m}2+1,\tfrac {m}2+2,\dots,m.
\label{eq:E7'Ab}
\end{align}
\reseteqn
There are several distinct possibilities for choosing
the $w_i$'s, $1\le i\le m$, which we summarise as follows: 
\begin{enumerate}
\item[(i)]
all the $w_i$'s are equal to $\ep$ (and $w_0=c$), 
\item[(ii)]
there is an $i$ with $1\le i\le \frac m2$ such that
\begin{equation} \label{eq:E7'C}
1\le \ell_T(w_i)=\ell_T(w_{i+\frac m2})\le 3.
\end{equation}
\end{enumerate}

Moreover, since $(w_0;w_1,\dots,w_m)\in NC^m(E_7)$, we must have 
$w_iw_{i+\frac m2}\le_T c$.
Together with equations~\eqref{eq:E7'A}--\eqref{eq:E7'C}, 
this implies that
\begin{equation} \label{eq:E7'D}
w_i=c^3w_ic^{-3},\quad 
w_i(cw_ic^{-1})\le_T c,\quad\text{and}\quad
1\le \ell_T(w_i)\le 3.
\end{equation}
With the help of Stembridge's {\sl Maple} package {\tt coxeter}
\cite{StemAZ}, one obtains three solutions for $w_i$ in 
\eqref{eq:E7'D} with $\ell_T(w_i)=2$:
{\small
\begin{equation*}
w_i\in\big\{                  [1, 3, 4, 5, 4, 3],\,
                          [2, 3, 4, 5, 6, 5, 4, 2],\,
                 [1, 2, 4, 2, 3, 4, 5, 6, 7, 6, 5, 4, 3, 1]
\big\},
\end{equation*}}%
where we used again {\tt coxeter}'s short notation, $\{s_1,s_2,s_3,s_4,s_5,s_6,s_7\}$ being a simple system of generators of 
$E_7$,
corresponding to the Dynkin diagram displayed in 
Figure~\ref{fig:E7}, and none if $\ell_T(w_i)=1$ or
$\ell_T(w_i)=3$.
Each of the solutions for $w_i$ gives rise to $m/2$ elements of
$\Fix_{NC^m(E_7)}(\phi^{p})$ since $i$ ranges from $1$ to $m/2$.

In total, we obtain 
$1+3\frac m2=\frac {3m+2}2$ elements in
$\Fix_{NC^m(E_7)}(\phi^p)$, which agrees with the limit in
\eqref{eq:E7.4}.

\smallskip
Next we consider the case in \eqref{eq:E7.5}.
By Lemma~\ref{lem:1}, we are free to choose $p=9m/5$ if 
$\zeta=\zeta_{10}$, respectively $p=18m/5$ if 
$\zeta=\zeta_5$. In both cases, $m$ must be divisible by $5$.

We start with the case that $p=9m/5$.
From \eqref{eq:Aktion}, we infer
\begin{equation*}
\phi^p\big((w_0;w_1,\dots,w_m)\big)=
(*;
c^{2}w_{\frac {m}5+1}c^{-2},c^{2}w_{\frac {m}5+2}c^{-2},
\dots,c^{2}w_{m}c^{-2},
cw_{1}c^{-1},\dots,
cw_{\frac {m}5}c^{-1}\big).
\end{equation*}
Supposing that 
$(w_0;w_1,\dots,w_m)$ is fixed by $\phi^p$, we obtain
the system of equations
{\refstepcounter{equation}\label{eq:E7A}}
\alphaeqn
\begin{align} \label{eq:E7Aa}
w_i&=c^2w_{\frac {m}5+i}c^{-2}, \quad i=1,2,\dots,\tfrac {4m}5,\\
w_i&=cw_{i-\frac {4m}5}c^{-1}, \quad i=\tfrac {4m}5+1,\tfrac {4m}5+2,\dots,m.
\label{eq:E7Ab}
\end{align}
\reseteqn
There are two distinct possibilities for choosing
the $w_i$'s, $1\le i\le m$: 
{\refstepcounter{equation}\label{eq:E7C}}
\alphaeqn
\begin{enumerate}
\item[(i)]
all the $w_i$'s are equal to $\ep$ (and $w_0=c$), 
\item[(ii)]
there is an $i$ with $1\le i\le \frac m5$ such that
\begin{equation} \label{eq:E7Cii}
\ell_T(w_i)=\ell_T(w_{i+\frac {m}5})=\ell_T(w_{i+\frac {2m}5})=
\ell_T(w_{i+\frac {3m}5})=\ell_T(w_{i+\frac {4m}5})=1,
\end{equation}
and the other $w_j$'s, $1\le j\le m$, are equal to $\ep$.
\end{enumerate}
\reseteqn

Moreover, since $(w_0;w_1,\dots,w_m)\in NC^m(E_7)$, we must have 
$$w_iw_{i+\frac m5}w_{i+\frac {2m}5}
w_{i+\frac {3m}5}w_{i+\frac {4m}5}\le_T c.$$
Together with equations~\eqref{eq:E7A}--\eqref{eq:E7C}, 
this implies that
\begin{equation} \label{eq:E7D}
w_i=c^9w_ic^{-9}\quad\text{and}\quad 
w_i(c^7w_ic^{-7})(c^5w_ic^{-5})(c^3w_ic^{-3})(cw_ic^{-1})\le_T c.
\end{equation}
With the help of Stembridge's {\sl Maple} package {\tt coxeter}
\cite{StemAZ}, one obtains 9 solutions for $w_i$ in 
\eqref{eq:E7D}:
{\small
\begin{multline*}
w_i\in\big\{                       [4],\,
                                     [5],\,
                                     [6],\,
                                     [7],\,
                                  [3, 4, 3],\,
                               [2, 4, 5, 4, 2],\,
                         [1, 3, 4, 5, 6, 5, 4, 3, 1],\\
                      [2, 3, 4, 5, 6, 7, 6, 5, 4, 2, 3],\,
                   [1, 2, 3, 4, 5, 6, 7, 6, 5, 4, 2, 3, 1]
\big\},
\end{multline*}}%
where $\{s_1,s_2,s_3,s_4,s_5,s_6,s_7\}$ is a simple system of generators of 
$E_7$,
corresponding to the Dynkin diagram displayed in Figure~\ref{fig:E7},
and each of them gives rise to $m/5$ elements of
$\Fix_{NC^m(E_7)}(\phi^{p})$ since $i$ ranges from $1$ to 
$m/5$.\footnote{Miraculously, these are exactly the same solutions
as in the case of \eqref{eq:E7.3}. We have no explanation for this
phenomenon.}

In total, we obtain 
$1+9\frac m5=\frac {9m+5}5$ elements in
$\Fix_{NC^m(E_7)}(\phi^p)$, which agrees with the limit in
\eqref{eq:E7.5}.

In the case that $p=18m/5$, we infer from \eqref{eq:Aktion} that
\begin{multline*}
\phi^p\big((w_0;w_1,\dots,w_m)\big)\\=
(*;
c^{4}w_{\frac {2m}5+1}c^{-4},c^{4}w_{\frac {2m}5+2}c^{-4},
\dots,c^{4}w_{m}c^{-4},
c^3w_{1}c^{-3},\dots,
c^3w_{\frac {2m}5}c^{-3}\big).
\end{multline*}
Supposing that 
$(w_0;w_1,\dots,w_m)$ is fixed by $\phi^p$, we obtain
the system of equations
{\refstepcounter{equation}\label{eq:E7A'}}
\alphaeqn
\begin{align} \label{eq:E7Aa'}
w_i&=c^4w_{\frac {2m}5+i}c^{-4}, \quad i=1,2,\dots,\tfrac {3m}5,\\
w_i&=c^3w_{i-\frac {3m}5}c^{-3}, \quad i=\tfrac {3m}5+1,\tfrac {3m}5+2,\dots,m.
\label{eq:E7Ab'}
\end{align}
\reseteqn
There are two distinct possibilities for choosing
the $w_i$'s, $1\le i\le m$: 
{\refstepcounter{equation}\label{eq:E7C'}}
\alphaeqn
\begin{enumerate}
\item[(i)]
all the $w_i$'s are equal to $\ep$ (and $w_0=c$), 
\item[(ii)]
there is an $i$ with $1\le i\le \frac m5$ such that
\begin{equation} \label{eq:E7Cii'}
\ell_T(w_i)=\ell_T(w_{i+\frac {m}5})=\ell_T(w_{i+\frac {2m}5})=
\ell_T(w_{i+\frac {3m}5})=\ell_T(w_{i+\frac {4m}5})=1,
\end{equation}
and the other $w_j$'s, $1\le j\le m$, are equal to $\ep$.
\end{enumerate}
\reseteqn

Moreover, since $(w_0;w_1,\dots,w_m)\in NC^m(E_7)$, we must have 
$$w_iw_{i+\frac m5}w_{i+\frac {2m}5}
w_{i+\frac {3m}5}w_{i+\frac {4m}5}\le_T c.$$
Together with equations~\eqref{eq:E7A'}--\eqref{eq:E7C'}, 
this implies that
\begin{equation} \label{eq:E7D'}
w_i=c^{18}w_ic^{-18}\quad\text{and}\quad 
w_i(c^7w_ic^{-7})(c^{14}w_ic^{-14})(c^3w_ic^{-3})
(c^{10}w_ic^{-10})\le_T c.
\end{equation}
Here, the first equation is automatically satisfied since 
$c^{18}=\ep$. Due to Lemma~\ref{lem:4} with $d=2$, we have
$c^{9}w_ic^{-9}=w_i$, hence also $c^{14}w_ic^{-14}=c^5w_ic^{-5}$, 
etc., so that \eqref{eq:E7D'} reduces to \eqref{eq:E7D}.
Therefore, we are facing exactly the same enumeration 
problem here as for
$p=9m/5$, and, consequently, the number of solutions to \eqref{eq:E7D'} is the same, namely
$\frac {9m+5}5$, as required.

\smallskip
Our next case is the case in \eqref{eq:E7.6}.
By Lemma~\ref{lem:1}, we are free to choose $p=9m/4$. In particular,
$m$ must be divisible by $4$.
From \eqref{eq:Aktion}, we infer
\begin{multline*}
\phi^p\big((w_0;w_1,\dots,w_m)\big)\\=
(*;
c^{3}w_{\frac {3m}4+1}c^{-3},c^{3}w_{\frac {3m}4+2}c^{-3},
\dots,c^{3}w_{m}c^{-3},
c^2w_{1}c^{-2},\dots,
c^2w_{\frac {3m}4}c^{-2}\big).
\end{multline*}
Supposing that 
$(w_0;w_1,\dots,w_m)$ is fixed by $\phi^p$, we obtain
the system of equations
{\refstepcounter{equation}\label{eq:E7''A}}
\alphaeqn
\begin{align} \label{eq:E7''Aa}
w_i&=c^3w_{\frac {3m}4+i}c^{-3}, \quad i=1,2,\dots,\tfrac {m}4,\\
w_i&=c^2w_{i-\frac {m}4}c^{-2}, \quad i=\tfrac {m}4+1,\tfrac {m}4+2,\dots,m.
\label{eq:E7''Ab}
\end{align}
\reseteqn
There are several distinct possibilities for choosing
the $w_i$'s, $1\le i\le m$, which we summarise as follows: 
\begin{enumerate}
\item[(i)]
all the $w_i$'s are equal to $\ep$ (and $w_0=c$), 
\item[(ii)]
there is an $i$ with $1\le i\le \frac m4$ such that
\begin{equation} \label{eq:E7''C}
\ell_T(w_i)=\ell_T(w_{i+\frac m4})=\ell_T(w_{i+\frac {2m}4})=
\ell_T(w_{i+\frac {3m}4})=1,
\end{equation}
and the other $w_j$'s, $1\le j\le m$, are equal to $\ep$,
\end{enumerate}

Moreover, since $(w_0;w_1,\dots,w_m)\in NC^m(E_7)$, we must have 
$w_iw_{i+\frac {m}4}w_{i+\frac {2m}4}w_{i+\frac {3m}4}\le_T c$.
Together with equations~\eqref{eq:E7''A}--\eqref{eq:E7''C}, 
this implies that
\begin{equation} \label{eq:E7''D}
w_i=c^{9}w_ic^{-9}\quad\text{and}\quad 
w_i(c^2w_ic^{-2})(c^4w_ic^{-4})(c^6w_ic^{-6})\le_T c.
\end{equation}
Here, the first equation in \eqref{eq:E7''D} 
is automatically satisfied due to
Lemma~\ref{lem:4} with $d=2$.

With the help of Stembridge's {\sl Maple} package {\tt coxeter}
\cite{StemAZ}, one obtains 9 solutions for $w_i$ in 
\eqref{eq:E7''D} with $\ell_T(w_i)=1$:
{\small
\begin{multline*}
w_i\in\big\{                         [1],\,
                                     [3],\,
                                  [2, 4, 2],\,
                               [3, 4, 5, 4, 3],\,
                            [1, 3, 4, 5, 4, 3, 1],\,
                            [2, 4, 5, 6, 5, 4, 2],\,
                         [2, 3, 4, 5, 6, 5, 4, 2, 3],\\
                      [1, 3, 4, 5, 6, 7, 6, 5, 4, 3, 1],\,
                [1, 4, 2, 3, 4, 5, 6, 7, 6, 5, 4, 2, 3, 1, 4]
\big\},
\end{multline*}}%
where $\{s_1,s_2,s_3,s_4,s_5,s_6,s_7\}$ is a simple system of generators of 
$E_7$,
corresponding to the Dynkin diagram displayed in Figure~\ref{fig:E7},
and each of them gives rise to $m/4$ elements of
$\Fix_{NC^m(E_7)}(\phi^{p})$ since $i$ ranges from $1$ to $m/4$.
Hence, we obtain 
$1+9\frac m4=\frac {9m+4}4$ elements in
$\Fix_{NC^m(E_7)}(\phi^p)$, which agrees with the limit in
\eqref{eq:E7.6}.

\smallskip
Finally we discuss the case in \eqref{eq:E7.9}.
By Lemma~\ref{lem:1}, we are free to choose $p=9m/2$. In particular,
$m$ must be divisible by $2$.
From \eqref{eq:Aktion}, we infer
\begin{multline*}
\phi^p\big((w_0;w_1,\dots,w_m)\big)\\=
(*;
c^{5}w_{\frac {m}2+1}c^{-5},c^{5}w_{\frac {m}2+2}c^{-5},
\dots,c^{5}w_{m}c^{-5},
c^4w_{1}c^{-4},\dots,
c^4w_{\frac {m}2}c^{-4}\big).
\end{multline*}
Supposing that 
$(w_0;w_1,\dots,w_m)$ is fixed by $\phi^p$, we obtain
the system of equations
{\refstepcounter{equation}\label{eq:E7''''A}}
\alphaeqn
\begin{align} \label{eq:E7''''Aa}
w_i&=c^5w_{\frac {m}2+i}c^{-5}, \quad i=1,2,\dots,\tfrac {m}2,\\
w_i&=c^4w_{i-\frac {m}2}c^{-4}, \quad i=\tfrac {m}2+1,\tfrac {m}2+2,\dots,m.
\label{eq:E7''''Ab}
\end{align}
\reseteqn
There are several distinct possibilities for choosing
the $w_i$'s, $1\le i\le m$: 
{\refstepcounter{equation}\label{eq:E7''''C}}
\alphaeqn
\begin{enumerate}
\item[(i)]
all the $w_i$'s are equal to $\ep$ (and $w_0=c$), 
\item[(ii)]
there is an $i$ with $1\le i\le \frac m2$ such that
\begin{equation} \label{eq:E7''''Cii}
\ell_T(w_i)=\ell_T(w_{i+\frac m2})=3,
\end{equation}
and the other $w_j$'s, $1\le j\le m$, are equal to $\ep$,
\item[(iii)]
there is an $i$ with $1\le i\le \frac m2$ such that
\begin{equation} \label{eq:E7''''Ciii}
\ell_T(w_i)=\ell_T(w_{i+\frac m2})=2,
\end{equation}
and the other $w_j$'s, $1\le j\le m$, are equal to $\ep$,
\item[(iv)]
there is an $i$ with $1\le i\le \frac m2$ such that
\begin{equation} \label{eq:E7''''Civ}
\ell_T(w_i)=\ell_T(w_{i+\frac m2})=1,
\end{equation}
and the other $w_j$'s, $1\le j\le m$, are equal to $\ep$,
\item[(v)]
there are $i_1$ and $i_2$ with $1\le i_1<i_2\le \frac m2$ such that
\begin{equation} \label{eq:E7''''Cv}
\ell_T(w_{i_1})=\ell_T(w_{i_2})=
\ell_T(w_{i_1+\frac m2})=\ell_T(w_{i_2+\frac m2})=1,
\end{equation}
and the other $w_j$'s, $1\le j\le m$, are equal to $\ep$,
\item[(vi)]
there are $i_1$ and $i_2$ with $1\le i_1,i_2\le \frac m2$ such that
\begin{equation} \label{eq:E7''''Cvi}
\ell_T(w_{i_1})=
\ell_T(w_{i_1+\frac m2})=2\quad\text{and}\quad
\ell_T(w_{i_2})=\ell_T(w_{i_2+\frac m2})=1,
\end{equation}
and the other $w_j$'s, $1\le j\le m$, are equal to $\ep$,
\item[(vii)]
there are $i_1,i_2,i_3$ with $1\le i_1<i_2<i_3\le \frac m2$ such that
\begin{equation} \label{eq:E7''''Cvii}
\ell_T(w_{i_1})=\ell_T(w_{i_2})=\ell_T(w_{i_3})=
\ell_T(w_{i_1+\frac m2})=\ell_T(w_{i_2+\frac m2})=
\ell_T(w_{i_3+\frac m2})=1,
\end{equation}
and the other $w_j$'s, $1\le j\le m$, are equal to $\ep$.
\end{enumerate}
\reseteqn

Moreover, since $(w_0;w_1,\dots,w_m)\in NC^m(E_7)$, we must have 
$w_iw_{i+\frac {m}2}\le_T c$,
respectively 
$w_{i_1}w_{i_2}w_{i_1+\frac {m}2}w_{i_2+\frac {m}2}\le_T c$,
respectively 
$$w_{i_1}w_{i_2}w_{i_3}
w_{i_1+\frac {m}2}w_{i_2+\frac {m}2}w_{i_3+\frac {m}2}\le_T c.$$
Together with equations~\eqref{eq:E7''''A}--\eqref{eq:E7''''C}, 
this implies that
\begin{equation} \label{eq:E7''''D}
w_i=c^{9}w_ic^{-9}\quad\text{and}\quad 
w_i(c^4w_ic^{-4})\le_T c,
\end{equation}
respectively that
\begin{equation} \label{eq:E7''''E}
w_{i_1}=c^{9}w_{i_1}c^{-9},\quad 
w_{i_2}=c^{9}w_{i_2}c^{-9},\quad\text{and}\quad  w_{i_1}w_{i_2}(c^4w_{i_1}c^{-4})(c^4w_{i_2}c^{-4})\le_T c,
\end{equation}
respectively that
\begin{multline} \label{eq:E7''''F}
w_{i_1}=c^{9}w_{i_1}c^{-9},\quad 
w_{i_2}=c^{9}w_{i_2}c^{-9},\quad 
w_{i_3}=c^{9}w_{i_3}c^{-9},\\
\quad\text{and}\quad w_{i_1}w_{i_2}w_{i_3}
(c^4w_{i_1}c^{-4})(c^4w_{i_2}c^{-4})(c^4w_{i_3}c^{-4})\le_T c.
\end{multline}
Here, the first equation in \eqref{eq:E7''''D},
the first two in \eqref{eq:E7''''E},
and the first three in \eqref{eq:E7''''F}, are all automatically 
satisfied due to Lemma~\ref{lem:4} with $d=2$.

With the help of Stembridge's {\sl Maple} package {\tt coxeter}
\cite{StemAZ}, one obtains 9 solutions for $w_i$ in 
\eqref{eq:E7''''D}  with $\ell_T(w_i)=1$:
{\small
\begin{multline*}
w_i\in\big\{                          [1],\,
                                     [3],\,
                                  [2, 4, 2],\,
                               [3, 4, 5, 4, 3],\,
                            [1, 3, 4, 5, 4, 3, 1],\,
                            [2, 4, 5, 6, 5, 4, 2],\,
                         [2, 3, 4, 5, 6, 5, 4, 2, 3],\\
                      [1, 3, 4, 5, 6, 7, 6, 5, 4, 3, 1],\,
                [1, 4, 2, 3, 4, 5, 6, 7, 6, 5, 4, 2, 3, 1, 4]
\big\},
\end{multline*}}%
where $\{s_1,s_2,s_3,s_4,s_5,s_6,s_7\}$ is a simple system of generators of $E_7$,
corresponding to the Dynkin diagram displayed in Figure~\ref{fig:E7},
and each of them gives rise to $m/2$ elements of
$\Fix_{NC^m(E_7)}(\phi^{p})$ since $i$ ranges from $1$ to $m/2$.
Furthermore, one obtains 12 solutions for $w_i$ in 
\eqref{eq:E7''''D}  with $\ell_T(w_i)=2$:
{\small
\begin{multline*}
w_i\in\big\{                    [1, 2, 4, 2],\,
                             [1, 3, 4, 5, 4, 3],\,
                          [1, 2, 4, 5, 6, 5, 4, 2],\,
                          [1, 3, 1, 4, 5, 4, 3, 1],\,
                          [2, 3, 4, 5, 6, 5, 4, 2],\\
                    [1, 3, 1, 4, 5, 6, 7, 6, 5, 4, 3, 1],\,
                       [2, 3, 4, 3, 5, 6, 5, 4, 2, 3],\,
                 [1, 2, 4, 2, 3, 4, 5, 6, 7, 6, 5, 4, 3, 1],\\
                    [2, 3, 4, 2, 5, 4, 2, 6, 5, 4, 2, 3],\,
              [1, 3, 1, 4, 3, 5, 4, 3, 1, 6, 7, 6, 5, 4, 3, 1],\\
           [1, 3, 4, 2, 3, 4, 5, 4, 2, 6, 7, 6, 5, 4, 2, 3, 1, 4],\,
        [1, 2, 4, 2, 3, 4, 5, 4, 6, 5, 4, 3, 7, 6, 5, 4, 2, 3, 1, 4]
\big\},
\end{multline*}}%
each of them giving rise to $m/2$ elements of
$\Fix_{NC^m(E_7)}(\phi^{p})$ since $i$ ranges from $1$ to $m/2$,
and one obtains 27 pairs $(w_{i_1},w_{i_2})$ of solutions in 
\eqref{eq:E7''''E} with $\ell_T(w_{i_1})=\ell_T(w_{i_2})=1$:
{\small
\begin{multline*}
w_i\in\big\{                             (  [1], [2, 4, 2]),\ (
                            [1], [3, 4, 5, 4, 3]),\ (
                         [1], [2, 4, 5, 6, 5, 4, 2]),\ (
                         [3], [1, 3, 4, 5, 4, 3, 1]),\\ (
                         [3], [2, 4, 5, 6, 5, 4, 2]),\ (
                   [3], [1, 3, 4, 5, 6, 7, 6, 5, 4, 3, 1]),\ (
                               [2, 4, 2], [1]),\\ (
                   [2, 4, 2], [2, 3, 4, 5, 6, 5, 4, 2, 3]),\ (
                [2, 4, 2], [1, 3, 4, 5, 6, 7, 6, 5, 4, 3, 1]),\\ (
                   [3, 4, 5, 4, 3], [1, 3, 4, 5, 4, 3, 1]),\ (
                [3, 4, 5, 4, 3], [2, 3, 4, 5, 6, 5, 4, 2, 3]),\\ (
             [3, 4, 5, 4, 3], [1, 3, 4, 5, 6, 7, 6, 5, 4, 3, 1]),\ (
                         [1, 3, 4, 5, 4, 3, 1], [1]),\\ (
                         [1, 3, 4, 5, 4, 3, 1], [3]),\ (
    [1, 3, 4, 5, 4, 3, 1], [1, 4, 2, 3, 4, 5, 6, 7, 6, 5, 4, 2, 3, 1, 4]),\\ (
                         [2, 4, 5, 6, 5, 4, 2], [1]),\ (
             [2, 4, 5, 6, 5, 4, 2], [2, 3, 4, 5, 6, 5, 4, 2, 3]),\\ (
    [2, 4, 5, 6, 5, 4, 2], [1, 4, 2, 3, 4, 5, 6, 7, 6, 5, 4, 2, 3, 1, 4]),\ (
                      [2, 3, 4, 5, 6, 5, 4, 2, 3], [3]),\\ (
                   [2, 3, 4, 5, 6, 5, 4, 2, 3], [2, 4, 2]),\ (
                [2, 3, 4, 5, 6, 5, 4, 2, 3], [3, 4, 5, 4, 3]),\\ (
                   [1, 3, 4, 5, 6, 7, 6, 5, 4, 3, 1], [3]),\ (
             [1, 3, 4, 5, 6, 7, 6, 5, 4, 3, 1], [3, 4, 5, 4, 3]),\\ (
               [1, 3, 4, 5, 6, 7, 6, 5, 4, 3, 1], 
                 [1, 4, 2, 3, 4, 5, 6, 7, 6, 5, 4, 2, 3, 1, 4]),\\ (
          [1, 4, 2, 3, 4, 5, 6, 7, 6, 5, 4, 2, 3, 1, 4], [2, 4, 2]),\\ (
    [1, 4, 2, 3, 4, 5, 6, 7, 6, 5, 4, 2, 3, 1, 4], [1, 3, 4, 5, 4, 3, 1]),\\ (
    [1, 4, 2, 3, 4, 5, 6, 7, 6, 5, 4, 2, 3, 1, 4], [2, 4, 5, 6, 5, 4, 2])
\big\},
\end{multline*}}%
each of them giving rise to $\binom {m/2}2$ elements of
$\Fix_{NC^m(E_7)}(\phi^{p})$ since $1\le i_1<i_2\le \frac m2$.

There are no solutions for $w_i$ in \eqref{eq:E7''''D} with
$\ell_T(w_i)=3$, and hence there are no solutions for
$w_{i_1},w_{i_2}$ in \eqref{eq:E7''''E} if we are in case (vi),
or for $w_{i_1},w_{i_2},w_{i_3}$ in \eqref{eq:E7''''F}.

In total, we obtain 
$1+(9+12)\frac m2+27\binom {m/2}2=\frac {(3m+2)(9m+4)}8$ elements in
$\Fix_{NC^m(E_7)}(\phi^p)$, which agrees with the limit in
\eqref{eq:E7.9}.

\smallskip
Finally, we turn to \eqref{eq:E7.1}. By Remark~\ref{rem:1},
the only choices for $h_2$ and $m_2$ to be considered
are $h_2=1$ and $m_2=7$, $h_2=1$ and $m_2=5$, 
$h_2=2$ and $m_2=7$, $h_2=2$ and $m_2=5$, 
$h_2=2$ and $m_2=4$, respectively $h_2=m_2=2$. 
These correspond to the choices $p=18m/7$, $p=18m/5$, $p=9m/7$,
$p=9m/5$, $p=9m/4$, respectively $p=9m/2$, 
all of which have already been discussed
 as they do not belong to \eqref{eq:E7.1}. Hence, 
\eqref{eq:1} must necessarily hold, as required.

\subsection*{\sc Case $G_{37}=E_8$}
The degrees are $2,8,12,14,18,20,24,30$, and hence we have
\begin{multline*}
\Cat^m(E_8;q)=\frac 
{[30m+30]_q\, [30m+24]_q\, [30m+20]_q\, [30m+18]_q} 
{[30]_q\, [24]_q\, [20]_q\, [18]_q}\\
\times
\frac 
{[30m+14]_q\, [30m+12]_q\, [30m+8]_q\, [30m+2]_q} 
{[14]_q\, [12]_q\, [8]_q\, [2]_q} .
\end{multline*}
Let $\zeta$ be a $30m$-th root of unity. 
The following cases on the right-hand side of \eqref{eq:1}
occur:
{\refstepcounter{equation}\label{eq:E8}}
\alphaeqn
\begin{align} 
\label{eq:E8.2}
\lim_{q\to\zeta}\Cat^m(E_8;q)&=m+1,
\quad\text{if }\zeta=\zeta_{30},\zeta_{15},\\
\label{eq:E8.3}
\lim_{q\to\zeta}\Cat^m(E_8;q)&=\tfrac {5m+4}4,
\quad\text{if }\zeta=\zeta_{24},\ 4\mid m,\\
\label{eq:E8.4}
\lim_{q\to\zeta}\Cat^m(E_8;q)&=\tfrac {3m+2}2,
\quad\text{if }\zeta=\zeta_{20},\ 2\mid m,\\
\label{eq:E8.5}
\lim_{q\to\zeta}\Cat^m(E_8;q)&=\tfrac {5m+3}3,
\quad\text{if }\zeta=\zeta_{18},\zeta_{9},\ 3\mid m,\\
\label{eq:E8.6}
\lim_{q\to\zeta}\Cat^m(E_8;q)&=\tfrac {15m+7}7,
\quad\text{if }\zeta=\zeta_{14},\zeta_{7},\ 7\mid m,\\
\label{eq:E8.7}
\lim_{q\to\zeta}\Cat^m(E_8;q)&=\tfrac {(5m+4)(5m+2)}8,
\quad\text{if }\zeta= \zeta_{12},\ 2\mid m,\\
\label{eq:E8.8}
\lim_{q\to\zeta}\Cat^m(E_8;q)&=\tfrac {(m+1)(3m+2)}2,
\quad\text{if }\zeta= \zeta_{10},\zeta_{5},\\
\label{eq:E8.9}
\lim_{q\to\zeta}\Cat^m(E_8;q)&=\frac{(5m+4)(15m+4)}{16},
\quad\text{if }\zeta= \zeta_{8},\ 4\mid m,\\
\label{eq:E8.10}
\lim_{q\to\zeta}\Cat^m(E_8;q)&=\frac{(m+1)(5m+4)(5m+3)(5m+2)}{24},
\quad\text{if }\zeta= \zeta_{6},\zeta_3,\\
\label{eq:E8.11}
\lim_{q\to\zeta}\Cat^m(E_8;q)&=\frac {(5m+4)(3m+2)(5m+2)(15m+4)}{64},
\quad\text{if }\zeta= \zeta_{4},\ 2\mid m,\\
\lim_{q\to\zeta}\Cat^m(E_8;q)&=\Cat^m(E_8),
\quad\text{if }\zeta=-1\text{ or }\zeta=1,\\
\label{eq:E8.1}
\lim_{q\to\zeta}\Cat^m(E_8;q)&=1,
\quad\text{otherwise.}
\end{align}
\reseteqn

We must now prove that the left-hand side of \eqref{eq:1} in
each case agrees with the values exhibited in 
\eqref{eq:E8}. The only cases not covered by
Lemmas~\ref{lem:2} and \ref{lem:3} are the ones in 
\eqref{eq:E8.3}, \eqref{eq:E8.4}, \eqref{eq:E8.5}, 
\eqref{eq:E8.6}, \eqref{eq:E8.7}, \eqref{eq:E8.9}, 
\eqref{eq:E8.11}, and \eqref{eq:E8.1}.

\smallskip
We begin with the case in \eqref{eq:E8.3}.
By Lemma~\ref{lem:1}, we are free to choose $p=5m/4$. In particular,
$m$ must be divisible by $4$.
From \eqref{eq:Aktion}, we infer
\begin{multline*}
\phi^p\big((w_0;w_1,\dots,w_m)\big)\\=
(*;
c^{2}w_{\frac {3m}4+1}c^{-2},c^{2}w_{\frac {3m}4+2}c^{-2},
\dots,c^{2}w_{m}c^{-2},
cw_{1}c^{-1},\dots,
cw_{\frac {3m}4}c^{-1}\big).
\end{multline*}
Supposing that 
$(w_0;w_1,\dots,w_m)$ is fixed by $\phi^p$, we obtain
the system of equations
{\refstepcounter{equation}\label{eq:E8'''A}}
\alphaeqn
\begin{align} \label{eq:E8'''Aa}
w_i&=c^2w_{\frac {3m}4+i}c^{-2}, \quad i=1,2,\dots,\tfrac {m}4,\\
w_i&=cw_{i-\frac {m}4}c^{-1}, \quad i=\tfrac {m}4+1,\tfrac {m}4+2,\dots,m.
\label{eq:E8'''Ab}
\end{align}
\reseteqn
There are several distinct possibilities for choosing
the $w_i$'s, $1\le i\le m$, which we summarise as follows: 
\begin{enumerate}
\item[(i)]
all the $w_i$'s are equal to $\ep$ (and $w_0=c$), 
\item[(ii)]
there is an $i$ with $1\le i\le \frac m4$ such that
\begin{equation} \label{eq:E8'''C}
1\le\ell_T(w_i)=\ell_T(w_{i+\frac m4})=\ell_T(w_{i+\frac {2m}4})=
\ell_T(w_{i+\frac {3m}4})\le2.
\end{equation}
\end{enumerate}

Moreover, since $(w_0;w_1,\dots,w_m)\in NC^m(E_8)$, we must have 
$$w_iw_{i+\frac {m}4}w_{i+\frac {2m}4}w_{i+\frac {3m}4}\le_T c.$$
Together with equations~\eqref{eq:E8'''A}--\eqref{eq:E8'''C}, 
this implies that
\begin{equation} \label{eq:E8'''D}
w_i=c^{5}w_ic^{-5}\quad\text{and}\quad 
w_i(cw_ic^{-1})(c^2w_ic^{-2})(c^3w_ic^{-3})\le_T c.
\end{equation}

With the help of Stembridge's {\sl Maple} package {\tt coxeter}
\cite{StemAZ}, one obtains 5 solutions for $w_i$ in 
\eqref{eq:E8'''D} with $\ell_T(w_i)=2$:
{\small
\begin{multline*} 
w_i\in\big\{   [1, 2, 3, 4, 5, 6, 7, 8, 7, 6, 5, 4, 2, 3, 1, 4],\,
                                [3, 4, 3, 5],\,
                             [2, 4, 5, 4, 2, 6],\\
                       [1, 3, 4, 5, 6, 5, 4, 3, 1, 7],\,
                    [2, 3, 4, 5, 6, 7, 6, 5, 4, 2, 3, 8]
\big\},
\end{multline*}}%
where we have again used the short notation of {\tt coxeter},
$\{s_1,s_2,s_3,s_4,s_5,s_6,s_7,s_8\}$ being a simple system of generators of 
$E_8$,
corresponding to the Dynkin diagram displayed in Figure~\ref{fig:E8}.
Each of the above solutions for $w_i$ gives rise to $m/4$ elements of
$\Fix_{NC^m(E_8)}(\phi^{p})$ since $i$ ranges from $1$ to $m/4$.

\begin{figure}[h]
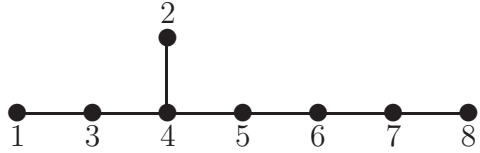

$$
\Einheit1cm
\Pfad(0,0),111111\endPfad
\Pfad(2,0),2\endPfad
\DickPunkt(0,0)
\DickPunkt(1,0)
\DickPunkt(2,0)
\DickPunkt(3,0)
\DickPunkt(4,0)
\DickPunkt(5,0)
\DickPunkt(6,0)
\DickPunkt(2,1)
\Label\u{1}(0,0)
\Label\u{3}(1,0)
\Label\u{4}(2,0)
\Label\u{5}(3,0)
\Label\u{6}(4,0)
\Label\u{7}(5,0)
\Label\u{8}(6,0)
\Label\o{\raise5pt\hbox{2}}(2,1)
\hskip6cm
$$ 
\caption{\scriptsize The Dynkin diagram for $E_8$}
\label{fig:E8}
\end{figure}

There are no solutions for $w_i$ in 
\eqref{eq:E8'''D} with $\ell_T(w_i)=1$.

In total, we obtain 
$1+5\frac m4=\frac {5m+4}4$ elements in
$\Fix_{NC^m(E_8)}(\phi^p)$, which agrees with the limit in
\eqref{eq:E8.3}.

\smallskip
Next we discuss the case in \eqref{eq:E8.4}.
By Lemma~\ref{lem:1}, we are free to choose $p=3m/2$. In particular,
$m$ must be divisible by $2$.
From \eqref{eq:Aktion}, we infer
$$
\phi^p\big((w_0;w_1,\dots,w_m)\big)\\=
(*;
c^{2}w_{\frac m2+1}c^{-2},c^{2}w_{\frac m2+2}c^{-2},
\dots,c^{2}w_{m}c^{-2},
cw_{1}c^{-1},\dots,
cw_{\frac m2}c^{-1}\big).
$$
Supposing that 
$(w_0;w_1,\dots,w_m)$ is fixed by $\phi^p$, we obtain
the system of equations
{\refstepcounter{equation}\label{eq:E8'A}}
\alphaeqn
\begin{align} \label{eq:E8'Aa}
w_i&=c^2w_{\frac m2+i}c^{-2}, \quad i=1,2,\dots,\tfrac {m}2,\\
w_i&=cw_{i-\frac {m}2}c^{-1}, \quad i=\tfrac {m}2+1,\tfrac {m}2+2,\dots,m.
\label{eq:E8'Ab}
\end{align}
\reseteqn
There are several distinct possibilities for choosing
the $w_i$'s, $1\le i\le m$, which we summarise as follows: 
\begin{enumerate}
\item[(i)]
all the $w_i$'s are equal to $\ep$ (and $w_0=c$), 
\item[(ii)]
there is an $i$ with $1\le i\le \frac m2$ such that
\begin{equation} \label{eq:E8'C}
1\le \ell_T(w_i)=\ell_T(w_{i+\frac m2})\le 4.
\end{equation}
\end{enumerate}

Moreover, since $(w_0;w_1,\dots,w_m)\in NC^m(E_8)$, we must have 
$w_iw_{i+\frac m2}\le_T c$.
Together with equations~\eqref{eq:E8'A}--\eqref{eq:E8'C}, 
this implies that
\begin{equation} \label{eq:E8'D}
w_i=c^3w_ic^{-3},\quad 
w_i(cw_ic^{-1})\le_T c,\quad\text{and}\quad
1\le \ell_T(w_i)\le 4.
\end{equation}
With the help of Stembridge's {\sl Maple} package {\tt coxeter}
\cite{StemAZ}, one obtains three solutions for $w_i$ in 
\eqref{eq:E8'D} with $\ell_T(w_i)=4$:
{\small
\begin{multline*} 
w_i\in\big\{ [1, 2, 3, 4, 2, 5, 4, 2, 6, 5, 7, 8, 7, 6, 5, 4, 2, 3],\,
                       [2, 3, 4, 2, 5, 6, 5, 4, 2, 7],\\
              [1, 2, 4, 2, 3, 4, 5, 4, 6, 7, 6, 5, 4, 3, 1, 8]
\big\},
\end{multline*}}%
where we used again {\tt coxeter}'s short notation, $\{s_1,s_2,s_3,s_4,s_5,s_6,s_7,s_8\}$ being a simple system of generators of 
$E_8$,
corresponding to the Dynkin diagram displayed in 
Figure~\ref{fig:E8}.
Each of these solutions for $w_i$ gives rise to $m/2$ elements of
$\Fix_{NC^m(E_8)}(\phi^{p})$ since $i$ ranges from $1$ to $m/2$.
There are no solutions for $w_i$ in 
\eqref{eq:E8'D} with $1\le\ell_T(w_i)\le3$.

In total, we obtain 
$1+3\frac m2=\frac {3m+2}2$ elements in
$\Fix_{NC^m(E_8)}(\phi^p)$, which agrees with the limit in
\eqref{eq:E8.4}.

\smallskip
Next we consider the case in \eqref{eq:E8.5}.
If $\zeta=\zeta_{18}$,
then, by Lemma~\ref{lem:1}, we are free to choose $p=5m/3$, 
whereas, for $\zeta=\zeta_9$, we can choose $p=10m/3$. In particular,
in both cases $m$ must be divisible by $3$.

First, let $p=5m/3$.
From \eqref{eq:Aktion}, we infer
\begin{equation*}
\phi^p\big((w_0;w_1,\dots,w_m)\big)=
(*;
c^{2}w_{\frac {m}3+1}c^{-2},c^{2}w_{\frac {m}3+2}c^{-2},
\dots,c^{2}w_{m}c^{-2},
cw_{1}c^{-1},\dots,
cw_{\frac {m}3}c^{-1}\big).
\end{equation*}
Supposing that 
$(w_0;w_1,\dots,w_m)$ is fixed by $\phi^p$, we obtain
the system of equations
{\refstepcounter{equation}\label{eq:E8A}}
\alphaeqn
\begin{align} \label{eq:E8Aa}
w_i&=c^2w_{\frac {m}3+i}c^{-2}, \quad i=1,2,\dots,\tfrac {2m}3,\\
w_i&=cw_{i-\frac {2m}3}c^{-1}, \quad i=\tfrac {2m}3+1,\tfrac {2m}3+2,\dots,m.
\label{eq:E8Ab}
\end{align}
\reseteqn
There several distinct possibilities for choosing
the $w_i$'s, $1\le i\le m$, which we summarise as follows: 
\begin{enumerate}
\item[(i)]
all the $w_i$'s are equal to $\ep$ (and $w_0=c$), 
\item[(ii)]
there is an $i$ with $1\le i\le \frac m3$ such that
\begin{equation} \label{eq:E8C}
1\le \ell_T(w_i)=\ell_T(w_{i+\frac {m}3})=
\ell_T(w_{i+\frac {2m}3})\le 2.
\end{equation}
\end{enumerate}

Moreover, since $(w_0;w_1,\dots,w_m)\in NC^m(E_8)$, we must have 
$$w_iw_{i+\frac m3}w_{i+\frac {2m}3}\le_T c.$$
Together with equations~\eqref{eq:E8A}--\eqref{eq:E8C}, 
this implies that
\begin{equation} \label{eq:E8D}
w_i=c^5w_ic^{-5}\quad\text{and}\quad 
w_i(c^3w_ic^{-3})(cw_ic^{-1})\le_T c.
\end{equation}
With the help of Stembridge's {\sl Maple} package {\tt coxeter}
\cite{StemAZ}, one obtains five solutions for $w_i$ in 
\eqref{eq:E8D} with $\ell_T(w_i)=2$:
{\small
\begin{multline*}
w_i\in\big\{  [1, 2, 3, 4, 5, 6, 7, 8, 7, 6, 5, 4, 2, 3, 1, 4],\,
                                [3, 4, 3, 5],\,
                             [2, 4, 5, 4, 2, 6],\\
                       [1, 3, 4, 5, 6, 5, 4, 3, 1, 7],\,
                    [2, 3, 4, 5, 6, 7, 6, 5, 4, 2, 3, 8]
\big\},
\end{multline*}}%
where $\{s_1,s_2,s_3,s_4,s_5,s_6,s_7,s_8\}$ is a simple system of generators of 
$E_8$,
corresponding to the Dynkin diagram displayed in Figure~\ref{fig:E8},
and each of them gives rise to $m/3$ elements of
$\Fix_{NC^m(E_8)}(\phi^{p})$ since $i$ ranges from $1$ to 
$m/3$. There are no solutions for $w_i$ in 
\eqref{eq:E8D} with $\ell_T(w_i)=1$.

In total, we obtain 
$1+5\frac m3=\frac {5m+3}3$ elements in
$\Fix_{NC^m(E_8)}(\phi^p)$, which agrees with the limit in
\eqref{eq:E8.5}.

Now let $p=10m/3$.
From \eqref{eq:Aktion}, we infer
\begin{multline*}
\phi^p\big((w_0;w_1,\dots,w_m)\big)\\=
(*;
c^{4}w_{\frac {2m}3+1}c^{-4},c^{4}w_{\frac {2m}3+2}c^{-4},
\dots,c^{4}w_{m}c^{-4},
c^3w_{1}c^{-3},\dots,
c^3w_{\frac {2m}3}c^{-3}\big).
\end{multline*}
Supposing that 
$(w_0;w_1,\dots,w_m)$ is fixed by $\phi^p$, we obtain
the system of equations
{\refstepcounter{equation}\label{eq:E8A'}}
\alphaeqn
\begin{align} \label{eq:E8Aa'}
w_i&=c^4w_{\frac {2m}3+i}c^{-4}, \quad i=1,2,\dots,\tfrac {m}3,\\
w_i&=c^3w_{i-\frac {m}3}c^{-3}, \quad i=\tfrac {m}3+1,\tfrac {m}3+2,\dots,m.
\label{eq:E8Ab'}
\end{align}
\reseteqn
There several distinct possibilities for choosing
the $w_i$'s, $1\le i\le m$, which we summarise as follows: 
\begin{enumerate}
\item[(i)]
all the $w_i$'s are equal to $\ep$ (and $w_0=c$), 
\item[(ii)]
there is an $i$ with $1\le i\le \frac m3$ such that
\begin{equation} \label{eq:E8C'}
1\le \ell_T(w_i)=\ell_T(w_{i+\frac {m}3})=
\ell_T(w_{i+\frac {2m}3})\le 2.
\end{equation}
\end{enumerate}

Moreover, since $(w_0;w_1,\dots,w_m)\in NC^m(E_8)$, we must have 
$$w_iw_{i+\frac m3}w_{i+\frac {2m}3}\le_T c.$$
Together with equations~\eqref{eq:E8A'}--\eqref{eq:E8C'}, 
this implies that
\begin{equation} \label{eq:E8D'}
w_i=c^{10}w_ic^{-10}\quad\text{and}\quad 
w_i(c^3w_ic^{-3})(c^6w_ic^{-6})\le_T c.
\end{equation}
Due to Lemma~\ref{lem:4} with $d=2$, we have
$c^{15}w_ic^{-15}=w_i$, hence also $c^{5}w_ic^{-5}=w_i$, so that 
\eqref{eq:E8D'} reduces to \eqref{eq:E8D}.
Therefore, we are facing exactly the same enumeration 
problem here as for
$p=5m/3$, and, consequently, the number of solutions to \eqref{eq:E8D'} is the same, namely
$\frac {5m+3}3$, as required.

\smallskip
Our next case is the case in \eqref{eq:E8.6}. If $\zeta=\zeta_{14}$,
then, by Lemma~\ref{lem:1}, we are free to choose $p=15m/7$, 
whereas, for $\zeta=\zeta_7$, we can choose $p=30m/7$. In particular,
in both cases $m$ must be divisible by $7$.

First, let $p=15m/7$.
From \eqref{eq:Aktion}, we infer
\begin{multline*}
\phi^p\big((w_0;w_1,\dots,w_m)\big)\\=
(*;
c^{3}w_{\frac {6m}7+1}c^{-3},c^{3}w_{\frac {6m}7+2}c^{-3},
\dots,c^{3}w_{m}c^{-3},
c^2w_{1}c^{-2},\dots,
c^2w_{\frac {6m}7}c^{-2}\big).
\end{multline*}
Supposing that 
$(w_0;w_1,\dots,w_m)$ is fixed by $\phi^p$, we obtain
the system of equations
{\refstepcounter{equation}\label{eq:E8'''''A}}
\alphaeqn
\begin{align} \label{eq:E8'''''Aa}
w_i&=c^3w_{\frac {6m}7+i}c^{-3}, \quad i=1,2,\dots,\tfrac {m}7,\\
w_i&=c^2w_{i-\frac {m}7}c^{-2}, \quad i=\tfrac {m}7+1,\tfrac {m}7+2,\dots,m.
\label{eq:E8'''''Ab}
\end{align}
\reseteqn
There are two distinct possibilities for choosing
the $w_i$'s, $1\le i\le m$: 
\begin{enumerate}
\item[(i)]
all the $w_i$'s are equal to $\ep$ (and $w_0=c$), 
\item[(ii)]
there is an $i$ with $1\le i\le \frac m7$ such that
\begin{multline} \label{eq:E8'''''C}
\ell_T(w_i)=\ell_T(w_{i+\frac m7})=\ell_T(w_{i+\frac {2m}7})=
\ell_T(w_{i+\frac {3m}7})\\=\ell_T(w_{i+\frac {4m}7})=
\ell_T(w_{i+\frac {5m}7})=\ell_T(w_{i+\frac {6m}7})=1,
\end{multline}
and the other $w_j$'s, $1\le j\le m$, are equal to $\ep$.
\end{enumerate}

Moreover, since $(w_0;w_1,\dots,w_m)\in NC^m(E_8)$, we must have 
$$w_iw_{i+\frac {m}7}w_{i+\frac {2m}7}w_{i+\frac {3m}7}
w_{i+\frac {4m}7}w_{i+\frac {5m}7}
w_{i+\frac {6m}7}\le_T c.$$
Together with the 
equations~\eqref{eq:E8'''''A}--\eqref{eq:E8'''''C}, 
this implies that
\begin{multline} \label{eq:E8'''''D}
w_i=c^{15}w_ic^{-15}\\\text{and}\quad 
w_i(c^2w_ic^{-2})(c^4w_ic^{-4})(c^6w_ic^{-6})(c^8w_ic^{-8})
(c^{10}w_ic^{-10})(c^{12}w_ic^{-12})\le_T c.
\end{multline}
Here, the first equation is automatically satisfied due to
Lemma~\ref{lem:4} with $d=2$.

With the help of Stembridge's {\sl Maple} package {\tt coxeter}
\cite{StemAZ}, one obtains 15 solutions for $w_i$ in 
\eqref{eq:E8'''''D}:
{\small
\begin{multline*}
w_i\in\big\{                         [4],\,
                                     [5],\,
                                     [6],\,
                                     [7],\,
                                     [8],\,
                                  [3, 4, 3],\,
                               [2, 4, 5, 4, 2],\,
                               [3, 4, 5, 4, 3],\,
                            [2, 4, 5, 6, 5, 4, 2],\\
                         [1, 3, 4, 5, 6, 5, 4, 3, 1],\,
                      [1, 3, 4, 5, 6, 7, 6, 5, 4, 3, 1],\\
                      [2, 3, 4, 5, 6, 7, 6, 5, 4, 2, 3],\,
                   [2, 3, 4, 5, 6, 7, 8, 7, 6, 5, 4, 2, 3],\\
                [1, 2, 3, 4, 5, 6, 7, 8, 7, 6, 5, 4, 2, 3, 1],\,
             [1, 4, 2, 3, 4, 5, 6, 7, 8, 7, 6, 5, 4, 2, 3, 1, 4]
\big\},
\end{multline*}}%
where we have again used the short notation of {\tt coxeter},
$\{s_1,s_2,s_3,s_4,s_5,s_6,s_7,s_8\}$ being a simple system of generators of  $E_8$,
corresponding to the Dynkin diagram displayed in Figure~\ref{fig:E8},
and each of them gives rise to $m/7$ elements of
$\Fix_{NC^m(E_8)}(\phi^{p})$ since $i$ ranges from $1$ to $m/7$.

In total, we obtain 
$1+15\frac m7=\frac {15m+7}7$ elements in
$\Fix_{NC^m(E_8)}(\phi^p)$, which agrees with the limit in
\eqref{eq:E8.6}.

Now let $p=30m/7$.
From \eqref{eq:Aktion}, we infer
\begin{multline*}
\phi^p\big((w_0;w_1,\dots,w_m)\big)\\=
(*;
c^{5}w_{\frac {5m}7+1}c^{-5},c^{5}w_{\frac {5m}7+2}c^{-5},
\dots,c^{5}w_{m}c^{-5},
c^4w_{1}c^{-4},\dots,
c^4w_{\frac {5m}7}c^{-4}\big).
\end{multline*}
Supposing that 
$(w_0;w_1,\dots,w_m)$ is fixed by $\phi^p$, we obtain
the system of equations
{\refstepcounter{equation}\label{eq:E8''''''A}}
\alphaeqn
\begin{align} \label{eq:E8''''''Aa}
w_i&=c^5w_{\frac {5m}7+i}c^{-5}, \quad i=1,2,\dots,\tfrac {2m}7,\\
w_i&=c^4w_{i-\frac {2m}7}c^{-4}, \quad i=\tfrac {2m}7+1,\tfrac {2m}7+2,\dots,m.
\label{eq:E8''''''Ab}
\end{align}
\reseteqn
There are two distinct possibilities for choosing
the $w_i$'s, $1\le i\le m$: 
\begin{enumerate}
\item[(i)]
all the $w_i$'s are equal to $\ep$ (and $w_0=c$), 
\item[(ii)]
there is an $i$ with $1\le i\le \frac m7$ such that
\begin{multline} \label{eq:E8''''''C}
\ell_T(w_i)=\ell_T(w_{i+\frac m7})=\ell_T(w_{i+\frac {2m}7})=
\ell_T(w_{i+\frac {3m}7})\\=\ell_T(w_{i+\frac {4m}7})=
\ell_T(w_{i+\frac {5m}7})=\ell_T(w_{i+\frac {6m}7})=1,
\end{multline}
and the other $w_j$'s, $1\le j\le m$, are equal to $\ep$.
\end{enumerate}

Moreover, since $(w_0;w_1,\dots,w_m)\in NC^m(E_8)$, we must have 
$$w_iw_{i+\frac {m}7}w_{i+\frac {2m}7}w_{i+\frac {3m}7}
w_{i+\frac {4m}7}w_{i+\frac {5m}7}
w_{i+\frac {6m}7}\le_T c.$$
Together with the 
equations~\eqref{eq:E8''''''A}--\eqref{eq:E8''''''C}, 
this implies that
\begin{multline} \label{eq:E8''''''D}
w_i=c^{30}w_ic^{-30}\\\text{and}\quad 
w_i(c^{17}w_ic^{-17})(c^4w_ic^{-4})(c^{21}w_ic^{-21})(c^8w_ic^{-8})
(c^{25}w_ic^{-25})(c^{12}w_ic^{-12})\le_T c.
\end{multline}
Here, the first equation is automatically satisfied since $c^{30}=\ep$. Moreover, due to Lemma~\ref{lem:4} with $d=2$, we have
$c^{17}w_ic^{-17}=c^{2}w_ic^{-2}$, etc., so that 
\eqref{eq:E8''''''D} reduces to \eqref{eq:E8'''''D}.
Therefore, we are facing exactly the same enumeration 
problem here as for
$p=15m/7$, and, consequently, the number of solutions to \eqref{eq:E8''''''D} is the same, namely
$\frac {15m+7}7$, as required.

\smallskip
We now turn to the case in \eqref{eq:E8.7}.
By Lemma~\ref{lem:1}, we are free to choose $p=5m/2$. In particular,
$m$ must be divisible by $2$.
From \eqref{eq:Aktion}, we infer
\begin{multline*}
\phi^p\big((w_0;w_1,\dots,w_m)\big)\\=
(*;
c^{3}w_{\frac {m}2+1}c^{-3},c^{3}w_{\frac {m}2+2}c^{-3},
\dots,c^{3}w_{m}c^{-3},
c^2w_{1}c^{-2},\dots,
c^2w_{\frac {m}2}c^{-2}\big).
\end{multline*}
Supposing that 
$(w_0;w_1,\dots,w_m)$ is fixed by $\phi^p$, we obtain
the system of equations
{\refstepcounter{equation}\label{eq:E8'''''''A}}
\alphaeqn
\begin{align} \label{eq:E8'''''''Aa}
w_i&=c^3w_{\frac {m}2+i}c^{-3}, \quad i=1,2,\dots,\tfrac {m}2,\\
w_i&=c^2w_{i-\frac {m}2}c^{-2}, \quad i=\tfrac {m}2+1,\tfrac {m}2+2,\dots,m.
\label{eq:E8'''''''Ab}
\end{align}
\reseteqn
There are several distinct possibilities for choosing
the $w_i$'s, $1\le i\le m$: 
{\refstepcounter{equation}\label{eq:E8'''''''C}}
\alphaeqn
\begin{enumerate}
\item[(i)]
all the $w_i$'s are equal to $\ep$ (and $w_0=c$), 
\item[(ii)]
there is an $i$ with $1\le i\le \frac m2$ such that
\begin{equation} \label{eq:E8'''''''Cii}
1\le\ell_T(w_i)=\ell_T(w_{i+\frac m2})\le 4,
\end{equation}
and the other $w_j$'s, $1\le j\le m$, are equal to $\ep$,
\item[(iii)]
there are $i_1$ and $i_2$ with $1\le i_1<i_2\le \frac m2$ such that
\begin{equation} \label{eq:E8'''''''Ciii}
\ell_1:=\ell_T(w_{i_1})=\ell_T(w_{i_1+\frac m2})\ge1,\quad
\ell_2:=\ell_T(w_{i_2})=\ell_T(w_{i_2+\frac m2})\ge1,
\quad\text{and}\quad
\ell_1+\ell_2\le4,
\end{equation}
and the other $w_j$'s, $1\le j\le m$, are equal to $\ep$,
\item[(iv)]
there are $i_1,i_2,i_3$ with $1\le i_1<i_2<i_3\le \frac m2$ such that
\begin{multline} \label{eq:E8'''''''Civ}
\ell_1:=\ell_T(w_{i_1})=\ell_T(w_{i_1+\frac m2})\ge1,\quad
\ell_2:=\ell_T(w_{i_2})=\ell_T(w_{i_2+\frac m2})\ge1,\\
\ell_3:=\ell_T(w_{i_3})=\ell_T(w_{i_3+\frac m2})\ge1,
\quad\text{and}\quad
\ell_1+\ell_2+\ell_3\le4,
\end{multline}
and the other $w_j$'s, $1\le j\le m$, are equal to $\ep$,
\item[(v)]
there are $i_1,i_2,i_3,i_4$ with $1\le i_1<i_2<i_3<i_4\le \frac m2$ such 
that
\begin{multline} \label{eq:E8'''''''Cv}
\ell_T(w_{i_1})=\ell_T(w_{i_2})=\ell_T(w_{i_3})=\ell_T(w_{i_4})\\=
\ell_T(w_{i_1+\frac m2})=\ell_T(w_{i_2+\frac m2})=
\ell_T(w_{i_3+\frac m2})=\ell_T(w_{i_4+\frac m2})=1,
\end{multline}
and all other $w_j$'s are equal to $\ep$.
\end{enumerate}
\reseteqn

Moreover, since $(w_0;w_1,\dots,w_m)\in NC^m(E_8)$, we must have 
$w_iw_{i+\frac {m}2}\le_T c$,
respectively 
$w_{i_1}w_{i_2}w_{i_1+\frac {m}2}w_{i_2+\frac {m}2}\le_T c$,
respectively 
$$w_{i_1}w_{i_2}w_{i_3}
w_{i_1+\frac {m}2}w_{i_2+\frac {m}2}w_{i_3+\frac {m}2}\le_T c,$$
respectively 
$$w_{i_1}w_{i_2}w_{i_3}w_{i_4}
w_{i_1+\frac {m}2}w_{i_2+\frac {m}2}w_{i_3+\frac {m}2}
w_{i_4+\frac {m}2}=c.$$
Together with the 
equations~\eqref{eq:E8'''''''A}--\eqref{eq:E8'''''''C}, 
this implies that
\begin{equation} \label{eq:E8'''''''D}
w_i=c^{5}w_ic^{-5}\quad\text{and}\quad 
w_i(c^2w_ic^{-2})\le_T c,
\end{equation}
respectively that
\begin{equation} \label{eq:E8'''''''E}
w_{i_1}=c^{5}w_{i_1}c^{-5},\quad 
w_{i_2}=c^{5}w_{i_2}c^{-5},\quad\text{and}\quad  w_{i_1}w_{i_2}(c^2w_{i_1}c^{-2})(c^2w_{i_2}c^{-2})\le_T c,
\end{equation}
respectively that
\begin{multline} \label{eq:E8'''''''F}
w_{i_1}=c^{5}w_{i_1}c^{-5},\quad 
w_{i_2}=c^{5}w_{i_2}c^{-5},\quad 
w_{i_3}=c^{5}w_{i_3}c^{-5},\\
\quad\text{and}\quad w_{i_1}w_{i_2}w_{i_3}
(c^2w_{i_1}c^{-2})(c^2w_{i_2}c^{-2})(c^2w_{i_3}c^{-2})\le_T c,
\end{multline}
respectively that
\begin{multline} \label{eq:E8'''''''G}
w_{i_1}=c^{5}w_{i_1}c^{-5},\quad 
w_{i_2}=c^{5}w_{i_2}c^{-5},\quad 
w_{i_3}=c^{5}w_{i_3}c^{-5},\quad 
w_{i_4}=c^{5}w_{i_4}c^{-5},\\
\quad\text{and}\quad w_{i_1}w_{i_2}w_{i_3}w_{i_4}
(c^2w_{i_1}c^{-2})(c^2w_{i_2}c^{-2})(c^2w_{i_3}c^{-2})
(c^2w_{i_4}c^{-2})= c.
\end{multline}

With the help of Stembridge's {\sl Maple} package {\tt coxeter}
\cite{StemAZ}, one obtains 10 solutions for $w_i$ in 
\eqref{eq:E8'''''''D} with $\ell_T(w_i)=2$:
{\small
\begin{multline} \label{eq:fL2}
w_i\in\big\{   [1, 4, 2, 3, 4, 5, 6, 7, 6, 5, 4, 2, 3, 4],\,
    [3, 1, 5, 4, 2, 3, 4, 5, 6, 7, 8, 7, 6, 5, 4, 2, 3, 1, 4, 5],\\
        [1, 2, 3, 4, 5, 6, 7, 8, 7, 6, 5, 4, 2, 3, 1, 4],\,
            [3, 4, 3, 5],\,
            [2, 4, 5, 4, 2, 6],\,
              [1, 3, 4, 5, 6, 5, 4, 3, 1, 7],\\
            [2, 3, 4, 5, 6, 7, 6, 5, 4, 2, 3, 8],\,
             [2, 4, 2, 3, 4, 5, 6, 5, 4, 3],\,
          [1, 3, 4, 5, 4, 2, 3, 1, 4, 5, 6, 7, 6, 5, 4, 2],\\
  [2, 3, 1, 4, 5, 6, 5, 4, 2, 3, 1, 4, 5, 6, 7, 8, 7, 6, 5, 4, 3, 1]
\big\},
\end{multline}}%
and one obtains 10 solutions for $w_i$ in 
\eqref{eq:E8'''''''D} with $\ell_T(w_i)=4$:
{\small
\begin{multline} \label{eq:fL4}
w_i\in\big\{  [1, 2, 3, 4, 5, 6, 7, 6, 5, 4, 2, 3, 4, 8] ,\,  
      [1, 2, 4, 2, 3, 4, 5, 4, 3, 6, 5, 7, 6, 5, 4, 2, 3, 4],\\
       [1, 2, 3, 1, 4, 5, 6, 7, 8, 7, 6, 5, 4, 2, 3, 1, 4, 5],\\
 [1, 3, 1, 4, 5, 4, 2, 3, 4, 5, 6, 5, 4, 2, 7, 6, 8, 7, 6, 5, 4, 2, 3, 1, 4, 5]     ,\\
 [1, 2, 3, 4, 2, 5, 4, 2, 3, 4, 6, 5, 7, 6, 5, 4, 3, 8, 7, 6, 5, 4, 2, 3, 1, 4],\,
     [4, 2, 3, 4, 5, 6],\\
[2, 3, 1, 4, 2, 5, 4, 6, 5, 4, 2, 3, 1, 4, 5, 6, 7, 8, 7, 6, 5, 4, 3, 1],\,
    [1, 5, 4, 2, 3, 1, 4, 5, 6, 7],\\
      [3, 1, 6, 5, 4, 2, 3, 1, 4, 3, 5, 6, 7, 8],\,
  [2, 3, 4, 2, 3, 5, 4, 6, 5, 4, 2, 7, 6, 5, 4, 2, 3, 8]
\big\},
\end{multline}}%
where we have again used the short notation of {\tt coxeter},
$\{s_1,s_2,s_3,s_4,s_5,s_6,s_7,s_8\}$ being a simple system of generators of  $E_8$,
corresponding to the Dynkin diagram displayed in Figure~\ref{fig:E8},
and each of them gives rise to $m/2$ elements of
$\Fix_{NC^m(E_8)}(\phi^{p})$ since $i$ ranges from $1$ to $m/2$.
There are no solutions for $w_i$ in 
\eqref{eq:E8'''''''D} with $\ell_T(w_i)=1$ or $\ell_T(w_i)=3$.

Consequently, there are no solutions in Cases~(iv) and (v),
and the only possible solutions occurring
in Case~(iii) are pairs $(w_{i_1},w_{i_2})$ of elements of
\eqref{eq:fL2} whose product is in \eqref{eq:fL4}.
Another computation using 
Stembridge's {\sl Maple} package {\tt coxeter}
finds the following 25 pairs meeting that description:
{\allowdisplaybreaks\small
\begin{multline*} 
w_i\in\big\{      (       [1, 2, 3, 4, 5, 6, 7, 8, 7, 6, 5, 4, 2, 3, 1, 4], \,
               [1, 4, 2, 3, 4, 5, 6, 7, 6, 5, 4, 2, 3, 4]),\\ (
       [2, 4, 5, 4, 2, 6], \,[1, 4, 2, 3, 4, 5, 6, 7, 6, 5, 4, 2, 3, 4]),\\ (
 [3, 4, 3, 5],\, [3, 1, 5, 4, 2, 3, 4, 5, 6, 7, 8, 7, 6, 5, 4, 2, 3, 1, 4, 5]),\\ (
       [1, 3, 4, 5, 6, 5, 4, 3, 1, 7], \,
         [3, 1, 5, 4, 2, 3, 4, 5, 6, 7, 8, 7, 6, 5, 4, 2, 3, 1, 4, 5]),\\ (
       [3, 1, 5, 4, 2, 3, 4, 5, 6, 7, 8, 7, 6, 5, 4, 2, 3, 1, 4, 5], \,
         [1, 2, 3, 4, 5, 6, 7, 8, 7, 6, 5, 4, 2, 3, 1, 4]),\\ (
             [2, 3, 4, 5, 6, 7, 6, 5, 4, 2, 3, 8], \,
               [1, 2, 3, 4, 5, 6, 7, 8, 7, 6, 5, 4, 2, 3, 1, 4]),\\ (
             [1, 3, 4, 5, 4, 2, 3, 1, 4, 5, 6, 7, 6, 5, 4, 2],\,
               [1, 2, 3, 4, 5, 6, 7, 8, 7, 6, 5, 4, 2, 3, 1, 4]),\\ (
       [1, 2, 3, 4, 5, 6, 7, 8, 7, 6, 5, 4, 2, 3, 1, 4], [3, 4, 3, 5]),\ (
                [2, 4, 2, 3, 4, 5, 6, 5, 4, 3],\, [3, 4, 3, 5]),\\ (
    [2, 3, 1, 4, 5, 6, 5, 4, 2, 3, 1, 4, 5, 6, 7, 8, 7, 6, 5, 4, 3, 1], \,
[3, 4, 3, 5]),\\ (
       [1, 4, 2, 3, 4, 5, 6, 7, 6, 5, 4, 2, 3, 4],\, [2, 4, 5, 4, 2, 6]),\ (
                      [3, 4, 3, 5], \,[2, 4, 5, 4, 2, 6]),\\ (
    [1, 3, 4, 5, 4, 2, 3, 1, 4, 5, 6, 7, 6, 5, 4, 2], \,[2, 4, 5, 4, 2, 6]),\\ (
       [3, 1, 5, 4, 2, 3, 4, 5, 6, 7, 8, 7, 6, 5, 4, 2, 3, 1, 4, 5], \,
         [1, 3, 4, 5, 6, 5, 4, 3, 1, 7]),\\ (
             [2, 4, 5, 4, 2, 6],\, [1, 3, 4, 5, 6, 5, 4, 3, 1, 7]),\\ (
    [2, 3, 1, 4, 5, 6, 5, 4, 2, 3, 1, 4, 5, 6, 7, 8, 7, 6, 5, 4, 3, 1], \,
      [1, 3, 4, 5, 6, 5, 4, 3, 1, 7]),\\ (
                [1, 4, 2, 3, 4, 5, 6, 7, 6, 5, 4, 2, 3, 4], \,
                  [2, 3, 4, 5, 6, 7, 6, 5, 4, 2, 3, 8]),\\ (
    [1, 3, 4, 5, 6, 5, 4, 3, 1, 7],\, [2, 3, 4, 5, 6, 7, 6, 5, 4, 2, 3, 8]),\\ (
    [2, 4, 2, 3, 4, 5, 6, 5, 4, 3],\, [2, 3, 4, 5, 6, 7, 6, 5, 4, 2, 3, 8]),\ (
             [2, 4, 5, 4, 2, 6], \,[2, 4, 2, 3, 4, 5, 6, 5, 4, 3]),\\ (
    [2, 3, 4, 5, 6, 7, 6, 5, 4, 2, 3, 8],\, [2, 4, 2, 3, 4, 5, 6, 5, 4, 3]),\\ (
             [1, 2, 3, 4, 5, 6, 7, 8, 7, 6, 5, 4, 2, 3, 1, 4], \,
               [1, 3, 4, 5, 4, 2, 3, 1, 4, 5, 6, 7, 6, 5, 4, 2]),\\ (
             [1, 3, 4, 5, 6, 5, 4, 3, 1, 7], \,
               [1, 3, 4, 5, 4, 2, 3, 1, 4, 5, 6, 7, 6, 5, 4, 2]),\\ (
    [3, 4, 3, 5], \,
      [2, 3, 1, 4, 5, 6, 5, 4, 2, 3, 1, 4, 5, 6, 7, 8, 7, 6, 5, 4, 3, 1]),\\ (
    [2, 3, 4, 5, 6, 7, 6, 5, 4, 2, 3, 8], \,
      [2, 3, 1, 4, 5, 6, 5, 4, 2, 3, 1, 4, 5, 6, 7, 8, 7, 6, 5, 4, 3, 1])
\big\},
\end{multline*}}%
where $\{s_1,s_2,s_3,s_4,s_5,s_6,s_7,s_8\}$ is a simple system of generators of $E_8$,
corresponding to the Dynkin diagram displayed in Figure~\ref{fig:E8},
and each of them gives rise to $\binom {m/2}2$ elements of
$\Fix_{NC^m(E_8)}(\phi^{p})$ since $1\le i_1<i_2\le \frac m2$.

In total, we obtain 
$$
1+20\frac m2
+25\binom {m/2}2
=\frac {(5m+4)(5m+2)}{8}
$$
elements in
$\Fix_{NC^m(E_8)}(\phi^p)$, which agrees with the limit in
\eqref{eq:E8.7}.

\smallskip
Next we consider the case in \eqref{eq:E8.9}.
By Lemma~\ref{lem:1}, we are free to choose $p=15m/4$. In particular,
$m$ must be divisible by $4$.
From \eqref{eq:Aktion}, we infer
\begin{multline*}
\phi^p\big((w_0;w_1,\dots,w_m)\big)\\=
(*;
c^{4}w_{\frac {m}4+1}c^{-4},c^{4}w_{\frac {m}4+2}c^{-4},
\dots,c^{4}w_{m}c^{-4},
c^3w_{1}c^{-3},\dots,
c^3w_{\frac {m}4}c^{-3}\big).
\end{multline*}
Supposing that 
$(w_0;w_1,\dots,w_m)$ is fixed by $\phi^p$, we obtain
the system of equations
{\refstepcounter{equation}\label{eq:E8''A}}
\alphaeqn
\begin{align} \label{eq:E8''Aa}
w_i&=c^4w_{\frac {m}4+i}c^{-4}, \quad i=1,2,\dots,\tfrac {3m}4,\\
w_i&=c^3w_{i-\frac {3m}4}c^{-3}, \quad i=\tfrac {3m}4+1,\tfrac {3m}4+2,\dots,m.
\label{eq:E8''Ab}
\end{align}
\reseteqn
There are several distinct possibilities for choosing
the $w_i$'s, $1\le i\le m$, which we summarise as follows: 
{\refstepcounter{equation}\label{eq:E8''C}}
\alphaeqn
\begin{enumerate}
\item[(i)]
all the $w_i$'s are equal to $\ep$ (and $w_0=c$), 
\item[(ii)]
there is an $i$ with $1\le i\le \frac m4$ such that
\begin{equation} \label{eq:E8''Cii}
1\le\ell_T(w_i)=\ell_T(w_{i+\frac m4})=\ell_T(w_{i+\frac {2m}4})=
\ell_T(w_{i+\frac {3m}4})\le2,
\end{equation}
and the other $w_j$'s, $1\le j\le m$, are equal to $\ep$,
\item[(iii)]
there are $i_1$ and $i_2$ with $1\le i_1<i_2\le \frac m4$ such that
\begin{multline} \label{eq:E8''Ciii}
\ell_T(w_{i_1})=\ell_T(w_{i_2})=
\ell_T(w_{i_1+\frac m4})=\ell_T(w_{i_2+\frac m4})\\=
\ell_T(w_{i_1+\frac {2m}4})=\ell_T(w_{i_2+\frac {2m}4})=
\ell_T(w_{i_1+\frac {3m}4})=\ell_T(w_{i_2+\frac {3m}4})=1,
\end{multline}
and all other $w_j$ are equal to $\ep$.
\end{enumerate}
\reseteqn

Moreover, since $(w_0;w_1,\dots,w_m)\in NC^m(E_8)$, we must have 
$$w_iw_{i+\frac {m}4}w_{i+\frac {2m}4}w_{i+\frac {3m}4}\le_T c,$$
or
$$
w_{i_1}w_{i_2}w_{i_1+\frac {m}4}w_{i_2+\frac {m}4}
w_{i_1+\frac {2m}4}w_{i_2+\frac {2m}4}
w_{i_1+\frac {3m}4}w_{i_2+\frac {3m}4}=c.
$$
Together with equations~\eqref{eq:E8''A}--\eqref{eq:E8''C}, 
this implies that
\begin{equation} \label{eq:E8''D}
w_i=c^{15}w_ic^{-15}\quad\text{and}\quad 
w_i(c^{11}w_ic^{-11})(c^7w_ic^{-7})
(c^3w_ic^{-3})\le_T c,
\end{equation}
or that
\begin{multline} \label{eq:E8''E}
w_{i_1}=c^{15}w_{i_1}c^{-15},\quad
w_{i_1}=c^{15}w_{i_2}c^{-15},\\
\quad\text{and}\quad 
w_{i_1}w_{i_2}
(c^{11}w_{i_1}c^{-11})(c^{11}w_{i_2}c^{-11})
(c^7w_{i_1}c^{-7})(c^7w_{i_2}c^{-7})
(c^3w_{i_1}c^{-3})(c^3w_{i_2}c^{-3})=c.
\end{multline}
Here, the first equation in \eqref{eq:E8''D} and the first
two equations in \eqref{eq:E8''E} are 
automatically satisfied due to Lemma~\ref{lem:4} with $d=2$.

With the help of Stembridge's {\sl Maple} package {\tt coxeter}
\cite{StemAZ}, one obtains 30 solutions for $w_i$ in 
\eqref{eq:E8''D} with $\ell_T(w_i)=1$:
{\small
\begin{multline*}
w_i\in\big\{                         [4],\,
                                     [5],\,
                                     [6],\,
                                     [7],\,
                                     [8],\,
                                  [3, 4, 3],\,
                                  [4, 5, 4],\,
                                  [5, 6, 5],\,
                                  [6, 7, 6],\,
                                  [7, 8, 7],\,
                               [2, 4, 5, 4, 2],\,
                               [3, 4, 5, 4, 3],\\
                            [2, 4, 5, 6, 5, 4, 2],\,
                         [1, 3, 4, 5, 6, 5, 4, 3, 1],\,
                         [4, 2, 3, 4, 5, 4, 2, 3, 4],\,
                      [1, 3, 4, 5, 6, 7, 6, 5, 4, 3, 1],\\
                      [2, 3, 4, 5, 6, 7, 6, 5, 4, 2, 3],\,
                   [1, 2, 3, 4, 5, 6, 7, 6, 5, 4, 2, 3, 1],\,
                   [5, 4, 2, 3, 4, 5, 6, 5, 4, 2, 3, 4, 5],\\
                   [2, 3, 4, 5, 6, 7, 8, 7, 6, 5, 4, 2, 3],\,
                [1, 5, 4, 2, 3, 4, 5, 6, 5, 4, 2, 3, 1, 4, 5],\\
                [1, 2, 3, 4, 5, 6, 7, 8, 7, 6, 5, 4, 2, 3, 1],\,
                [4, 2, 3, 4, 5, 6, 7, 8, 7, 6, 5, 4, 2, 3, 4],\\
             [1, 4, 2, 3, 4, 5, 6, 7, 8, 7, 6, 5, 4, 2, 3, 1, 4],\,
          [3, 1, 4, 2, 3, 4, 5, 6, 7, 8, 7, 6, 5, 4, 2, 3, 1, 4, 3],\\
          [1, 6, 5, 4, 2, 3, 4, 5, 6, 7, 6, 5, 4, 2, 3, 1, 4, 5, 6],\,
       [3, 1, 6, 5, 4, 2, 3, 4, 5, 6, 7, 6, 5, 4, 2, 3, 1, 4, 3, 5, 6],\\
    [4, 3, 1, 5, 4, 2, 3, 4, 5, 6, 7, 8, 7, 6, 5, 4, 2, 3, 1, 4, 3, 5, 4],\\
 [3, 1, 7, 6, 5, 4, 2, 3, 4, 5, 6, 7, 8, 7, 6, 5, 4, 2, 3, 1, 4, 3, 5, 6, 7],\\
[2, 4, 3, 1, 7, 6, 5, 4, 2, 3, 4, 5, 6, 7, 8, 7, 6, 5, 4, 2, 3, 1, 4, 3, 5, 4, 2, 6, 7]
\big\},
\end{multline*}}%
one obtains 45 solutions for $w_i$ in 
\eqref{eq:E8''D} with $\ell_T(w_i)=2$ and $w_i$ of type $A_1^2$
(as a parabolic Coxeter element; see the end of Section~\ref{sec:prel}):
{\allowdisplaybreaks\tiny
\begin{multline} \label{eq:E8sol4}
w_i\in\big\{[1, 2, 3, 1, 4, 5, 6, 5, 4, 2, 3, 4, 5, 7, 6, 5, 4, 2, 3, 1, 4, 5, 8, 7, 6, 5, 4, 2, 3, 1],\\
[1, 2, 3, 4, 2, 3, 1, 7, 6, 5, 4, 2, 3, 1, 4, 3, 5, 4, 2, 6, 7, 8, 7, 6, 5, 4, 2, 3, 1, 4, 3, 5, 4, 2, 6, 7],\\
[1, 2, 3, 4, 2, 5, 4, 6, 5, 4, 2, 7, 6, 5, 4, 2, 3, 1],\,
[1, 2, 3, 4, 3, 5, 4, 6, 5, 4, 3, 7, 6, 5, 4, 2, 3, 1],\\
[1, 2, 3, 4, 5, 4, 6, 5, 4, 7, 8, 7, 6, 5, 4, 2, 3, 1],\,
[1, 2, 3, 4, 5, 4, 6, 7, 6, 5, 4, 2, 3, 1],\\
[1, 2, 3, 4, 5, 6, 5, 4, 2, 3, 4, 5, 7, 6, 5, 4, 2, 3, 4, 5, 8, 7, 6, 5, 4, 2, 3, 1],\,
[1, 2, 6, 5, 4, 2, 3, 1, 7, 6, 5, 4, 2, 3, 1, 4, 3, 5, 4, 2, 6, 5, 7, 6],\\
[1, 3, 1, 4, 2, 3, 4, 5, 4, 6, 5, 7, 6, 5, 4, 2, 8, 7, 6, 5, 4, 2, 3, 1, 4, 3],\\
[1, 3, 1, 4, 2, 7, 6, 5, 4, 2, 3, 1, 8, 7, 6, 5, 4, 2, 3, 1, 4, 3, 5, 4, 2, 6, 5, 4, 3, 1, 7, 6, 8, 7],\\
[1, 3, 4, 3, 5, 4, 3, 6, 5, 4, 3, 1],\,
[1, 3, 4, 5, 4, 6, 5, 4, 7, 6, 5, 4, 3, 1],\,
[1, 4, 2, 3, 1, 4, 3, 5, 4, 3, 1, 6, 7, 8, 7, 6, 5, 4, 2, 3, 1, 4],\\
[1, 4, 2, 3, 4, 5, 6, 5, 7, 6, 5, 8, 7, 6, 5, 4, 2, 3, 1, 4],\,
[1, 4, 5, 4, 2, 3, 4, 5, 6, 5, 4, 2, 3, 1, 4, 5],\\
[1, 5, 4, 2, 3, 1, 4, 5, 6, 5, 4, 2, 3, 1, 4, 5],\,
[1, 5, 4, 2, 3, 4, 5, 6, 5, 4, 2, 3, 1, 4, 5, 8],\\
[1, 5, 6, 5, 4, 2, 3, 4, 5, 6, 7, 6, 5, 4, 2, 3, 1, 4, 5, 6],\,
[2, 3, 1, 4, 2, 3, 4, 5, 4, 6, 5, 7, 6, 5, 4, 3, 8, 7, 6, 5, 4, 2, 3, 1, 4, 3],\\
[2, 3, 1, 6, 5, 4, 2, 3, 1, 4, 3, 5, 6, 7, 6, 5, 4, 2, 3, 1, 4, 3, 5, 6],\\
[2, 3, 4, 2, 3, 1, 4, 7, 6, 5, 4, 2, 3, 4, 5, 6, 7, 8, 7, 6, 5, 4, 2, 3, 1, 4, 3, 5, 4, 2, 6, 7],\\
[2, 3, 4, 2, 3, 5, 4, 2, 3, 6, 7, 8, 7, 6, 5, 4, 2, 3],\,
[2, 3, 4, 5, 4, 6, 5, 4, 7, 6, 5, 4, 2, 3],\,
[2, 3, 4, 5, 6, 5, 7, 6, 5, 8, 7, 6, 5, 4, 2, 3],\\
[2, 4, 2, 3, 4, 5, 4, 3, 6, 7, 8, 7, 6, 5, 4, 2, 3, 4],\,
[2, 4, 2, 5, 4, 2, 6, 5, 4, 2],\\
[2, 4, 3, 1, 6, 7, 6, 5, 4, 2, 3, 4, 5, 6, 7, 8, 7, 6, 5, 4, 2, 3, 1, 4, 3, 5, 4, 2, 6, 7],\,
[2, 4, 5, 4, 2, 7, 8, 7],\\
[3, 1, 4, 2, 3, 4, 5, 6, 5, 7, 8, 7, 6, 5, 4, 2, 3, 1, 4, 3],\,
[3, 1, 4, 7, 6, 5, 4, 2, 3, 4, 5, 6, 7, 8, 7, 6, 5, 4, 2, 3, 1, 4, 3, 5, 6, 7],\\
[3, 1, 5, 6, 5, 4, 2, 3, 4, 5, 6, 7, 6, 5, 4, 2, 3, 1, 4, 3, 5, 6],\,
[3, 1, 6, 7, 6, 5, 4, 2, 3, 4, 5, 6, 7, 8, 7, 6, 5, 4, 2, 3, 1, 4, 3, 5, 6, 7],\\
[3, 4, 2, 3, 1, 5, 4, 2, 3, 1, 4, 8, 7, 6, 5, 4, 2, 3, 1, 4, 3, 5, 4, 2, 6, 5, 4, 3, 7, 8],\,
[3, 4, 2, 3, 4, 5, 4, 2, 6, 7, 8, 7, 6, 5, 4, 2, 3, 4],\\
[3, 4, 3, 6, 7, 6],\,
[3, 4, 5, 4, 3, 7, 8, 7],\,
[4, 2, 3, 1, 5, 4, 2, 3, 1, 4, 3, 5, 6, 7, 8, 7, 6, 5, 4, 2, 3, 1, 4, 3, 5, 4],\\
[4, 2, 3, 4, 5, 4, 2, 3, 4, 7],\,
[4, 2, 3, 4, 5, 6, 7, 6, 8, 7, 6, 5, 4, 2, 3, 4],\,
[4, 3, 1, 5, 4, 2, 3, 4, 5, 6, 7, 6, 8, 7, 6, 5, 4, 2, 3, 1, 4, 3, 5, 4],\\
[4, 5, 4, 2, 3, 4, 5, 6, 5, 4, 2, 3, 4, 5],\,
[4, 5, 4, 8],\,
[4, 7, 8, 7],\\
[5, 4, 2, 3, 4, 5, 6, 5, 4, 2, 3, 4, 5, 8],\,
[5, 4, 3, 1, 6, 5, 4, 2, 3, 1, 4, 3, 5, 4, 6, 5]
\big\},
\end{multline}}%
and one obtains 20 solutions for $w_i$ in 
\eqref{eq:E8''D} with $\ell_T(w_i)=2$ and $w_i$ of type $A_2$:
{\small
\begin{multline} \label{eq:E8sol5}
w_i\in\big\{[1, 2, 3, 1, 4, 5, 6, 7, 8, 7, 6, 5, 4, 2, 3, 1, 4, 3],\,
[1, 2, 3, 4, 5, 6, 7, 6, 5, 4, 2, 3, 1, 8],\\
[1, 2, 3, 4, 5, 6, 7, 8, 7, 6, 5, 4, 2, 3, 1, 4],\
[1, 2, 4, 5, 4, 2, 3, 4, 5, 6, 5, 4, 3, 1],\\
[1, 2, 4, 5, 6, 5, 4, 2, 3, 4, 5, 6, 7, 6, 5, 4, 3, 1],\,
[1, 3, 1, 4, 5, 6, 5, 4, 2, 3, 1, 4, 5, 6, 7, 6, 5, 4, 2, 3],\\
[1, 3, 1, 4, 5, 6, 7, 6, 5, 4, 2, 3, 1, 4, 5, 6, 7, 8, 7, 6, 5, 4, 2, 3],\,
[1, 3, 4, 5, 6, 5, 4, 3, 1, 7],\\
[1, 4, 2, 3, 1, 4, 5, 4, 6, 7, 8, 7, 6, 5, 4, 2, 3, 1, 4, 3, 5, 4],\\
[2, 3, 4, 3, 1, 5, 6, 7, 6, 5, 4, 2, 3, 1, 4, 3, 5, 6, 7, 8, 7, 6, 5, 4, 2, 3, 1, 4],\,
[2, 3, 4, 5, 6, 7, 6, 5, 4, 2, 3, 8],\\
[2, 3, 4, 5, 6, 7, 8, 7, 6, 5, 4, 2, 3, 4],\,
[2, 4, 5, 4, 2, 6],\,
[3, 4, 2, 3, 4, 5, 4, 2],\,
[3, 4, 3, 5],\\
[3, 4, 5, 4, 2, 3, 4, 5, 6, 5, 4, 2],\,
[4, 5],\,
[5, 6],\,
[6, 7],\,
[7, 8]
\big\},
\end{multline}}%
where $\{s_1,s_2,s_3,s_4,s_5,s_6,s_7,s_8\}$ is a simple system of generators of 
$E_8$,
corresponding to the Dynkin diagram displayed in Figure~\ref{fig:E8},
and each of them gives rise to $m/4$ elements of
$\Fix_{NC^m(E_8)}(\phi^{p})$ since $i$ ranges from $1$ to $m/4$.

The number of solutions in Case~(iii) can be computed
from our knowledge of the solutions in Case~(ii) according to type,
using some elementary counting arguments. 
Namely, the number of
solutions of \eqref{eq:E8''E} is equal to
$$
45\cdot 2+20\cdot 3=150,
$$
since an element of type $A_1^2$ can be decomposed in two ways 
into a product of two elements of absolute length $1$, while for
an element of type $A_2$ this can be done in $3$
ways. 

In total, we obtain 
$1+(30+45+20)\frac m4+150\binom {m/4}2=\frac {(5m+4)(15m+4)}{16}$ elements in
$\Fix_{NC^m(E_8)}(\phi^p)$, which agrees with the limit in
\eqref{eq:E8.9}.

\smallskip
Finally we discuss the case in \eqref{eq:E8.11}.
By Lemma~\ref{lem:1}, we are free to choose $p=15m/2$. In particular,
$m$ must be divisible by $2$.
From \eqref{eq:Aktion}, we infer
\begin{multline*}
\phi^p\big((w_0;w_1,\dots,w_m)\big)\\=
(*;
c^{8}w_{\frac {m}2+1}c^{-8},c^{8}w_{\frac {m}2+2}c^{-8},
\dots,c^{8}w_{m}c^{-8},
c^7w_{1}c^{-7},\dots,
c^7w_{\frac {m}2}c^{-7}\big).
\end{multline*}
Supposing that 
$(w_0;w_1,\dots,w_m)$ is fixed by $\phi^p$, we obtain
the system of equations
{\refstepcounter{equation}\label{eq:E8''''A}}
\alphaeqn
\begin{align} \label{eq:E8''''Aa}
w_i&=c^8w_{\frac {m}2+i}c^{-8}, \quad i=1,2,\dots,\tfrac {m}2,\\
w_i&=c^7w_{i-\frac {m}2}c^{-7}, \quad i=\tfrac {m}2+1,\tfrac {m}2+2,\dots,m.
\label{eq:E8''''Ab}
\end{align}
\reseteqn
There are several distinct possibilities for choosing
the $w_i$'s, $1\le i\le m$: 
{\refstepcounter{equation}\label{eq:E8''''C}}
\alphaeqn
\begin{enumerate}
\item[(i)]
all the $w_i$'s are equal to $\ep$ (and $w_0=c$), 
\item[(ii)]
there is an $i$ with $1\le i\le \frac m2$ such that
\begin{equation} \label{eq:E8''''Cii}
1\le\ell_T(w_i)=\ell_T(w_{i+\frac m2})\le 4,
\end{equation}
and the other $w_j$'s, $1\le j\le m$, are equal to $\ep$,
\item[(iii)]
there are $i_1$ and $i_2$ with $1\le i_1<i_2\le \frac m2$ such that
\begin{equation} \label{eq:E8''''Ciii}
\ell_1:=\ell_T(w_{i_1})=\ell_T(w_{i_1+\frac m2})\ge1,\quad
\ell_2:=\ell_T(w_{i_2})=\ell_T(w_{i_2+\frac m2})\ge1,\quad\text{and}\quad
\ell_1+\ell_2\le4,
\end{equation}
and the other $w_j$'s, $1\le j\le m$, are equal to $\ep$,
\item[(iv)]
there are $i_1,i_2,i_3$ with $1\le i_1<i_2<i_3\le \frac m2$ such that
\begin{multline} \label{eq:E8''''Civ}
\ell_1:=\ell_T(w_{i_1})=\ell_T(w_{i_1+\frac m2})\ge1,\quad
\ell_2:=\ell_T(w_{i_2})=\ell_T(w_{i_2+\frac m2})\ge1,\\
\ell_3:=\ell_T(w_{i_3})=\ell_T(w_{i_3+\frac m2})\ge1,
\quad\text{and}\quad
\ell_1+\ell_2+\ell_3\le4,
\end{multline}
and the other $w_j$'s, $1\le j\le m$, are equal to $\ep$,
\item[(v)]
there are $i_1,i_2,i_3,i_4$ with $1\le i_1<i_2<i_3<i_4\le \frac m2$ such that
\begin{multline} \label{eq:E8''''Cv}
\ell_T(w_{i_1})=\ell_T(w_{i_2})=\ell_T(w_{i_3})=\ell_T(w_{i_4})\\=
\ell_T(w_{i_1+\frac m2})=\ell_T(w_{i_2+\frac m2})=
\ell_T(w_{i_3+\frac m2})=\ell_T(w_{i_4+\frac m2})=1,
\end{multline}
and all other $w_j$'s are equal to $\ep$.
\end{enumerate}
\reseteqn

Moreover, since $(w_0;w_1,\dots,w_m)\in NC^m(E_8)$, we must have 
$w_iw_{i+\frac {m}2}\le_T c$,
respectively 
$w_{i_1}w_{i_2}w_{i_1+\frac {m}2}w_{i_2+\frac {m}2}\le_T c$,
respectively 
$$w_{i_1}w_{i_2}w_{i_3}
w_{i_1+\frac {m}2}w_{i_2+\frac {m}2}w_{i_3+\frac {m}2}\le_T c,$$
respectively 
$$w_{i_1}w_{i_2}w_{i_3}w_{i_4}
w_{i_1+\frac {m}2}w_{i_2+\frac {m}2}w_{i_3+\frac {m}2}
w_{i_4+\frac {m}2}=c.$$
Together with equations~\eqref{eq:E8''''A}--\eqref{eq:E8''''C}, 
this implies that
\begin{equation} \label{eq:E8''''D}
w_i=c^{15}w_ic^{-15}\quad\text{and}\quad 
w_i(c^7w_ic^{-7})\le_T c,
\end{equation}
respectively that
\begin{equation} \label{eq:E8''''E}
w_{i_1}=c^{15}w_{i_1}c^{-15},\quad 
w_{i_2}=c^{15}w_{i_2}c^{-15},\quad\text{and}\quad  w_{i_1}w_{i_2}(c^7w_{i_1}c^{-7})(c^7w_{i_2}c^{-7})\le_T c,
\end{equation}
respectively that
\begin{multline} \label{eq:E8''''F}
w_{i_1}=c^{15}w_{i_1}c^{-15},\quad 
w_{i_2}=c^{15}w_{i_2}c^{-15},\quad 
w_{i_3}=c^{15}w_{i_3}c^{-15},\\
\quad\text{and}\quad w_{i_1}w_{i_2}w_{i_3}
(c^7w_{i_1}c^{-7})(c^7w_{i_2}c^{-7})(c^7w_{i_3}c^{-7})\le_T c,
\end{multline}
respectively that
\begin{multline} \label{eq:E8''''G}
w_{i_1}=c^{15}w_{i_1}c^{-15},\quad 
w_{i_2}=c^{15}w_{i_2}c^{-15},\quad 
w_{i_3}=c^{15}w_{i_3}c^{-15},\quad 
w_{i_4}=c^{15}w_{i_4}c^{-15},\\
\quad\text{and}\quad w_{i_1}w_{i_2}w_{i_3}w_{i_4}
(c^7w_{i_1}c^{-7})(c^7w_{i_2}c^{-7})(c^7w_{i_3}c^{-7})
(c^7w_{i_4}c^{-7})=c.
\end{multline}
Here, the first equation in \eqref{eq:E8''''D},
the first two in \eqref{eq:E8''''E},
the first three in \eqref{eq:E8''''F},
and the first four in \eqref{eq:E8''''G}, are all automatically 
satisfied due to Lemma~\ref{lem:4} with $d=2$.

With the help of Stembridge's {\sl Maple} package {\tt coxeter}
\cite{StemAZ}, one obtains 45 solutions for $w_i$ in 
\eqref{eq:E8''''D} with $\ell_T(w_i)=1$:
{\small
\begin{multline*}
w_i\in\big\{                         [1],\,
                                     [3],\,
                                     [4],\,
                                     [5],\,
                                     [6],\,
                                     [7],\,
                                     [8],\,
                                  [2, 4, 2],\,
                                  [3, 4, 3],\,
                                  [4, 5, 4],\,
                                  [5, 6, 5],\,
                                  [6, 7, 6],\,
                                  [7, 8, 7],\\
                               [2, 4, 5, 4, 2],\,
                               [3, 4, 5, 4, 3],\,
                            [1, 3, 4, 5, 4, 3, 1],\,
                            [2, 4, 5, 6, 5, 4, 2],\,
                            [3, 4, 5, 6, 5, 4, 3],\\
                         [1, 3, 4, 5, 6, 5, 4, 3, 1],\,
                         [4, 2, 3, 4, 5, 4, 2, 3, 4],\,
                         [2, 3, 4, 5, 6, 5, 4, 2, 3],\,
                         [2, 4, 5, 6, 7, 6, 5, 4, 2],\\
                      [1, 3, 4, 5, 6, 7, 6, 5, 4, 3, 1],\,
                      [4, 2, 3, 4, 5, 6, 5, 4, 2, 3, 4],\,
                      [2, 3, 4, 5, 6, 7, 6, 5, 4, 2, 3],\\
                   [1, 2, 3, 4, 5, 6, 7, 6, 5, 4, 2, 3, 1],\,
                   [1, 3, 4, 5, 6, 7, 8, 7, 6, 5, 4, 3, 1],\,
                   [5, 4, 2, 3, 4, 5, 6, 5, 4, 2, 3, 4, 5],\\
                   [4, 2, 3, 4, 5, 6, 7, 6, 5, 4, 2, 3, 4],\,
                   [2, 3, 4, 5, 6, 7, 8, 7, 6, 5, 4, 2, 3],\,
                [1, 5, 4, 2, 3, 4, 5, 6, 5, 4, 2, 3, 1, 4, 5],\\
                [1, 4, 2, 3, 4, 5, 6, 7, 6, 5, 4, 2, 3, 1, 4],\,
                [1, 2, 3, 4, 5, 6, 7, 8, 7, 6, 5, 4, 2, 3, 1],\\
                [4, 2, 3, 4, 5, 6, 7, 8, 7, 6, 5, 4, 2, 3, 4],\,
             [1, 5, 4, 2, 3, 4, 5, 6, 7, 6, 5, 4, 2, 3, 1, 4, 5],\\
             [1, 4, 2, 3, 4, 5, 6, 7, 8, 7, 6, 5, 4, 2, 3, 1, 4],\,
          [3, 1, 4, 2, 3, 4, 5, 6, 7, 8, 7, 6, 5, 4, 2, 3, 1, 4, 3],\\
          [1, 6, 5, 4, 2, 3, 4, 5, 6, 7, 6, 5, 4, 2, 3, 1, 4, 5, 6],\,
          [1, 5, 4, 2, 3, 4, 5, 6, 7, 8, 7, 6, 5, 4, 2, 3, 1, 4, 5],\\
       [3, 1, 6, 5, 4, 2, 3, 4, 5, 6, 7, 6, 5, 4, 2, 3, 1, 4, 3, 5, 6],\\
       [3, 1, 5, 4, 2, 3, 4, 5, 6, 7, 8, 7, 6, 5, 4, 2, 3, 1, 4, 3, 5],\\
    [4, 3, 1, 5, 4, 2, 3, 4, 5, 6, 7, 8, 7, 6, 5, 4, 2, 3, 1, 4, 3, 5, 4],\\
    [3, 1, 6, 5, 4, 2, 3, 4, 5, 6, 7, 8, 7, 6, 5, 4, 2, 3, 1, 4, 3, 5, 6],\\
 [3, 1, 7, 6, 5, 4, 2, 3, 4, 5, 6, 7, 8, 7, 6, 5, 4, 2, 3, 1, 4, 3, 5, 6, 7],\\
[2, 4, 3, 1, 7, 6, 5, 4, 2, 3, 4, 5, 6, 7, 8, 7, 6, 5, 4, 2, 3, 1, 4, 3, 5, 4, 2, 6, 7]
\big\},
\end{multline*}}%
one obtains 150 solutions for $w_i$ in 
\eqref{eq:E8''''D} with $\ell_T(w_i)=2$ and $w_i$ of type $A_1^2$:
{\allowdisplaybreaks\tiny
\begin{multline*}
w_i\in\big\{ [1, 2, 3, 1, 4, 3, 5, 4, 6, 5, 7, 6, 5, 4, 3, 1, 8, 7, 6, 5, 4, 2, 3, 1],\\
[1, 2, 3, 1, 4, 5, 6, 5, 4, 2, 3, 4, 5, 7, 6, 5, 4, 2, 3, 1, 4, 5, 8, 7, 6, 5, 4, 2, 3, 1],\,
[1, 2, 3, 1, 4, 5, 6, 7, 8, 7, 6, 5, 4, 2, 3, 1],\\
[1, 2, 3, 4, 2, 3, 1, 7, 6, 5, 4, 2, 3, 1, 4, 3, 5, 4, 2, 6, 7, 8, 7, 6, 5, 4, 2, 3, 1, 4, 3, 5, 4, 2, 6, 7],\\
[1, 2, 3, 4, 2, 3, 5, 4, 6, 5, 7, 6, 5, 4, 2, 3, 8, 7, 6, 5, 4, 2, 3, 1],\,
[1, 2, 3, 4, 2, 5, 4, 2, 6, 7, 8, 7, 6, 5, 4, 2, 3, 1],\\
[1, 2, 3, 4, 2, 5, 4, 6, 5, 4, 2, 7, 6, 5, 4, 2, 3, 1],\,
[1, 2, 3, 4, 2, 5, 4, 6, 5, 4, 2, 7, 8, 7, 6, 5, 4, 2, 3, 1],\\
[1, 2, 3, 4, 3, 5, 4, 6, 5, 4, 3, 7, 6, 5, 4, 2, 3, 1],\,
[1, 2, 3, 4, 3, 5, 4, 6, 5, 7, 6, 5, 4, 3, 8, 7, 6, 5, 4, 2, 3, 1],\\
[1, 2, 3, 4, 5, 4, 2, 3, 4, 6, 5, 7, 6, 5, 4, 2, 3, 4, 8, 7, 6, 5, 4, 2, 3, 1],\,
[1, 2, 3, 4, 5, 4, 6, 5, 4, 7, 8, 7, 6, 5, 4, 2, 3, 1],\\
[1, 2, 3, 4, 5, 4, 6, 7, 6, 5, 4, 2, 3, 1],\,
[1, 2, 3, 4, 5, 4, 6, 7, 8, 7, 6, 5, 4, 2, 3, 1],\\
[1, 2, 3, 4, 5, 6, 5, 4, 2, 3, 4, 5, 7, 6, 5, 4, 2, 3, 4, 5, 8, 7, 6, 5, 4, 2, 3, 1],\,
[1, 2, 3, 4, 5, 6, 5, 7, 8, 7, 6, 5, 4, 2, 3, 1],\\
[1, 2, 4, 2, 3, 1, 7, 6, 5, 4, 2, 3, 1, 4, 3, 5, 4, 2, 6, 7],\,
[1, 2, 4, 2, 3, 1, 8, 7, 6, 5, 4, 2, 3, 1, 4, 3, 5, 4, 2, 6, 7, 8],\\
[1, 2, 4, 2, 3, 4, 5, 4, 3, 6, 7, 6, 5, 4, 2, 3, 1, 4],\,
[1, 2, 4, 2, 3, 4, 5, 4, 3, 6, 7, 8, 7, 6, 5, 4, 2, 3, 1, 4],\\
[1, 2, 4, 2, 3, 4, 5, 4, 6, 5, 4, 3, 7, 6, 5, 4, 2, 3, 1, 4],\,
[1, 2, 4, 2, 3, 4, 5, 4, 6, 5, 7, 6, 5, 4, 3, 8, 7, 6, 5, 4, 2, 3, 1, 4],\\
[1, 2, 4, 5, 4, 2, 3, 4, 5, 6, 5, 4, 3, 7, 6, 5, 4, 2, 3, 1, 4, 5],\,
[1, 2, 4, 5, 4, 2],\,
[1, 2, 4, 5, 6, 5, 4, 2],\\
[1, 2, 6, 5, 4, 2, 3, 1, 7, 6, 5, 4, 2, 3, 1, 4, 3, 5, 4, 2, 6, 5, 7, 6],\,
[1, 3, 1, 4, 2, 3, 4, 5, 4, 6, 5, 7, 6, 5, 4, 2, 8, 7, 6, 5, 4, 2, 3, 1, 4, 3],\\
[1, 3, 1, 4, 2, 5, 4, 2, 3, 1, 8, 7, 6, 5, 4, 2, 3, 1, 4, 3, 5, 4, 2, 6, 5, 4, 3, 1, 7, 8],\\
[1, 3, 1, 4, 2, 7, 6, 5, 4, 2, 3, 1, 8, 7, 6, 5, 4, 2, 3, 1, 4, 3, 5, 4, 2, 6, 5, 4, 3, 1, 7, 6, 8, 7],\\
[1, 3, 1, 4, 3, 1, 5, 6, 7, 8, 7, 6, 5, 4, 3, 1],\,
[1, 3, 1, 4, 3, 5, 4, 3, 1, 6, 5, 4, 3, 1],\,
[1, 3, 1, 4, 3, 5, 4, 3, 1, 6, 7, 8, 7, 6, 5, 4, 3, 1],\\
[1, 3, 1, 4, 5, 4, 2, 3, 4, 5, 6, 5, 4, 2, 7, 8, 7, 6, 5, 4, 2, 3, 1, 4, 3, 5],\\
[1, 3, 1, 4, 5, 4, 2, 3, 4, 5, 6, 5, 7, 6, 5, 4, 2, 8, 7, 6, 5, 4, 2, 3, 1, 4, 3, 5],\\
[1, 3, 1, 4, 5, 6, 5, 4, 2, 3, 4, 5, 6, 7, 6, 5, 4, 2, 8, 7, 6, 5, 4, 2, 3, 1, 4, 3, 5, 6],\\
[1, 3, 1, 4, 5, 6, 5, 4, 3, 1],\,
[1, 3, 1, 4, 5, 6, 7, 6, 5, 4, 3, 1],\,
[1, 3, 4, 2, 3, 4, 5, 4, 2, 6, 7, 8, 7, 6, 5, 4, 2, 3, 1, 4],\\
[1, 3, 4, 2, 3, 4, 5, 4, 6, 5, 7, 6, 5, 4, 2, 8, 7, 6, 5, 4, 2, 3, 1, 4],\,
[1, 3, 4, 3, 5, 4, 3, 1],\\
[1, 3, 4, 3, 5, 4, 3, 6, 5, 4, 3, 1],\,
[1, 3, 4, 5, 4, 2, 3, 4, 5, 6, 5, 4, 2, 7, 6, 5, 4, 2, 3, 1, 4, 5],\\
[1, 3, 4, 5, 4, 2, 3, 4, 5, 6, 5, 4, 2, 7, 8, 7, 6, 5, 4, 2, 3, 1, 4, 5],\\
[1, 3, 4, 5, 4, 2, 3, 4, 5, 6, 5, 7, 6, 5, 4, 2, 8, 7, 6, 5, 4, 2, 3, 1, 4, 5],\,
[1, 3, 4, 5, 4, 3, 1, 7],\\
[1, 3, 4, 5, 4, 6, 5, 4, 7, 6, 5, 4, 3, 1],\,
[1, 3, 4, 5, 4, 6, 7, 6, 5, 4, 3, 1],\,
[1, 3, 4, 5, 4, 6, 7, 8, 7, 6, 5, 4, 3, 1],\,
[1, 3, 4, 5, 6, 5, 4, 3, 1, 8],\\
[1, 3, 4, 5, 6, 5, 7, 6, 5, 4, 3, 1],\,
[1, 3, 4, 5, 6, 7, 6, 8, 7, 6, 5, 4, 3, 1],\,
[1, 4, 2, 3, 1, 4, 3, 5, 4, 3, 1, 6, 7, 8, 7, 6, 5, 4, 2, 3, 1, 4],\\
[1, 4, 2, 3, 1, 4, 3, 5, 4, 6, 5, 7, 6, 5, 4, 3, 1, 8, 7, 6, 5, 4, 2, 3, 1, 4],\,
[1, 4, 2, 3, 1, 4, 5, 6, 7, 8, 7, 6, 5, 4, 2, 3, 1, 4],\\
[1, 4, 2, 3, 4, 5, 4, 2, 3, 4, 6, 5, 7, 6, 5, 4, 2, 3, 4, 8, 7, 6, 5, 4, 2, 3, 1, 4],\,
[1, 4, 2, 3, 4, 5, 6, 5, 7, 6, 5, 4, 2, 3, 1, 4],\\
[1, 4, 2, 3, 4, 5, 6, 5, 7, 6, 5, 8, 7, 6, 5, 4, 2, 3, 1, 4],\,
[1, 4, 5, 4, 2, 3, 4, 5, 6, 5, 4, 2, 3, 1, 4, 5],\\
[1, 4, 5, 4, 2, 3, 4, 5, 6, 7, 6, 5, 4, 2, 3, 1, 4, 5],\,
[1, 4, 5, 4, 2, 3, 4, 5, 6, 7, 8, 7, 6, 5, 4, 2, 3, 1, 4, 5],\\
[1, 4],\,
[1, 5, 4, 2, 3, 1, 4, 5, 6, 5, 4, 2, 3, 1, 4, 5],\,
[1, 5, 4, 2, 3, 1, 4, 5, 6, 7, 8, 7, 6, 5, 4, 2, 3, 1, 4, 5],\\
[1, 5, 4, 2, 3, 1, 4, 5],\,
[1, 5, 4, 2, 3, 4, 5, 6, 5, 4, 2, 3, 1, 4, 5, 8],\,
[1, 5, 4, 2, 3, 4, 5, 6, 7, 6, 8, 7, 6, 5, 4, 2, 3, 1, 4, 5],\\
[1, 5, 6, 5, 4, 2, 3, 4, 5, 6, 7, 6, 5, 4, 2, 3, 1, 4, 5, 6],\,
[1, 6, 5, 4, 2, 3, 1, 4, 5, 6],\,
[1, 6],\,
[1, 7, 6, 5, 4, 2, 3, 1, 4, 5, 6, 7],\,
[1, 8],\\
[2, 3, 1, 4, 2, 3, 4, 5, 4, 6, 5, 7, 6, 5, 4, 3, 8, 7, 6, 5, 4, 2, 3, 1, 4, 3],\\
[2, 3, 1, 4, 5, 6, 5, 4, 2, 3, 4, 5, 6, 7, 6, 5, 4, 3, 8, 7, 6, 5, 4, 2, 3, 1, 4, 3, 5, 6],\,
[2, 3, 1, 6, 5, 4, 2, 3, 1, 4, 3, 5, 6, 7, 6, 5, 4, 2, 3, 1, 4, 3, 5, 6],\\
[2, 3, 4, 2, 3, 1, 4, 7, 6, 5, 4, 2, 3, 4, 5, 6, 7, 8, 7, 6, 5, 4, 2, 3, 1, 4, 3, 5, 4, 2, 6, 7],\\
[2, 3, 4, 2, 3, 5, 4, 2, 3, 6, 7, 8, 7, 6, 5, 4, 2, 3],\,
[2, 3, 4, 2, 3, 5, 4, 6, 5, 4, 2, 3, 7, 6, 5, 4, 2, 3],\\
[2, 3, 4, 2, 3, 5, 4, 6, 5, 4, 2, 3, 7, 8, 7, 6, 5, 4, 2, 3],\,
[2, 3, 4, 2, 3, 5, 4, 6, 5, 7, 6, 5, 4, 2, 3, 8, 7, 6, 5, 4, 2, 3],\\
[2, 3, 4, 2, 5, 4, 2, 6, 5, 4, 2, 3],\,
[2, 3, 4, 2, 5, 4, 2, 6, 7, 8, 7, 6, 5, 4, 2, 3],\,
[2, 3, 4, 2, 5, 4, 6, 5, 4, 2, 7, 6, 5, 4, 2, 3],\\
[2, 3, 4, 2, 5, 4, 6, 5, 4, 2, 7, 8, 7, 6, 5, 4, 2, 3],\,
[2, 3, 4, 2, 5, 6, 5, 4, 2, 3],\,
[2, 3, 4, 2, 5, 6, 7, 6, 5, 4, 2, 3],\\
[2, 3, 4, 3, 5, 4, 6, 5, 4, 3, 7, 6, 5, 4, 2, 3],\,
[2, 3, 4, 3, 5, 6, 7, 6, 5, 4, 2, 3],\,
[2, 3, 4, 3, 5, 6, 7, 8, 7, 6, 5, 4, 2, 3],\\
[2, 3, 4, 5, 4, 6, 5, 4, 2, 3],\,
[2, 3, 4, 5, 4, 6, 5, 4, 7, 6, 5, 4, 2, 3],\,
[2, 3, 4, 5, 6, 5, 4, 2, 3, 8],\,
[2, 3, 4, 5, 6, 5, 7, 6, 5, 8, 7, 6, 5, 4, 2, 3],\\
[2, 3, 4, 5, 6, 5, 7, 8, 7, 6, 5, 4, 2, 3],\,
[2, 3, 4, 5, 6, 7, 6, 8, 7, 6, 5, 4, 2, 3],\,
[2, 4, 2, 3, 4, 5, 4, 3, 6, 5, 4, 2, 3, 4],\\
[2, 4, 2, 3, 4, 5, 4, 3, 6, 7, 6, 5, 4, 2, 3, 4],\,
[2, 4, 2, 3, 4, 5, 4, 3, 6, 7, 8, 7, 6, 5, 4, 2, 3, 4],\,
[2, 4, 2, 3, 4, 5, 4, 6, 5, 4, 3, 7, 6, 5, 4, 2, 3, 4],\\
[2, 4, 2, 5, 4, 2, 6, 5, 4, 2],\,
[2, 4, 2, 5, 6, 5, 4, 2],\,
[2, 4, 2, 5, 6, 7, 6, 5, 4, 2],\,
[2, 4, 2, 6],\,
[2, 4, 2, 8],\\
[2, 4, 3, 1, 6, 7, 6, 5, 4, 2, 3, 4, 5, 6, 7, 8, 7, 6, 5, 4, 2, 3, 1, 4, 3, 5, 4, 2, 6, 7],\,
[2, 4, 5, 4, 2, 7, 8, 7],\\
[2, 4, 5, 4, 2, 7],\,
[2, 4, 5, 4, 2, 8],\,
[2, 4, 5, 4, 6, 5, 4, 2],\,
[2, 4, 5, 6, 5, 7, 6, 5, 4, 2],\,
[3, 1, 4, 2, 3, 4, 5, 6, 5, 7, 8, 7, 6, 5, 4, 2, 3, 1, 4, 3],\\
[3, 1, 4, 5, 4, 2, 3, 4, 5, 6, 7, 8, 7, 6, 5, 4, 2, 3, 1, 4, 3, 5],\,
[3, 1, 4, 5, 4, 6, 5, 4, 2, 3, 4, 5, 6, 7, 8, 7, 6, 5, 4, 2, 3, 1, 4, 3, 5, 6],\\
[3, 1, 4, 6, 5, 4, 2, 3, 4, 5, 6, 7, 8, 7, 6, 5, 4, 2, 3, 1, 4, 3, 5, 6],\,
[3, 1, 4, 7, 6, 5, 4, 2, 3, 4, 5, 6, 7, 8, 7, 6, 5, 4, 2, 3, 1, 4, 3, 5, 6, 7],\\
[3, 1, 5, 4, 2, 3, 4, 5, 6, 7, 6, 8, 7, 6, 5, 4, 2, 3, 1, 4, 3, 5],\,
[3, 1, 5, 6, 5, 4, 2, 3, 4, 5, 6, 7, 6, 5, 4, 2, 3, 1, 4, 3, 5, 6],\\
[3, 1, 5, 6, 5, 4, 2, 3, 4, 5, 6, 7, 8, 7, 6, 5, 4, 2, 3, 1, 4, 3, 5, 6],\,
[3, 1, 6, 5, 4, 2, 3, 1, 4, 3, 5, 6],\\
[3, 1, 6, 7, 6, 5, 4, 2, 3, 4, 5, 6, 7, 8, 7, 6, 5, 4, 2, 3, 1, 4, 3, 5, 6, 7],\,
[3, 1, 7, 6, 5, 4, 2, 3, 1, 4, 3, 5, 6, 7],\\
[3, 1, 8, 7, 6, 5, 4, 2, 3, 1, 4, 3, 5, 6, 7, 8],\,
[3, 4, 2, 3, 1, 5, 4, 2, 3, 1, 4, 8, 7, 6, 5, 4, 2, 3, 1, 4, 3, 5, 4, 2, 6, 5, 4, 3, 7, 8],\\
[3, 4, 2, 3, 4, 5, 4, 2, 6, 5, 4, 2, 3, 4],\,
[3, 4, 2, 3, 4, 5, 4, 2, 6, 7, 8, 7, 6, 5, 4, 2, 3, 4],\,
[3, 4, 3, 5, 4, 3],\,
[3, 4, 3, 6, 7, 6],\\
[3, 4, 3, 6],\,
[3, 4, 3, 7],\,
[3, 4, 5, 4, 3, 7, 8, 7],\,
[3, 4, 5, 4, 6, 5, 4, 3],\,
[3, 4, 5, 6, 5, 4, 3, 8],\,
[3, 5],\,
[3, 7],\\
[4, 2, 3, 1, 5, 4, 2, 3, 1, 4, 3, 5, 6, 7, 8, 7, 6, 5, 4, 2, 3, 1, 4, 3, 5, 4],\,
[4, 2, 3, 4, 5, 4, 2, 3, 4, 7],\,
[4, 2, 3, 4, 5, 6, 5, 4, 2, 3, 4, 8],\\
[4, 2, 3, 4, 5, 6, 5, 7, 6, 5, 4, 2, 3, 4],\,
[4, 2, 3, 4, 5, 6, 7, 6, 8, 7, 6, 5, 4, 2, 3, 4],\,
[4, 2, 3, 4],\\
[4, 3, 1, 5, 4, 2, 3, 4, 5, 6, 7, 6, 8, 7, 6, 5, 4, 2, 3, 1, 4, 3, 5, 4],\,
[4, 5, 4, 2, 3, 4, 5, 6, 5, 4, 2, 3, 4, 5],\,
[4, 5, 4, 8],\,
[4, 7, 8, 7],\\
[4, 7],\,
[4, 8],\,
[5, 4, 2, 3, 4, 5, 6, 5, 4, 2, 3, 4, 5, 8],\,
[5, 4, 2, 3, 4, 5],\\
[5, 4, 3, 1, 6, 5, 4, 2, 3, 1, 4, 3, 5, 4, 6, 5],\,
[5, 8],\,
[6, 5, 4, 2, 3, 4, 5, 6]
\big\},
\end{multline*}}%
one obtains 100 solutions for $w_i$ in 
\eqref{eq:E8''''D} with $\ell_T(w_i)=2$ and $w_i$ of type $A_2$:
{\allowdisplaybreaks\tiny
\begin{multline*}
w_i\in\big\{[1, 2, 3, 1, 4, 5, 4, 6, 7, 8, 7, 6, 5, 4, 2, 3, 1, 4, 3, 5],\,
[1, 2, 3, 1, 4, 5, 6, 5, 4, 7, 6, 5, 4, 2, 3, 1, 4, 3, 5, 6],\\
[1, 2, 3, 1, 4, 5, 6, 5, 4, 7, 8, 7, 6, 5, 4, 2, 3, 1, 4, 3, 5, 6],\,
[1, 2, 3, 1, 4, 5, 6, 7, 8, 7, 6, 5, 4, 2, 3, 1, 4, 3],\\
[1, 2, 3, 4, 3, 1, 5, 6, 7, 6, 5, 4, 3, 8, 7, 6, 5, 4, 2, 3, 1, 4, 3, 5, 4, 2, 6, 7],\,
[1, 2, 3, 4, 5, 4, 6, 7, 6, 5, 4, 2, 3, 1, 4, 5],\\
[1, 2, 3, 4, 5, 4, 6, 7, 8, 7, 6, 5, 4, 2, 3, 1, 4, 5],\,
[1, 2, 3, 4, 5, 6, 7, 6, 5, 4, 2, 3, 1, 4],\\
[1, 2, 3, 4, 5, 6, 7, 6, 5, 4, 2, 3, 1, 8],\,
[1, 2, 3, 4, 5, 6, 7, 6, 5, 4, 2, 3],\,
[1, 2, 3, 4, 5, 6, 7, 8, 7, 6, 5, 4, 2, 3, 1, 4],\\
[1, 2, 3, 4, 5, 6, 7, 8, 7, 6, 5, 4, 2, 3],\,
[1, 2, 4, 2, 3, 1, 4, 5, 6, 7, 6, 5, 4, 2, 3, 1, 4, 3, 5, 6, 7, 8, 7, 6, 5, 4, 2, 3],\\
[1, 2, 4, 2, 3, 4, 5, 6, 7, 6, 5, 4, 3, 1],\,
[1, 2, 4, 2, 3, 4, 5, 6, 7, 8, 7, 6, 5, 4, 3, 1],\,
[1, 2, 4, 5, 4, 2, 3, 4, 5, 6, 5, 4, 3, 1],\\
[1, 2, 4, 5, 4, 2, 3, 4, 5, 6, 7, 6, 5, 4, 3, 1],\,
[1, 2, 4, 5, 4, 2, 3, 4, 5, 6, 7, 8, 7, 6, 5, 4, 3, 1],\,
[1, 2, 4, 5, 6, 5, 4, 2, 3, 4, 5, 6, 7, 6, 5, 4, 3, 1],\\
[1, 3, 1, 4, 5, 4, 2, 3, 1, 4, 5, 6, 7, 8, 7, 6, 5, 4, 2, 3],\,
[1, 3, 1, 4, 5, 6, 5, 4, 2, 3, 1, 4, 5, 6, 7, 6, 5, 4, 2, 3],\\
[1, 3, 1, 4, 5, 6, 5, 4, 2, 3, 1, 4, 5, 6, 7, 8, 7, 6, 5, 4, 2, 3],\,
[1, 3, 1, 4, 5, 6, 7, 6, 5, 4, 2, 3, 1, 4, 5, 6, 7, 8, 7, 6, 5, 4, 2, 3],\\
[1, 3, 4, 3, 1, 5, 4, 2, 3, 1, 4, 5, 6, 7, 8, 7, 6, 5, 4, 2, 3, 4],\,
[1, 3, 4, 5, 4, 2, 3, 1, 4, 5, 6, 5, 4, 2],\\
[1, 3, 4, 5, 4, 2, 3, 1, 4, 5, 6, 7, 6, 5, 4, 2],\,
[1, 3, 4, 5, 4, 3, 1, 6, 7, 6],\,
[1, 3, 4, 5, 4, 3, 1, 6],\,
[1, 3, 4, 5, 4, 3],\\
[1, 3, 4, 5, 6, 5, 4, 2, 3, 1, 4, 5, 6, 7, 6, 5, 4, 2],\,
[1, 3, 4, 5, 6, 5, 4, 3, 1, 7, 8, 7],\,
[1, 3, 4, 5, 6, 5, 4, 3, 1, 7],\\
[1, 3, 4, 5, 6, 5, 4, 3],\,
[1, 3, 4, 5, 6, 7, 6, 5, 4, 3, 1, 8],\,
[1, 4, 2, 3, 1, 4, 5, 4, 6, 7, 8, 7, 6, 5, 4, 2, 3, 1, 4, 3, 5, 4],\\
[1, 4, 2, 3, 4, 5, 6, 5, 7, 6, 5, 4, 2, 3, 1, 4, 5, 6],\,
[1, 4, 2, 3, 4, 5, 6, 7, 6, 5, 4, 2, 3, 1, 4, 5],\,
[1, 4, 2, 3, 4, 5, 6, 7, 6, 5, 4, 2, 3, 1, 4, 8],\\
[1, 4, 2, 3, 4, 5, 6, 7, 6, 5, 4, 2, 3, 4],\,
[1, 4, 2, 3, 4, 5, 6, 7, 8, 7, 6, 5, 4, 2, 3, 1, 4, 5],\,
[1, 4, 2, 3, 4, 5, 6, 7, 8, 7, 6, 5, 4, 2, 3, 4],\\
[1, 5, 4, 2, 3, 4, 5, 6, 5, 4, 2, 3, 1, 4, 5, 7, 8, 7],\,
[1, 5, 4, 2, 3, 4, 5, 6, 5, 4, 2, 3, 1, 4, 5, 7],\,
[1, 5, 4, 2, 3, 4, 5, 6, 5, 4, 2, 3, 4, 5],\\
[1, 5, 4, 2, 3, 4, 5, 6, 7, 6, 5, 4, 2, 3, 1, 4, 5, 6],\,
[1, 5, 4, 2, 3, 4, 5, 6, 7, 6, 5, 4, 2, 3, 1, 4, 5, 8],\\
[2, 3, 1, 4, 2, 3, 1, 4, 5, 6, 7, 6, 5, 8, 7, 6, 5, 4, 2, 3, 1, 4, 3, 5, 4, 2, 6, 7],\,
[2, 3, 1, 4, 5, 6, 5, 4, 2, 3, 1, 4, 5, 6, 7, 6, 5, 4, 3, 1],\\
[2, 3, 1, 4, 5, 6, 5, 4, 2, 3, 1, 4, 5, 6, 7, 8, 7, 6, 5, 4, 3, 1],\,
[2, 3, 1, 4, 5, 6, 7, 6, 5, 4, 2, 3, 1, 4, 5, 6, 7, 8, 7, 6, 5, 4, 3, 1],\\
[2, 3, 4, 3, 1, 5, 6, 7, 6, 5, 4, 2, 3, 1, 4, 3, 5, 6, 7, 8, 7, 6, 5, 4, 2, 3, 1, 4],\,
[2, 3, 4, 5, 4, 6, 5, 4, 2, 3, 4, 5],\\
[2, 3, 4, 5, 6, 5, 4, 2, 3, 4],\,
[2, 3, 4, 5, 6, 5, 4, 2, 3, 7, 8, 7],\,
[2, 3, 4, 5, 6, 5, 4, 2, 3, 7],\\
[2, 3, 4, 5, 6, 5, 4, 2],\,
[2, 3, 4, 5, 6, 7, 6, 5, 4, 2, 3, 4],\,
[2, 3, 4, 5, 6, 7, 6, 5, 4, 2, 3, 8],\,
[2, 3, 4, 5, 6, 7, 6, 5, 4, 2],\\
[2, 3, 4, 5, 6, 7, 8, 7, 6, 5, 4, 2, 3, 4],\,
[2, 4, 2, 3, 1, 7, 6, 5, 4, 2, 3, 4, 5, 6, 7, 8, 7, 6, 5, 4, 2, 3, 1, 4, 3, 5, 6, 7],\\
[2, 4, 2, 3, 4, 5, 4, 3],\,
[2, 4, 2, 3, 4, 5, 6, 5, 4, 3],\,
[2, 4, 2, 5, 6, 5],\,
[2, 4, 2, 5],\,
[2, 4, 5, 4, 2, 3, 4, 5, 6, 5, 4, 3],\\
[2, 4, 5, 4, 2, 6, 7, 6],\,
[2, 4, 5, 4, 2, 6],\,
[2, 4, 5, 6, 5, 4, 2, 7],\,
[3, 1, 4, 2, 3, 4, 5, 6, 5, 7, 8, 7, 6, 5, 4, 2, 3, 1, 4, 3, 5, 6],\\
[3, 1, 4, 2, 3, 4, 5, 6, 7, 8, 7, 6, 5, 4, 2, 3, 1, 4, 3, 5],\,
[3, 1, 4, 2, 3, 4, 5, 6, 7, 8, 7, 6, 5, 4, 2, 3, 1, 4],\\
[3, 1, 5, 4, 2, 3, 4, 5, 6, 7, 6, 8, 7, 6, 5, 4, 2, 3, 1, 4, 3, 5, 6, 7],\,
[3, 1, 5, 4, 2, 3, 4, 5, 6, 7, 8, 7, 6, 5, 4, 2, 3, 1, 4, 3, 5, 4],\\
[3, 1, 5, 4, 2, 3, 4, 5, 6, 7, 8, 7, 6, 5, 4, 2, 3, 1, 4, 3, 5, 6],\,
[3, 1, 5, 4, 2, 3, 4, 5, 6, 7, 8, 7, 6, 5, 4, 2, 3, 1, 4, 5],\\
[3, 1, 6, 5, 4, 2, 3, 4, 5, 6, 7, 6, 5, 4, 2, 3, 1, 4, 3, 5, 6, 8],\,
[3, 1, 6, 5, 4, 2, 3, 4, 5, 6, 7, 6, 5, 4, 2, 3, 1, 4, 5, 6],\\
[3, 1, 6, 5, 4, 2, 3, 4, 5, 6, 7, 8, 7, 6, 5, 4, 2, 3, 1, 4, 3, 5, 6, 7],\,
[3, 4, 2, 3, 4, 5, 4, 2],\,
[3, 4, 2, 3, 4, 5, 6, 5, 4, 2],\\
[3, 4, 2, 3, 4, 5, 6, 7, 6, 5, 4, 2],\,
[3, 4, 3, 1, 5, 4, 2, 3, 4, 5, 6, 7, 8, 7, 6, 5, 4, 2, 3, 1, 4, 5],\,
[3, 4, 3, 5, 6, 5],\\
[3, 4, 3, 5],\,
[3, 4, 5, 4, 2, 3, 4, 5, 6, 5, 4, 2],\,
[3, 4, 5, 4, 3, 6],\,
[3, 4, 5, 4],\,
[3, 4],\\
[4, 2, 3, 1, 4, 5, 4, 2, 3, 1, 4, 3, 5, 6, 7, 8, 7, 6, 5, 4, 3, 1],\,
[4, 2, 3, 4, 5, 4, 2, 3, 4, 6, 7, 6],\,
[4, 2, 3, 4, 5, 4, 2, 3, 4, 6],\\
[4, 2, 3, 4, 5, 6, 5, 4, 2, 3, 4, 5],\,
[4, 2, 3, 4, 5, 6, 5, 4, 2, 3, 4, 7, 8, 7],\,
[4, 2, 3, 4, 5, 6, 5, 4, 2, 3, 4, 7],\\
[4, 2, 3, 4, 5, 6, 7, 6, 5, 4, 2, 3, 4, 8],\,
[4, 5],\,
[5, 6],\,
[6, 7],\,
[7, 8]
\big\},
\end{multline*}}%
one obtains 75 solutions for $w_i$ in 
\eqref{eq:E8''''D} with $\ell_T(w_i)=3$ and $w_i$ of type $A_1^3$:
{\allowdisplaybreaks\tiny
\begin{multline*}
w_i\in\big\{ [1, 2, 3, 1, 4, 2, 5, 4, 6, 5, 4, 2, 7, 8, 7, 6, 5, 4, 2, 3, 1],\,
[1, 2, 3, 1, 4, 3, 5, 4, 3, 6, 5, 4, 3, 7, 6, 5, 4, 3, 1, 8, 7, 6, 5, 4, 2, 3, 1],\\
[1, 2, 3, 1, 4, 5, 4, 6, 5, 4, 2, 3, 4, 5, 7, 6, 5, 4, 2, 3, 1, 4, 5, 8, 7, 6, 5, 4, 2, 3, 1],\,
[1, 2, 3, 1, 4, 5, 4, 6, 7, 8, 7, 6, 5, 4, 2, 3, 1],\\
[1, 2, 3, 4, 2, 3, 1, 6, 7, 6, 5, 4, 2, 3, 1, 4, 3, 5, 4, 2, 6, 7, 8, 7, 6, 5, 4, 2, 3, 1, 4, 3, 5, 4, 2, 6, 7],\\
[1, 2, 3, 4, 2, 3, 5, 4, 6, 5, 4, 7, 6, 5, 4, 2, 3, 8, 7, 6, 5, 4, 2, 3, 1],\\
[1, 2, 3, 4, 2, 5, 4, 2, 3, 4, 6, 5, 4, 3, 7, 6, 5, 4, 2, 3, 4, 8, 7, 6, 5, 4, 2, 3, 1],\\
[1, 2, 3, 4, 3, 1, 5, 6, 7, 6, 5, 4, 2, 3, 1, 4, 3, 5, 6, 8, 7, 6, 5, 4, 2, 3, 1],\\
[1, 2, 3, 4, 3, 5, 4, 3, 6, 5, 4, 3, 7, 6, 5, 4, 2, 3, 1],\,
[1, 2, 3, 4, 3, 5, 4, 6, 5, 4, 7, 6, 5, 4, 3, 8, 7, 6, 5, 4, 2, 3, 1],\\
[1, 2, 3, 4, 5, 4, 6, 5, 4, 2, 3, 4, 5, 7, 6, 5, 4, 2, 3, 4, 5, 8, 7, 6, 5, 4, 2, 3, 1],\,
[1, 2, 3, 4, 5, 6, 5, 4, 2, 3, 4, 5, 7, 6, 5, 4, 2, 3, 1],\\
[1, 2, 3, 4, 5, 6, 5, 4, 3, 1, 7, 6, 5, 4, 2, 3, 1, 4, 3, 5, 4, 6, 5, 8, 7, 6, 5, 4, 2, 3, 1],\\
[1, 2, 4, 2, 3, 1, 4, 3, 5, 4, 3, 1, 6, 5, 7, 6, 5, 4, 3, 1, 8, 7, 6, 5, 4, 2, 3, 1, 4],\\
[1, 2, 4, 2, 3, 1, 5, 6, 5, 7, 6, 5, 4, 2, 3, 1, 4, 3, 5, 4, 2, 6, 7],\,
[1, 2, 4, 2, 3, 1, 6, 7, 6, 8, 7, 6, 5, 4, 2, 3, 1, 4, 3, 5, 4, 2, 6, 7, 8],\\
[1, 2, 4, 2, 5, 4, 2, 3, 4, 5, 6, 5, 4, 3, 7, 6, 5, 4, 2, 3, 1, 4, 5],\,
[1, 2, 4, 2, 5, 6, 5, 4, 2],\,
[1, 2, 4, 5, 4, 2, 8],\\
[1, 3, 1, 4, 2, 3, 4, 5, 4, 6, 5, 4, 7, 6, 5, 4, 2, 8, 7, 6, 5, 4, 2, 3, 1, 4, 3],\\
[1, 3, 1, 4, 2, 3, 5, 4, 2, 3, 1, 8, 7, 6, 5, 4, 2, 3, 1, 4, 3, 5, 4, 2, 6, 5, 4, 3, 1, 7, 8],\\
[1, 3, 1, 4, 2, 3, 7, 6, 5, 4, 2, 3, 1, 8, 7, 6, 5, 4, 2, 3, 1, 4, 3, 5, 4, 2, 6, 5, 4, 3, 1, 7, 6, 8, 7],\\
[1, 3, 1, 4, 2, 5, 4, 2, 3, 1, 6, 7, 6, 8, 7, 6, 5, 4, 2, 3, 1, 4, 3, 5, 4, 2, 6, 5, 4, 3, 1, 7, 8],\\
[1, 3, 1, 4, 3, 1, 5, 6, 7, 6, 8, 7, 6, 5, 4, 3, 1],\,
[1, 3, 1, 4, 5, 4, 6, 5, 4, 2, 3, 4, 5, 6, 7, 6, 5, 4, 2, 8, 7, 6, 5, 4, 2, 3, 1, 4, 3, 5, 6],\\
[1, 3, 1, 4, 5, 4, 6, 7, 6, 5, 4, 3, 1],\,
[1, 3, 4, 3, 5, 4, 3, 1, 7],\,
[1, 3, 4, 3, 5, 4, 3, 6, 5, 4, 3, 1, 8],\\
[1, 3, 4, 5, 4, 2, 3, 1, 4, 5, 6, 5, 4, 2, 7, 8, 7, 6, 5, 4, 2, 3, 1, 4, 5],\,
[1, 4, 2, 3, 1, 4, 5, 6, 5, 7, 6, 5, 8, 7, 6, 5, 4, 2, 3, 1, 4],\\
[1, 4, 2, 3, 4, 5, 4, 2, 3, 4, 6, 7, 8, 7, 6, 5, 4, 2, 3, 1, 4],\,
[1, 4, 2, 3, 4, 5, 6, 7, 6, 5, 4, 2, 3, 4, 5, 6, 8, 7, 6, 5, 4, 2, 3, 1, 4],\\
[1, 4, 5, 4, 2, 3, 1, 4, 5, 6, 5, 4, 2, 3, 1, 4, 5],\,
[1, 4, 5, 4, 2, 3, 1, 4, 5, 6, 7, 8, 7, 6, 5, 4, 2, 3, 1, 4, 5],\\
[1, 4, 5, 4, 2, 3, 4, 5, 6, 5, 4, 2, 3, 1, 4, 5, 8],\,
[1, 4, 5, 4, 2, 3, 4, 5, 6, 7, 6, 8, 7, 6, 5, 4, 2, 3, 1, 4, 5],\\
[1, 4, 5, 4, 6, 5, 4, 2, 3, 1, 4, 5, 6],\,
[1, 4, 8],\,
[1, 5, 4, 2, 3, 1, 4, 5, 7],\\
[1, 5, 4, 2, 3, 4, 5, 6, 5, 4, 2, 3, 4, 5, 7, 6, 5, 4, 2, 3, 1, 4, 5],\,
[1, 6, 7, 6, 5, 4, 2, 3, 1, 4, 5, 6, 7],\\
[2, 3, 1, 4, 2, 5, 6, 5, 4, 2, 3, 4, 5, 6, 7, 6, 5, 4, 3, 8, 7, 6, 5, 4, 2, 3, 1, 4, 3, 5, 6],\\
[2, 3, 1, 5, 6, 5, 4, 2, 3, 1, 4, 3, 5, 6, 7, 6, 5, 4, 2, 3, 1, 4, 3, 5, 6],\\
[2, 3, 4, 2, 3, 1, 4, 6, 7, 6, 5, 4, 2, 3, 4, 5, 6, 7, 8, 7, 6, 5, 4, 2, 3, 1, 4, 3, 5, 4, 2, 6, 7],\,
[2, 3, 4, 2, 3, 5, 4, 2, 3, 6, 7, 6, 8, 7, 6, 5, 4, 2, 3],\\
[2, 3, 4, 2, 3, 5, 4, 3, 6, 5, 4, 2, 3, 7, 8, 7, 6, 5, 4, 2, 3],\,
[2, 3, 4, 2, 3, 5, 4, 6, 5, 4, 7, 6, 5, 4, 2, 3, 8, 7, 6, 5, 4, 2, 3],\\
[2, 3, 4, 2, 3, 5, 6, 7, 6, 5, 4, 2, 3],\,
[2, 3, 4, 3, 5, 6, 5, 7, 8, 7, 6, 5, 4, 2, 3],\,
[2, 3, 4, 5, 4, 6, 5, 4, 2, 3, 8],\\
[2, 3, 4, 5, 6, 5, 4, 2, 3, 4, 5, 7, 6, 5, 4, 2, 3],\,
[2, 4, 2, 3, 1, 4, 5, 4, 3, 8, 7, 6, 5, 4, 2, 3, 1, 4, 3, 5, 4, 2, 6, 7, 8],\\
[2, 4, 2, 3, 1, 4, 5, 6, 7, 6, 5, 4, 3, 8, 7, 6, 5, 4, 2, 3, 1, 4, 3, 5, 4, 2, 6, 7, 8],\,
[2, 4, 2, 3, 4, 5, 4, 3, 6, 5, 4, 2, 3, 4, 8],\\
[2, 4, 2, 3, 4, 5, 4, 3, 6, 7, 6, 8, 7, 6, 5, 4, 2, 3, 4],\,
[3, 1, 4, 2, 3, 4, 5, 6, 7, 6, 5, 4, 2, 3, 4, 5, 6, 8, 7, 6, 5, 4, 2, 3, 1, 4, 3],\\
[3, 1, 4, 5, 4, 2, 3, 4, 5, 6, 7, 6, 8, 7, 6, 5, 4, 2, 3, 1, 4, 3, 5],\,
[3, 1, 4, 5, 4, 6, 5, 4, 2, 3, 1, 4, 3, 5, 6],\\
[3, 1, 4, 5, 4, 8, 7, 6, 5, 4, 2, 3, 1, 4, 3, 5, 6, 7, 8],\,
[3, 1, 4, 6, 7, 6, 5, 4, 2, 3, 4, 5, 6, 7, 8, 7, 6, 5, 4, 2, 3, 1, 4, 3, 5, 6, 7],\\
[3, 1, 5, 6, 5, 7, 6, 5, 4, 2, 3, 1, 4, 3, 5, 6, 7],\,
[3, 1, 6, 5, 4, 2, 3, 1, 4, 3, 5, 6, 8],\\
[3, 1, 6, 5, 4, 2, 3, 4, 5, 6, 7, 6, 5, 4, 2, 3, 4, 5, 6, 8, 7, 6, 5, 4, 2, 3, 1, 4, 3, 5, 6],\,
[3, 1, 7, 8, 7, 6, 5, 4, 2, 3, 1, 4, 3, 5, 6, 7, 8],\\
[3, 4, 2, 3, 1, 5, 4, 2, 3, 1, 4, 7, 8, 7, 6, 5, 4, 2, 3, 1, 4, 3, 5, 4, 2, 6, 5, 4, 3, 7, 8],\,
[3, 4, 3, 5, 4, 3, 7, 8, 7],\\
[3, 4, 5, 4, 6, 5, 4, 3, 8],\,
[4, 2, 3, 1, 5, 4, 2, 3, 1, 4, 3, 5, 6, 7, 6, 8, 7, 6, 5, 4, 2, 3, 1, 4, 3, 5, 4],\\
[4, 2, 3, 4, 5, 4, 2, 3, 4, 6, 5, 4, 2, 3, 4],\,
[4, 2, 3, 4, 5, 4, 2, 3, 4, 6, 7, 8, 7, 6, 5, 4, 2, 3, 4],\,
[4, 2, 3, 4, 6],\\
[4, 5, 4, 2, 3, 4, 5, 6, 5, 4, 2, 3, 4, 5, 8],\,
[5, 4, 2, 3, 4, 5, 7, 8, 7],\,
[5, 4, 3, 1, 6, 5, 4, 2, 3, 1, 4, 3, 5, 4, 6, 5, 8],\,
[5, 6, 5, 4, 2, 3, 4, 5, 6]
\big\},
\end{multline*}}%
one obtains 165 solutions for $w_i$ in 
\eqref{eq:E8''''D} with $\ell_T(w_i)=3$ and $w_i$ of type $A_1*A_2$:
{\allowdisplaybreaks\tiny
\begin{multline*}
w_i\in\big\{
 [1, 3, 1, 4, 5, 4, 2, 3, 4, 5, 6, 5, 4, 2, 7, 8, 7, 6, 5, 4, 2, 3, 1, 4, 5],\\
[1, 2, 3, 1, 4, 2, 5, 4, 2, 6, 5, 4, 2, 7, 8, 7, 6, 5, 4, 2, 3, 1, 4, 3, 5],\\
[1, 2, 3, 1, 4, 2, 5, 4, 6, 5, 4, 2, 3, 4, 5, 7, 6, 5, 4, 3, 1, 8, 7, 6, 5, 4, 2, 3, 1],\\
[1, 2, 3, 1, 4, 3, 5, 4, 6, 5, 4, 3, 7, 6, 5, 4, 3, 8, 7, 6, 5, 4, 2, 3, 1, 4, 3, 5, 6],\\
[1, 2, 3, 1, 4, 3, 5, 4, 6, 5, 7, 6, 5, 4, 3, 8, 7, 6, 5, 4, 2, 3, 1, 4, 3],\,
[1, 2, 3, 1, 4, 3, 5, 4, 6, 5, 7, 6, 5, 4, 3, 8, 7, 6, 5, 4, 2, 3, 1],\\
[1, 2, 3, 1, 4, 5, 4, 6, 5, 4, 7, 6, 5, 4, 2, 3, 1, 4, 3, 5, 6],\,
[1, 2, 3, 1, 4, 5, 4, 6, 5, 4, 7, 8, 7, 6, 5, 4, 2, 3, 1, 4, 3, 5, 6],\\
[1, 2, 3, 1, 4, 5, 6, 5, 4, 2, 3, 4, 5, 7, 6, 5, 4, 2, 3, 4, 5, 8, 7, 6, 5, 4, 2, 3, 1],\,
[1, 2, 3, 1, 4, 5, 6, 5, 7, 8, 7, 6, 5, 4, 2, 3, 1, 4, 3],\\
[1, 2, 3, 4, 2, 3, 5, 4, 6, 5, 4, 7, 6, 5, 4, 2, 3, 4, 5, 8, 7, 6, 5, 4, 2, 3, 1],\,
[1, 2, 3, 4, 2, 3, 5, 4, 6, 5, 7, 6, 5, 4, 2, 3, 4, 8, 7, 6, 5, 4, 2, 3, 1],\\
[1, 2, 3, 4, 2, 5, 4, 2, 3, 4, 6, 5, 7, 6, 5, 4, 3, 8, 7, 6, 5, 4, 2, 3, 1],\,
[1, 2, 3, 4, 2, 5, 4, 2, 6, 5, 4, 2, 7, 6, 5, 4, 2, 3, 1, 4, 5],\\
[1, 2, 3, 4, 2, 5, 4, 2, 6, 5, 4, 2, 7, 8, 7, 6, 5, 4, 2, 3, 1, 4, 5],\,
[1, 2, 3, 4, 2, 5, 4, 2, 6, 5, 7, 8, 7, 6, 5, 4, 2, 3, 1],\\
[1, 2, 3, 4, 2, 5, 4, 2, 6, 7, 8, 7, 6, 5, 4, 2, 3, 1, 4],\,
[1, 2, 3, 4, 2, 5, 4, 6, 5, 4, 2, 3, 4, 5, 7, 6, 5, 4, 3, 8, 7, 6, 5, 4, 2, 3, 1],\\
[1, 2, 3, 4, 2, 5, 4, 6, 5, 4, 2, 7, 6, 5, 4, 2, 3, 1, 8],\,
[1, 2, 3, 4, 3, 1, 5, 6, 5, 7, 6, 5, 4, 3, 8, 7, 6, 5, 4, 2, 3, 1, 4, 3, 5, 4, 2, 6, 7],\\
[1, 2, 3, 4, 3, 5, 4, 3, 6, 5, 4, 3, 7, 6, 5, 4, 2, 3, 1, 4, 5],\,
[1, 2, 3, 4, 3, 5, 4, 6, 5, 4, 3, 7, 6, 5, 4, 2, 3, 1, 4],\\
[1, 2, 3, 4, 3, 5, 4, 6, 5, 4, 3, 7, 6, 5, 4, 2, 3],\,
[1, 2, 3, 4, 3, 5, 4, 6, 5, 7, 6, 5, 4, 3, 8, 7, 6, 5, 4, 2, 3, 1, 4],\\
[1, 2, 3, 4, 5, 4, 2, 3, 4, 6, 5, 7, 6, 5, 4, 2, 3, 4, 5, 8, 7, 6, 5, 4, 2, 3, 1],\\
[1, 2, 3, 4, 5, 4, 2, 3, 4, 6, 5, 7, 6, 5, 4, 2, 3, 4, 8, 7, 6, 5, 4, 2, 3, 1, 4],\\
[1, 2, 3, 4, 5, 4, 6, 5, 7, 8, 7, 6, 5, 4, 2, 3, 1],\,
[1, 2, 3, 4, 5, 4, 6, 7, 6, 5, 4, 2, 3, 1, 8],\\
[1, 2, 3, 4, 5, 6, 5, 4, 3, 1, 7, 6, 5, 4, 2, 3, 1, 4, 3, 5, 6],\,
[1, 2, 3, 4, 5, 6, 5, 7, 8, 7, 6, 5, 4, 2, 3],\\
[1, 2, 4, 2, 3, 1, 4, 3, 8, 7, 6, 5, 4, 2, 3, 1, 4, 3, 5, 4, 2, 6, 5, 4, 3, 7, 8],\\
[1, 2, 4, 2, 3, 1, 4, 5, 6, 5, 7, 6, 5, 4, 2, 3, 1, 4, 3, 5, 6, 7, 8, 7, 6, 5, 4, 2, 3],\\
[1, 2, 4, 2, 3, 1, 6, 7, 6, 5, 4, 2, 3, 1, 4, 3, 5, 4, 2, 6, 5, 7, 6],\,
[1, 2, 4, 2, 3, 4, 5, 4, 2, 3, 4, 6, 5, 7, 6, 5, 4, 2, 8, 7, 6, 5, 4, 2, 3, 1, 4],\\
[1, 2, 4, 2, 3, 4, 5, 4, 3, 6, 5, 7, 6, 5, 4, 2, 3, 1, 4],\,
[1, 2, 4, 2, 3, 4, 5, 4, 3, 6, 5, 7, 6, 5, 8, 7, 6, 5, 4, 2, 3, 1, 4],\\
[1, 2, 4, 2, 3, 4, 5, 4, 3, 6, 7, 6, 5, 4, 2, 3, 1, 4, 8],\,
[1, 2, 4, 2, 3, 4, 5, 4, 3, 6, 7, 6, 5, 4, 2, 3, 4],\\
[1, 2, 4, 2, 3, 4, 5, 4, 3, 6, 7, 8, 7, 6, 5, 4, 2, 3, 4],\,
[1, 2, 4, 2, 3, 4, 5, 4, 6, 5, 4, 3, 7, 6, 5, 4, 2, 3, 1, 4, 5],\\
[1, 2, 4, 2, 3, 4, 5, 4, 6, 5, 4, 3, 7, 6, 5, 4, 2, 3, 4],\,
[1, 2, 4, 2, 3, 4, 5, 6, 5, 7, 6, 5, 4, 3, 1],\,
[1, 2, 4, 5, 4, 2, 3, 4, 5, 6, 5, 4, 3, 1, 8],\\
[1, 2, 4, 5, 4, 2, 3, 4, 5, 6, 7, 6, 8, 7, 6, 5, 4, 3, 1],\,
[1, 2, 4, 5, 4, 2, 6],\,
[1, 2, 4, 5, 4, 6, 5, 4, 2, 3, 4, 5, 6, 7, 6, 5, 4, 3, 1],\\
[1, 3, 1, 4, 2, 3, 4, 5, 4, 6, 5, 4, 7, 6, 5, 4, 2, 8, 7, 6, 5, 4, 2, 3, 1, 4, 3, 5, 6],\\
[1, 3, 1, 4, 2, 3, 4, 5, 4, 6, 5, 7, 6, 5, 4, 2, 8, 7, 6, 5, 4, 2, 3, 1, 4, 3, 5],\\
[1, 3, 1, 4, 2, 3, 4, 5, 4, 6, 5, 7, 6, 5, 4, 2, 8, 7, 6, 5, 4, 2, 3, 1, 4],\\
[1, 3, 1, 4, 2, 3, 4, 5, 6, 7, 6, 5, 4, 2, 3, 1, 8, 7, 6, 5, 4, 2, 3, 1, 4, 3, 5, 4, 2, 6, 7],\\
[1, 3, 1, 4, 2, 5, 4, 2, 3, 1, 7, 8, 7, 6, 5, 4, 2, 3, 1, 4, 3, 5, 4, 2, 6, 5, 4, 3, 1, 7, 6, 8, 7],\,
[1, 3, 1, 4, 3, 1, 5, 4, 6, 7, 8, 7, 6, 5, 4, 3, 1],\\
[1, 3, 1, 4, 3, 5, 4, 3, 1, 6, 5, 4, 3, 1, 7, 8, 7],\,
[1, 3, 1, 4, 3, 5, 4, 3, 6, 5, 4, 2, 3, 1, 4, 5, 6, 7, 8, 7, 6, 5, 4, 2, 3],\\
[1, 3, 1, 4, 3, 5, 4, 3, 6, 5, 4, 3, 1],\,
[1, 3, 1, 4, 5, 4, 2, 3, 1, 4, 5, 6, 7, 6, 8, 7, 6, 5, 4, 2, 3],\\
[1, 3, 1, 4, 5, 4, 2, 3, 4, 5, 6, 5, 4, 2, 7, 6, 8, 7, 6, 5, 4, 2, 3, 1, 4, 3, 5],\\
[1, 3, 1, 4, 5, 4, 2, 3, 4, 5, 6, 5, 7, 6, 5, 4, 2, 8, 7, 6, 5, 4, 2, 3, 1, 4, 3, 5, 6],\\
[1, 3, 1, 4, 5, 4, 2, 3, 4, 5, 6, 5, 7, 6, 5, 4, 2, 8, 7, 6, 5, 4, 2, 3, 1, 4, 5],\,
[1, 3, 1, 4, 5, 6, 5, 4, 3, 1, 7],\\
[1, 3, 1, 6, 5, 4, 2, 3, 1, 4, 3, 5, 4, 6, 5, 7, 6, 5, 4, 2, 3],\,
[1, 3, 4, 2, 3, 4, 5, 4, 2, 3, 4, 6, 5, 7, 6, 5, 4, 3, 8, 7, 6, 5, 4, 2, 3, 1, 4],\\
[1, 3, 4, 2, 3, 4, 5, 4, 2, 6, 5, 7, 6, 5, 8, 7, 6, 5, 4, 2, 3, 1, 4],\,
[1, 3, 4, 2, 3, 4, 5, 4, 6, 5, 7, 6, 5, 4, 2, 8, 7, 6, 5, 4, 2, 3, 1, 4, 5],\\
[1, 3, 4, 2, 3, 5, 4, 2, 3, 1, 4, 8, 7, 6, 5, 4, 2, 3, 1, 4, 3, 5, 4, 2, 6, 7, 8],\\
[1, 3, 4, 2, 3, 5, 6, 7, 6, 5, 4, 2, 3, 1, 4, 8, 7, 6, 5, 4, 2, 3, 1, 4, 3, 5, 4, 2, 6, 7, 8],\\
[1, 3, 4, 3, 1, 5, 4, 2, 3, 1, 4, 5, 6, 7, 6, 8, 7, 6, 5, 4, 2, 3, 4],\,
[1, 3, 4, 3, 5, 4, 2, 3, 1, 4, 5, 6, 5, 4, 2],\\
[1, 3, 4, 3, 5, 4, 2, 3, 1, 4, 5, 6, 7, 6, 5, 4, 2],\,
[1, 3, 4, 3, 5, 4, 3],\,
[1, 3, 4, 5, 4, 2, 3, 4, 5, 6, 5, 4, 2, 7, 6, 5, 4, 2, 3, 1, 4, 5, 8],\\
[1, 3, 4, 5, 4, 2, 3, 4, 5, 6, 5, 4, 2, 7, 6, 8, 7, 6, 5, 4, 2, 3, 1, 4, 5],\,
[1, 3, 4, 5, 4, 6, 5, 7, 6, 5, 4, 3, 1],\\
[1, 3, 4, 5, 4, 6, 7, 6, 5, 4, 3, 1, 8],\,
[1, 3, 4, 5, 6, 5, 4, 3, 8],\
[1, 4, 2, 3, 1, 4, 3, 5, 4, 3, 1, 6, 5, 7, 6, 5, 8, 7, 6, 5, 4, 2, 3, 1, 4],\\
[1, 4, 2, 3, 1, 4, 3, 5, 4, 3, 6, 7, 8, 7, 6, 5, 4, 2, 3, 1, 4],\,
[1, 4, 2, 3, 1, 4, 3, 5, 4, 6, 5, 7, 6, 5, 4, 3, 8, 7, 6, 5, 4, 2, 3, 1, 4],\\
[1, 4, 2, 3, 1, 4, 3, 5, 6, 7, 8, 7, 6, 5, 4, 3, 1],\,
[1, 4, 2, 3, 4, 5, 4, 3, 1, 6, 7, 8, 7, 6, 5, 4, 2, 3, 1, 4, 3, 5, 4],\\
[1, 4, 2, 3, 4, 5, 6, 5, 7, 6, 5, 4, 2, 3, 4],\,
[1, 4, 3, 1, 5, 4, 2, 3, 1, 4, 3, 5, 4, 6, 7, 8, 7, 6, 5, 4, 2, 3, 4],\\
[1, 4, 5, 4, 2, 3, 4, 5, 6, 5, 4, 2, 3, 1, 4, 5, 7, 8, 7],\,
[1, 4, 5, 4, 2, 3, 4, 5, 6, 5, 4, 2, 3, 1, 4, 5, 7],\\
[1, 4, 5, 4, 2, 3, 4, 5, 6, 5, 4, 2, 3, 4, 5],\,
[1, 4, 5, 4, 2, 3, 4, 5, 6, 7, 6, 5, 4, 2, 3, 1, 4, 5, 8],\\
[1, 5, 4, 2, 3, 1, 4, 3, 5, 4, 6, 5, 4, 3, 1],\,
[1, 5, 4, 2, 3, 1, 4, 3, 5, 4, 6, 7, 8, 7, 6, 5, 4, 3, 1],\,
[1, 5, 4, 2, 3, 1, 4, 5, 6, 5, 4, 2, 3, 1, 4, 5, 7, 8, 7],\\
[1, 5, 4, 2, 3, 4, 5, 6, 5, 4, 2, 3, 4, 5, 8],\,
[1, 5, 4, 2, 3, 4, 5],\,
[1, 6, 5, 4, 2, 3, 4, 5, 6],\\
[2, 3, 1, 4, 2, 3, 1, 4, 5, 6, 5, 7, 6, 5, 8, 7, 6, 5, 4, 2, 3, 1, 4, 3, 5, 4, 2, 6, 7],\\
[2, 3, 1, 4, 2, 3, 4, 5, 4, 6, 5, 4, 7, 6, 5, 4, 3, 8, 7, 6, 5, 4, 2, 3, 1, 4, 3, 5, 6],\\
[2, 3, 1, 4, 2, 5, 4, 2, 6, 5, 4, 2, 3, 1, 4, 5, 6, 7, 8, 7, 6, 5, 4, 3, 1],\,
[2, 3, 1, 4, 2, 5, 6, 5, 4, 2, 3, 1, 4, 5, 6, 7, 8, 7, 6, 5, 4, 3, 1],\\
[2, 3, 1, 4, 2, 5, 6, 7, 6, 5, 4, 2, 3, 1, 4, 5, 6, 7, 8, 7, 6, 5, 4, 3, 1],\\
[2, 3, 1, 4, 3, 1, 7, 6, 5, 4, 2, 3, 1, 4, 3, 5, 4, 2, 6, 5, 7, 6, 8, 7, 6, 5, 4, 2, 3, 1, 4],\\
[2, 3, 1, 4, 5, 4, 6, 5, 4, 2, 3, 1, 4, 5, 6, 7, 6, 5, 4, 3, 1],\,
[2, 3, 1, 4, 5, 4, 6, 5, 4, 2, 3, 1, 4, 5, 6, 7, 8, 7, 6, 5, 4, 3, 1],\\
[2, 3, 4, 2, 3, 1, 5, 6, 7, 6, 5, 4, 2, 3, 1, 4, 3, 5, 6, 7, 8, 7, 6, 5, 4, 2, 3, 1, 4],\,
[2, 3, 4, 2, 3, 5, 4, 2, 3, 6, 5, 7, 6, 5, 8, 7, 6, 5, 4, 2, 3],\\
[2, 3, 4, 2, 3, 5, 4, 2, 3, 6, 5, 7, 8, 7, 6, 5, 4, 2, 3],\,
[2, 3, 4, 2, 3, 5, 4, 2, 6, 7, 8, 7, 6, 5, 4, 2, 3],\\
[2, 3, 4, 2, 3, 5, 4, 6, 5, 4, 2, 3, 7, 6, 5, 4, 2, 3, 8],\,
[2, 3, 4, 2, 3, 5, 4, 6, 5, 4, 2, 3, 7, 6, 8, 7, 6, 5, 4, 2, 3],\\
[2, 3, 4, 2, 3, 5, 4, 6, 5, 4, 2, 7, 6, 5, 4, 2, 3],\,
[2, 3, 4, 2, 3, 5, 4, 6, 5, 4, 2, 7, 8, 7, 6, 5, 4, 2, 3],\,
[2, 3, 4, 2, 3, 5, 4, 6, 5, 4, 3, 7, 6, 5, 4, 2, 3],\\
[2, 3, 4, 2, 5, 4, 2, 6, 5, 4, 2, 3, 4],\,
[2, 3, 4, 2, 5, 4, 2, 6, 5, 4, 2, 3, 7, 8, 7],\,
[2, 3, 4, 2, 5, 4, 2, 6, 5, 7, 8, 7, 6, 5, 4, 2, 3],\\
[2, 3, 4, 2, 5, 4, 2, 6, 7, 8, 7, 6, 5, 4, 2, 3, 4],\,
[2, 3, 4, 2, 5, 4, 6, 5, 4, 2, 3],\,
[2, 3, 4, 2, 5, 4, 6, 5, 4, 2, 7, 6, 5, 4, 2, 3, 8],\\
[2, 3, 4, 2, 5, 4, 6, 5, 4, 7, 6, 5, 4, 2, 3],\,
[2, 3, 4, 2, 5, 6, 5, 4, 2, 3, 7],\,
[2, 3, 4, 3, 5, 4, 6, 5, 4, 3, 7, 6, 5, 4, 2, 3, 4],\\
[2, 3, 4, 3, 5, 4, 6, 5, 4, 7, 6, 5, 4, 2, 3],\,
[2, 3, 4, 3, 5, 6, 7, 6, 5, 4, 2, 3, 8],\,
[2, 3, 4, 5, 4, 6, 5, 4, 2, 3, 4, 5, 8],\\
[2, 3, 4, 5, 4, 6, 5, 4, 2],\,
[2, 3, 4, 5, 6, 5, 4, 2, 3, 4, 8],\,
[2, 3, 4, 5, 6, 5, 7, 6, 8, 7, 6, 5, 4, 2, 3],\\
[2, 3, 4, 5, 6, 7, 6, 8, 7, 6, 5, 4, 2, 3, 4],\,
[2, 4, 2, 3, 1, 4, 7, 6, 5, 4, 2, 3, 4, 5, 6, 7, 8, 7, 6, 5, 4, 2, 3, 1, 4, 3, 5, 6, 7],\\
[2, 4, 2, 3, 1, 6, 7, 6, 5, 4, 2, 3, 4, 5, 6, 7, 8, 7, 6, 5, 4, 2, 3, 1, 4, 3, 5, 6, 7],\,
[2, 4, 2, 3, 1, 7, 6, 5, 4, 2, 3, 1, 4, 3, 5, 6, 7],\\
[2, 4, 2, 3, 1, 8, 7, 6, 5, 4, 2, 3, 1, 4, 3, 5, 6, 7, 8],\,
[2, 4, 2, 3, 4, 5, 4, 3, 6, 5, 4, 2, 3, 4, 7, 8, 7],\,
[2, 4, 2, 3, 4, 5, 4, 3, 6, 5, 4, 2, 3, 4, 7],\\
[2, 4, 2, 3, 4, 5, 4, 3, 6, 5, 7, 6, 5, 4, 2, 3, 4],\,
[2, 4, 2, 3, 4, 5, 4, 3, 6, 7, 6, 5, 4, 2, 3, 4, 8],\,
[2, 4, 2, 3, 4, 5, 6, 5, 4, 3, 8],\\
[2, 4, 2, 5, 4, 6, 5, 4, 2],\,
[2, 4, 2, 5, 6, 5, 4, 2, 7],\,
[2, 4, 2, 5, 8],\,
[2, 4, 5, 4, 2, 3, 4, 5, 6, 5, 4, 3, 8],\\
[2, 4, 5, 4, 2, 7, 8],\,
[2, 4, 5, 6, 5, 4, 2, 3, 1, 7, 6, 5, 4, 2, 3, 1, 4, 3, 5, 6, 7],\,
[3, 1, 4, 5, 4, 2, 3, 4, 5, 6, 7, 6, 8, 7, 6, 5, 4, 2, 3, 1, 4, 3, 5, 6, 7],\\
[3, 1, 4, 5, 4, 2, 3, 4, 5, 6, 7, 8, 7, 6, 5, 4, 2, 3, 1, 4, 3, 5, 6],\,
[3, 1, 4, 5, 6, 5, 4, 2, 3, 4, 5, 6, 7, 8, 7, 6, 5, 4, 2, 3, 1, 4, 3, 5, 6],\\
[3, 1, 4, 6, 5, 4, 2, 3, 4, 5, 6, 7, 8, 7, 6, 5, 4, 2, 3, 1, 4, 3, 5, 6, 7],\,
[3, 1, 5, 4, 2, 3, 4, 5, 6, 7, 6, 8, 7, 6, 5, 4, 2, 3, 1, 4, 3, 5, 4],\\
[3, 1, 5, 4, 2, 3, 4, 5, 6, 7, 6, 8, 7, 6, 5, 4, 2, 3, 1, 4, 5],\,
[3, 1, 5, 6, 5, 4, 2, 3, 1, 4, 3, 5, 4, 6, 5],\\
[3, 1, 5, 6, 5, 4, 2, 3, 4, 5, 6, 7, 6, 5, 4, 2, 3, 1, 4, 3, 5, 6, 8],\,
[3, 1, 5, 6, 5, 4, 2, 3, 4, 5, 6, 7, 6, 5, 4, 2, 3, 1, 4, 5, 6],\\
[3, 1, 6, 5, 4, 2, 3, 1, 4, 5, 6],\,
[3, 1, 7, 6, 5, 4, 2, 3, 1, 4, 5, 6, 7],\\
[3, 4, 2, 3, 4, 5, 4, 2, 3, 1, 4, 8, 7, 6, 5, 4, 2, 3, 1, 4, 3, 5, 6, 7, 8],\,
[3, 4, 2, 3, 4, 5, 4, 2, 6, 5, 4, 2, 3, 4, 7, 8, 7],\\
[3, 4, 2, 3, 4, 5, 4, 2, 7],\,
[3, 4, 2, 3, 4, 5, 6, 5, 7, 6, 5, 4, 2],\,
[3, 4, 3, 1, 5, 4, 2, 3, 4, 5, 6, 7, 6, 8, 7, 6, 5, 4, 2, 3, 1, 4, 5],\\
[3, 4, 3, 5, 4, 2, 3, 4, 5, 6, 5, 4, 2],\,
[3, 4, 3, 6, 7],\,
[3, 4, 5, 4, 3, 1, 6, 5, 4, 2, 3, 1, 4, 5, 6],\,
[3, 4, 7],\\
[4, 2, 3, 1, 4, 5, 4, 2, 3, 1, 4, 3, 5, 6, 7, 6, 8, 7, 6, 5, 4, 3, 1],\,
[4, 2, 3, 4, 5, 6, 5, 4, 2, 3, 4, 5, 8],\,
[4, 5, 8],\,
[4, 7, 8],\\
[1, 3, 1, 4, 5, 6, 5, 7, 6, 5, 4, 2, 3, 1, 4, 5, 6, 7, 8, 7, 6, 5, 4, 2, 3]
\big\},
\end{multline*}}%
one obtains 90 solutions for $w_i$ in 
\eqref{eq:E8''''D} with $\ell_T(w_i)=3$ and $w_i$ of type $A_3$:
{\allowdisplaybreaks\tiny
\begin{multline*}
w_i\in\big\{[1, 2, 3, 1, 4, 3, 1, 5, 6, 7, 6, 5, 8, 7, 6, 5, 4, 2, 3, 1, 4, 3, 5, 4, 2, 6, 7],\\
[1, 2, 3, 1, 4, 3, 5, 4, 6, 5, 7, 6, 5, 4, 3, 1, 8, 7, 6, 5, 4, 2, 3, 1, 4],\,
[1, 2, 3, 1, 4, 5, 4, 6, 7, 8, 7, 6, 5, 4, 2, 3, 1, 4, 3, 5, 4],\\
[1, 2, 3, 1, 4, 5, 4, 6, 7, 8, 7, 6, 5, 4, 2, 3, 1, 4, 3, 5, 6],\,
[1, 2, 3, 1, 4, 5, 4, 6, 7, 8, 7, 6, 5, 4, 2, 3, 1, 4, 5],\\
[1, 2, 3, 1, 4, 5, 6, 5, 4, 7, 6, 5, 4, 2, 3, 1, 4, 3, 5, 6, 8],\,
[1, 2, 3, 1, 4, 5, 6, 5, 7, 8, 7, 6, 5, 4, 2, 3, 1, 4, 3, 5, 6],\\
[1, 2, 3, 1, 4, 5, 6, 7, 8, 7, 6, 5, 4, 2, 3, 1, 4, 3, 5],\,
[1, 2, 3, 1, 4, 5, 6, 7, 8, 7, 6, 5, 4, 2, 3, 1, 4],\\
[1, 2, 3, 4, 2, 3, 5, 4, 6, 5, 7, 6, 5, 4, 2, 3, 8, 7, 6, 5, 4, 2, 3],\,
[1, 2, 3, 4, 2, 5, 4, 2, 6, 7, 8, 7, 6, 5, 4, 2, 3],\\
[1, 2, 3, 4, 2, 5, 4, 6, 5, 4, 2, 7, 6, 5, 4, 2, 3],\,
[1, 2, 3, 4, 2, 5, 4, 6, 5, 4, 2, 7, 8, 7, 6, 5, 4, 2, 3],\,
[1, 2, 3, 4, 5, 4, 6, 7, 6, 5, 4, 2, 3, 1, 4, 5, 8],\\
[1, 2, 3, 4, 5, 6, 7, 6, 5, 4, 2, 3, 1, 4, 5],\,
[1, 2, 3, 4, 5, 6, 7, 6, 5, 4, 2, 3, 1, 4, 8],\,
[1, 2, 3, 4, 5, 6, 7, 6, 5, 4, 2, 3, 4],\\
[1, 2, 3, 4, 5, 6, 7, 6, 5, 4, 2, 3, 8],\,
[1, 2, 3, 4, 5, 6, 7, 8, 7, 6, 5, 4, 2, 3, 1, 4, 5],\\
[1, 2, 3, 4, 5, 6, 7, 8, 7, 6, 5, 4, 2, 3, 4],\,
[1, 2, 4, 2, 3, 1, 4, 3, 5, 4, 3, 1, 6, 7, 8, 7, 6, 5, 4, 3, 1],\\
[1, 2, 4, 2, 3, 1, 4, 5, 6, 7, 6, 5, 4, 2, 3, 1, 4, 5, 6, 7, 8, 7, 6, 5, 4, 2, 3],\,
[1, 2, 4, 2, 3, 1, 7, 6, 5, 4, 2, 3, 1, 4, 3, 5, 4, 2, 6, 7, 8],\\
[1, 2, 4, 2, 3, 4, 5, 4, 6, 5, 4, 7, 6, 5, 4, 3, 1],\,
[1, 2, 4, 2, 3, 4, 5, 4, 6, 7, 6, 5, 4, 3, 1],\,
[1, 2, 4, 2, 3, 4, 5, 4, 6, 7, 8, 7, 6, 5, 4, 3, 1],\\
[1, 2, 4, 2, 3, 4, 5, 6, 7, 6, 5, 4, 3, 1, 8],\,
[1, 2, 4, 5, 4, 2, 3, 4, 5, 6, 5, 4, 3, 1, 7, 8, 7],\,
[1, 2, 4, 5, 4, 2, 3, 4, 5, 6, 5, 4, 3, 1, 7],\\
[1, 2, 4, 5, 4, 2, 3, 4, 5, 6, 5, 4, 3],\,
[1, 2, 4, 5, 4, 2, 3, 4, 5, 6, 5, 7, 6, 5, 4, 3, 1],\,
[1, 2, 4, 5, 4, 2, 3, 4, 5, 6, 7, 6, 5, 4, 3, 1, 8],\\
[1, 3, 1, 4, 5, 4, 2, 3, 1, 4, 5, 6, 5, 7, 6, 5, 8, 7, 6, 5, 4, 2, 3],\,
[1, 3, 1, 4, 5, 4, 2, 3, 1, 4, 5, 6, 5, 7, 8, 7, 6, 5, 4, 2, 3],\\
[1, 3, 1, 4, 5, 4, 2, 3, 1, 4, 5, 6, 7, 8, 7, 6, 5, 4, 2, 3, 4],\,
[1, 3, 1, 4, 5, 6, 5, 4, 2, 3, 1, 4, 5, 6, 7, 6, 5, 4, 2, 3, 8],\\
[1, 3, 1, 4, 5, 6, 5, 4, 2, 3, 1, 4, 5, 6, 7, 6, 5, 4, 2],\,
[1, 3, 1, 4, 5, 6, 5, 4, 2, 3, 1, 4, 5, 6, 7, 6, 8, 7, 6, 5, 4, 2, 3],\\
[1, 3, 4, 2, 3, 4, 5, 4, 2, 6, 7, 8, 7, 6, 5, 4, 2, 3, 4],\,
[1, 3, 4, 5, 4, 2, 3, 1, 4, 5, 6, 5, 4, 2, 7],\\
[1, 3, 4, 5, 4, 2, 3, 1, 4, 5, 6, 5, 7, 6, 5, 4, 2],\,
[1, 3, 4, 5, 4, 2, 3, 4, 5, 6, 5, 4, 2],\\
[1, 3, 4, 5, 4, 3, 1, 6, 7],\,
[1, 3, 4, 5, 4, 3, 6],\,
[1, 3, 4, 5, 6, 5, 4, 3, 1, 7, 8],\,
[1, 4, 2, 3, 1, 4, 5, 6, 7, 8, 7, 6, 5, 4, 2, 3, 1, 4, 5],\\
[1, 4, 2, 3, 4, 5, 6, 7, 6, 5, 4, 2, 3, 1, 4, 5, 6],\,
[1, 4, 2, 3, 4, 5, 6, 7, 6, 5, 4, 2, 3, 1, 4, 5, 8],\,
[1, 4, 2, 3, 4, 5, 6, 7, 6, 5, 4, 2, 3, 4, 8],\\
[1, 5, 4, 2, 3, 1, 4, 5, 6, 7, 6],\,
[1, 5, 4, 2, 3, 1, 4, 5, 6],\,
[1, 5, 4, 2, 3, 4, 5, 6, 5, 4, 2, 3, 1, 4, 5, 7, 8],\,
[1, 6, 5, 4, 2, 3, 1, 4, 5, 6, 7],\\
[2, 3, 1, 4, 2, 3, 4, 5, 4, 6, 5, 7, 6, 5, 4, 3, 8, 7, 6, 5, 4, 2, 3, 1, 4],\,
[2, 3, 1, 4, 5, 6, 5, 4, 2, 3, 1, 4, 5, 6, 7, 6, 5, 4, 3, 1, 8],\\
[2, 3, 1, 4, 5, 6, 5, 4, 2, 3, 1, 4, 5, 6, 7, 6, 8, 7, 6, 5, 4, 3, 1],\,
[2, 3, 1, 4, 5, 6, 5, 4, 2, 3, 4, 5, 6, 7, 6, 5, 4, 3, 1],\,
[2, 3, 4, 2, 5, 4, 2, 6, 5, 4, 2],\\
[2, 3, 4, 2, 5, 6, 5, 4, 2],\,
[2, 3, 4, 2, 5, 6, 7, 6, 5, 4, 2],\,
[2, 3, 4, 3, 1, 5, 6, 7, 6, 5, 4, 2, 3, 1, 4, 5, 6, 7, 8, 7, 6, 5, 4, 3, 1],\\
[2, 3, 4, 5, 6, 5, 4, 2, 3, 4, 5],\,
[2, 3, 4, 5, 6, 5, 4, 2, 3, 4, 7, 8, 7],\,
[2, 3, 4, 5, 6, 5, 4, 2, 3, 4, 7],\\
[2, 3, 4, 5, 6, 5, 4, 2, 3, 7, 8],\,
[2, 3, 4, 5, 6, 5, 4, 2, 7],\,
[2, 3, 4, 5, 6, 7, 6, 5, 4, 2, 3, 4, 8],\,
[2, 4, 2, 3, 4, 5, 4, 3, 6],\\
[2, 4, 2, 3, 4, 5, 4, 6, 5, 4, 3],\,
[2, 4, 2, 5, 6],\,
[2, 4, 5, 4, 2, 6, 7],\,
[3, 1, 4, 2, 3, 4, 5, 6, 7, 8, 7, 6, 5, 4, 2, 3, 1, 4, 3, 5, 6],\\
[3, 1, 4, 2, 3, 4, 5, 6, 7, 8, 7, 6, 5, 4, 2, 3, 1, 4, 5],\,
[3, 1, 4, 5, 4, 2, 3, 4, 5, 6, 7, 8, 7, 6, 5, 4, 2, 3, 1, 4, 5],\\
[3, 1, 5, 4, 2, 3, 4, 5, 6, 7, 8, 7, 6, 5, 4, 2, 3, 1, 4, 3, 5, 6, 7],\,
[3, 1, 6, 5, 4, 2, 3, 1, 4, 3, 5, 6, 7, 8, 7],\\
[3, 1, 6, 5, 4, 2, 3, 1, 4, 3, 5, 6, 7],\,
[3, 1, 7, 6, 5, 4, 2, 3, 1, 4, 3, 5, 6, 7, 8],\\
[3, 4, 2, 3, 1, 4, 5, 4, 2, 3, 4, 5, 6, 7, 8, 7, 6, 5, 4, 3, 1],\,
[3, 4, 2, 3, 4, 5, 4, 2, 6, 7, 6],\,
[3, 4, 2, 3, 4, 5, 4, 2, 6],\,
[3, 4, 2, 3, 4, 5, 4, 6, 5, 4, 2],\\
[3, 4, 2, 3, 4, 5, 6, 5, 4, 2, 7],\,
[3, 4, 3, 5, 6],\,
[3, 4, 5],\,
[4, 2, 3, 4, 5, 4, 2, 3, 4, 6, 7],\\
[4, 2, 3, 4, 5, 6, 5, 4, 2, 3, 4, 7, 8],\,
[4, 2, 3, 4, 5, 6, 5],\,
[4, 2, 3, 4, 5],\,
[5, 4, 2, 3, 4, 5, 6]
\big\},
\end{multline*}}%
one obtains 15 solutions for $w_i$ in 
\eqref{eq:E8''''D} with $\ell_T(w_i)=4$ and $w_i$ of type 
$A_1^2*A_2$:
{\allowdisplaybreaks\small
\begin{multline} \label{eq:E8sol6}
w_i\in\big\{ [1, 2, 3, 1, 4, 5, 4, 6, 5, 4, 2, 3, 4, 5, 7, 6, 5, 4, 2, 3, 4, 5, 8, 7, 6, 5, 4, 2, 3, 1],\\
[1, 2, 3, 4, 3, 1, 5, 6, 5, 7, 6, 5, 4, 2, 3, 1, 4, 3, 5, 4, 6, 5, 8, 7, 6, 5, 4, 2, 3, 1],\\
[1, 2, 3, 4, 5, 4, 6, 5, 4, 2, 3, 4, 5, 7, 6, 5, 4, 2, 3, 1, 4, 5],\\
[1, 2, 3, 4, 5, 4, 6, 5, 4, 3, 1, 7, 6, 5, 4, 2, 3, 1, 4, 3, 5, 6],\\
[1, 3, 1, 4, 2, 3, 4, 5, 6, 5, 7, 6, 5, 4, 2, 3, 1, 8, 7, 6, 5, 4, 2, 3, 1, 4, 3, 5, 4, 2, 6, 7],\\
[1, 3, 1, 4, 2, 3, 5, 4, 2, 3, 1, 7, 8, 7, 6, 5, 4, 2, 3, 1, 4, 3, 5, 4, 2, 6, 5, 4, 3, 1, 7, 6, 8, 7],\\
[1, 4, 3, 1, 5, 4, 2, 3, 1, 4, 3, 5, 4, 6, 7, 6, 8, 7, 6, 5, 4, 2, 3, 4],\\
[1, 4, 5, 4, 2, 3, 1, 4, 5, 6, 5, 4, 2, 3, 1, 4, 5, 7, 8, 7],\\
[1, 4, 5, 4, 2, 3, 4, 5, 6, 5, 4, 2, 3, 4, 5, 8],\\
[2, 4, 2, 3, 1, 4, 5, 4, 3, 6, 7, 6, 8, 7, 6, 5, 4, 2, 3, 1, 4, 3, 5, 4, 2, 6, 7, 8],\\
[2, 4, 2, 3, 1, 4, 6, 7, 6, 5, 4, 2, 3, 4, 5, 6, 7, 8, 7, 6, 5, 4, 2, 3, 1, 4, 3, 5, 6, 7],\\
[3, 1, 4, 2, 3, 4, 5, 6, 5, 7, 6, 5, 4, 2, 3, 4, 5, 6, 8, 7, 6, 5, 4, 2, 3, 1, 4, 3, 5, 6],\\
[3, 1, 5, 6, 5, 4, 2, 3, 1, 4, 3, 5, 4, 6, 5, 8],\\
[3, 4, 2, 3, 4, 5, 4, 2, 3, 1, 4, 7, 8, 7, 6, 5, 4, 2, 3, 1, 4, 3, 5, 6, 7, 8],\\
[4, 2, 3, 4, 5, 4, 2, 3, 4, 6, 5, 4, 2, 3, 4, 7, 8, 7]
\big\},
\end{multline}}%
one obtains 45 solutions for $w_i$ in 
\eqref{eq:E8''''D} with $\ell_T(w_i)=4$ and $w_i$ of type $A_1*A_3$:
{\allowdisplaybreaks\tiny
\begin{multline} \label{eq:E8sol7}
w_i\in\big\{
[1, 2, 3, 1, 4, 2, 5, 4, 2, 6, 5, 4, 2, 7, 8, 7, 6, 5, 4, 2, 3, 1, 4, 5],\\
[1, 2, 3, 1, 4, 2, 5, 4, 6, 5, 4, 2, 3, 4, 5, 7, 6, 5, 4, 3, 8, 7, 6, 5, 4, 2, 3, 1],\\
[1, 2, 3, 1, 4, 3, 1, 5, 6, 5, 7, 6, 5, 8, 7, 6, 5, 4, 2, 3, 1, 4, 3, 5, 4, 2, 6, 7],\\
[1, 2, 3, 1, 4, 3, 5, 4, 6, 5, 4, 7, 6, 5, 4, 3, 8, 7, 6, 5, 4, 2, 3, 1, 4, 3, 5, 6],\\
[1, 2, 3, 1, 4, 5, 4, 6, 5, 4, 7, 6, 5, 4, 2, 3, 1, 4, 3, 5, 6, 8],\,
[1, 2, 3, 4, 2, 3, 5, 4, 6, 5, 4, 7, 6, 5, 4, 2, 3, 8, 7, 6, 5, 4, 2, 3],\\
[1, 2, 3, 4, 2, 3, 5, 4, 6, 5, 7, 6, 5, 4, 2, 3, 4, 5, 8, 7, 6, 5, 4, 2, 3, 1],\,
[1, 2, 3, 4, 2, 5, 4, 2, 3, 4, 6, 5, 4, 7, 6, 5, 4, 3, 8, 7, 6, 5, 4, 2, 3, 1],\\
[1, 2, 3, 4, 2, 5, 4, 2, 6, 5, 4, 2, 7, 6, 5, 4, 2, 3, 1, 4, 5, 8],\,
[1, 2, 3, 4, 3, 5, 4, 6, 5, 4, 3, 7, 6, 5, 4, 2, 3, 1, 4, 5],\\
[1, 2, 3, 4, 3, 5, 4, 6, 5, 4, 3, 7, 6, 5, 4, 2, 3, 4],\,
[1, 2, 3, 4, 5, 6, 5, 4, 2, 3, 4, 5, 7, 6, 5, 4, 2, 3],\\
[1, 2, 4, 2, 3, 1, 4, 5, 6, 5, 7, 6, 5, 4, 2, 3, 1, 4, 5, 6, 7, 8, 7, 6, 5, 4, 2, 3],\,
[1, 2, 4, 2, 3, 4, 5, 4, 3, 6, 7, 6, 5, 4, 2, 3, 4, 8],\\
[1, 2, 4, 5, 4, 2, 3, 4, 5, 6, 5, 4, 3, 8],\,
[1, 3, 1, 4, 2, 3, 4, 5, 4, 6, 5, 7, 6, 5, 4, 2, 8, 7, 6, 5, 4, 2, 3, 1, 4, 3, 5, 6],\\
[1, 3, 1, 4, 2, 3, 4, 5, 4, 6, 5, 7, 6, 5, 4, 2, 8, 7, 6, 5, 4, 2, 3, 1, 4, 5],\,
[1, 3, 1, 4, 5, 4, 2, 3, 1, 4, 5, 6, 7, 6, 8, 7, 6, 5, 4, 2, 3, 4],\\
[1, 3, 4, 2, 3, 5, 4, 2, 3, 1, 4, 6, 7, 6, 8, 7, 6, 5, 4, 2, 3, 1, 4, 3, 5, 4, 2, 6, 7, 8],\,
[1, 3, 4, 3, 5, 4, 2, 3, 1, 4, 5, 6, 5, 4, 2, 7],\\
[1, 3, 4, 3, 5, 4, 2, 3, 4, 5, 6, 5, 4, 2],\,
[1, 4, 2, 3, 1, 4, 3, 5, 4, 3, 6, 5, 7, 6, 5, 8, 7, 6, 5, 4, 2, 3, 1, 4],\\
[1, 4, 2, 3, 4, 5, 4, 2, 3, 4, 6, 5, 7, 6, 5, 8, 7, 6, 5, 4, 2, 3, 1, 4],\,
[1, 4, 2, 3, 4, 5, 4, 2, 3, 4, 6, 7, 8, 7, 6, 5, 4, 2, 3, 4],\\
[1, 4, 5, 4, 2, 3, 4, 5, 6, 5, 4, 2, 3, 1, 4, 5, 7, 8],\,
[1, 5, 4, 2, 3, 1, 4, 3, 5, 4, 6, 5, 4, 3, 1, 7, 8, 7],\\
[2, 3, 1, 4, 2, 5, 6, 5, 4, 2, 3, 1, 4, 5, 6, 7, 6, 8, 7, 6, 5, 4, 3, 1],\,
[2, 3, 1, 4, 5, 4, 6, 5, 4, 2, 3, 1, 4, 5, 6, 7, 6, 5, 4, 3, 1, 8],\\
[2, 3, 1, 4, 5, 4, 6, 5, 4, 2, 3, 4, 5, 6, 7, 6, 5, 4, 3, 1],\,
[2, 3, 4, 2, 3, 1, 5, 6, 7, 6, 5, 4, 2, 3, 1, 4, 5, 6, 7, 8, 7, 6, 5, 4, 3, 1],\\
[2, 3, 4, 2, 3, 5, 4, 2, 3, 6, 5, 7, 6, 8, 7, 6, 5, 4, 2, 3],\,
[2, 3, 4, 2, 3, 5, 4, 2, 6, 5, 7, 8, 7, 6, 5, 4, 2, 3],\\
[2, 3, 4, 2, 3, 5, 4, 6, 5, 4, 7, 6, 5, 4, 2, 3],\,
[2, 3, 4, 2, 5, 4, 2, 6, 5, 4, 2, 3, 4, 7, 8, 7],\\
[2, 3, 4, 5, 6, 5, 4, 2, 3, 4, 5, 8],\,
[2, 4, 2, 3, 1, 4, 5, 4, 8, 7, 6, 5, 4, 2, 3, 1, 4, 3, 5, 6, 7, 8],\\
[2, 4, 2, 3, 1, 5, 6, 5, 7, 6, 5, 4, 2, 3, 1, 4, 3, 5, 6, 7],\,
[2, 4, 2, 3, 4, 5, 4, 3, 6, 5, 4, 2, 3, 4, 7, 8],\,
[2, 4, 2, 3, 4, 5, 4, 6, 5, 4, 3, 8],\\
[3, 1, 4, 2, 3, 4, 5, 6, 7, 6, 5, 4, 2, 3, 4, 5, 6, 8, 7, 6, 5, 4, 2, 3, 1, 4],\,
[3, 1, 4, 5, 4, 2, 3, 4, 5, 6, 7, 6, 8, 7, 6, 5, 4, 2, 3, 1, 4, 5],\\
[3, 1, 4, 5, 4, 2, 3, 4, 5, 6, 7, 8, 7, 6, 5, 4, 2, 3, 1, 4, 3, 5, 6, 7],\,
[3, 1, 4, 5, 4, 6, 5, 4, 2, 3, 1, 4, 3, 5, 6, 7, 8, 7] ,\\
[3, 1, 4, 5, 4, 6, 5, 4, 2, 3, 1, 4, 5, 6],\,
[3, 4, 2, 3, 1, 4, 5, 4, 2, 3, 4, 5, 6, 7, 6, 8, 7, 6, 5, 4, 3, 1]
\big\},
\end{multline}}%
one obtains 5 solutions for $w_i$ in 
\eqref{eq:E8''''D} with $\ell_T(w_i)=4$ and $w_i$ of type $A_2^2$:
{\allowdisplaybreaks\small
\begin{multline} \label{eq:E8sol8}
w_i\in\big\{
[1, 2, 3, 4, 2, 5, 4, 2, 3, 4, 6, 5, 7, 6, 5, 4, 3, 8, 7, 6, 5, 4, 2, 3, 1, 4],\\
[1, 2, 4, 2, 3, 4, 5, 4, 3, 6, 5, 7, 6, 5, 4, 2, 3, 4],\\
[1, 3, 1, 4, 5, 4, 2, 3, 4, 5, 6, 5, 4, 2, 7, 6, 8, 7, 6, 5, 4, 2, 3, 1, 4, 5],\\
[2, 3, 1, 4, 2, 5, 4, 6, 5, 4, 2, 3, 1, 4, 5, 6, 7, 8, 7, 6, 5, 4, 3, 1],\\
[2, 3, 4, 2, 3, 5, 4, 6, 5, 4, 2, 7, 6, 5, 4, 2, 3, 8]
\big\},
\end{multline}}%
one obtains 18 solutions for $w_i$ in 
\eqref{eq:E8''''D} with $\ell_T(w_i)=4$ and $w_i$ of type $A_4$:
{\allowdisplaybreaks\tiny
\begin{multline} \label{eq:E8sol9}
w_i\in\big\{
[1, 2, 3, 1, 4, 3, 5, 4, 6, 5, 7, 6, 5, 4, 3, 8, 7, 6, 5, 4, 2, 3, 1, 4],\,
[1, 2, 3, 1, 4, 5, 6, 7, 8, 7, 6, 5, 4, 2, 3, 1, 4, 3, 5, 6],\\
[1, 2, 3, 4, 2, 5, 4, 2, 6, 5, 7, 8, 7, 6, 5, 4, 2, 3],\,
[1, 2, 3, 4, 2, 5, 4, 2, 6, 7, 8, 7, 6, 5, 4, 2, 3, 4],\\
[1, 2, 3, 4, 2, 5, 4, 6, 5, 4, 2, 7, 6, 5, 4, 2, 3, 8],\,
[1, 2, 3, 4, 5, 6, 7, 6, 5, 4, 2, 3, 1, 4, 5, 8],\,
[1, 2, 4, 2, 3, 4, 5, 4, 6, 5, 7, 6, 5, 4, 3, 1],\\
[1, 2, 4, 2, 3, 4, 5, 4, 6, 7, 6, 5, 4, 3, 1, 8],\,
[1, 2, 4, 5, 4, 2, 3, 4, 5, 6, 5, 4, 3, 1, 7, 8],\\
[1, 3, 1, 4, 5, 4, 2, 3, 1, 4, 5, 6, 5, 7, 6, 8, 7, 6, 5, 4, 2, 3],\,
[1, 4, 2, 3, 1, 4, 3, 5, 4, 6, 7, 8, 7, 6, 5, 4, 3, 1],\\
[1, 5, 4, 2, 3, 4, 5, 6],\,
[2, 3, 4, 2, 5, 4, 6, 5, 4, 2],\,
[2, 3, 4, 2, 5, 6, 5, 4, 2, 7],\,
[2, 3, 4, 5, 6, 5, 4, 2, 3, 4, 7, 8],\\
[2, 4, 2, 3, 1, 7, 6, 5, 4, 2, 3, 1, 4, 3, 5, 6, 7, 8],\,
[3, 1, 6, 5, 4, 2, 3, 1, 4, 5, 6, 7],\,
[3, 4, 2, 3, 4, 5, 4, 2, 6, 7]
\big\},
\end{multline}}%
and one obtains 5 solutions for $w_i$ in 
\eqref{eq:E8''''D} with $\ell_T(w_i)=4$ and $w_i$ of type $D_4$:
{\small
\begin{multline} \label{eq:E8sol10}
w_i\in\big\{
[1, 2, 3, 1, 4, 5, 6, 7, 8, 7, 6, 5, 4, 2, 3, 1, 4, 5],\,
[1, 2, 3, 4, 5, 6, 7, 6, 5, 4, 2, 3, 4, 8],\\
[1, 5, 4, 2, 3, 1, 4, 5, 6, 7],\,
[3, 1, 6, 5, 4, 2, 3, 1, 4, 3, 5, 6, 7, 8],\,
[4, 2, 3, 4, 5, 6]
\big\},
\end{multline}}%
where $\{s_1,s_2,s_3,s_4,s_5,s_6,s_7,s_8\}$ is a simple system of generators of $E_8$,
corresponding to the Dynkin diagram displayed in Figure~\ref{fig:E8},
and each of them gives rise to $m/2$ elements of
$\Fix_{NC^m(E_8)}(\phi^{p})$ since $i$ ranges from $1$ to $m/2$.
There are no solutions  for $w_i$ in \eqref{eq:E8''''D} with 
$w_i$ of type $A_1^4$.

Letting the computer find all solutions in cases (iii)--(v) would
take years. However, the number of these solutions can be computed
from our knowledge of the solutions in Case~(ii) according to type,
if this information is combined with the decomposition numbers in
the sense of \cite{KratCB,KratCF,KrMuAB} (see the end of
Section~\ref{sec:prel}) and some elementary 
(multiset) permutation counting. The decomposition numbers for
$A_2$, $A_3$, $A_4$, and $D_4$ of which we make use can be found in
the appendix of \cite{KratCF}.

To begin with, the number of
solutions of \eqref{eq:E8''''E} with $\ell_1=\ell_2=1$ is equal to
$$
n_{1,1}:=150\cdot 2+100\cdot N_{A_2}(A_1,A_1)=600,
$$
since an element of type $A_1^2$ can be decomposed in two ways 
into a product of two elements of absolute length $1$, while for
an element of type $A_2$ this can be done in $ N_{A_2}(A_1,A_1)=3$
ways. Similarly, the number of
solutions of \eqref{eq:E8''''E} with $\ell_1=2$ and $\ell_2=1$ is equal to
$$
n_{2,1}:=75\cdot 3+165\cdot(1+N_{A_2}(A_1,A_1))
+90\cdot N_{A_3}(A_2,A_1)=1425,
$$
the number of
solutions of \eqref{eq:E8''''E} with $\ell_1=3$ and $\ell_2=1$ is equal to
\begin{multline*}
n_{3,1}:=15\cdot (2+N_{A_2}(A_1,A_1))
+45\cdot(1+N_{A_3}(A_2,A_1))
+5\cdot(2N_{A_2}(A_1,A_1))\\
+18\cdot(N_{A_4}(A_3,A_1)+N_{A_4}(A_1*A_2,A_1))
+5\cdot(N_{D_4}(A_3,A_1)+N_{D_4}(A_1^3,A_1))=660,
\end{multline*}
the number of
solutions of \eqref{eq:E8''''E} with $\ell_1=\ell_2=2$ is equal to
\begin{multline*}
n_{2,2}:=15\cdot (2+2N_{A_2}(A_1,A_1))
+45\cdot(2N_{A_3}(A_2,A_1))
+5\cdot(2+N_{A_2}(A_1,A_1)^2)\\
+18\cdot(N_{A_4}(A_2,A_2)+N_{A_4}(A_1^2,A_1^2)+2N_{A_4}(A_2,A_1^2))\\
+5\cdot(N_{D_4}(A_2,A_2)+2N_{D_4}(A_2,A_1^2))=1195,
\end{multline*}
the number of
solutions of \eqref{eq:E8''''F} with $\ell_1=\ell_2=\ell_3=1$ is 
equal to
$$
n_{1,1,1}:=75\cdot 3!+165\cdot(3N_{A_2}(A_1,A_1))
+90N_{A_3}(A_1,A_1,A_1)=3375,
$$
the number of
solutions of \eqref{eq:E8''''F} with $\ell_1=2$ and 
$\ell_2=\ell_3=1$ is equal to
\begin{multline*}
n_{2,1,1}:=15\cdot (2+N_{A_2}(A_1,A_1)+2\cdot2\cdot N_{A_2}(A_1,A_1))
+45\cdot(2N_{A_3}(A_2,A_1)+N_{A_3}(A_1,A_1,A_1))\\
+5\cdot(2N_{A_2}(A_1,A_1)+2N_{A_2}(A_1,A_1)^2)
+18\cdot(N_{A_4}(A_2,A_1,A_1)+N_{A_4}(A_1^2,A_1,A_1))\\
+5\cdot(N_{D_4}(A_2,A_1,A_1)+N_{D_4}(A_1^2,A_1,A_1))=2850,
\end{multline*}
and the number of
solutions of \eqref{eq:E8''''G} is equal to
\begin{multline*}
n_{1,1,1,1}:=15\cdot (12N_{A_2}(A_1,A_1))
+45\cdot(4N_{A_3}(A_1,A_1,A_1))
+5\cdot(6N_{A_2}(A_1,A_1)^2)\\
+18\cdot N_{A_4}(A_1,A_1,A_1,A_1)
+5\cdot N_{D_4}(A_1,A_1,A_1,A_1)=6750.
\end{multline*}

In total, we obtain 
\begin{multline*}
1+(45+150+100+75+165+90+15+45+5+18+5)\frac m2
+(n_{1,1}+2n_{2,1}+2n_{3,1}+n_{2,2})\binom {m/2}2\\
+(n_{1,1,1}+3n_{2,1,1})\binom {m/2}3
+n_{1,1,1,1}\binom {m/2}4
=\frac {(5m+4)(3m+2)(5m+2)(15m+4)}{64}
\end{multline*}
elements in
$\Fix_{NC^m(E_8)}(\phi^p)$, which agrees with the limit in
\eqref{eq:E8.11}.

\smallskip
Finally, we turn to \eqref{eq:E8.1}. By Remark~\ref{rem:1},
the only choices for $h_2$ and $m_2$ to be considered
are $h_2=1$ and $m_2=7$, $h_2=2$ and $m_2=8$, 
$h_2=2$ and $m_2=7$, $h_2=2$ and $m_2=4$, 
respectively $h_2=m_2=2$. These correspond to the choices
$p=30m/7$, $p=15m/8$, $p=15m/7$, $p=15m/4$, respectively
$15m/2$, out of which only $p=15m/8$
has not yet been discussed and belongs to the current case.
The corresponding action of $\phi^p$ is given by 
\begin{multline*}
\phi^p\big((w_0;w_1,\dots,w_m)\big)\\=
(*;
c^{2}w_{\frac {m}8+1}c^{-2},c^{2}w_{\frac {m}8+2}c^{-2},
\dots,c^{2}w_{m}c^{-2},
cw_{1}c^{-1},\dots,
cw_{\frac {m}8}c^{-1}\big),
\end{multline*}
so that we have to solve 
\begin{equation} \label{eq:8prod} 
t_1(c^{13}t_1c^{-13})(c^{11}t_1c^{-11})(c^{9}t_1c^{-9})
(c^{7}t_1c^{-7})(c^{5}t_1c^{-5})(c^{3}t_1c^{-3})(ct_1c^{-1})=c
\end{equation}
for $t_1$ with $\ell_T(t_1)=1$.
A computation with the help of 
Stembridge's {\sl Maple} package {\tt coxeter}
\cite{StemAZ} finds no solution. 
Hence, 
the left-hand side of \eqref{eq:1} is equal to $1$, as required.

\section{Cyclic sieving II}
\label{sec:siev2}

In this section we present the second cyclic sieving conjecture due to
Bessis and Reiner \cite[Conj.~6.5]{BeReAA}.

Let $\psi:NC^m(W)\to NC^m(W)$ be the map defined by
\begin{equation} \label{eq:psi}
(w_0;w_1,\dots,w_m)\mapsto
\big(cw_mc^{-1};w_0,w_1,\dots,w_{m-1}\big).
\end{equation}
For $m=1$, we have $w_0=cw_1^{-1}$, so that
this action reduces to the inverse of the {\it Kreweras complement\/} 
$K_{\text {id}}^{c}$ as defined by Armstrong
\cite[Def.~2.5.3]{ArmDAA}.

It is easy to see that $\psi^{(m+1)h}$ acts as the identity,
where $h$ is the Coxeter number of $W$ (see \eqref{eq2:Aktion} below).
By slight abuse of notation as before, let $C_2$ be the cyclic group of order
$(m+1)h$ generated by $\psi$. 

Given these definitions, we are now in the position to state
the second cyclic sieving conjecture of Bessis and Reiner.
By the results of \cite{KratCG} and of this paper, it becomes the
following theorem.

\begin{theorem} \label{thm2:1}
For an irreducible well-generated complex reflection group $W$ and any $m\ge1$, 
the triple $(NC^m(W),\Cat^m(W;q),C_2)$, where $\Cat^m(W;q)$ is
the $q$-analogue of the Fu\ss--Catalan number defined in
\eqref{eq:FCZahl}, exhibits the cyclic sieving phenomenon.
\end{theorem}

By definition of the cyclic sieving phenomenon, we have to
prove that 
\begin{equation} \label{eq2:1}
\vert\Fix_{NC^m(W)}(\psi^{p})\vert = 
\Cat^m(W;q)\big\vert_{q=e^{2\pi i p/(m+1)h}}, 
\end{equation}
for all $p$ in the range $0\le p<(m+1)h$.

\section{Auxiliary results II}
\label{sec:aux2}

This section collects several auxiliary results which allow us to
reduce the problem of proving Theorem~\ref{thm2:1}, respectively the
equivalent statement
\eqref{eq2:1}, for the 26 exceptional groups listed in
Section~\ref{sec:prel} to a finite problem. While Lemmas~\ref{lem2:2}
and \ref{lem2:3} cover special choices of the parameters,
Lemmas~\ref{lem2:1} and
\ref{lem2:6} afford an inductive procedure. More precisely, 
if we assume that we have already verified Theorem~\ref{thm2:1} for all 
groups of smaller rank, then Lemmas~\ref{lem2:1} and \ref{lem2:6}, together 
with Lemmas~\ref{lem2:2} and \ref{lem2:7}, 
reduce the verification of Theorem~\ref{thm2:1}
for the group that we are currently considering to a finite problem;
see Remark~\ref{rem2:1}.
The final lemma of this section, Lemma~\ref{lem2:8}, disposes of 
complex reflection groups with a special property satisfied by their degrees.

Let $p=a(m+1)+b$, $0\le b<m+1$. We have
\begin{multline}
\psi^p\big((w_0;w_1,\dots,w_m)\big)\\
=(c^{a+1}w_{m-b+1}c^{-a-1};c^{a+1}w_{m-b+2}c^{-a-1},
\dots,c^{a+1}w_{m}c^{-a-1},\\
c^{a}w_{0}c^{-a},\dots,
c^{a}w_{m-b}c^{-a}\big).
\label{eq2:Aktion}
\end{multline}

\begin{lemma} \label{lem2:1}
It suffices to check \eqref{eq2:1} for $p$ a divisor of $(m+1)h$.
More precisely, let $p$ be a divisor of $(m+1)h$, and let $k$ be another positive integer with 
$\gcd(k,(m+1)h/p)=1$, then we have
\begin{equation} \label{eq2:2}
\Cat^m(W;q)\big\vert_{q=e^{2\pi i p/(m+1)h}}
= 
\Cat^m(W;q)\big\vert_{q=e^{2\pi i kp/(m+1)h}}
\end{equation}
and
\begin{equation} \label{eq2:3}
\vert\Fix_{NC^m(W)}(\psi^{p})\vert =
\vert\Fix_{NC^m(W)}(\psi^{kp})\vert 	.
\end{equation}
\end{lemma}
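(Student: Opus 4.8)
The plan is to imitate the proof of Lemma~\ref{lem:1} verbatim, replacing the map $\phi$ by $\psi$ and the quantity $mh$ by $(m+1)h$ throughout. The only structural fact needed beyond what is already recorded is the identity that $\psi^{(m+1)h}$ is the identity map, which is stated in the paragraph preceding Theorem~\ref{thm2:1} and can be read off from \eqref{eq2:Aktion}; combined with the hypothesis $p\mid(m+1)h$ it shows that the order of $\psi^{p}$ divides $N:=(m+1)h/p$.

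For \eqref{eq2:2} I would proceed exactly as in the proof of Lemma~\ref{lem:1}. Since $p\mid(m+1)h$, the numbers $\zeta:=e^{2\pi ip/(m+1)h}$ and $e^{2\pi ikp/(m+1)h}=\zeta^{k}$ are both primitive $N$-th roots of unity, the latter because $\gcd(k,N)=1$. The $q$-Fu\ss--Catalan number $\Cat^m(W;q)=\prod_{i=1}^{n}[mh+d_i]_q/[d_i]_q$ is a polynomial in $q$, so its specialisation at $\zeta$ is unambiguous, and the ``limit rule'' \eqref{eq:limit}, applied to the factors of this product after grouping the numerator and denominator factors that vanish at $\zeta$ (as in the proof of Lemma~\ref{lem:1}), expresses this specialisation, up to the evident cancellations, as a product involving only the degrees $d_i$ with $N\mid d_i$ and the integer $N$. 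Since that expression does not involve the choice of primitive $N$-th root of unity, both sides of \eqref{eq2:2} are equal.

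For \eqref{eq2:3} I would argue purely group-theoretically. If $x\in\Fix_{NC^m(W)}(\psi^{p})$ then at once $\psi^{kp}(x)=(\psi^{p})^{k}(x)=x$, so $x\in\Fix_{NC^m(W)}(\psi^{kp})$. Conversely, assume $x\in\Fix_{NC^m(W)}(\psi^{kp})$. Since $\gcd(k,N)=1$ there is an integer $k'$ with $k'k\equiv1\pmod N$, say $k'k=1+\ell N$; then, using that $\psi^{pN}=\psi^{(m+1)h}$ is the identity, we obtain $(\psi^{p})^{k'k}=\psi^{p}\cdot(\psi^{pN})^{\ell}=\psi^{p}$, hence $x=(\psi^{kp})^{k'}(x)=\psi^{k'kp}(x)=\psi^{p}(x)$, so that $x\in\Fix_{NC^m(W)}(\psi^{p})$. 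This proves \eqref{eq2:3}, and together with \eqref{eq2:2} the lemma.

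I do not expect a real obstacle: the statement is the exact analogue of Lemma~\ref{lem:1}, and so is its proof. The only point that deserves a moment's attention is the claim in \eqref{eq2:2} that the limit-rule evaluation of $\Cat^m(W;q)$ yields literally the same number at the two primitive $N$-th roots $\zeta$ and $\zeta^{k}$; this is why one wants to invoke, as above, that $\Cat^m(W;q)$ is an honest polynomial, so that its value at $\zeta$ is a well-defined number obtained after genuine cancellation of the vanishing factors, rather than manipulating the product of $q$-integers purely formally.
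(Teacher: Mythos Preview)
Your argument for \eqref{eq2:3} is fine and matches the paper exactly. The gap is in \eqref{eq2:2}: the proof of Lemma~\ref{lem:1} does \emph{not} carry over verbatim, and the paper explicitly says so. The limit rule \eqref{eq:limit} has the hypothesis $\alpha\equiv\beta\pmod d$. In Lemma~\ref{lem:1} one pairs $[mh+d_i]_q$ with $[d_i]_q$, and since $d\mid mh$ one has $mh+d_i\equiv d_i\pmod d$; in fact $\zeta^{mh+d_i}=\zeta^{d_i}$, so the non-vanishing factors cancel identically and the result is manifestly independent of which primitive $d$-th root of unity one plugs in. Here, however, $N=(m+1)h/p$ divides $(m+1)h$, not $mh$, so $mh+d_i\equiv d_i-h\pmod N$ rather than $d_i$. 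Consequently the numerator factor $1-\zeta^{mh+d_i}=1-\zeta^{d_i-h}$ is \emph{not} equal to the denominator factor $1-\zeta^{d_i}$, the vanishing numerator factors are indexed by $\{i:N\mid d_i-h\}$ rather than $\{i:N\mid d_i\}$, and the surviving quotient $\prod(1-\zeta^{d_i-h})/\prod(1-\zeta^{d_i})$ genuinely depends on $\zeta$ a priori. Your claim that the specialisation ``involves only the degrees $d_i$ with $N\mid d_i$'' is therefore incorrect.

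The paper's actual proof of \eqref{eq2:2} sets $S_1=\{i:\zeta^{d_i-h}=1\}$ and $S_2=\{i:\zeta^{d_i}=1\}$, applies l'Hospital, invokes a result of Lehrer--Taylor to rule out $|S_1|<|S_2|$, and then shows that for $|S_1|=|S_2|$ the remaining factors $\prod_{i\notin S_1}(1-\zeta^{d_i-h})\big/\prod_{i\notin S_2}(1-\zeta^{d_i})$ cancel \emph{pairwise}. This last claim amounts to the equality of the multisets $\{(d_i-h)\bmod N:i\notin S_1\}$ and $\{d_i\bmod N:i\notin S_2\}$, and is verified case-by-case using the classification of irreducible well-generated complex reflection groups. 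Only after this pairwise cancellation does the value become visibly independent of the choice of primitive $N$-th root, yielding \eqref{eq2:2}. Merely knowing that $\Cat^m(W;q)$ is a polynomial does not help: its values at $\zeta$ and $\zeta^k$ are Galois conjugates, and one needs an additional argument (such as the one above) to see they coincide.
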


\begin{proof}
For \eqref{eq2:3}, this follows in the same way as \eqref{eq:3}
in Lemma~\ref{lem:1}.

For \eqref{eq2:2}, we must argue differently than in 
Lemma~\ref{lem:1}. Let us write $\zeta=e^{2\pi i p/(m+1)h}$.
For a given group $W$, we write $S_1(W)$ for the set of all
indices $i$ such that $\zeta^{d_i-h}=1$, and we write
$S_2(W)$ for the set of all
indices $i$ such that $\zeta^{d_i}=1$. 
By the rule of de l'Hospital, we have
\begin{equation} \label{eq:S1S2}
\Cat^m(W;q)\big\vert_{q=e^{2\pi i p/(m+1)h}}=
\begin{cases}
0&\text{if }\vert S_1(W)\vert>\vert S_2(W)\vert,\\
\frac {\prod_{i\in S_1(W)}(mh+d_i)} 
{\prod_{i\in S_2(W)} d_i}
\frac {\prod_{i\notin S_1(W)}(1-\zeta^{d_i-h})} 
{\prod_{i\notin S_2(W)}(1-\zeta^{d_i})},
&\text{if }\vert S_1(W)\vert=\vert S_2(W)\vert.
\end{cases}
\end{equation}
Since, by Theorem~\ref{thm:0}, $\Cat^m(W;q)$ is a polynomial in $q$,
the case 
$\vert S_1(W)\vert<\vert S_2(W)\vert$ cannot occur.

We claim 
that, for the case where $\vert S_1(W)\vert=\vert S_2(W)\vert$, 
the factors in the quotient of products
$$
\frac {\prod_{i\notin S_1(W)}(1-\zeta^{d_i-h})} 
{\prod_{i\notin S_2(W)}(1-\zeta^{d_i})}
$$
cancel pairwise. If we assume the correctness of the claim,
it is obvious that we get the same result
if we replace $\zeta$ by $\zeta^k$, where $\gcd(k,(m+1)h/p)=1$,
hence establishing \eqref{eq2:2}.

In order to see that our claim is indeed valid,
we proceed in a case-by-case fashion,
making appeal to the classification of irreducible well-generated complex 
reflection groups, which we recalled in Section~\ref{sec:prel}.
First of all, since $d_n=h$, the set $S_1(W)$ is always non-empty
as it contains the element $n$. Hence, if we want to have
$\vert S_1(W)\vert=\vert S_2(W)\vert$, the set $S_2(W)$ must be
non-empty as well. In other words, the integer $(m+1)h/p$ must divide at least
one of the degrees $d_1,d_2,\dots,d_n$. In particular, this implies that,
for each fixed reflection group $W$ of exceptional type, only a finite number
of values of $(m+1)h/p$ has to be checked. Writing $M$ for $(m+1)h/p$,
what needs to be checked is whether the 
{\it multi}sets (that is, multiplicities of elements must be taken
into account)
$$
\{(d_i-h)\text{ mod }M:i\notin S_1(W)\}
\quad 
\text{and}
\quad 
\{d_i\text{ mod }M:i\notin S_2(W)\}
$$
are the same. Since, for a fixed irreducible well-generated complex
reflection group, there is only a finite number of possibilities for
$M$, this amounts to a routine verification.
\end{proof}

\begin{lemma} \label{lem2:2}
Let $p$ be a divisor of $(m+1)h$.
If $p$ is divisible by $m+1$, then \eqref{eq2:1} is true.
\end{lemma}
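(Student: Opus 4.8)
The plan is to transcribe the proof of Lemma~\ref{lem:2}, with the rôle of $m$ there played by $m+1$ here. First I would specialise the action formula \eqref{eq2:Aktion}: writing $p=a(m+1)+b$ with $0\le b<m+1$, the hypothesis $(m+1)\mid p$ forces $b=0$ and $a=p/(m+1)$, so that $\psi^p$ acts on $NC^m(W)$ simply by
\begin{equation*}
\psi^p\big((w_0;w_1,\dots,w_m)\big)=\big(c^{p/(m+1)}w_0c^{-p/(m+1)};\,c^{p/(m+1)}w_1c^{-p/(m+1)},\dots,c^{p/(m+1)}w_mc^{-p/(m+1)}\big).
\end{equation*}
Hence a tuple fixed by $\psi^p$ has every component $w_i$ ($0\le i\le m$) fixed under conjugation by $c^{p/(m+1)}$.

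Next I would set $W'=\Cent_W(c^{p/(m+1)})$ and record the relevant arithmetic: from $(m+1)\mid p\mid (m+1)h$ one gets $p/(m+1)\mid h$, whence $(m+1)h/p=h/(p/(m+1))$ is an integer dividing $h$. By the theorem of Springer cited in the proof of Lemma~\ref{lem:2} (\cite[Theorem~4.2]{SpriAA}, \cite[Theorem~11.24(iii)]{LeTaAA}), $W'$ is a well-generated complex reflection group whose degrees are precisely those degrees of $W$ divisible by $(m+1)h/p$; since $(m+1)h/p\mid d_n=h$, the degree $h$ survives, so the Coxeter number of $W'$ is again $h$. Invoking \eqref{eq:7} for the centraliser $W'$ (the identity $NC(W)\cap W'=NC(W')$ holding for the centraliser of any power of $c$), and reading off partial products exactly as in Lemma~\ref{lem:2}, I would conclude that $\Fix_{NC^m(W)}(\psi^{p})$ coincides with $NC^m(W')$.

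It then remains to evaluate both sides of \eqref{eq2:1}. Applying Theorem~\ref{thm:2} to $W'$ (whose Coxeter number is $h$) gives
\begin{equation*}
\vert\Fix_{NC^m(W)}(\psi^{p})\vert=\vert NC^m(W')\vert=\underset{\frac{(m+1)h}{p}\mid d_i}{\prod_{1\le i\le n}}\frac{mh+d_i}{d_i}.
\end{equation*}
On the other side, $e^{2\pi ip/(m+1)h}$ is a primitive $\big((m+1)h/p\big)$-th root of unity, and since $(m+1)h/p$ divides $mh$ one has $mh+d_i\equiv d_i\pmod{(m+1)h/p}$; so the ``limit rule'' \eqref{eq:limit} collapses $\Cat^m(W;q)\big\vert_{q=e^{2\pi ip/(m+1)h}}$ to the very same product, establishing \eqref{eq2:1}.

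Since the argument merely mirrors Lemma~\ref{lem:2}, I do not expect a genuine obstacle; the only delicate point is the divisibility bookkeeping --- in particular verifying $p/(m+1)\mid h$ so that Springer's description of $W'$ is clean and $h$ remains the Coxeter number, and checking that it is $mh$ (not $(m+1)h$) that appears in the numerators on both sides, which is what makes the two evaluations agree.
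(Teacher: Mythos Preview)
Your proposal is correct and follows essentially the same route as the paper's own proof: specialise \eqref{eq2:Aktion} with $b=0$, identify the fixed set with $NC^m(W')$ for $W'=\Cent_W(c^{p/(m+1)})$ via Springer's theorem and \eqref{eq:7}, then match Theorem~\ref{thm:2} against the limit rule \eqref{eq:limit}. Your added bookkeeping --- that $p/(m+1)\mid h$, hence $h$ survives as a degree of $W'$ so its Coxeter number is again $h$, and that $(m+1)h/p\mid mh$ so that \eqref{eq:limit} applies --- makes explicit what the paper leaves implicit, but otherwise the arguments coincide.
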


\begin{proof}
According to \eqref{eq2:Aktion}, the action of $\psi^p$ on
$NC^m(W)$ is described by
\begin{multline*}
\psi^p\big((w_0;w_1,\dots,w_m)\big)\\
=(c^{p/(m+1)}w_{0}c^{-p/(m+1)};c^{p/(m+1)}w_{1}c^{-p/(m+1)},\dots,
c^{p/(m+1)}w_{m}c^{-p/(m+1)}\big).
\end{multline*}
Hence, if $(w_0;w_1,\dots,w_m)$ is fixed by $\psi^p$, then
each individual $w_i$ must be fixed under conjugation by $c^{p/(m+1)}$.

Using the notation $W'=\Cent_W(c^{p/(m+1)})$, 
the previous observation means that $w_i\in W'$, 
$i=1,2,\dots,m$. By the theorem of Springer cited in the proof of
Lemma~\ref{lem:2} and by \eqref{eq:7},
the tuples $(w_0;w_1,\dots,w_m)$ fixed by $\psi^p$
are in fact identical with the elements of $NC^m(W')$, which
implies that
\begin{equation} \label{eq2:6}
\vert\Fix_{NC^m(W)}(\psi^{p})\vert=\vert NC^m(W')\vert.
\end{equation}
Application of
Theorem~\ref{thm:2} with $W$ replaced by $W'$ and of the
``limit rule" \eqref{eq:limit} then yields that
\begin{equation} \label{eq2:5}
\vert NC^m(W')\vert=
\underset{\frac {(m+1)h} p\mid d_i}{\prod_{1\le i\le n}} \frac {mh+d_i} {d_i}=\Cat^m(W;q)\big\vert_{q=e^{2\pi i p/(m+1)h}}.
\end{equation}
Combining \eqref{eq2:6} and \eqref{eq2:5}, we obtain \eqref{eq2:1}.
This finishes the proof of the lemma.
\end{proof}

\begin{lemma} \label{lem2:3}
Equation \eqref{eq2:1} holds for all divisors $p$ of $m+1$.
\end{lemma}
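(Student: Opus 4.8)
The plan is to follow closely the pattern of the proof of Lemma~\ref{lem:3}, treating separately the two possibilities $p=m+1$ and $p$ a proper divisor of $m+1$. The organising remark is that, whenever $p\mid m+1$, the map $\psi^{m+1}$ is a power of $\psi^{p}$, so $\Fix_{NC^m(W)}(\psi^{p})\subseteq\Fix_{NC^m(W)}(\psi^{m+1})$; the strategy is therefore to describe $\Fix_{NC^m(W)}(\psi^{m+1})$ completely and then to decide which of its elements remain fixed under the finer action of $\psi^{p}$.

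First I would compute $\Fix_{NC^m(W)}(\psi^{m+1})$. Putting $p=m+1$ (hence $a=1$, $b=0$) in \eqref{eq2:Aktion} shows that $\psi^{m+1}$ acts on $NC^m(W)$ as componentwise conjugation by $c$, so a tuple $(w_0;w_1,\dots,w_m)$ is fixed by $\psi^{m+1}$ precisely when every $w_i$ lies in $\Cent_W(c)$. Exactly as in the proof of Lemma~\ref{lem:3} (the theorem of Springer cited in the proof of Lemma~\ref{lem:2}, together with the fact that the only degree of $W$ divisible by $h$ is $h$ itself), the group $\Cent_W(c)$ is the cyclic group $\langle c\rangle$ of order $h$, and by \eqref{eq:7} we get $w_i\in NC(W)\cap\langle c\rangle=\{\ep,c\}$ for each $i$. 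Since $\ell_T(c)=n$ and $\ell_T(\ep)=0$, the constraint $\sum_{i=0}^m\ell_T(w_i)=\ell_T(c)$ forces exactly one of the $w_i$ to equal $c$. Thus $\Fix_{NC^m(W)}(\psi^{m+1})=\{e_0,e_1,\dots,e_m\}$, where $e_i$ denotes the tuple with $c$ in position $i$ and $\ep$ in all other positions.

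Next I would let $\psi^{p}$ act on the $e_i$. For any divisor $p$ of $m+1$, formula \eqref{eq2:Aktion} shows that $\psi^{p}$ cyclically shifts the components by $p$ positions modulo $m+1$, conjugating the wrapped-around entries by a power of $c$; since conjugating $\ep$ or $c$ by a power of $c$ changes nothing, one gets $\psi^{p}(e_i)=e_{(i+p)\bmod(m+1)}$. Hence $e_i$ is fixed by $\psi^{p}$ if and only if $(m+1)\mid p$, i.e.\ if and only if $p=m+1$. Consequently $\vert\Fix_{NC^m(W)}(\psi^{p})\vert=m+1$ when $p=m+1$ and $\vert\Fix_{NC^m(W)}(\psi^{p})\vert=0$ when $p$ is a proper divisor of $m+1$.

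It remains to check that the right-hand side of \eqref{eq2:1} agrees with these values. Put $M=(m+1)h/p$, so $M=h$ if $p=m+1$ and $M=kh$ with $k=(m+1)/p\ge2$ otherwise, and $\zeta=e^{2\pi ip/(m+1)h}$ is a primitive $M$-th root of unity. If $p=m+1$, the only degree divisible by $M=h$ is $d_n=h$, so by \eqref{eq:limit} the factor $[(m+1)h]_q/[h]_q$ tends to $\tfrac{(m+1)h}{h}=m+1$ as $q\to\zeta$, while every other factor $[mh+d_i]_q/[d_i]_q$ tends to $1$; thus the right-hand side equals $m+1$. If $p$ is a proper divisor, then no degree is divisible by $M=kh$ (all degrees are at most $h<kh$), so $\prod_i[d_i]_q$ does not vanish at $q=\zeta$, whereas the numerator factor $[(m+1)h]_q$ attached to $i=n$ does vanish there (because $kh\mid(m+1)h$), so $\Cat^m(W;q)\big\vert_{q=\zeta}=0$. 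In either case this matches the left-hand side, proving the lemma. The main point needing care is precisely this last computation: in contrast to Lemma~\ref{lem:3}, the right-hand side genuinely vanishes for proper divisors of $m+1$, and one must verify that it is the $i=n$ factor that forces this while no denominator degenerates --- alternatively, the vanishing can be read off directly from the de l'Hospital formula for $\Cat^m(W;q)\big\vert_{q=e^{2\pi ip/(m+1)h}}$ established in the proof of Lemma~\ref{lem2:1}.
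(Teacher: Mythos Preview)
Your proof is correct and uses essentially the same ingredients as the paper's proof: Springer's theorem to identify $\Cent_W(c)$ as $\langle c\rangle$, the relation \eqref{eq:7} to conclude $w_i\in\{\ep,c\}$, and the length constraint to pin down the $m+1$ tuples $e_0,\dots,e_m$. The only organisational difference is that you first determine $\Fix_{NC^m(W)}(\psi^{m+1})$ and then filter by $\psi^p$, whereas the paper analyses the fixed-point equations for $\psi^p$ directly; both routes lead to the same conclusion with the same tools. Your explicit verification of the right-hand side (in particular the vanishing for proper divisors $p$ of $m+1$, where no $d_i$ is divisible by $(m+1)h/p\ge 2h$) is more detailed than the paper's, which simply states the values.
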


\begin{proof}
We have
$$
\Cat^m(W;q)\big\vert_{q=e^{2\pi i p/(m+1)h}}=
\begin{cases}
0&\text{if }p<m+1,\\
m+1&\text{if }p=m+1.
\end{cases}
$$
Here, the first case follows from \eqref{eq:S1S2} and the fact
that we have $S_1(W)\supseteq\{n\}$ 
and $S_2(W)=\emptyset$ if $p\mid (m+1)$ and
$p<m+1$.

On the other hand, if $(w_0;w_1,\dots,w_m)$ is fixed by $\psi^p$,
then, because of the action \eqref{eq2:Aktion}, we must have
$w_0=w_{p}=\dots=w_{m-p+1}$ and $w_0=cw_{m-p+1}c^{-1}$. 
In particular,
$w_0\in\Cent_W(c)$. By the theorem of Springer cited in the
proof of Lemma~\ref{lem:2}, the subgroup $\Cent_W(c)$
is itself a complex reflection group whose degrees are those degrees
of $W$ that are divisible by $h$. The only such degree is $h$
itself, hence $\Cent_W(c)$ is the cyclic group generated by
$c$. Moreover, by \eqref{eq:7}, we obtain that $w_0=\ep$ or
$w_0=c$. If $p=m+1$, the set $\Fix_{NC^m(W)}(\psi^p)$ consists of the 
$m+1$ elements $(w_0;w_1,\dots,w_m)$ obtained by choosing 
$w_i=c$ for a particular $i$ between $0$ and $m$, all other $w_j$'s 
being equal to $\ep$. If $p<m+1$, then
there is no element in $\Fix_{NC^m(W)}(\psi^p)$.
\end{proof}

\begin{lemma} \label{lem2:6}
Let $W$ be an irreducible 
well-generated complex reflection group of rank $n$, 
and let $p=m_1h_1$ be a divisor of $(m+1)h$, where $m+1=m_1m_2$ and
$h=h_1h_2$. We assume that 
$\gcd(h_1,m_2)=1$. Suppose that
Theorem~\ref{thm2:1} has already been verified for all 
irreducible well-generated
complex reflection groups with rank $<n$. 
If $h_2$ does not divide all degrees $d_i$,
then equation~\eqref{eq2:1} is satisfied.
\end{lemma}

\begin{proof}
Let us write $h_1=am_2+b$, with $0\le b<m_2$. The condition 
$\gcd(h_1,m_2)=1$ translates into $\gcd(b,m_2)=1$.
From \eqref{eq2:Aktion}, we infer that
\begin{multline} \label{eq2:m2Aktion}
\psi^p\big((w_0;w_1,\dots,w_m)\big)\\=
(c^{a+1}w_{m-m_1b+1}c^{-a-1};c^{a+1}w_{m-m_1b+2}c^{-a-1},
\dots,c^{a+1}w_{m}c^{-a-1},\\
c^aw_{0}c^{-a},\dots,
c^aw_{m-m_1b}c^{-a}\big).
\end{multline}
Supposing that 
$(w_0;w_1,\dots,w_m)$ is fixed by $\psi^p$, we obtain
the system of equations
\begin{align*} 
w_i&=c^{a+1}w_{i+m-m_1b+1}c^{-a-1}, \quad i=0,1,\dots,m_1b-1,\\
w_i&=c^aw_{i-m_1b}c^{-a}, \quad i=m_1b,m_1b+1,\dots,m,
\end{align*}
which, after iteration, implies in particular that
$$
w_i=c^{b(a+1)+(m_2-b)a}w_ic^{-b(a+1)-(m_2-b)a}=c^{h_1}w_ic^{-h_1},
\quad i=0,1,\dots,m.
$$
It is at this point where we need $\gcd(b,m_2)=1$.
The last equation shows that each $w_i$, $i=0,1,\dots,m$, 
lies in $\Cent_{W}(c^{h_1})$. 
By the theorem of Springer cited in the
proof of Lemma~\ref{lem:2}, this centraliser subgroup 
is itself a complex reflection group, $W'$ say, 
whose degrees are those degrees
of $W$ that are divisible by $h/h_1=h_2$. Since, by assumption, $h_2$
does not divide {\em all\/} degrees, $W'$ has 
rank strictly less than $n$. Again by assumption, we know that
Theorem~\ref{thm2:1} is true for $W'$, so that in particular,
$$
\vert\Fix_{NC^m(W')}(\psi^{p})\vert = 
\Cat^m(W';q)\big\vert_{q=e^{2\pi i p/(m+1)h}}.
$$
The arguments above together with \eqref{eq:7} show that 
$\Fix_{NC^m(W)}(\psi^{p})=\Fix_{NC^m(W')}(\psi^{p})$.
On the other hand, it is straightforward to see that
$$\Cat^m(W;q)\big\vert_{q=e^{2\pi i p/(m+1)h}}=
\Cat^m(W';q)\big\vert_{q=e^{2\pi i p/(m+1)h}}.$$
This proves \eqref{eq2:1} for our particular $p$, as required. 
\end{proof}

\begin{lemma} \label{lem2:7}
Let $W$ be an irreducible 
well-generated complex reflection group of rank $n$, 
and let $p=m_1h_1$ be a divisor of $(m+1)h$, where $m+1=m_1m_2$ and
$h=h_1h_2$. We assume that 
$\gcd(h_1,m_2)=1$. If $m_2>n$ then
$$
\Fix_{NC^m(W)}(\psi^{p})=\emptyset.
$$
\end{lemma}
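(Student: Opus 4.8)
The plan is to follow the same strategy as in the proof of Lemma~\ref{lem:7}, with the crucial difference that the all-$\ep$ configuration, which there produced the fixed point $(c;\ep,\dots,\ep)$, is here incompatible with membership in $NC^m(W)$. First I would write $h_1=am_2+b$ with $0\le b<m_2$, exactly as in the proof of Lemma~\ref{lem2:6}; the hypothesis $\gcd(h_1,m_2)=1$ gives $\gcd(b,m_2)=1$, and since $m_2>n\ge1$ forces $m_2\ge2$, we cannot have $b=0$, so in fact $1\le b<m_2$. Assume, for contradiction, that $(w_0;w_1,\dots,w_m)\in\Fix_{NC^m(W)}(\psi^p)$. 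By \eqref{eq2:m2Aktion}, exactly as in the proof of Lemma~\ref{lem2:6}, being fixed by $\psi^p$ forces
\begin{align*}
w_i&=c^{a+1}w_{i+m-m_1b+1}c^{-a-1},\quad i=0,1,\dots,m_1b-1,\\
w_i&=c^{a}w_{i-m_1b}c^{-a},\quad i=m_1b,m_1b+1,\dots,m,
\end{align*}
so that each $w_i$ is conjugate (by a power of $c$) to $w_{i'}$, where $i'\in\{0,1,\dots,m\}$ with $i'\equiv i-m_1b\pmod{m+1}$.

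Next I would note that if $w_j\ne\ep$ for some $j$, then $w_k\ne\ep$ for every $k$ in the orbit of $j$ under the translation $i\mapsto i-m_1b$ on $\Z/(m+1)\Z$. Since $m+1=m_1m_2$ and $\gcd(b,m_2)=1$, the additive order of $m_1b$ modulo $m_1m_2$ equals $m_2$ (indeed $k\cdot m_1b\equiv0\pmod{m_1m_2}$ iff $m_2\mid kb$ iff $m_2\mid k$), so this orbit consists of exactly $m_2$ indices, pairwise distinct in $\{0,1,\dots,m\}$. As conjugation preserves reflection length, $\ell_T(w_k)=\ell_T(w_j)\ge1$ for every such $k$, whence $\ell_T(w_0)+\ell_T(w_1)+\dots+\ell_T(w_m)\ge m_2>n=\ell_T(c)$, contradicting the defining relation of $NC^m(W)$.

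Hence every $w_i$, $0\le i\le m$, must equal $\ep$; but then $w_0w_1\cdots w_m=\ep\ne c$ (as $\ell_T(c)=n\ge1$), contradicting $(w_0;w_1,\dots,w_m)\in NC^m(W)$. Therefore no such tuple exists, i.e.\ $\Fix_{NC^m(W)}(\psi^p)=\emptyset$. I expect no genuine obstacle: the only points needing a little care are the elementary orbit-size computation and the observation that, in contrast to the operator $\phi$ treated in Section~\ref{sec:siev1}, where $w_0$ is recomputed as the completion of the product rather than being cyclically permuted, here $\psi$ cyclically permutes all of $w_0,w_1,\dots,w_m$, so the tuple with all entries equal to $\ep$ fails to lie in $NC^m(W)$ and no fixed point survives.
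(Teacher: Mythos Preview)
Your argument is correct and follows essentially the same route as the paper's proof: both write $h_1=am_2+b$, use \eqref{eq2:m2Aktion} to deduce that a non-$\ep$ entry forces a full orbit of $m_2$ non-$\ep$ entries under translation by $-m_1b$ modulo $m+1$, and then invoke $m_2>n=\ell_T(c)$ to reach a contradiction with the length condition defining $NC^m(W)$. Your treatment of the all-$\ep$ case is in fact slightly more explicit than the paper's, which phrases the hypothesis as ``there exists a $j\ge1$ with $w_j\ne\ep$'' (a vestige of the proof of Lemma~\ref{lem:7}) and leaves implicit the exclusion of $(c;\ep,\dots,\ep)$.
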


\begin{proof}
Let us suppose that $(w_0;w_1,\dots,w_m)\in 
\Fix_{NC^m(W)}(\psi^{p})$ and that there exists a $j\ge1$ such
that $w_j\ne\ep$. By \eqref{eq2:m2Aktion}, it then follows
for such a $j$ that
also $w_k\ne\ep$ for all $k\equiv j-lm_1b$~(mod~$m+1$), where, as before,
$b$ is defined as the unique integer with $h_1=am_2+b$ and
$0\le b<m_2$. Since, by assumption, $\gcd(b,m_2)=1$, there are
exactly $m_2$ such $k$'s which are distinct mod~$m+1$. 
However, this implies that the sum of the absolute lengths
of the $w_i$'s, $0\le i\le m$, is at least $m_2>n$, a
contradiction. This leaves as only possibility 
$(w_0;w_1,\dots,w_m)=(c;\ep,\dots,\ep)$. However,
this is clearly not an element of $\Fix_{NC^m(W)}(\psi^{p})$
unless $p$ is divisible by $m+1$. This is impossible since
$$
\frac {p} {m+1}=\frac {m_1h_1} {m_1m_2}=\frac {h_1} {m_2}
$$
is not an integer by our hypotheses.
\end{proof}

\begin{remark} \label{rem2:1}
(1)
If we put ourselves in the situation of the assumptions of
Lemma~\ref{lem2:6}, then we may conclude that equation~\eqref{eq2:1} 
only needs to be checked for pairs $(m_2,h_2)$ subject to the 
following restrictions: 
\begin{equation} 
m_2\ge2,\quad 
\gcd(h_1,m_2)=1,\quad
\text{and $h_2$ divides all degrees of $W$}. 
\label{eq2:restr}
\end{equation}
Indeed, Lemmas~\ref{lem2:2} and \ref{lem2:6} 
together
imply that equation~\eqref{eq2:1} is always satisfied except if
$m_2\ge2$, $h_2$ divides all degrees of $W$, 
and $\gcd(h_1,m_2)=1$. 

\smallskip
(2) Still putting ourselves in the situation of Lemma~\ref{lem2:6},
if $m_2>n$ and $m_2h_2$ does not divide any of the degrees of $W$,
then equation~\eqref{eq2:1} is satisfied. Indeed, Lemma~\ref{lem2:7}
says that in this case the left-hand side of \eqref{eq2:1} equals
$0$, while it is obvious that in this case the right-hand side of 
\eqref{eq2:1} equals $0$ as well. 

\smallskip
(3) It should be observed that this leaves a finite
number of choices for $m_2$ to consider, whence a finite number of
choices for $(m_1,m_2,h_1,h_2)$. Altogether, there remains a finite
number of choices for $p=h_1m_1$ to be checked.
\end{remark}

\begin{lemma} \label{lem2:8}
Let $W$ be an irreducible 
well-generated complex reflection group of rank $n$ 
with the property that $d_i\mid h$ for $i=1,2,\dots,n$.
Then Theorem~{\em\ref{thm2:1}} is true for this group $W$.
\end{lemma}

\begin{proof}
By Lemma~\ref{lem2:1}, we may restrict ourselves to divisors
$p$ of $(m+1)h$. 

Suppose that $e^{2\pi ip/(m+1)h}$ is a $d_i$-th root of unity
for some $i$. In other words, $(m+1)h/p$ divides $d_i$.
Since $d_i$ is a divisor of $h$ by assumption, 
the integer $(m+1)h/p$ also divides $h$. But this is equivalent to
saying that $m+1$ divides $p$, and equation \eqref{eq2:1} holds by 
Lemma~\ref{lem2:2}.

Now assume that $(m+1)h/p$ does not divide any of the $d_i$'s.
In this case, it follows from \eqref{eq:S1S2} and the fact that
we have  $S_1(W)\supseteq\{n\}$ 
and $S_2(W)=\emptyset$ that
the right-hand side of \eqref{eq2:1} equals $0$. 
Inspection of the classification of all irreducible 
well-generated complex reflection groups, which we recalled in
Section~\ref{sec:prel}, reveals that all groups
satisfying the hypotheses of the lemma have rank $n\le2$.
Except for the groups contained in the infinite series $G(d,1,n)$ 
and $G(e,e,n)$ for which Theorem~\ref{thm:1} has been established in
\cite{KratCG}, these are the groups $G_5,G_6,G_9,G_{10},
G_{14},G_{17},G_{18},G_{21}$. We now discuss these groups case
by case, keeping the notation of Lemma~\ref{lem2:6}.
In order to simplify the argument, we note that Lemma~\ref{lem2:7}
implies that equation~\eqref{eq2:1} holds if $m_2>2$, so that
in the following arguments we always may assume that $m_2=2$.


\smallskip
{\sc Case $G_5$}. The degrees are $6,12$, and
therefore Remark~\ref{rem2:1}.(1) implies that equation~\eqref{eq2:1}
is always satisfied.

\smallskip
{\sc Case $G_6$}. The degrees are $4,12$, and
therefore, according to Remark~\ref{rem2:1}.(1), we need only consider
the case where $h_2=4$ and $m_2=2$, that is, $p=3(m+1)/2$. Then \eqref{eq2:m2Aktion} becomes
\begin{equation} \label{eq2:3m2Aktion}
\psi^p\big((w_0;w_1,\dots,w_m)\big)
=(c^{2}w_{\frac {m+1}2}c^{-2};
c^{2}w_{\frac {m+3}2}c^{-2},
\dots,c^{2}w_{m}c^{-2},
cw_{0}c^{-1},\dots,
cw_{\frac {m-1}2}c^{-1}\big).
\end{equation}
If $(w_0;w_1,\dots,w_m)$ is fixed by $\psi^p$, 
there must exist an $i$
with $0\le i\le \frac {m-1}2$ such that $\ell_T(w_i)=1$,
$w_icw_{i}c^{-1}=c$, and all $w_j$, $j\ne i,\frac {m+1}2+i$,
equal $\ep$. However, 
with the help of {\tt CHEVIE}, one verifies that there is no such
solution to this equation. Hence,
the left-hand side of \eqref{eq2:1} is equal to $0$, as required.

\smallskip
{\sc Case $G_9$}. The degrees are $8,24$, and
therefore, according to Remark~\ref{rem2:1}.(1), we need only consider
the case where $h_2=8$ and $m_2=2$, that is, $p=3(m+1)/2$. 
This is the same $p$ as for $G_6$. Again, 
{\tt CHEVIE} finds no solution. Hence,
the left-hand side of \eqref{eq2:1} is equal to $0$, as required.

\smallskip
{\sc Case $G_{10}$}. The degrees are $12,24$, and
therefore, according to Remark~\ref{rem2:1}.(1), we need only consider
the case where $h_2=12$ and $m_2=2$, that is, $p=3(m+1)/2$. 
This is the same $p$ as for $G_6$. Again, 
{\tt CHEVIE} finds no solution. Hence,
the left-hand side of \eqref{eq2:1} is equal to $0$, as required.

\smallskip
{\sc Case $G_{14}$}. The degrees are $6,24$, and
therefore Remark~\ref{rem2:1}.(1) implies that equation~\eqref{eq2:1}
is always satisfied.

\smallskip
{\sc Case $G_{17}$}. The degrees are $20,60$, and
therefore, according to Remark~\ref{rem2:1}.(1), we need only consider
the cases where $h_2=20$ and $m_2=2$, respectively that $h_2=4$
and $m_2=2$. In the first case, $p=3(m+1)/2$, which is
the same $p$ as for $G_6$. Again, 
{\tt CHEVIE} finds no solution. 
In the second case, $p=15(m+1)/2$. Then \eqref{eq2:m2Aktion} becomes
\begin{multline} \label{eq2:15m2Aktion}
\psi^p\big((w_0;w_1,\dots,w_m)\big)\\
=(c^{8}w_{\frac {m+1}2}c^{-8};
c^{8}w_{\frac {m+3}2}c^{-8},
\dots,c^{8}w_{m}c^{-8},
c^7w_{0}c^{-7},\dots,
c^7w_{\frac {m-1}2}c^{-7}\big).
\end{multline}
By Lemma~\ref{lem:4}, every element of $NC(W)$ is fixed under 
conjugation by $c^3$, and, thus, on elements fixed by $\psi^p$,
the above action of $\psi^p$
reduces to the one in \eqref{eq2:3m2Aktion}. This action was already
discussed in the first case.
Hence, in both cases,
the left-hand side of \eqref{eq2:1} is equal to $0$, as required.

\smallskip
{\sc Case $G_{18}$}. The degrees are $30,60$, and
therefore Remark~\ref{rem2:1}.(1) implies that equation~\eqref{eq2:1}
is always satisfied.

\smallskip
{\sc Case $G_{21}$}. The degrees are $12,60$, and
therefore, according to Remark~\ref{rem2:1}.(1), we need only consider
the cases where $h_2=5$ and $m_2=2$, respectively that $h_2=15$
and $m_2=2$. In the first case, $p=5(m+1)/2$, 
so that \eqref{eq2:m2Aktion} becomes
\begin{multline} \label{eq2:5m2Aktion}
\psi^p\big((w_0;w_1,\dots,w_m)\big)\\
=(c^{3}w_{\frac {m+1}2}c^{-3};
c^{3}w_{\frac {m+3}2}c^{-3},
\dots,c^{3}w_{m}c^{-3},
c^2w_{0}c^{-2},\dots,
c^2w_{\frac {m-1}2}c^{-2}\big).
\end{multline}
If $(w_0;w_1,\dots,w_m)$ is fixed by $\psi^p$, 
there must exist an $i$
with $0\le i\le \frac {m-1}2$ such that $\ell_T(w_i)=1$ and
$w_ic^2w_{i}c^{-2}=c$. However, 
with the help of {\tt CHEVIE}, one verifies that there is no such
solution to this equation. 
In the second case, $p=15(m+1)/2$. Then \eqref{eq2:m2Aktion} becomes
the action in \eqref{eq2:15m2Aktion}.
By Lemma~\ref{lem:4}, every element of $NC(W)$ is fixed under 
conjugation by $c^5$, and, thus, on elements fixed by $\psi^p$,
the action of $\psi^p$ in
\eqref{eq2:15m2Aktion} reduces to the one in the first case. 
Hence, in both cases,
the left-hand side of \eqref{eq2:1} is equal to $0$, as required.

\smallskip
This completes the proof of the lemma.
\end{proof}

\section{Case-by-case verification of Theorem~\ref{thm2:1}}
\label{sec:Beweis2}

We now perform a case-by-case verification of Theorem~\ref{thm2:1}.
It should be observed that the action of $\psi$ (given in 
\eqref{eq:psi}) is exactly the same as the action of $\phi$
(given in \eqref{eq:phi}) with $m$ replaced by $m+1$ 
{\it on the components $w_1,w_2,\dots,w_{m+1}$}, that is,
if we disregard the $0$-th component of the elements of the
generalised non-crossing partitions involved. The only difference
which arises is that, while the $(m+1)$-tuples 
$(w_0;w_1,\dots,w_m)$ in \eqref{eq:psi} must satisfy 
$w_0w_1\cdots w_m=c$, for  $w_1,w_2,\dots,w_{m+1}$ in \eqref{eq:phi}
we only must have $w_1w_2\cdots w_{m+1}\le_T c$. The condition
for $(w_0;w_1,\dots,w_m)$ of being in $\Fix_{NC^m(W)}(\psi^p)$
is therefore exactly the same as the condition on
$w_1,w_2,\dots,w_{m+1}$ for the element
$(\ep;w_1,\dots,w_m,w_{m+1})$ being in $\Fix_{NC^{m+1}(W)}(\phi^p)$.
Consequently, we may use the counting results from 
Section~\ref{sec:Beweis1}, except that we have to restrict our
attention to those elements $(w_0;w_1,\dots,w_m,w_{m+1})\in 
NC^{m+1}(W)$ for which $w_1w_2\cdots w_{m+1}=c$, or, equivalently,
$w_0=\ep$.

As before, we write $\zeta_d$ for a primitive $d$-th root of 
unity.

\subsection*{\sc Case $G_4$}
The degrees are $4,6$, and hence we have
$$
\Cat^m(G_4;q)=\frac 
{[6m+6]_q\, [6m+4]_q} 
{[6]_q\, [4]_q} .
$$
Let $\zeta$ be a $6(m+1)$-th root of unity. 
As before, in what follows
we abbreviate the assertion that ``$\zeta$ is a primitive $d$-th root of
unity" as ``$\zeta=\zeta_d$."
The following cases on the right-hand side of \eqref{eq2:1}
occur:
{\refstepcounter{equation}\label{eq2:G4}}
\alphaeqn
\begin{align} 
\label{eq2:G4.2}
\lim_{q\to\zeta}\Cat^m(G_4;q)&=m+1,
\quad\text{if }\zeta=\zeta_6,\zeta_3,\\
\label{eq2:G4.3}
\lim_{q\to\zeta}\Cat^m(G_4;q)&=\tfrac {3m+3}2,
\quad\text{if }\zeta=\zeta_4,\ 2\mid (m+1),\\
\label{eq2:G4.4}
\lim_{q\to\zeta}\Cat^m(G_4;q)&=\Cat^m(G_4),
\quad\text{if }\zeta=-1\text{ or }\zeta=1,\\
\label{eq2:G4.1}
\lim_{q\to\zeta}\Cat^m(G_4;q)&=0,
\quad\text{otherwise.}
\end{align}
\reseteqn

We must now prove that the left-hand side of \eqref{eq2:1} in
each case agrees with the values exhibited in 
\eqref{eq2:G4}. The only cases not covered by
Lemmas~\ref{lem2:2} and \ref{lem2:3} are the ones in \eqref{eq2:G4.3}
and \eqref{eq2:G4.1}. On the other hand, the only case left
to consider according to Remark~\ref{rem2:1} is
the case where $h_2=m_2=2$, that is the case \eqref{eq2:G4.3} where
$p=3(m+1)/2$. In particular, $m+1$ must be divisible by $2$.
The action of $\psi^p$ is the same as the one in 
\eqref{eq2:3m2Aktion}. Hence, the counting problem is the same as
there, except that the underlying group now is $G_4$. 
With the help of {\tt CHEVIE}, one finds that each of the
$3$ (complex) reflections in $G_4$ which are less than the (chosen) 
Coxeter element is a valid choice for $w_i$,
and each of these choices gives rise to $(m+1)/2$ elements in
$\Fix_{NC^m(G_4)}(\psi^p)$
since the index $i$ ranges from $0$ to $(m-1)/2$. 

Hence, in total, we obtain 
$3\frac {m+1}2=\frac {3m+3}2$ elements in
$\Fix_{NC^m(G_4)}(\psi^p)$, which agrees with the limit in
\eqref{eq2:G4.3}.

\subsection*{\sc Case $G_8$}
The degrees are $8,12$, and hence we have
$$
\Cat^m(G_8;q)=\frac 
{[12m+12]_q\, [12m+8]_q} 
{[12]_q\, [8]_q} .
$$
Let $\zeta$ be a $12(m+1)$-th root of unity. 
The following cases on the right-hand side of \eqref{eq2:1}
occur:
{\refstepcounter{equation}\label{eq2:G8}}
\alphaeqn
\begin{align} 
\label{eq2:G8.2}
\lim_{q\to\zeta}\Cat^m(G_8;q)&=m+1,
\quad\text{if }\zeta=\zeta_{12},\zeta_6,\zeta_3,\\
\label{eq2:G8.3}
\lim_{q\to\zeta}\Cat^m(G_8;q)&=\tfrac {3m+3}2,
\quad\text{if }\zeta=\zeta_8,\ 2\mid (m+1),\\
\label{eq2:G8.4}
\lim_{q\to\zeta}\Cat^m(G_8;q)&=\Cat^m(G_8),
\quad\text{if }\zeta=\zeta_4,-1,1,\\
\label{eq2:G8.1}
\lim_{q\to\zeta}\Cat^m(G_8;q)&=0,
\quad\text{otherwise.}
\end{align}
\reseteqn

We must now prove that the left-hand side of \eqref{eq2:1} in
each case agrees with the values exhibited in 
\eqref{eq2:G8}. The only cases not covered by
Lemmas~\ref{lem2:2} and \ref{lem2:3} are the ones in \eqref{eq2:G8.3}
and \eqref{eq2:G8.1}. On the other hand, the only case left
to consider according to Remark~\ref{rem2:1} is
the case where $h_2=4$ and $m_2=2$, that is the case \eqref{eq2:G8.3} where
$p=3(m+1)/2$. In particular, $m+1$ must be divisible by $2$.
The action of $\psi^p$ is the same as the one in 
\eqref{eq2:3m2Aktion}. Hence, the counting problem is the same as
there, except that the underlying group now is $G_8$. 
With the help of {\tt CHEVIE}, one finds that each of the
$3$ (complex) reflections in $G_8$ which are less than the (chosen) 
Coxeter element is a valid choice for $w_i$,
and each of these choices gives rise to $(m+1)/2$ elements in
$\Fix_{NC^m(G_8)}(\psi^p)$
since the index $i$ ranges from $0$ to $(m-1)/2$. 

Hence, in total, we obtain 
$3\frac {m+1}2=\frac {3m+3}2$ elements in
$\Fix_{NC^m(G_8)}(\psi^p)$, which agrees with the limit in
\eqref{eq2:G8.3}.

\subsection*{\sc Case $G_{16}$}
The degrees are $20,30$, and hence we have
$$
\Cat^m(G_{16};q)=\frac 
{[30m+30]_q\, [30m+20]_q} 
{[30]_q\, [20]_q} .
$$
Let $\zeta$ be a $30(m+1)$-th root of unity. 
The following cases on the right-hand side of \eqref{eq2:1}
occur:
{\refstepcounter{equation}\label{eq2:G16}}
\alphaeqn
\begin{align} 
\label{eq2:G16.2}
\lim_{q\to\zeta}\Cat^m(G_{16};q)&=m+1,
\quad\text{if }\zeta=\zeta_{30},\zeta_{15},\zeta_6,\zeta_3,\\
\label{eq2:G16.3}
\lim_{q\to\zeta}\Cat^m(G_{16};q)&=\tfrac {3m+3}2,
\quad\text{if }\zeta=\zeta_{20},\zeta_4,\ 2\mid (m+1),\\
\label{eq2:G16.4}
\lim_{q\to\zeta}\Cat^m(G_{16};q)&=\Cat^m(G_{16}),
\quad\text{if }\zeta=\zeta_{10},\zeta_5,-1,1,\\
\label{eq2:G16.1}
\lim_{q\to\zeta}\Cat^m(G_{16};q)&=0,
\quad\text{otherwise.}
\end{align}
\reseteqn

We must now prove that the left-hand side of \eqref{eq2:1} in
each case agrees with the values exhibited in 
\eqref{eq2:G16}. The only cases not covered by
Lemmas~\ref{lem2:2} and \ref{lem2:3} are the ones in \eqref{eq2:G16.3}
and \eqref{eq2:G16.1}. On the other hand, the only cases left
to consider according to Remark~\ref{rem2:1} are
the cases where $h_2=10$ and $m_2=2$, respectively $h_2=m_2=2$.
Both cases belong to \eqref{eq2:G16.3}. In the first case,
we have $p=3(m+1)/2$, while in the second case we have 
$p=15(m+1)/2$. 
In particular, $m+1$ must be divisible by $2$. In the first case,
the action of $\psi^p$ is the same as the one in 
\eqref{eq2:3m2Aktion}. Hence, the counting problem is the same as
there, except that the underlying group now is $G_{16}$. 
With the help of {\tt CHEVIE}, one finds that each of the
$3$ (complex) reflections in $G_{16}$ which are less than the (chosen) 
Coxeter element is a valid choice for $w_i$,
and each of these choices gives rise to $(m+1)/2$ elements in
$\Fix_{NC^m(G_{16})}(\psi^p)$
since the index $i$ ranges from $0$ to $(m-1)/2$. 
On the other hand, if $p=15(m+1)/2$, 
then the action of $\psi^p$ is the same as the one in \eqref{eq2:15m2Aktion}. 
By Lemma~\ref{lem:4}, every element of $NC(G_{16})$ is fixed under 
conjugation by $c^3$, and, thus, on elements fixed by $\psi^p$,
the action of $\psi^p$
reduces to the one in the first case. 

Hence, in total, we obtain 
$3\frac {m+1}2=\frac {3m+3}2$ elements in
$\Fix_{NC^m(G_{16})}(\psi^p)$, which agrees with the limit in
\eqref{eq2:G16.3}.

\subsection*{\sc Case $G_{20}$}
The degrees are $12,30$, and hence we have
$$
\Cat^m(G_{20};q)=\frac 
{[30m+30]_q\, [30m+12]_q} 
{[30]_q\, [12]_q} .
$$
Let $\zeta$ be a $30(m+1)$-th root of unity. 
The following cases on the right-hand side of \eqref{eq2:1}
occur:
{\refstepcounter{equation}\label{eq2:G20}}
\alphaeqn
\begin{align} 
\label{eq2:G20.2}
\lim_{q\to\zeta}\Cat^m(G_{20};q)&=m+1,
\quad\text{if }\zeta=\zeta_{30},\zeta_{15},\zeta_{10},\zeta_5,\\
\label{eq2:G20.3}
\lim_{q\to\zeta}\Cat^m(G_{20};q)&=\tfrac {5m+5}2,
\quad\text{if }\zeta=\zeta_{12},\zeta_4,\ 2\mid (m+1),\\
\label{eq2:G20.4}
\lim_{q\to\zeta}\Cat^m(G_{20};q)&=\Cat^m(G_{20}),
\quad\text{if }\zeta=\zeta_6,\zeta_3,-1,1,\\
\label{eq2:G20.1}
\lim_{q\to\zeta}\Cat^m(G_{20};q)&=0,
\quad\text{otherwise.}
\end{align}
\reseteqn

We must now prove that the left-hand side of \eqref{eq2:1} in
each case agrees with the values exhibited in 
\eqref{eq2:G20}. The only cases not covered by
Lemmas~\ref{lem2:2} and \ref{lem2:3} are the ones in \eqref{eq2:G20.3}
and \eqref{eq2:G20.1}. On the other hand, the only cases left
to consider according to Remark~\ref{rem2:1} are
the cases where $h_2=6$ and $m_2=2$, respectively $h_2=m_2=2$.
Both cases belong to \eqref{eq2:G20.3}. In the first case,
we have $p=5(m+1)/2$, while in the second case we have 
$p=15(m+1)/2$. 
In particular, $m+1$ must be divisible by $2$. In the first case,
the action of $\psi^p$ is the same as the one in 
\eqref{eq2:5m2Aktion}. Hence, the counting problem is the same as
there, except that the underlying group now is $G_{20}$. 
With the help of {\tt CHEVIE}, one finds that each of the
$5$ (complex) reflections in $G_{20}$ which are less than the (chosen) 
Coxeter element is a valid choice for $w_i$,
and each of these choices gives rise to $(m+1)/2$ elements in
$\Fix_{NC^m(G_{20})}(\psi^p)$
since the index $i$ ranges from $0$ to $(m-1)/2$. 
On the other hand, if $p=15(m+1)/2$, then the action of $\psi^p$ is 
the same as the one in \eqref{eq2:15m2Aktion}. 
By Lemma~\ref{lem:4}, every element of $NC(G_{20})$ is fixed under 
conjugation by $c^5$, and, thus, on elements fixed by $\psi^p$,
the action of $\psi^p$
reduces to the one in the first case. 

Hence, in total, we obtain 
$5\frac {m+1}2=\frac {5m+5}2$ elements in
$\Fix_{NC^m(G_{20})}(\psi^p)$, which agrees with the limit in
\eqref{eq2:G20.3}.

\subsection*{\sc Case $G_{23}=H_3$}
The degrees are $2,6,10$, and hence we have
$$
\Cat^m(H_3;q)=\frac 
{[10m+10]_q\, [10m+6]_q\, [10m+2]_q} 
{[10]_q\, [6]_q\, [2]_q} .
$$
Let $\zeta$ be a $10(m+1)$-th root of unity. 
The following cases on the right-hand side of \eqref{eq2:1}
occur:
{\refstepcounter{equation}\label{eq2:H3}}
\alphaeqn
\begin{align} 
\label{eq2:H3.2}
\lim_{q\to\zeta}\Cat^m(H_3;q)&=m+1,
\quad\text{if }\zeta=\zeta_{10},\zeta_5,\\
\label{eq2:H3.3}
\lim_{q\to\zeta}\Cat^m(H_3;q)&=\tfrac {5m+5}3,
\quad\text{if }\zeta= \zeta_{6},\zeta_3,\ 3\mid (m+1),\\
\label{eq2:H3.4}
\lim_{q\to\zeta}\Cat^m(H_3;q)&=\Cat^m(H_3),
\quad\text{if }\zeta=-1\text{ or }\zeta=1,\\
\label{eq2:H3.1}
\lim_{q\to\zeta}\Cat^m(H_3;q)&=0,
\quad\text{otherwise.}
\end{align}
\reseteqn

We must now prove that the left-hand side of \eqref{eq2:1} in
each case agrees with the values exhibited in 
\eqref{eq2:H3}. The only cases not covered by
Lemmas~\ref{lem2:2} and \ref{lem2:3} are the ones 
in \eqref{eq2:H3.3} and \eqref{eq2:H3.1}.
On the other hand, the only cases left
to consider according to Remark~\ref{rem2:1} are
the cases where $h_2=1$ and $m_2=3$, $h_2=2$
and $m_2=3$, and $h_2=m_2=2$. 
These correspond to the choices $p=10(m+1)/3$,
$p=5(m+1)/3$, respectively $p=5(m+1)/2$.
The first two cases belong to \eqref{eq2:H3.3}, while
$p=5(m+1)/2$ belongs to \eqref{eq2:H3.1}.

In the case that $p=5(m+1)/3$, the action of $\psi^p$ is given by
\begin{multline*}
\psi^p\big((w_0;w_1,\dots,w_m)\big)\\
=(c^{2}w_{\frac {m+1}3}c^{-2};c^{2}w_{\frac {m+4}3}c^{-2},
\dots,c^{2}w_{m}c^{-2},
cw_{0}c^{-1},\dots,
cw_{\frac {m-2}3}c^{-1}\big).
\end{multline*}
Hence, for an $i$ with $0\le i\le \frac {m-2} {3}$,
we must find an element $w_i=t_1$, where $t_1$ satisfies
\eqref{eq:H3D}, and all other $w_j$, $j\notin\big\{i,i+\frac {m+1} {3},
i+\frac {2(m+1)} {3}\big\}$, are set equal to $\ep$.
We have found
five solutions to the counting problem \eqref{eq:H3D} in
\eqref{eq:H3sol1}.
Each of them gives rise to $(m+1)/3$ elements in
$\Fix_{NC^m(H_3)}(\psi^p)$
since the index $i$ ranges from $0$ to $(m-2)/3$. 
On the other hand, if $p=10(m+1)/3$, then the action of $\psi^p$ is given by
\begin{multline*}
\psi^p\big((w_0;w_1,\dots,w_m)\big)\\
=(c^{4}w_{\frac {2m+2}3}c^{-4};c^{4}w_{\frac {2m+5}3}c^{-4},
\dots,c^{4}w_{m}c^{-4},
c^3w_{0}c^{-3},\dots,
c^3w_{\frac {2m-1}3}c^{-3}\big).
\end{multline*}
By Lemma~\ref{lem:4}, every element of $NC(H_3)$ is fixed under 
conjugation by $c^5$, and, thus, on elements fixed by $\psi^p$,
the action of $\psi^p$
reduces to the one in the first case. 

Hence, in total, we obtain 
$5\frac {m+1}3=\frac {5m+5}3$ elements in
$\Fix_{NC^m(H_3)}(\psi^p)$, which agrees with the limit in
\eqref{eq2:H3.3}.

If $p=5(m+1)/2$, then the action of $\psi^p$ is 
the same as the one in \eqref{eq2:5m2Aktion}.
The computation at the end of Case~$H_3$
in Section~\ref{sec:Beweis1} did not find any solutions,
which is in agreement with \eqref{eq2:H3.1}.

\subsection*{\sc Case $G_{24}$}
The degrees are $4,6,14$, and hence we have
$$
\Cat^m(G_{24};q)=\frac 
{[14m+14]_q\, [14m+6]_q\, [14m+4]_q} 
{[14]_q\, [6]_q\, [4]_q} .
$$
Let $\zeta$ be a $14(m+1)$-th root of unity. 
The following cases on the right-hand side of \eqref{eq2:1}
occur:
{\refstepcounter{equation}\label{eq2:G24}}
\alphaeqn
\begin{align} 
\label{eq2:G24.2}
\lim_{q\to\zeta}\Cat^m(G_{24};q)&=m+1,
\quad\text{if }\zeta=\zeta_{14},\zeta_7,\\
\label{eq2:G24.3}
\lim_{q\to\zeta}\Cat^m(G_{24};q)&=\tfrac {7m+7}3,
\quad\text{if }\zeta=\zeta_{6},\zeta_3,\ 3\mid (m+1),\\
\label{eq2:G24.5}
\lim_{q\to\zeta}\Cat^m(G_{24};q)&=\Cat^m(G_{24}),
\quad\text{if }\zeta=-1\text{ or }\zeta=1,\\
\label{eq2:G24.1}
\lim_{q\to\zeta}\Cat^m(G_{24};q)&=0,
\quad\text{otherwise.}
\end{align}
\reseteqn

We must now prove that the left-hand side of \eqref{eq2:1} in
each case agrees with the values exhibited in 
\eqref{eq2:G24}. The only cases not covered by
Lemmas~\ref{lem2:2} and \ref{lem2:3} are the ones in \eqref{eq2:G24.3}
and \eqref{eq2:G24.1}. 
On the other hand, the only cases left
to consider according to Remark~\ref{rem2:1} are 
the cases where $h_2=1$ and $m_2=3$, $h_2=2$ and $m_2=3$, 
and $h_2=m_2=2$. 
These correspond to the choices $p=14(m+1)/3$,
$p=7(m+1)/3$, respectively $p=7(m+1)/2$.
The first two cases belong to \eqref{eq2:G24.3}, while
$p=7(m+1)/2$ belongs to \eqref{eq2:G24.1}.

In the case that $p=14(m+1)/3$ or $p=7(m+1)/3$, we have found
seven solutions to the counting problem \eqref{eq:G24D} in
\eqref{eq:G24sol1},
and each of them gives rise to $(m+1)/3$ elements in
$\Fix_{NC^m(G_{24})}(\psi^p)$
(in the style as discussed in Case~$H_3$).
Hence, in total, we obtain 
$7\frac {m+1}3=\frac {7m+7}3$ elements in
$\Fix_{NC^m(G_{24})}(\psi^p)$, which agrees with the limit in
\eqref{eq2:G24.3}.

If $p=7(m+1)/2$, the relevant counting problem is \eqref{eq:G24''D}.
However, no element
$(w_0;w_1,\dots,w_m)\in \Fix_{NC^m(G_{24})}(\psi^p)$ can be
produced in this way since the counting problem imposes the
restriction that $\ell_T(w_0)+\ell_T(w_1)+\dots+\ell_T(w_m)$
be even, which contradicts the fact that $\ell_T(c)=n=3$. 
This is in agreement with the limit in
\eqref{eq2:G24.1}.

\subsection*{\sc Case $G_{25}$}
The degrees are $6,9,12$, and hence we have
$$
\Cat^m(G_{25};q)=\frac 
{[12m+12]_q\, [12m+9]_q\, [12m+6]_q} 
{[12]_q\, [9]_q\, [6]_q} .
$$
Let $\zeta$ be a $12(m+1)$-th root of unity. 
The following cases on the right-hand side of \eqref{eq2:1}
occur:
{\refstepcounter{equation}\label{eq2:G25}}
\alphaeqn
\begin{align} 
\label{eq2:G25.2}
\lim_{q\to\zeta}\Cat^m(G_{25};q)&=m+1,
\quad\text{if }\zeta=\zeta_{12},\zeta_4,\\
\label{eq2:G25.3}
\lim_{q\to\zeta}\Cat^m(G_{25};q)&=\tfrac {4m+4}3,
\quad\text{if }\zeta=\zeta_{9},\ 3\mid (m+1),\\
\label{eq2:G25.4}
\lim_{q\to\zeta}\Cat^m(G_{25};q)&=(m+1)(2m+1),
\quad\text{if }\zeta=\zeta_6,-1\\
\label{eq2:G25.5}
\lim_{q\to\zeta}\Cat^m(G_{25};q)&=\Cat^m(G_{25}),
\quad\text{if }\zeta=\zeta_3,1,\\
\label{eq2:G25.1}
\lim_{q\to\zeta}\Cat^m(G_{25};q)&=0,
\quad\text{otherwise.}
\end{align}
\reseteqn

We must now prove that the left-hand side of \eqref{eq2:1} in
each case agrees with the values exhibited in 
\eqref{eq2:G25}. The only cases not covered by
Lemmas~\ref{lem2:2} and \ref{lem2:3} are the ones in \eqref{eq2:G25.3}
and \eqref{eq2:G25.1}. 
On the other hand, the only case left
to consider according to Remark~\ref{rem2:1} is the case
where $h_2=m_2=3$. This corresponds to the choice $p=4(m+1)/3$, 
which belongs to \eqref{eq2:G25.3}.
We have found
four solutions to the counting problem \eqref{eq:G25D} in
\eqref{eq:G25sol1},
and each of them gives rise to $(m+1)/3$ elements in
$\Fix_{NC^m(G_{25})}(\psi^p)$
(in the style as discussed in Case~$H_3$).
Hence, in total, we obtain 
$4\frac {m+1}3=\frac {4m+4}3$ elements in
$\Fix_{NC^m(G_{25})}(\psi^p)$, which agrees with the limit in
\eqref{eq2:G25.3}.

\subsection*{\sc Case $G_{26}$}
The degrees are $6,12,18$, and hence we have
$$
\Cat^m(G_{26};q)=\frac 
{[18m+18]_q\, [18m+12]_q\, [18m+6]_q} 
{[18]_q\, [12]_q\, [6]_q} .
$$
Let $\zeta$ be a $14(m+1)$-th root of unity. 
The following cases on the right-hand side of \eqref{eq2:1}
occur:
{\refstepcounter{equation}\label{eq2:G26}}
\alphaeqn
\begin{align} 
\label{eq2:G26.2}
\lim_{q\to\zeta}\Cat^m(G_{26};q)&=m+1,
\quad\text{if }\zeta=\zeta_{18},\zeta_9,\\
\label{eq2:G26.5}
\lim_{q\to\zeta}\Cat^m(G_{26};q)&=\Cat^m(G_{26}),
\quad\text{if }\zeta=\zeta_6,\zeta_3,-1,1,\\
\label{eq2:G26.1}
\lim_{q\to\zeta}\Cat^m(G_{26};q)&=0,
\quad\text{otherwise.}
\end{align}
\reseteqn

We must now prove that the left-hand side of \eqref{eq2:1} in
each case agrees with the values exhibited in 
\eqref{eq2:G26}. The only case not covered by
Lemmas~\ref{lem2:2} and \ref{lem2:3} is the one in \eqref{eq2:G26.1}. 
On the other hand, the only cases left
to consider according to Remark~\ref{rem2:1} are the cases
where $h_2=6$ and $m_2=2$, respectively $h_2=m_2=2$. 
These correspond to the choices $p=3(m+1)/2$,
respectively $p=9(m+1)/2$, both of which belong to 
\eqref{eq2:G26.1}. The relevant counting problem is
\eqref{eq:G26D}. However, no element
$(w_0;w_1,\dots,w_m)\in \Fix_{NC^m(G_{26})}(\psi^p)$ can be
produced in this way since the counting problem imposes the
restriction that $\ell_T(w_0)+\ell_T(w_1)+\dots+\ell_T(w_m)$
be even, which is absurd. This is in agreement with the limit in
\eqref{eq2:G26.1}.

\subsection*{\sc Case $G_{27}$}
The degrees are $6,12,30$, and hence we have
$$
\Cat^m(G_{27};q)=\frac 
{[30m+30]_q\, [30m+12]_q\, [30m+6]_q} 
{[30]_q\, [12]_q\, [6]_q} .
$$
Let $\zeta$ be a $14(m+1)$-th root of unity. 
The following cases on the right-hand side of \eqref{eq2:1}
occur:
{\refstepcounter{equation}\label{eq2:G27}}
\alphaeqn
\begin{align} 
\label{eq2:G27.2}
\lim_{q\to\zeta}\Cat^m(G_{27};q)&=m+1,
\quad\text{if }\zeta=\zeta_{30},\zeta_{15},\zeta_{10},\zeta_5,\\
\label{eq2:G27.4}
\lim_{q\to\zeta}\Cat^m(G_{27};q)&=\Cat^m(G_{27}),
\quad\text{if }\zeta=\zeta_6,\zeta_3,-1,1,\\
\label{eq2:G27.1}
\lim_{q\to\zeta}\Cat^m(G_{27};q)&=0,
\quad\text{otherwise.}
\end{align}
\reseteqn

We must now prove that the left-hand side of \eqref{eq2:1} in
each case agrees with the values exhibited in 
\eqref{eq2:G27}. The only case not covered by
Lemmas~\ref{lem2:2} and \ref{lem2:3} is the one in \eqref{eq2:G27.1}.
On the other hand, the only cases left
to consider according to Remark~\ref{rem2:1} are the cases
where $h_2=6$ and $m_2=3$, $h_2=m_2=3$, $h_2=6$ and $m_2=2$, 
respectively $h_2=m_2=2$. These correspond to the choices
$p=5(m+1)/3$, $10(m+1)/3$, $5(m+1)/2$, respectively $15(m+1)/2$, 
all of which belong to \eqref{eq2:G27.1}.

If $p=5(m+1)/3$ or $p=10(m+1)/3$, the computation with the
help of {\tt CHEVIE}
at the end of Case~$G_{27}$
in Section~\ref{sec:Beweis1} did not find any solutions
for the corresponding counting problem. This is in agreement with
the limit in \eqref{eq2:G27.1}.

In the case that $5(m+1)/2$ or $15(m+1)/2$, 
the relevant counting problem is
\eqref{eq:G27D}. However, no element
$(w_0;w_1,\dots,w_m)\in \Fix_{NC^m(G_{27})}(\psi^p)$ can be
produced in this way since the counting problem imposes the
restriction that $\ell_T(w_0)+\ell_T(w_1)+\dots+\ell_T(w_m)$
be even, which is absurd. This is again 
in agreement with the limit in \eqref{eq2:G27.1}.

\subsection*{\sc Case $G_{28}=F_4$}
The degrees are $2,6,8,12$, and hence we have
$$
\Cat^m(F_4;q)=\frac 
{[12m+12]_q\, [12m+8]_q\, [12m+6]_q\, [12m+2]_q} 
{[12]_q\, [8]_q\, [6]_q\, [2]_q} .
$$
Let $\zeta$ be a $12(m+1)$-th root of unity. 
The following cases on the right-hand side of \eqref{eq2:1}
occur:
{\refstepcounter{equation}\label{eq2:F4}}
\alphaeqn
\begin{align} 
\label{eq2:F4.2}
\lim_{q\to\zeta}\Cat^m(F_4;q)&=m+1,
\quad\text{if }\zeta=\zeta_{12},\\
\label{eq2:F4.3}
\lim_{q\to\zeta}\Cat^m(F_4;q)&=\tfrac {3m+3}2,
\quad\text{if }\zeta=\zeta_{8},\ 2\mid (m+1),\\
\label{eq2:F4.6}
\lim_{q\to\zeta}\Cat^m(F_4;q)&=(m+1)(2m+1),
\quad\text{if }\zeta= \zeta_{6},\zeta_{3},\\
\label{eq2:F4.5}
\lim_{q\to\zeta}\Cat^m(F_4;q)&=\tfrac {(m+1)(3m+2)}2,
\quad\text{if }\zeta= \zeta_{4},\\
\label{eq2:F4.8}
\lim_{q\to\zeta}\Cat^m(F_4;q)&=\Cat^m(F_4),
\quad\text{if }\zeta=-1\text{ or }\zeta=1,\\
\label{eq2:F4.1}
\lim_{q\to\zeta}\Cat^m(F_4;q)&=0,
\quad\text{otherwise.}
\end{align}
\reseteqn

We must now prove that the left-hand side of \eqref{eq2:1} in
each case agrees with the values exhibited in 
\eqref{eq2:F4}. 
The only cases not covered by
Lemmas~\ref{lem2:2} and \ref{lem2:3} are the ones in 
\eqref{eq2:F4.3} and \eqref{eq2:F4.1}.
On the other hand, according to Remark~\ref{rem2:1},
the are no choices for $h_2$ and $m_2$ left to be considered.

\subsection*{\sc Case $G_{29}$}
The degrees are $4,8,12,20$, and hence we have
$$
\Cat^m(G_{29};q)=\frac 
{[20m+20]_q\, [20m+12]_q\, [20m+8]_q\, [20m+4]_q} 
{[20]_q\, [12]_q\, [8]_q\, [4]_q} .
$$
Let $\zeta$ be a $20(m+1)$-th root of unity. 
The following cases on the right-hand side of \eqref{eq2:1}
occur:
{\refstepcounter{equation}\label{eq2:G29}}
\alphaeqn
\begin{align} 
\label{eq2:G29.2}
\lim_{q\to\zeta}\Cat^m(G_{29};q)&=m+1,
\quad\text{if }\zeta=\zeta_{20},\zeta_{10},\zeta_5,\\
\label{eq2:G29.5}
\lim_{q\to\zeta}\Cat^m(G_{29};q)&=\Cat^m(G_{29}),
\quad\text{if }\zeta=\zeta_4,-1,1,\\
\label{eq2:G29.1}
\lim_{q\to\zeta}\Cat^m(G_{29};q)&=0,
\quad\text{otherwise.}
\end{align}
\reseteqn

We must now prove that the left-hand side of \eqref{eq2:1} in
each case agrees with the values exhibited in 
\eqref{eq2:G29}. The only case not covered by
Lemmas~\ref{lem2:2} and \ref{lem2:3} is the one in \eqref{eq2:G29.1}.
On the other hand, the only cases left
to consider according to Remark~\ref{rem2:1},
the only choices for $h_2$ and $m_2$ to be considered
are $h_2=1$ and $m_2=3$, $h_2=2$ and $m_2=3$, 
$h_2=4$ and $m_2=3$, $h_2=4$ and $m_2=2$, 
respectively $h_2=m_2=4$. 
These correspond to the choices $p=20(m+1)/3$, $p=10(m+1)/3$, 
$p=5(m+1)/3$, $p=5(m+1)/2$, 
respectively $p=5(m+1)/4$, all of which belong to \eqref{eq2:G29.1}.

In the case that $p=20(m+1)/3$, $p=10(m+1)/3$, or $p=5(m+1)/3$,
the relevant counting problem is
\eqref{eq:G29D}. However, no element
$(w_0;w_1,\dots,w_m)\in \Fix_{NC^m(G_{27})}(\psi^p)$ can be
produced in this way since the counting problem imposes the
restriction that $\ell_T(w_0)+\ell_T(w_1)+\dots+\ell_T(w_m)$
be divisible by $3$, which is absurd. This is 
in agreement with the limit in \eqref{eq2:G29.1}.

In the case that $p=5(m+1)/2$,
the relevant counting problem is
\eqref{eq:G29'D}, for which we did not find any solutions.
This is again
in agreement with the limit in \eqref{eq2:G29.1}.

In the case that $p=5(m+1)/4$, 
the computation at the end of Case~$G_{29}$
in Section~\ref{sec:Beweis1} did not find any solutions,
which is as well in agreement with the limit in \eqref{eq2:G29.1}.

\subsection*{\sc Case $G_{30}=H_4$}
The degrees are $2,12,20,30$, and hence we have
$$
\Cat^m(H_4;q)=\frac 
{[30m+30]_q\, [30m+20]_q\, [30m+12]_q\, [30m+2]_q} 
{[30]_q\, [20]_q\, [12]_q\, [2]_q} .
$$
Let $\zeta$ be a $30(m+1)$-th root of unity. 
The following cases on the right-hand side of \eqref{eq2:1}
occur:
{\refstepcounter{equation}\label{eq2:H4}}
\alphaeqn
\begin{align} 
\label{eq2:H4.2}
\lim_{q\to\zeta}\Cat^m(H_4;q)&=m+1,
\quad\text{if }\zeta=\zeta_{30},\zeta_{15},\\
\label{eq2:H4.3}
\lim_{q\to\zeta}\Cat^m(H_4;q)&=\tfrac {3m+3}2,
\quad\text{if }\zeta=\zeta_{20},\ 2\mid (m+1),\\
\label{eq2:H4.4}
\lim_{q\to\zeta}\Cat^m(H_4;q)&=\tfrac {5m+5}2,
\quad\text{if }\zeta=\zeta_{12},\ 2\mid (m+1),\\
\label{eq2:H4.5}
\lim_{q\to\zeta}\Cat^m(H_4;q)&=\tfrac {(m+1)(3m+2)}2,
\quad\text{if }\zeta= \zeta_{10},\zeta_{5},\\
\label{eq2:H4.6}
\lim_{q\to\zeta}\Cat^m(H_4;q)&=\tfrac {(m+1)(5m+2)}2,
\quad\text{if }\zeta= \zeta_{6},\zeta_{3},\\
\label{eq2:H4.7}
\lim_{q\to\zeta}\Cat^m(H_4;q)&=\tfrac {(m+1)(15m+1)}4,
\quad\text{if }\zeta= \zeta_{4},\ 2\mid (m+1),\\
\label{eq2:H4.8}
\lim_{q\to\zeta}\Cat^m(H_4;q)&=\Cat^m(H_4),
\quad\text{if }\zeta=-1\text{ or }\zeta=1,\\
\label{eq2:H4.1}
\lim_{q\to\zeta}\Cat^m(H_4;q)&=0,
\quad\text{otherwise.}
\end{align}
\reseteqn

We must now prove that the left-hand side of \eqref{eq2:1} in
each case agrees with the values exhibited in 
\eqref{eq2:H4}. The only cases not covered by
Lemmas~\ref{lem2:2} and \ref{lem2:3} are the ones in 
\eqref{eq2:H4.3}, \eqref{eq2:H4.4}, \eqref{eq2:H4.7},
and \eqref{eq2:H4.1}.
On the other hand, the only cases left
to consider according to Remark~\ref{rem2:1} are the cases
where $h_2=2$ and $m_2=4$, respectively $h_2=m_2=2$. 
These correspond to the choices $p=15(m+1)/2$,
respectively $p=15(m+1)/4$, out of which the first  
belongs to \eqref{eq2:H4.7}, while the second belongs to
\eqref{eq2:H4.1}.

In the case that $p=15(m+1)/2$, the action of $\psi^p$ is the same as
the one in \eqref{eq2:15m2Aktion}.
We have found
eight solutions to the counting problem \eqref{eq:H4''D} in
\eqref{eq:H4sol3}, 
each of them giving rise to $(m+1)/2$ elements in
$\Fix_{NC^m(H_4)}(\psi^p)$
since the index $i$ (in \eqref{eq:H4''D}) 
ranges from $0$ to $(m-1)/2$,
and we have found
30 solutions to the counting problem \eqref{eq:H4''E} in
\eqref{eq:H4sol4}, 
each of them giving rise to $\binom {(m+1)/2}2$ elements in
$\Fix_{NC^m(H_4)}(\psi^p)$
since $0\le i_1<i_2\le (m-1)/2$
(in \eqref{eq:H4''E}).  Hence, we obtain 
$8\frac {m+1}2+30\binom {(m+1)/2}2=\frac {(m+1)(15m+1)}4$ elements in
$\Fix_{NC^m(H_4)}(\psi^p)$, which agrees with the limit in
\eqref{eq2:H4.7}.

If $p=15(m+1)/4$, the computation at the end of Case~$H_4$
in Section~\ref{sec:Beweis1} did not find any solutions,
which is in agreement with the limit in \eqref{eq2:H4.1}.

\subsection*{\sc Case $G_{32}$}
The degrees are $12,18,24,30$, and hence we have
$$
\Cat^m(G_{32};q)=\frac 
{[30m+30]_q\, [30m+24]_q\, [30m+18]_q\, [30m+12]_q} 
{[30]_q\, [24]_q\, [18]_q\, [12]_q} .
$$
Let $\zeta$ be a $30(m+1)$-th root of unity. 
The following cases on the right-hand side of \eqref{eq2:1}
occur:
{\refstepcounter{equation}\label{eq2:G32}}
\alphaeqn
\begin{align} 
\label{eq2:G32.2}
\lim_{q\to\zeta}\Cat^m(G_{32};q)&=m+1,
\quad\text{if }\zeta=\zeta_{30},\zeta_{15},\zeta_{10},\zeta_5,\\
\label{eq2:G32.3}
\lim_{q\to\zeta}\Cat^m(G_{32};q)&=\tfrac {5m+5}4,
\quad\text{if }\zeta=\zeta_{24},\zeta_8,\ 4\mid (m+1),\\
\label{eq2:G32.5}
\lim_{q\to\zeta}\Cat^m(G_{32};q)&=\tfrac {(5m+5)(5m+3)}{8},
\quad\text{if }\zeta=\zeta_{12},\zeta_{4},\ 2\mid (m+1),\\
\label{eq2:G32.6}
\lim_{q\to\zeta}\Cat^m(G_{32};q)&=\Cat^m(G_{32}),
\quad\text{if }\zeta=\zeta_6,\zeta_3,-1,1,\\
\label{eq2:G32.1}
\lim_{q\to\zeta}\Cat^m(G_{32};q)&=0,
\quad\text{otherwise.}
\end{align}
\reseteqn

We must now prove that the left-hand side of \eqref{eq2:1} in
each case agrees with the values exhibited in 
\eqref{eq2:G32}. The only cases not covered by
Lemmas~\ref{lem2:2} and \ref{lem2:3} are the ones in 
\eqref{eq2:G32.3}, 
\eqref{eq2:G32.5}, 
and \eqref{eq2:G32.1}.
On the other hand, the only cases left
to consider according to Remark~\ref{rem2:1} are the cases
where $h_2=2$ and $m_2=4$,   
$h_2=6$ and $m_2=4$, $h_2=m_2=3$, $h_2=6$ and $m_2=3$, 
$h_2=m_2=2$, respectively $h_2=6$ and $m_2=2$. 
These correspond to the choices $p=15(m+1)/4$,  
$p=5(m+1)/4$, $p=10(m+1)/3$, $p=5(m+1)/3$, $p=15(m+1)/2$,
respectively $p=5(m+1)/2$, out of which the first two belong
to \eqref{eq2:G32.3}, the next two belong to \eqref{eq2:G32.1},
and the last two belong to \eqref{eq2:G32.5}. 

In the case that $p=15(m+1)/4$ or $p=5(m+1)/4$, we have found
five solutions to the counting problem \eqref{eq:G32D} in
\eqref{eq:G32sol1},
each of them giving rise to $(m+1)/4$ elements in
$\Fix_{NC^m(G_{32})}(\psi^p)$.
Hence, we obtain 
$5\frac {m+1}4=\frac {5m+5}4$ elements in
$\Fix_{NC^m(G_{32})}(\psi^p)$, which agrees with the limit in
\eqref{eq2:G32.3}.

In the case that $p=10(m+1)/3$ or $p=5(m+1)/3$, 
the relevant counting problem is
\eqref{eq:G32''''D}. However, no element
$(w_0;w_1,\dots,w_m)\in \Fix_{NC^m(G_{32})}(\psi^p)$ can be
produced in this way since the counting problem imposes the
restriction that $\ell_T(w_0)+\ell_T(w_1)+\dots+\ell_T(w_m)$
be divisible by $3$, which is absurd. This is 
in agreement with the limit in \eqref{eq2:G32.1}.

In the case that $p=15(m+1)/2$ or $p=5(m+1)/2$, we have found
ten solutions to the counting problem \eqref{eq:G32'D} in
\eqref{eq:G32sol2}, 
each of them giving rise to $(m+1)/2$ elements in
$\Fix_{NC^m(G_{32})}(\psi^p)$,
and we have found
25 solutions to the counting problem \eqref{eq:G32'E} in
\eqref{eq:G32sol3}, 
each of them giving rise to $\binom {(m+1)/2}2$ elements in
$\Fix_{NC^m(G_{32})}(\psi^p)$.
Hence, we obtain 
$10\frac {m+1}2+25\binom {(m+1)/2}2=\frac {(5m+5)(5m+3)}8$ 
elements in
$\Fix_{NC^m(G_{32})}(\psi^p)$, which agrees with the limit in
\eqref{eq2:G32.5}.

\subsection*{\sc Case $G_{33}$}
The degrees are $4,6,10,12,18$, and hence we have
$$
\Cat^m(G_{33};q)=\frac 
{[18m+18]_q\, [18m+12]_q\, [18m+10]_q\, [18m+6]_q\, [18m+4]_q} 
{[18]_q\, [12]_q\, [10]_q\, [6]_q\, [4]_q} .
$$
Let $\zeta$ be a $18(m+1)$-th root of unity. 
The following cases on the right-hand side of \eqref{eq2:1}
occur:
{\refstepcounter{equation}\label{eq2:G33}}
\alphaeqn
\begin{align} 
\label{eq2:G33.2}
\lim_{q\to\zeta}\Cat^m(G_{33};q)&=m+1,
\quad\text{if }\zeta=\zeta_{18},\zeta_9,\\
\label{eq2:G33.4}
\lim_{q\to\zeta}\Cat^m(G_{33};q)&=\tfrac {9m+9}5,
\quad\text{if }\zeta=\zeta_{10},\zeta_5,\ 5\mid (m+1),\\
\label{eq2:G33.5}
\lim_{q\to\zeta}\Cat^m(G_{33};q)&=\tfrac {(m+1)(3m+2)(3m+1)}{2},
\quad\text{if }\zeta=\zeta_{6},\zeta_{3},\\
\label{eq2:G33.7}
\lim_{q\to\zeta}\Cat^m(G_{33};q)&=\Cat^m(G_{33}),
\quad\text{if }\zeta=-1\text{ or }\zeta=1,\\
\label{eq2:G33.1}
\lim_{q\to\zeta}\Cat^m(G_{33};q)&=0,
\quad\text{otherwise.}
\end{align}
\reseteqn

We must now prove that the left-hand side of \eqref{eq2:1} in
each case agrees with the values exhibited in 
\eqref{eq2:G33}. The only cases not covered by
Lemmas~\ref{lem2:2} and \ref{lem2:3} are the ones in 
\eqref{eq2:G33.4} 
and \eqref{eq2:G33.1}.
On the other hand, the only cases left
to consider according to Remark~\ref{rem2:1} are the cases
where $h_2=1$ and $m_2=5$, $h_2=2$ and $m_2=5$, 
$h_2=2$ and $m_2=4$, 
respectively $h_2=m_2=2$. 
These correspond to the choices $p=18(m+1)/5$, $p=9(m+1)/5$, 
$p=9(m+1)/4$, respectively $p=9(m+1)/2$, out of which 
the first two belong to \eqref{eq2:G33.4},
while the others belong to \eqref{eq2:G33.1}.

In the case that $p=18(m+1)/5$ or $p=9(m+1)/5$, we have found
nine solutions to the counting problem \eqref{eq:G33D} in
\eqref{eq:G33sol1}. Hence, we obtain 
$9\frac {m+1}5=\frac {9m+9}5$ elements in
$\Fix_{NC^m(G_{33})}(\psi^p)$, which agrees with the limit in
\eqref{eq2:G33.4}.

If $p=9(m+1)/4$, 
the computation at the end of Case~$G_{33}$
in Section~\ref{sec:Beweis1} did not find any solutions,
which is again in agreement with the limit in \eqref{eq2:G32.1}.

In the case that $p=9(m+1)/2$, 
the relevant counting problems are \eqref{eq:G33''''D}
and \eqref{eq:G33''''E}. However, no element
$(w_0;w_1,\dots,w_m)\in \Fix_{NC^m(G_{33})}(\psi^p)$ can be
produced in this way since the counting problem imposes the
restriction that $\ell_T(w_0)+\ell_T(w_1)+\dots+\ell_T(w_m)$
be even, which is absurd. This is 
in agreement with the limit in \eqref{eq2:G33.1}.

\subsection*{\sc Case $G_{34}$}
The degrees are $6,12,18,24,30,42$, and hence we have
\begin{multline*}
\Cat^m(G_{34};q)=\frac 
{[42m+42]_q\, [42m+30]_q\, [42m+24]_q} 
{[42]_q\, [30]_q\, [24]_q}\\
\times
\frac {[42m+18]_q\,
 [42m+12]_q\, [42m+6]_q}
{[18]_q\, [12]_q\, [6]_q} .
\end{multline*}
Let $\zeta$ be a $42(m+1)$-th root of unity. 
The following cases on the right-hand side of \eqref{eq2:1}
occur:
{\refstepcounter{equation}\label{eq2:G34}}
\alphaeqn
\begin{align} 
\label{eq2:G34.2}
\lim_{q\to\zeta}\Cat^m(G_{34};q)&=m+1,
\quad\text{if }\zeta=\zeta_{42},\zeta_{21},\zeta_{14},\zeta_7,\\
\label{eq2:G34.7}
\lim_{q\to\zeta}\Cat^m(G_{34};q)&=\Cat^m(G_{34}),
\quad\text{if }\zeta=\zeta_6,\zeta_3,-1,1,\\
\label{eq2:G34.1}
\lim_{q\to\zeta}\Cat^m(G_{34};q)&=0,
\quad\text{otherwise.}
\end{align}
\reseteqn

We must now prove that the left-hand side of \eqref{eq2:1} in
each case agrees with the values exhibited in 
\eqref{eq2:G34}. The only case not covered by
Lemmas~\ref{lem2:2} and \ref{lem2:3} is the one in 
\eqref{eq2:G34.1}.
On the other hand, the only cases left
to consider according to Remark~\ref{rem2:1} are the cases
where $h_2=1$ and $m_2=5$, $h_2=2$ and $m_2=5$, 
$h_2=3$ and $m_2=5$, $h_2=6$ and $m_2=5$,
$h_2=2$ and $m_2=4$, $h_2=6$ and $m_2=4$, 
$h_2=m_2=3$, $h_2=6$ and $m_2=3$, 
$h_2=m_2=2$, 
$h_2=6$ and $m_2=2$,   
respectively $h_2=6$ and $m_2=6$. 
These correspond to the choices 
$p=42(m+1)/5$, $p=21(m+1)/5$, $p=14(m+1)/5$, $p=7(m+1)/5$, 
$p=21(m+1)/4$, $p=7(m+1)/4$, 
$p=14(m+1)/3$, $p=7(m+1)/3$, $p=21(m+1)/2$,  
$p=7(m+1)/2$, 
respectively $p=7(m+1)/6$, all of which belong to \eqref{eq2:G34.1}. 

In the case that $p=42(m+1)/5$, $p=21(m+1)/5$, 
$p=14(m+1)/5$, or $p=7(m+1)/5$, 
the relevant counting problem is \eqref{eq:G34D}.
However, no element
$(w_0;w_1,\dots,w_m)\in \Fix_{NC^m(G_{34})}(\psi^p)$ can be
produced in this way since the counting problem imposes the
restriction that $\ell_T(w_0)+\ell_T(w_1)+\dots+\ell_T(w_m)$
be divisible by $5$, which is absurd. This is 
in agreement with the limit in \eqref{eq2:G34.1}.

In the case that $p=21(m+1)/4$ or $p=7(m+1)/4$, 
the relevant counting problem is \eqref{eq:G34'D}.
However, no element
$(w_0;w_1,\dots,w_m)\in \Fix_{NC^m(G_{34})}(\psi^p)$ can be
produced in this way since the counting problem imposes the
restriction that $\ell_T(w_0)+\ell_T(w_1)+\dots+\ell_T(w_m)$
be divisible by $4$, which is absurd. This is 
in agreement with the limit in \eqref{eq2:G34.1}.

In the case that $p=14(m+1)/3$ or $p=7(m+1)/3$, 
the relevant counting problems are \eqref{eq:G34*'''D}
and \eqref{eq:G34*'''E}. However, the computations with the
help of {\tt CHEVIE}
performed in Case~$G_{34}$ in Section~\ref{sec:Beweis1}
did not find any solutions for \eqref{eq:G34*'''D}
or \eqref{eq:G34*'''E}. This is 
in agreement with the limit in \eqref{eq2:G34.1}.

In the case that $p=21(m+1)/2$, 
the relevant counting problems are \eqref{eq:G34''''D},
\eqref{eq:G34''''E}, and \eqref{eq:G34''''F}. However, 
the computations with the
help of {\tt CHEVIE}
performed in Case~$G_{34}$ in Section~\ref{sec:Beweis1}
found no $w_i$ with $\ell_T(w_i)=3$ 
in \eqref{eq:G34''''D}, and hence no solutions for
$(w_{i_1},w_{i_2})$ with $\ell_T(w_{i_1})+\ell_T(w_{i_2})=3$
in \eqref{eq:G34''''E}, and no solutions for
$(w_{i_1},w_{i_2},w_{i_3})$ in \eqref{eq:G34''''F}.
This is 
in agreement with the limit in \eqref{eq2:G34.1}.

If $p=7(m+1)/6$, the computation at the end of Case~$G_{34}$
in Section~\ref{sec:Beweis1} did not find any solutions, which
is also in agreement with the limit in \eqref{eq2:G34.1}.

\subsection*{\sc Case $G_{35}=E_6$}
The degrees are $2,5,6,8,9,12$, and hence we have
$$
\Cat^m(E_6;q)=\frac 
{[12m+12]_q\, [12m+9]_q\, [12m+8]_q\, [12m+6]_q\, [12m+5]_q\, [12m+2]_q} 
{[12]_q\, [9]_q\, [8]_q\, [6]_q\, [5]_q\, [2]_q} .
$$
Let $\zeta$ be a $12(m+1)$-th root of unity. 
The following cases on the right-hand side of \eqref{eq2:1}
occur:
{\refstepcounter{equation}\label{eq2:E6}}
\alphaeqn
\begin{align} 
\label{eq2:E6.2}
\lim_{q\to\zeta}\Cat^m(E_6;q)&=m+1,
\quad\text{if }\zeta=\zeta_{12},\\
\label{eq2:E6.3}
\lim_{q\to\zeta}\Cat^m(E_6;q)&=\tfrac {4m+4}3,
\quad\text{if }\zeta=\zeta_{9},\ 3\mid (m+1),\\
\label{eq2:E6.4}
\lim_{q\to\zeta}\Cat^m(E_6;q)&=\tfrac {3m+3}2,
\quad\text{if }\zeta=\zeta_{8},\ 2\mid (m+1),\\
\label{eq2:E6.7}
\lim_{q\to\zeta}\Cat^m(E_6;q)&=(m+1)(2m+1),
\quad\text{if }\zeta= \zeta_{6},\\
\label{eq2:E6.6}
\lim_{q\to\zeta}\Cat^m(E_6;q)&=\tfrac {(m+1)(3m+2)}2,
\quad\text{if }\zeta= \zeta_{4},\\
\label{eq2:E6.8}
\lim_{q\to\zeta}\Cat^m(E_6;q)&=\tfrac {(m+1)(4m+3)(2m+1)}3,
\quad\text{if }\zeta= \zeta_{3},\\
\label{eq2:E6.9}
\lim_{q\to\zeta}\Cat^m(E_6;q)&=\tfrac {(m+1)(3m+2)(2m+1)(6m+1)}2,
\quad\text{if }\zeta= -1,\\
\lim_{q\to\zeta}\Cat^m(E_6;q)&=\Cat^m(E_6),
\quad\text{if }\zeta=1,\\
\label{eq2:E6.1}
\lim_{q\to\zeta}\Cat^m(E_6;q)&=0,
\quad\text{otherwise.}
\end{align}
\reseteqn

We must now prove that the left-hand side of \eqref{eq2:1} in
each case agrees with the values exhibited in 
\eqref{eq2:E6}. The only cases not covered by
Lemmas~\ref{lem2:2} and \ref{lem2:3} are the ones in 
\eqref{eq2:E6.3}, \eqref{eq2:E6.4}, 
and \eqref{eq2:E6.1}.
On the other hand, the only cases left
to consider according to Remark~\ref{rem2:1} are the cases
where $h_2=1$ and $m_2=5$, respectively $h_2=2$ and $m_2=5$. 
These correspond to the choices $p=12(m+1)/5$,
respectively $p=6(m+1)/5$, both of which belong to \eqref{eq2:E6.1}.

In the case that $p=12(m+1)/5$, 
the relevant counting problem is \eqref{eq:E6''D}. 
However, no element
$(w_0;w_1,\dots,w_m)\in \Fix_{NC^m(E_{6})}(\psi^p)$ can be
produced in this way since the counting problem imposes the
restriction that $\ell_T(w_0)+\ell_T(w_1)+\dots+\ell_T(w_m)$
be divisible by $5$, which is absurd. This is 
in agreement with the limit in \eqref{eq2:E6.1}.

If $p=6(m+1)/5$, the computation at the end of Case~$E_6$
in Section~\ref{sec:Beweis1} did not find any solutions,
which is also in agreement with the limit in \eqref{eq2:E6.1}.

\subsection*{\sc Case $G_{36}=E_7$}
The degrees are $2,6,8,10,12,14,18$, and hence we have
\begin{multline*}
\Cat^m(E_7;q)=\frac 
{[18m+18]_q\, [18m+14]_q\, [18m+12]_q} 
{[18]_q\, [14]_q\, [12]_q}\\
\times
\frac 
{[18m+10]_q\, [18m+8]_q\, [18m+6]_q\, [18m+2]_q} 
{[10]_q\, [8]_q\, [6]_q\, [2]_q} .
\end{multline*}
Let $\zeta$ be a $18(m+1)$-th root of unity. 
The following cases on the right-hand side of \eqref{eq2:1}
occur:
{\refstepcounter{equation}\label{eq2:E7}}
\alphaeqn
\begin{align} 
\label{eq2:E7.2}
\lim_{q\to\zeta}\Cat^m(E_7;q)&=m+1,
\quad\text{if }\zeta=\zeta_{18},\zeta_{9},\\
\label{eq2:E7.3}
\lim_{q\to\zeta}\Cat^m(E_7;q)&=\tfrac {9m+9}7,
\quad\text{if }\zeta=\zeta_{14},\zeta_{7},\ 7\mid (m+1),\\
\label{eq2:E7.10}
\lim_{q\to\zeta}\Cat^m(E_7;q)&=\frac {(m+1)(3m+2)(3m+1)}2,
\quad\text{if }\zeta= \zeta_{6},\zeta_{3},\\
\lim_{q\to\zeta}\Cat^m(E_7;q)&=\Cat^m(E_7),
\quad\text{if }\zeta=-1\text{ or }\zeta=1,\\
\label{eq2:E7.1}
\lim_{q\to\zeta}\Cat^m(E_7;q)&=0,
\quad\text{otherwise.}
\end{align}
\reseteqn

We must now prove that the left-hand side of \eqref{eq2:1} in
each case agrees with the values exhibited in 
\eqref{eq2:E7}. The only cases not covered by
Lemmas~\ref{lem2:2} and \ref{lem2:3} are the ones in 
\eqref{eq2:E7.3} and \eqref{eq2:E7.1}.
On the other hand, the only cases left
to consider according to Remark~\ref{rem2:1} are the cases
where $h_2=1$ and $m_2=7$, $h_2=2$ and $m_2=7$,
$h_2=1$ and $m_2=5$, 
$h_2=2$ and $m_2=5$, 
$h_2=2$ and $m_2=4$, respectively $h_2=m_2=2$. 
These correspond to the choices $p=18(m+1)/7$, 
$p=9(m+1)/7$, $p=18(m+1)/5$,
$p=9(m+1)/5$, $p=9(m+1)/4$, respectively $p=9(m+1)/2$, 
out of which the first two belong to \eqref{eq2:E6.3}, 
and all others belong to \eqref{eq2:E6.1}.

In the case that $p=18(m+1)/7$ or $p=9(m+1)/7$, we have found
nine solutions to the counting problem \eqref{eq:E7'''D} in
\eqref{eq:E7sol1}. Hence, we obtain 
$9\frac {m+1}7=\frac {9m+9}7$ elements in
$\Fix_{NC^m(E_7)}(\psi^p)$, which agrees with the limit in
\eqref{eq2:E7.3}.

In the case that $p=18(m+1)/5$ or $p=9(m+1)/5$,
the relevant counting problem is \eqref{eq:E7D}. 
However, no element
$(w_0;w_1,\dots,w_m)\in \Fix_{NC^m(E_{7})}(\psi^p)$ can be
produced in this way since the counting problem imposes the
restriction that $\ell_T(w_0)+\ell_T(w_1)+\dots+\ell_T(w_m)$
be divisible by $5$, which is absurd. This is 
in agreement with the limit in \eqref{eq2:E7.1}.

In the case that $p=9(m+1)/4$,
the relevant counting problem is \eqref{eq:E7''D}. 
However, no element
$(w_0;w_1,\dots,w_m)\in \Fix_{NC^m(E_{7})}(\psi^p)$ can be
produced in this way since the counting problem imposes the
restriction that $\ell_T(w_0)+\ell_T(w_1)+\dots+\ell_T(w_m)$
be divisible by $4$, which is absurd.
This is again
in agreement with the limit in \eqref{eq2:E7.1}.

In the case that $p=9(m+1)/2$,
the relevant counting problems are \eqref{eq:E7''''D},
\eqref{eq:E7''''E}, and \eqref{eq:E7''''F}. 
However, no element
$(w_0;w_1,\dots,w_m)\in \Fix_{NC^m(E_{7})}(\psi^p)$ can be
produced in this way since the counting problem imposes the
restriction that $\ell_T(w_0)+\ell_T(w_1)+\dots+\ell_T(w_m)$
be even, which is absurd. This is also
in agreement with the limit in \eqref{eq2:E7.1}.

\subsection*{\sc Case $G_{37}=E_8$}
The degrees are $2,8,12,14,18,20,24,30$, and hence we have
\begin{multline*}
\Cat^m(E_8;q)=\frac 
{[30m+30]_q\, [30m+24]_q\, [30m+20]_q\, [30m+18]_q} 
{[30]_q\, [24]_q\, [20]_q\, [18]_q}\\
\times
\frac 
{[30m+14]_q\, [30m+12]_q\, [30m+8]_q\, [30m+2]_q} 
{[14]_q\, [12]_q\, [8]_q\, [2]_q} .
\end{multline*}
Let $\zeta$ be a $30(m+1)$-th root of unity. 
The following cases on the right-hand side of \eqref{eq2:1}
occur:
{\refstepcounter{equation}\label{eq2:E8}}
\alphaeqn
\begin{align} 
\label{eq2:E8.2}
\lim_{q\to\zeta}\Cat^m(E_8;q)&=m+1,
\quad\text{if }\zeta=\zeta_{30},\zeta_{15},\\
\label{eq2:E8.3}
\lim_{q\to\zeta}\Cat^m(E_8;q)&=\tfrac {5m+5}4,
\quad\text{if }\zeta=\zeta_{24},\ 4\mid (m+1),\\
\label{eq2:E8.4}
\lim_{q\to\zeta}\Cat^m(E_8;q)&=\tfrac {3m+3}2,
\quad\text{if }\zeta=\zeta_{20},\ 2\mid (m+1),\\
\label{eq2:E8.7}
\lim_{q\to\zeta}\Cat^m(E_8;q)&=\tfrac {(5m+5)(5m+3)}{8},
\quad\text{if }\zeta= \zeta_{12},\ 2\mid (m+1),\\
\label{eq2:E8.8}
\lim_{q\to\zeta}\Cat^m(E_8;q)&=\tfrac {(m+1)(3m+2)}2,
\quad\text{if }\zeta= \zeta_{10},\zeta_{5},\\
\label{eq2:E8.9}
\lim_{q\to\zeta}\Cat^m(E_8;q)&=\frac{(5m+5)(15m+7)}{16},
\quad\text{if }\zeta= \zeta_{8},\ 4\mid (m+1),\\
\label{eq2:E8.10}
\lim_{q\to\zeta}\Cat^m(E_8;q)&=\frac{(m+1)(5m+4)(5m+3)(5m+2)}{24},
\quad\text{if }\zeta= \zeta_{6},\zeta_3,\\
\label{eq2:E8.11}
\lim_{q\to\zeta}\Cat^m(E_8;q)&=\frac {(m+1)(5m+3)(15m+7)(15m+1)}{64},
\quad\text{if }\zeta= \zeta_{4},\ 2\mid (m+1),\\
\lim_{q\to\zeta}\Cat^m(E_8;q)&=\Cat^m(E_8),
\quad\text{if }\zeta=-1\text{ or }\zeta=1,\\
\label{eq2:E8.1}
\lim_{q\to\zeta}\Cat^m(E_8;q)&=0,
\quad\text{otherwise.}
\end{align}
\reseteqn

We must now prove that the left-hand side of \eqref{eq2:1} in
each case agrees with the values exhibited in 
\eqref{eq2:E8}. The only cases not covered by
Lemmas~\ref{lem2:2} and \ref{lem2:3} are the ones in 
\eqref{eq2:E8.3}, \eqref{eq2:E8.4}, 
\eqref{eq2:E8.7}, \eqref{eq2:E8.9}, 
\eqref{eq2:E8.11}, and \eqref{eq2:E8.1}.
On the other hand, the only cases left
to consider according to Remark~\ref{rem2:1} are the cases
where $h_2=2$ and $m_2=8$, $h_2=1$ and $m_2=7$,  
$h_2=2$ and $m_2=7$, $h_2=2$ and $m_2=4$, 
respectively $h_2=m_2=2$. These correspond to the choices
$p=15(m+1)/8$, $p=30(m+1)/7$, $p=15(m+1)/7$, $p=15(m+1)/4$, respectively
$15(m+1)/2$, out of which the first three belong to \eqref{eq2:E8.1},
the fourth
belongs to \eqref{eq2:E8.9}, and the last belongs to 
\eqref{eq2:E8.11}.

If $p=15(m+1)/8$, the relevant counting problem is \eqref{eq:8prod}. 
However, the computation at the end of Case~$E_8$
in Section~\ref{sec:Beweis1} did not find any solutions,
which is in agreement with the limit in \eqref{eq2:E8.1}.
Hence, 
the left-hand side of \eqref{eq2:1} is equal to $0$, as required.

In the case that $p=30(m+1)/7$ or $p=15(m+1)/7$,
the relevant counting problem is \eqref{eq:E8'''''D}. 
However, no element
$(w_0;w_1,\dots,w_m)\in \Fix_{NC^m(E_{8})}(\psi^p)$ can be
produced in this way since the counting problem imposes the
restriction that $\ell_T(w_0)+\ell_T(w_1)+\dots+\ell_T(w_m)$
be divisible by $7$, which is absurd. This is also
in agreement with the limit in \eqref{eq2:E8.1}.

In the case that $p=15(m+1)/4$, the relevant counting problems
are \eqref{eq:E8''D} and \eqref{eq:E8''E}. 
We have found
45 solutions $w_i$ to \eqref{eq:E8''D} of type $A_1^2$ in 
\eqref{eq:E8sol4}, and we have found 20 solutions $w_i$ to
\eqref{eq:E8''D} of type $A_2$ in
\eqref{eq:E8sol5}, which implied $150$ solutions for 
$(w_{i_1},w_{i_2})$
to \eqref{eq:E8''E}. The first two give rise to 
to $(45+20)\frac{m+1}4=65\frac{m+1}4$ elements in
$\Fix_{NC^m(E_{8})}(\psi^p)$,
while the third give rise to $150\binom {(m+1)/4}2$ elements in
$\Fix_{NC^m(E_{8})}(\psi^p)$.
Hence, we obtain 
$65\frac {m+1}4+150\binom {(m+1)/4}2=\frac {(5m+5)(15m+7)}{16}$ elements in
$\Fix_{NC^m(E_8)}(\psi^p)$, which agrees with the limit in
\eqref{eq2:E8.9}.

In the case that $p=15(m+1)/2$, the relevant counting problems
are \eqref{eq:E8''''D}, \eqref{eq:E8''''E}, \eqref{eq:E8''''F}, 
and \eqref{eq:E8''''G}. 
We have found
15 solutions $w_i$ to \eqref{eq:E8''''D} of type $A_1^2*A_2$ in 
\eqref{eq:E8sol6}, 
we have found 45 solutions $w_i$ to
\eqref{eq:E8''''D} of type $A_1*A_3$ in
\eqref{eq:E8sol7}, 
we have found 5 solutions $w_i$ to
\eqref{eq:E8''''D} of type $A_2^2$ in
\eqref{eq:E8sol8}, 
we have found 18 solutions $w_i$ to
\eqref{eq:E8''''D} of type $A_4$ in
\eqref{eq:E8sol9}, 
we have found 5 solutions $w_i$ to
\eqref{eq:E8''''D} of type $D_4$ in
\eqref{eq:E8sol10}, each giving rise to $(m+1)/2$ 
elements in $\Fix_{NC^m(E_{8})}(\psi^p)$.
Using the notation from there,
these imply $2n_{3,1}+n_{2,2}=2\cdot 660+1195=2515$ solutions for 
$(w_{i_1},w_{i_2})$ to \eqref{eq:E8''''E} with 
$\ell_T(w_{i_1})+\ell_T(w_{i_2})=4$, each giving rise to 
$\binom {(m+1)/2}2$ elements in $\Fix_{NC^m(E_{8})}(\psi^p)$.
They also imply $3n_{2,1,1}=3\cdot 2850=8550$ solutions for 
$(w_{i_1},w_{i_2},w_{i_3})$ to \eqref{eq:E8''''F} with 
$\ell_T(w_{i_1})+\ell_T(w_{i_2})+\ell_T(w_{i_3})=4$, 
each giving rise to $\binom {(m+1)/2}3$ elements in
$\Fix_{NC^m(E_{8})}(\psi^p)$.
Finally, they imply as well $n_{1,1,1,1}=6750$ solutions for 
$(w_{i_1},w_{i_2},w_{i_3},w_{i_4})$ to \eqref{eq:E8''''G}, 
each giving rise to $\binom {(m+1)/2}4$ elements in
$\Fix_{NC^m(E_{8})}(\psi^p)$.

In total, we obtain 
\begin{multline*}
(15+45+5+18+5)\frac {m+1}2+2515\binom {(m+1)/2}2+
8550\binom {(m+1)/2}3+6750\binom {(m+1)/2}4\\
=\frac {(m+1)(5m+3)(15m+7)(15m+1)}{64}
\end{multline*}
elements in
$\Fix_{NC^m(E_8)}(\psi^p)$, which agrees with the limit in
\eqref{eq2:E8.11}.

\section*{Acknowledgements}
The authors thank an anonymous referee for a very careful 
reading of the paper \cite{KrMuAE}, and for the many 
pertinent suggestions which
helped to improve that paper and also this manuscript considerably.

\end{document}
